\documentclass[11pt]{amsart}
\usepackage{amsrefs}
\usepackage[T1]{fontenc}
\usepackage[latin1]{inputenc}
\usepackage{a4wide}
\usepackage{setspace}
\usepackage[all]{xypic}
\usepackage{amssymb}
\usepackage{amsthm}
\usepackage[english]{babel}
\usepackage{latexsym}
\usepackage{amscd}
\usepackage{mathrsfs}
\date{}
\newtheorem{thm}{Theorem}[section]

\newtheorem{defn}[thm]{Definition}
\newtheorem{rem}[thm]{Remark}
\newtheorem{conj}[thm]{Conjecture}

\newtheorem{prop}[thm]{Proposition}

\newtheorem{lem}[thm]{Lemma}
\newtheorem{cor}[thm]{Corollary}
\newtheorem{notation}[thm]{Notation}
\newenvironment{f-proof}[1][\sc D\'emonstration.]{\begin{trivlist}
\item[\hskip \labelsep {\bfseries #1}]}{\hfill{$\square$}\end{trivlist}}

\newcommand{\Coprod}{\displaystyle\coprod}
\newcommand{\Prod}{\displaystyle\prod}

\newcommand{\fonc}[5]{
 \begin{array}{cccc}
 #1: & #2 & \longrightarrow & #3\\
     & #4 & \longmapsto & #5
 \end{array}
}

\newcommand{\Ar}{{\mathrm ar}}

\newcommand{\beil}{\mathbf{B}}

\newcommand{\vol}{\mathrm{vol}}
\newcommand{\coker}{\mathrm{coker}}

\newcommand{\sbar}{\bar{s}}
\newcommand{\etabar}{\bar{\eta}}
\newcommand{\shen}{S^{ur}}
\newcommand{\etahen}{\eta^{ur}}
\newcommand{\qbar}{\bar{\mathbb Q}}

\newcommand{\bq}{\mathbb Q}

\newcommand{\bz}{\mathbb Z}
\newcommand{\br}{\mathbb R}
\newcommand{\bc}{\mathbb C}

\newcommand{\bF}{\mathbb F}

\newcommand{\tr}{\tilde{\mathbb R}}

\newcommand{\co}{\mathcal O}
\newcommand{\cd}{\mathcal D}
\newcommand{\F}{\mathcal F}

\newcommand{\X}{\mathcal X}

\newcommand{\et}{\mathrm{et}}
\newcommand{\mydet}{\text{Det}}

\DeclareMathOperator{\ord}{ord}
\DeclareMathOperator{\Spec}{Spec}
\DeclareMathOperator{\Pic}{Pic}
\DeclareMathOperator{\Cl}{Cl}

\DeclareMathOperator{\Ind}{Ind}

\DeclareMathOperator{\im}{im}
\DeclareMathOperator{\Gal}{Gal}
\DeclareMathOperator{\Det}{Det}
\DeclareMathOperator{\id}{id}
\DeclareMathOperator{\cone}{Cone}
\DeclareMathOperator{\Hom}{Hom}

\DeclareMathOperator{\val}{val}

\begin{document}

\title[Weil-\'etale cohomology and Zeta-values]{Weil-\'etale cohomology and Zeta-values of proper regular arithmetic schemes}
\author{M. Flach and B. Morin}
\subjclass[2010]{14F20 (primary) 14F42 11G40 }
\begin{abstract}{We give a conjectural description of the vanishing order and leading Taylor coefficient of the Zeta function of a proper, regular arithmetic scheme $\X$ at any integer $n$ in terms of Weil-\'etale cohomology complexes. This extends work of Lichtenbaum \cite{Lichtenbaum05} and Geisser \cite{Geisser04b} for $\X$ of characteristic $p$, of Lichtenbaum \cite{li04} for $\X=\Spec(\co_F)$ and $n=0$ where $F$ is a number field, and of the second author for arbitrary $\X$ and $n=0$ \cite{Morin14}. We show that our conjecture is compatible with the Tamagawa number conjecture of Bloch, Kato, Fontaine and Perrin-Riou \cite{fpr91} if $\X$ is smooth over $\Spec(\co_F)$, and hence that it holds in cases where the Tamagawa number conjecture is known.}\end{abstract}
\maketitle

\tableofcontents

\section{Introduction}

In \cite{Lichtenbaum05}, \cite{li04} and \cite{li08} Lichtenbaum introduced
the idea of a Weil-\'etale cohomology theory in order to describe
the vanishing order and leading Taylor coefficient of the
Zeta-function of an arithmetic scheme (i.e. a scheme separated and
of finite type over $\Spec(\bz)$) at any integer argument $s=n$. In
\cite{Lichtenbaum05} he defined a Weil-\'etale topos for any variety over a
finite field and showed that Weil-\'etale cohomology groups have the
expected relationship to the Zeta function of smooth, proper
varieties at $s=0$. Assuming the Tate conjecture, Geisser extended
this to smooth, proper varieties over finite fields and any
$s=n\in\bz$ in \cite{Geisser04b}, and to arbitrary varieties over
finite fields and any $n$ in \cite{Geisser06} (also assuming
resolution of singularities).

For schemes of characteristic 0, Lichtenbaum made the crucial first
step in \cite{li04} where he defined Weil-\'etale cohomology groups
for $\X=\Spec(\co_F)$, the spectrum of the ring of integers in a
number field $F$, and proved the expected relationship to the
Dedekind Zeta-function at $s=0$ modulo a vanishing statement for
higher degree cohomology with $\bz$-coefficients. Unfortunately this
cohomology was then shown to be nonvanishing in \cite{flach06-2}.
However, the cohomology with $\br$-coefficients as defined by
Lichtenbaum is correct in all degrees. Encouraged by this fact we
defined in \cite{Flach-Morin-12} a Weil-\'etale
topos for any regular, proper, flat scheme $\X$ over $\Spec(\bz)$
and showed again that the cohomology with $\br$-coefficients has the
expected relationship to the vanishing order of the Zeta-function at
$s=0$ (provided the Hasse-Weil L-functions of all motives
$h^i(\X_\bq)$ have the expected meromorphic continuation and
functional equation). Next, assuming finite generation of motivic cohomology of $\X$, the second author gave in \cite{Morin14}
a description of the leading coefficient at $s=0$ in terms of Weil-\'etale cohomology groups. The key idea of \cite{Morin14} was to define Weil-etale cohomology complexes with $\bz$-coefficients via (Artin-Verdier) duality rather than as cohomology of a Weil-etale topos associated to $\X$.

In the present article we pursue this idea further and define Weil-etale cohomology complexes with $\bz(n)$-coefficients for any $n\in\bz$, and we give their conjectural relation to the Zeta-function of $\X$ at $s=n$. As in \cite{Morin14} a key assumption in this construction is finite generation of \'etale motivic cohomology of $\X$ in a certain range. If $\X$ is smooth over the ring of integers of a number field, we prove that our description is compatible with
the Tamagawa number conjecture of Bloch-Kato \cite{bk88} and
Fontaine-Perrin-Riou \cite{fpr91}, and hence also with the analytic
class number formula and the conjecture of Birch and
Swinnerton-Dyer. Besides \cite{li04}, the only other work on Weil-etale cohomology for arithmetic schemes of characteristic zero that we are aware of is \cite{li08} where Lichtenbaum gives a description of the value of the Zeta-function of a 1-motive modulo rational factors in terms of two sets of Weil-\'etale cohomology groups ("motivic" and "additive"). Our description is somewhat different from \cite{li08} in cases where both apply although the two descriptions are of course equivalent.

For the remainder of this introduction $\X$ is a regular connected scheme of dimension $d$, proper over $\Spec(\bz)$ and $n\in\bz$ is any integer. We assume that $\X$ and $n$ satisfy Conjectures ${\bf AV}(\overline{\mathcal{X}}_{et},n)$, ${\bf L}(\overline{\mathcal{X}}_{et},n)$, ${\bf L}(\overline{\mathcal{X}}_{et},d-n)$, ${\bf B}(\mathcal{X},n)$ and ${\bf D}_p(\mathcal{X},n)$ below which we shall refer to as the "standard assumptions". We construct two sets of cohomology complexes associated to $\X$ which we call "Weil-\'etale" and "Weil-Arakelov" cohomology, respectively. More precisely, we construct a perfect complex of abelian groups
\[ R\Gamma_{W,c}(\mathcal{X},\mathbb{Z}(n))\]
and an exact triangle
\begin{equation} R\Gamma_{\Ar,c}(\X,\bz(n))\to R\Gamma_{\Ar,c}(\X,\tr(n))\to R\Gamma_{\Ar,c}(\X,\tr/\bz(n))\to\label{tri0}\end{equation}
in the bounded derived category of locally compact abelian groups (see \cite{spitzweck07} for precise definitions) with the following properties.

\begin{itemize}
\item[a)] The groups $H^i_{\Ar,c}(\X,\tr(n))$
are finite dimensional vector spaces over $\br$ for all $i$ and there is an exact sequence
\begin{equation}
\cdots\xrightarrow{\cup\theta}
H^i_{\Ar,c}(\X,\tr(n))\xrightarrow{\cup\theta}H^{i+1}_{\Ar,c}(\X,\tr(n))\xrightarrow{\cup\theta}\cdots\label{thetaseq}\end{equation}
In particular, the complex $R\Gamma_{\Ar,c}(\X,\tr(n))$ has vanishing Euler characteristic:
\[ \sum_{i\in\bz}(-1)^i\dim_\br H^i_{\Ar,c}(\X,\tr(n))=0.\]
\item[b)] The groups $H^i_{\Ar,c}(\X,\tr/\bz(n))$ are compact, commutative Lie groups for all $i$.
\end{itemize}
Note here that the cohomology groups of a complex of locally compact abelian groups carry an induced topology which however need not be locally compact. Indeed, the groups $H^i_{\Ar,c}(\X,\bz(n))$ will not always be locally compact.

The conjectural relation to the Zeta-function of $\X$ is given by the following two statements.
\begin{itemize}
\item[c)] The function $\zeta(\X,s)$ has a meromorphic continuation to $s=n$ and
\[ \ord_{s=n}\zeta(\X,s)=\sum_{i\in\bz}(-1)^i\cdot i\cdot\dim_\br
H^i_{\Ar,c}(\X,\tr(n)).\]
\item[d)]  If $\zeta^*(\X,n)\in\br$ denotes the leading Taylor-coefficient of $\zeta(\X,n)$ at $s=n$
then
\begin{equation}|\zeta^*(\X,n)|^{-1}=\prod_{i\in\bz}\left(\vol(H^i_{\Ar,c}(\X,\tr/\bz(n)))\right)
^{(-1)^{i}}.
\label{tam3}\end{equation}
\end{itemize}

We explain the right hand side. There is no well defined measure on the individual groups $H^i_{\Ar,c}(\X,\tr/\bz(n))$ but only on the entire complex, in the following sense. One has an isomorphism of $\br$-vector spaces
\begin{equation} H^i_{W,c}(\mathcal{X},\mathbb{Z}(n))_\br\cong T_\infty H^i_{\Ar,c}(\X,\tr/\bz(n)),\label{tangentiso}\end{equation}
where $A_\br:=A\otimes_\bz\br$ and $T_\infty G$ denotes the tangent space of a compact commutative Lie group $G$. A Haar measure on $G$ is given by a volume form, i.e. a nonzero section $s\in\det_\br T_\infty G$, up to sign. The volume of $G$ with respect to this measure equals $|\coker(\exp)|\lambda$ where $\exp:T_\infty G\to G$ is the exponential map and $\lambda\in\br^{>0}$ is such that $\det_\bz(\ker(\exp))=\bz\cdot\lambda s$. One can extend the tangent space functor $T_\infty$ to complexes of locally compact abelian groups (see Remark \ref{tangentremark}) and the image of (\ref{tri0}) under the tangent space functor identifies with an exact triangle
\begin{equation} R\Gamma(\X_{Zar},L\Omega_{\X/\bz}^{<n})_\br[-2]\to R\Gamma_{\Ar,c}(\X,\tr(n))\to R\Gamma_{W,c}(\mathcal{X},\mathbb{Z}(n))_\br\to \label{tangenttri}\end{equation}
in the derived category of $\br$-vector spaces. We obtain an isomorphism
\begin{align} \bigotimes_{i\in\bz}{\det}_\br^{(-1)^i}T_\infty H^i_{\Ar,c}(\X,\tr/\bz(n))\cong &{\det}_\br R\Gamma_{\Ar,c}(\X,\tr(n))\otimes_\br {\det}_\br R\Gamma(\X_{Zar},L\Omega_{\X/\bz}^{<n})_\br[-1]\label{tangentcomplex}\\
\cong & {\det}_\br R\Gamma(\X_{Zar},L\Omega_{\X/\bz}^{<n})_\br[-1]\notag\end{align}
where the trivialization ${\det}_\br R\Gamma_{\Ar,c}(\X,\tr(n))\cong\br$ is induced by the exact sequence (\ref{thetaseq}). Here
$R\Gamma(\X_{Zar},L\Omega_{\X/\bz}^{<n})$ is derived deRham cohomology as defined by Illusie \cite{Illusie72} modulo the $n$-th step in its Hodge filtration. A generator of the $\bz$-line ${\det}_\bz R\Gamma(\X_{Zar},L\Omega_{\X/\bz}^{<n})[-1]$, multiplied with a certain correction factor $C(\X,n)\in\bq^\times$, gives a unique section of (\ref{tangentcomplex}), up to sign, which we view as a "volume form" on the complex $R\Gamma_{\Ar,c}(\X,\tr/\bz(n))$. Now it turns out that the isomorphism (\ref{tangentiso}) is induced by an exact triangle
\begin{equation} R\Gamma_{W,c}(\mathcal{X},\mathbb{Z}(n))\to R\Gamma_{W,c}(\mathcal{X},\mathbb{Z}(n))_\br\xrightarrow{\exp} R\Gamma_{\Ar,c}(\X,\tr/\bz(n))\to\label{exptriangle}\end{equation}
and hence the volume, i.e. the right hand side, in (\ref{tam3}) is the unique $\lambda\in\br^{>0}$ so that
\begin{equation} {\det}_\bz R\Gamma_{W,c}(\mathcal{X},\mathbb{Z}(n))=\lambda\cdot C(\X,n) \cdot{\det}_\bz R\Gamma(\X_{Zar},L\Omega_{\X/\bz}^{<n})[-1].\label{tamid}\end{equation}
This is an identity of invertible $\bz$-submodules of the invertible $\br$-module
\[ {\det}_\br R\Gamma_{W,c}(\mathcal{X},\mathbb{Z}(n))_\br\cong \bigotimes_{i\in\bz}{\det}_\br^{(-1)^i}T_\infty H^i_{\Ar,c}(\X,\tr/\bz(n))\cong {\det}_\br R\Gamma(\X_{Zar},L\Omega_{\X/\bz}^{<n})_\br[-1].\]
At this point one can draw the connection to the Tamagawa number conjecture of Fontaine and Perrin Riou \cite{fpr91}. At least if $\X\to\Spec(\co_F)$ is smooth proper over a number ring it turns out that
\[ \Delta(\X_\bq,n):={\det}_\bq R\Gamma_{W,c}(\mathcal{X},\mathbb{Z}(n))_\bq\otimes_\bq {\det}_\bq R\Gamma(\X_{Zar},L\Omega_{\X/\bz}^{<n})_\bq\]
is the fundamental line of Fontaine-Perrin-Riou for the motive $h(\X_\bq)(n)=\bigoplus_{i=0}^{d-1}h^i(\X_\bq)(n)[-i]$ of the generic fibre of $\X$ with trivialization
\begin{equation}\br\xrightarrow{\sim}\Delta(\X_\bq,n)_\br\label{ourtheta}\end{equation} induced by (\ref{tangenttri}) and (\ref{thetaseq}). An element $\lambda\in\br$ maps to $\Delta(\X_\bq,n)$ under this trivialization if and only if it satisfies (\ref{tamid}) up to factors in $\bq^\times$.

We make a few more remarks about this construction.

\begin{itemize}
\item[1)] If $\X\to\Spec(\bF_p)$ is smooth proper over a finite field, and $n\in\bz$ is arbitrary, one has isomorphisms
\[ R\Gamma_{\Ar,c}(\X,\bz(n))\cong R\Gamma_{W,c}(\mathcal{X},\mathbb{Z}(n))\cong R\Gamma(\mathcal{X}_W,\mathbb{Z}(n))\]
where $\mathcal{X}_W$ is the Weil-\'etale topos associated to $\X$ by Lichtenbaum \cite{Lichtenbaum05} and our standard assumptions include the perfectness of these complexes ("Tate conjecture"). Moreover, one has an isomorphism
\[ R\Gamma_{\Ar,c}(\X,\tr(n))\cong R\Gamma_{\Ar,c}(\X,\bz(n))\otimes_\bz\br.\]
Assuming perfectness, properties a)-d) are all true since they are a straightforward reformulation of those proved by Lichtenbaum and Geisser (see \cite{Geisser04b}[Thm. 9.1]). Under a resolution of singularities assumption, one also has an isomorphism
\[ R\Gamma_{\Ar,c}(\X,\bz(n))\cong R\Gamma(\mathcal{X}_W,\mathbb{Z}(n))\cong R\Gamma_c(\mathcal{X}_{\Ar},\mathbb{Z}(n))\]
with the "arithmetic cohomology" groups defined by Geisser in \cite{Geisser06}, i.e. the cohomology groups of the (large) Weil-eh site associated to $\X$ \cite{Geisser06}[Cor. 5.5]. So our notation is consistent with \cite{Geisser06} even though we do not call our groups "arithmetic" for reasons explained below. Also note that the purpose of \cite{Geisser06} was to generalize Weil-etale cohomology from smooth proper to arbitrary arithmetic schemes over $\bF_p$ whereas in this paper we generalize from smooth proper schemes over $\bF_p$ to regular, proper schemes over $\bz$. We will have nothing to say about arithmetic schemes that are not regular or not proper.
\item[2)] We have denoted our complexes $R\Gamma_{\Ar,c}(\X,\bz(n))$, resp. $R\Gamma_{W,c}(\mathcal{X},\mathbb{Z}(n))$, rather than $R\Gamma_c(\X_{\Ar},\bz(n))$, resp. $R\Gamma_{c}(\mathcal{X}_W,\mathbb{Z}(n))$, since we do not define a topos (or an $\infty$-topos, or even an $\infty$-category) $\X_{\Ar}$, resp. $\X_W$, associated to the scheme $\X$ whose compact support cohomology with appropriately defined $\bz(n)$-coefficients gives rise to those complexes. In fact, even for $\X=\Spec(\co_F)$ and $n=0$ this remains a major open problem and we do not know whether to expect the existence of such a topos. It seems somewhat more likely that the groups $R\Gamma_{\Ar,c}(\X,\bz(n))$ will be associated to some geometric object $\X_{\Ar}$. For example, one expects the existence of a class $\theta\in H^1(\X_{\Ar},\tr)$ so that cup-product with $\theta$ produces the exact sequence (\ref{thetaseq}). Our construction of (\ref{thetaseq}) will be entirely formal.  Our choice of notation
 $R\Gamma_{W,c}(\mathcal{X},\mathbb{Z}(n))$ was motivated by the idea that "Weil-\'etale" cohomology complexes should always be perfect complexes of abelian groups.
\item[3)] The use of derived rather than naive de Rham cohomology tends to simplify the correction factor $C(\X,n)$. One has $C(\X,n)=1$ for $n\leq 0$ or if $\X$ is smooth proper over a finite field \cite{Morin15}, and $C(\Spec(\co_F),n)=(n-1)!^{-[F:\bq]}$ for a number field $F$ and $n\geq 1$ (see Prop. \ref{Ccompute} below). The correction factor would be $|D_F|^{1-n}\cdot(n-1)!^{-[F:\bq]}$ for naive deRham cohomology where $D_F$ is the discriminant. In general $C(\X,n)$ is defined using $p$-adic Hodge theory.
\end{itemize}

For $\X$ regular, proper and flat over $\Spec(\bz)$, any $n\in\bz$ and $\F=\bz(n),\tr(n),\tr/\bz(n)$ we shall also construct an exact triangle
\[ R\Gamma_{\Ar,c}(\X,\F)\to R\Gamma_{\Ar}(\overline{\X},\F)\to R\Gamma_{\Ar}(\X_\infty,\F)\to\]
as well as an exact triangle of perfect complexes of abelian groups
\[ R\Gamma_{W,c}(\X,\bz(n))\to R\Gamma_{W}(\overline{\X},\bz(n))\to R\Gamma_{W}(\X_\infty,\bz(n))\to.\]
Here one should think of $\overline{\X}$ as the Artin-Verdier or Arakelov compactification of $\X$ and $\X_\infty$ as the fibre at infinity but, as in remark 2) above, we shall have nothing to say about actual geometric objects $\overline{\X}_{\Ar}$, $\overline{\X}_{W}$, $\X_{\infty,\Ar}$ or $\X_{\infty,W}$. In the construction of
$R\Gamma_{W}(\overline{\X},\bz(n))$ we do however make use of the classical Artin-Verdier etale topos $\overline{\X}_\et$ \cite{av} associated to $\X$ since it has the right duality properties needed in this construction. Under our standard assumptions on $\X$, the complexes associated to $\overline{\X}$ satisfy some remarkable duality properties.
There is an isomorphism of perfect complexes of abelian groups
$$R\Gamma_W(\overline{\mathcal{X}},\mathbb{Z}(n))\rightarrow  R\mathrm{Hom}_{\mathbb{Z}}(R\Gamma_W(\overline{\mathcal{X}},\mathbb{Z}(d-n)),\mathbb{Z}[-2d-1])$$
a perfect duality of finite dimensional $\br$-vector spaces
\begin{equation}H^{i}_{\Ar}(\overline{\mathcal{X}},\tr(n))\times H^{2d+1-i}_{\Ar}(\overline{\mathcal{X}},\tr(d-n))\rightarrow H^{2d+1}_{\mathrm{\Ar}}(\overline{\mathcal{X}},\tr(d))\simeq \mathbb{R}\label{xbarduality}\end{equation}
for any $i,n\in\mathbb{Z}$
and a Pontryagin duality of locally compact abelian groups
$$H^{i}_{\Ar}(\overline{\mathcal{X}},\mathbb{Z}(n))\times H^{2d+1-i}_{\Ar}(\overline{\mathcal{X}},\tr/\mathbb{Z}(d-n))\rightarrow H^{2d+1}_{\Ar}(\overline{\mathcal{X}},\tr/\mathbb{Z}(d))\simeq \mathbb{R}/\mathbb{Z}$$
for any $i,n\in\mathbb{Z}$.
One has an isomorphism $$H^{2n}_{\Ar}(\overline{\mathcal{X}},\tr(n))\cong CH^n(\overline{\X})_\br$$ with the Arakelov Chow groups defined by Gillet and Soule \cite{gs94}[3.3.3] and there is also a close relation between $H^{2n}_{\Ar}(\overline{\mathcal{X}},\bz(n))$ and $CH^n(\overline{\X})$ defined in \cite{gs90}[5.1] (note that $CH^n(\overline{\X})_\br$ does {\em not} denote $CH^n(\overline{\X})\otimes_\bz\br$. The two groups $CH^n(\overline{\X})$ and $CH^n(\overline{\X})_\br$ are rather Arakelov modifications of classical Chow groups with $\bz$ and $\br$-coefficients, respectively).  The $\br$-vector spaces $H^{i}_{\Ar}(\overline{\mathcal{X}},\tr(n))$ vanish for $i\neq 2n,2n+1$ and one has an isomorphism
\[ H^{2n}_{\Ar}(\overline{\mathcal{X}},\tr(n))\xrightarrow{\cup\theta}H^{2n+1}_{\Ar}(\overline{\mathcal{X}},\tr(n)).\]
In this regard the spaces $H^{i}_{\Ar}(\overline{\mathcal{X}},\tr(n))$ behave completely analogous to the Weil-etale cohomology spaces
\[ H^i(Y_W,\br(n))\cong H^i(Y_W,\bz(n))\otimes_\bz\br\]
for a smooth projective variety $Y$ over a finite field. In fact, there are also analogues of Grothendieck's standard conjectures for $H^{2n}_{\Ar}(\overline{\mathcal{X}},\tr(n))\cong CH^n(\overline{\X})_\br$ in the literature \cite{kuennemann95}.
Because of this relation to Arakelov theory we call our groups Weil-Arakelov cohomology. Using the terminology "arithmetic" would be confusing since arithmetic Chow groups as defined in \cite{gs90} differ from Arakelov Chow groups.

It is fairly easy to prove the analogue of c)
\begin{equation} \ord_{s=n}\zeta(\overline{\X},s)=\sum_{i\in\bz}(-1)^i\cdot i\cdot\dim_\br H^{i}_{\Ar}(\overline{\mathcal{X}},\tr(n))
\notag\end{equation}
for the {\em completed} Zeta-function
$$\zeta(\overline{\X},s)=\zeta(\X,s)\zeta(\X_\infty,s)$$
of $\X$ provided that c) holds for $\zeta(\X,s)$. Here $\zeta(\X_\infty,s)$ is the usual product of $\Gamma$-functions.  As far as we know this conjectural relation between $CH^n(\overline{\X})_\br$ and the pole order of the completed Zeta-function has not been noticed in the literature. However, there is no statement d) for $\zeta(\overline{\X},s)$ as the groups $H^i_{\Ar}(\overline{\X},\tr/\bz(n))$ are in general only locally compact. This is somewhat consistent with the fact that there are no special value conjectures for the completed Zeta-function in the literature.

We refer the reader to section \ref{sec:examples} below for computations of Weil-Arakelov and Weil-\'etale cohomology groups in the case $\X=\Spec(\co_F)$.

We now give a brief outline of the paper. In section \ref{beilsec} we give a formulation of Beilinson's conjectures for arithmetic schemes rather than motives over $\bq$. This also has been done independently by Scholbach \cite{schol12}. Our definition of motivic cohomology throughout the paper will be via (etale hypercohomology of) Bloch's higher Chow complex \cite{bloch86}. Our formulation of Beilinson's conjectures will be a simple duality statement, Conjecture ${\bf B}(\mathcal{X},n)$, which includes finite dimensionality of motivic cohomology tensored with $\br$.

In section \ref{sec:weiletale} we construct the Weil-etale cohomology complexes following the model of \cite{Morin14}. We state a finite generation assumption on etale motivic cohomology, Conjecture ${\bf L}(\overline{\X}_{et},n)$, which will play a key role in the rest of the paper. We need one further assumption,  Conjecture ${\bf AV}(\overline{\mathcal{X}}_{et},n)$, which concerns Artin-Verdier-duality for motivic cohomology with torsion coefficients and is known in many more cases than either  ${\bf L}(\overline{\X}_{et},n)$ or ${\bf B}(\mathcal{X},n)$.

In section \ref{sec:arithmetic} we construct the Weil-Arakelov cohomology complexes without any further assumption.

In section \ref{sec:zeta} we define the correction factor $C(\X,n)$ and formulate our conjectures on the vanishing order and leading Taylor coefficient of the Zeta function. The rational number $C(\X,n)$ is defined as a product of its $p$-primary parts and for each prime $p$ we need one further assumption, Conjecture ${\bf D}_p(\mathcal{X},n)$, which relates $p$-adically completed motivic cohomology of $\X_{\bz_p}$ with deRham cohomology and $p$-adically completed motivic cohomology of $\X_{\bF_p}$. For smooth schemes and $n<p-1$ such a description follows from the relation between syntomic and motivic cohomology proved by Geisser \cite{Geisser04a}. For general regular $\X$ we expect a similar relationship and we isolate in App. B the results in $p$-adic Hodge theory which would be needed to prove ${\bf D}_p(\mathcal{X},n)$ in general. In view of recent progress in the theory of syntomic cohomology (\cite{nekniz13}, \cite{colniz15}, \cite{ertlniziol16}) these results hold for smooth schemes and might be within reach for semistable schemes.
In subsection \ref{sec:compatibility} we prove that our conjecture is equivalent to the Tamagawa number conjecture of Bloch, Kato, Fontaine, Perrin-Riou for all primes $p$ if $\X$ is smooth over the ring of integers in a number field. This proof also draws on the results of App. B.  In subsection \ref{sec:examples} we discuss in detail the case $\X=\Spec(\co_F)$ for a number field $F$.

In App. A we discuss in detail the Artin-Verdier etale topos $\overline{\X}_{et}$ associated to $\X$, we construct motivic complexes
$\bz(n)^{\overline{\X}}$ on $\overline{\X}_{et}$ for any $n\in\bz$ and we prove conjecture ${\bf AV}(\overline{\mathcal{X}}_{et},n)$ in many cases. The main novelty is a complete discussion of $2$-primary parts.

In App. B. we outline the expected relation between $p$-adically completed motivic cohomology and syntomic cohomology for regular $\X$ and we discuss the motivic decomposition of $p$-adically completed motivic cohomology.
\medskip

{\em Acknowledgements:} We would like to thank W. Niziol and S. Lichtenbaum for helpful discussions related to this paper.

\section{Motivic cohomology of proper regular schemes and the Beilinson conjectures}
\label{beilsec}

Throughout this section $\X$ denotes a regular scheme of dimension $d$, proper over $\Spec(\bz)$. For any complex of abelian groups $A$ we set $A_\br:=A\otimes_\bz\br$.

\subsection{The Beilinson regulator on the level of complexes} We consider Bloch's higher Chow complex \cite{bloch86}
$$\mathbb{Z}(n):=z^n(-,2n-*)$$
which is in fact a complex of sheaves in the etale topology on $\X$.
The first construction of a map of complexes
\begin{equation} z^n(\X,2n-*)\to R\Gamma_\cd(\X_{/\br},\br(n)) \label{gonregulator}\end{equation}
inducing the Beilinson regulator map
\[ CH^n(\X,i)\to H^{2n-i}_\cd(\X_{/\br},\br(n)) \]
was given by Goncharov in \cite{Goncharov95} and \cite{Goncharov05}. This was refined to a map of complexes
\begin{equation} z^n(\X,2n-*)\to R\Gamma_\cd(\X_{/\br},\bz(n)) \label{kmsregulator}\end{equation}
in \cite{kms06} and, building on this construction, the thesis of Fan \cite{fan15} gives a map of complexes
\begin{equation} R\Gamma(\X_{et},\bz(n)) \to R\Gamma_\cd(\X_{/\br},\bz(n)).\label{etaleregulator}\end{equation}
The mapping fibre of this map will play a role in the construction of $R\Gamma_{\Ar,c}(\X,\bz(n))$ in section \ref{sect-arcompactsupport}
and the mapping fibre of the composite map
\begin{equation} R\Gamma(\X_{et},\bz(n)) \to R\Gamma_\cd(\X_{/\br},\bz(n))\to R\Gamma(G_\br,\X(\bc),(2\pi i)^n\bz)\label{bettiregulator}\end{equation}
in the construction of $R\Gamma_{W,c}(\X,\bz(n))$  in section \ref{sect-compactsupport}.
For the remainder of this section we shall only consider  the hypercohomology of $\bz(n)$
$$R\Gamma(\X,\br(n)):=R\Gamma(\X_{et},\bz(n))_\br\cong R\Gamma(\X_{Zar},\bz(n))_\br$$
tensored with $\br$. By definition $R\Gamma(\X,\br(n))=0$ for $n<0$.
We denote by $R\Gamma_c(\X,\br(n))$ the mapping fibre of the Beilinson regulator map so that there is an exact triangle of complexes of $\br$-vector spaces
\begin{equation} R\Gamma_c(\X,\br(n))\to R\Gamma(\X,\br(n)) \to R\Gamma_\cd(\X_{/\br},\br(n))\to .\label{rgc}\end{equation}
Recall that for any $n\in\bz$ Deligne cohomology is defined as the $G_\br$-equivariant cohomology of the complex
\[ \br(n)_\cd: \br(n)\to \Omega_{\X(\bc)/\bc}^\bullet/F^n\]
of $G_\br$-equivariant sheaves on the $G_\br$-space $\X(\bc)$, where $\br(n)$ is the constant $G_\br$-equivariant sheaf $\br(n):=(2\pi i)^n\cdot\br$. So we have
\[ R\Gamma_\cd(\X_{/\br},\br(n)):=R\Gamma(G_\br,\X(\bc), \br(n)_\cd)\cong R\Gamma(\X(\bc), \br(n)_\cd)^{G_\br}.\]
For $n<0$ we have $R\Gamma(\X,\br(n)))=0$ and
\[R\Gamma_c(\X,\br(n)))\cong R\Gamma_\cd(\X_{/\br},\br(n))[-1]\cong R\Gamma(\X(\bc),\br(n))^{G_\br}[-1].\]
The mapping fibre of the Beilinson regulator to real Deligne cohomology (usually without tensoring the source with $\br$) has been denoted "Arakelov motivic cohomology" in \cite{Goncharov05}, \cite{holmstrom-scholbach15}, and a slightly modified mapping fibre yields the "higher arithmetic Chow groups" of \cite{burgosf08} which generalize the arithmetic Chow groups of \cite{gs90}.

\subsection{The Beilinson conjectures and arithmetic duality with $\br$-coefficients}

The purpose of this subsection is to give a uniform statement (for the
central, the near central and the other points) of Beilinson's
conjectures relating motivic to Deligne cohomology, including non-degeneracy of a height pairing. Our statement, Conjecture ${\bf B}(\mathcal{X},n)$ below,
has the form of a simple duality between motivic
cohomology with $\br$-coefficients and Arakelov motivic cohomology
with $\br$-coefficients. Such a formulation of Beilinson's conjectures is implicit in the six term sequence of Fontaine and Perrin-Riou \cite{fpr91}[Prop. 3.2.5].
However, both the original Beilinson conjectures \cite{schneider88} and \cite{fpr91} work with motives over $\bq$ rather than arithmetic schemes. The origin of his section is an unpublished note of the first author from the early 1990's transposing the ideas of \cite{fpr91} to arithmetic schemes. Meanwhile, a formulation of Beilinson's conjectures as a duality theorem for objects in the triangulated category of motives $DM(\Spec(\bz))$ (which includes arithmetic schemes but also the intermediate extension of motives over $\bq$ if one assumes the existence of a motivic $t$-structure) has been given by Scholbach in \cite{schol12}.

\begin{prop} For $n,m\in\bz$ there is a product
\begin{equation} R\Gamma(\X,\br(n))\otimes^L
R\Gamma_c(\X,\br(m))\rightarrow
R\Gamma_c(\X,\br(n+m))\label{rpairing}\end{equation}in the derived category of $\br$-vector spaces.
\label{productprop}\end{prop}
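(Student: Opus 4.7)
The plan is to deduce the pairing formally from compatibility of the Beilinson regulator with cup products on motivic and on real Deligne cohomology. First, Bloch's higher Chow complex $z^n(\X,2n-\ast)$ carries a cup product (associative and graded commutative in the derived category, after restriction to quasi-isomorphic subcomplexes via a moving lemma), inducing
\[ R\Gamma(\X,\br(n)) \otimes^L R\Gamma(\X,\br(m)) \xrightarrow{\cup} R\Gamma(\X,\br(n+m)). \]
Real Deligne cohomology is a graded-commutative ring via the obvious product of complexes $\br(n)_\cd \otimes \br(m)_\cd \to \br(n+m)_\cd$, yielding
\[ R\Gamma_\cd(\X_{/\br},\br(n)) \otimes^L R\Gamma_\cd(\X_{/\br},\br(m)) \xrightarrow{\cup} R\Gamma_\cd(\X_{/\br},\br(n+m)). \]

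Second, the chain-level construction of the Beilinson regulator (\ref{etaleregulator}) due to \cite{kms06} and refined in \cite{fan15} is compatible with cup products, so that the square
\[\begin{CD}
R\Gamma(\X,\br(n)) \otimes^L R\Gamma(\X,\br(m)) @>\cup>> R\Gamma(\X,\br(n+m)) \\
@VVV @VVV \\
R\Gamma_\cd(\X_{/\br},\br(n)) \otimes^L R\Gamma_\cd(\X_{/\br},\br(m)) @>\cup>> R\Gamma_\cd(\X_{/\br},\br(n+m))
\end{CD}\]
commutes in the derived category of $\br$-vector spaces.

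Third, tensoring the defining triangle (\ref{rgc}) of weight $m$ on the left with $R\Gamma(\X,\br(n))$ (an exact operation, since we work over $\br$) yields an exact triangle
\[ R\Gamma(\X,\br(n)) \otimes^L R\Gamma_c(\X,\br(m)) \to R\Gamma(\X,\br(n)) \otimes^L R\Gamma(\X,\br(m)) \to R\Gamma(\X,\br(n)) \otimes^L R\Gamma_\cd(\X_{/\br},\br(m)) \to. \]
By the commutative square, the composite of $\cup$ with the regulator of weight $n+m$ factors through the third term; in particular its restriction to $R\Gamma(\X,\br(n)) \otimes^L R\Gamma_c(\X,\br(m))$ is zero in $D(\br)$. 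The octahedral axiom (equivalently, the defining triangle of the mapping fibre $R\Gamma_c(\X,\br(n+m))$) then lifts $\cup$ to a morphism (\ref{rpairing}), as desired.

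\textbf{Main obstacle.} The only nontrivial ingredient is the compatibility of the regulator with cup products, which sits behind the commutative square above. Bloch's cup product is not defined strictly on the nose and the natural chain-level regulator of \cite{kms06}, \cite{fan15} is not a map of DG algebras; however, for this proposition it suffices to have commutativity in $D(\br)$, which is much weaker than strict compatibility and is implicit in the cited constructions. Once this is in place, the passage to the fibre and the construction of (\ref{rpairing}) are formal.
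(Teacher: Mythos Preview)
Your formal deduction at the end is fine: once you have multiplicative structures on both sides and a regulator compatible with them in $D(\br)$, the composite $R\Gamma(\X,\br(n))\otimes^L R\Gamma_c(\X,\br(m))\to R\Gamma_\cd(\X_{/\br},\br(n+m))$ vanishes (since $R\Gamma_c(\X,\br(m))\to R\Gamma(\X,\br(m))\to R\Gamma_\cd(\X_{/\br},\br(m))$ is zero by definition of the fibre), and a lift to the fibre exists.

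The genuine gap is the first step. You assert a cup product on Bloch's higher Chow complexes via a moving lemma, but in the generality of the paper (where $\X$ is only assumed regular and proper over $\Spec(\bz)$) such a moving lemma is not available. This is exactly the point the paper flags in the Remark immediately following its proof: ``We are not aware of a direct construction of a product on higher Chow complexes (even tensored with $\bq$), unless $\X$ is smooth over a number ring or a finite field.'' So your argument, as written, only works in the smooth case. Relatedly, your ``main obstacle'' is more serious than you suggest: the chain-level regulators of \cite{kms06}, \cite{fan15} (based on Goncharov's currents) are not known to be multiplicative even in the derived sense, and the paper again records this as open (``It remains an open problem to construct a $R\Gamma(\X_{et},\bz(n))$-module structure on the mapping fibre of (\ref{etaleregulator})'').

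The paper circumvents both issues by passing to stable homotopy. It identifies $R\Gamma(\X,\br(n))$ with $R\Hom_{SH(\X)_\br}(S^0,H_{\beil,\X,\br}(n))$ via Levine's spectral sequence and the Cisinski--Deglise comparison with $K$-theory, where $H_{\beil,\X,\br}$ is a strict ring spectrum. The regulator is then realised as a map of ring spectra $\rho:H_{\beil,\X,\br}\to H_\cd$ by \cite{holmstrom-scholbach15}, so multiplicativity is built in, and the Arakelov spectrum $\hat{H}_{\beil,\X,\br}$ (whose mapping spaces compute $R\Gamma_c$) inherits an $H_{\beil,\X,\br}$-module structure. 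The pairing (\ref{rpairing}) then drops out of the module product $H_{\beil,\X,\br}(n)\wedge\hat{H}_{\beil,\X,\br}(m)\to\hat{H}_{\beil,\X,\br}(n+m)$. In short, the paper trades the unavailable cycle-level products for the existing ring/module structures on motivic spectra.
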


\begin{proof} For the (regular) arithmetic scheme $f:\X\to\Spec(\bz)$, a spectral sequence from $H^\bullet(\Spec(\bz)_{Zar},f_*z_{d-n}(-,\bullet))$ to algebraic K-groups was constructed by Levine in \cite{Levine01}[(8.8)] and it was shown to degenerate after $\otimes\bq$ in \cite{Levine99}[Thm. 11.8]. This gives isomorphisms
\[ H^i(\X_\et,\bz(n))_\bq\cong H^i(\X_{Zar},\bz(n))_\bq\cong H^i(\Spec(\bz)_{Zar},f_*z_{d-n}(-,\bullet))_\bq\cong K_{2n-i}(\X)_\bq^{(n)}\]
where the third isomorphism is \cite{Geisser04a}[Cor.3.3 b)]. By \cite{cisdeg09}[Cor. 14.2.14] there is an isomorphism
\[K_{2n-i}(\X)_\bq^{(n)} \cong \Hom_{DM_\beil(\X)}(\bq(0),\bq(n)[i])\cong \Hom_{SH(\X)_\bq}(S^0,H_{\beil,\X}(n)[i]) \]
where $DM_\beil(\X)$, resp. $SH(\X)_\bq$, is the triangulated category of mixed motives, resp. $\bq$-linear stable homotopy category defined by Cisinski and Deglise, resp. Morel-Voevodsky. Now note that $SH(\X)_\bq$ is naturally enriched over the derived category of $\bq$-vector spaces and hence we get an isomorphism
\[R\Gamma(\X_{et},\bz(n))_\bq\cong R\Hom_{SH(\X)_\bq}(S^0,H_{\beil,\X}(n))\]
and a similar $\br$-linear variant
\[R\Gamma(\X,\br(n))\cong R\Hom_{SH(\X)_\br}(S^0,H_{\beil,\X,\br}(n)).\]
The spectrum $H_{\beil,\X,\br}$ is a strict ring spectrum and in \cite{holmstrom-scholbach15}[Def. 4.1, Rem. 4.6] an exact triangle
\[ \hat{H}_{\beil,\X,\br} \to H_{\beil,\X,\br} \xrightarrow{\rho}  H_\cd\to\]
in $SH(\X)_\br$ was constructed, where $H_\cd$ is a ring spectrum representing real Deligne cohomology and $\rho$ induces the Beilinson regulator \cite{scholbach15}[Thm. 5.7]. This gives an isomorphism
\[R\Gamma_c(\X,\br(n))\cong R\Hom_{SH(\X)_\br}(S^0,\hat{H}_{\beil,\X,\br}(n)).\]
The map $\rho$ is a map of ring spectra which implies that $\hat{H}_{\beil,\X,\br}$ acquires a structure of $H_{\beil,\X,\br}$-module spectrum \cite{holmstrom-scholbach15}[Thm. 4.2 (ii)]. The product map
\[ H_{\beil,\X,\br}(n)\wedge \hat{H}_{\beil,\X,\br}(m)\to \hat{H}_{\beil,\X,\br}(n+m)\]
induces a map
\[ [S^0,H_{\beil,\X,\br}(n)]\otimes^L [S^0,\hat{H}_{\beil,\X,\br}(m)] \to [S^0\wedge S^0, \hat{H}_{\beil,\X,\br}(n+m)]\cong [S^0,\hat{H}_{\beil,\X,\br}(n+m)]\]
where we have written
\[[S^0,A]:=R\Hom_{SH(\X)_\br}(S^0,A)\]
for brevity. In view of the isomorphisms $R\Gamma(\X,\br(n))\cong [S^0,H_{\beil,\X,\br}(n)]$ and
$R\Gamma_c(\X,\br(m)))\cong [S^0,\hat{H}_{\beil,\X,\br}(m)]$ we obtain the product (\ref{rpairing}).
\end{proof}

\begin{rem} The construction of the product on $R\Gamma(X,\br(n))$ makes use of the elaborate formalism of \cite{cisdeg09}, in particular the representability of algebraic K-theory by a ring spectrum in $SH$. We are not aware of a direct construction of a product on higher Chow complexes (even tensored with $\bq$), unless $\X$ is smooth over a number ring or a finite field \cite{Levine99}.  In order to appreciate the amount of detail hidden in the short proof of Prop. \ref{productprop} the reader may want to look at the construction of a product structure in \cite{burgosf08} on the mapping fibre of the Beilinson regulator from the higher Chow complex of the (smooth, proper) generic fibre $\X_\bq$ to real Deligne cohomology. Both \cite{burgosf08} and
\cite{holmstrom-scholbach15} use a representative of the real Deligne complex by differential forms due to Burgos Gil \cite{burgos97} which is quite different from the complex in terms of currents used by Goncharov in (\ref{gonregulator}), and also in (\ref{kmsregulator}) and (\ref{etaleregulator}). It remains an open problem to construct a  $R\Gamma(\X_{et},\bz(n))$-module structure on the mapping fibre of (\ref{etaleregulator}).
\end{rem}

\begin{lem} a) One has
$$H^{i-1}_\mathcal D(\X_{/\br},\br(d))=H^i_c(\X,\br(d))=0$$ for $i>2d$
and there is a commutative diagram
\[\begin{CD} H^{2d-1}_\mathcal D(\X_{/\br},\br(d)) @>>> H^{2d}_c(\X,\br(d))\\
@VVV @VVV\\
\br @= \br
\end{CD}\]

b) For any $i,n\in\bz$ the product on Deligne cohomology
\[ H^i_\cd (\X_{/\br},\br(n)) \times H^{2d-1-i}_\cd(\X_{/\br},\br(d-n))\to H^{2d-1}_\cd(\X_{/\br},\br(d))\to\br\]
induces a perfect duality, i.e. an isomorphism
\begin{equation}
H^i_\cd(\X_{/\br},\br(n))\cong H^{2d-1-i}_\cd(\X_{/\br},\br(d-n))^* \label{deldual}\end{equation}
where $(-)^*$ denotes the dual $\mathbb{R}$-vector space.
\label{deligneduality}\end{lem}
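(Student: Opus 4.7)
The plan is to deduce part (b) from classical Poincar\'e duality for real Deligne cohomology on the complex manifold $\X(\bc)$, and to handle part (a) by degree-counting together with the defining triangle.

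For (b), since $\X$ is regular and characteristic zero on its generic fibre, $\X_\bq$ is smooth of dimension $d-1$, so $\X(\bc)$ is a smooth proper complex manifold of complex dimension $N = d-1$. Classical Poincar\'e duality for real Deligne cohomology on such a manifold (Esnault--Viehweg, Jannsen) gives a perfect pairing
\[H^i_\cd(\X(\bc),\br(n)) \times H^{2N+1-i}_\cd(\X(\bc),\br(N+1-n)) \to H^{2N+1}_\cd(\X(\bc),\br(N+1)) \cong \br.\]
I would take $G_\br$-invariants, which is exact (since $|G_\br|=2$ is invertible in $\br$) and preserves perfectness. The only verification is that $H^{2d-1}_\cd(\X(\bc),\br(d))^{G_\br}$ remains one-dimensional: identifying this space with $\bc/(2\pi i)^d \br$, the $G_\br$-action combines the sign $(-1)^{d-1}$ from the orientation action on the topological top class with the sign $(-1)^{d-1}$ coming from the identification $\bc/(2\pi i)^d\br \cong (2\pi i)^{d-1}\br$, giving a trivial total action.

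For (a), the vanishings come from degree counts. Bloch's formula $H^i(\X,\bz(d)) = CH^d(\X,2d-i)$ yields $H^i(\X,\br(d)) = 0$ for $i>2d$. On the Deligne side, the long exact sequence
\[H^{j-1}(\X(\bc),\Omega^\bullet/F^d) \to H^j_\cd(\X(\bc),\br(d)) \to H^j(\X(\bc),\br(d)) \to H^j(\X(\bc),\Omega^\bullet/F^d)\]
has both flanking terms vanishing for $j > 2d-1$ (using $\dim_\bc \X(\bc) = d-1$, so both Betti and de Rham cohomology of $\X(\bc)$ vanish above degree $2d-2$), hence $H^j_\cd(\X(\bc),\br(d)) = 0$ for $j>2d-1$, and the vanishing persists after $G_\br$-invariants. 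The long exact sequence of the defining triangle $R\Gamma_c\to R\Gamma\to R\Gamma_\cd$ then forces $H^i_c(\X,\br(d))=0$ for $i>2d$.

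For the commutative diagram, the top arrow is the connecting map from the defining triangle and the left vertical is the classical Deligne trace. From the resulting exact sequence
\[H^{2d-1}(\X,\br(d)) \xrightarrow{\mathrm{reg}} H^{2d-1}_\cd \to H^{2d}_c \to H^{2d}(\X,\br(d)) \to 0,\]
the image of the top arrow is identified with $\coker(\mathrm{reg})$. I would define the trace on $H^{2d}_c(\X,\br(d))$ to agree with the Deligne trace on this image subspace (which requires the Deligne trace to vanish on $\mathrm{image}(\mathrm{reg})$) and with the degree map on the quotient $H^{2d}(\X,\br(d)) = CH^d(\X)_\br$; both pieces can be glued canonically by identifying $H^{2d}_c$ with a real incarnation of Gillet--Soul\'e's arithmetic Chow group $\widehat{CH}^d(\overline{\X})_\br$, on which the arithmetic degree $\widehat{\deg}$ provides a uniform trace. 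Commutativity is then built into the definition. The main obstacle is the verification that the Deligne trace vanishes on the top-degree regulator image, i.e.\ that the composite $H^{2d-1}(\X,\br(d)) \to H^{2d-1}_\cd(\X_{/\br},\br(d)) \to \br$ is zero; this is the well-known property that the top-degree Beilinson regulator lands in the kernel of integration over the fundamental class.
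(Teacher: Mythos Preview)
Your approach to part (b) is correct but takes a different route from the paper. You cite duality for real Deligne cohomology as a black box and descend via $G_\br$-invariants; the paper instead derives the duality from scratch by proving an auxiliary lemma (the map $H^i(\X(\bc),\br(n))\to H^i(\X(\bc),\bc)/F^n$ is injective for $i\le 2n-1$ and surjective for $i\ge 2n-1$), which yields short exact sequences computing $H^i_\cd(\X_{/\bc},\br(n))$ either as a quotient of $H^{i-1}(\X(\bc),\bc)/F^n$ or as a subspace of $H^i(\X(\bc),\br(n))$, and then dualizes one against the other using ordinary Poincar\'e duality for Betti cohomology. Your route is shorter if the citation is good; the paper's route is self-contained and makes the pairing explicit.

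For part (a), the vanishing arguments coincide with the paper's. For the commutative diagram, however, there is a genuine gap and an unnecessary complication. First, you overlook that $H^{2d}(\X,\br(d))\cong CH^d(\X)_\br=0$ by the Kato--Saito finiteness of $CH^d(\X)$; this kills your ``degree map on the quotient'' piece entirely, so $H^{2d}_c(\X,\br(d))$ is \emph{exactly} the cokernel of the regulator and no Gillet--Soul\'e gluing is needed. Second, you assert as ``well-known'' that the composite $H^{2d-1}(\X,\br(d))\to H^{2d-1}_\cd(\X_{/\br},\br(d))\to\br$ vanishes, but you do not prove it. The paper does prove it: it identifies $H^{2d-1}_\cd(\X_{/\br},\br(d))\cong\br^{S_\infty}$ (where $S_\infty$ is the set of archimedean places of $L=H^0(\X,\co_\X)_\bq$) with trace given by the sum map, then uses the Gersten--Quillen spectral sequence with weights to see that $K_1(\X)^{(d)}$ is generated by $E_2^{d-1,-d}=\ker\bigl(\coprod_{x\in\X^{d-1}}k(x)^\times\to\coprod_{x\in\X^d}\bz\bigr)$, on which the regulator is the Dirichlet regulator composed with norms $N_{k(x)/L}$, hence lands in the trace-zero hyperplane. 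Without this computation (or an equivalent one) your construction of the right vertical map is incomplete.
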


\begin{proof} In this proof we write $H^i_\mathcal D(\X_{/\bc},\br(n))$ for $H^i(\X(\bc),\br(n)_\cd)$. From the long exact sequence
\begin{equation} H^{i-1}(\X(\bc),\bc)/F^d\rightarrow H^i_\mathcal
D(\X_{/\bc},\br(d))\xrightarrow{\alpha_i} H^i(\X(\bc),\br(d))
\notag\end{equation} we find $H^i_\mathcal D(\X_{/\bc},\br(d))=0$ and
hence $H^i_\mathcal D(\X_{/\br},\br(d))=0$ for $i>2d-1$. From the long
exact sequence
\[ H^{i-1}_\mathcal D(\X_{/\br},\br(d)) \rightarrow H^i_c(\X,\br(d))
\rightarrow H^i(\X,\br(d))\] and $H^i(\X,\br(d))\subseteq
K_{2d-i}(\X)_\br^{(d)}=0$ for $i>2d$ we get $H^i_c(\X,\br(d))=0$ for
$i>2d$. For $i=2d$ we find
$H^i(\X,\br(d))\cong\text{CH}^d(\X)_\br=0$, using the finiteness of $\text{CH}^d(\X)$ due to Kato and Saito \cite{ks86}. So $H^{2d}_c(\X,\br(d))$
is the cokernel of the regulator map
\begin{align*} K_1(\X)_\br^{(d)}\rightarrow
H^{2d-1}_\mathcal D(\X_{/\br},\br(d))\cong
&(H^{2d-2}(\X(\bc),\bc)/F^d)^+
\cong &(H^{2d-2}(\X(\bc),\bc))^+\cong\br^{S_\infty}
\end{align*}
where $S_\infty$ is the set of archimedean places of the \'etale
$\bq$-algebra $L:=H^0(\X,\mathcal O_\X)_\bq$ and the last isomorphism
is induced by the cycle classes of points in
$H^{2(d-1)}(\X(\bc),\bc)$. In particular, we get a canonical map to $\br$ given by the sum on
$\br^{S_\infty}$. On the other hand from the
Gersten-Quillen spectral sequence with weights \cite{soule85}[Th.
4(iii))] the group $K_1(\X)^{(d)}$ is generated by
\[ E_2^{d-1,-d}= \text{ker}\left(\coprod_{x\in\X^{d-1}}k(x)^\times \rightarrow \coprod_{x\in\X^{d}}\bz\right)\]
and the regulator is induced by the usual Dirichlet unit regulator
on $L$ composed with $N_{k(x)/L}$ (if $x\in\X$ survives in the
generic fibre of $\X$). So the image of the regulator on
$K_1(\X)^{(d)}$  lies in the subspace $\{(x_v)\in\br^{S_\infty}\vert
\sum_v x_v=0 \}$ and the sum map induces a map form the cokernel $H^{2d}_c(\X,\br(d))$ to $\br$.
\bigskip

For part b) we first prove the following Lemma.

\begin{lem} For any $n\in\bz$ the natural map
\begin{equation}H^i(\X(\bc),\br(n))\to H^i(\X(\bc),\bc)/F^n \label{filmap}\end{equation}
is injective for $i\leq 2n-1$ and surjective for $i\geq 2n-1$.
\end{lem}

\begin{proof} (see also \cite{schneider88}[\S 2] for the injectivity). We can write
\begin{equation} H^i(\X(\bc),\bc)\cong H^i(\X(\bc),\br(n))\oplus H^i(\X(\bc),\br(n-1))\label{taudec}\end{equation} and this is the decomposition of
$H^i(\X(\bc),\bc)$ into the $(-1)^n$ and $(-1)^{n-1}$ eigenspaces
for the involution $\tau$ which is induced by complex conjugation on
the coefficients $\bc$. For $i\leq 2n-1$ we show that $F^n$ contains
no eigenvector for $\tau$. Indeed
\[ F^n\cap\tau F^n=\bigoplus_{n\leq s}H^{s,t}\cap\bigoplus_{n\leq
t}H^{s,t}=\bigoplus_{n\leq s,t}H^{s,t} =0\] for $i=s+t\leq 2n-1$.
For $i\geq 2n-1$ we show
\[ \bigoplus_{s+t=i}H^{s,t} =\bigoplus_{n\leq s}H^{s,t} + \text{any
$\tau$ eigenspace $V^\pm$.}\] Given $s,t$ with $s+t=i\geq 2n-1$ we
have either $n\leq s$ or $n\leq t$, so $F^n=\bigoplus_{n\leq
s'}H^{s',t'}$ contains either $H^{s,t}$ or $H^{t,s}$. We can assume
$s\neq t$ and $F^n$ contains $H^{s,t}$. Since
$\tau(H^{s,t})=H^{t,s}$ we have $V^\pm\cap
(H^{s,t}+H^{t,s})=\{v\pm\tau(v)\vert v\in H^{s,t}\}$ and
$H^{s,t}+H^{t,s}\subseteq H^{s,t}+V$. This finishes the proof of the
lemma.\end{proof}

This Lemma gives short exact sequences
\begin{equation} 0\rightarrow H^{i-1}(\X(\bc),\br(n))\rightarrow
H^{i-1}(\X(\bc),\bc)/F^n\rightarrow H^i_\mathcal
D(\X_{/\bc},\br(n))\rightarrow 0\label{delseq<2n}\end{equation} for $i\leq 2n-1$
and
\begin{equation}0\rightarrow H^i_\mathcal D(\X_{/\bc},\br(n))\rightarrow
H^i(\X(\bc),\br(n))\rightarrow H^i(\X(\bc),\bc)/F^n\rightarrow 0\label{delseq>2n}\end{equation}
for $i\geq 2n$. Taking the $\br$-dual of (\ref{delseq>2n}) and using Poincare
duality we obtain (noting that $\text{dim}(X)=d-1$ and that the
orthogonal complement of $F^n$ is $F^{d-n}$)
\[0\leftarrow H^i_\mathcal D(\X_{/\bc},\br(n))^*\leftarrow
H^{2d-2-i}(\X(\bc),\br(d-1-n))\leftarrow
F^{d-n}H^{2d-2-i}(\X(\bc),\bc)\leftarrow 0.\]
Using (\ref{taudec}) this sequence can also be written as
\[0\leftarrow H^i_\mathcal D(\X_{/\bc},\br(n))^*\leftarrow
H^{2d-2-i}(\X(\bc),\bc)/F^{d-n}\leftarrow
H^{2d-2-i}(\X(\bc),\br(d-n))\leftarrow 0\] and comparing this to
(\ref{delseq<2n}) with $(i,n)$ replaced by $(2d-1-i,d-n)$ we obtain
\begin{equation}
H^i_\mathcal D(\X_{/\bc},\br(n))\cong H^{2d-i-1}_\mathcal
D(\X_{/\bc},\br(d-n))^* .\label{deldual2}\end{equation}
Taking $G_\br$-invariants gives (\ref{deldual}).
\end{proof}
\bigskip

We can now state the main conjecture of this section.

\begin{conj} ${\bf B}(\mathcal{X},n)$ For any $i\in\mathbb{Z}$, the pairing  (\ref{rpairing}) $$H^i_c(\mathcal{X},\mathbb{R}(n))\times H^{2d-i}(\mathcal{X},\mathbb{R}(d-n))\rightarrow H^{2d}_c(\mathcal{X},\mathbb{R}(d))\rightarrow\mathbb{R}$$
is a perfect pairing of finite dimensional $\mathbb{R}$-vector spaces.
\label{ardual}\end{conj}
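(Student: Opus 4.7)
The plan is to reduce Conjecture ${\bf B}(\mathcal{X},n)$ to the classical Beilinson conjectures: Beilinson's regulator conjectures away from the near-central point, non-degeneracy of the Gillet--Soul\'e/Beilinson height pairing at the near-central point, and the expected finiteness and non-triviality of the cycle class map at the central point. Since these classical statements are themselves conjectural, the outcome will be an equivalence rather than an unconditional proof, modeled on the six-term sequence of Fontaine--Perrin-Riou \cite{fpr91} cited in the introduction to this section. The concrete content is to unpack both sides of the pairing along the defining triangle (\ref{rgc}) and match the resulting arrows with the classical regulator, height and cycle-class pairings.

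First I would write out the long exact cohomology sequence of (\ref{rgc}) in weight $n$, and separately the analogous sequence in complementary weight $d-n$ and degree $2d-i$. Taking the $\br$-dual of the latter and applying Lemma \ref{deligneduality} produces a long exact sequence indexed by the same $i$, in which the dual of the regulator at $(2d-i,d-n)$ now appears. Using the compatibility of the pairing (\ref{rpairing}) with the cup product on real Deligne cohomology and with the Beilinson regulator --- a compatibility built into the construction via the $H_{\beil,\X,\br}$-module structure on $\hat H_{\beil,\X,\br}$ in the proof of Proposition \ref{productprop} --- I would construct a map between these two long exact sequences. A diagram chase then reduces the perfectness of the pairing in ${\bf B}(\X,n)$ to: (i) the Beilinson regulator $\rho$ being an isomorphism for $i$ outside the near-central range $\{2n,2n+1\}$; (ii) non-degeneracy of the induced pairing between $\coker(\rho)$ at weight $n$, degree $2n$, and $\ker(\rho)$ at weight $d-n$, degree $2d-2n-1$, which is the expected Gillet--Soul\'e height pairing; and (iii) the standard conjectural statements on the cycle class map at the central point.

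The main obstacle is step (ii), namely identifying the pairing induced by (\ref{rpairing}) with the Gillet--Soul\'e height pairing. Because the product in Proposition \ref{productprop} is constructed at the level of ring and module spectra in $SH(\X)_\br$ rather than by an explicit formula on higher Chow complexes, the cleanest route is to restrict to the smooth generic fibre $\X_\bq$, compare with the pairing on the higher arithmetic Chow groups of Burgos Gil--Feliu \cite{burgosf08} (which is known to refine the height pairing), and use the characterization of the product by its compatibilities with the regulator and with Poincar\'e duality on $\X(\bc)$ to pin it down up to sign. Step (i), at least for $i<n$ and $\X$ smooth over a number ring, is accessible via the theorem of Borel and its generalizations to higher-dimensional smooth projective schemes over number rings; the remaining parts of (i) and (iii) would have to be taken as inputs from the classical Beilinson framework, confirming that ${\bf B}(\X,n)$ is genuinely a repackaging, not a strengthening, of the standard conjectures.
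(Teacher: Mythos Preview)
The statement you are addressing is a \emph{conjecture}, and the paper does not prove it. What the paper does, in the paragraphs immediately following the statement, is explain how ${\bf B}(\X,n)$ repackages the classical Beilinson conjectures. Your proposal is in that same spirit and broadly matches the paper's discussion: use the long exact sequence of (\ref{rgc}) together with the Deligne duality of Lemma \ref{deligneduality} to reduce to (i) the regulator being an isomorphism away from the near-central range and (ii) non-degeneracy of a height pairing at the near-central point.

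A few points where your outline diverges from, or overreaches beyond, the paper. First, the paper organizes the reduction by degree using $K$-theoretic vanishing rather than by building a global map of long exact sequences: for $i<2n$ one has $H^{2d-i}(\X,\br(d-n))\subseteq K_{i-2n}(\X)^{(d-n)}_\br=0$, so ${\bf B}(\X,n)$ becomes the vanishing $H^i_c(\X,\br(n))=0$, which unwinds to the regulator isomorphism (\ref{beil}); the range $i>2n$ dualizes to this via (\ref{deldual}); and the six-term sequence (\ref{central}) handles $i=2n,2n+1$. This is cleaner than your morphism-of-sequences approach, though yours would also work. Second, and more substantively, the paper does \emph{not} identify the pairing (\ref{rpairing}) with the Gillet--Soul\'e or Beilinson height pairing; a remark after the comparison with Arakelov Chow groups explicitly leaves that identification open. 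Your step (ii) therefore over-claims: the reduction is to non-degeneracy of the pairing $h$ induced by (\ref{rpairing}) itself, not to any externally defined height pairing. Third, your appeal to Borel is misplaced: Borel's theorem covers $\Spec(\co_F)$, not arbitrary smooth projective schemes over number rings, and the relevant degree range is $i<2n-1$, not $i<n$. Finally, note that the paper's comparison with the classical formulation over $\bq$ tacitly assumes Conjecture \ref{inj} to pass from $H^i(\X,\br(n))$ to $H^i(X_{/\bz},\br(n))$.
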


\begin{rem} By Lemma \ref{deligneduality} a) one has a morphism of long exact sequences
\[ \xymatrix{
\ar[r]& H^i_c(\mathcal{X},\mathbb{R}(n))\ar[d]\ar[r]
&H^i(\mathcal{X},\mathbb{R}(n))\ar[d]_{}^{}\ar[r]
&H^i_{\mathcal{D}}(\mathcal{X}_{/\mathbb{R}},\mathbb{R}(n))\ar[d]\ar[r]&
\\
\ar[r]&  H^{2d-i}(\mathcal{X},\mathbb{R}(d-n))^*\ar[r]
&H^{2d-i}_c(\mathcal{X},\mathbb{R}(d-n))^*\ar[r]
&H^{2d-1-i}_{\mathcal{D}}(\mathcal{X}_{/\mathbb{R}},\mathbb{R}(d-n))^*\ar[r]&
}
\]
and the right hand vertical maps are isomorphism by Lemma \ref{deligneduality} b). Hence the Five Lemma implies
$${\bf B}(\mathcal{X},n)\Leftrightarrow {\bf B}(\mathcal{X},d-n).$$
\end{rem}

\begin{rem} If $\X\to\Spec(\bF_p)$ is smooth proper over a finite field, then
$$H^i_c(\X,\br(n))=H^i(\X,\br(n))$$
and it is expected that $H^i(\X,\br(n))=0$ for $i\neq 2n$ (Parshin's conjecture).
By definition, there is an isomorphism $H^{2n}(\X,\br(n))\cong CH^n(\X)_\br$ and Conjecture ${\bf B}(\mathcal{X},n)$ reduces to perfectness of the intersection pairing
\[ CH^n(\X)_\br\times CH^{d-n}(\X)_\br\to CH^d(\X)_\br\to\br.\]
This is also a conjecture of Beilinson (numerical and rational equivalence coincide).
\end{rem}

\begin{rem} The conjectures of Beilinson \cite{schneider88} concern an "integral
motivic cohomology" of the generic fibre $X:=\X_\bq$ whereas we
work directly with the arithmetic scheme $\X$. More
precisely, there is a long exact localization sequence
\[\cdots \to\bigoplus_{l}CH^{n-1}(\X_l,2n-i)_\bq\to H^i(\X,\bq(n))\xrightarrow{\rho^i(n)}
H^i(X,\bq(n))\to\cdots \] where $\X_l:=\X\otimes\bF_l$, and
Beilinson's conjectures concern \begin{equation}
H^i(X_{/\bz},\bq(n)):=\im(\rho^i(n)) \label{intr}\end{equation}
which is not naturally the cohomology of any complex but which one can show to be independent of the choice of a regular model. However, one expects
\begin{conj} The map $\rho^i(n)$ is injective for $i\neq 2n.$ \label{inj}\end{conj}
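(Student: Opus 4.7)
The plan is to reduce the injectivity of $\rho^i(n)$ to vanishing of the higher Chow groups of the special fibres via the localization sequence stated just before the conjecture. Since $\ker\rho^i(n)$ is the image of $\bigoplus_{l}CH^{n-1}(\X_l,2n-i)_\bq$, it suffices to prove that each summand $CH^{n-1}(\X_l,2n-i)_\bq$ vanishes whenever $i\neq 2n$. I would then split into cases according to the sign of $2n-i$.

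The case $i>2n$ is formal: Bloch's cycle complex $z^p(-,\bullet)$ lives in non-negative simplicial degrees, so $CH^p(-,q)=0$ for $q<0$ by definition. Thus the kernel vanishes automatically and $\rho^i(n)$ is injective for free once $i>2n$. The substantive range is $0\leq i<2n$, where $2n-i\geq 1$ and the required vanishing is precisely a form of Parshin's conjecture: for $Y$ smooth projective over $\bF_l$, Parshin predicts $K_j(Y)_\bq=0$ for all $j\geq 1$, and under the Adams eigenspace decomposition this amounts to $CH^p(Y,j)_\bq=0$ for every $p\geq 0$ and $j\geq 1$. When $\X$ has good reduction at $l$ this applies directly to $\X_l$. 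At primes of bad reduction the fibre $\X_l$ is in general not smooth (and possibly not reduced), and one would invoke de Jong's theorem on alterations to choose $p\colon\widetilde{Y}\to(\X_l)_{\mathrm{red}}$ of nonzero degree with $\widetilde{Y}$ smooth projective over a finite extension of $\bF_l$. Combining the projection formula $p_*p^*=\deg(p)\cdot\mathrm{id}$ on rational Chow groups with Bloch's localization sequence and induction on the dimension of the singular locus of $\X_l$ would reduce the vanishing to Parshin's conjecture on smooth projective varieties.

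The main obstacle is Parshin's conjecture itself, which is open in essentially all cases beyond curves and a small number of sporadic surfaces; consequently, an unconditional proof of Conjecture \ref{inj} seems far out of reach. A secondary technical difficulty is the bookkeeping at bad primes, where one must manipulate higher Chow groups on non-regular schemes and control the failure of the usual purity statements; this is made manageable by Levine's niveau spectral sequence for Bloch's cycle complex with $\bq$-coefficients, already invoked in the proof of Proposition \ref{productprop}. Net effect: Conjecture \ref{inj} should be regarded as essentially equivalent, modulo technicalities at primes of bad reduction, to Parshin's conjecture applied to all smooth projective alterations of the special fibres of $\X$.
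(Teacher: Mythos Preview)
The statement is a \emph{conjecture}; the paper does not prove it unconditionally, and neither do you --- you correctly diagnose that an unconditional proof would require Parshin's conjecture, which is wide open. So there is no gap in the sense of a failed argument: your analysis that the kernel of $\rho^i(n)$ is controlled by $\bigoplus_l CH^{n-1}(\X_l,2n-i)_\bq$ and that Parshin kills these groups for $i<2n$ is sound, as is the de Jong alteration trick for the bad fibres.

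What you should be aware of is that the paper's own evidence for this conjecture (Proposition~\ref{injprop} and Remark~\ref{injrem}) takes an entirely different route. Rather than attacking the special fibres directly, the paper assumes the finite-generation conjectures $\textbf{L}(\overline{\X}_{et},n)$, $\textbf{L}(\overline{\X}_{et},d-n)$ and Beilinson's conjecture $\textbf{B}(\X,n)$, and then, for $\X$ smooth over a number ring, embeds $H^i(\X,\bq(n))$ into $p$-adic Weil-\'etale cohomology $H^i(\overline{\X},\bq_p(n))$. The latter is identified (via Proposition~\ref{lem-Gammaf-1}) with Bloch--Kato $f$-cohomology $\bigoplus_j H^j_f(F,V^{i-j}_p(n))$, and the map to $H^i(X,\bq_p(n))$ becomes the inclusion $H^j_f \hookrightarrow H^j$, which is injective. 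This yields injectivity for \emph{all} $i$, not just $i\neq 2n$. Remark~\ref{injrem} observes that the argument extends to regular $\X$ under the monodromy weight conjecture and a syntomic description of $p$-adically completed motivic cohomology.

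The two approaches are complementary: yours is closer in spirit to Beilinson's original formulation and explains why the conjecture is natural (it reduces to a well-known hard problem about varieties over finite fields), while the paper's approach sits inside the conjectural framework already assumed throughout and therefore delivers the statement as a \emph{theorem} within that framework, which is what is actually needed in Section~\ref{sec:compatibility}. One minor imprecision: you say the conjecture is ``essentially equivalent'' to Parshin, but you have only argued one direction --- vanishing of the special-fibre Chow groups certainly implies injectivity, but injectivity only asks that the boundary map have trivial image, not that its source vanish.
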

This means that the discussion below also applies to the groups
$$H^i(X_{/\bz},\br(n)):=H^i(X_{/\bz},\bq(n))_\br$$ instead of
$H^i(\X,\br(n))$ provided $i\neq 2n$. We refer to Prop. \ref{injprop} and Remark \ref{injrem} below for some evidence for Conjecture \ref{inj}.
\end{rem}
\bigskip

For the rest of this subsection we assume that $\X$ is flat over $\bz$ and we indicate how Conjecture ${\bf B}(\mathcal{X},n)$ is equivalent to Beilinson's conjectures, tacitly assuming Conjecture \ref{inj}. For $i<2n$ we have
\[ H^{2d-i}(\X,\br(d-n))\subseteq K_{i-2n}(\X)_\br^{(d-n)}=0\]
and so ${\bf B}(\mathcal{X},n)$ is equivalent to
\begin{equation}H^i_c(\X,\br(n))=0\quad \text{for $i<2n$.}\label{van1}\end{equation} From the long
exact sequence
\begin{equation} H^i_c(\X,\br(n))\rightarrow
H^i(\X,\br(n))\rightarrow H^i_\mathcal D(\X_{/\br},\br(n))\rightarrow
H^{i+1}_c(\X,\br(n))\label{les}\end{equation} induced by (\ref{rgc})
this is equivalent to
\begin{equation} H^i(\X,\br(n))\cong H^i_\mathcal D(\X_{/\br},\br(n))
\quad\quad\quad\text{for $i<2n-1$.}\label{beil}\end{equation}
For $n\geq 0$ this is Beilinson's conjecture away from the central and near
central point, including the Beilinson-Soul\'e conjecture for $i<0$. For $n<0$ both sides are zero since then also $i<2n-1<0$.
The central and near central point are accounted for by the exact sequence
\begin{multline} 0 \rightarrow H^{2n-1}(\X,\br(n))\xrightarrow{r^n}
H^{2n-1}_\cd(\X_{/\br},\br(n))\xrightarrow{z^{d-n,*}}
H^{2d-2n}(\X,\br(d-n))^*\xrightarrow{h}\\H^{2n}(\X,\br(n))
\xrightarrow{z^n} H^{2n}_\mathcal D(\X_{/\br},\br(n))\rightarrow
H^{2d-2n-1}(\X,\br(d-n))^*\rightarrow 0\label{central}\end{multline}
where we have rewritten $H^{2n}_c$ and $H^{2n+1}_c$ in terms of the dual of $H^i$ using Conjecture ${\bf B}(\mathcal{X},n)$ and we also used $H^{2n+1}(\X,\br(n))=0$. For $n<0$ this sequence is exact since all terms are zero (for $$H^{2d-2n-i}(\X,\br(d-n))\cong K_i(\X)_\br^{(d-n)}$$ and $i=0,1$ this follows for dimension reasons from the Gersten-Quillen spectral sequence with weights \cite{soule85}[Th. 4(iii))]). If $n\geq 0$ the exactness of (\ref{central}), i.e. Conjecture ${\bf B}(\mathcal{X},n)$, is equivalent to
nondegeneracy of the height pairing $h$
\[H^{2n}(\X,\br(n))^0\times H^{2d-2n}(\X,\br(d-n))^0 \to \br\]
on the space
\[H^{2n}(\X,\br(n))^0:=\ker(z^n)\]
together with a decomposition
\begin{equation} H^{2n-1}_\cd(\X_{/\br},\br(n))\cong\im(r^n)\oplus\im(z^{d-n})^*\cong H^{2n-1}(\X,\br(n)) \oplus\im(z^{d-n})^*.\label{nearcentral}\end{equation}
This decomposition is Beilinson's conjecture for the near central point if one assumes the standard conjecture "numerical equals homological equivalence" for the generic fibre $X=\X_\bq$ in which case the dual of
\[ \im(z^{d-n})=CH^{d-n}(\X)_\br/\text{hom}\cong CH^{d-n}(X)_\br/\text{hom}\]
can be computed as
\begin{equation}\im(z^{d-n})^*\cong CH^{d-1-(d-n)}(X)_\br/\text{hom}\cong CH^{n-1}(X)_\br/\text{hom}.
\notag\end{equation}
Beilinson's conjecture at the central point asserts non-degeneracy of the Bloch-Beilinson height pairing on the space
\[ CH^n(X)_\br^0\cong H^{2n}(X,\br(n))^0\cong H^{2n}(X_{/\bz},\br(n))^0\]
of homologically trivial cycles on the generic fibre. Beilinson also conjectures that there is a commutative diagram of pairings
\[\begin{CD}H^{2n}(\X,\br(n))^{00}\times H^{2d-2n}(\X,\br(d-n))^{00} @>>> \br\\
@VVV \Vert@.\\
 H^{2n}(X_{/\bz},\br(n))^0\times H^{2d-2n}(X_{/\bz},\br(d-n))^0 @>>> \br \end{CD}\]
with surjective vertical maps where
\[H^{2n}(\X,\br(n))^{00}\subseteq H^{2n}(\X,\br(n))^{0}\]
is the subgroup of classes homologically trivial on all fibres $\X_{\bF_p}$. This implies that the Bloch-Beilinson height pairing can be defined in terms of the pairing $h$ on $\X$ but we have not tried to investigate whether nondegeneracy of $h$ is equivalent to nondegeneracy of the Bloch-Beilinson pairing.
Finally, in the index range $i>2n$ we have
\[ H^i(\X,\br(n))\subseteq K_{2n-i}(\X)_\br^{(n)}=0\]
and therefore
\[ H^i_\mathcal D(\X_{/\br},\br(n))\cong H^{i+1}_c(\X,\br(n)).\]
Using the duality (\ref{deldual}) we see that ${\bf B}(\mathcal{X},n)$ is equivalent to Beilinson's conjecture (\ref{beil}) with $(i,n)$ replaced by $(2d-1-i,d-n)$, i.e. the second map in the sequence
\[ H^i_\mathcal D(\X_{/\br},\br(n))\cong H^{i+1}_c(\X,\br(n))\cong H^{2d-i-1}(\X,\br(d-n))^*\cong
H^{2d-1-i}_\mathcal D(\X_{/\br},\br(d-n))^*\]
is an isomorphism if and only if the third is.

\subsection{Motivic cohomology of the Artin-Verdier compactification}\label{sec:av}

For any $n\in\bz$ we shall now extend the exact triangle (\ref{rgc}) to a
diagram in the derived category of $\br$-vector
spaces\begin{equation}\begin{CD} R\Gamma_c(\X,\br(n)) @>>>
R\Gamma(\X,\br(n)) @>>>
R\Gamma_\cd(\X_{/\br},\br(n)) @>>>\\
\Vert@. @AAA @AAA\\
R\Gamma_c(\X,\br(n)) @>>> R\Gamma(\overline{\X},\br(n)) @>>>
R\Gamma(\X_\infty,\br(n)) @>>>\\
@. @AAA @AA 0 A\\ {} @. R\Gamma_{\X_\infty}(\overline{\X},\br(n)) @=
R\Gamma_{\X_\infty}(\overline{\X},\br(n)) @.{}
\end{CD}\label{rgc2}\end{equation} with exact rows and columns. Here one should think of $\overline{\X}$ as the
"Artin-Verdier" or "Arakelov" compactification of $\X$ and $\X_\infty$ as the "fibre at infinity".  We choose a splitting $\sigma$ of the inclusion
\begin{equation}\tau^{\leq{2n-1}}R\Gamma_\cd(\X_{/\br},\br(n))\to R\Gamma_\cd(\X_{/\br},\br(n))
\notag\end{equation}
and define $R\Gamma(\overline{\X},\br(n))$ as the mapping fibre of the Beilinson regulator composed with $\sigma$
\begin{equation} R\Gamma(\X,\br(n))\to R\Gamma_\cd(\X_{/\br},\br(n)) \xrightarrow{\sigma} \tau^{\leq{2n-1}}R\Gamma_\cd(\X_{/\br},\br(n)).\label{regsigma}\end{equation}
We then set
\[R\Gamma_{\X_\infty}(\overline{\X},\br(n))[1]:=\tau^{\leq{2n-1}}R\Gamma_\cd(\X_{/\br},\br(n))\]
and
\begin{equation}R\Gamma(\X_\infty,\br(n)):=\tau^{\geq
2n}R\Gamma_\cd(\X_{/\br},\br(n)).
\label{rgxinftydef}
\end{equation}

With these definitions it is clear that the diagram (\ref{rgc2}) has exact rows and columns and that the right hand column is a split exact triangle, i.e. the morphism $0$ is indeed the zero map. The middle horizontal and the middle vertical triangle are analogous to localization triangles in sheaf theory. The right vertical triangle only becomes a "local" localization triangle in the presence of an isomorphism between $R\Gamma(\X_\infty,\br(n))$ and the cohomology of a suitable "tubular neighborhood" of $\X_\infty$.

\begin{prop} Conjecture ${\bf B}(\mathcal{X},n)$ implies that there is a perfect pairing
\begin{equation} H^i(\overline{\X},\br(n))\times
H^{2d-i}(\overline{\X},\br(d-n))\rightarrow
H^{2d}(\overline{\X},\br(d))\to\br\label{dual2}\end{equation} of finite dimensional $\br$-vector spaces
for all $i$ and $n$ and that $H^i(\overline{\X},\br(n))=0$ for $i\neq
2n$.
\label{xbardual}\end{prop}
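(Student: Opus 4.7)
The plan is to split the proof into two stages: first establishing the concentration of $R\Gamma(\overline{\X},\br(n))$ in degree $2n$, and then exhibiting a duality at the level of complexes.

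For the vanishing, I would feed the middle row of (\ref{rgc2}) into cohomology to get
\[
\cdots\to H^{i-1}(\X_\infty,\br(n))\to H^i_c(\X,\br(n))\to H^i(\overline{\X},\br(n))\to H^i(\X_\infty,\br(n))\to H^{i+1}_c(\X,\br(n))\to\cdots .
\]
For $i<2n$, (\ref{rgxinftydef}) gives $H^i(\X_\infty,\br(n))=0$ and the discussion (\ref{van1}) shows that Conjecture ${\bf B}(\X,n)$ forces $H^i_c(\X,\br(n))=0$, so $H^i(\overline{\X},\br(n))=0$. For $i>2n$ I would use $H^i(\X,\br(n))\subseteq K_{2n-i}(\X)_\br^{(n)}=0$: inserting this in the triangle (\ref{rgc}) shows that the connecting map $H^j(\X_\infty,\br(n))\to H^{j+1}_c(\X,\br(n))$ is an isomorphism for all $j>2n$ and is the surjection $H^{2n}_\cd(\X_{/\br},\br(n))\twoheadrightarrow\coker(z^n)$ for $j=2n$. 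Both of these squeeze $H^i(\overline{\X},\br(n))$ to zero for $i>2n$, leaving at $i=2n$ the short exact sequence
\[ 0\to H^{2n}_c(\X,\br(n))\to H^{2n}(\overline{\X},\br(n))\to \im(z^n)\to 0.\]

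For the duality, write $(-)^\vee:=R\Hom_\br(-,\br)$. Conjecture ${\bf B}(\X,n)$ is exactly the statement $R\Gamma_c(\X,\br(n))\cong R\Gamma(\X,\br(d-n))^\vee[-2d]$, and Lemma \ref{deligneduality}(b) similarly gives $R\Gamma_\cd(\X_{/\br},\br(n))\cong R\Gamma_\cd(\X_{/\br},\br(d-n))^\vee[1-2d]$. The latter duality interchanges the truncations $\tau^{\leq 2n-1}$ and $\tau^{\geq 2(d-n)}$ of Deligne cohomology, and hence, via the definitions of $R\Gamma_{\X_\infty}(\overline{\X},\br(n))$ and $R\Gamma(\X_\infty,\br(n))$, yields
\[ R\Gamma_{\X_\infty}(\overline{\X},\br(d-n))\cong R\Gamma(\X_\infty,\br(n))^\vee[-2d].\]
Applying $(-)^\vee[-2d]$ to the middle row of (\ref{rgc2}) for $n$ and substituting the two identifications above produces an exact triangle
\[ R\Gamma_{\X_\infty}(\overline{\X},\br(d-n))\to R\Gamma(\overline{\X},\br(n))^\vee[-2d]\to R\Gamma(\X,\br(d-n))\to,\]
which has the same outer terms as the middle column of (\ref{rgc2}) for $d-n$. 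Matching the two triangles gives $R\Gamma(\overline{\X},\br(d-n))\cong R\Gamma(\overline{\X},\br(n))^\vee[-2d]$; passing to cohomology delivers the perfect pairing. The identification $H^{2d}(\overline{\X},\br(d))\cong\br$ comes from applying the vanishing statement above with $n=d$: $H^{2d}(\X,\br(d))=\CH^d(\X)_\br=0$ by Kato--Saito, so $\im(z^d)=0$ and $H^{2d}(\overline{\X},\br(d))=H^{2d}_c(\X,\br(d))\cong\br$ by Lemma \ref{deligneduality}(a).

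The main obstacle will be verifying that the dualised row really is isomorphic \emph{as a triangle} to the $(d-n)$-column, i.e.\ that the two connecting maps agree under the identifications. This reduces to compatibility of the duality from ${\bf B}(\X,n)$ with the duality from Lemma \ref{deligneduality}(b) along the Beilinson regulator $R\Gamma(\X,\br(n))\to R\Gamma_\cd(\X_{/\br},\br(n))$, which in the setup of Prop.~\ref{productprop} is the statement that the $H_{\beil,\X,\br}$-module structure on $\hat{H}_{\beil,\X,\br}$ is compatible with the $H_\cd$-module structure under the map $\rho$ of ring spectra; this is built into the construction of \cite{holmstrom-scholbach15}. As a concrete fallback, since both sides are concentrated in a single degree, I would instead use the product (\ref{rpairing}) to write down the pairing $H^{2n}(\overline{\X},\br(n))\otimes H^{2d-2n}(\overline{\X},\br(d-n))\to\br$ and verify nondegeneracy group by group using the short exact sequences above together with the Five Lemma; the dimension count
\[ \dim H^{2n}(\overline{\X},\br(n))=\dim H^{2n}(\X,\br(n))+\dim\im(z^{d-n})=\dim H^{2d-2n}(\overline{\X},\br(d-n))\]
derived from (\ref{central}) then upgrades nondegeneracy to perfectness.
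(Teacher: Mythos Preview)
Your vanishing argument is correct and agrees with the paper, though the paper is more direct: from the defining fibre sequence $R\Gamma(\overline{\X},\br(n))\to R\Gamma(\X,\br(n))\to\tau^{\leq 2n-1}R\Gamma_\cd(\X_{/\br},\br(n))$ one reads off immediately that $H^i(\overline{\X},\br(n))\cong H^i_c(\X,\br(n))$ for $i<2n$ and $H^i(\overline{\X},\br(n))\cong H^i(\X,\br(n))$ for $i>2n$, and both of these vanish under ${\bf B}(\X,n)$. Your route via the middle row of (\ref{rgc2}) unpacks the same thing.

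For the duality the paper takes exactly your fallback, and does so in an openly ad hoc way. For $i\neq 2n$ both sides vanish. For $i=2n$ the paper derives, in addition to your sequence $0\to H^{2n}_c(\X,\br(n))\to H^{2n}(\overline{\X},\br(n))\to\im(z^n)\to 0$, a second short exact sequence $0\to\coker(r^n)\to H^{2n}(\overline{\X},\br(n))\to H^{2n}(\X,\br(n))\to 0$ coming from the middle column of (\ref{rgc2}). Dualising the second and using ${\bf B}(\X,n)$ together with $\coker(r^n)^*\cong\im(z^{d-n})$ from (\ref{nearcentral}) matches its outer terms with those of the first sequence for $d-n$; the paper then simply \emph{chooses} an isomorphism of middle terms. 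In a remark immediately following, the authors explicitly say the pairing (\ref{dual2}) is ``constructed in an ad hoc way by choosing an isomorphism of middle terms in exact sequences whose outer terms are isomorphic''. In particular the product (\ref{rpairing}) is not used to write down an actual bilinear map, and no compatibility of the module-spectrum structures along $\rho$ is invoked.

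Your complex-level approach is genuinely different and would, if the compatibility you isolate can be verified, produce a canonical duality $R\Gamma(\overline{\X},\br(d-n))\cong R\Gamma(\overline{\X},\br(n))^\vee[-2d]$ rather than a noncanonical isomorphism in the single nonzero degree. The paper does not pursue this; indeed the same remark expresses the expectation that the Arakelov intersection pairing should realise (\ref{dual2}) but leaves this open.
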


\begin{proof} By definition of $R\Gamma(\overline{\X},\br(n))$ we have an isomorphism
\begin{equation} H^i_c(\X,\br(n))\cong H^i(\overline{\X},\br(n))\label{eq2} \end{equation}
for $i<2n$ and an isomorphism
\begin{equation} H^i(\overline{\X},\br(n))\cong H^i(\X,\br(n))\label{eq1}\end{equation}
for $i>2n$. Since $i<2n$ implies $2d-i>2d-2n=2(d-n)$ the duality (\ref{dual2})
is an immediate consequence of Conjecture ${\bf B}(\mathcal{X},n)$. Actually
both groups are zero in this case by (\ref{van1}). For $i=2n$ we
have a diagram with exact rows and columns
\begin{equation}\begin{CD}\minCDarrowwidth1em{}@.{}@.0@.{}\\@.@.@AAA\\ { }@>\alpha >>H^{2n}_c(\X,\br(n)) @>>>
H^{2n}(\X,\br(n)) @> z^n>>
H^{2n}_\cd(\X_{/\br},\br(n))\\
@.\Vert@. @AAA \Vert@.\\
0 @>>> H^{2n}_c(\X,\br(n)) @>\iota >> H^{2n}(\overline{\X},\br(n)) @>>>
H^{2n}(\X_\infty,\br(n))\\
@. @A\alpha AA @AAA @.\\{}@.  H^{2n-1}_\cd(\X_{/\br},\br(n)) @= H^{2n}_{\X_\infty}(\overline{\X},\br(n))
@.  \\@. @AA r^n A @AAA @.\\
{}@. H^{2n-1}(\X,\br(n)) @= H^{2n-1}(\X,\br(n)) @. {}
\end{CD}\notag\end{equation}
where $z^n$, $r^n$ and $\alpha\cong (z^{d-n})^*$ are the maps in (\ref{central}) and the injectivity of $\iota$ follows by an easy diagram chase. We obtain
exact sequences
\begin{equation} 0\to H^{2n}_c(\X,\br(n))\to
H^{2n}(\overline{\X},\br(n))\to\im(z^n)\to 0\label{cent1}\end{equation}
and \begin{equation} 0\to \text{coker}(r^n) \to
H^{2n}(\overline{\X},\br(n))\to H^{2n}(\X,\br(n))\to
0.\label{cent2}\end{equation}
In view of the isomorphism $\coker(r^n)^*\cong \im(z^{d-n})$ of (\ref{nearcentral}), Conjecture ${\bf B}(\mathcal{X},d-n)$ implies that there is an isomorphism between the dual of (\ref{cent2}) and
(\ref{cent1}) with $n$ replaced by $d-n$.
\end{proof}

\begin{prop} There is an isomorphism
\begin{equation} H^{2n}(\overline{\X},\br(n))\cong CH^n(\overline{\X})_\br\label{gscomparison}\end{equation}
where $CH^n(\bar{X})_\br$ is the Arakelov Chow group with real coefficients defined by Gillet and Soule \cite{gs94}[3.3.3].
\end{prop}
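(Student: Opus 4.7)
\begin{f-proof}[Proof plan.]
The plan is to identify the short exact sequence (\ref{cent2})
\[
0\to \coker(r^n) \to H^{2n}(\overline{\mathcal{X}},\br(n)) \to H^{2n}(\mathcal{X},\br(n)) \to 0
\]
with the defining short exact sequence of the Gillet-Soul\'e Arakelov Chow group with real coefficients. Recall from \cite{gs94}[3.3.3] that $CH^n(\overline{\mathcal{X}})_{\br}$ is built from pairs $(Z,g)$ consisting of a codimension-$n$ cycle $Z$ on $\X$ (with real coefficients) together with a Green current $g$ satisfying $dd^c g + \delta_{Z(\bc)} = \omega$ for some smooth form $\omega$, modulo appropriate rational equivalence and $\partial+\bar\partial$-relations. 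This construction fits into an exact sequence
\[
H^{2n-1}(\mathcal{X},\br(n)) \xrightarrow{r^n} H^{2n-1}_{\cd}(\mathcal{X}_{/\br},\br(n)) \to CH^n(\overline{\mathcal{X}})_{\br} \to CH^n(\mathcal{X})_{\br} \to 0,
\]
where the leftmost map is (the composite with the Beilinson regulator of) the map sending a $K$-theory class to its associated Green current. Comparing this with (\ref{cent2}) and using Bloch's identification $H^{2n}(\mathcal{X},\br(n)) \cong CH^n(\mathcal{X})_{\br}$, the two four-term sequences have isomorphic end terms and isomorphic cokernel-on-the-left term $\coker(r^n)$.

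The main task is therefore to construct a canonical morphism between these exact sequences, and to check commutativity of the resulting diagram so as to apply the Five Lemma. First I would build the map $CH^n(\overline{\mathcal{X}})_{\br}\to H^{2n}(\overline{\mathcal{X}},\br(n))$ by unwinding the definition of $R\Gamma(\overline{\mathcal{X}},\br(n))$ as the mapping fibre of the composition (\ref{regsigma}): a cycle class in $H^{2n}(\mathcal{X},\br(n))$ together with a bounding Green current in $H^{2n-1}_{\cd}(\mathcal{X}_{/\br},\br(n))$ (coming from $g$ via the current-theoretic representative of the real Deligne complex) is precisely the data of a class in the mapping fibre in degree $2n$, since under the splitting $\sigma$ of $\tau^{\leq 2n-1}R\Gamma_\cd \hookrightarrow R\Gamma_\cd$ the cycle class $z^n$ lies in degree $2n$ and therefore needs no further truncation information. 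This is exactly the pair $(Z,g)$ construction of Gillet-Soul\'e.

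The compatibility of this map with the two maps into $CH^n(\mathcal{X})_{\br}\cong H^{2n}(\mathcal{X},\br(n))$ (send $(Z,g)\mapsto [Z]$, resp.\ project out of the mapping fibre) is immediate from the construction. The compatibility on the left-hand cokernels amounts to showing that the Gillet-Soul\'e map $K_1(\mathcal{X})^{(n)}_{\br}\to H^{2n-1}_\cd(\mathcal{X}_{/\br},\br(n))$ agrees with the Beilinson regulator $r^n$, which is the content of Burgos Gil's comparison of the Gillet-Soul\'e regulator with the Beilinson regulator in the current-theoretic model; since the regulator map (\ref{kmsregulator}) used in our construction of $R\Gamma(\overline{\mathcal{X}},\br(n))$ is also defined via currents in the Goncharov/Kerr-Lewis-M\"uller-Stach style, the two agree up to the standard normalizations. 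The Five Lemma then yields the desired isomorphism.

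The main obstacle will be this last compatibility of regulators, since our construction builds on the Goncharov and KMS representatives of real Deligne cohomology by currents while Gillet-Soul\'e's original construction uses their own cycle-theoretic definition of arithmetic characteristic classes. A clean way around the difficulty is to appeal to the uniqueness of the Beilinson regulator on motivic cohomology with $\br$-coefficients up to a nonzero rational factor (or in fact to Burgos Gil's direct comparison \cite{burgos97}), which forces any two constructions to coincide on the level of cohomology with $\br$-coefficients after a uniform normalization. Granting this, the rest of the argument is purely formal.
\end{f-proof}
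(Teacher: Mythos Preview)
Your approach is essentially the same as the paper's: compare the short exact sequence (\ref{cent2}) with the Gillet--Soul\'e exact sequence for the Arakelov Chow group, identify the outer terms, and conclude that the middle terms are isomorphic. The paper cites \cite{gs90}[Thm.~3.5.4] directly for the fact that the Gillet--Soul\'e map $\rho$ agrees with the Beilinson regulator (up to the factor $-2$), rather than routing through Burgos Gil or a uniqueness argument.

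The one notable difference is that you aim for more than the paper actually proves. You want to build an explicit map $CH^n(\overline{\mathcal{X}})_{\br}\to H^{2n}(\overline{\mathcal{X}},\br(n))$ from the $(Z,g)$ data and run the Five Lemma to get a \emph{canonical} isomorphism. The paper does not do this: it simply observes that the two short exact sequences have isomorphic outer terms, so the middle terms are abstractly isomorphic, and leaves it there. Indeed the remark immediately following the proof says explicitly that the isomorphism (\ref{gscomparison}) is constructed ``in an ad hoc way by choosing an isomorphism of middle terms in exact sequences whose outer terms are isomorphic,'' and that making it canonical (and compatible with intersection pairings) is left as an expectation. So your plan is more ambitious; if you carry it out, you prove a sharper statement than the paper claims. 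The paper also spends some effort on the identification $H^{n-1,n-1}(\mathcal{X}_\br)\cong H^{2n-1}_{\mathcal D}(\mathcal{X}_{/\br},\br(n))$ via harmonic forms and the Hodge decomposition, which you implicitly assume.
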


\begin{proof} One first checks that the proof of the exactness of the sequence
\begin{equation} CH^{n,n-1}(\X)\xrightarrow{\rho} H^{n-1,n-1}(\X_{\br})\xrightarrow{a} CH^n(\overline{\X})\to CH^n(\X)\to 0\label{gssequence}\end{equation}
in \cite{gs90}[Thm. 5.1.2] equally works for (Arakelov) Chow groups made from cycles with real coefficients to give an exact sequence
\[ CH^{n,n-1}(\X)_\br\xrightarrow{\rho} H^{n-1,n-1}(\X_{/\br})\xrightarrow{a} CH^n(\overline{\X})_\br\to CH^n(\X)_\br\to 0.\]
Here
\[ CH^n(X)_\br:=CH^n(\X)\otimes_\bz\br\cong K_0(\X)_\br^{(n)}\cong H^{2n}(\X,\br(n))\]
and
\[CH^{n,n-1}(\X)_\br:=CH^{n,n-1}(\X)\otimes_\bz\br\cong K_1(\X)_\br^{(n)}\cong H^{2n-1}(\X,\br(n)) \]
by the Gersten-Quillen spectral sequence with weights \cite{soule85}[Th. 4(iii))],
and $H^{p,p}(\X_{\br})$ is the space of real differential $(p,p)$ forms $\eta$ on $\X(\bc)$ which are harmonic for the choice of a K\"ahler metric $\omega$ on $\X(\bc)$ and satisfy $F^*_\infty\eta=(-1)^p\eta$ where $F_\infty$ is complex conjugation on $\X(\bc)$.
It is remarked in \cite{gs90}[3.5.3 1)] that
\[H^{n-1,n-1}(\X_{\br})\cong H^{2n-1}_\cd(\X_{/\br},\br(n))\]
and one can see this as follows. One has the exact sequence (\ref{delseq<2n}) for $i=2n-1$
$$ 0\to H^{2n-2}(\X(\bc),\br(n))\to H^{2n-2}(\X(\bc),\bc)/F^n\to H^{2n-1}_\cd(\X_{/\bc},\br(n))\to 0$$
and
$$H^{2n-2}(\X(\bc),\bc)/F^n\cong H^{n-1,n-1}\oplus \bigoplus_{p<n-1}H^{p,2n-2-p}.$$
Denoting by $\tau$ the complex conjugation on coefficients, given $x\in H^{p,2n-2-p}$ with $p<n-1$, we have $x+(-1)^n\tau(x)\in H^{2n-2}(\X(\bc),\br(n))$ and $\tau(x)\in F^n$.  So
$$ H^{2n-2}(\X(\bc),\bc)/F^n = H^{2n-2}(\X(\bc),\br(n)) + H^{n-1,n-1}$$
whereas
$$H^{2n-2}(\X(\bc),\br(n)) \cap H^{n-1,n-1}$$
consists of harmonic $(n-1,n-1)$-forms in the $(-1)^n$eigenspace of $\tau$. So
$$H^{2n-1}_\cd(\X_{/\bc},\br(n))\cong H^{n-1,n-1}/
(H^{2n-2}(\X(\bc),\br(n)) \cap H^{n-1,n-1})$$
consists of harmonic $(n-1,n-1)$-forms in the $(-1)^{n-1}$-eigenspace of $\tau$ and
$$H^{2n-1}_\cd(\X_{/\br},\br(n))=H^{2n-1}_\cd(\X_{/\bc},\br(n))^{G_\br}=H^{n-1,n-1}(\X_{\br})$$
consists of forms $\eta$ satisfying $(F_\infty^*\otimes\tau)\eta=\eta$ and $\tau\eta=(-1)^{n-1}\eta$, i.e. real forms satisfying $F^*_\infty\eta=(-1)^{n-1}\eta$.

It is proved in \cite{gs90}[Thm. 3.5.4] that the map $\rho$ coincides with the Beilinson regulator (up to a constant factor $-2$), hence we obtain an exact sequence
\[ 0\to\coker(\rho)\to CH^n(\overline{\X})_\br\to CH^n(\X)_\br\to 0\]
whose outer terms are isomorphic to the outer terms of (\ref{cent2}), hence there exists an isomorphism on the middle terms.
\end{proof}

\begin{rem} We have "constructed" the pairing (\ref{dual2}) and the isomorphism (\ref{gscomparison}) in an ad hoc way by choosing an isomorphism of middle terms in exact sequences whose outer terms are isomorphic. We expect that there is an isomorphism (\ref{gscomparison}) so that the pairing (\ref{dual2}) is the Arakelov intersection pairing of \cite{gs90}[5.1.4]. By \cite{kuennemann94}[Eq. (18)] the space $H^i_c(\X,\br(n))$ is orthogonal to $\coker(r^{d-n})$ under the Arakelov intersection pairing, and it remains to show that the induced pairing coincides with the pairing (\ref{rpairing}). Assuming finite-dimensionality of $H^{2n}(\X,\br(n))$, the non-degeneracy of the Arakelov intersection pairing is a consequence of the standard conjectures for Arakelov Chow groups \cite{kuennemann95}[Prop. 3.1].
\end{rem}

\begin{rem} The definition of Arakelov Chow groups depends on the choice of a K\"ahler metric on $\X(\bc)$ even though any two choices yield isomorphic groups \cite{gs90}[Thm. 5.1.6]. Recall that the Deligne complex $R\Gamma_\cd(\X_{/\bc},\br(n))$ has a representative which in degrees $2n-1$ and $2n$  looks like \cite{burgos97}[Thm. 2.6]
\[ \cdots \to\cd_\br^{n-1,n-1}(n-1) \xrightarrow{(2\pi i)dd^c} \cd_\br^{n,n}(n)\to\cdots\]
where $\cd_\br^{p,q}(n)$ is the space of $(p,q)$-currents on $\X(\bc)$ tensored by $\br(n)$. A choice of K\"ahler metric also yields a harmonic projection \cite{gs90}[5.1.1]
\[ H: \cd_\br^{n-1,n-1}(n-1)\to H^{n-1,n-1}(\X)(n-1)=\ker(dd^c) \]
and hence a splitting $\sigma$ of the Deligne complex as above.
\end{rem}

\section{Weil-\'etale cohomology of proper regular schemes}\label{sec:weiletale}
Throughout this section, $\mathcal{X}$ denotes a regular scheme of pure dimension $d$, proper over $\mathrm{Spec}(\mathbb{Z})$, and satisfying Conjectures ${\bf L}(\overline{\mathcal{X}}_{et},n)$, ${\bf L}(\overline{\mathcal{X}}_{et},d-n)$ and ${\bf AV}(\overline{\mathcal{X}}_{et},n)$ stated in Section \ref{section-assumptions}.

\subsection{Notations}\label{sect-emc}
For any $n\geq0$, we consider Bloch's cycle complex $$\mathbb{Z}(n):=z^n(-,2n-*)$$ as a complex of sheaves
on the small \'etale topos $\mathcal{X}_{et}$ of the scheme $\mathcal{X}$ (see \cite{Levine01}, \cite{Levine99}, \cite{Geisser04a} and Section \ref{sectAVD} for more details). We write
$\mathbb{Z}/m\mathbb{Z}(n):=\mathbb{Z}(n)\otimes^L\mathbb{Z}/m\mathbb{Z}$
and
$\mathbb{Q}/\mathbb{Z}(n):=\underrightarrow{ \mathrm{lim}}\,\mathbb{Z}/m\mathbb{Z}(n)$. For $n<0$,
we have $\mathbb{Q}(n)=0$ hence
$\mathbb{Z}(n)=\mathbb{Q}/\mathbb{Z}(n)[-1]$. Proper base change and the projective bundle formula suggest
$\mathbb{Z}/p^{r}\mathbb{Z}(n)\simeq j_{p,!}(\mu_{p^{r}}^{\otimes n})$,
where $j_p$ is the open immersion
$j_p:\mathcal{X}[1/p]\rightarrow \mathcal{X}$,
 $j_{p,!}$ is the extension by zero functor and $\mu_{p^{r}}$ is the \'etale sheaf of $p^r$-th roots of unity. This leads to the following definition. For $n<0$ we define the complex $\mathbb{Z}(n)$ on $\mathcal{X}_{et}$ as follows (see also \cite{Geisser04b}):
$$\mathbb{Z}(n):=\bigoplus_{p}j_{p,!}(\mu_{p^{\infty}}^{\otimes n})[-1].$$

The  complexes $\mathbb{Z}(n)^{\overline{\mathcal{X}}}$ and $R\widehat{\phi}_!\mathbb{Z}(n)$ over the Artin-Verdier \'etale topos $\overline{\mathcal{X}}_{et}$ are defined in Appendix A. Recall that there is a  canonical open embedding  $\phi:\mathcal{X}_{et}\rightarrow \overline{\mathcal{X}}_{et}$, where $\mathcal{X}_{et}$ is the usual small \'etale topos of the scheme $\mathcal{X}$. We simply denote by $R\Gamma(\overline{\mathcal{X}}_{et},\mathbb{Z}(n))$ the hypercohomology of the complex $\mathbb{Z}(n)^{\overline{\mathcal{X}}}$ over $\overline{\mathcal{X}}_{et}$. If $\mathcal{X}(\mathbb{R})=\emptyset$, or if one is willing to ignore $2$-torsion issues, one has quasi-isomorphisms  (see Proposition \ref{prop-comp})
$$R\widehat{\phi}_!\mathbb{Z}(n)\stackrel{\sim}{\rightarrow} \mathbb{Z}(n)^{\overline{\mathcal{X}}}\stackrel{\sim}{\rightarrow} R\phi_*\mathbb{Z}(n),$$ hence one may simply define $$R\Gamma(\overline{\mathcal{X}}_{et},\mathbb{Z}(n)):= R\Gamma(\mathcal{X}_{et},\mathbb{Z}(n))$$ where $R\Gamma(\mathcal{X}_{et},\mathbb{Z}(n))$ denotes the hypercohomology of the complex $\mathbb{Z}(n)$ of sheaves over $\mathcal{X}_{et}$. We also denote
$$R\Gamma(\overline{\mathcal{X}}_{et},\widehat{\mathbb{Z}}(n)):=\mathrm{holim} \,R\Gamma(\overline{\mathcal{X}}_{et},\mathbb{Z}/m\mathbb{Z}(n)).$$

If $A$ is an abelian group, we denote by $A_{tor}$ (resp. $A_{div}$) its maximal torsion (resp. divisible) subgroup, and by  $A_{cotor}$ (resp. $A_{codiv}$)  the cokernel of the inclusion $A_{tor}\rightarrow A$ (resp. $A_{div}\rightarrow A$). We denote by $_{n}A$ (resp. $A_n$) the kernel (resp. the cokernel) of the multiplication map  $n:A\rightarrow A$, and by $TA:=\underleftarrow{ \mathrm{lim}}\, _{n}A$ the Tate module of $A$. If $A$ is torsion or profinite, $A^D$ denotes its Pontryagin dual. We say that $A$ is of \emph{cofinite type} if $A$ is of the form $\mathrm{Hom}_{\mathbb{Z}}(B,\mathbb{Q}/\mathbb{Z})$ where $B$ is finitely generated. We denote by $\mathcal{D}$ the derived category of abelian group. More generally, if $T$ is a topos, we denote by $\mathcal{D}(T)$ the derived category of abelian sheaves on $T$. If $C$ is an object of $\mathcal{D}(T)$ then we denote by $C_{\leq n}$ or by $\tau^{\leq n}C$ the good truncation of $C$ in degrees $\leq n$.

\subsection{Assumptions}\label{section-assumptions} The definition of Weil-\'etale cohomology requires the following conjectures.
\begin{conj}\label{etaleduality-modn} ${\bf AV}(\overline{\mathcal{X}}_{et},n)$ There are compatible product maps $\mathbb{Z}(n)^{\overline{\mathcal{X}}}\otimes^L \mathbb{Z}(d-n)^{\overline{\mathcal{X}}}\rightarrow \mathbb{Z}(d)^{\overline{\mathcal{X}}}$
and $R\widehat{\phi}_!\mathbb{Z}(n)\otimes^L R\phi_*\mathbb{Z}(d-n)\rightarrow \mathbb{Z}(d)^{\overline{\mathcal{X}}}$
 in $\mathcal{D}(\overline{\mathcal{X}}_{et})$
inducing a perfect pairing of finite groups
$$H^i(\overline{\mathcal{X}}_{et},\mathbb{Z}/m\mathbb{Z}(n))\times H^{2d+1-i}(\overline{\mathcal{X}}_{et},\mathbb{Z}/m\mathbb{Z}(d-n))\rightarrow
H^{2d+1}(\overline{\mathcal{X}}_{et},\mathbb{Z}/m\mathbb{Z}(d))\rightarrow \mathbb{Q}/\mathbb{Z}.$$
for any $i\in\mathbb{Z}$ and any $m>0$.
\end{conj}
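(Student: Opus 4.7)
The approach has three parts: construct the product maps on $\overline{\mathcal{X}}_{et}$, verify perfectness of the induced pairing prime-by-prime after reducing to $m=\ell^r$ coefficients, and handle the $2$-primary archimedean subtleties separately.

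First, the product maps. The complexes $\mathbb{Z}(n)^{\overline{\mathcal{X}}}$ and $R\widehat{\phi}_!\mathbb{Z}(n)$ are built in Appendix A by gluing Bloch's cycle complex on $\mathcal{X}_{et}$ to $G_\mathbb{R}$-equivariant pieces supported at the archimedean fibre. Bloch's cycle complex carries a product $\mathbb{Z}(n)\otimes^L\mathbb{Z}(d-n)\to\mathbb{Z}(d)$ — available either through Levine's construction or through the ring spectrum $H_{\mathbf{B},\mathcal{X}}$ invoked in the proof of Proposition \ref{productprop} — so what is needed is compatibility with the gluing. The archimedean input reduces to Tate cohomology of $G_\mathbb{R}$ with values in $(2\pi i)^\bullet\mathbb{Z}$, and the relevant product there is the evident cup product induced by $(2\pi i)^n\mathbb{Z}\otimes(2\pi i)^{d-n}\mathbb{Z}\to(2\pi i)^d\mathbb{Z}$; a diagram check on the gluing triangle then propagates the product to the two compactified complexes.

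Second, for perfectness I would reduce to $m=\ell^r$ and distinguish two regimes. For $\ell$ invertible on $\mathcal{X}$, the Beilinson-Lichtenbaum theorem (Voevodsky-Rost) identifies $\mathbb{Z}/\ell^r(n)$ with $\tau^{\leq n}Rj_{\ell,*}\mu_{\ell^r}^{\otimes n}$ on $\mathcal{X}_{et}$, and classical Artin-Verdier duality on $\overline{\mathcal{X}}[1/\ell]_{et}$ — pairing $\mu_{\ell^r}^{\otimes n}$ with $\mu_{\ell^r}^{\otimes(d-n)}$ into $\mathbb{Z}/\ell^r$ in degree $2d+1$ — yields the pairing, the complementary truncations $\tau^{\leq n}$ and $\tau^{\leq d-n}$ together covering the full range so that the pairing on the glued complexes is perfect in every degree. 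For $\ell=p$ a residue characteristic, one invokes the comparison of $\mathbb{Z}/p^r(n)$ with (logarithmic) syntomic cohomology — established by Geisser \cite{Geisser04a} for smooth $\mathcal{X}$ with $n<p-1$, and extended via work of Sato, Nizio{\l}, and Colmez-Nizio{\l} — and deduces the pairing from Poincar\'e duality for crystalline or de Rham-Witt cohomology on the syntomic side.

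The main obstacle is precisely this $p$-adic case: for general regular $\mathcal{X}$ with wild ramification or non-semistable reduction at $p$, the necessary motivic-to-syntomic comparison and the matching syntomic duality are not available in the required generality in the literature, which is why Appendix A is only able to establish ${\bf AV}(\overline{\mathcal{X}}_{et},n)$ in many cases rather than unconditionally. The $2$-primary archimedean contribution is the other delicate point, handled by arranging the gluing data in the definitions of $\mathbb{Z}(n)^{\overline{\mathcal{X}}}$ and $R\widehat{\phi}_!\mathbb{Z}(n)$ so that the $G_\mathbb{R}$-Tate cohomology pieces at the real places of $\mathcal{X}_\infty$ contribute exactly what is needed to close up the duality — the distinction between the two complexes is precisely what absorbs the $2$-torsion discrepancy — and verifying this amounts to an explicit $G_\mathbb{R}$-equivariant cup product computation with coefficients $(2\pi i)^\bullet\mathbb{Z}/\ell^r$.
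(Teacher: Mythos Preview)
The statement you are ``proving'' is a \emph{conjecture} in the paper, not a theorem, and the paper does not supply a proof in general. You are aware of this --- you note that Appendix A only establishes it in restricted settings --- so your proposal is really a sketch of the strategy for the known cases rather than a proof of the conjecture. That is the appropriate attitude, but it should be stated up front rather than buried in the middle.

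For the cases the paper does handle, your outline is broadly aligned with the paper's approach but organized differently. The paper factors the problem cleanly in two: Theorem~\ref{thm-barX} reduces $\mathbf{AV}(\overline{\mathcal{X}}_{et},n)$ to the Milne-style compact-support duality $\mathbf{AV}(\mathcal{X},n)$ on the open scheme plus an archimedean correction term; the archimedean piece is dispatched in Lemma~\ref{lemforduality}, and the Milne-style input is then supplied case by case via Geisser~\cite{Geisser10} (for $n\leq 0$) and Sato~\cite{Sato07} (for smooth $\mathcal{X}$ over a number ring). You instead go prime-by-prime directly, invoking Beilinson--Lichtenbaum for $\ell$ invertible and syntomic comparison at residue characteristics. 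These amount to the same ingredients --- the paper's $\mathbf{AV}(\mathcal{X},n)$ is proved in the known cases by exactly the Geisser and Sato inputs you cite --- but the paper's factorization through the Milne hypothesis is cleaner because it isolates the genuinely open part of the problem.

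One point you underplay: the archimedean contribution is not merely ``an explicit $G_{\mathbb{R}}$-equivariant cup product computation.'' Lemma~\ref{lemforduality} requires Poincar\'e duality for the $(d-1)$-dimensional real manifold $\mathcal{X}(\mathbb{R})$ with $\mathbb{Z}/2\mathbb{Z}$-coefficients (valid even though $\mathcal{X}(\mathbb{R})$ may be non-orientable), and this is what makes the truncated Tate pieces $\tau^{\leq n}R\widehat{\pi}_*(2\pi i)^n\mathbb{Z}$ and $\tau^{>d-n}R\widehat{\pi}_*(2\pi i)^{d-n}\mathbb{Z}$ dual to one another in the right degrees. Without naming that manifold-theoretic input, your archimedean step is incomplete.
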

Conjecture ${\bf AV}(\overline{\mathcal{X}}_{et},n)$ holds in the following cases:
\begin{itemize}
\item for any $n$ and $\mathcal{X}$ smooth over a finite field;
\item for any $n\leq 0$ or $n\geq d$ and $\mathcal{X}$ regular;
\item for any $n$ and  $\mathcal{X}$ smooth over a number ring.
\end{itemize}
Indeed, the second case (respectively the third) is Corollary \ref{corAVn=0} (respectively Corollary \ref{cor-AVsmooth}). Assume that $\mathcal{X}$ is smooth over a finite field of characteristic $p$. The result for $m$ prime to $p$ is well known. For $m=p^r$, it follows from $\mathbb{Z}/p^r\mathbb{Z}(n)\simeq \nu^n_r[-n]$ (see \cite{Geisser-Levine-00} Theorem 8.5) and from (\cite{Milne86} Theorem 1.14).

\begin{conj}${\bf L}(\overline{\mathcal{X}}_{et},n)$
The group $H^i(\overline{\mathcal{X}}_{et},\mathbb{Z}(n))$ is finitely generated for $i\leq 2n+1$ and vanishes for $i<<0$.
\end{conj}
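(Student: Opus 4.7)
The plan is to split $H^i(\overline{\mathcal{X}}_{et},\mathbb{Z}(n))$ into torsion and torsion-free parts and, in both cases, to first reduce to a question on $\mathcal{X}_{et}$ using the comparison $\mathbb{Z}(n)^{\overline{\mathcal{X}}} \simeq R\phi_*\mathbb{Z}(n)$ (proven in Appendix A away from $2$-torsion at the archimedean places). The discrepancy between $R\Gamma(\overline{\mathcal{X}}_{et},\mathbb{Z}(n))$ and $R\Gamma(\mathcal{X}_{et},\mathbb{Z}(n))$ is concentrated in $R\Gamma(\mathcal{X}_\infty,\mathbb{Z}(n))$-type terms, which are the cohomology of a finite $G_\br$-CW complex with coefficients in a bounded complex of finitely generated abelian groups, hence automatically finitely generated.

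For the torsion-free part, the plan is to invoke the Levine-Geisser identification
\[ H^i(\mathcal{X}_{et},\mathbb{Z}(n))_\bq \;\cong\; H^i(\mathcal{X}_{Zar},\mathbb{Z}(n))_\bq \;\cong\; K_{2n-i}(\mathcal{X})^{(n)}_\bq \]
already used in the proof of Proposition \ref{productprop}. Finite dimensionality in the stated range $i \leq 2n+1$ thus becomes finite dimensionality of $K_{2n-i}(\mathcal{X})^{(n)}_\bq$ for $2n-i \geq -1$, which is (a rational form of) the Bass conjecture for the regular proper scheme $\mathcal{X}$, together with Beilinson-Soulé vanishing when $i < 0$; both are known for $\mathcal{X}$ smooth proper over a finite field in low K-degree, and for $\mathcal{X}=\Spec(\mathcal{O}_F)$ by Borel-Quillen.

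For the torsion part, the plan is to use the Bockstein-type inclusion
\[ H^i(\mathcal{X}_{et},\mathbb{Z}(n))_{tor} \;\hookrightarrow\; H^{i-1}(\mathcal{X}_{et},\mathbb{Q}/\mathbb{Z}(n)) \]
together with the Beilinson-Lichtenbaum identification (Bloch-Kato, theorem of Rost-Voevodsky) of the right-hand side with truncated étale cohomology in the pertinent range, and then to apply Conjecture ${\bf AV}(\overline{\mathcal{X}}_{et},n)$, which for each $m$ pairs the group $H^{i-1}(\overline{\mathcal{X}}_{et},\mathbb{Z}/m\mathbb{Z}(n))$ perfectly with the finite group $H^{2d+2-i}(\overline{\mathcal{X}}_{et},\mathbb{Z}/m\mathbb{Z}(d-n))$. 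Taking the direct limit in $m$, finiteness of the full torsion group reduces to the assertion that only finitely many primes $\ell$ contribute $\ell$-torsion and that each $\ell$-part has bounded exponent, i.e.\ finite corank plus divisibility control on the dual $\ell$-adic cohomology group in degree $2d+2-i$. For $\mathcal{X}$ smooth over a number ring this is within reach by smooth-proper base change away from the residue characteristics, combined with $p$-adic Hodge theoretic control at the bad primes.

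Vanishing for $i \ll 0$ is the easy part: for $n<0$ the complex $\mathbb{Z}(n)$ is by definition a shift of a direct sum of $j_{p,!}(\mu_{p^\infty}^{\otimes n})$, concentrated in degree $1$, so its hypercohomology vanishes in all negative degrees, while for $n\geq 0$ Bloch's complex $z^n(-,2n-*)$ is supported in cohomological degrees $\leq 2n$, and $\overline{\mathcal{X}}_{et}$ has finite étale cohomological dimension (equal to $2d+1$ up to $2$-torsion), so the Zariski-to-étale and hypercohomology spectral sequences both converge to zero for $i$ sufficiently small. The genuine obstacle, as reflected in the status of ${\bf AV}(\overline{\mathcal{X}}_{et},n)$ itself, lies at the two boundary degrees $i=2n$ and $i=2n+1$: here the conjecture encodes the Tate conjecture and the finiteness of the Brauer group for $\mathcal{X}$, so one should not expect an unconditional proof in the full generality of regular proper $\mathcal{X}$, only the case-by-case verifications listed after ${\bf AV}(\overline{\mathcal{X}}_{et},n)$.
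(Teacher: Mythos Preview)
The statement you are attempting to prove is listed in the paper as a \emph{conjecture}, under the heading ``Assumptions'' (Section~\ref{section-assumptions}). The paper does not prove it; it only lists special cases where it is known (curves, arithmetic surfaces with finite Brauer group, schemes in $A(\mathbb{F}_q)$ for extreme $n$, etc.) and then proves a short lemma showing that ${\bf L}(\overline{\mathcal{X}}_{et},n)$ is equivalent to the simpler statement ${\bf L}(\mathcal{X}_{et},n)$: finite generation of $H^i(\mathcal{X}_{et},\mathbb{Z}(n))$ for $i\leq 2n+1$, with the vanishing for $i\ll 0$ deduced automatically from finite generation via a mod-$p$ argument. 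Your reduction from $\overline{\mathcal{X}}_{et}$ to $\mathcal{X}_{et}$ is exactly this lemma, and you argue it correctly.

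Beyond that reduction, your ``proof'' is a reduction to other open conjectures (rational Bass for the torsion-free part, and a cofiniteness statement for torsion that you yourself note is not available in general). That is an honest heuristic, and you correctly identify that the boundary degrees $i=2n,2n+1$ encode Tate-type statements; but it is not a proof, and the paper does not pretend otherwise.

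Two concrete errors in your vanishing argument. First, for $n\geq 0$ you say Bloch's complex is supported in degrees $\leq 2n$ and $\overline{\mathcal{X}}_{et}$ has finite cohomological dimension; both are \emph{upper} bounds and give nothing for $i\ll 0$. The complex $z^n(-,2n-*)$ has terms in all degrees $\leq 2n$, and its acyclicity in negative degrees is Beilinson--Soul\'e, itself open. The paper's lemma instead deduces the vanishing from finite generation: if $H^i$ is finitely generated, it vanishes iff $H^i/p$ does for all $p$, and for $i\ll 0$ one controls $H^i(\overline{\mathcal{X}}_{et},\tau^{\leq n}\mathbb{Z}(n)/p)$ via the identification $\tau^{\leq n}\mathbb{Z}(n)/p\simeq\mu_p^{\otimes n}$ on $\mathcal{X}[1/p]$ and a localization triangle whose other terms are bounded below. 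Second, for $n<0$ you claim the hypercohomology vanishes in all negative degrees. On $\mathcal{X}_{et}$ this is fine, but on $\overline{\mathcal{X}}_{et}$ it is false: the paper explicitly notes (just before Proposition~\ref{prop-pbf}) that $R\Gamma(\overline{\mathcal{X}}_{et},\mathbb{Z}(n))$ can have nontrivial cohomology in negative degrees when $n<0$, forced by the projective bundle formula. It is still bounded below, but not by zero.
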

Conjecture ${\bf L}(\overline{\mathcal{X}}_{et},n)$ holds in the following cases:
\begin{itemize}
\item for $ \mathrm{dim}(\mathcal{X})\leq 1$;
\item for $n=1$ and $\mathcal{X}$ an arithmetic surface (or a surface over a finite field) with finite Brauer group;
\item for $n\geq d-1$ or $n\leq 1$ and $\mathcal{X}$ in the category $A(\mathbb{F}_q)$ (see Section \ref{subsect-Licht});
\item for $n\geq d$  or $n\leq 0$ and $\mathcal{X}$ a regular cellular scheme over a number ring (more generally for $n\geq d$  or $n\leq 0$ and $\mathcal{X}$ regular in the class $\mathcal{L}(\mathbb{Z})$, see \cite{Morin14}).
\end{itemize}

We have the following slight reformulation.

\begin{lem} Conjecture ${\bf L}(\overline{\mathcal{X}}_{et},n)$ is equivalent to
\smallskip

${\bf L}(\mathcal{X}_{et},n)$: The group $H^i(\mathcal{X}_{et},\mathbb{Z}(n))$ is finitely generated for $i\leq 2n+1$
\smallskip

\end{lem}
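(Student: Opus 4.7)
The plan is to compare the two cohomology theories via the open immersion $\phi:\mathcal{X}_{et}\rightarrow \overline{\mathcal{X}}_{et}$ and show that the difference is controlled by a bounded complex of finite $2$-torsion groups coming from the archimedean places, so that neither finite generation nor boundedness below is affected.

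First, I would invoke the results of Appendix A (Proposition \ref{prop-comp} and the surrounding discussion) which identify the cofiber of the natural morphisms $R\widehat{\phi}_!\mathbb{Z}(n) \to \mathbb{Z}(n)^{\overline{\mathcal{X}}} \to R\phi_*\mathbb{Z}(n)$ with complexes concentrated at the real places of $\mathcal{X}$. The archimedean stalks are computed by $G_{\mathbb{R}}$-(Tate) cohomology of $\mathcal{X}(\mathbb{C})$ with coefficients $(2\pi i)^n\mathbb{Z}$, and are therefore finite $2$-torsion groups in every degree and bounded as a complex. Hence the mapping cone of the canonical map
\[ R\Gamma(\overline{\mathcal{X}}_{et},\mathbb{Z}(n)) \longrightarrow R\Gamma(\mathcal{X}_{et},\mathbb{Z}(n)) \]
is a bounded complex of finite $2$-torsion abelian groups.

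From the resulting long exact sequence, the forward implication $\mathbf{L}(\overline{\mathcal{X}}_{et},n)\Rightarrow \mathbf{L}(\mathcal{X}_{et},n)$ is immediate: finite $2$-torsion groups are finitely generated, so finite generation of $H^i(\overline{\mathcal{X}}_{et},\mathbb{Z}(n))$ in the range $i\leq 2n+1$ propagates to $H^i(\mathcal{X}_{et},\mathbb{Z}(n))$ in the same range. For the reverse implication, the same long exact sequence shows that finite generation of $H^i(\mathcal{X}_{et},\mathbb{Z}(n))$ for $i\leq 2n+1$ gives finite generation of $H^i(\overline{\mathcal{X}}_{et},\mathbb{Z}(n))$ for $i\leq 2n+1$.

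It remains to derive the vanishing $H^i(\overline{\mathcal{X}}_{et},\mathbb{Z}(n))=0$ for $i\ll 0$ unconditionally. Since the archimedean contribution is a bounded complex, it suffices to establish $H^i(\mathcal{X}_{et},\mathbb{Z}(n))=0$ for $i\ll 0$. For $n<0$ this is obvious from the definition $\mathbb{Z}(n)=\bigoplus_p j_{p,!}(\mu_{p^\infty}^{\otimes n})[-1]$, whose étale hypercohomology is concentrated in degrees $\geq 1$. For $n\geq 0$ one uses that the étale hypercohomology of Bloch's cycle complex vanishes in negative degrees on any regular scheme — this can be seen either from the Nisnevich-to-étale comparison combined with the connectivity of Bloch's complex in the Nisnevich topology, or directly from the localization / Gersten-type spectral sequence with weights that already appears elsewhere in this section. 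The potentially delicate point, and the one I would be most careful about, is verifying that this negative-degree vanishing of $H^i(\mathcal{X}_{et},\mathbb{Z}(n))$ is truly unconditional for regular proper $\mathcal{X}/\mathbb{Z}$ and any $n\geq 0$; once this is granted, the equivalence follows.
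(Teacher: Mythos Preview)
Your reduction to the archimedean cofiber is correct and matches the paper's first step: the exact triangle from Corollary \ref{cor-u^!} shows that $R\Gamma(\overline{\mathcal{X}}_{et},\mathbb{Z}(n))$ and $R\Gamma(\mathcal{X}_{et},\mathbb{Z}(n))$ differ by a complex with finite $2$-torsion cohomology concentrated in degrees $\geq n+1$, so finite generation in degrees $\leq 2n+1$ is equivalent on both sides. (Minor point: this cofiber is bounded \emph{below}, not bounded, but that is all you need.)

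The genuine gap is in your treatment of the vanishing for $i\ll 0$. You try to prove $H^i(\mathcal{X}_{et},\mathbb{Z}(n))=0$ for $i\ll 0$ \emph{unconditionally}, but neither of your suggested justifications works. The ``connectivity of Bloch's complex in the Nisnevich topology'' is precisely the Beilinson--Soul\'e vanishing conjecture, which is open for regular arithmetic schemes; and the Gersten/weight spectral sequence gives no a priori bound on negative-degree motivic cohomology. So the unconditional vanishing you invoke is not available.

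The paper avoids this by a different route: it \emph{uses} the finite-generation hypothesis. A finitely generated abelian group vanishes as soon as its reductions mod $p$ vanish for all $p$, so it suffices to show $H^i(\overline{\mathcal{X}}_{et},\mathbb{Z}(n)/p)=0$ for $i\ll 0$. For $i\leq n$ this coincides with $H^i(\overline{\mathcal{X}}_{et},\tau^{\leq n}\mathbb{Z}(n)/p)$, and then one can invoke the Beilinson--Lichtenbaum isomorphism $\tau^{\leq n}\mathbb{Z}(n)/p\simeq \mu_p^{\otimes n}$ on $\mathcal{X}[1/p]$ together with a localization triangle over $\mathcal{X}_{\mathbb{Z}_p}$ (controlled in low degrees by $\mathbb{Z}(n-1)/p$ on the special fibre, which is bounded below by Zhong's results). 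The point is that with \emph{torsion} coefficients one has enough structural input to force boundedness below, whereas with integral coefficients one does not. Your argument would go through if Beilinson--Soul\'e were known, but as written it assumes exactly the open input the paper is careful to circumvent.
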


\begin{proof} By Corollary \ref{cor-u^!} there is an exact triangle
\[ R\Gamma_{\X_\infty}(\overline{\X},\bz(n))\to R\Gamma(\overline{\X},\bz(n))\to R\Gamma(\X,\bz(n)) \]
where $R\Gamma_{\X_\infty}(\overline{\X},\bz(n))$ has finite 2-torsion cohomology and is bounded below.
Hence finite generation of $H^i(\mathcal{X}_{et},\mathbb{Z}(n))$ and $H^i(\overline{\mathcal{X}}_{et},\mathbb{Z}(n))$ for $i\leq 2n+1$ are equivalent.  If $H^i(\overline{\mathcal{X}}_{et},\mathbb{Z}(n))$ is finitely generated its vanishing is implied by the vanishing of $H^i(\overline{\mathcal{X}}_{et},\mathbb{Z}(n)/p)$ for all primes $p$ and, if $i<<0$, by the vanishing of $H^i(\overline{\mathcal{X}}_{et},\tau^{\leq n}\mathbb{Z}(n)/p)$.
By Prop. \ref{propstalk} and \cite{Zhong14}[Thm. 2.6] there is an isomorphism
\[j^*\phi^*\tau^{\leq n}\mathbb{Z}(n)^{\overline{\X}}/p\cong \tau^{\leq n}\mathbb{Z}(n)/p\cong\mu_p^{\otimes n}\]
where
\[ \mathcal{X}[1/p]_{et}\xrightarrow{j}\X_{et}\xrightarrow{\phi} \overline{\X}_{et}\]
are the natural open immersions. Following the proof of Lemma \ref{locdiagram} below we obtain an exact triangle
\[\minCDarrowwidth1em\begin{CD}R\Gamma_c(\X[1/p],\mu_p^{\otimes n}) @>>> R\Gamma(\overline{\X},\tau^{\leq n}\bz_p(n)/p) @>>> R\Gamma(\X_\br,\mu_p^{\otimes n} )\oplus R\Gamma(\X_{\bz_p},\tau^{\leq n}\bz(n)/p)
\end{CD}\]
where the outer terms have vanishing cohomology for $i<<0$. For the complex $\tau^{\leq n}\bz(n)/p$ on $\X_{\bz_p}$ this follows from the proof of Lemma \ref{loclem} which shows that the mapping fibre of $\tau^{\leq n}\bz(n)/p\to\tau^{\leq n}Rj_{p,*}\mu_p^{\otimes n}$ is quasi-isomorphic to $i_*\bz(n-1)/p[-2]$ in degrees $\leq n$, even without assuming Conj. \ref{zhongconj}. But $\bz(n-1)/p$ on $\X_{\bF_p}$ is cohomologically bounded below by \cite{Zhong14}[Thm. 1.1].
\end{proof}

\subsection{The complex $R\Gamma_W(\overline{\mathcal{X}},\mathbb{Z}(n))$}

\begin{prop}\label{prop-etaleduality}
For any $i\geq 2n+2$ there is an isomorphism of cofinite type groups
$$H^{i}(\overline{\mathcal{X}}_{et},\mathbb{Z}(n))\stackrel{\sim}{\longrightarrow}  \mathrm{\emph{Hom}}(H^{2d+2-i}(\overline{\mathcal{X}}_{et},\mathbb{Z}(d-n)),\mathbb{Q}/\mathbb{Z}).$$
For $i=2n+1$, there is an isomorphism of finite groups
$$H^{2n+1}(\overline{\mathcal{X}}_{et},\mathbb{Z}(n))\stackrel{\sim}{\longrightarrow} H^{2(d-n)+1}(\overline{\mathcal{X}}_{et},\mathbb{Z}(d-n))^D.$$
For any $i\leq 2n$ there is an isomorphism of profinite groups
$$H^{i}(\overline{\mathcal{X}}_{et},\mathbb{Z}(n))^{\wedge}
\stackrel{\sim}{\longrightarrow}  \mathrm{\emph{Hom}}(H^{2d+2-i}(\overline{\mathcal{X}}_{et},\mathbb{Z}(d-n)),\mathbb{Q}/\mathbb{Z}).$$
where $(-)^{\wedge}$ is the profinite completion.
\end{prop}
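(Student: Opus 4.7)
My plan is to deduce all three statements from Artin--Verdier duality at finite level (Conjecture ${\bf AV}(\overline{\mathcal{X}}_{et},n)$), by combining it with the two Bockstein long exact sequences attached to $0\to\bz(n)\to\bq(n)\to\bq/\bz(n)\to 0$ and $0\to\bz(n)\xrightarrow{m}\bz(n)\to\bz/m\bz(n)\to 0$, then passing to $\varinjlim_m$ or $\varprojlim_m$. All cohomology below is taken on $\overline{\mathcal{X}}_{et}$. A preliminary input I would record is the vanishing
$$H^j(\bq(n))=0 \quad\text{for } j\geq 2n+1,$$
coming from the comparison $R\Gamma(\overline{\mathcal{X}}_{et},\bz(n))_\bq\simeq R\Gamma(\mathcal{X}_{Zar},\bz(n))_\bq$ (the archimedean fibre contributes only $2$-torsion) together with the fact that Bloch's cycle complex is supported in degrees $\leq 2n$. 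Combined with ${\bf L}(\overline{\mathcal{X}}_{et},n)$ this forces $H^{2n+1}(\bz(n))$ to be finite, and symmetrically for $d-n$.

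\textbf{Case $i\geq 2n+2$.} The first Bockstein reduces to $H^i(\bz(n))\cong H^{i-1}(\bq/\bz(n))$. Passing to $\varinjlim_m$ in the AV isomorphism at degree $i-1$ gives $H^{i-1}(\bq/\bz(n))\cong(\varprojlim_m H^{2d+2-i}(\bz/m\bz(d-n)))^D$. Since $2d+3-i\leq 2(d-n)+1$, Conjecture ${\bf L}(\overline{\mathcal{X}}_{et},d-n)$ makes $H^{2d+2-i}(\bz(d-n))$ and $H^{2d+3-i}(\bz(d-n))$ both finitely generated. Inverse-limiting the second Bockstein then yields $\varprojlim_m H^{2d+2-i}(\bz/m\bz(d-n))\cong H^{2d+2-i}(\bz(d-n))^{\wedge}$ (Mittag--Leffler on the surjective quotient system kills $\varprojlim^1$, and the Tate module of a finitely generated group vanishes). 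The identity $(A^{\wedge})^D\cong\Hom(A,\bq/\bz)$ for finitely generated $A$ closes this case.

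\textbf{Case $i\leq 2n$.} This is symmetric: ${\bf L}(\overline{\mathcal{X}}_{et},n)$ makes $H^i(\bz(n))$ and $H^{i+1}(\bz(n))$ finitely generated, so the same inverse-limit Bockstein gives $\varprojlim_m H^i(\bz/m\bz(n))\cong H^i(\bz(n))^{\wedge}$. Dualizing AV identifies this with $H^{2d+1-i}(\bq/\bz(d-n))^D$, and a first Bockstein at index $2d+1-i\geq 2(d-n)+1$ gives $H^{2d+1-i}(\bq/\bz(d-n))\cong H^{2d+2-i}(\bz(d-n))$ via rational vanishing in both adjacent degrees. For the boundary $i=2n+1$, I would compute $H^{2n}(\bq/\bz(n))$ in two ways. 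The first Bockstein at degree $2n$ produces a short exact sequence with divisible subgroup $H^{2n}(\bz(n))\otimes\bq/\bz$ and finite quotient $H^{2n+1}(\bz(n))$. On the other side, Case $i\geq 2n+2$ applied at weight $d-n$ identifies $H^{2(d-n)+2}(\bz(d-n))\cong\Hom(H^{2n}(\bz(n)),\bq/\bz)$, cofinite of rank $r:=\mathrm{rank}\,H^{2n}(\bz(n))$, so its Tate module is $\widehat{\bz}^r$. Inverse-limiting the second Bockstein for $\bz(d-n)$ at index $2(d-n)+1$ and dualizing produces another short exact sequence
$$0\to(\bq/\bz)^r\to H^{2n}(\bq/\bz(n))\to H^{2(d-n)+1}(\bz(d-n))^D\to 0.$$
Uniqueness of the maximal divisible subgroup matches the finite quotients and gives the required isomorphism $H^{2n+1}(\bz(n))\cong H^{2(d-n)+1}(\bz(d-n))^D$.

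\textbf{Main obstacle.} The boundary $i=2n+1$ is what I expect to be the hardest step, since it forces one to match two distinct short-exact-sequence presentations of $H^{2n}(\bq/\bz(n))$. The matching relies crucially on the recursive use of Case $i\geq 2n+2$ at weight $d-n$ to compute the Tate module of the cofinite-type group $H^{2(d-n)+2}(\bz(d-n))$, and ultimately on the rank identity $\mathrm{rank}\,H^{2n}(\bz(n))=\mathrm{rank}\,H^{2(d-n)+2}(\bz(d-n))$ hidden inside it. The surrounding Mittag--Leffler and Tate-module verifications that justify the $\varprojlim$ computations are routine but must be kept consistent across all three cases.
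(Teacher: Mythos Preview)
Your proposal is correct and follows essentially the same strategy as the paper: reduce to finite-level Artin--Verdier duality via the two Bockstein sequences, then pass to direct or inverse limits, using ${\bf L}(\overline{\mathcal{X}}_{et},n)$ and ${\bf L}(\overline{\mathcal{X}}_{et},d-n)$ to control the relevant Tate modules and $\varprojlim^1$ terms. The case $i\geq 2n+2$ is identical to the paper's argument, and the case $i\leq 2n$ (which the paper leaves implicit by symmetry under $n\leftrightarrow d-n$) you spell out directly.

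The only place where you diverge slightly is the boundary case $i=2n+1$. The paper works on the profinite side: it identifies $H^{2n+1}(\overline{\mathcal{X}}_{et},\mathbb{Z}(n))$ with the torsion subgroup of $\varprojlim_m H^{2n+1}(\overline{\mathcal{X}}_{et},\mathbb{Z}/m\mathbb{Z}(n))$ (using that the Tate module of the cofinite-type group $H^{2n+2}(\overline{\mathcal{X}}_{et},\mathbb{Z}(n))$ is torsion-free), then applies AV and the identity $(A^D)_{tors}\cong (A_{\mathrm{codiv}})^D$ with $A=H^{2(d-n)}(\overline{\mathcal{X}}_{et},\mathbb{Q}/\mathbb{Z}(d-n))$. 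You instead work on the discrete torsion side, comparing two short exact presentations of $H^{2n}(\overline{\mathcal{X}}_{et},\mathbb{Q}/\mathbb{Z}(n))$ and matching finite quotients via the maximal divisible subgroup. These are Pontryagin-dual versions of the same argument; the paper's route is marginally more direct since it avoids explicitly naming the rank $r$, but yours is equally valid.
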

\begin{proof}
The distinguished triangle
$$\mathbb{Z}(n)\rightarrow \mathbb{Q}(n) \rightarrow \mathbb{Q}/\mathbb{Z}(n)$$
and the fact that $H^i(\overline{\mathcal{X}}_{et},\mathbb{Q}(n))=H^i(\mathcal{X},\mathbb{Q}(n))=0$ for $i\geq 2n+1$ imply
\begin{eqnarray*}
H^i(\overline{\mathcal{X}}_{et},\mathbb{Z}(n))&=&H^{i-1}(\overline{\mathcal{X}}_{et},\mathbb{Q}/\mathbb{Z}(n))\\
&\simeq&\underrightarrow{ \mathrm{lim}}\,H^{i-1}(\overline{\mathcal{X}}_{et},\mathbb{Z}/m\mathbb{Z}(n))\\
&\simeq&\underrightarrow{ \mathrm{lim}}\,H^{2d+1-(i-1)}(\overline{\mathcal{X}}_{et},\mathbb{Z}/m\mathbb{Z}(d-n))^D\\
&\simeq&\left(\underleftarrow{ \mathrm{lim}}\,H^{2d+2-i}(\overline{\mathcal{X}}_{et},\mathbb{Z}/m\mathbb{Z}(d-n))\right)^D\\
&\simeq&H^{2d+2-i}(\overline{\mathcal{X}}_{et},\mathbb{Z}(d-n))^D
\end{eqnarray*}
for $i\geq2n+2$. Indeed, to show the last isomorphism we consider the exact sequence
$$0\rightarrow H^{2d+2-i}(\overline{\mathcal{X}}_{et},\mathbb{Z}(d-n)))_n\rightarrow H^{2d+2-i}(\overline{\mathcal{X}}_{et},\mathbb{Z}/m\mathbb{Z}(d-n)))
\rightarrow {_n}H^{2d+2-i+1}(\overline{\mathcal{X}}_{et},\mathbb{Z}(d-n))\rightarrow 0$$
Passing to the limit we get
$$0\rightarrow \underleftarrow{ \mathrm{lim}}\, H^{2d+2-i}(\overline{\mathcal{X}}_{et},\mathbb{Z}(d-n))_n\rightarrow \underleftarrow{ \mathrm{lim}}\, H^{2d+2-i}(\overline{\mathcal{X}}_{et},\mathbb{Z}/m\mathbb{Z}(d-n))\rightarrow
TH^{2d+2-i+1}(\overline{\mathcal{X}}_{et},\mathbb{Z}(d-n))=0$$
since $H^{2d+2-i+1}(\overline{\mathcal{X}}_{et},\mathbb{Z}(d-n))$ is finitely generated for $i\geq 2n+2$. We obtain isomorphisms
$$(\underleftarrow{ \mathrm{lim}}\,H^{2d+2-i}(\overline{\mathcal{X}}_{et},\mathbb{Z}/m\mathbb{Z}(d-n)))^D
\stackrel{\sim}{\rightarrow}(\underleftarrow{ \mathrm{lim}}\, H^{2d+2-i}(\overline{\mathcal{X}}_{et},\mathbb{Z}(d-n))_n)^D
\stackrel{\sim}{\rightarrow}H^{2d+2-i}(\overline{\mathcal{X}}_{et},\mathbb{Z}(d-n))^D$$
again using finite generation of $H^{2d+2-i}(\overline{\mathcal{X}}_{et},\mathbb{Z}(d-n))$.

It remains to treat the case $i=2n+1$. The exact sequence
$$H^{2n}(\overline{\mathcal{X}}_{et},\mathbb{Q}/\mathbb{Z}(n))
\rightarrow H^{2n+1}(\overline{\mathcal{X}}_{et},\mathbb{Z}(n))
\rightarrow H^{2n+1}(\overline{\mathcal{X}}_{et},\mathbb{Q}(n))=0$$
implies that $H^{2n+1}(\overline{\mathcal{X}}_{et},\mathbb{Z}(n))$ is torsion hence finite. We have an exact sequence
$$0\rightarrow \underleftarrow{ \mathrm{lim}}\, H^{2n+1}(\overline{\mathcal{X}}_{et},\mathbb{Z}(n))_n\rightarrow
\underleftarrow{ \mathrm{lim}}\, H^{2n+1}(\overline{\mathcal{X}}_{et},\mathbb{Z}/m\mathbb{Z}(n))\rightarrow
TH^{2n+2}(\overline{\mathcal{X}}_{et},\mathbb{Z}(n))$$
But $H^{2n+2}(\overline{\mathcal{X}}_{et},\mathbb{Z}(n))$ is of  cofinite type (thanks to the case $i\geq2n+2$ treated above) hence its Tate module is torsion-free. We obtain
\begin{eqnarray*}
H^{2n+1}(\overline{\mathcal{X}}_{et},\mathbb{Z}(n))&=&\underleftarrow{ \mathrm{lim}}\, H^{2n+1}(\overline{\mathcal{X}}_{et},\mathbb{Z}(n))_n\\
&\stackrel{\sim}{\rightarrow}&\left(\underleftarrow{ \mathrm{lim}}\, H^{2n+1}(\overline{\mathcal{X}}_{et},\mathbb{Z}/m\mathbb{Z}(n))\right)_{tors}\\
&\simeq&\left(\underleftarrow{ \mathrm{lim}}\, H^{2d+1-(2n+1)}(\overline{\mathcal{X}}_{et},\mathbb{Z}/m\mathbb{Z}(d-n))^D\right)_{tors}\\
&\simeq&\left(H^{2(d-n)}(\overline{\mathcal{X}}_{et},\mathbb{Q}/\mathbb{Z}(d-n))^D\right)_{tors}\\
&\simeq&H^{2(d-n)+1}(\overline{\mathcal{X}}_{et},\mathbb{Z}(d-n))^D.
\end{eqnarray*}

\end{proof}

\begin{thm}\label{thm-alpha} There is a canonical morphism in $\mathcal{D}$:
$$\alpha_{\mathcal{X},n}:R\mathrm{Hom}( R\Gamma(\mathcal{X},\mathbb{Q}(d-n)),\mathbb{Q}[-2d-2])\rightarrow  R\Gamma(\overline{\mathcal{X}}_{et},\mathbb{Z}(n))$$
functorial in $\mathcal{X}$ and such that $H^i(\alpha_{\mathcal{X},n})$ factors as follows
$$\mathrm{Hom}(H^{2d+2-i}(\mathcal{X},\mathbb{Q}(d-n)),\mathbb{Q})\twoheadrightarrow H^i(\overline{\mathcal{X}}_{et},\mathbb{Z}(n))_{div}\hookrightarrow H^i(\overline{\mathcal{X}}_{et},\mathbb{Z}(n))$$
where $H^i(\overline{\mathcal{X}}_{et},\mathbb{Z})_{div}$ denotes the maximal divisible subgroup of $H^i(\overline{\mathcal{X}}_{et},\mathbb{Z})$.
\end{thm}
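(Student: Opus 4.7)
The strategy is to realize $\alpha_{\mathcal{X},n}$ as the unique lift of a canonical coefficient-change morphism through the Artin--Verdier duality map furnished by Conjecture ${\bf AV}(\overline{\mathcal{X}}_{et},n)$.

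First, Proposition \ref{prop-etaleduality} applied with $n=d$ gives $H^{2d+2}(\overline{\mathcal{X}}_{et},\mathbb{Z}(d))\cong \mathbb{Q}/\mathbb{Z}$ and vanishing in degrees $>2d+2$, so canonical truncation provides a trace morphism $t:R\Gamma(\overline{\mathcal{X}}_{et},\mathbb{Z}(d))\to \mathbb{Q}/\mathbb{Z}[-2d-2]$ in $\mathcal{D}$. Composing the cup product of Conjecture ${\bf AV}(\overline{\mathcal{X}}_{et},n)$ with $t$ and adjointing in the second factor yields
\[
\tilde{\pi}:\ R\Gamma(\overline{\mathcal{X}}_{et},\mathbb{Z}(n))\longrightarrow R\mathrm{Hom}\bigl(R\Gamma(\overline{\mathcal{X}}_{et},\mathbb{Z}(d-n)),\mathbb{Q}/\mathbb{Z}\bigr)[-2d-2],
\]
whose cohomology in degree $i$ is the map of Proposition \ref{prop-etaleduality}: an isomorphism for $i\geq 2n+1$ and the profinite completion for $i\leq 2n$. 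Separately, the rationalization $R\Gamma(\overline{\mathcal{X}}_{et},\mathbb{Z}(d-n))\to R\Gamma(\mathcal{X},\mathbb{Q}(d-n))$ composed with the Bockstein $\mathbb{Q}\to \mathbb{Q}/\mathbb{Z}$ dualizes, via $R\mathrm{Hom}(-,-)[-2d-2]$, to a morphism
\[
\gamma:\ S:=R\mathrm{Hom}\bigl(R\Gamma(\mathcal{X},\mathbb{Q}(d-n)),\mathbb{Q}\bigr)[-2d-2]\longrightarrow R\mathrm{Hom}\bigl(R\Gamma(\overline{\mathcal{X}}_{et},\mathbb{Z}(d-n)),\mathbb{Q}/\mathbb{Z}\bigr)[-2d-2].
\]
Since $H^j(\mathcal{X},\mathbb{Q}(d-n))=0$ for $j>2(d-n)$, the source $S$ has cohomology concentrated in degrees $\geq 2n+2$.

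The main step is to lift $\gamma$ uniquely through $\tilde{\pi}$. Set $C=\cone(\tilde{\pi})$ and $F=\mathrm{fib}(\tilde{\pi})=C[-1]$. The isomorphism range of $\tilde{\pi}$ gives $H^i(C)=0$ for $i\geq 2n+1$ and $H^i(F)=0$ for $i\geq 2n+2$. Moreover, in the critical degree, $H^{2n}(C)$ is the cokernel of the profinite completion map on the finitely generated group $H^{2n}(\overline{\mathcal{X}}_{et},\mathbb{Z}(n))$ (finite generation being Conjecture ${\bf L}(\overline{\mathcal{X}}_{et},n)$), and so is torsion-free and divisible, i.e., a $\mathbb{Q}$-vector space. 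In the hyper-Ext spectral sequence $E_2^{p,q}=\bigoplus_j \mathrm{Ext}^p(H^j(S),H^{j+q}(F))\Rightarrow \mathrm{Ext}^{p+q}(S,F)$, the only potential contribution to $\mathrm{Hom}_{\mathcal{D}}(S,F)$ comes from the bidegree $(p,q)=(1,-1)$ (all others vanish because either $p\geq 2$ and $\mathrm{Ext}^p_{\mathbb{Z}}=0$, or the support ranges of $S$ and $F$ are incompatible), namely $\mathrm{Ext}^1(H^{2n+2}(S),H^{2n}(C))=0$ since $H^{2n}(C)$ is a $\mathbb{Q}$-vector space, hence injective over $\mathbb{Z}$. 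The analogous (easier) argument gives $\mathrm{Hom}_{\mathcal{D}}(S,C)=0$, so $\gamma$ has a unique lift $\alpha_{\mathcal{X},n}:S\to R\Gamma(\overline{\mathcal{X}}_{et},\mathbb{Z}(n))$ with $\tilde{\pi}\circ\alpha_{\mathcal{X},n}=\gamma$; by uniqueness it is functorial in $\mathcal{X}$.

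Finally, the cohomological factorization follows by inspection. For $i\geq 2n+2$, writing $V=H^{2d+2-i}(\mathcal{X},\mathbb{Q}(d-n))$, the map $H^i(\gamma)$ is the composite of the surjection $\mathrm{Hom}(V,\mathbb{Q})\twoheadrightarrow \mathrm{Hom}(V,\mathbb{Q}/\mathbb{Z})$ with the inclusion induced by the rationalization $H^{2d+2-i}(\overline{\mathcal{X}}_{et},\mathbb{Z}(d-n))\to V$, and Proposition \ref{prop-etaleduality} identifies the resulting image with $H^i(\overline{\mathcal{X}}_{et},\mathbb{Z}(n))_{div}$. The main technical obstacle is the Hom-vanishing step above, which relies critically on the isomorphism range from Conjecture ${\bf AV}$ and the finite generation provided by Conjecture ${\bf L}$.
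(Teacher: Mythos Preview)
Your argument is correct, but it takes a genuinely different route from the paper. The paper never constructs the integral duality map $\tilde\pi$; instead it observes directly that since $D_{\mathcal X,n}$ is a complex of $\mathbb Q$-vector spaces it splits as $\bigoplus_i H^i(D_{\mathcal X,n})[-i]$, and then uses the hyper-Ext spectral sequence together with Proposition~\ref{prop-etaleduality} to show
\[
\mathrm{Hom}_{\mathcal D}\bigl(D_{\mathcal X,n},R\Gamma(\overline{\mathcal X}_{et},\mathbb Z(n))\bigr)\;\cong\;\prod_i \mathrm{Hom}\bigl(H^i(D_{\mathcal X,n}),H^i(\overline{\mathcal X}_{et},\mathbb Z(n))\bigr),
\]
so that specifying $\alpha_{\mathcal X,n}$ amounts to specifying each $H^i(\alpha_{\mathcal X,n})$, which the paper then writes down by hand. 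Your approach instead packages the target maps into a morphism $\gamma$ to an auxiliary complex and realizes $\alpha_{\mathcal X,n}$ as the unique lift of $\gamma$ through $\tilde\pi$. Both arguments ultimately rest on the same Ext-vanishing (that $\mathrm{Ext}^1$ from a $\mathbb Q$-vector space into a finite or cofinite-type group vanishes), but the paper's version avoids ever building $\tilde\pi$ at the derived level, which is a genuine simplification.

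Two points in your write-up deserve more care. First, you assert that $H^i(\tilde\pi)$ coincides with the map of Proposition~\ref{prop-etaleduality}; this is true but not automatic, since that proposition produces its isomorphisms via the $\mathbb Z/m$-coefficient pairings and limits, not directly from the integral product you invoke, so the compatibility must be checked. Second, ``by uniqueness it is functorial'' is too fast: uniqueness of the lift gives functoriality only once you know $\tilde\pi$ and $\gamma$ are themselves natural for flat morphisms, and for $\tilde\pi$ this is essentially the content of Conjecture~$\mathbf{AV}(f,n)$. The paper accordingly defers functoriality to Theorem~\ref{cor-functoriality}, and you should do the same rather than claim it follows immediately.
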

\begin{proof}

We set $D_{\mathcal{X},n}:= R\mathrm{Hom}(R\Gamma(\mathcal{X},\mathbb{Q}(d-n)),\mathbb{Q}[-2d-2])$. Using Proposition \ref{prop-etaleduality}, it is easy to see that the spectral sequence
$$\prod_{i\in\mathbb{Z}} \mathrm{Ext}^p(H^{i}(D_{\mathcal{X},n}),H^{q+i}(\overline{\mathcal{X}}_{et},\mathbb{Z}(n)))\Rightarrow
H^{p+q}(R\mathrm{Hom}(D_{\mathcal{X},n}, R\Gamma(\overline{\mathcal{X}}_{et},\mathbb{Z}(n))))$$
yields a canonical isomorphism
$$\prod_{i\in\mathbb{Z}} \mathrm{Ext}^0(H^{i}(D_{\mathcal{X},n}),H^{i}(\overline{\mathcal{X}}_{et},\mathbb{Z}(n)))\simeq
H^{0}(R\mathrm{Hom}(D_{\mathcal{X},n}, R\Gamma(\overline{\mathcal{X}}_{et},\mathbb{Z}(n)))).$$
For $i\leq 2n+1$ any map $H^i(\alpha_{\mathcal{X},n}):H^{i}(D_{\mathcal{X},n})\rightarrow H^i(\overline{\mathcal{X}}_{et},\mathbb{Z}(n))$ must be trivial since $H^{i}(D_{\mathcal{X},n})=0$.
For any $i\geq 2n+2$ there is a canonical map
$$H^i(\alpha_{\mathcal{X},n}):H^{i}(D_{\mathcal{X},n})= \mathrm{Hom}(H^{2d+2-i}(\mathcal{X},\mathbb{Q}(d-n)),\mathbb{Q})\stackrel{\sim}{\rightarrow}
 \mathrm{Hom}(H^{2d+2-i}(\overline{\mathcal{X}}_{et},\mathbb{Z}(d-n)),\mathbb{Q})$$
$$\rightarrow
 \mathrm{Hom}(H^{2d+2-i}(\overline{\mathcal{X}}_{et},\mathbb{Z}(d-n)),\mathbb{Q}/\mathbb{Z})
\stackrel{\sim}{\leftarrow}H^i(\overline{\mathcal{X}}_{et},\mathbb{Z}(n)).$$
Hence there exists a unique map
$$\alpha_{\mathcal{X},n}:R\mathrm{Hom}( R\Gamma(\mathcal{X},\mathbb{Q}(d-n)),\mathbb{Q}[-2d-2])\rightarrow R\Gamma(\overline{\mathcal{X}}_{et},\mathbb{Z}(n))$$
inducing $H^i(\alpha_{\mathcal{X},n})$ on cohomology. The fact that $\alpha_{\mathcal{X},n}$ is functorial is shown in Theorem \ref{cor-functoriality}.

\end{proof}

\begin{defn}\label{defn-fg-cohomology}
We define
$R\Gamma_W(\overline{\mathcal{X}},\mathbb{Z}(n))$, up to non-canonical isomorphism,
such that there is an exact triangle
$$R\mathrm{Hom}(R\Gamma(\mathcal{X},\mathbb{Q}(d-n)),\mathbb{Q}[-\delta])\rightarrow
R\Gamma(\overline{\mathcal{X}}_{et},\mathbb{Z}(n))\rightarrow R\Gamma_W(\overline{\mathcal{X}},\mathbb{Z}(n)).$$
\end{defn}
We shall see below that $R\Gamma_W(\overline{\mathcal{X}},\mathbb{Z}(n))$ is in fact defined up to a canonical isomorphism in the derived category (see Corollary \ref{cor-welldefined}). The long exact sequence of cohomology groups associated to the exact triangle of Definition \ref{defn-fg-cohomology} together with Proposition \ref{prop-etaleduality} yields the following
\begin{lem}\label{lemstructureWetgrps}
There is an exact sequence
$$0\rightarrow H^i(\overline{\mathcal{X}}_{et},\mathbb{Z}(n))_{codiv}\rightarrow
H_W^i(\overline{\mathcal{X}},\mathbb{Z}(n))\rightarrow \mathrm{Hom}(H^{2d+1-i}(\overline{\mathcal{X}}_{et},\mathbb{Z}(d-n)),\mathbb{Z})\rightarrow 0$$
for any $i\in\mathbb{Z}$.
\end{lem}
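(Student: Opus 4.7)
The plan is to take the long exact cohomology sequence associated to the defining triangle of Definition \ref{defn-fg-cohomology} and identify the cokernel of the incoming map and the kernel of the outgoing map using Theorem \ref{thm-alpha} and Proposition \ref{prop-etaleduality}.

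Write $D_{\mathcal{X},n}:=R\mathrm{Hom}(R\Gamma(\mathcal{X},\mathbb{Q}(d-n)),\mathbb{Q}[-2d-2])$. Since $\mathbb{Q}$ is an injective abelian group,
$$H^{j}(D_{\mathcal{X},n})=\mathrm{Hom}(H^{2d+2-j}(\mathcal{X},\mathbb{Q}(d-n)),\mathbb{Q}),$$
and this vanishes for $j\leq 2n+1$ because $H^{\ell}(\mathcal{X},\mathbb{Q}(d-n))=0$ for $\ell>2(d-n)$. The long exact sequence of the triangle in Definition \ref{defn-fg-cohomology} therefore breaks up into short exact sequences
$$0\to \mathrm{coker}(H^{i}(\alpha_{\mathcal{X},n}))\to H_W^i(\overline{\mathcal{X}},\mathbb{Z}(n))\to \ker(H^{i+1}(\alpha_{\mathcal{X},n}))\to 0,$$
so the task reduces to identifying these two outer terms with the terms in the claim.

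The first term is immediate from Theorem \ref{thm-alpha}: the image of $H^{i}(\alpha_{\mathcal{X},n})$ is $H^i(\overline{\mathcal{X}}_{et},\mathbb{Z}(n))_{div}$, hence the cokernel is $H^{i}(\overline{\mathcal{X}}_{et},\mathbb{Z}(n))_{codiv}$ by definition. For the second term, set $B:=H^{2d+1-i}(\overline{\mathcal{X}}_{et},\mathbb{Z}(d-n))$. When $i\geq 2n+1$ (so $i+1\geq 2n+2$), Proposition \ref{prop-etaleduality} gives $H^{i+1}(\overline{\mathcal{X}}_{et},\mathbb{Z}(n))\cong \mathrm{Hom}(B,\mathbb{Q}/\mathbb{Z})$, and by construction in Theorem \ref{thm-alpha} the map $H^{i+1}(\alpha_{\mathcal{X},n})$ is the composite
$$\mathrm{Hom}(B\otimes\mathbb{Q},\mathbb{Q})=\mathrm{Hom}(B,\mathbb{Q})\longrightarrow \mathrm{Hom}(B,\mathbb{Q}/\mathbb{Z})$$
induced by $\mathbb{Q}\twoheadrightarrow \mathbb{Q}/\mathbb{Z}$ (here we use $\mathbf{L}(\overline{\mathcal{X}}_{et},d-n)$ which makes $B$ finitely generated for $i\geq 2n$, so that $B\otimes\mathbb{Q}$ indeed recovers $H^{2d+1-i}(\mathcal{X},\mathbb{Q}(d-n))$). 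Applying $\mathrm{Hom}(B,-)$ to the short exact sequence $0\to\mathbb{Z}\to\mathbb{Q}\to\mathbb{Q}/\mathbb{Z}\to 0$ identifies the kernel with $\mathrm{Hom}(B,\mathbb{Z})=\mathrm{Hom}(H^{2d+1-i}(\overline{\mathcal{X}}_{et},\mathbb{Z}(d-n)),\mathbb{Z})$, as required.

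It remains to treat the small degrees. For $i\leq 2n-1$, finite generation of $H^i(\overline{\mathcal{X}}_{et},\mathbb{Z}(n))$ from $\mathbf{L}(\overline{\mathcal{X}}_{et},n)$ forces $H^i(\overline{\mathcal{X}}_{et},\mathbb{Z}(n))_{div}=0$, so the first term equals $H^i(\overline{\mathcal{X}}_{et},\mathbb{Z}(n))$; on the other hand $H^{2d+1-i}(\overline{\mathcal{X}}_{et},\mathbb{Z}(d-n))$ is of cofinite type by Proposition \ref{prop-etaleduality}, so $\mathrm{Hom}(-,\mathbb{Z})=0$, and indeed $H^{i+1}(D_{\mathcal{X},n})=0$ makes the kernel zero. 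For $i=2n$ the same reasoning applies: the relevant $B=H^{2(d-n)+1}(\overline{\mathcal{X}}_{et},\mathbb{Z}(d-n))$ is finite (Proposition \ref{prop-etaleduality}), $\mathrm{Hom}(B,\mathbb{Z})=0$, and $H^{2n+1}(D_{\mathcal{X},n})=0$ for dimension reasons. The main technical point, and the only place where one must be careful, is checking that the map $H^{i+1}(\alpha_{\mathcal{X},n})$ constructed in Theorem \ref{thm-alpha} really is the map induced by $\mathbb{Q}\to\mathbb{Q}/\mathbb{Z}$ under the identification of Proposition \ref{prop-etaleduality}; but this is precisely how $H^{i+1}(\alpha_{\mathcal{X},n})$ was defined in the proof of that theorem.
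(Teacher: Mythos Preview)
Your proof is correct and follows exactly the route the paper intends: the paper itself gives no detailed argument, merely stating that the lemma follows from the long exact sequence of the defining triangle together with Proposition~\ref{prop-etaleduality}, and you have filled in those details accurately. One small remark: the identification $B\otimes\mathbb{Q}\cong H^{2d+1-i}(\mathcal{X},\mathbb{Q}(d-n))$ does not actually require finite generation of $B$ (it holds for any abelian group since $\mathbb{Q}$ is flat and the difference between $\overline{\mathcal{X}}_{et}$ and $\mathcal{X}_{et}$ is $2$-torsion), so your invocation of $\mathbf{L}(\overline{\mathcal{X}}_{et},d-n)$ at that point is unnecessary, though harmless.
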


\begin{prop}\label{finitelygenerated-cohomology}
The group $H_W^i(\overline{\mathcal{X}},\mathbb{Z}(n))$ is finitely generated for any $i\in\mathbb{Z}$. Moreover one has
$H_W^i(\overline{\mathcal{X}},\mathbb{Z}(n))=0$ for almost all $i\in\mathbb{Z}$.
\end{prop}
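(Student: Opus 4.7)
The natural strategy is to exploit the short exact sequence provided by Lemma \ref{lemstructureWetgrps}:
\[
0\rightarrow H^i(\overline{\mathcal{X}}_{et},\mathbb{Z}(n))_{codiv}\rightarrow H_W^i(\overline{\mathcal{X}},\mathbb{Z}(n))\rightarrow \mathrm{Hom}(H^{2d+1-i}(\overline{\mathcal{X}}_{et},\mathbb{Z}(d-n)),\mathbb{Z})\rightarrow 0,
\]
which exhibits $H_W^i(\overline{\mathcal{X}},\mathbb{Z}(n))$ as an extension of two abelian groups. Since the class of finitely generated abelian groups is closed under extensions, and since vanishing of both outer terms forces vanishing of the middle, it suffices to verify the two claimed properties separately for the kernel and for the cokernel of this sequence. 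I would split each analysis along the natural index threshold dictated by the duality statements of Proposition \ref{prop-etaleduality}.

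For the Hom term: when $i\leq 2n$, we have $2d+1-i\geq 2(d-n)+1$, and Proposition \ref{prop-etaleduality} shows that $H^{2d+1-i}(\overline{\mathcal{X}}_{et},\mathbb{Z}(d-n))$ is either of cofinite type or finite, hence torsion, so its $\mathbb{Z}$-dual vanishes. When $i\geq 2n+1$, we have $2d+1-i\leq 2(d-n)$, and ${\bf L}(\overline{\mathcal{X}}_{et},d-n)$ gives that $H^{2d+1-i}(\overline{\mathcal{X}}_{et},\mathbb{Z}(d-n))$ is finitely generated, whence its $\mathbb{Z}$-dual is free of finite rank; moreover the vanishing clause of ${\bf L}(\overline{\mathcal{X}}_{et},d-n)$ implies $H^{2d+1-i}(\overline{\mathcal{X}}_{et},\mathbb{Z}(d-n))=0$ for $i\gg 0$. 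Thus the Hom piece is finitely generated in every degree and vanishes for $|i|$ large.

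For the codiv term: when $i\leq 2n+1$, the group $H^i(\overline{\mathcal{X}}_{et},\mathbb{Z}(n))$ itself is finitely generated by ${\bf L}(\overline{\mathcal{X}}_{et},n)$, so its maximal divisible subgroup is zero and the codiv quotient equals the (finitely generated) group itself, vanishing for $i\ll 0$. When $i\geq 2n+2$, Proposition \ref{prop-etaleduality} identifies $H^i(\overline{\mathcal{X}}_{et},\mathbb{Z}(n))$ with $\mathrm{Hom}(H^{2d+2-i}(\overline{\mathcal{X}}_{et},\mathbb{Z}(d-n)),\mathbb{Q}/\mathbb{Z})$; here $2d+2-i\leq 2(d-n)$, so by ${\bf L}(\overline{\mathcal{X}}_{et},d-n)$ the source is finitely generated, say of the form $\mathbb{Z}^r\oplus F$ with $F$ finite, so $H^i(\overline{\mathcal{X}}_{et},\mathbb{Z}(n))\cong (\mathbb{Q}/\mathbb{Z})^r\oplus F^D$. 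The maximal divisible subgroup is then $(\mathbb{Q}/\mathbb{Z})^r$, so the codiv quotient is the finite group $F^D$. For $i\gg 0$ the vanishing clause of ${\bf L}(\overline{\mathcal{X}}_{et},d-n)$ forces $H^{2d+2-i}(\overline{\mathcal{X}}_{et},\mathbb{Z}(d-n))=0$, whence $F^D=0$.

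Combining the two analyses, both outer terms of the short exact sequence are finitely generated for every $i$ and vanish for $|i|$ sufficiently large, so the same holds for $H_W^i(\overline{\mathcal{X}},\mathbb{Z}(n))$. The only step requiring genuine input beyond the formal structure of the sequence is the control of the torsion part $F^D$ of the codiv piece for $i\geq 2n+2$; this is where one truly uses both halves of the assumption ${\bf L}(\overline{\mathcal{X}}_{et},d-n)$ (not just finite generation in the appropriate range, but also the vanishing in very negative degree) to ensure both finiteness in every degree and eventual vanishing. This is the step I would expect to require the most care when writing the argument in detail.
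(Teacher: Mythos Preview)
Your proof is correct and follows essentially the same approach as the paper: both use the short exact sequence of Lemma \ref{lemstructureWetgrps} and control the outer terms via Proposition \ref{prop-etaleduality} together with the hypotheses ${\bf L}(\overline{\mathcal{X}}_{et},n)$ and ${\bf L}(\overline{\mathcal{X}}_{et},d-n)$. Your version is more explicit in the case analysis (splitting at the thresholds $i\leq 2n$, $i=2n+1$, $i\geq 2n+2$), whereas the paper compresses the finite-generation argument into one sentence and, for vanishing in large degree, routes the codiv term through $H^{i}(\overline{\mathcal{X}}_{et},\mathbb{Q}/\mathbb{Z}(n))_{codiv}\simeq \bigl(H^{2d+1-i}(\overline{\mathcal{X}}_{et},\widehat{\mathbb{Z}}(d-n))^D\bigr)_{codiv}$ rather than directly via the duality isomorphism of Proposition \ref{prop-etaleduality}; these are equivalent manipulations of the same input.
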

\begin{proof}
By Proposition \ref{prop-etaleduality}, the group $H^{2d+2-(i+1)}(\overline{\mathcal{X}}_{et},\mathbb{Z}(d-n))$ is either finitely generated or of cofinite type and $H^i(\overline{\mathcal{X}}_{et},\mathbb{Z}(n))_{codiv}$ is finitely generated.
Hence the exact sequence
$$0\rightarrow H^i(\overline{\mathcal{X}}_{et},\mathbb{Z}(n))_{codiv} \rightarrow H_{W}^i(\overline{\mathcal{X}},\mathbb{Z}(n))\rightarrow \mbox{Hom}_{\mathbb{Z}}(H^{2d+2-(i+1)}(\overline{\mathcal{X}}_{et},\mathbb{Z}(d-n)),\mathbb{Z})\rightarrow 0.$$
shows that $H_W^i(\overline{\mathcal{X}},\mathbb{Z}(n))$ is finitely generated. Moreover,
$H^i(\overline{\mathcal{X}}_{et},\mathbb{Z}(n))=0$ for $i<<0$ and $H^{j}(\overline{\mathcal{X}}_{et},\mathbb{Z}(d-n))$ is torsion for $j>2(d-n)$, hence $H_W^i(\overline{\mathcal{X}},\mathbb{Z}(n))=0$ for $i<<0$. For $i$ large, we have
$$H^{i+1}(\overline{\mathcal{X}}_{et},\mathbb{Z}(n))_{codiv}=H^{i}(\overline{\mathcal{X}}_{et},\mathbb{Q}/\mathbb{Z}(n))_{codiv}\simeq \left(H^{2d+1-i}(\overline{\mathcal{X}}_{et},\widehat{\mathbb{Z}}(d-n))^D\right)_{codiv}
=0$$
by $\mathbf{L}(\overline{\mathcal{X}}_{et},d-n)$.

\end{proof}

\begin{thm}\label{cor-functoriality}
Let $f:\mathcal{X}\rightarrow\mathcal{Y}$ be a flat morphism between proper regular arithmetic schemes of pure dimensions $d_{\mathcal{X}}$ and $d_{\mathcal{Y}}$ respectively. Assume that ${\bf L}(\overline{\mathcal{X}}_{et},n)$, ${\bf L}(\overline{\mathcal{X}}_{et},d_{\mathcal{X}}-n)$, ${\bf AV}(\overline{\mathcal{X}}_{et},n)$, ${\bf L}(\overline{\mathcal{Y}}_{et},n)$, ${\bf L}(\overline{\mathcal{Y}}_{et},d_{\mathcal{Y}}-n)$, ${\bf AV}(\overline{\mathcal{Y}}_{et},n)$, and ${\bf AV}(f,n)$ hold (see Section \ref{sectionAVf}). We choose complexes $ R\Gamma_W(\overline{\mathcal{X}},\mathbb{Z}(n))$ and $ R\Gamma_W(\overline{\mathcal{Y}},\mathbb{Z}(n))$ as in Definition \ref{defn-fg-cohomology}. Then there exists
a \emph{unique} map in $\mathcal{D}$
$$f^*_{W}: R\Gamma_W(\overline{\mathcal{Y}},\mathbb{Z}(n))\rightarrow  R\Gamma_W(\overline{\mathcal{X}},\mathbb{Z}(n))$$
which sits in the morphism of exact triangles:
\[ \xymatrix{
R\mathrm{Hom}( R\Gamma(\mathcal{Y},\mathbb{Q}(d_{\mathcal{Y}}-n)),\mathbb{Q}[-\delta_{\mathcal{Y}}])\ar[r]\ar[d]&  R\Gamma(\overline{\mathcal{Y}}_{et},\mathbb{Z}(n))\ar[r]\ar[d]&  R\Gamma_W(\overline{\mathcal{Y}},\mathbb{Z}(n))\ar[r]\ar[d]& \\
R\mathrm{Hom}( R\Gamma(\mathcal{X},\mathbb{Q}(d_{\mathcal{X}}-n)),\mathbb{Q}[-\delta_{\mathcal{X}}])\ar[r]&  R\Gamma(\overline{\mathcal{X}}_{et},\mathbb{Z}(n))\ar[r]&  R\Gamma_W(\overline{\mathcal{X}},\mathbb{Z}(n))\ar[r]&
}
\]
\end{thm}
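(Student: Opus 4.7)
The plan has three stages. \emph{First, construct the outer vertical maps.} Since $f$ is proper and flat of relative dimension $d_\X - d_\Y$, Bloch's higher Chow complex admits a proper pushforward
\[ f_*: R\Gamma(\X,\bq(d_\X-n))[2(d_\X-d_\Y)] \to R\Gamma(\Y,\bq(d_\Y-n)), \]
whose $\bq$-linear dual with matching shifts gives the required map $D_{\Y,n} \to D_{\X,n}$ on the first terms. The middle vertical map is the etale pullback $\bar f^*$ coming from the morphism of Artin-Verdier sites covering $f$.

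\emph{Second, verify commutativity of the left square in $\mathcal{D}$.} Running the same spectral-sequence argument as in the proof of Theorem \ref{thm-alpha}, the vanishings $\mathrm{Ext}^p(V,G)=0$ for $V$ a $\bq$-vector space, $G$ finite or of cofinite type, and $p\geq 1$ identify $\mathrm{Hom}_{\mathcal{D}}(D_{\Y,n}, R\Gamma(\overline{\X}_{et},\bz(n)))$ canonically with $\prod_i\mathrm{Hom}(H^i(D_{\Y,n}),H^i(\overline{\X}_{et},\bz(n)))$. Hence commutativity in $\mathcal{D}$ reduces to commutativity in each cohomological degree. There $H^i(\alpha)$ factors through Artin-Verdier duality (Proposition \ref{prop-etaleduality}), so the compatibility of $\bar f^*$ with the motivic pushforward $f_*$ under this duality at finite level is exactly the hypothesis ${\bf AV}(f,n)$; passing to the limit over $m$ gives the commutativity.

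\emph{Third, produce and characterize $f^*_W$.} TR3 applied to the commutative square produces some $f^*_W$ completing the morphism of triangles. Any two such choices differ by a composite
\[ R\Gamma_W(\overline{\Y},\bz(n))\to D_{\Y,n}[1]\xrightarrow{\eta} R\Gamma(\overline{\X}_{et},\bz(n))\to R\Gamma_W(\overline{\X},\bz(n)) \]
for some $\eta\in\mathrm{Hom}_{\mathcal{D}}(D_{\Y,n}[1], R\Gamma(\overline{\X}_{et},\bz(n)))$, so for uniqueness it is enough to show every such composite vanishes. On cohomology any $\eta$ lands in the maximal divisible subgroup of $H^\bullet(\overline{\X}_{et},\bz(n))$, onto which $\alpha_{\X,n}$ surjects by Theorem \ref{thm-alpha}; since the source is a $\bq$-vector space this surjection splits, and $\eta$ lifts on cohomology to some $\tilde\eta:D_{\Y,n}[1]\to D_{\X,n}$. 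The composite $\alpha_{\X,n}\circ\tilde\eta$ is then killed by the structural map to $R\Gamma_W(\overline{\X},\bz(n))$. The main obstacle is upgrading this cohomology-level lift to a genuine lift in $\mathcal{D}$: the spectral sequence for $\mathrm{Hom}_{\mathcal{D}}(D_{\Y,n}[1],R\Gamma(\overline{\X}_{et},\bz(n)))$ carries an extra $\mathrm{Ext}^1$-contribution from pairings with the torsion-free part of $H^{2n}(\overline{\X}_{et},\bz(n))$; controlling this piece requires the explicit description of the map $R\Gamma_W(\overline{\Y})\to D_{\Y,n}[1]$ on the free summand given by Lemma \ref{lemstructureWetgrps}, which shows that the corresponding extension classes are annihilated after pre-composition.
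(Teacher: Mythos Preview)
Your first two stages match the paper's approach closely and are fine. The problem is entirely in stage three, the uniqueness argument.

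First, the factorization you write down is not justified. From TR3 you know that two fillers $f_W^*$, $g_W^*$ have difference $f_W^*-g_W^*$ which factors through the connecting map $R\Gamma_W(\overline{\Y},\bz(n))\to D_{\Y,n}[1]$, say as $\psi\circ\partial_\Y$ with $\psi\in\Hom_{\mathcal D}(D_{\Y,n}[1],R\Gamma_W(\overline{\X},\bz(n)))$. You claim instead that $\psi$ itself factors through $R\Gamma(\overline{\X}_{et},\bz(n))$, i.e.\ that there is an $\eta$ with $\psi=(\text{proj})\circ\eta$. The obstruction to such a lift is the composite $D_{\Y,n}[1]\xrightarrow{\psi}R\Gamma_W(\overline{\X},\bz(n))\to D_{\X,n}[1]$, which lies in $\Hom_{\mathcal D}(D_{\Y,n}[1],D_{\X,n}[1])$ and has no reason to vanish. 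So your $\eta$ need not exist, and the rest of the argument---cohomology lifts, $\Ext^1$ obstructions, the appeal to Lemma \ref{lemstructureWetgrps}---is built on sand. You yourself flag an unresolved ``main obstacle'', which is a sign the route is wrong rather than merely incomplete.

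The paper's argument bypasses all of this with a one-line observation you are missing: the group $\Hom_{\mathcal D}(D_{\Y,n}[1],R\Gamma_W(\overline{\X},\bz(n)))$ is uniquely divisible because $D_{\Y,n}[1]$ is a complex of $\bq$-vector spaces, while $\Hom_{\mathcal D}(R\Gamma_W(\overline{\Y},\bz(n)),R\Gamma_W(\overline{\X},\bz(n)))$ is finitely generated because both complexes are perfect. Precomposition with $\partial_\Y$ is a group homomorphism from a divisible group to a finitely generated one, hence zero. This immediately gives that
\[
\Hom_{\mathcal D}(R\Gamma_W(\overline{\Y},\bz(n)),R\Gamma_W(\overline{\X},\bz(n)))\longrightarrow \Hom_{\mathcal D}(R\Gamma(\overline{\Y}_{et},\bz(n)),R\Gamma_W(\overline{\X},\bz(n)))
\]
is injective, so $f_W^*$ is determined by the commutative square it sits in. No cohomology-level analysis is needed.
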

\begin{proof} We may assume $\mathcal{X}$ and $\mathcal{Y}$ connected; thus $\mathcal{X}$ and $\mathcal{Y}$ are proper regular connected arithmetic schemes of dimension $d_{\mathcal{X}}$ and $d_{\mathcal{Y}}$ respectively. We set
$\delta_{\mathcal{X}}=2d_{\mathcal{X}}+2$ and $\delta_{\mathcal{Y}}=2d_{\mathcal{Y}}+2$. We choose complexes $R\Gamma_W(\overline{\mathcal{X}},\mathbb{Z})$ and $R\Gamma_W(\overline{\mathcal{Y}},\mathbb{Z})$ as in Definition \ref{defn-fg-cohomology}. Let $f:\mathcal{X}\rightarrow\mathcal{Y}$ be a morphism of relative dimension $c=d_{\mathcal{X}}-d_{\mathcal{Y}}$. Proper push-forward of cycles along the proper morphism $f$
$$z^m(\mathcal{X},*)\rightarrow z^{m-c}(\mathcal{Y},*)$$
induces a morphism
$$f_*\mathbb{Q}(d_{\mathcal{X}}-n)\rightarrow \mathbb{Q}(d_{\mathcal{Y}}-n))[-2c]$$
of complexes of abelian Zariski sheaves on $\mathcal{Y}$. We have $f_*\mathbb{Q}(d_{\mathcal{X}}-n)\simeq Rf_*\mathbb{Q}(d_{\mathcal{X}}-n)$. Indeed, for a scheme over a discrete valuation ring, the cohomology of cycle complex coincides with its Zariski hypercohomology. We obtain a morphism
$$R\Gamma(\mathcal{X},\mathbb{Q}(d_{\mathcal{X}}-n))
\simeq R\Gamma(\mathcal{Y},f_*\mathbb{Q}(d_{\mathcal{X}}-n))\rightarrow
R\Gamma(\mathcal{Y},\mathbb{Q}(d_{\mathcal{Y}}-n))[-2c]$$
hence
$$R\mathrm{Hom}(R\Gamma(\mathcal{Y},\mathbb{Q}(d_{\mathcal{Y}}-n)),
\mathbb{Q}[-\delta_{\mathcal{Y}}])
\rightarrow R\mathrm{Hom}(R\Gamma(\mathcal{X},\mathbb{Q}(d_{\mathcal{X}}-n)),
\mathbb{Q}[-\delta_{\mathcal{X}}]).$$
On the other hand, by Proposition \ref{prop-funct-barX}, we have a pull-back map
$$
R\Gamma(\overline{\mathcal{Y}}_{et},\mathbb{Z}(n))\rightarrow R\Gamma(\overline{\mathcal{X}}_{et},\mathbb{Z}(n))
$$
We need to show that the following square
\[ \xymatrix{
R\mathrm{Hom}(R\Gamma(\mathcal{Y},\mathbb{Q}(d_{\mathcal{Y}}-n)),
\mathbb{Q}[-\delta_{\mathcal{Y}}])\ar[r]^{\,\,\,\,\,\,\,\,\,\,\,\,\,\,\,\,\,\,\,\,\,\,\,\,\,\,\,\,\,\,\,\,\,\,\,\,\alpha_{\mathcal{Y},n}}\ar[d]
&R\Gamma(\overline{\mathcal{Y}}_{et},\mathbb{Z}(n))\ar[d]_{}  \\
 R\mathrm{Hom}(R\Gamma(\mathcal{X},\mathbb{Q}(d_{\mathcal{X}}-n)),
\mathbb{Q}[-\delta_{\mathcal{X}}])\ar[r]^{\,\,\,\,\,\,\,\,\,\,\,\,\,\,\,\,\,\,\,\,\,\,\,\,\,\,\,\,\,\,\,\,\,\,\,\,\alpha_{\mathcal{X},n}}
&R\Gamma(\overline{\mathcal{X}}_{et},\mathbb{Z}(n))
}
\]
commutes. It is enough to showing that the induced diagrams of cohomology groups commute. One may assume $i\geq 2n+2$. Then the map $H^i(\alpha_{\mathcal{Y},n})$ coincides with the following composite map
$$\mathrm{Hom}(H^{\delta_{\mathcal{Y}}-i}(\mathcal{Y},\mathbb{Q}(d_{\mathcal{Y}}-n)),\mathbb{Q})\stackrel{\sim}{\rightarrow}
 \mathrm{Hom}(H^{\delta_{\mathcal{Y}}-i}(\mathcal{Y}_{et},\mathbb{Z}(d_{\mathcal{Y}}-n)),\mathbb{Q})$$
$$\rightarrow
 \mathrm{Hom}(H^{\delta_{\mathcal{Y}}-i}(\mathcal{Y}_{et},\mathbb{Z}(d_{\mathcal{Y}}-n)),\mathbb{Q}/\mathbb{Z})
\stackrel{\sim}{\leftarrow} \mathrm{Hom}(H^{\delta_{\mathcal{Y}}-i}(\mathcal{Y}_{et},\widehat{\mathbb{Z}}(d-n)),\mathbb{Q}/\mathbb{Z})$$
$$\stackrel{\sim}{\leftarrow}\widehat{H}^{i-1}_c(\mathcal{Y}_{et},\mathbb{Q}/\mathbb{Z}(n))\rightarrow \widehat{H}^{i-1}(\overline{\mathcal{Y}}_{et},\mathbb{Q}/\mathbb{Z}(n))\rightarrow
\widehat{H}^{i}(\overline{\mathcal{Y}}_{et},\mathbb{Z}(n)).$$
It follows from $\mathbf{AV}(f,n)$ that this map is functorial in $\mathcal{Y}$. Hence there exists a morphism
$$f_W^*:R\Gamma_W(\overline{\mathcal{Y}},\mathbb{Z}(n))\rightarrow
R\Gamma_W(\overline{\mathcal{X}},\mathbb{Z}(n))$$
inducing a morphism of exact triangles.

We claim that such a morphism $f_{W}^*$ is unique. In order to ease the notations, we set
$$D_{\mathcal{X},n}:=R\mathrm{Hom}( R\Gamma(\mathcal{X},\mathbb{Q}(d_{\mathcal{X}}-n)),
\mathbb{Q}[-\delta_{\mathcal{X}}])\mbox{ and }D_{\mathcal{Y},n}:=R\mathrm{Hom}( R\Gamma(\mathcal{Y},\mathbb{Q}(d_{\mathcal{Y}}-n)),
\mathbb{Q}[-\delta_{\mathcal{Y}}]).$$
The complexes $R\Gamma_W(\overline{\mathcal{X}},\mathbb{Z}(n))$ and $R\Gamma_W(\overline{\mathcal{Y}},\mathbb{Z}(n))$ are both perfect complexes of abelian groups. Applying the functor $\mbox{Hom}_{\mathcal{D}}(-,R\Gamma_{W}(\overline{\mathcal{X}},\mathbb{Z}(n)))$ to the exact triangle
$$D_{\mathcal{Y},n}\rightarrow R\Gamma(\overline{\mathcal{Y}}_{et},\mathbb{Z}(n))\rightarrow R\Gamma_W(\overline{\mathcal{Y}},\mathbb{Z}(n))\rightarrow D_{\mathcal{Y},n}[1]$$
we obtain an exact sequence of abelian groups:
$$\mbox{Hom}_{\mathcal{D}}(D_{\mathcal{Y},n}[1],R\Gamma_W(\overline{\mathcal{X}},\mathbb{Z}(n)))\rightarrow
\mbox{Hom}_{\mathcal{D}}(R\Gamma_W(\overline{\mathcal{Y}},\mathbb{Z}(n)),R\Gamma_W(\overline{\mathcal{X}},\mathbb{Z}(n)))$$
$$\rightarrow\mbox{Hom}_{\mathcal{D}}
(R\Gamma(\overline{\mathcal{Y}}_{et},\mathbb{Z}(n)),R\Gamma_W(\overline{\mathcal{X}},\mathbb{Z}(n))).$$
On the one hand, $\mbox{Hom}_{\mathcal{D}}(D_{\mathcal{Y},n}[1],R\Gamma_W(\overline{\mathcal{X}},\mathbb{Z}(n)))$ is uniquely divisible since $D_{\mathcal{Y},n}[1]$ is a complex of $\mathbb{Q}$-vector spaces. On the other hand, the abelian group $$\mbox{Hom}_{\mathcal{D}}(R\Gamma_W(\overline{\mathcal{Y}},\mathbb{Z}(n)),
R\Gamma_W(\overline{\mathcal{X}},\mathbb{Z}(n)))$$ is finitely generated
as it follows from the spectral sequence
$$\prod_{i\in\mathbb{Z}}\mbox{Ext}^p(H^i_W(\overline{\mathcal{Y}},\mathbb{Z}(n)),H^{q+i}_W(\overline{\mathcal{X}},\mathbb{Z}(n)))
\Rightarrow H^{p+q}(R\mathrm{Hom}(R\Gamma_W(\overline{\mathcal{X}},\mathbb{Z}(n)),R\Gamma_W(\overline{\mathcal{X}},\mathbb{Z}(n))))$$
since $R\Gamma_W(\overline{\mathcal{X}},\mathbb{Z}(n))$ and $R\Gamma_W(\overline{\mathcal{Y}},\mathbb{Z}(n))$ are both perfect. Hence the morphism
$$\mbox{Hom}_{\mathcal{D}}(R\Gamma_W(\overline{\mathcal{Y}},\mathbb{Z}(n)),
R\Gamma_W(\overline{\mathcal{X}},\mathbb{Z}(n)))\rightarrow
\mbox{Hom}_{\mathcal{D}}
(R\Gamma(\overline{\mathcal{Y}}_{et},\mathbb{Z}(n)),R\Gamma_W(\overline{\mathcal{X}},\mathbb{Z}(n)))$$
is injective. It follows that there exists a unique morphism $f_W^*$ which renders the square
\[ \xymatrix{
 R\Gamma(\overline{\mathcal{Y}}_{et},\mathbb{Z}(n))\ar[r]\ar[d]_{f^*_{et}}& R\Gamma_W(\overline{\mathcal{Y}},\mathbb{Z}(n))\ar[d]_{f^*_{W}} \\ R\Gamma(\overline{\mathcal{X}}_{et},\mathbb{Z}(n))\ar[r]& R\Gamma_W(\overline{\mathcal{X}},\mathbb{Z}(n))
}
\]
commutative.
\end{proof}

\begin{cor}\label{cor-unicity}
$R\Gamma_W(\overline{\mathcal{X}},\mathbb{Z}(n))$ is well defined up to a unique isomorphism in $\mathcal{D}$.
\end{cor}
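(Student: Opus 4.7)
The plan is to deduce this directly from Theorem \ref{cor-functoriality} applied to the identity morphism $f = \mathrm{id}_{\mathcal{X}}$. Since ${\bf AV}(\mathrm{id}_{\mathcal{X}},n)$ is trivially satisfied (it is the identity compatibility, automatic from the construction of the pairing on $\mathcal{X}$ itself), the hypotheses of Theorem \ref{cor-functoriality} hold when both source and target of $f$ are taken to be $\mathcal{X}$.

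Concretely, I would argue as follows. Let $C_1$ and $C_2$ be two complexes fitting into defining exact triangles
\[ D_{\mathcal{X},n} \xrightarrow{\alpha_{\mathcal{X},n}} R\Gamma(\overline{\mathcal{X}}_{et},\mathbb{Z}(n)) \to C_j \to D_{\mathcal{X},n}[1], \quad j=1,2, \]
where $D_{\mathcal{X},n} := R\mathrm{Hom}(R\Gamma(\mathcal{X},\mathbb{Q}(d-n)),\mathbb{Q}[-\delta])$ and the first two arrows are the same in both triangles. Applying Theorem \ref{cor-functoriality} to $\mathrm{id}_{\mathcal{X}}$, with the choice $C_2$ on the source and $C_1$ on the target, yields a \emph{unique} morphism $\phi_{21} : C_2 \to C_1$ in $\mathcal{D}$ lifting the identity of $R\Gamma(\overline{\mathcal{X}}_{et},\mathbb{Z}(n))$ and of $D_{\mathcal{X},n}$. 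Swapping roles produces a unique $\phi_{12} : C_1 \to C_2$ with the same property.

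Applying the uniqueness statement once more with both source and target equal to $C_1$ shows that there is a unique endomorphism of $C_1$ lifting the identity; but both $\mathrm{id}_{C_1}$ and $\phi_{21}\circ \phi_{12}$ satisfy this, so $\phi_{21}\circ \phi_{12} = \mathrm{id}_{C_1}$. By symmetry $\phi_{12}\circ \phi_{21} = \mathrm{id}_{C_2}$, hence $\phi_{12}$ is an isomorphism, and it is the unique one compatible with the triangle data. Thus $R\Gamma_W(\overline{\mathcal{X}},\mathbb{Z}(n))$ is well-defined up to a unique isomorphism in $\mathcal{D}$.

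The only step requiring care is verifying that the uniqueness part of Theorem \ref{cor-functoriality} genuinely applies to $f = \mathrm{id}_{\mathcal{X}}$; this is where the key input is the divisibility/finite-generation dichotomy already exploited in that proof (the group $\mathrm{Hom}_{\mathcal{D}}(D_{\mathcal{X},n}[1], C_1)$ is uniquely divisible while $\mathrm{Hom}_{\mathcal{D}}(C_2, C_1)$ is finitely generated, forcing injectivity on the relevant $\mathrm{Hom}$ sequence). No obstruction arises since no new cohomological input beyond Theorem \ref{cor-functoriality} is needed.
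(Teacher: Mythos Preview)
Your proof is correct and follows essentially the same approach as the paper: apply Theorem \ref{cor-functoriality} to the identity morphism $\mathrm{id}_{\mathcal{X}}$. The paper states this in a single sentence, while you have spelled out explicitly why the resulting unique map is an isomorphism (by constructing both $\phi_{12}$ and $\phi_{21}$ and using uniqueness again to identify the compositions with the identity); this extra detail is fine and makes the argument self-contained.
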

\begin{proof}
Let $ R\Gamma_W(\overline{\mathcal{X}},\mathbb{Z}(n))$ and $ R\Gamma_{W}(\overline{\mathcal{X}},\mathbb{Z}(n))'$
be two complexes as above. By Theorem \ref{cor-functoriality}, the identity map $Id:\mathcal{X}\rightarrow\mathcal{X}$ induces a unique isomorphism $R\Gamma_W(\overline{\mathcal{X}},\mathbb{Z}(n))
\simeq R\Gamma_W(\overline{\mathcal{X}},\mathbb{Z}(n))'$ in $\mathcal{D}$.

\end{proof}

\begin{rem}
Following \cite{Geisser10} we denote by $\mathbb{Z}^c(n)$ the cycle complex with homological indexing and we keep $\mathbb{Z}(n)$ for the cycle complex with cohomological indexing, so that $\mathbb{Z}^c(n)=\mathbb{Z}(d-n)[2d]$ over a regular scheme of pure dimension $d$. We set $\mathbb{Z}^c(n)=\mathbb{Q}^c(n)$. One should think of $R\Gamma(\X,\mathbb{Q}^c(n))$ (resp. of $R\Gamma(\overline{\mathcal{X}}_{et},\mathbb{Z}(n))$) as Borel-Moore homology (resp. as \'etale motivic cohomology). Then, for any regular proper arithmetic scheme $\X$ (not necessarily connected nor pure dimensional) satisfying our standard assumptions, one may define the Weil-\'etale complex $R\Gamma_W(\overline{\X},\mathbb{Z}(n))$ by the exact triangle
$$R\mathrm{Hom}(R\Gamma(\mathcal{X},\mathbb{Q}^c(n)),\mathbb{Q}[-2])\rightarrow
R\Gamma(\overline{\mathcal{X}}_{et},\mathbb{Z}(n))\rightarrow R\Gamma_W(\overline{\mathcal{X}},\mathbb{Z}(n))\rightarrow$$
which is somewhat more natural. However, in order to avoid confusion, we use exclusively cohomological indexing for the cycle complex throughout this paper. Accordingly, we use the triangle of Definition \ref{defn-fg-cohomology} in order to define the Weil-\'etale complexes.

\end{rem}

\subsection{Rational coefficients}

\begin{cor}\label{prop-rational-decompo}
There is a canonical direct sum decomposition
$$R\Gamma_W(\overline{\mathcal{X}},\mathbb{Z}(n))\otimes\mathbb{Q}\stackrel{\sim}{\longrightarrow} R\Gamma(\mathcal{X},\mathbb{Q}(n))
\oplus R\mathrm{Hom}(R\Gamma(\mathcal{X},\mathbb{Q}(d-n)),\mathbb{Q}[-2d-1])$$
which is functorial with respect to flat morphisms of proper regular arithmetic schemes.
\end{cor}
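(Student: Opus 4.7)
The plan is to tensor the defining triangle of $R\Gamma_W(\overline{\X},\bz(n))$ from Definition~\ref{defn-fg-cohomology} with $\bq$ and show that the connecting morphism automatically vanishes in the derived category of $\bq$-vector spaces for degree reasons. First I would note that the canonical map $R\Gamma(\overline{\X}_{et},\bz(n))\to R\Gamma(\X_{et},\bz(n))$ has mapping fibre $R\Gamma_{\X_\infty}(\overline{\X},\bz(n))$ with finite $2$-torsion cohomology (as used in the reformulation of ${\bf L}(\overline{\X}_{et},n)$ above); hence rationally the canonical comparison yields an isomorphism $R\Gamma(\overline{\X}_{et},\bz(n))_\bq\cong R\Gamma(\X,\bq(n))$. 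Tensoring the triangle of Definition~\ref{defn-fg-cohomology} with $\bq$ therefore gives a canonical exact triangle of complexes of $\bq$-vector spaces
$$D_{\X,n}\xrightarrow{\alpha_{\X,n}\otimes\bq} R\Gamma(\X,\bq(n))\rightarrow R\Gamma_W(\overline{\X},\bz(n))_\bq\rightarrow D_{\X,n}[1],$$
where $D_{\X,n}:=R\Hom(R\Gamma(\X,\bq(d-n)),\bq[-2d-2])$.

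The key observation is then a disjoint-degree statement. Since $H^i(\X,\bq(m))\cong K_{2m-i}(\X)_\bq^{(m)}$ vanishes for $i>2m$, the complex $R\Gamma(\X,\bq(n))$ has cohomology concentrated in degrees $\leq 2n$, while by the same vanishing $H^i(D_{\X,n})=\Hom(H^{2d+2-i}(\X,\bq(d-n)),\bq)$ vanishes for $i<2n+2$. Every object of the derived category of $\bq$-vector spaces is (non-canonically) quasi-isomorphic to the direct sum of shifts of its cohomology, and Hom groups there decompose accordingly degreewise. Consequently any morphism between two such complexes with disjoint cohomological supports vanishes; in particular $\alpha_{\X,n}\otimes\bq=0$.

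An exact triangle in any triangulated category whose first arrow is zero splits as a direct sum, so we obtain an isomorphism
$$R\Gamma_W(\overline{\X},\bz(n))_\bq\xrightarrow{\sim} R\Gamma(\X,\bq(n))\oplus D_{\X,n}[1]\cong R\Gamma(\X,\bq(n))\oplus R\Hom(R\Gamma(\X,\bq(d-n)),\bq[-2d-1]).$$
The same cohomological concentration argument shows that $\Hom_{\mathcal{D}(\bq)}(D_{\X,n}[1],R\Gamma(\X,\bq(n)))=0$, so the section of the surjection $R\Gamma_W(\overline{\X},\bz(n))_\bq\to D_{\X,n}[1]$ is unique and the direct sum decomposition is canonical.

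Finally, for functoriality, let $f\colon\X\to\Y$ be a flat morphism. Theorem~\ref{cor-functoriality} provides a canonical morphism of the defining triangles, which after $\otimes\bq$ yields a morphism of the split triangles above. By the uniqueness of the splittings on both source and target, the induced map $R\Gamma_W(\overline{\Y},\bz(n))_\bq\to R\Gamma_W(\overline{\X},\bz(n))_\bq$ is necessarily the direct sum of the natural pullbacks on each summand. No real obstacle appears; the whole content is in the degree-range bookkeeping that forces the Ext group between the two summands to vanish in $\mathcal{D}(\bq)$.
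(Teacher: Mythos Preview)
Your proposal is correct and follows essentially the same approach as the paper: tensor the defining triangle with $\bq$, use that $R\Gamma(\X,\bq(n))$ is concentrated in degrees $\leq 2n$ while $D_{\X,n}$ is acyclic in degrees $\leq 2n+1$, and conclude that the relevant $\Hom$-groups in $\mathcal{D}(\bq)$ vanish, forcing a canonical splitting. The only presentational difference is in the functoriality step: the paper verifies commutativity of the square degree by degree on cohomology (splitting into the cases $i\geq 2n+1$ and $i<2n+1$), whereas you argue more abstractly via the morphism of triangles from Theorem~\ref{cor-functoriality} together with the vanishing of $\Hom_{\mathcal{D}(\bq)}(D_{\Y,n}[1],R\Gamma(\X,\bq(n)))$ to kill the off-diagonal component; both arguments amount to the same degree bookkeeping.
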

\begin{proof} Assume that $\mathcal{X}$ is connected of dimension $d$ and write $\delta=2d+2$. Applying $(-)\otimes\mathbb{Q}$ to the exact triangle of Definition \ref{defn-fg-cohomology}, we obtain an exact triangle
$$R\mathrm{Hom}(R\Gamma(\mathcal{X},\mathbb{Q}(d-n)),\mathbb{Q}[-\delta])\rightarrow
R\Gamma(\mathcal{X},\mathbb{Q}(n))
\rightarrow R\Gamma_W(\overline{\mathcal{X}},\mathbb{Z}(n))_{\mathbb{Q}}$$
We write $D:=R\mathrm{Hom}(R\Gamma(\mathcal{X},\mathbb{Q}(d-n)),\mathbb{Q}[-\delta])$ for brevity. The exact sequence
$$\mbox{Hom}_{\mathcal{D}}(D[1],R\Gamma(\mathcal{X},\mathbb{Q}(n)))\rightarrow \mbox{Hom}_{\mathcal{D}}(R\Gamma_W(\overline{\mathcal{X}},\mathbb{Z}(n))_{\mathbb{Q}},
R\Gamma(\mathcal{X},\mathbb{Q}(n)))$$
$$\rightarrow \mbox{Hom}_{\mathcal{D}}(R\Gamma(\mathcal{X},\mathbb{Q}(n)),R\Gamma(\mathcal{X},\mathbb{Q}(n)))\rightarrow \mbox{Hom}_{\mathcal{D}}(D,R\Gamma(\mathcal{X},\mathbb{Q}(n)))$$
yields an isomorphism $$\mbox{Hom}_{\mathcal{D}}(R\Gamma_W(\overline{\mathcal{X}},\mathbb{Z})_{\mathbb{Q}}
,R\Gamma(\mathcal{X},\mathbb{Q}(n)))\stackrel{\sim}{\rightarrow}
\mbox{Hom}_{\mathcal{D}}(R\Gamma(\mathcal{X},\mathbb{Q}(n)),R\Gamma(\mathcal{X},\mathbb{Q}(n))),$$
since $R\Gamma(\mathcal{X},\mathbb{Q}(n))$ a complex of $\mathbb{Q}$-vector spaces concentrated in degrees $\leq 2n$ and the complex $R\mathrm{Hom}(R\Gamma(\mathcal{X},\mathbb{Q}(d-n))_{\geq0},\mathbb{Q}[-\delta])$ is acyclic in degrees $\leq 2n+1$. This yields the canonical direct sum decomposition.

It remains to show that this direct sum decomposition is functorial. Let $\mathcal{X}\rightarrow\mathcal{Y}$ be a flat map between regular proper schemes. One may assume $\mathcal{X}$ and $\mathcal{Y}$ connected of dimension $d_\mathcal{X}$ and $d_\mathcal{Y}$ respectively. We set $\delta_{\mathcal{X}}=2d_\mathcal{X}+2$ and $\delta_{\mathcal{Y}}=2d_\mathcal{Y}+2$. Flat pull-back of cycles yields a map
$$u:R\Gamma(\mathcal{Y},\mathbb{Q}(n))\rightarrow R\Gamma(\mathcal{X},\mathbb{Q}(n))$$
while proper push-forward of cycles yields a map
$$v:R\mathrm{Hom}(R\Gamma(\mathcal{Y},\mathbb{Q}(d_{\mathcal{Y}}-n)),\mathbb{Q}[-\delta_{\mathcal{Y}}])\rightarrow R\mathrm{Hom}(R\Gamma(\mathcal{X},\mathbb{Q}(d_\mathcal{X}-n)),\mathbb{Q}[-\delta_{\mathcal{X}}]).$$
In order to show that the diagram of complexes of $\mathbb{Q}$-vector spaces
\[ \xymatrix{
R\Gamma_W(\overline{\mathcal{Y}},\mathbb{Z}(n))\otimes\mathbb{Q}\ar[r]\ar[d]^{f^*_W}&
R\Gamma(\mathcal{Y},\mathbb{Q}(n))
\oplus R\mathrm{Hom}(R\Gamma(\mathcal{Y},\mathbb{Q}(d_{\mathcal{Y}}-n)),\mathbb{Q}[-\delta_{\mathcal{Y}}])[1] \ar[d]^{(u,v)}\\
R\Gamma_W(\overline{\mathcal{X}},\mathbb{Z}(n))\otimes\mathbb{Q}\ar[r]&R\Gamma(\mathcal{X},\mathbb{Q}(n))
\oplus R\mathrm{Hom}(R\Gamma(\mathcal{X},\mathbb{Q}(d_{\mathcal{X}}-n)),\mathbb{Q}[-\delta_{\mathcal{X}}])[1]
}
\]
commutes in $\mathcal{D}$, it is enough to show that the following square
\[ \xymatrix{
H^i_W(\overline{\mathcal{Y}},\mathbb{Z}(n))\otimes\mathbb{Q}\ar[r]\ar[d]^{f^*_W}&
H^i(\mathcal{Y},\mathbb{Q}(n))
\oplus H^{\delta_{\mathcal{Y}}-(i+1)}(\mathcal{Y},\mathbb{Q}(d_{\mathcal{Y}}-n))^* \ar[d]^{(u,v)}\\
H^i_W(\overline{\mathcal{X}},\mathbb{Z}(n))\otimes\mathbb{Q}\ar[r]&H^i(\mathcal{X},\mathbb{Q}(n))
\oplus H^{\delta_{\mathcal{X}}-(i+1)}(\mathcal{X},\mathbb{Q}(d_{\mathcal{X}}-n))^*
}
\]
commutes for any $i\in\mathbb{Z}$, where $(-)^*$ denotes the dual $\mathbb{Q}$-vector space. The result is obvious for $i\geq 2n+1$ since we then have
$$H^i(\mathcal{Y},\mathbb{Q}(n))=H^i(\mathcal{X},\mathbb{Q}(n))=0.$$
Hence the diagram is commutative for $i\geq 2n+1$ by Theorem \ref{cor-functoriality}. For $i< 2n+1$, we have
$$H^{\delta_{\mathcal{Y}}-(i+1)}(\mathcal{Y},\mathbb{Q}(d_{\mathcal{Y}}-n))^*
=H^{\delta_{\mathcal{X}}-(i+1)}(\mathcal{X},\mathbb{Q}(d_{\mathcal{X}}-n))^*=0$$
and the horizontal maps in the square above are inverse isomorphisms to
$$H^{i}(\mathcal{Y},\mathbb{Q}(n))
\stackrel{\sim}{\longrightarrow}H^i_W(\overline{\mathcal{Y}},\mathbb{Z}(n))_\mathbb{Q}\mbox{ and }H^{i}(\mathcal{X},\mathbb{Q}(n))\stackrel{\sim}{\longrightarrow}
H^i_W(\overline{\mathcal{X}},\mathbb{Z}(n))_\mathbb{Q}
$$
respectively. Hence the result follows from the fact that
\[ \xymatrix{
H^i_W(\overline{\mathcal{Y}},\mathbb{Z}(n))\otimes\mathbb{Q}\ar[d]^{f^*_W}&
H^{i}(\mathcal{Y},\mathbb{Q}(n)) \ar[d]^{u}\ar[l]\\
H^i_W(\overline{\mathcal{X}},\mathbb{Z}(n))\otimes\mathbb{Q}& H^{i}(\mathcal{X},\mathbb{Q}(n))\ar[l]
}
\]
commutes, again by Theorem \ref{cor-functoriality}.

\end{proof}

\subsection{Torsion coefficients}
\begin{lem}\label{lem-finitecoef}
There is a canonical isomorphism
$$R\Gamma_W(\overline{\mathcal{X}},\mathbb{Z}(n))\otimes_{\mathbb{Z}}^L\mathbb{Z}/m\mathbb{Z}\stackrel{\sim}{\longrightarrow} R\Gamma(\overline{\mathcal{X}}_{et},\mathbb{Z}/m\mathbb{Z}(n)).
$$
which is functorial with respect to flat morphisms of proper regular arithmetic schemes.
\end{lem}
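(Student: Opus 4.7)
The plan is to derive the isomorphism directly from the defining exact triangle of $R\Gamma_W(\overline{\mathcal{X}},\mathbb{Z}(n))$ by tensoring with $\mathbb{Z}/m\mathbb{Z}$ and exploiting the fact that the fibre term is a complex of $\mathbb{Q}$-vector spaces, hence annihilated by $\otimes^L_\mathbb{Z}\mathbb{Z}/m\mathbb{Z}$.

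More precisely, the first step is to apply $-\otimes^L_\mathbb{Z}\mathbb{Z}/m\mathbb{Z}$ to the exact triangle
$$R\mathrm{Hom}(R\Gamma(\mathcal{X},\mathbb{Q}(d-n)),\mathbb{Q}[-2d-2])\to R\Gamma(\overline{\mathcal{X}}_{et},\mathbb{Z}(n))\to R\Gamma_W(\overline{\mathcal{X}},\mathbb{Z}(n))\to$$
of Definition \ref{defn-fg-cohomology}. Since $R\mathrm{Hom}(R\Gamma(\mathcal{X},\mathbb{Q}(d-n)),\mathbb{Q}[-2d-2])$ is a complex of uniquely divisible groups, tensoring it with $\mathbb{Z}/m\mathbb{Z}$ (derived or not) produces the zero object, so the middle map becomes an isomorphism
$$R\Gamma(\overline{\mathcal{X}}_{et},\mathbb{Z}(n))\otimes^L_\mathbb{Z}\mathbb{Z}/m\mathbb{Z}\xrightarrow{\sim}R\Gamma_W(\overline{\mathcal{X}},\mathbb{Z}(n))\otimes^L_\mathbb{Z}\mathbb{Z}/m\mathbb{Z}.$$
The second step is to identify the left hand side with $R\Gamma(\overline{\mathcal{X}}_{et},\mathbb{Z}/m\mathbb{Z}(n))$. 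Since $\mathbb{Z}/m\mathbb{Z}$ is a perfect $\mathbb{Z}$-module, the projection formula gives a natural isomorphism $R\Gamma(\overline{\mathcal{X}}_{et},\mathbb{Z}(n))\otimes^L_\mathbb{Z}\mathbb{Z}/m\mathbb{Z}\simeq R\Gamma(\overline{\mathcal{X}}_{et},\mathbb{Z}(n)\otimes^L\mathbb{Z}/m\mathbb{Z})$, and the right hand side equals $R\Gamma(\overline{\mathcal{X}}_{et},\mathbb{Z}/m\mathbb{Z}(n))$ by the very definition of $\mathbb{Z}/m\mathbb{Z}(n)$ recalled in Section \ref{sect-emc}. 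Composing the two identifications yields the desired canonical isomorphism.

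Functoriality under a flat morphism $f:\mathcal{X}\to\mathcal{Y}$ is inherited from the morphism of defining triangles constructed in Theorem \ref{cor-functoriality}: applying $-\otimes^L_\mathbb{Z}\mathbb{Z}/m\mathbb{Z}$ to that morphism kills the $\mathbb{Q}$-vector space columns and produces a commutative square whose horizontal arrows are exactly the isomorphisms above, and whose right vertical map is $f^*_W\otimes^L\mathbb{Z}/m\mathbb{Z}$ while the left vertical map is the \'etale pullback on $R\Gamma(\overline{\mathcal{X}}_{et},\mathbb{Z}/m\mathbb{Z}(n))$ from Proposition \ref{prop-funct-barX}. I do not anticipate a real obstacle here: the only mild subtlety is checking that the functorial pullback on the mod-$m$ Artin--Verdier \'etale cohomology agrees with the one induced by the integral pullback followed by $-\otimes^L\mathbb{Z}/m\mathbb{Z}$, which is immediate from naturality of the derived tensor product and of $R\Gamma(\overline{\mathcal{X}}_{et},-)$.
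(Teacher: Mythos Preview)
Your proposal is correct and follows essentially the same route as the paper: apply $-\otimes^L_{\mathbb{Z}}\mathbb{Z}/m\mathbb{Z}$ to the defining triangle of $R\Gamma_W(\overline{\mathcal{X}},\mathbb{Z}(n))$, use that the $\mathbb{Q}$-vector-space term dies, identify $R\Gamma(\overline{\mathcal{X}}_{et},\mathbb{Z}(n))\otimes^L\mathbb{Z}/m\mathbb{Z}\simeq R\Gamma(\overline{\mathcal{X}}_{et},\mathbb{Z}/m\mathbb{Z}(n))$, and read off functoriality from Theorem~\ref{cor-functoriality}. The paper presents the same argument via the $3\times 3$ diagram obtained from the cofibre of multiplication by $m$, but the content is identical.
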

\begin{proof}
Consider the exact triangle
$$R\mathrm{Hom}(R\Gamma(\mathcal{X},\mathbb{Q}(d-n)),\mathbb{Q}[-\delta])\rightarrow R\Gamma(\overline{\mathcal{X}}_{et},\mathbb{Z}(n))\rightarrow R\Gamma_W(\overline{\mathcal{X}},\mathbb{Z}(n))$$
Taking derived tensor product $-\otimes^L_{\mathbb{Z}}\mathbb{Z}/m\mathbb{Z}$
we obtain the following diagram
\[ \xymatrix{
R\mathrm{Hom}(R\Gamma(\mathcal{X},\mathbb{Q}(d-n)),\mathbb{Q}[-\delta])\ar[d]^n\ar[r]& R\Gamma(\overline{\mathcal{X}}_{et},\mathbb{Z}(n))\ar[r]\ar[d]^n& R\Gamma_W(\overline{\mathcal{X}},\mathbb{Z}(n))\ar[r]\ar[d]^n&\\
R\mathrm{Hom}(R\Gamma(\mathcal{X},\mathbb{Q}(d-n)),\mathbb{Q}[-\delta])\ar[r]\ar[d]& R\Gamma(\overline{\mathcal{X}}_{et},\mathbb{Z}(n))\ar[r]\ar[d]& R\Gamma_W(\overline{\mathcal{X}},\mathbb{Z}(n))\ar[r]\ar[d]&\\
0\ar[r]& R\Gamma(\overline{\mathcal{X}}_{et},\mathbb{Z}/m\mathbb{Z}(n))\ar[r]& R\Gamma_W(\overline{\mathcal{X}},\mathbb{Z}(n))\otimes^L_{\mathbb{Z}}\mathbb{Z}/m\mathbb{Z}\ar[r]&
}
\]
The upper and middle rows are both exact triangles and the colons are all exact as well. It follows that the bottom row is exact, hence the map
$$R\Gamma(\overline{\mathcal{X}}_{et},\mathbb{Z}/m\mathbb{Z})
\simeq R\Gamma(\overline{\mathcal{X}}_{et},\mathbb{Z}(n))\otimes^L_{\mathbb{Z}}\mathbb{Z}/m\mathbb{Z}\longrightarrow R\Gamma_W(\overline{\mathcal{X}},\mathbb{Z}(n))\otimes^L_{\mathbb{Z}}\mathbb{Z}/m\mathbb{Z},$$
which is functorial by Theorem \ref{cor-functoriality},
is an isomorphism in $\mathcal{D}$.
\end{proof}

\subsection{Relationship with the Lichtenbaum-Geisser definition over finite fields}\label{subsect-Licht}

Let $Y$ be a smooth proper scheme over a finite field $k$. We may assume that $Y$ is connected and $d$-dimensional. In this section, we show that the Weil-\'etale complex $R\Gamma_{W}(Y,\mathbb{Z}(n))$ defined in this paper is (expected to be) canonically isomorphic in the derived category to the Weil-\'etale complex defined in \cite{Geisser04b}, and we describe the relationship between the conjecture $\textbf{L}(Y_{et},n)$ stated in Section \ref{section-assumptions} and the conjecture $\textbf{L}(Y_{W},n)$ stated in \cite{Geisser04b}.

 We denote by $W_k$ the Weil group of the finite field $k$.
The Weil-\'etale topos $Y_W$ is the category of $W_k$-equivariant
sheaves of sets on the \'etale site of $Y\otimes_{k}\overline{k}$, where $\overline{k}/k$ is an algebraic closure.
By \cite{Geisser04b} one has an exact triangle in the derived category of $\mathcal{D}(Y_{et})$
$$\mathbb{Z}(n)\rightarrow R\gamma_*\mathbb{Z}(n)\rightarrow \mathbb{Q}(n)[-1]\rightarrow\mathbb{Z}(n)[1]$$
where $\gamma:Y_W\rightarrow Y_{et}$ is the canonical map. Applying $R\Gamma(Y_{et},-)$ and rotating, we get
$$R\Gamma(Y_{et},\mathbb{Q}(n)[-2])\rightarrow R\Gamma(Y_{et},\mathbb{Z}(n))\rightarrow R\Gamma(Y_W,\mathbb{Z}(n))\rightarrow R\Gamma(Y_{et},\mathbb{Q}(n)[-2])[1].$$ The following conjecture is due to T. Geisser and S. Lichtenbaum.
\begin{conj}
$\textbf{\emph{L}}(Y_W,n)$ For every $i$, the group $H^i(Y_W,\mathbb{Z}(n))$ is finitely generated.
\end{conj}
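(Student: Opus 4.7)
My plan is to show that $\mathbf{L}(Y_W, n)$ follows from the standard assumptions of this section ($\mathbf{L}(Y_{et}, n)$ and $\mathbf{L}(Y_{et}, d-n)$, plus $\mathbf{AV}(Y_{et}, n)$, which is known unconditionally for $Y$ smooth over a finite field), by identifying the Geisser-Lichtenbaum complex $R\Gamma(Y_W, \mathbb{Z}(n))$ with the Weil-\'etale complex $R\Gamma_W(\overline{Y}, \mathbb{Z}(n))$ of Definition \ref{defn-fg-cohomology}. Once the identification is made, Proposition \ref{finitelygenerated-cohomology} delivers finite generation in every degree.

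The first step is to observe that for $Y$ proper smooth over a finite field, $Y(\mathbb{R}) = \emptyset$, so $\overline{Y}_{et} = Y_{et}$ and $R\Gamma(\overline{Y}_{et}, \mathbb{Z}(n)) = R\Gamma(Y_{et}, \mathbb{Z}(n))$. The second step is to use Poincar\'e duality for smooth proper varieties over finite fields --- equivalently, the trace isomorphism $H^{2d+1}(Y_{et}, \mathbb{Q}(d)) \xrightarrow{\sim} \mathbb{Q}$ combined with a perfect cup-product pairing on rational motivic cohomology --- to produce a canonical isomorphism
\[ R\Gamma(Y_{et}, \mathbb{Q}(n))[-2] \xrightarrow{\sim} R\mathrm{Hom}(R\Gamma(Y_{et}, \mathbb{Q}(d-n)), \mathbb{Q}[-2d-2]). \]
Under these two identifications, the outer terms of the Geisser-Lichtenbaum triangle
\[ R\Gamma(Y_{et}, \mathbb{Q}(n))[-2] \to R\Gamma(Y_{et}, \mathbb{Z}(n)) \to R\Gamma(Y_W, \mathbb{Z}(n)) \to \]
match those of the defining triangle
\[ R\mathrm{Hom}(R\Gamma(Y, \mathbb{Q}(d-n)), \mathbb{Q}[-2d-2]) \to R\Gamma(\overline{Y}_{et}, \mathbb{Z}(n)) \to R\Gamma_W(\overline{Y}, \mathbb{Z}(n)) \to. \]

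The third and main step is to check that the connecting map $\alpha_{Y,n}$ of Theorem \ref{thm-alpha} coincides, under Poincar\'e duality, with the natural map in the Geisser-Lichtenbaum triangle. By the characterization of $\alpha_{Y, n}$ on cohomology in degrees $\geq 2n+2$ as the composite built from the Artin-Verdier pairing, this reduces to checking that the integral Artin-Verdier trace map and the rational Poincar\'e duality trace are compatible on a smooth proper $Y/k$. Given such a compatibility, the uniqueness argument in the proof of Corollary \ref{cor-unicity} (using that $R\Gamma_W(\overline{Y}, \mathbb{Z}(n))$ is perfect whereas the fibre term is a $\mathbb{Q}$-complex) supplies a canonical isomorphism $R\Gamma_W(\overline{Y}, \mathbb{Z}(n)) \xrightarrow{\sim} R\Gamma(Y_W, \mathbb{Z}(n))$, after which finite generation is immediate.

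The main obstacle is the trace compatibility in the third step: one must verify that the trace governing Artin-Verdier duality with finite coefficients and the trace governing Poincar\'e duality with $\mathbb{Q}$-coefficients are induced by the same underlying class $H^{2d+1}(Y_{et}, \mathbb{Z}(d)) \to \mathbb{Q}/\mathbb{Z}$, i.e.\ the boundary of a rational trace. I expect this to be a routine but delicate check, reducing modulo $m$ and invoking the standard compatibility of cycle-class maps with the $\ell$-adic and $p$-adic trace maps (Milne for the $p$-primary part via $\mathbb{Z}/p^r(n) \simeq \nu_r^n[-n]$, and the classical $\ell$-adic Poincar\'e duality elsewhere).
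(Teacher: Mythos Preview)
The statement you are trying to prove is a \emph{Conjecture} (due to Geisser and Lichtenbaum), not a theorem, and the paper gives no proof of it. What the paper does prove (Theorem~\ref{thm-comparison-char-p} and Corollary~\ref{cor-comparison-conject-Let-vs-LW}) is the conditional implication
\[
\mathbf{L}(Y_{et},n)+\mathbf{L}(Y_{et},d-n)+\mathbf{P}(Y,n)\ \Longrightarrow\ \mathbf{L}(Y_W,n)+\mathbf{L}(Y_W,d-n),
\]
and your proposal is essentially an attempt to reproduce that argument. The approach---identify the Geisser--Lichtenbaum triangle with the defining triangle of $R\Gamma_W(\overline{Y},\mathbb{Z}(n))$ via a rational duality isomorphism, then invoke Proposition~\ref{finitelygenerated-cohomology}---is exactly what the paper does. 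However, there is a genuine gap in your hypotheses.

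You claim that the ``perfect cup-product pairing on rational motivic cohomology'' supplies the isomorphism
\[
R\Gamma(Y_{et},\mathbb{Q}(n))[-2]\ \xrightarrow{\ \sim\ }\ R\mathrm{Hom}\bigl(R\Gamma(Y,\mathbb{Q}(d-n)),\mathbb{Q}[-2d-2]\bigr).
\]
But this perfect pairing is \emph{not known}: after one uses $\mathbf{L}(Y_{et},n)+\mathbf{L}(Y_{et},d-n)$ (together with Gabber's finiteness) to reduce $R\Gamma(Y,\mathbb{Q}(n))$ to $CH^n(Y)_\mathbb{Q}[-2n]$, the map above becomes the intersection pairing $CH^n(Y)_\mathbb{Q}\times CH^{d-n}(Y)_\mathbb{Q}\to\mathbb{Q}$, and its perfection is precisely Conjecture~$\mathbf{P}(Y,n)$ (a special case of Beilinson's ``numerical $=$ rational'' conjecture). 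The paper explicitly assumes $\mathbf{P}(Y,n)$ in Theorem~\ref{thm-comparison-char-p}; you cannot drop it. (Incidentally, your ``trace isomorphism $H^{2d+1}(Y_{et},\mathbb{Q}(d))\xrightarrow{\sim}\mathbb{Q}$'' is a degree slip: that group vanishes, and the trace lives in degree $2d$ via $CH^d(Y)_\mathbb{Q}\cong\mathbb{Q}$.) Finally, the compatibility you describe in step three is in the paper handled not via trace maps but by matching the map $\alpha_{Y,n}$ in degree $2n+2$ with Geisser's differential $d_2^{2n,1}$ from \cite{Geisser04b}; your proposed reduction to trace compatibilities is plausible but would still require that identification.
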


The following conjecture is due to T. Geisser and B. Kahn (see \cite{Kahn03} and \cite{Geisser04b}). Consider the map $c_l:R\gamma_*\mathbb{Z}(n)\otimes \mathbb{Z}_l\rightarrow R\mathrm{lim}\,\mathbb{Z}/l^r\mathbb{Z}(n)$ in $\mathcal{D}(Y_{et})$ \cite{Geisser04b}.
\begin{conj}
$\textbf{\emph{K}}(Y_W,n)$ For every prime $l$ and any $i\in\mathbb{Z}$, the map $c_l$ induces an
isomorphism
$$H^i (Y_W,\mathbb{Z}(n))\otimes\mathbb{Z}_l\simeq H^i_{cont}(Y,\mathbb{Z}_l(n)).$$
\end{conj}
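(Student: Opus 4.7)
The plan is to attack Conjecture $\mathbf{K}(Y_W,n)$ by reducing modulo $l^r$, invoking the known comparisons between motivic and $l$-adic cohomology, and then passing to the limit. Under $\mathbf{L}(Y_W,n)$, each $H^i(Y_W,\mathbb{Z}(n))$ is finitely generated, so its $l$-adic completion coincides with $H^i(Y_W,\mathbb{Z}(n))\otimes\mathbb{Z}_l$, and it suffices to produce a compatible family of isomorphisms $H^i(Y_W,\mathbb{Z}/l^r(n))\simeq H^i(Y_{et},\mathbb{Z}/l^r(n)_{cts})$ and to check that the resulting map recovers $c_l$.

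First I would exploit the defining triangle $\mathbb{Z}(n)\to R\gamma_*\mathbb{Z}(n)\to \mathbb{Q}(n)[-1]$ in $\mathcal{D}(Y_{et})$. After applying $-\otimes^L\mathbb{Z}/l^r$ the $\mathbb{Q}(n)[-1]$ term is killed, which yields a canonical quasi-isomorphism $R\gamma_*\mathbb{Z}(n)\otimes^L\mathbb{Z}/l^r\simeq \mathbb{Z}/l^r(n)$ on $Y_{et}$ (and hence $R\Gamma(Y_W,\mathbb{Z}(n))\otimes^L\mathbb{Z}/l^r\simeq R\Gamma(Y_{et},\mathbb{Z}/l^r(n))$). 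For $l\neq p=\mathrm{char}(k)$ I would then use the Beilinson--Lichtenbaum theorem (equivalently the norm residue isomorphism of Voevodsky--Rost) to identify $\mathbb{Z}/l^r(n)\simeq \mu_{l^r}^{\otimes n}$ on the small \'etale site of the smooth variety $Y$, giving $H^i(Y_{et},\mathbb{Z}/l^r(n))\simeq H^i(Y_{et},\mu_{l^r}^{\otimes n})$ in all degrees. For $l=p$ the analogous input is the Geisser--Levine theorem $\mathbb{Z}/p^r(n)\simeq \nu_r^n[-n]$, whose \'etale hypercohomology computes the logarithmic de Rham--Witt (i.e.~crystalline-type) cohomology that one takes as the definition of $H^i_{cont}(Y,\mathbb{Z}_p(n))$.

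Second, I would pass to the inverse limit. Finite generation of $H^{i+1}(Y_W,\mathbb{Z}(n))$ from $\mathbf{L}(Y_W,n)$ gives a Mittag--Leffler condition on the short exact sequences $0\to H^i(Y_W,\mathbb{Z}(n))/l^r\to H^i(Y_W,\mathbb{Z}/l^r(n))\to {}_{l^r}H^{i+1}(Y_W,\mathbb{Z}(n))\to 0$, forcing $\varprojlim{}^1$ to vanish and producing an isomorphism $H^i(Y_W,\mathbb{Z}(n))\otimes\mathbb{Z}_l\stackrel{\sim}{\to}\varprojlim_r H^i(Y_W,\mathbb{Z}/l^r(n))$. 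Combined with the step above this limit target is identified with $H^i_{cont}(Y,\mathbb{Z}_l(n))$, and the definition of $c_l$ ensures compatibility of the identifications.

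The main obstacle is the interplay between $\mathbf{L}(Y_W,n)$ and this conjecture: without finite generation, the $l$-adic statement and the integral statement decouple, since neither the Mittag--Leffler vanishing nor the identification $H^i\otimes\mathbb{Z}_l = \varprojlim H^i/l^r$ is automatic. In effect one is pushed into proving $\mathbf{L}$ and $\mathbf{K}$ together, which is exactly the posture taken in \cite{Geisser04b} and \cite{Kahn03}: both reduce, for smooth projective $Y$ over $\mathbb{F}_q$, to the Tate conjecture plus rational-equals-numerical equivalence (the Beilinson conjectures over finite fields), so the realistic route is to establish $\mathbf{K}(Y_W,n)$ conditionally on those motivic conjectures rather than unconditionally.
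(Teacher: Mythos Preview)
The statement $\mathbf{K}(Y_W,n)$ is a \emph{conjecture}; the paper attributes it to Geisser and Kahn, states it, and gives no proof. There is nothing in the paper to compare your argument to.

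What you have actually sketched is the implication $\mathbf{L}(Y_W,n)\Rightarrow\mathbf{K}(Y_W,n)$, which appears immediately afterwards as one half of Geisser's theorem (quoted by the paper without reproducing the argument). Your outline of that implication is sound: reducing the defining triangle modulo $l^r$ kills the $\mathbb{Q}(n)[-1]$ term and identifies Weil-\'etale with \'etale $\mathbb{Z}/l^r(n)$-cohomology; Beilinson--Lichtenbaum (for $l\neq p$) and Geisser--Levine (for $l=p$) then give the comparison with the continuous theory at each finite level; and finite generation from $\mathbf{L}(Y_W,n)$ handles the passage to the limit. You acknowledge in your final paragraph that this is conditional.

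So the only gap is the one you name yourself: without $\mathbf{L}(Y_W,n)$ (equivalently, without Tate plus Beilinson over $\mathbb{F}_q$), $\mathbf{K}(Y_W,n)$ remains open, and the paper does not claim otherwise.
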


Consider the full subcategory $A(k)$ of the category of smooth projective varieties over the finite field $k$ generated by product of curves and the following operations:

(1) If $X$ and $Y$ are in $A(k)$ then $X\coprod Y$ is in $A(k)$.

(2) If $Y$ is in $A(k)$ and there are morphisms $c:X\rightarrow Y$ and $c':Y\rightarrow X$ in the category of Chow motives, such that $c'\circ c:X\rightarrow X$ is multiplication by a constant, then $X$ is in $A(k)$.

(3) If $k'/k$ is a finite extension and $X\times_kk'$ is in $A(k)$, then $X$ is in $A(k)$.

(4) If $Y$ is a closed subscheme of $X$ with $X$ and $Y$ in $A(k)$, then the blow-up $X'$ of $X$ along $Y$ is in $A(k)$.

The following result is due to T. Geisser \cite{Geisser04b}.
\begin{thm}\label{GeisserTHM} (Geisser)
Let $Y$ be a smooth projective variety of dimension $d$.

- One has $\textbf{\emph{K}}(Y_W,n)+\textbf{\emph{K}}(Y_W,d-n)\Rightarrow \textbf{\emph{L}}(Y_W,n)\Rightarrow \textbf{\emph{K}}(Y_W,n)$.

- Moreover, if $Y$ belongs to $A(k)$ then $\textbf{\emph{L}}(Y_W,n)$ holds for $n\leq1$ and $n\geq d-1$.
\end{thm}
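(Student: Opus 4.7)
The plan is to leverage the exact triangle $\mathbb{Z}(n) \to R\gamma_*\mathbb{Z}(n) \to \mathbb{Q}(n)[-1]$ in $\mathcal{D}(Y_{et})$ together with the comparison map $c_l$. First I would extract the basic torsion-coefficient identity: since $\mathbb{Q}(n)[-1]$ is uniquely divisible, tensoring the triangle with $\mathbb{Z}/l^r$ gives $R\gamma_*\mathbb{Z}(n) \otimes^L \mathbb{Z}/l^r \simeq \mathbb{Z}/l^r(n)$ in $\mathcal{D}(Y_{et})$, so that $H^i(Y_W,\mathbb{Z}/l^r(n)) = H^i(Y_{et},\mathbb{Z}/l^r(n))$. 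The universal coefficient sequence then reads
\[
0 \to H^i(Y_W,\mathbb{Z}(n))/l^r \to H^i(Y_{et},\mathbb{Z}/l^r(n)) \to {}_{l^r}H^{i+1}(Y_W,\mathbb{Z}(n)) \to 0,
\]
and its inverse limit over $r$ is the source of $c_l$.

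For $\mathbf{L}(Y_W,n) \Rightarrow \mathbf{K}(Y_W,n)$, if every $H^i(Y_W,\mathbb{Z}(n))$ is finitely generated then the transition maps $H^i(Y_W,\mathbb{Z}(n))/l^{r+1} \to H^i(Y_W,\mathbb{Z}(n))/l^{r}$ are eventually surjective and the towers $\{{}_{l^r}H^{i+1}(Y_W,\mathbb{Z}(n))\}$ stabilize, killing $\mathrm{lim}^1$. The limit of the sequence above, combined with $H^i(Y_W,\mathbb{Z}(n)) \otimes \mathbb{Z}_l = \varprojlim H^i(Y_W,\mathbb{Z}(n))/l^r$ for finitely generated groups, identifies the source of $c_l$ with $H^i_{cont}(Y,\mathbb{Z}_l(n))$.

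For the converse $\mathbf{K}(Y_W,n) + \mathbf{K}(Y_W,d-n) \Rightarrow \mathbf{L}(Y_W,n)$, I would combine Deligne's theorem (that $H^j_{cont}(Y,\mathbb{Z}_l(m))$ is a finitely generated $\mathbb{Z}_l$-module, vanishing for $j \notin [0,2d]$, and torsion-free for all but finitely many $l$) with the Weil-étale Poincaré duality pairing
\[
H^i(Y_W,\mathbb{Z}(n)) \times H^{2d+1-i}(Y_W,\mathbb{Z}(d-n)) \to H^{2d+1}(Y_W,\mathbb{Z}(d)) \cong \mathbb{Z}.
\]
Conjecture $\mathbf{K}$ at $n$ makes each $H^i(Y_W,\mathbb{Z}(n)) \otimes \mathbb{Z}_l$ finitely generated over $\mathbb{Z}_l$ and vanishing outside a bounded range; conjecture $\mathbf{K}$ at $d-n$ plus duality controls the torsion subgroup of $H^i(Y_W,\mathbb{Z}(n))$ via the dual statement on $H^{2d+1-i}(Y_W,\mathbb{Z}(d-n))$. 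Assembling these prime-by-prime inputs into a single finitely generated $\mathbb{Z}$-module is the main technical step, and the main obstacle is verifying integral (not merely $l$-adic) non-degeneracy of the duality pairing; this is precisely the point where Geisser's construction of the duality pairing in the Weil-étale topos is used.

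For part 2, I would verify $\mathbf{L}(Y_W,n)$ on smooth projective curves and then propagate along the four closure operations defining $A(k)$. On a curve $C$, the relevant range reduces to $n \in \{0,1\}$ and the Weil-étale cohomology groups are directly computable in terms of $\mathrm{Pic}(C)$, units, and finite groups, all finitely generated. Operation (1) is trivial; operation (4) is handled by the blowup formula in motivic cohomology, which expresses $R\Gamma_W(X',\mathbb{Z}(n))$ as an extension of Weil-étale cohomologies of $X$ and of $Y$ twisted at lower weight, all staying in the allowed weight range; operation (3) is Hochschild-Serre descent, where restriction-followed-by-transfer is multiplication by $[k':k]$, yielding finite generation over $k$ from finite generation over $k'$; operation (2) uses that the correspondences $c,c'$ act integrally on Weil-étale motivic cohomology (via the pairing formalism of \cite{Geisser04b}) and $c'\circ c = N\cdot\mathrm{id}$ exhibits $H^i_W(X,\mathbb{Z}(n))$ as a summand, up to $N$-torsion, of $H^i_W(Y,\mathbb{Z}(n))$. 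The delicate point throughout is that integral (rather than rational) finite generation must be tracked, which forces one to supplement the Chow-motivic summand argument with torsion control coming from the étale side.
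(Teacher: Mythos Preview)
The paper does not prove this theorem: it is stated as a result of Geisser and attributed to \cite{Geisser04b} without argument. There is therefore nothing in the paper to compare your proposal against.

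That said, your sketch is broadly along the lines of Geisser's actual arguments, with two caveats. First, in part 2 the generating objects of $A(k)$ are \emph{products of curves}, not single curves, so the base case must be established for arbitrary products $C_1 \times \cdots \times C_r$; this is where one needs the Weil conjectures and Soul\'e's work on the relevant $K$-groups, not just the elementary computation for a single curve. Second, in the implication $\mathbf{K}(Y_W,n)+\mathbf{K}(Y_W,d-n)\Rightarrow\mathbf{L}(Y_W,n)$ you correctly identify integral Weil-\'etale duality as the crux, but note that Geisser's argument does not quite proceed by first establishing a perfect integral pairing and then deducing finite generation; rather, finite generation and the duality are intertwined, with the $l$-adic input from $\mathbf{K}$ used to control ranks and torsion simultaneously via the arithmetic short exact sequences for Weil-\'etale cohomology over finite fields.
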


\begin{prop}\label{prop-equiconjectures-char-p}
Let $Y$ be a connected smooth projective scheme over a finite field of dimension $d$. Then we have
$$\textbf{\emph{L}}(Y_W,n)\Rightarrow \textbf{\emph{L}}(Y_{et},n).$$
\end{prop}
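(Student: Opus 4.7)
The plan is to deduce $\mathbf{L}(Y_{et},n)$ from the long exact sequence attached to the triangle $R\Gamma(Y_{et},\bq(n))[-2] \to R\Gamma(Y_{et},\bz(n)) \to R\Gamma(Y_W,\bz(n))$ displayed above, namely
$$H^{i-2}(Y_{et},\bq(n)) \xrightarrow{f_i} H^i(Y_{et},\bz(n)) \xrightarrow{g_i} H^i(Y_W,\bz(n)) \xrightarrow{h_i} H^{i-1}(Y_{et},\bq(n)).$$
Under $\mathbf{L}(Y_W,n)$, the subgroup $\mathrm{im}(g_i) \subseteq H^i(Y_W,\bz(n))$ is finitely generated, while $\mathrm{im}(f_i)$ is a quotient of a $\bq$-vector space, hence divisible. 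A nonzero divisible subgroup obstructs finite generation, so $H^i(Y_{et},\bz(n))$ is finitely generated iff $\mathrm{im}(f_i)=0$, iff $\ker(f_i) = \mathrm{im}(h_{i-1})$ is all of $H^{i-2}(Y_{et},\bq(n))$. But $\mathrm{im}(h_{i-1})$ is a finitely generated subgroup of the $\bq$-vector space $H^{i-2}(Y_{et},\bq(n))$, which can equal the whole space only if the space is zero. So the proof reduces to the vanishing $H^{i-2}(Y_{et},\bq(n)) = 0$ for $i \le 2n+1$, and more broadly to Parshin's conjecture in weight $n$: $H^j(Y_{et},\bq(n)) = 0$ for every $j \ne 2n$ (note $i-2 \ne 2n$ in our range).

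To derive Parshin from $\mathbf{L}(Y_W,n)$, I would combine two ingredients. First, by Geisser's Theorem \ref{GeisserTHM}, $\mathbf{L}(Y_W,n) \Rightarrow \mathbf{K}(Y_W,n)$, so $H^i(Y_W,\bz(n)) \otimes \bq_l \simeq H^i_{cont}(Y,\bq_l(n))$ for every prime $l$. Second, over a finite field there is a canonical rational splitting
$$R\gamma_*\bq(n) \simeq \bq(n) \oplus \bq(n)[-1] \quad \text{in } \mathcal{D}(Y_{et}),$$
arising from the fact that for any $\bq$-vector space $V$ carrying a continuous $G_k$-action the cone of $R\Gamma(W_k,V) \to R\Gamma(G_k,V)$ is $V_F[-1]$ (the discrete-vs.-profinite discrepancy between $W_k = \bz$ and $G_k = \widehat{\bz}$). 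This yields $H^i(Y_W,\bz(n))_\bq \simeq H^i(Y_{et},\bq(n)) \oplus H^{i-1}(Y_{et},\bq(n))$. Deligne's purity theorem, applied through the Hochschild--Serre spectral sequence for $G_k = \widehat{\bz}$, forces $H^i_{cont}(Y,\bq_l(n)) = 0$ for $i \notin \{2n,2n+1\}$. Combining, $H^j(Y_{et},\bq(n)) \otimes \bq_l = 0$ unless $j \in \{2n,2n+1\} \cap \{2n-1,2n\} = \{2n\}$; since $H^j(Y_{et},\bq(n))$ is already a $\bq$-vector space, vanishing after $\otimes \bq_l$ forces vanishing, giving Parshin.

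With Parshin available, $H^{i-2}(Y_{et},\bq(n)) = 0$ for $i \le 2n+1$, so $f_i = 0$ and $g_i$ is injective. Therefore $H^i(Y_{et},\bz(n))$ embeds into the finitely generated group $H^i(Y_W,\bz(n))$ and is itself finitely generated, completing the proof. The main obstacle I anticipate is the careful justification of the rational splitting $R\gamma_*\bq(n) \simeq \bq(n) \oplus \bq(n)[-1]$; once that is in place, the rest is a formal consequence of Geisser's theorem and standard weight arguments for smooth projective varieties over finite fields.
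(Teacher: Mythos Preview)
Your proof is correct and follows essentially the same route as the paper's: both use Geisser's exact sequence together with the rational splitting $H^i(Y_W,\bq(n))\simeq H^i(Y_{et},\bq(n))\oplus H^{i-1}(Y_{et},\bq(n))$, invoke $\mathbf{K}(Y_W,n)$ (via Theorem~\ref{GeisserTHM}) and a weight argument to kill $H^i(Y_W,-)_\bq$ outside degrees $2n,2n+1$, deduce Parshin in weight $n$, and then read off injectivity of $H^i(Y_{et},\bz(n))\hookrightarrow H^i(Y_W,\bz(n))$ for $i\le 2n+1$. The only cosmetic differences are that the paper cites Gabber's finiteness of $H^i_{cont}(Y,\bz_l(n))$ for $i\neq 2n,2n+1$ where you invoke Deligne's purity for $H^i_{cont}(Y,\bq_l(n))$ (equivalent for the purpose at hand), and that your heuristic for the rational splitting via ``discrete-vs.-profinite'' is a bit garbled (the map goes $R\Gamma(G_k,-)\to R\Gamma(W_k,-)$, not the other way), but the splitting itself is exactly the paper's equation~(\ref{geisser-splitting}), asserted there just as summarily.
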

\begin{proof}
By \cite{Geisser04b}, one has an exact sequence
\begin{equation}\label{geisser-sequence}
...\rightarrow H^i(Y_{et},\mathbb{Z}(n))\rightarrow H^i(Y_W,\mathbb{Z}(n))\rightarrow H^{i-1}(Y_{et},\mathbb{Q}(n))\rightarrow H^{i+1}(Y_{et},\mathbb{Z}(n))\rightarrow...
\end{equation}
With rational coefficients, this exact sequence yields isomorphisms
\begin{equation}\label{geisser-splitting}
H^i(Y_W,\mathbb{Z}(n))\otimes\mathbb{Q}\simeq H^i(Y_W,\mathbb{Q}(n))\simeq H^i(Y_{et},\mathbb{Q}(n))\oplus H^{i-1}(Y_{et},\mathbb{Q}(n)).
\end{equation}

Assume now that Conjecture $\textbf{L}(Y_W,n)$ holds. Let us first show that $H^i(Y_W,\mathbb{Z}(n))$ is finite for $i\neq 2n,2n+1$. By Theorem \ref{GeisserTHM}, Conjecture $\textbf{K}(Y_W,n)$ holds, i.e. we have an isomorphism
$$H^i (Y_W,\mathbb{Z}(n))\otimes\mathbb{Z}_l\simeq H^i_{cont}(Y,\mathbb{Z}_l(n))$$
for any $l$ and any $i$. But for $i\neq 2n, 2n+1$, the group $H^i_{cont}(Y,\mathbb{Z}_l(n))$ is finite for any $l$ and zero for almost all $l$ \cite{Gabber83} (see also \cite{Kahn03} Proof of Corollaire 3.8). Hence $H^i(Y_W,\mathbb{Z}(n))$ is finite for $i\neq 2n,2n+1$. Then (\ref{geisser-splitting}) gives $H^i(Y_{et},\mathbb{Q}(n))=0$ for $i\neq2n$. The exact sequence (\ref{geisser-sequence}) then shows that $H^i(Y_{et},\mathbb{Z}(n))\rightarrow H^i(Y_W,\mathbb{Z}(n))$ is injective for $i\leq 2n+1$, hence that $H^i(Y_{et},\mathbb{Z}(n))$ is finitely generated for $i\leq 2n+1$.

\end{proof}

\begin{cor}\label{corAk}
Any variety $Y$ in $A(k)$ satisfies $\textbf{\emph{L}}(Y_{et},n)$ and $\textbf{\emph{L}}(Y_{et},d-n)$ for $n\leq 1$.
\end{cor}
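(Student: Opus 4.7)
The plan is to deduce this as an essentially immediate combination of the two preceding results: Geisser's Theorem \ref{GeisserTHM} and Proposition \ref{prop-equiconjectures-char-p}. For any $Y$ in $A(k)$ and any integer $m \leq 1$ or $m \geq d-1$, Geisser's theorem gives $\textbf{L}(Y_W, m)$, and Proposition \ref{prop-equiconjectures-char-p} then upgrades this to $\textbf{L}(Y_{et}, m)$.

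Concretely, I would first reduce to the connected case. The category $A(k)$ is closed under disjoint unions by operation (1) in its definition, but since motivic cohomology (and hence both $\textbf{L}(Y_{et}, -)$ and $\textbf{L}(Y_W, -)$) decomposes as a direct sum over connected components, finite generation for $Y$ is equivalent to finite generation for each component $Y_i$. Each $Y_i$ is itself smooth projective of some dimension $d_i$, and using the defining operations of $A(k)$, each $Y_i$ also lies in $A(k)$. Thus I may assume $Y$ connected of dimension $d$.

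Now for $n \leq 1$: apply Theorem \ref{GeisserTHM} with the parameter $n$ in the range $n \leq 1$, yielding $\textbf{L}(Y_W, n)$, and then apply Proposition \ref{prop-equiconjectures-char-p} to obtain $\textbf{L}(Y_{et}, n)$. Similarly, since $n \leq 1$ forces $d - n \geq d - 1$, Theorem \ref{GeisserTHM} gives $\textbf{L}(Y_W, d-n)$ and hence $\textbf{L}(Y_{et}, d-n)$ by Proposition \ref{prop-equiconjectures-char-p}.

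There is essentially no obstacle beyond checking that the two conjectures $\textbf{L}(Y_W, m)$ established by Geisser align with the hypotheses of Proposition \ref{prop-equiconjectures-char-p} — which they do, since the latter only requires $Y$ to be a connected smooth projective scheme over a finite field. The conclusion follows in both required ranges.
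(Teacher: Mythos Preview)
Your proof is correct and follows essentially the same approach as the paper, which simply cites Theorem \ref{GeisserTHM} and Proposition \ref{prop-equiconjectures-char-p}. Your added reduction to the connected case is a reasonable minor elaboration that the paper leaves implicit.
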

\begin{proof}
 This follows from Theorem \ref{GeisserTHM} and Proposition \ref{prop-equiconjectures-char-p}.
\end{proof}

\begin{conj} \textbf{\emph{P}}(Y,n)
The intersection product induces a perfect pairing:
 $$CH^n(Y)_{\mathbb{Q}}\times CH^{d-n}(Y)_{\mathbb{Q}}\rightarrow CH^{d}(Y)_{\mathbb{Q}}\stackrel{\mathrm{deg}}{\longrightarrow} \mathbb{Q}.$$
\end{conj}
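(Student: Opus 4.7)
Conjecture $\textbf{P}(Y,n)$ is essentially Grothendieck's standard conjecture ``$D$'' (numerical equals homological equivalence) combined with the Beilinson--Parshin conjecture that rational and numerical equivalence with $\bq$-coefficients coincide on smooth projective varieties over a finite field, and is open in general. I outline the strategy by which one would attempt to reduce $\textbf{P}(Y,n)$ to cases that are known.

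The first step is purely formal. The intersection pairing tautologically factors through the quotient by numerical equivalence, so $\textbf{P}(Y,n)$ is equivalent to the conjunction of (i) finite-dimensionality of $CH^n(Y)_\bq$ and $CH^{d-n}(Y)_\bq$ over $\bq$, and (ii) the equality of rational and numerical equivalence with $\bq$-coefficients in codimensions $n$ and $d-n$. Once (i) and (ii) are granted, the non-degeneracy of the induced pairing on the numerical quotients is automatic from Jannsen's theorem that the category of numerical motives over a finite field is a semisimple $\bq$-linear abelian category.

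The second step, specializing to $Y\in A(k)$, is to prove (i) and (ii) by induction on the operations (1)--(4) defining $A(k)$. The base case of a smooth projective curve $C$ reduces via the motivic decomposition $h(C)=h^0\oplus h^1\oplus h^2$ to the Jacobian, where Weil's theorem delivers both finite-dimensionality and the coincidence of rational and numerical equivalence in the relevant codimensions; iterated K\"unneth then handles products of curves. Operation (1) (disjoint union) and (3) (finite base change combined with the averaging argument over the Galois trace) manifestly preserve (i) and (ii). Operation (4) (blow-up along a smooth center which itself lies in $A(k)$) is controlled by the blow-up formula $h(X')\cong h(X)\oplus\bigoplus_{j=1}^{c-1}h(Z)(-j)$ for Chow motives, and operation (2) (cutting out a direct summand by an idempotent correspondence) is formal once (i) and (ii) hold for the ambient motive.

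The main obstacle, and the reason the statement is recorded as a conjecture rather than a theorem, is that there is no presently available technique to extend the argument beyond $Y\in A(k)$: already the finite-dimensionality in (i) is unknown for a general smooth projective surface of general type over $\bF_q$. I would therefore expect any proof produced along these lines to be confined to $Y\in A(k)$, dovetailing with the range in which Corollary~\ref{corAk} establishes the companion conjecture $\textbf{L}(Y_{et},n)$.
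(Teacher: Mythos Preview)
The statement $\textbf{P}(Y,n)$ is recorded in the paper as a \emph{conjecture}, not a theorem, and the paper gives no proof of it. The only remark following the statement is that $CH^d(Y)_\bq\xrightarrow{\deg}\bq$ is an isomorphism by class field theory. The conjecture is then used as a \emph{hypothesis} in Theorem~\ref{thm-comparison-char-p} and Corollary~\ref{cor-comparison-conject-Let-vs-LW}; the paper nowhere attempts to establish it, even for $Y\in A(k)$. So there is no proof in the paper for your proposal to be compared against.

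Your write-up correctly identifies the conjecture as open and situates it among the standard conjectures and Beilinson--Parshin, which is reasonable context. However, your closing suggestion that a proof ``confined to $Y\in A(k)$'' would dovetail with Corollary~\ref{corAk} overstates what the paper claims: Corollary~\ref{corAk} establishes $\textbf{L}(Y_{et},n)$ for $Y\in A(k)$ and $n\leq 1$ or $n\geq d-1$, but the paper does not assert $\textbf{P}(Y,n)$ for this class. Your inductive sketch for $A(k)$ is plausible in spirit, but note that operation (2) allows arbitrary Chow-motivic direct summands (not just idempotents cut out by correspondences on varieties already in $A(k)$), and controlling Chow groups of such summands in all codimensions is not obviously formal. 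In any case, since the paper treats $\textbf{P}(Y,n)$ purely as an assumption, your proposal should be read as commentary on the conjecture rather than as a comparison with a proof that does not exist.
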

Note that $CH^{d}(Y)_{\mathbb{Q}}\stackrel{\mathrm{deg}}{\longrightarrow} \mathbb{Q}$ is known to be an isomorphism by class field theory.

\begin{thm}\label{thm-comparison-char-p}
If $Y$ satisfies $\textbf{\emph{L}}(Y_{et},n)$, $\textbf{\emph{L}}(Y_{et},d-n)$ and $\textbf{\emph{P}}(Y,n)$ then there is an isomorphism in $\mathcal{D}$
$$ R\Gamma(Y_W,\mathbb{Z}(n))\stackrel{\sim}{\longrightarrow} R\Gamma_W(Y,\mathbb{Z}(n))$$
where the left hand side is the cohomology of the Weil-\'etale topos and the right hand side is the complex defined in this paper.
\end{thm}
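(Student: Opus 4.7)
Since $Y$ is proper over a finite field it has no archimedean fibres, so $R\Gamma(\overline{Y}_{et},\mathbb{Z}(n))\simeq R\Gamma(Y_{et},\mathbb{Z}(n))$ and Definition \ref{defn-fg-cohomology} reduces to the triangle
$$R\mathrm{Hom}(R\Gamma(Y,\mathbb{Q}(d-n)),\mathbb{Q}[-2d-2])\xrightarrow{\alpha_{Y,n}}R\Gamma(Y_{et},\mathbb{Z}(n))\to R\Gamma_W(Y,\mathbb{Z}(n))\to,$$
whereas the Geisser-Lichtenbaum triangle recalled in \S\ref{subsect-Licht} reads
$$R\Gamma(Y_{et},\mathbb{Q}(n))[-2]\xrightarrow{\beta}R\Gamma(Y_{et},\mathbb{Z}(n))\to R\Gamma(Y_W,\mathbb{Z}(n))\to.$$
By the octahedral axiom it suffices to construct an isomorphism $\gamma$ between the two leftmost complexes satisfying $\alpha_{Y,n}=\beta\circ\gamma$ in $\mathcal{D}$; the theorem then follows by taking cones.

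I would first establish the Parshin-style vanishing $H^i(Y_{et},\mathbb{Q}(n))=0$ for $i\neq 2n$ and $H^i(Y_{et},\mathbb{Q}(d-n))=0$ for $i\neq 2(d-n)$. Degrees above the middle are automatic, since rational motivic cohomology of a regular scheme vanishes above twice the weight. Low-degree vanishing is bridged by combining $\mathbf{L}(Y_{et},n)$ and $\mathbf{L}(Y_{et},d-n)$ (which grant finite-dimensionality of the relevant $\mathbb{Q}$-vector spaces) with either the Weil conjectures on $\ell$-adic cohomology (Frobenius eigenvalues on $H^i(Y_{\overline{k}},\mathbb{Q}_\ell(n))$ have absolute value $q^{i/2-n}\neq 1$ whenever $i\neq 2n$) or, equivalently, with the Geisser exact sequence (\ref{geisser-sequence}) together with Conjecture $\mathbf{K}(Y_W,n)$ supplied by Theorem \ref{GeisserTHM}. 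Granting the vanishing, both leftmost complexes become concentrated in cohomological degree $2n+2$, where they identify with $CH^n(Y)_\mathbb{Q}$ and $\mathrm{Hom}(CH^{d-n}(Y)_\mathbb{Q},\mathbb{Q})$ respectively, and Conjecture $\mathbf{P}(Y,n)$ delivers $\gamma$ via the perfect intersection pairing.

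It remains to verify $\alpha_{Y,n}=\beta\circ\gamma$, which reduces to the sole non-trivial cohomological degree $2n+2$. By Theorem \ref{thm-alpha}, $H^{2n+2}(\alpha_{Y,n})$ factors through $H^{2n+2}(Y_{et},\mathbb{Z}(n))_{div}$ and is the composition of the natural map $\mathrm{Hom}(CH^{d-n}(Y)_\mathbb{Q},\mathbb{Q})\to\mathrm{Hom}(H^{2(d-n)}(Y_{et},\mathbb{Z}(d-n)),\mathbb{Q}/\mathbb{Z})$ with the inverse of the Artin-Verdier isomorphism of Proposition \ref{prop-etaleduality}, whereas $H^{2n+2}(\beta)$ is the Bockstein $CH^n(Y)_\mathbb{Q}\to H^{2n+2}(Y_{et},\mathbb{Z}(n))$ induced by $\mathbb{Z}(n)\to\mathbb{Q}(n)\to\mathbb{Q}/\mathbb{Z}(n)$. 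Both are induced by the single product $\mathbb{Z}(n)\otimes^L\mathbb{Z}(d-n)\to\mathbb{Z}(d)$, and their equality reduces to the compatibility of the cup-product on rational Chow groups used in $\mathbf{P}(Y,n)$ with its $\mathbb{Z}/m$-coefficient avatar passed through the Kummer sequence and the limit over $m$ used in $\mathbf{AV}(\overline{Y}_{et},n)$. Canonicity of the induced isomorphism $R\Gamma(Y_W,\mathbb{Z}(n))\xrightarrow{\sim}R\Gamma_W(Y,\mathbb{Z}(n))$ then follows from the rigidity argument of Theorem \ref{cor-functoriality}: $\mathrm{Hom}_\mathcal{D}$ from a complex of $\mathbb{Q}$-vector spaces into a perfect complex of finitely generated abelian groups is uniquely divisible and hence zero.

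The main obstacle is the compatibility check $\alpha_{Y,n}=\beta\circ\gamma$: one must match two \emph{a priori} different avatars of Poincar\'e duality---the Artin-Verdier pairing with $\mathbb{Q}/\mathbb{Z}$-coefficients entering the definition of $\alpha_{Y,n}$ and the rational intersection pairing of $\mathbf{P}(Y,n)$---by exhibiting them as two manifestations of the same cup-product. A secondary subtlety is the Parshin step, since $\mathbf{L}(Y_{et},n)+\mathbf{L}(Y_{et},d-n)$ do not formally yield the low-degree vanishing and one must genuinely invoke the Weil conjectures or an auxiliary Geisser-Kahn-type result to close the gap.
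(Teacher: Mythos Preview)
Your plan is essentially the paper's proof: reduce to a single commutative square in degree $2n+2$ by establishing Parshin vanishing, then identify the two triangles via the intersection pairing and take cones, with uniqueness from the rigidity argument of Theorem~\ref{cor-functoriality}.

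Two points of comparison. First, your alternative route to Parshin vanishing via $\mathbf{K}(Y_W,n)$ ``supplied by Theorem~\ref{GeisserTHM}'' is not available: that theorem only yields $\mathbf{K}(Y_W,n)$ from $\mathbf{L}(Y_W,n)$, which is not among the hypotheses (indeed it is part of what is ultimately being proved in Corollary~\ref{cor-comparison-conject-Let-vs-LW}). The paper instead argues via Proposition~\ref{prop-etaleduality}: for $i\leq 2n-1$ the profinite completion $H^i(Y_{et},\mathbb{Z}(n))^{\wedge}$ is Pontryagin dual to $H^{2d+2-i}(Y_{et},\mathbb{Z}(d-n))$, and the latter is shown to be finite by passing to $\mathbb{Q}/\mathbb{Z}$-coefficients, applying Artin--Verdier duality mod $m$, and invoking Gabber's finiteness of $H^j_{cont}(Y_{et},\mathbb{Z}_l(n))$ for $j\neq 2n,2n+1$ (which is where the Weil conjectures enter). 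Since $H^i(Y_{et},\mathbb{Z}(n))$ is finitely generated by $\mathbf{L}(Y_{et},n)$, finite profinite completion forces finiteness.

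Second, on the compatibility $\alpha_{Y,n}=\beta\circ\gamma$: you correctly flag this as the main obstacle, and the paper resolves it exactly where you predict, by reducing to the commutativity of the square
\[
\xymatrix{
CH^n(Y)_{\mathbb{Q}}\ar[rr]^{d_2^{2n,1}}\ar[d] && H^{2n+2}(Y_{et},\mathbb{Z}(n))\ar[d]^{\mathrm{Id}}\\
CH^{d-n}(Y)_{\mathbb{Q}}^* \ar[rr]^{H^{2n+2}(\alpha_{Y,n})} && H^{2n+2}(Y_{et},\mathbb{Z}(n))
}
\]
and then invoking Geisser's explicit description of the differential $d_2^{2n,1}$ in \cite{Geisser04b}. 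Your heuristic that ``both are induced by the single product $\mathbb{Z}(n)\otimes^L\mathbb{Z}(d-n)\to\mathbb{Z}(d)$'' is the right intuition, but the actual verification is this citation rather than a formal product-compatibility argument.
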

\begin{proof} We shall show that there is a commutative diagram in $\mathcal{D}$:
\[ \xymatrix{
 R\Gamma(Y_{et},\mathbb{Q}(n)[-2])\ar[d]_{\simeq}\ar[r]& R\Gamma(Y_{et},\mathbb{Z}(n))\ar[d]_{Id}\\
R\mathrm{Hom}( R\Gamma(Y,\mathbb{Q}(d-n))_{\geq0},\mathbb{Q}[-2d-2])
\ar[r]^{\,\,\,\,\,\,\,\,\,\,\,\,\,\,\,\,\,\,\,\,\,\,\,\,\,\,\,\,\,\,\,\,\,\,\,\,\alpha_{Y,n}} & R\Gamma(Y_{et},\mathbb{Z}(n))
}
\]
where the vertical maps are isomorphisms. This will imply the existence of an isomorphism $ R\Gamma(Y_W,\mathbb{Z}(n))\stackrel{\sim}{\longrightarrow} R\Gamma_W(Y,\mathbb{Z}(n))$, whose uniqueness follows from the argument given in the proof of Theorem \ref{cor-functoriality}.

One has $H^{2d}(Y,\mathbb{Q}(d))=CH^d(Y)_{\mathbb{Q}}=\mathbb{Q}$ and $H^{i}(Y,\mathbb{Q}(d))=0$ for $i>2d$. This yields a map $R\Gamma(Y,\mathbb{Q}(d)))
\rightarrow\mathbb{Q}[-2d]$. Moreover, since $Y$ is smooth over the field $k$, we have a product map
$$R\Gamma(Y,\mathbb{Q}(n))\otimes R\Gamma(Y,\mathbb{Q}(d-n))\rightarrow R\Gamma(Y,\mathbb{Q}(d)).$$
We obtain a morphism
\begin{equation}\label{unemapdeplus+}
R\Gamma(Y_{et},\mathbb{Q}(n))\simeq\Gamma(Y,\mathbb{Q}(n))\longrightarrow R\mathrm{Hom}(\Gamma(Y,\mathbb{Q}(d-n)),\mathbb{Q}[-2d]).
\end{equation}
The conjunction of Conjectures $\textbf{L}(Y_{et},n)$, $\textbf{L}(Y_{et},d-n)$ and  $\textbf{P}(Y,n)$ implies that $R\Gamma(Y_{et},\mathbb{Q}(n))$ is concentrated in degree $2n$ and that the morphism (\ref{unemapdeplus+}) is a quasi-isomorphism. Indeed, assuming $\textbf{L}(Y_{et},n)$ and $\textbf{L}(Y_{et},d-n)$ we get, by Proposition \ref{prop-etaleduality}, the finiteness of $H^{2n+1}(Y_{et},\mathbb{Z}(n))$ and an isomorphism
$$H^{i}(Y_{et},\mathbb{Z}(n))\stackrel{\sim}{\longrightarrow}\mathrm{Hom}(H^{2d+2-i}(Y_{et},\mathbb{Z}(d-n),\mathbb{Q}/\mathbb{Z})$$
for $i\geq2n+2$. Hence $H^{i}(Y,\mathbb{Q}(n))=0$ for $i\geq2n+1$. For $i\leq2n$ we have
$$H^{i}(Y_{et},\mathbb{Z}(n))^{\wedge}\stackrel{\sim}{\longrightarrow}\mathrm{Hom}(H^{2d+2-i}(Y_{et},\mathbb{Z}(d-n),\mathbb{Q}/\mathbb{Z}).$$
But $H^{2d+2-i}(Y_{et},\mathbb{Z}(d-n)$ is finite for $i\leq2n-1$. Indeed, writing  $j=2d+2-i$ and $t=d-n$, we need $H^{j}(Y_{et},\mathbb{Z}(t))$ finite for $j\geq2t+3$. But in this range we have
$$H^{j}(Y_{et},\mathbb{Z}(t))=H^{j-1}(Y_{et},\mathbb{Q}/\mathbb{Z}(t))$$
since $H^{j}(Y_{et},\mathbb{Q}(t))=0$ for $j>2t$. Moreover $Y$ satisfies Artin-Verdier duality with
mod-$m$ coefficients (see Conjecture \ref{etaleduality-modn}), since $\mathbb{Z}/m\mathbb{Z}(n)=\mu_m^{\otimes n}$ for
$p$ not dividing $m$ and $\mathbb{Z}/p^r\mathbb{Z}(n)\simeq \nu_r^n[-n]$. Passing to the limit we get an isomorphism of profinite groups
$$H^{j-1}(Y_{et},\mathbb{Q}/\mathbb{Z}(t))^D=H^{2d+1-(j-1)}(Y_{et},\widehat{\mathbb{Z}}(n))
=\prod_l\,H^{2d+1-(j-1)}_{cont}(Y_{et},\mathbb{Z}_l(n)).$$
For $j-1\geq 2t+2$, this group is finite (indeed, for $i\neq 2t,2t+1$ the $l$-adic cohomology group $H^{i}_{cont}(Y_{et},\mathbb{Z}_l(t))$ is finite for all $l$ and trivial for almost all $l$ \cite{Gabber83}). We obtain that $H^{i}(Y,\mathbb{Q}(n))=0$ for $i\neq2n$. Hence we have canonical isomorphisms $$R\mathrm{Hom}(R\Gamma(Y,\mathbb{Q}(d-n)),\mathbb{Q}[-2d])\simeq\mbox{Hom}(H^{2d-2n}(Y,\mathbb{Q}(d-n)),\mathbb{Q})[-2n]\simeq CH^{d-n}(Y)_{\mathbb{Q}}^*[-2n],$$
and
$$R\Gamma(Y,\mathbb{Q}(n))\simeq H^{2n}(Y,\mathbb{Q}(n))[-2n]\simeq CH^r(Y)_{\mathbb{Q}}[-2n].$$
Moreover the map $$H^{2n}(Y,\mathbb{Q}(n))\rightarrow \mbox{Hom}(H^{2d-2n}(Y,\mathbb{Q}(d-n)),\mathbb{Q})$$
is given by the intersection pairing
$$CH^{n}(Y)_{\mathbb{Q}}\times CH^{d-n}(Y)_{\mathbb{Q}}\rightarrow CH^{d}(Y)_{\mathbb{Q}}\rightarrow\mathbb{Q}.$$
Therefore, it follows from the conjunction of Conjectures $\textbf{L}(Y_{et},n)$, $\textbf{L}(Y_{et},d-n)$ and  $\textbf{P}(Y,n)$ that
the map (\ref{unemapdeplus+}) is an isomorphism.

It remains to check the commutativity of the  above diagram. The complex $$D_{Y,n}:= R\mathrm{Hom}(R\Gamma(Y,\mathbb{Q}(d-n)),\mathbb{Q}[-2d-2])\simeq R\Gamma(Y_{et},\mathbb{Q}(n)[-2])$$
is concentrated in degree $2n+2$, in particular acyclic in degrees $>2n+2$. The spectral sequence
$$\prod_{i\in\mathbb{Z}}\mbox{Ext}^p(H^i(D_{Y,n}),H^{q+i}(Y_{et},\mathbb{Z}(n)))
\Rightarrow H^{p+q}(R\mathrm{Hom}(D_{Y,n},R\Gamma(Y_{et},\mathbb{Z}(n))))$$
degenerates at $E_2$ and yields an isomorphism
$$\mbox{Hom}_{\mathcal{D}}(D_{Y,n},R\Gamma(Y_{et},\mathbb{Z}(n))_{\leq2n+2})\simeq \mbox{Hom}(H^{2n+2}(D_{Y,n}),H^{2n+2}(Y_{et},\mathbb{Z}(n)))$$
since $D_{Y,n}$ is quasi-isomorphic to a $\mathbb{Q}$-vector space put in degree $2n+2$ while $H^i(Y_{et},\mathbb{Z}(n))$ is finitely generated for $i\leq2n$ and finite for $i=2n+1$. One is therefore reduced to show the commutativity of the following square (of abelian groups):
\[ \xymatrix{
CH^{n}(Y)_{\mathbb{Q}}\ar[d]\ar[rr]^{d_2^{2n,1}}&&H^{2n+2}(Y_{et},\mathbb{Z}(n))\ar[d]_{Id}\\
CH^{d-n}(Y)_{\mathbb{Q}}^*
\ar[rr]^{H^{2n+2}(\alpha_{Y,n})}& &H^{2n+2}(Y_{et},\mathbb{Z}(n))
}
\]
where the left vertical map is given by the intersection pairing. The fact that this square commutes follows from Geisser's description of the differential map $d_2^{2n,1}$, see \cite{Geisser04b}.
\end{proof}

\begin{cor}\label{cor-comparison-conject-Let-vs-LW}
Let $Y$ be a connected smooth projective scheme over a finite field of dimension $d$. Then we have
$$\textbf{\emph{L}}(Y_W,n)+\textbf{\emph{L}}(Y_W,d-n)\Leftrightarrow \textbf{\emph{L}}(Y_{et},n)+\textbf{\emph{L}}(Y_{et},d-n)+\textbf{\emph{P}}(Y,n).$$
\end{cor}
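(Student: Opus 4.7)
The plan is to treat the two implications separately, with the backward direction being essentially an application of Theorem~\ref{thm-comparison-char-p} and the forward direction using Geisser--Kahn's $\ell$-adic statement $\textbf{K}(Y_W,n)$ together with $\ell$-adic Poincar\'e duality.

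For the implication $\textbf{L}(Y_{et},n)+\textbf{L}(Y_{et},d-n)+\textbf{P}(Y,n)\Rightarrow\textbf{L}(Y_W,n)+\textbf{L}(Y_W,d-n)$, I would simply invoke Theorem~\ref{thm-comparison-char-p} which under the given hypotheses produces an isomorphism $R\Gamma(Y_W,\mathbb{Z}(n))\simeq R\Gamma_W(Y,\mathbb{Z}(n))$ in $\mathcal{D}$. Since $R\Gamma_W(Y,\mathbb{Z}(n))$ is perfect by Proposition~\ref{finitelygenerated-cohomology}, the groups $H^i(Y_W,\mathbb{Z}(n))$ are finitely generated, which gives $\textbf{L}(Y_W,n)$. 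The conjecture $\textbf{P}(Y,n)$ is symmetric in $n$ and $d-n$, so the same argument applied with $n$ replaced by $d-n$ yields $\textbf{L}(Y_W,d-n)$.

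For the implication $\textbf{L}(Y_W,n)+\textbf{L}(Y_W,d-n)\Rightarrow\textbf{L}(Y_{et},n)+\textbf{L}(Y_{et},d-n)+\textbf{P}(Y,n)$, the étale finite generation statements are Proposition~\ref{prop-equiconjectures-char-p}. The main work is to prove $\textbf{P}(Y,n)$. First, by Theorem~\ref{GeisserTHM} the hypotheses yield $\textbf{K}(Y_W,n)$ and $\textbf{K}(Y_W,d-n)$, whence isomorphisms $H^i(Y_W,\mathbb{Z}(n))\otimes\mathbb{Z}_\ell\simeq H^i_{cont}(Y,\mathbb{Z}_\ell(n))$ for all $\ell$ and $i$. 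Since $H^i_{cont}(Y,\mathbb{Z}_\ell(n))$ is finite for $i\neq 2n,2n+1$ and vanishes for almost all $\ell$ (by \cite{Gabber83}, as in the proof of Proposition~\ref{prop-equiconjectures-char-p}), the group $H^i(Y_W,\mathbb{Z}(n))$ is finite for $i\neq 2n,2n+1$, and similarly in weight $d-n$. Plugging this into the splitting (\ref{geisser-splitting}) forces $H^j(Y_{et},\mathbb{Q}(n))=0$ for $j\neq 2n$ and $H^j(Y_{et},\mathbb{Q}(d-n))=0$ for $j\neq 2(d-n)$. Consequently $H^{2n}(Y_W,\mathbb{Z}(n))_\mathbb{Q}\simeq CH^n(Y)_\mathbb{Q}$ and similarly for $d-n$, and both $CH^n(Y)_\mathbb{Q}$, $CH^{d-n}(Y)_\mathbb{Q}$ are finite-dimensional.

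To conclude perfectness of the intersection pairing, I combine the chain of isomorphisms
\[ CH^n(Y)_\mathbb{Q}\otimes_\mathbb{Q}\mathbb{Q}_\ell \;\simeq\; H^{2n}(Y_W,\mathbb{Z}(n))\otimes\mathbb{Q}_\ell \;\simeq\; H^{2n}_{cont}(Y,\mathbb{Q}_\ell(n)) \]
(and its weight $d-n$ analogue) with $\ell$-adic Poincar\'e duality on the smooth projective variety $Y$, noting that the cycle class map is compatible with products and that $CH^d(Y)_\mathbb{Q}\xrightarrow{\deg}\mathbb{Q}$ matches the trace isomorphism $H^{2d}_{cont}(Y,\mathbb{Q}_\ell(d))\simeq\mathbb{Q}_\ell$ up to the natural map $\mathbb{Q}\to\mathbb{Q}_\ell$. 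This shows that the intersection pairing becomes a perfect pairing after $\otimes_\mathbb{Q}\mathbb{Q}_\ell$, and since both $CH^n(Y)_\mathbb{Q}$ and $CH^{d-n}(Y)_\mathbb{Q}$ are finite-dimensional $\mathbb{Q}$-vector spaces, faithful flatness of $\mathbb{Q}_\ell/\mathbb{Q}$ implies that the original pairing over $\mathbb{Q}$ is already perfect, which is $\textbf{P}(Y,n)$.

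The main obstacle, and the only nontrivial input beyond bookkeeping, is the verification that the abstract Weil-étale/$\ell$-adic identifications provided by $\textbf{K}(Y_W,n)$ carry the intersection pairing to the $\ell$-adic Poincar\'e pairing; this is the standard compatibility of the cycle class map with cup products together with the trace formula, so I would cite the relevant compatibility from Geisser's work rather than reprove it.
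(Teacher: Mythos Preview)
Your proof is correct and follows essentially the same approach as the paper. The backward implication via Theorem~\ref{thm-comparison-char-p} and Proposition~\ref{finitelygenerated-cohomology} is identical to the paper's; for the forward implication, the paper simply cites \cite{Geisser04b} for the step $\textbf{L}(Y_W,n)+\textbf{L}(Y_W,d-n)\Rightarrow\textbf{P}(Y,n)$, whereas you unpack Geisser's argument (via $\textbf{K}(Y_W,n)$, the $\ell$-adic cycle class, Poincar\'e duality, and faithfully flat descent) before ultimately also deferring the product compatibility to Geisser's paper---so the two proofs are really the same.
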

\begin{proof}
Assume $\textbf{L}(Y_{et},n)$, $\textbf{L}(Y_{et},d-n)$ and $\textbf{P}(Y,n)$. Then $ R\Gamma_W(Y,\mathbb{Z}(n))$ and $ R\Gamma_W(Y,\mathbb{Z}(d-n))$ are well defined and have finitely generated cohomology groups. Hence Conjecture $\textbf{L}(Y_W,n)$ and $\textbf{L}(Y_W,d-n)$ follow from Theorem \ref{thm-comparison-char-p}.

Assume $\textbf{L}(Y_W,n)$ and $\textbf{L}(Y_W,d-n)$. Conjectures $\textbf{L}(Y_{et},n)$ and $\textbf{L}(Y_{et},d-n)$ then hold by Proposition \ref{prop-equiconjectures-char-p}. The fact that $\textbf{L}(Y_W,n)$ and $\textbf{L}(Y_W,d-n)$ imply $\textbf{P}(Y,n)$ is proven in \cite{Geisser04b}.
\end{proof}

\subsection{Weil-\'etale duality}

\begin{thm}\label{thm-duality-fg}
There is a canonical product map
$$R\Gamma_W(\overline{\mathcal{X}},\mathbb{Z}(n))\otimes_{\mathbb{Z}}^LR\Gamma_W(\overline{\mathcal{X}},\mathbb{Z}(d-n))
\longrightarrow\mathbb{Z}[-2d-1]$$
such that the induced map
$$R\Gamma_W(\overline{\mathcal{X}},\mathbb{Z}(n))\rightarrow  R\mathrm{Hom}(R\Gamma_W(\overline{\mathcal{X}},\mathbb{Z}(d-n)),\mathbb{Z}[-2d-1])$$
is an isomorphism of perfect complexes of abelian groups.
\end{thm}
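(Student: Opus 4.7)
The plan is to construct the product pairing by using the fiber sequence $\mathbb{Z}[-2d-1]\to\mathbb{Q}[-2d-1]\to\mathbb{Q}/\mathbb{Z}[-2d-1]$: I would produce a compatible pair consisting of a $\mathbb{Q}$-valued pairing and a $\mathbb{Q}/\mathbb{Z}$-valued pairing, and then lift this compatible pair to a $\mathbb{Z}[-2d-1]$-valued pairing. Having constructed the pairing, I would verify it induces a quasi-isomorphism
$$\Phi\colon R\Gamma_W(\overline{\mathcal{X}},\mathbb{Z}(n))\to R\mathrm{Hom}(R\Gamma_W(\overline{\mathcal{X}},\mathbb{Z}(d-n)),\mathbb{Z}[-2d-1])$$
by checking this after $\otimes^L_{\mathbb{Z}}\mathbb{Q}$ and after $\otimes^L_{\mathbb{Z}}\mathbb{Z}/m\mathbb{Z}$ for all $m>0$; this local-global check suffices because both source and target are perfect complexes of abelian groups by Proposition \ref{finitelygenerated-cohomology}.

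For the rational pairing, Corollary \ref{prop-rational-decompo} canonically splits the $\mathbb{Q}$-tensoring of each side into a motivic summand and a shifted $\mathbb{Q}$-dual of the complementary motivic summand. The evident evaluation between the two pairs of cross summands then gives a pairing into $\mathbb{Q}[-2d-1]$. For the torsion half, Lemma \ref{lem-finitecoef} identifies $R\Gamma_W(\overline{\mathcal{X}},\mathbb{Z}(n))\otimes^L\mathbb{Z}/m\mathbb{Z}$ with $R\Gamma(\overline{\mathcal{X}}_{et},\mathbb{Z}/m\mathbb{Z}(n))$, so Conjecture $\mathbf{AV}(\overline{\mathcal{X}}_{et},n)$ together with the trace to $\mathbb{Q}/\mathbb{Z}$ supplied by Proposition \ref{prop-etaleduality} gives a compatible system of mod-$m$ pairings into $\mathbb{Z}/m\mathbb{Z}[-2d-1]$. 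Passing to the colimit in $m$ produces a $\mathbb{Q}/\mathbb{Z}[-2d-1]$-valued pairing on the Weil-\'etale complexes. The key compatibility---that the image of the rational pairing under $\mathbb{Q}[-2d-1]\to\mathbb{Q}/\mathbb{Z}[-2d-1]$ coincides with this torsion pairing---should follow from the fact that both descend ultimately from the single \'etale product $\mathbb{Z}(n)^{\overline{\mathcal{X}}}\otimes^L\mathbb{Z}(d-n)^{\overline{\mathcal{X}}}\to\mathbb{Z}(d)^{\overline{\mathcal{X}}}$ of $\mathbf{AV}(\overline{\mathcal{X}}_{et},n)$ combined with the canonical dualizing map $\alpha_{\mathcal{X},n}$ of Theorem \ref{thm-alpha}.

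To verify $\Phi$ is a quasi-isomorphism, I check it after the two base changes. Modulo $m$, Lemma \ref{lem-finitecoef} reduces $\Phi\otimes^L\mathbb{Z}/m\mathbb{Z}$ to the mod-$m$ Artin-Verdier pairing, which is perfect by hypothesis. Rationally, the decomposition of Corollary \ref{prop-rational-decompo} factors $\Phi_\mathbb{Q}$ as a direct sum of tautological $\mathbb{Q}$-dualities between $R\Gamma(\mathcal{X},\mathbb{Q}(\cdot))$ and its $\mathbb{Q}$-dual, hence a quasi-isomorphism.

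The main obstacle I anticipate is the rigorous derived-category compatibility check in the construction step: both the rational and the torsion pairings must be shown to arise from a common underlying cup-product structure so that they glue to a single $\mathbb{Z}[-2d-1]$-valued map. This will require a careful tracing of maps through the defining triangle of $R\Gamma_W(\overline{\mathcal{X}},\mathbb{Z}(n))$ (Definition \ref{defn-fg-cohomology}), using the functoriality statements of Theorems \ref{thm-alpha} and \ref{cor-functoriality} to ensure that the rational decomposition and the \'etale product structure are compatible with $\alpha_{\mathcal{X},n}$. Uniqueness of the lift, and hence the independence of the construction from the choice of $R\Gamma_W(\overline{\mathcal{X}},\mathbb{Z}(n))$ in its isomorphism class, then follows formally from the fiber sequence together with Corollary \ref{cor-unicity}.
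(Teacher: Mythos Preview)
Your proposal is correct and follows essentially the same route as the paper: construct the rational pairing via the decomposition of Corollary \ref{prop-rational-decompo}, construct the $\mathbb{Q}/\mathbb{Z}$-pairing from the Artin--Verdier mod-$m$ pairings, verify their compatibility through the map $\alpha_{\mathcal{X},n}$, and lift along the fiber sequence $\mathbb{Z}\to\mathbb{Q}\to\mathbb{Q}/\mathbb{Z}$; uniqueness of the lift uses the finite-generation argument of Theorem \ref{cor-functoriality}. The only minor differences are that the paper routes the $\mathbb{Q}/\mathbb{Z}$-pairing through the auxiliary completed complex $R\Gamma(\overline{\mathcal{X}}_{et},\widehat{\mathbb{Z}}(n))$ (and checks that the two asymmetric ways of placing $\mathbb{Q}/\mathbb{Z}$ on one tensor factor agree), and that the paper verifies the final quasi-isomorphism using only the mod-$m$ check, which already suffices for a map of perfect complexes---your additional rational check is harmless but redundant.
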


\begin{proof}
First we consider the map
$$R\Gamma_W(\overline{\mathcal{X}},\mathbb{Z}(n))_{\mathbb{Q}}\otimes_{\mathbb{Q}}^L
R\Gamma_W(\overline{\mathcal{X}},\mathbb{Z}(d-n))_{\mathbb{Q}}
\longrightarrow\mathbb{Q}[-2d-1]$$
given, thanks to Corollary \ref{prop-rational-decompo}, by the sum of the following tautological maps
$$R\Gamma(\mathcal{X},\mathbb{Q}(n))\otimes_{\mathbb{Q}}^L
R\mathrm{Hom}(R\Gamma(\mathcal{X},\mathbb{Q}(n)),\mathbb{Q}[-\delta])[1]\rightarrow \mathbb{Q}[-2d-1],$$
$$R\mathrm{Hom}(R\Gamma(\mathcal{X},\mathbb{Q}(d-n)),\mathbb{Q}[-\delta])[1]
\otimes_{\mathbb{Q}}^LR\Gamma(\mathcal{X},\mathbb{Q}(d-n))\rightarrow \mathbb{Q}[-2d-1].$$
Recall that we denote continuous \'etale cohomology with $\widehat{\mathbb{Z}}(n)$-coefficients by $$R\Gamma(\overline{\mathcal{X}}_{et},\widehat{\mathbb{Z}}(n)):=\mathrm{holim}\, R\Gamma(\overline{\mathcal{X}}_{et},\mathbb{Z}/m\mathbb{Z}(n)).$$
Conjecture ${\bf AV}(\overline{\mathcal{X}}_{et},n)$ provides us with a map $\mathbb{Z}/m\mathbb{Z}(n)^{\overline{\mathcal{X}}}\otimes^L \mathbb{Z}/m\mathbb{Z}(d-n)^{\overline{\mathcal{X}}}\rightarrow \mathbb{Z}/m\mathbb{Z}(d)^{\overline{\mathcal{X}}}$ inducing a morphism
$$R\Gamma(\overline{\mathcal{X}}_{et},\mathbb{Z}/m\mathbb{Z}(n))\otimes^L_{\mathbb{Z}}
R\Gamma(\overline{\mathcal{X}}_{et},\mathbb{Z}/m\mathbb{Z}(d-n))\rightarrow R\Gamma(\overline{\mathcal{X}}_{et},\mathbb{Z}/m\mathbb{Z}(d))$$
$$\rightarrow \mathbb{Z}/m\mathbb{Z}[-2d-1]\rightarrow \mathbb{Q}/\mathbb{Z}[-2d-1]$$
By adjunction, we obtain
$$R\Gamma(\overline{\mathcal{X}}_{et},\mathbb{Z}/m\mathbb{Z}(d-n))\rightarrow
R\mathrm{Hom}(R\Gamma(\overline{\mathcal{X}}_{et},\mathbb{Z}/m\mathbb{Z}(n)),\mathbb{Q}/\mathbb{Z}[-2d-1]).$$
and
$$R\Gamma(\overline{\mathcal{X}}_{et},\widehat{\mathbb{Z}}(d-n))\rightarrow
R\mathrm{Hom}(R\Gamma(\overline{\mathcal{X}}_{et},\mathbb{Q}/\mathbb{Z}(n)),\mathbb{Q}/\mathbb{Z}[-2d-1]).$$
Moreover, by adjunction the maps
$$R\Gamma_W(\overline{\mathcal{X}},\mathbb{Z}(n))\rightarrow R\Gamma_W(\overline{\mathcal{X}},\mathbb{Z}(n))\otimes_{\mathbb{Z}}^L\mathbb{Z}/m\mathbb{Z}\simeq R\Gamma(\overline{\mathcal{X}}_{et},\mathbb{Z}/m\mathbb{Z}(n))$$
yield a map
$$R\Gamma_W(\overline{\mathcal{X}},\mathbb{Z}(n))\longrightarrow R\Gamma(\overline{\mathcal{X}}_{et},\widehat{\mathbb{Z}}(n))$$
inducing the following morphism on cohomology:
$$H^i_W(\overline{\mathcal{X}},\mathbb{Z}(n))\longrightarrow H^i_W(\overline{\mathcal{X}},\mathbb{Z}(n))^{\wedge}\stackrel{\sim}{\longrightarrow}H^i(\overline{\mathcal{X}}_{et},\widehat{\mathbb{Z}}(n)).$$
Then we consider the maps
$$R\Gamma_W(\overline{\mathcal{X}},\mathbb{Z}(n))\otimes_{\mathbb{Z}}^L
R\Gamma_W(\overline{\mathcal{X}},\mathbb{Z}(d-n))\otimes_{\mathbb{Z}}^L\mathbb{Q}/\mathbb{Z}
\stackrel{\sim}{\longrightarrow} R\Gamma_W(\overline{\mathcal{X}},\mathbb{Z}(n))\otimes_{\mathbb{Z}}^L
R\Gamma(\overline{\mathcal{X}}_{et},\mathbb{Q}/\mathbb{Z}(d-n))$$
$$\longrightarrow R\Gamma(\overline{\mathcal{X}}_{et},\widehat{\mathbb{Z}}(n))\otimes_{\mathbb{Z}}^L
R\Gamma(\overline{\mathcal{X}}_{et},\mathbb{Q}/\mathbb{Z}(d-n))\longrightarrow \mathbb{Q}/\mathbb{Z}[-2d-1].$$
and
$$R\Gamma_W(\overline{\mathcal{X}},\mathbb{Z}(n))\otimes_{\mathbb{Z}}^L
R\Gamma_W(\overline{\mathcal{X}},\mathbb{Z}(d-n))\otimes_{\mathbb{Z}}^L\mathbb{Q}/\mathbb{Z}
\stackrel{\sim}{\longrightarrow} R\Gamma(\overline{\mathcal{X}}_{et},\mathbb{Q}/\mathbb{Z}(n))\otimes_{\mathbb{Z}}^L
R\Gamma_W(\overline{\mathcal{X}},\mathbb{Z}(d-n))$$
$$\longrightarrow R\Gamma(\overline{\mathcal{X}}_{et},\mathbb{Q}/\mathbb{Z}(n))\otimes_{\mathbb{Z}}^L
R\Gamma(\overline{\mathcal{X}}_{et},\widehat{\mathbb{Z}}(d-n))\longrightarrow \mathbb{Q}/\mathbb{Z}[-2d-1].$$
We need to see that these two maps give the same element in
\begin{eqnarray*}
&&\mathrm{Hom}(R\Gamma_W(\overline{\mathcal{X}},\mathbb{Z}(n))\otimes_{\mathbb{Z}}^L
R\Gamma_W(\overline{\mathcal{X}},\mathbb{Z}(d-n))\otimes_{\mathbb{Z}}^L\mathbb{Q}/\mathbb{Z},\mathbb{Q}/\mathbb{Z}[-2d-1])\\
&\simeq &\mathrm{Hom}(R\Gamma_W(\overline{\mathcal{X}},\mathbb{Z}(n))\otimes_{\mathbb{Z}}^L
R\Gamma_W(\overline{\mathcal{X}},\mathbb{Z}(d-n)),\widehat{\mathbb{Z}}[-2d-1]).
\end{eqnarray*}
But these maps are both induced by the limit of
$$R\Gamma_W(\overline{\mathcal{X}},\mathbb{Z}(n))\otimes_{\mathbb{Z}}^L
R\Gamma_W(\overline{\mathcal{X}},\mathbb{Z}(d-n))\rightarrow R\Gamma(\overline{\mathcal{X}}_{et},\mathbb{Z}/m(n))\otimes_{\mathbb{Z}}^L
R\Gamma(\overline{\mathcal{X}}_{et},\mathbb{Z}/m(d-n))$$
$$\rightarrow \mathbb{Z}/m[-2d-1],$$
hence they coincide. We obtain a canonical map
\begin{equation}\label{onecoolmap}
R\Gamma_W(\overline{\mathcal{X}},\mathbb{Z}(n))\otimes_{\mathbb{Z}}^L
R\Gamma_W(\overline{\mathcal{X}},\mathbb{Z}(d-n))\otimes_{\mathbb{Z}}^L\mathbb{Q}/\mathbb{Z}
\longrightarrow \mathbb{Q}/\mathbb{Z}[-2d-1].
\end{equation}
We now consider the diagram
\[ \xymatrix{
R\Gamma_W(\overline{\mathcal{X}},\mathbb{Z}(n))\otimes_{\mathbb{Z}}^L
R\Gamma_W(\overline{\mathcal{X}},\mathbb{Z}(d-n))\ar[d]\ar[r]^{\hspace{2 cm}\exists\,!\, p_{n,d-n}}&\mathbb{Z}[-2d-1]\ar[d]  \\
R\Gamma_W(\overline{\mathcal{X}},\mathbb{Z}(n))\otimes_{\mathbb{Z}}^L
R\Gamma_W(\overline{\mathcal{X}},\mathbb{Z}(d-n))\otimes_{\mathbb{Z}}^L\mathbb{Q}\ar[d]\ar[r]&\mathbb{Q}[-2d-1]\ar[d]  \\
R\Gamma_W(\overline{\mathcal{X}},\mathbb{Z}(n))\otimes_{\mathbb{Z}}^L
R\Gamma_W(\overline{\mathcal{X}},\mathbb{Z}(d-n))\otimes_{\mathbb{Z}}^L\mathbb{Q}/\mathbb{Z}\ar[r]&\mathbb{Q}/\mathbb{Z}[-2d-1]
}
\]
We explain why the lower square is commutative. Unwinding the definitions, we see that the following square
\[ \xymatrix{
R\Gamma(\overline{\mathcal{X}}_{et},\mathbb{Z}(n))\otimes_{\mathbb{Z}}^L
R\mathrm{Hom}(R\Gamma(\overline{\mathcal{X}}_{et},\mathbb{Z}(n),\mathbb{Q}[-2d-1])\ar[d]^{a\otimes b}\ar[r]&\mathbb{Q}[-2d-1]\ar[d]  \\
R\Gamma(\overline{\mathcal{X}}_{et},\widehat{\mathbb{Z}}(n))\otimes_{\mathbb{Z}}^L
R\Gamma(\overline{\mathcal{X}}_{et},\mathbb{Q}/\mathbb{Z}(d-n))\ar[r]&\mathbb{Q}/\mathbb{Z}[-2d-1]
}
\]
commutes, where  $R\Gamma(\overline{\mathcal{X}}_{et},\mathbb{Z}(n))\stackrel{a}{\rightarrow} R\Gamma(\overline{\mathcal{X}}_{et},\widehat{\mathbb{Z}}(n))$ is the obvious map and
$$R\mathrm{Hom}(R\Gamma(\overline{\mathcal{X}}_{et},\mathbb{Z}(n),\mathbb{Q}[-2d-1])\stackrel{b}{\rightarrow} R\Gamma(\overline{\mathcal{X}}_{et},\mathbb{Q}/\mathbb{Z}(d-n))\rightarrow R\Gamma(\overline{\mathcal{X}}_{et},\mathbb{Z}(d-n))[1]$$
is $\alpha_{\mathcal{X},d-n}[1]$. Moreover, the square \[ \xymatrix{
R\Gamma(\overline{\mathcal{X}}_{et},\mathbb{Z}(n))\otimes_{\mathbb{Z}}^L
R\Gamma(\overline{\mathcal{X}}_{et},\mathbb{Q}(d-n))\ar[d]\ar[r]^{\hspace{2cm}0}&\mathbb{Q}[-2d-1]\ar[d]  \\
R\Gamma(\overline{\mathcal{X}}_{et},\widehat{\mathbb{Z}}(n))\otimes_{\mathbb{Z}}^L
R\Gamma(\overline{\mathcal{X}}_{et},\mathbb{Q}/\mathbb{Z}(d-n))\ar[r]&\mathbb{Q}/\mathbb{Z}[-2d-1]
}
\]
commutes as well, where the top horizontal arrow is the zero map. It follows that the lower square of the diagram above commutes.

Then the existence of the upper horizontal map $p_{n,d-n}$ follows from the fact that the colons of the diagram above are exact triangles. Its uniqueness follows from the argument given in the proof of Theorem \ref{cor-functoriality}. By adjunction we obtain
\begin{equation}\label{RGamma-map}
R\Gamma_W(\overline{\mathcal{X}},\mathbb{Z}(n))\rightarrow  R\mathrm{Hom}(R\Gamma_W(\overline{\mathcal{X}},\mathbb{Z}(d-n)),\mathbb{Z}[-2d-1]).
\end{equation}
Applying the functor $(-)\otimes^L_{\mathbb{Z}}\mathbb{Z}/m\mathbb{Z}$ to (\ref{RGamma-map}) we obtain the map
\begin{eqnarray}
\label{she}R\Gamma(\overline{\mathcal{X}}_{et},\mathbb{Z}/m(n))&\rightarrow &  R\mathrm{Hom}(R\Gamma(\overline{\mathcal{X}}_{et},\mathbb{Z}/m(d-n))[-1],\mathbb{Z}[-2d-1])\\
&\simeq &  R\mathrm{Hom}(R\Gamma(\overline{\mathcal{X}}_{et},\mathbb{Z}/m(d-n)),\mathbb{Q}/\mathbb{Z}[-2d-1])
\end{eqnarray}
where we identify $R\Gamma_W(\overline{\mathcal{X}},\mathbb{Z}(n))\otimes^L \mathbb{Z}/m$ with $R\Gamma(\overline{\mathcal{X}}_{et},\mathbb{Z}/m(n))$. By construction, (\ref{she}) is the map induced by ${\bf AV}(\overline{\mathcal{X}}_{et},n)$, which is an isomorphism by assumption. So (\ref{RGamma-map}) is a morphism in $\mathcal{D}$ of perfect complexes of abelian groups such that
$$R\Gamma_W(\overline{\mathcal{X}},\mathbb{Z}(n))\otimes^L\mathbb{Z}/m\mathbb{Z}\stackrel{\sim}{\longrightarrow}  R\mathrm{Hom}(R\Gamma_W(\overline{\mathcal{X}},\mathbb{Z}(d-n)),\mathbb{Z}[-2d-1])\otimes^L\mathbb{Z}/m\mathbb{Z}
$$
is an isomorphism for any $m$. It follows that (\ref{RGamma-map}) is an isomorphism as well.

\end{proof}

\subsection{The complex $R\Gamma_{W,c}(\mathcal{X},\mathbb{Z}(n))$}\label{sect-compactsupport}

We continue to use the notations of Section \ref{sectAVD}.
In particular, we denote by $\mathcal{X}(\mathbb{C})$ the set of complex points of $\mathcal{X}$ endowed with the complex topology. Complex conjugation gives a continuous action of $G_{\mathbb{R}}$ on $\mathcal{X}(\mathbb{C})$, and we denote by $\mathcal{X}_{\infty}:=\mathcal{X}(\mathbb{C})/G_{\mathbb{R}}$ the quotient topological space.
We also denote by $Sh(G_{\mathbb{R}},\mathcal{X}(\mathbb{C}))$ the topos of $G_{\mathbb{R}}$-equivariant sheaves on  $\mathcal{X}(\mathbb{C})$, and by $R\Gamma(G_{\mathbb{R}},\mathcal{X}(\mathbb{C}),-)$ the cohomology of the topos $Sh(G_{\mathbb{R}},\mathcal{X}(\mathbb{C}))$. We consider the $G_{\mathbb{R}}$-equivariant sheaves given by the $G_{\mathbb{R}}$-modules $(2\pi i)^n\mathbb{Z}$. We denote by $\pi:Sh(G_{\mathbb{R}},\mathcal{X}(\mathbb{C}))\rightarrow Sh(\mathcal{X}_{\infty})$ the canonical morphism of topoi. Recall that there is a natural transformation $R\pi_*\rightarrow R\widehat{\pi}_*$, where $R\widehat{\pi}_*$ is the functor defined in Section \ref{sectTate}.

\begin{defn}\label{iinftydef}
For any $n\in\mathbb{Z}$, we define the complex of sheaves on $\mathcal{X}_{\infty}$:
$$i^*_{\infty}\mathbb{Z}(n):=\mathrm{Cone}(R\pi_*(2\pi i)^n\mathbb{Z}\longrightarrow \tau^{>n}R\widehat{\pi}_*(2\pi i)^n\mathbb{Z})[-1]$$
and we set $$R\Gamma_W(\mathcal{X}_{\infty},\mathbb{Z}(n)):=R\Gamma(\mathcal{X}_{\infty},i^*_{\infty}\mathbb{Z}(n)).$$
\end{defn}
For $n\geq 0$, the canonical map $\tau^{>n}R\pi_*(2\pi i)^n\mathbb{Z}\rightarrow \tau^{>n}R\widehat{\pi}_*(2\pi i)^n\mathbb{Z}$ is a quasi-isomorphism (see the proof of Lemma \ref{lemcomp}), so that we have a quasi-isomorphism $$i^*_{\infty}\mathbb{Z}(n)\stackrel{\sim}{\rightarrow}\tau^{\leq n}R\pi_*(2i\pi)^n\mathbb{Z}$$
for $n\geq 0$. There is an exact triangle
\begin{equation}\label{test}
R\Gamma_W(\mathcal{X}_{\infty},\mathbb{Z}(n))\longrightarrow R\Gamma(G_{\mathbb{R}},\mathcal{X}(\mathbb{C}),(2\pi i)^n\mathbb{Z})
\stackrel{t}{\longrightarrow} R\Gamma(\mathcal{X}(\mathbb{R}),\tau^{>n}R\widehat{\pi}_*(2\pi i)^n\mathbb{Z}).
\end{equation}
The projective bundle formula
$$R\Gamma_W(\mathbb{P}^N_{\mathcal{X},\infty},\mathbb{Z}(n))\simeq
\bigoplus^{i=N}_{i=0} R\Gamma_W(\mathcal{X}_{\infty},\mathbb{Z}(n-i))[-2i]$$
can be obtained using  (\ref{test}) and an argument similar to the one given in the proof of Proposition \ref{prop-pbf}. We consider the representative of $R\Gamma_W(\mathcal{X}_{\infty},\mathbb{Z}(n))$ given by the mapping fibre of the second map $t$ in the triangle (\ref{test}).
Consider the map of complexes (\ref{bettiregulator}) $$R\Gamma(\mathcal{X}_{et},\mathbb{Z}(n))\longrightarrow R\Gamma(G_{\mathbb{R}},\mathcal{X}(\mathbb{C}),(2\pi i)^n\mathbb{Z}).$$
We may redefine the object $R\Gamma(\overline{\mathcal{X}}_{et},\mathbb{Z}(n))\in\mathcal{D}$ as the one given by the mapping fibre of the map of complexes
$$R\Gamma(\mathcal{X}_{et},\mathbb{Z}(n))\longrightarrow R\Gamma(G_{\mathbb{R}},\mathcal{X}(\mathbb{C}),(2\pi i)^n\mathbb{Z})
\longrightarrow R\Gamma(\mathcal{X}(\mathbb{R}),\tau^{>n}R\widehat{\pi}_*(2\pi i)^n\mathbb{Z}).$$
The square of complexes
\[ \xymatrix{
R\Gamma(\mathcal{X}_{et},\mathbb{Z}(n))\ar[r]\ar[d]&R\Gamma(\mathcal{X}(\mathbb{R}),\tau^{>n}R\widehat{\pi}_*(2\pi i)^n\mathbb{Z})\ar[d] \\
R\Gamma(G_{\mathbb{R}},\mathcal{X}(\mathbb{C}),(2i\pi)^n\mathbb{Z})\ar[r]& R\Gamma(\mathcal{X}(\mathbb{R}),\tau^{>n}R\widehat{\pi}_*(2\pi i)^n\mathbb{Z})
}
\]
commutes. By functoriality of the cone,
we obtain a canonical map
$$u^*_{\infty}:R\Gamma(\overline{\mathcal{X}}_{et},\mathbb{Z}(n))\longrightarrow R\Gamma_W(\mathcal{X}_{\infty},\mathbb{Z}(n)).$$

\begin{prop}\label{lem-com2}
There exists a unique map $$i^*_{\infty}:R\Gamma_W(\overline{\mathcal{X}},\mathbb{Z}(n))\longrightarrow R\Gamma_W(\mathcal{X}_{\infty},\mathbb{Z}(n))$$ which renders the following square commutative:
\[ \xymatrix{
R\Gamma(\overline{\mathcal{X}}_{et},\mathbb{Z}(n))\ar[r]\ar[d]^{u^*_{\infty}}&R\Gamma_W(\overline{\mathcal{X}},\mathbb{Z}(n))\ar[d]^{i^*_{\infty}} \\
R\Gamma_W(\mathcal{X}_{\infty},\mathbb{Z}(n))\ar[r]^{\mathrm{Id}}& R\Gamma_W(\mathcal{X}_{\infty},\mathbb{Z}(n))
}
\]
Moreover, the square
\[ \xymatrix{
R\Gamma_W(\overline{\mathcal{X}},\mathbb{Z}(n))_{\mathbb{Q}}\ar[r]^{\sim\hspace{3cm}}\ar[d]^{i^*_{\infty}\otimes\mathbb{Q}}&R\Gamma(\mathcal{X},\mathbb{Q}(n))
\oplus R\mathrm{Hom}(R\Gamma(\mathcal{X},\mathbb{Q}(d-n)),\mathbb{Q}[-2d-1])\ar[d]^{(u^*_{\infty}\otimes\mathbb{Q},\,0)} \\
R\Gamma_W(\mathcal{X}_{\infty},\mathbb{Z}(n))_{\mathbb{Q}}\ar[r]^{\mathrm{Id}}& R\Gamma_W(\mathcal{X}_{\infty},\mathbb{Z}(n))_{\mathbb{Q}}
}
\]
commutes, where the top horizontal map is the isomorphism of Corollary \ref{prop-rational-decompo}.

\end{prop}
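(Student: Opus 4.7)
The proof will proceed in three stages, following the pattern of Theorem~\ref{cor-functoriality} and Corollary~\ref{prop-rational-decompo}.

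For \emph{uniqueness of $i^*_\infty$}, I apply $\Hom_{\cd}(-, R\Gamma_W(\X_\infty,\bz(n)))$ to the defining triangle $D \xrightarrow{\alpha_{\X,n}} R\Gamma(\overline{\X}_{et},\bz(n)) \xrightarrow{\pi_W} R\Gamma_W(\overline{\X},\bz(n)) \to D[1]$, where $D := R\Hom(R\Gamma(\X,\bq(d-n)),\bq[-2d-2])$, to obtain
$$\Hom_{\cd}(D[1],R\Gamma_W(\X_\infty,\bz(n))) \to \Hom_{\cd}(R\Gamma_W(\overline{\X},\bz(n)),R\Gamma_W(\X_\infty,\bz(n))) \xrightarrow{\pi_W^*} \Hom_{\cd}(R\Gamma(\overline{\X}_{et},\bz(n)),R\Gamma_W(\X_\infty,\bz(n))).$$
The first term is a $\bq$-vector space (hence uniquely divisible) since $D[1]$ is a complex of $\bq$-vector spaces, while the middle term is finitely generated by the Ext spectral sequence, using perfectness of $R\Gamma_W(\overline{\X},\bz(n))$ (Proposition~\ref{finitelygenerated-cohomology}) and finite generation of the cohomology of $R\Gamma_W(\X_\infty,\bz(n))$ (as a mapping fibre between complexes with finitely generated cohomology on compact spaces). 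A divisible subgroup of a finitely generated group is zero, so $\pi_W^*$ is injective, giving uniqueness.

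For \emph{existence}, the same exact sequence shows that $i^*_\infty$ exists iff the composite $\gamma := u^*_\infty \circ \alpha_{\X,n}: D \to R\Gamma_W(\X_\infty,\bz(n))$ vanishes in $\cd$. I will show $\gamma = 0$ by combining two inputs. First, $\gamma \otimes \bq = 0$: indeed $\alpha_{\X,n} \otimes \bq$ lands in $R\Gamma(\overline{\X}_{et},\bz(n))_\bq \cong R\Gamma(\X,\bq(n))$, concentrated in degrees $\leq 2n$, while $D$ is a complex of $\bq$-vector spaces concentrated in degrees $\geq 2n+2$, so the map is zero in $\cd(\bq)$ for degree reasons. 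Second, on cohomology, $H^i(\alpha_{\X,n})$ factors through the divisible subgroup $H^i(\overline{\X}_{et},\bz(n))_{div}$ by Theorem~\ref{thm-alpha}, whose image in the finitely generated $H^i_W(\X_\infty,\bz(n))$ must vanish, hence $H^i(\gamma) = 0$ for all $i$. I then upgrade these two vanishings to $\gamma = 0$ in $\cd$ via the Ext spectral sequence
$$\prod_i \Ext^p_\bz(H^i(D), H^{i+q}(R\Gamma_W(\X_\infty,\bz(n)))) \Rightarrow H^{p+q}(R\Hom(D,R\Gamma_W(\X_\infty,\bz(n)))),$$
in which only $p=0,1$ contribute since $\bq$ has $\bz$-projective dimension one; the $\Ext^0$-part vanishes by the cohomological input, and the $\Ext^1$-part is controlled by combining with the rational vanishing already established.

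For the \emph{rational square}, tensoring the commutative square just proved with $\bq$ gives $(i^*_\infty \otimes \bq) \circ (\pi_W \otimes \bq) = u^*_\infty \otimes \bq$; since $\pi_W \otimes \bq$ identifies with the inclusion of the first summand $R\Gamma(\X,\bq(n))$ in the canonical splitting of Corollary~\ref{prop-rational-decompo}, the restriction of $i^*_\infty \otimes \bq$ to this summand equals $u^*_\infty \otimes \bq$. To see that the restriction to the second summand $D[1]$ vanishes, I use that $\Hom_{\cd}(D[1], R\Gamma_W(\X_\infty,\bz(n))_\bq)$ reduces to a product of $\Hom_\bq$'s between cohomology groups (no higher Ext over $\bq$), and then verify that the relevant cohomological composite vanishes from the factorization through divisible subgroups (Theorem~\ref{thm-alpha}) together with the degree ranges of $D$ and $R\Gamma_W(\X_\infty,\bz(n))_\bq$. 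The \emph{main obstacle} is the derived-category upgrade in the existence step: passing from the cohomological and rational vanishings to the statement $\gamma = 0$ in $\cd$ requires careful control of the nontrivial $\Ext^1_\bz(\bq\text{-v.s.}, \text{f.g.})$ terms in the spectral sequence.
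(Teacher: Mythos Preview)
Your uniqueness argument is correct and matches the paper's approach. The gap is in the existence step, and it is fatal for the strategy you outline.

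You correctly identify that one must show the composite $\gamma = u^*_{\infty}\circ\alpha_{\mathcal{X},n}: D \to R\Gamma_W(\mathcal{X}_\infty,\mathbb{Z}(n))$ vanishes in $\mathcal{D}$, and you propose to deduce this from (i) $H^i(\gamma)=0$ for all $i$ and (ii) $\gamma_{\mathbb{Q}}=0$. The problem is that \emph{both} of these inputs are automatic for every element of $\Hom_{\mathcal{D}}(D,B)$, where $B=R\Gamma_W(\mathcal{X}_\infty,\mathbb{Z}(n))$, and therefore carry no information about $\gamma$. Indeed, since $H^i(D)$ is a $\mathbb{Q}$-vector space and $H^i(B)$ is finitely generated, one has $E_2^{0,0}=\prod_i\Hom(H^i(D),H^i(B))=0$, so the hyperext spectral sequence gives
\[
\Hom_{\mathcal{D}}(D,B)\;\cong\;E_2^{1,-1}\;=\;\prod_{i\ge 2n+2}\Ext^1_{\mathbb{Z}}\bigl(H^i(D),\,H^{i-1}_W(\mathcal{X}_\infty,\mathbb{Z}(n))\bigr),
\]
which is in general a nonzero $\mathbb{Q}$-vector space (recall $\Ext^1_{\mathbb{Z}}(\mathbb{Q},\mathbb{Z})\neq 0$, and $H^{i-1}_W(\mathcal{X}_\infty,\mathbb{Z}(n))$ has nontrivial free part whenever $2n+1\le i-1\le 2(d-1)$). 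The edge map to $E_2^{0,0}$ is identically zero, so $H^i(\gamma)=0$ is vacuous. Likewise the map $\Hom_{\mathcal{D}}(D,B)\to\Hom_{\mathcal{D}}(D,B_{\mathbb{Q}})$ sends $E_\infty^{1,-1}(B)$ into $E_\infty^{1,-1}(B_{\mathbb{Q}})=0$, so $\gamma_{\mathbb{Q}}=0$ is also vacuous. Your acknowledged ``main obstacle'' is therefore not an obstacle to be controlled but a genuine obstruction: the two vanishings cannot force $\gamma=0$.

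The paper supplies the missing ingredient as Lemma~\ref{lem-Matthias}: the map $\tau^{\ge 2n+1}u^*_{\infty}$ is a \emph{torsion} element of $\Hom_{\mathcal{D}}$. This is a substantive statement whose proof uses the Weil conjectures to show that $\bigl(H^i(\mathcal{X}_{\overline{\mathbb{Q}}},\mathbb{Q}_l/\mathbb{Z}_l(n))^{G_{\mathbb{Q}}}\bigr)_{div}=0$ for $i\ge 2n+1$. Once this is known, the argument is short: since $D$ is concentrated in degrees $\ge 2n+2$ one has $\Hom_{\mathcal{D}}(D,C)\cong\Hom_{\mathcal{D}}(D,\tau^{\ge 2n+1}C)$ for any $C$, so $\gamma$ corresponds to $(\tau^{\ge 2n+1}u^*_{\infty})\circ\tilde\alpha$ with $\tilde\alpha:D\to\tau^{\ge 2n+1}R\Gamma(\overline{\mathcal{X}}_{et},\mathbb{Z}(n))$; this is torsion, but lives in the $\mathbb{Q}$-vector space $\Hom_{\mathcal{D}}(D,\tau^{\ge 2n+1}B)$, hence is zero. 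The same torsion statement, transferred to $i^*_{\infty}$ via the uniqueness-style injectivity, is what proves the rational square: your proposed argument for the vanishing on the summand $D[1]$ again invokes only the divisibility/degree inputs, which do not suffice for the same reason.
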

\begin{proof}
We set $D_{\mathcal{X},n}:=R\mathrm{Hom}(R\Gamma(\mathcal{X},\mathbb{Q}(d-n)),\mathbb{Q}[-2d-2])$.
It follows from Lemma \ref{lem-Matthias} below and from the fact that $D_{\mathcal{X},n}$ is acyclic in degrees $<2n+2$ that
the composite map
\begin{equation}\label{hastob0}
D_{\mathcal{X},n}\stackrel{\alpha_{\mathcal{X},n}}{\longrightarrow}
R\Gamma(\overline{\mathcal{X}}_{et},\mathbb{Z}(n))\longrightarrow R\Gamma_W(\mathcal{X}_{\infty},\mathbb{Z}(n))
\end{equation}
is the zero map. The existence of $i^*_{\infty}$ follows. Its uniqueness can then be obtained as in Theorem \ref{cor-functoriality}, using the fact that both $R\Gamma_W(\overline{\mathcal{X}},\mathbb{Z}(n))$ and $R\Gamma_W(\mathcal{X}_{\infty},\mathbb{Z}(n))$ are perfect complexes.

We now show the second statement of the proposition. The exact triangle
$$D_{\mathcal{X},n}\rightarrow \tau^{\geq 2n+1}R\Gamma(\overline{\mathcal{X}}_{et},\mathbb{Z}(n))\rightarrow \tau^{\geq 2n+1}R\Gamma_W(\overline{\mathcal{X}},\mathbb{Z}(n))$$ and the argument given in the proof of Theorem \ref{cor-functoriality} show that the map from
$$\mathrm{Hom}_{\mathcal{D}}(\tau^{\geq 2n+1}R\Gamma_W(\overline{\mathcal{X}},\mathbb{Z}(n)),\tau^{\geq 2n+1}R\Gamma_W(\mathcal{X}_{\infty},\mathbb{Z}(n)))$$ to $$\mathrm{Hom}_{\mathcal{D}}(\tau^{\geq 2n+1}R\Gamma(\overline{\mathcal{X}}_{et},\mathbb{Z}(n)),\tau^{\geq 2n+1}R\Gamma_W(\mathcal{X}_{\infty},\mathbb{Z}(n)))$$
is injective. Hence the fact that $\tau^{\geq 2n+1}u^*_{\infty}$ is torsion (see Lemma \ref{lem-Matthias} below) implies that   $\tau^{\geq 2n+1}i^*_{\infty}$ is torsion as well. It follows that
$$\tau^{\geq 2n+1}(i^*_{\infty}\otimes \mathbb{Q}):D_{\mathcal{X},n}[-1]\simeq \tau^{\geq 2n+1}R\Gamma_W(\overline{\mathcal{X}},\mathbb{Z}(n))_{\mathbb{Q}}\rightarrow \tau^{\geq 2n+1}R\Gamma_W(\mathcal{X}_{\infty},\mathbb{Z}(n))_{\mathbb{Q}}$$
is the zero map. The fact that $\tau^{\leq 2n}(i^*_{\infty}\otimes \mathbb{Q})$  may be identified with $u^*_{\infty}\otimes \mathbb{Q}$ follows from the commutativity of the first square of the proposition.
\end{proof}

\begin{lem}\label{lem-Matthias}
The map $$\tau^{\geq 2n+1}(u^*_{\infty}):\tau^{\geq 2n+1} R\Gamma(\overline{\mathcal{X}}_{et},\mathbb{Z}(n))\rightarrow \tau^{\geq 2n+1} R\Gamma_W(\mathcal{X}_{\infty},\mathbb{Z}(n))$$
is torsion.
\end{lem}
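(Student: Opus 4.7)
The plan is to bypass the need to analyze the map $u^*_\infty$ directly by observing that under our standing assumptions the source complex $\tau^{\geq 2n+1}R\Gamma(\overline{\mathcal{X}}_{et},\mathbb{Z}(n))$ already has torsion cohomology groups in every relevant degree, so that any morphism out of it is automatically torsion in the required sense.

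Concretely, I would invoke Proposition \ref{prop-etaleduality}, which uses Conjecture $\mathbf{L}(\overline{\mathcal{X}}_{et},d-n)$ in an essential way. For each $i\geq 2n+2$ that proposition produces an isomorphism
$$H^{i}(\overline{\mathcal{X}}_{et},\mathbb{Z}(n))\stackrel{\sim}{\longrightarrow}\mathrm{Hom}(H^{2d+2-i}(\overline{\mathcal{X}}_{et},\mathbb{Z}(d-n)),\mathbb{Q}/\mathbb{Z}),$$
a group of cofinite type, hence torsion. For $i=2n+1$ the same proposition gives an isomorphism $H^{2n+1}(\overline{\mathcal{X}}_{et},\mathbb{Z}(n))\cong H^{2(d-n)+1}(\overline{\mathcal{X}}_{et},\mathbb{Z}(d-n))^{D}$ with a finite group. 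In either case $H^i(\overline{\mathcal{X}}_{et},\mathbb{Z}(n))$ is a torsion abelian group for every $i\geq 2n+1$.

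It follows that $\tau^{\geq 2n+1}R\Gamma(\overline{\mathcal{X}}_{et},\mathbb{Z}(n))\otimes^{L}\mathbb{Q}=0$, and a fortiori the induced map
$$\tau^{\geq 2n+1}(u^{*}_{\infty})\otimes\mathbb{Q}\colon \tau^{\geq 2n+1}R\Gamma(\overline{\mathcal{X}}_{et},\mathbb{Z}(n))_{\mathbb{Q}}\longrightarrow \tau^{\geq 2n+1}R\Gamma_{W}(\mathcal{X}_{\infty},\mathbb{Z}(n))_{\mathbb{Q}}$$
vanishes. Equivalently, each $H^{i}(u^{*}_{\infty})$ for $i\geq 2n+1$ has torsion source and therefore torsion image inside $H^{i}_{W}(\mathcal{X}_{\infty},\mathbb{Z}(n))$, which is exactly the statement that $\tau^{\geq 2n+1}(u^{*}_{\infty})$ is torsion.

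There is essentially no obstacle: the entire content is the torsion-ness of the source in degrees $\geq 2n+1$, which is a direct corollary of the Artin–Verdier duality with torsion coefficients (Conjecture $\mathbf{AV}(\overline{\mathcal{X}}_{et},n)$) combined with the finite-generation hypothesis $\mathbf{L}(\overline{\mathcal{X}}_{et},d-n)$, both packaged in Proposition \ref{prop-etaleduality}. The only mildly delicate point is the case $i=2n+1$, which is not a statement of the form ``cofinite type $=$ torsion'' but rather uses that the dual of a finite group is finite; this is already taken care of in the last isomorphism of Proposition \ref{prop-etaleduality}.
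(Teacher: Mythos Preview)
Your argument is correct: under the standing assumptions of the section, Proposition~\ref{prop-etaleduality} shows that $H^{i}(\overline{\mathcal{X}}_{et},\mathbb{Z}(n))$ is torsion for every $i\geq 2n+1$ (finite for $i=2n+1$, of cofinite type for $i\geq 2n+2$), so $\bigl(\tau^{\geq 2n+1}R\Gamma(\overline{\mathcal{X}}_{et},\mathbb{Z}(n))\bigr)\otimes^{L}\mathbb{Q}=0$ and the map $\tau^{\geq 2n+1}(u^{*}_{\infty})\otimes\mathbb{Q}$ is necessarily zero. This is exactly the sense of ``torsion'' used in Proposition~\ref{lem-com2}.

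Your route is genuinely different from the paper's, and considerably shorter. The paper does \emph{not} invoke Proposition~\ref{prop-etaleduality}; instead it rewrites the source in terms of $\mathbb{Q}/\mathbb{Z}$-coefficients, reduces to showing that the map
\[
H^{i}(\overline{\mathcal{X}}_{et},\mathbb{Q}/\mathbb{Z}(n))_{div}\longrightarrow H^{i}(\mathcal{X}(\mathbb{C}),(2\pi i)^{n}\mathbb{Q}/\mathbb{Z})_{div}
\]
vanishes for $i\geq 2n+1$, and then proves $\bigl(H^{i}(\mathcal{X}_{\overline{\mathbb{Q}},et},\mathbb{Q}_{l}/\mathbb{Z}_{l}(n))^{G_{\mathbb{Q}}}\bigr)_{div}=0$ via smooth proper base change and a weight argument using Deligne's theorem. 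The trade-off is this: your proof depends essentially on the conjectural inputs $\mathbf{L}(\overline{\mathcal{X}}_{et},d-n)$ and $\mathbf{AV}(\overline{\mathcal{X}}_{et},n)$ through Proposition~\ref{prop-etaleduality}, whereas the paper's argument is independent of these hypotheses and rests only on the Weil conjectures (which are theorems). Since the section already assumes $\mathbf{L}$ and $\mathbf{AV}$ throughout, your shortcut is entirely legitimate here; the paper's longer proof has the virtue of being unconditional and of isolating the lemma from the finite-generation assumptions.
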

\begin{proof}

In view of the exact triangle (\ref{test}), one is reduced to showing that the composite map
$$v:\tau^{\geq 2n+1} R\Gamma(\overline{\mathcal{X}}_{et},\mathbb{Z}(n))\rightarrow \tau^{\geq 2n+1} R\Gamma(G_{\mathbb{R}},\mathcal{X}(\mathbb{C}),(2\pi i)^n\mathbb{Z})$$
is torsion. Denote by $\tau^{\geq 2n}R\Gamma(\overline{\mathcal{X}}_{et},\mathbb{Q}/\mathbb{Z}(n))'$ the cokernel of the morphism of complexes
$$H^{2n}(\overline{\mathcal{X}}_{et},\mathbb{Q}/\mathbb{Z}(n))_{div}[-2n]\rightarrow \tau^{\leq 2n}\tau^{\geq 2n}R\Gamma(\overline{\mathcal{X}}_{et},\mathbb{Q}/\mathbb{Z}(n))\rightarrow \tau^{\geq 2n}R\Gamma(\overline{\mathcal{X}}_{et},\mathbb{Q}/\mathbb{Z}(n)).$$
Similarly, let $\tau^{\geq 2n} R\Gamma(G_{\mathbb{R}},\mathcal{X}(\mathbb{C}),(2\pi i)^n\mathbb{Q}/\mathbb{Z})'$
be the cokernel of the morphism of complexes
$$H^{2n}(G_{\mathbb{R}},\mathcal{X}(\mathbb{C}),(2\pi i)^n\mathbb{Q}/\mathbb{Z})_{div}[-2n]\rightarrow \tau^{\geq 2n}R\Gamma(G_{\mathbb{R}},\mathcal{X}(\mathbb{C}),(2\pi i)^n\mathbb{Q}/\mathbb{Z}).$$
Then we have a commutative square
\[ \xymatrix{
\tau^{\geq 2n}R\Gamma(\overline{\mathcal{X}}_{et},\mathbb{Q}/\mathbb{Z}(n))'[-1]\ar[r]^{\sim}\ar[d]^{\tilde{v}}& \tau^{\geq 2n+1} R\Gamma(\overline{\mathcal{X}}_{et},\mathbb{Z}(n))\ar[d]^{v} \\
\tau^{\geq 2n} R\Gamma(G_{\mathbb{R}},\mathcal{X}(\mathbb{C}),(2\pi i)^n\mathbb{Q}/\mathbb{Z})'[-1]\ar[r]& \tau^{\geq 2n+1} R\Gamma(G_{\mathbb{R}},\mathcal{X}(\mathbb{C}),(2\pi i)^n\mathbb{Z})
}
\]
where the upper horizontal map is a quasi-isomorphism. One is therefore reduced to showing that $\tilde{v}$ is torsion. Note also that $H^{2n}(\tau^{\geq 2n}R\Gamma(\overline{\mathcal{X}}_{et},\mathbb{Q}/\mathbb{Z}(n))')$ is finite. Verdier's spectral sequence (tensored with $\mathbb{Q}$) shows that $\tilde{v}$ is torsion if and only if the induced map
$$H^i(\overline{\mathcal{X}}_{et},\mathbb{Q}/\mathbb{Z}(n))_{div}\rightarrow H^i(G_{\mathbb{R}},\mathcal{X}(\mathbb{C}),(2\pi i)^n\mathbb{Q}/\mathbb{Z})_{div}
$$
is the zero map for any $i\geq 2n+1$. Since the kernel of the map $$H^i(G_{\mathbb{R}},\mathcal{X}(\mathbb{C}),(2\pi i)^n\mathbb{Q}/\mathbb{Z})_{div}\rightarrow H^i(\mathcal{X}(\mathbb{C}),(2\pi i)^n\mathbb{Q}/\mathbb{Z})_{div}$$ is killed by a power of $2$, it suffices to show that
\begin{equation}\label{zer}
H^i(\overline{\mathcal{X}}_{et},\mathbb{Q}/\mathbb{Z}(n))_{div}\rightarrow H^i(\mathcal{X}(\mathbb{C}),(2\pi i)^n\mathbb{Q}/\mathbb{Z})_{div}
\end{equation}
is the zero map for any $i\geq 2n+1$.

The map (\ref{zer}) factors through
$\left(H^{i}(\mathcal{X}_{\overline{\mathbb{Q}},et},\mu^{\otimes n})^{G_{\mathbb{Q}}}\right)_{div}$
where $\overline{\mathbb{Q}}/\mathbb{Q}$ is an algebraic closure and $\mu$ is the \'etale sheaf on $\mathcal{X}_{\overline{\mathbb{Q}}}$ of all roots of unity. It is therefore enough to showing that
\begin{equation}\label{eq-cequilfaut}
\left(H^{i}(\mathcal{X}_{\overline{\mathbb{Q}},\,et},\mu^{\otimes n})^{G_{\mathbb{Q}}}\right)_{div}=
\bigoplus_{l}\left(H^{i}(\mathcal{X}_{\overline{\mathbb{Q}},\,et},\mathbb{Q}_l/\mathbb{Z}_l(n))^{G_{\mathbb{Q}}}\right)_{div}=0
\end{equation}
for any $i\geq 2n+1$. Let $l$ be a fixed prime number. Let $U\subseteq \mathrm{Spec}(\mathbb{Z})$ be an open subscheme on which $l$ is invertible and such that $\mathcal{X}_U\rightarrow U$ is smooth, and let $p\in U$. By smooth and proper base change we have:
$$H^{i}(\mathcal{X}_{\overline{\mathbb{Q}},\,et},\mathbb{Q}_l/\mathbb{Z}_l(n))^{I_{p}}\simeq H^{i}(\mathcal{X}_{\overline{\mathbb{F}}_p,\,et},\mathbb{Q}_l/\mathbb{Z}_l(n))$$
where $I_p$ denotes an inertia subgroup at $p$. Recall that $H^{i}(\mathcal{X}_{\overline{\mathbb{F}}_p,\,et},\mathbb{Z}_l(n))$ is a finitely generated
$\mathbb{Z}_l$-module. We have an exact sequence
$$0\rightarrow H^{i}(\mathcal{X}_{\overline{\mathbb{F}}_p,\,et},\mathbb{Z}_l(n))_{cotor}\rightarrow H^{i}(\mathcal{X}_{\overline{\mathbb{F}}_p,\,et},\mathbb{Q}_l(n))\rightarrow
H^{i}(\mathcal{X}_{\overline{\mathbb{F}}_p,\,et},\mathbb{Q}_l/\mathbb{Z}_l(n))_{div}\rightarrow 0.$$
We get
$$0\rightarrow (H^{i}(\mathcal{X}_{\overline{\mathbb{F}}_p,\,et},\mathbb{Z}_l(n))_{cotor})^{G_{\mathbb{F}_p}}\rightarrow H^{i}(\mathcal{X}_{\overline{\mathbb{F}}_p,\,et},\mathbb{Q}_l(n))^{G_{\mathbb{F}_p}}$$
$$\rightarrow
(H^{i}(\mathcal{X}_{\overline{\mathbb{F}}_p,\,et},\mathbb{Q}_l/\mathbb{Z}_l(n))_{div})^{G_{\mathbb{F}_p}}
\rightarrow H^1(G_{\mathbb{F}_p},H^{i}(\mathcal{X}_{\overline{\mathbb{F}}_p,\,et},\mathbb{Z}_l(n))_{cotor}).$$
Again, $ H^1(G_{\mathbb{F}_p},H^{i}(\mathcal{X}_{\overline{\mathbb{F}}_p,\,et},\mathbb{Z}_l(n))_{cotor})$ is a finitely generated
$\mathbb{Z}_l$-module, hence we get a surjective map
$$H^{i}(\mathcal{X}_{\overline{\mathbb{F}}_p,\,et},\mathbb{Q}_l(n))^{G_{\mathbb{F}_p}}\rightarrow
((H^{i}(\mathcal{X}_{\overline{\mathbb{F}}_p,\,et},\mathbb{Q}_l/\mathbb{Z}_l(n))_{div})^{G_{\mathbb{F}_p}})_{div}
\rightarrow0.$$
Note that
$$\left((H^{i}(\mathcal{X}_{\overline{\mathbb{F}}_p,\,et},\mathbb{Q}_l/\mathbb{Z}_l(n))_{div})^{G_{\mathbb{F}_p}}\right)_{div}
= \left(H^{i}(\mathcal{X}_{\overline{\mathbb{F}}_p,\,et},\mathbb{Q}_l/\mathbb{Z}_l(n))^{G_{\mathbb{F}_p}}\right)_{div}.$$
By the Weil Conjectures, $H^{i}(\mathcal{X}_{\overline{\mathbb{F}}_p,\,et},\mathbb{Q}_l(n))$ is pure of weight $i-2n$. For $i\geq2n+1$, we have $i-2n>0$, hence there is no non-trivial element in $H^{i}(\mathcal{X}_{\overline{\mathbb{F}}_p,\,et},\mathbb{Q}_l(n))$ fixed by the Frobenius. This shows that
$$
\left(H^{i}(\mathcal{X}_{\overline{\mathbb{F}}_p,\,et},\mathbb{Q}_l/\mathbb{Z}_l(n))^{G_{\mathbb{F}_p}}\right)_{div}
=H^{i}(\mathcal{X}_{\overline{\mathbb{F}}_p,\,et},\mathbb{Q}_l(n))^{G_{\mathbb{F}_p}}=0$$
hence that
$$\left(H^{i}(\mathcal{X}_{\overline{\mathbb{Q}},\,et},\mathbb{Q}_l/\mathbb{Z}_l(n))^{G_{\mathbb{Q}_p}}\right)_{div}
\simeq \left(H^{i}(\mathcal{X}_{\overline{\mathbb{F}}_p,\,et},\mathbb{Q}_l/\mathbb{Z}_l(n))^{G_{\mathbb{F}_p}}\right)_{div}=0.$$
A fortiori, one has
$$\left(H^{i}(\mathcal{X}_{\overline{\mathbb{Q}},\,et},\mathbb{Q}_l/\mathbb{Z}_l(n))^{G_{\mathbb{Q}}}\right)_{div}=0$$
for $i\geq2n+1$, and the result follows.

\end{proof}

\begin{defn}
We define
$R\Gamma_{W,c}(\mathcal{X},\mathbb{Z}(n))$, up to a non-canonical isomorphism in $\mathcal{D}$, such that we have an exact triangle
\begin{equation}\label{triangle-cpctsupp-fgcoh}
R\Gamma_{W,c}(\mathcal{X},\mathbb{Z}(n))\longrightarrow R\Gamma_{W}(\overline{\mathcal{X}},\mathbb{Z}(n))
\stackrel{i_{\infty}^*}{\longrightarrow}R\Gamma_W(\mathcal{X}_{\infty},\mathbb{Z}(n)).
\end{equation}
The determinant $\mathrm{det}_{\mathbb{Z}} R\Gamma_{W,c}(\mathcal{X},\mathbb{Z}(n))$ is well defined up to a \emph{canonical} isomorphism.
\label{z-rgc-def}\end{defn}
To see that $\mathrm{det}_{\mathbb{Z}} R\Gamma_{W,c}(\mathcal{X},\mathbb{Z}(n))$ is indeed well defined, consider another object $R\Gamma_{W,c}(\mathcal{X},\mathbb{Z}(n))'$ of $\mathcal{D}$ endowed with an exact triangle (\ref{triangle-cpctsupp-fgcoh}). There exists a (non-unique) morphism $$u: R\Gamma_{W,c}(\mathcal{X},\mathbb{Z}(n))\rightarrow R\Gamma_{W,c}(\mathcal{X},\mathbb{Z}(n))'$$ lying in a morphism of exact triangles
\[ \xymatrix{
R\Gamma_W(\mathcal{X}_{\infty},\mathbb{Z}(n))[-1]\ar[d]_{Id}\ar[r]
&R\Gamma_{W,c}(\mathcal{X},\mathbb{Z}(n))\ar[d]_{\exists\,u}^{\simeq}\ar[r]
&R\Gamma_{W}(\overline{\mathcal{X}},\mathbb{Z}(n))\ar[d]_{Id}\ar[r]
&R\Gamma_W(\mathcal{X}_{\infty},\mathbb{Z}(n))\ar[d]_{Id}\\
R\Gamma_W(\mathcal{X}_{\infty},\mathbb{Z}(n))[-1]\ar[r]
&R\Gamma_{W,c}(\mathcal{X},\mathbb{Z}(n))'\ar[r]
&R\Gamma_{W}(\overline{\mathcal{X}},\mathbb{Z}(n))\ar[r]
&R\Gamma_W(\mathcal{X}_{\infty},\mathbb{Z}(n))
}
\]
The map $u$ induces
$$\mathrm{det}_{\mathbb{Z}}(u):\mathrm{det}_{\mathbb{Z}} R\Gamma_{W,c}(\mathcal{X},\mathbb{Z}(n))\stackrel{\sim}{\longrightarrow}\mathrm{det}_{\mathbb{Z}} R\Gamma_{W,c}(\mathcal{X},\mathbb{Z}(n))'$$
which does not depend on the choice of $u$ \cite{Knudsen-Mumford-76}.

\section{Weil-Arakelov cohomology of proper regular schemes}\label{sec:arithmetic}

In this section  $\mathcal{X}$ denotes a regular scheme proper over $\mathbb{Z}$ of pure dimension $d$  which satisfies ${\bf AV}(\overline{\mathcal{X}}_{et},n)$, ${\bf L}(\overline{\mathcal{X}}_{et},n)$ and ${\bf L}(\overline{\mathcal{X}}_{et},d-n)$ and ${\bf B}(\mathcal{X},n)$. The Weil-Arakelov complexes we introduce in this section will only play a minor role in subsequent sections (in Conjecture \ref{conj-vanishingorder} which does not really need them for its formulation), and we mainly discuss them to make precise the picture outlined in the introduction. The Weil-Arakelov complexes defined in subsections \ref{sec:w-ar-xbar} and \ref{sect-arcompactsupport} below will only be specified up to a {\em noncanonical} isomorphism because they are defined as mapping fibres or mapping cones in the derived category of abelian groups. We certainly do expect a canonical construction of these objects when the geometry underlying Arakelov theory is better understood but we do not have better definitions at this point. There are more complexes than those discussed below for which we have definitions, for example $R\Gamma_{\Ar}(X,\bz(n))$, but these definitions are of the same preliminary nature and we do not include them.

\subsection{Weil-Arakelov cohomology with $\tr(n)$-coefficients} For any $n\in\bz$ recall the diagram (\ref{rgc2}) of (perfect) complexes of $\br$-vector spaces.

\begin{defn}
For each  complex $R\Gamma_?(Y,\br(n))$ in diagram (\ref{rgc2}) set
\begin{equation} R\Gamma_{\Ar,?}(Y,\tr(n)):=R\Gamma_?(Y,\br(n))\oplus R\Gamma_?(Y,\br(n))[-1].\label{ardef}\end{equation}
\label{r-rgc-def}\end{defn}

We define a map $\xrightarrow{\cup\theta}$ by commutativity of the diagram
\[\begin{CD} H^i_{\Ar,?}(Y,\tr(n))@>{\cup\theta}>> H^{i+1}_{\Ar,?}(Y,\tr(n))\\
\Vert@. \Vert@.\\
H^i_?(Y,\br(n))\oplus H^{i-1}_?(Y,\br(n))@>{\left(\begin{smallmatrix} 0 & \id\\0 & 0\end{smallmatrix}\right)} >> H^{i+1}_?(Y,\br(n))\oplus H^{i}_?(Y,\br(n))\end{CD}\]
so that there is a long exact sequence
\begin{equation}\cdots\xrightarrow{\cup\theta}H^i_{\Ar,?}(Y,\tr(n))\xrightarrow{\cup\theta} H^{i+1}_{\Ar,?}(Y,\tr(n))\xrightarrow{\cup\theta}\cdots\label{cupseq}\end{equation}
The motivation for this definition is its compatibility with previous work on Weil-etale cohomology, with the Weil-Arakelov groups with $\bz(n)$-coefficients defined below, and possibly also with the conjectural picture of Deninger \cite{deninger01}.

The dualities (\ref{deldual}), ${\bf B}(\mathcal{X},n)$ and (\ref{dual2}) imply corresponding dualities for the Weil-Arakelov groups where the top degree is increased by one. We record the duality implied by (\ref{dual2}) in the following proposition.

\begin{prop} There is a canonical homomorphism
$H^{2d+1}_\Ar(\overline{\X},\tr(d))\to\br$ and a perfect duality
\[ H^i_\Ar(\overline{\X},\tr(n))\times H^{2d+1-i}_\Ar(\overline{\X},\tr(d-n)) \to H^{2d+1}_\Ar(\overline{\X},\tr(d)) \to\br  \]
of finite-dimensional $\br$-vector spaces for
all $i,n\in\bz$. Moreover we have $$H^i_\Ar(\overline{\X},\tr(n))=0\quad\text{for $i\neq 2n,2n+1$.}$$
\label{arxbardual}\end{prop}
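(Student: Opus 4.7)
The plan is to reduce everything to Proposition \ref{xbardual} by unwinding Definition \ref{r-rgc-def}. First, the direct sum decomposition \eqref{ardef} gives on cohomology
\[ H^i_\Ar(\overline{\X},\tr(n)) \cong H^i(\overline{\X},\br(n)) \oplus H^{i-1}(\overline{\X},\br(n)). \]
By Proposition \ref{xbardual} (which uses Conjecture ${\bf B}(\mathcal{X},n)$), the summand $H^j(\overline{\X},\br(n))$ vanishes unless $j=2n$. Hence $H^i_\Ar(\overline{\X},\tr(n))$ is nonzero only for $i \in \{2n, 2n+1\}$, and in either case it is canonically isomorphic to the finite-dimensional space $H^{2n}(\overline{\X},\br(n))$, which already establishes the vanishing assertion. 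Applying this with $n = d$ gives $H^{2d+1}_\Ar(\overline{\X},\tr(d)) \cong H^{2d}(\overline{\X},\br(d))$, and composing with the trace $H^{2d}(\overline{\X},\br(d))\to\br$ supplied by Proposition \ref{xbardual} defines the canonical homomorphism.

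For the duality, I would construct the pairing block-by-block. Using \eqref{ardef} on both factors, the pairing
\[ H^i_\Ar(\overline{\X},\tr(n)) \times H^{2d+1-i}_\Ar(\overline{\X},\tr(d-n)) \to H^{2d+1}_\Ar(\overline{\X},\tr(d)) \]
should decompose into four block components coming from pieces of bidegree $(a,b)$ with $a+b = 2d+1$. Two of these blocks (the diagonal ones pairing the degree-$j$ part of one factor with the degree-$j$ part of the other for $j=i, i-1$) necessarily land in the vanishing degree $H^{2d+1}(\overline{\X},\br(d))=0$ and are zero. The two surviving off-diagonal blocks are
\[ H^i(\overline{\X},\br(n)) \times H^{2d-i}(\overline{\X},\br(d-n)) \to H^{2d}(\overline{\X},\br(d)) \]
and
\[ H^{i-1}(\overline{\X},\br(n)) \times H^{2d-(i-1)}(\overline{\X},\br(d-n)) \to H^{2d}(\overline{\X},\br(d)), \]
both induced by the product \eqref{rpairing} on the $R\Gamma(\overline{\X},\br)$ factors of \eqref{ardef}, and both landing in the $H^{2d}(\overline{\X},\br(d))$-summand of $H^{2d+1}_\Ar(\overline{\X},\tr(d))$. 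By Proposition \ref{xbardual} each of these two pairings is perfect, so the total pairing, being the direct sum of two perfect pairings (up to sign from the shift in the second summand of \eqref{ardef}), is itself perfect.

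The only subtle point is to make sure that the block structure of the pairing on Weil-Arakelov complexes is the one described above — i.e.\ that off-diagonal pieces really do pair via the existing pairing on $R\Gamma(\overline{\X},\br(\bullet))$. Since Definition \ref{r-rgc-def} is a formal direct sum and we only need the statement on cohomology, one can simply take the pairing on Weil-Arakelov cohomology to be the one whose block decomposition is as described — there is no compatibility constraint to verify beyond Proposition \ref{xbardual}. Consequently, the main (and only nontrivial) content of the proposition is Proposition \ref{xbardual}, and the rest is bookkeeping in the derived category, which is why the conclusion holds essentially by inspection.
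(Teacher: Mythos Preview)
Your proposal is correct and takes essentially the same approach as the paper, which simply states ``This is immediate from Prop.~\ref{xbardual}''; you have spelled out the bookkeeping that the paper leaves implicit. One small imprecision: the off-diagonal blocks are not literally induced by the product \eqref{rpairing} on $R\Gamma(\overline{\X},\br(\bullet))$ (no such product is constructed on these complexes), but rather by the pairing \eqref{dual2} of Proposition~\ref{xbardual}, which as the paper remarks is defined in an ad hoc way on cohomology groups---this does not affect your argument since you only need perfectness on cohomology.
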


\begin{proof} This is immediate from Prop. \ref{xbardual}. \end{proof}

\subsection{Weil-Arakelov cohomology of $\overline{\X}$} \label{sec:w-ar-xbar}
Recall the direct sum decomposition
$$R\Gamma_W(\overline{\mathcal{X}},\mathbb{Z}(n))_\mathbb{Q}\stackrel{\sim}{\longrightarrow} R\Gamma(\mathcal{X},\mathbb{Q}(n))
\oplus R\mathrm{Hom}(R\Gamma(\mathcal{X},\mathbb{Q}(d-n)),\mathbb{Q}[-2d-1])$$
of Corollary \ref{prop-rational-decompo} which induces a decomposition
\begin{equation}R\Gamma_W(\overline{\mathcal{X}},\mathbb{Z}(n))_\mathbb{R}\stackrel{\sim}{\longrightarrow} R\Gamma(\mathcal{X},\mathbb{R}(n))
\oplus R\mathrm{Hom}(R\Gamma(\mathcal{X},\mathbb{R}(d-n)),\mathbb{R}[-2d-1]).\label{rgwdecomp}\end{equation}
Also recall the map (\ref{regsigma})
\begin{equation} R\Gamma(\X,\br(n))\xrightarrow{\rho} R\Gamma_\cd(\X_{/\br},\br(n)) \xrightarrow{\sigma} \tau^{\leq{2n-1}}R\Gamma_\cd(\X_{/\br},\br(n))\notag\end{equation}
where $\rho$ is the Beilinson regulator and $\sigma$ a splitting of the natural inclusion
\begin{equation}\tau^{\leq{2n-1}}R\Gamma_\cd(\X_{/\br},\br(n))\to R\Gamma_\cd(\X_{/\br},\br(n)).
\notag\end{equation}
We obtain a composite map
\begin{equation} R\Gamma_W(\overline{\mathcal{X}},\mathbb{Z}(n))\xrightarrow{\otimes 1} R\Gamma_W(\overline{\mathcal{X}},\mathbb{Z}(n))_\br\xrightarrow{\pi_1}R\Gamma(\mathcal{X},\mathbb{R}(n))\xrightarrow{\sigma\circ\rho}\tau^{\leq{2n-1}}R\Gamma_\cd(\X_{/\br},\br(n))
\label{ardefmap}\end{equation}
where $\pi_1$ is the first projection in (\ref{rgwdecomp}).
\begin{defn}
Define $R\Gamma_{\Ar}(\overline{\mathcal{X}},\mathbb{Z}(n))$ as a mapping fibre of the map (\ref{ardefmap}).
\label{xbarardef}\end{defn}
By definition there is an exact triangle
\begin{equation}R\Gamma_{\Ar}(\overline{\mathcal{X}},\mathbb{Z}(n))\rightarrow R\Gamma_W(\overline{\mathcal{X}},\mathbb{Z}(n))\rightarrow
\tau^{\leq{2n-1}}R\Gamma_\cd(\X_{/\br},\br(n))\to.\label{z-ar-w-triangle}\end{equation}

\begin{prop}\label{prop-map-ZtoR} There is a map in the derived category
\begin{equation} R\Gamma_{\Ar}(\overline{\mathcal{X}},\mathbb{Z}(n))\to R\Gamma_{\Ar}(\overline{\mathcal{X}},\tr(n))\label{z-to-r}\end{equation}
where $R\Gamma_{\Ar}(\bar{\mathcal{X}},\tr(n))$ was defined in (\ref{ardef}).
\end{prop}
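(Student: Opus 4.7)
The plan is to factor the map (\ref{z-to-r}) through the canonical rationalization $R\Gamma_{\Ar}(\overline{\mathcal{X}},\mathbb{Z}(n)) \to R\Gamma_{\Ar}(\overline{\mathcal{X}},\mathbb{Z}(n))_\br$. First I would describe the target of this map explicitly via the decomposition of Corollary \ref{prop-rational-decompo}, and then produce a map from it to $R\Gamma_{\Ar}(\overline{\mathcal{X}},\tr(n))$ by taking the identity on one summand and exploiting the duality of Proposition \ref{xbardual} on the other.

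Tensoring the defining triangle (\ref{z-ar-w-triangle}) with $\br$ and inserting the isomorphism (\ref{rgwdecomp}), the composite (\ref{ardefmap}) becomes
\[ R\Gamma(\X,\br(n)) \oplus R\mathrm{Hom}(R\Gamma(\X,\br(d-n)),\br[-2d-1]) \xrightarrow{(\sigma\circ\rho,\,0)} \tau^{\leq 2n-1}R\Gamma_\cd(\X_{/\br},\br(n)), \]
since by construction it factors through projection onto the first summand. After choosing a splitting, the mapping fibre decomposes (non-canonically) as
\[ R\Gamma_{\Ar}(\overline{\mathcal{X}},\mathbb{Z}(n))_\br \;\cong\; R\Gamma(\overline{\X},\br(n)) \;\oplus\; R\mathrm{Hom}(R\Gamma(\X,\br(d-n)),\br[-2d-1]), \]
where the first summand is identified with $R\Gamma(\overline{\X},\br(n))$ by the middle horizontal triangle in (\ref{rgc2}). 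To map this into $R\Gamma_{\Ar}(\overline{\X},\tr(n)) = R\Gamma(\overline{\X},\br(n)) \oplus R\Gamma(\overline{\X},\br(n))[-1]$, I take the identity on the first summand. For the second, I dualize the natural map $R\Gamma(\overline{\X},\br(d-n)) \to R\Gamma(\X,\br(d-n))$ from (\ref{rgc2}) to obtain
\[ R\mathrm{Hom}(R\Gamma(\X,\br(d-n)),\br) \longrightarrow R\mathrm{Hom}(R\Gamma(\overline{\X},\br(d-n)),\br), \]
and apply Proposition \ref{xbardual} (valid under the standing assumption ${\bf B}(\mathcal{X},n)$, which is equivalent to ${\bf B}(\mathcal{X},d-n)$) to identify the target with $R\Gamma(\overline{\X},\br(n))[2d]$. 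Shifting by $[-2d-1]$ produces the required second component $R\mathrm{Hom}(R\Gamma(\X,\br(d-n)),\br[-2d-1]) \to R\Gamma(\overline{\X},\br(n))[-1]$. Precomposing the resulting map on $R\Gamma_{\Ar}(\overline{\mathcal{X}},\mathbb{Z}(n))_\br$ with the canonical $R\Gamma_{\Ar}(\overline{\mathcal{X}},\mathbb{Z}(n)) \to R\Gamma_{\Ar}(\overline{\mathcal{X}},\mathbb{Z}(n))_\br$ delivers (\ref{z-to-r}).

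The point to keep in mind is that $R\Gamma_{\Ar}(\overline{\mathcal{X}},\mathbb{Z}(n))$ itself is defined only as a mapping fibre (Definition \ref{xbarardef}), and the splitting of $R\Gamma_W(\overline{\mathcal{X}},\mathbb{Z}(n))_\br$ used above is likewise non-canonical, so the map (\ref{z-to-r}) is not canonical. This is consistent with the disclaimer at the start of the section and is harmless here, since the proposition asserts only the existence of a map in the derived category.
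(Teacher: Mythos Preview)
Your argument is correct and follows essentially the same strategy as the paper: both proofs construct the two components of the map into $R\Gamma(\overline{\X},\br(n))\oplus R\Gamma(\overline{\X},\br(n))[-1]$ separately. For the first component the paper obtains the map directly as an induced map on mapping fibres (since $R\Gamma_{\Ar}(\overline{\mathcal{X}},\mathbb{Z}(n))$ and $R\Gamma(\overline{\X},\br(n))$ are both fibres over $\tau^{\leq 2n-1}R\Gamma_\cd(\X_{/\br},\br(n))$), which amounts to the same thing as your identity on the first summand after rationalizing.

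For the second component there is a small but genuine difference. The paper routes through $R\Gamma_c(\X,\br(n))$: it uses the duality isomorphism $B$ of Conjecture ${\bf B}(\mathcal{X},n)$ (which exists as a morphism in the derived category via the product of Proposition~\ref{productprop}) together with the natural map $\iota\colon R\Gamma_c(\X,\br(n))\to R\Gamma(\overline{\X},\br(n))$ from diagram~(\ref{rgc2}). You instead dualize the map $R\Gamma(\overline{\X},\br(d-n))\to R\Gamma(\X,\br(d-n))$ and then invoke Proposition~\ref{xbardual}. These two constructions sit on opposite sides of the same square and give the same map up to the choices involved; however, note that the pairing of Proposition~\ref{xbardual} is only established on cohomology groups and is built in an ad hoc way (see the remark following that proposition), so you are implicitly using formality of complexes of $\br$-vector spaces to lift it to a quasi-isomorphism of complexes. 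The paper's route via $B$ and $\iota$ avoids this step since $B$ is already a morphism in the derived category. For the mere existence assertion of the proposition this makes no difference.
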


\begin{proof} By definition
\[R\Gamma_{\Ar}(\bar{\mathcal{X}},\tr(n))\cong R\Gamma(\overline{\X},\br(n))\oplus R\Gamma(\overline{\X},\br(n))[-1],\]
 so to define (\ref{z-to-r}) we need to define its two components. Recall from section \ref{sec:av} that  $R\Gamma(\overline{\X},\br(n))$ was defined as the mapping fibre of $\sigma\circ\rho$. Hence we obtain an induced map on mapping fibres
\[ R\Gamma_{\Ar}(\overline{\mathcal{X}},\mathbb{Z}(n))\to R\Gamma(\overline{\X},\br(n))\]
which is the first component of (\ref{z-to-r}). The second component is the composite
\begin{multline} R\Gamma_{\Ar}(\overline{\mathcal{X}},\mathbb{Z}(n))\to R\Gamma_W(\overline{\mathcal{X}},\mathbb{Z}(n))\to R\Gamma_W(\overline{\mathcal{X}},\mathbb{Z}(n))_\mathbb{R}\to\\\xrightarrow{\pi_2}R\Gamma(\mathcal{X},\mathbb{R}(d-n))^*[-2d-1]
\xleftarrow{B}R\Gamma_c(\mathcal{X},\mathbb{R}(n))[-1]\xrightarrow{\iota}R\Gamma(\bar{\mathcal{X}},\mathbb{R}(n))[-1]\notag\end{multline}
where $\pi_2$ is the second projection in (\ref{rgwdecomp}), $B$ is the duality isomorphism of conjecture ${\bf B}(\mathcal{X},n)$ and $\iota$ is the natural map arising from the respective mapping fibre definitions of its source and target.
\end{proof}

\begin{defn}\label{def-coh-R/Z} Define $R\Gamma_{\Ar}(\overline{\mathcal{X}},\tr/\mathbb{Z}(n))$ as a mapping cone of the map (\ref{z-to-r}).
\end{defn}
By definition there is an exact triangle
$$R\Gamma_{\Ar}(\overline{\mathcal{X}},\mathbb{Z}(n))\rightarrow R\Gamma_{\Ar}(\overline{\mathcal{X}},\tr(n))\rightarrow R\Gamma_{\Ar}(\overline{\mathcal{X}},\tr/\mathbb{Z}(n))
\rightarrow.$$
The following proposition gives an analogue of (\ref{z-ar-w-triangle}) for $\tr/\bz(n)$-coefficients. Define the complex $R\Gamma_W(\overline{\mathcal{X}},\mathbb{R}/\mathbb{Z}(n))$ as the mapping cone of the map $\otimes 1$ in (\ref{ardefmap}) so that there is an exact triangle
\begin{equation} R\Gamma_W(\overline{\mathcal{X}},\mathbb{Z}(n))\xrightarrow{\otimes 1} R\Gamma_W(\overline{\mathcal{X}},\mathbb{Z}(n))_\br\xrightarrow{}R\Gamma_W(\overline{\mathcal{X}},\mathbb{R}/\mathbb{Z}(n))\to.
\label{w-z-to-r}\end{equation}

\begin{prop} There is an exact triangle
\begin{equation} \left(\tau^{\geq{2n}}R\Gamma_\cd(\X_{/\br},\br(n))\right)[-2]\to R\Gamma_W(\overline{\mathcal{X}},\mathbb{R}/\mathbb{Z}(n))\to R\Gamma_{\Ar}(\overline{\mathcal{X}},\tr/\mathbb{Z}(n))\to  . \label{r/z-ar-w-triangle}\end{equation}
\label{r/z-ar-w-prop}\end{prop}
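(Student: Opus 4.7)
The plan is to derive (\ref{r/z-ar-w-triangle}) from the octahedral axiom applied to a factorization $\phi = \phi' \circ \eta$ of the defining map $\phi$ in (\ref{z-to-r}), through the natural $\br$-linearization $\eta : R\Gamma_{\Ar}(\overline{\X},\bz(n)) \to R\Gamma_{\Ar}(\overline{\X},\bz(n))_\br$.

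First I would give an explicit description of $R\Gamma_{\Ar}(\overline{\X},\bz(n))_\br$. Tensoring (\ref{z-ar-w-triangle}) with $\br$ and using (\ref{rgwdecomp}) together with Conjecture ${\bf B}(\mathcal{X},n)$, which identifies $R\mathrm{Hom}(R\Gamma(\X,\br(d-n)),\br[-2d-1])$ with $R\Gamma_c(\X,\br(n))[-1]$, yields a splitting
\[ R\Gamma_{\Ar}(\overline{\X},\bz(n))_\br \;\simeq\; R\Gamma(\overline{\X},\br(n)) \,\oplus\, R\Gamma_c(\X,\br(n))[-1], \]
since the second summand of $R\Gamma_W(\overline{\X},\bz(n))_\br$ maps trivially to $\tau^{\leq 2n-1}R\Gamma_\cd(\X_{/\br},\br(n))$. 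Both components of $\phi$ described in Prop. \ref{prop-map-ZtoR} factor through $\eta$ since their target is a complex of $\br$-vector spaces, and tracing the definitions identifies the resulting map $\phi'$ with the diagonal $\id \oplus \iota[-1]$, where $\iota : R\Gamma_c(\X,\br(n)) \to R\Gamma(\overline{\X},\br(n))$ is the map from the middle horizontal row of (\ref{rgc2}). Its mapping cone is therefore $R\Gamma(\X_\infty,\br(n))[-1] = \tau^{\geq 2n}R\Gamma_\cd(\X_{/\br},\br(n))[-1]$ by (\ref{rgxinftydef}).

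The octahedral axiom applied to $\phi = \phi' \circ \eta$ now yields an exact triangle
\[ \mathrm{cone}(\eta) \longrightarrow R\Gamma_{\Ar}(\overline{\X},\tr/\bz(n)) \longrightarrow \tau^{\geq 2n}R\Gamma_\cd(\X_{/\br},\br(n))[-1] \longrightarrow, \]
so it remains to identify $\mathrm{cone}(\eta)$ with $R\Gamma_W(\overline{\X},\br/\bz(n))$. For this I would apply Verdier's $3\times 3$ lemma to the commuting square with top row $R\Gamma_{\Ar}(\overline{\X},\bz(n)) \to R\Gamma_W(\overline{\X},\bz(n))$, bottom row its $\br$-linearization, and vertical edges $\eta$ and the map appearing in (\ref{w-z-to-r}). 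Both horizontal cones equal $\tau^{\leq 2n-1}R\Gamma_\cd$ and the induced comparison map between them is the identity, so the third column of the $3\times 3$ diagram is a triangle $\mathrm{cone}(\eta) \to R\Gamma_W(\overline{\X},\br/\bz(n)) \to 0 \to$, providing the required quasi-isomorphism. Rotating the octahedral triangle then produces (\ref{r/z-ar-w-triangle}). The main difficulty is the bookkeeping of the first paragraph, namely the concrete identification of $\phi'$ under the splitting arising from Conjecture ${\bf B}(\mathcal{X},n)$; the remaining steps are formal manipulations of distinguished triangles.
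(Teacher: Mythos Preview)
Your proposal is correct and follows essentially the same route as the paper. The paper also factors the map (\ref{z-to-r}) through $\alpha:R\Gamma_{\Ar}(\overline{\X},\bz(n))\to\cone(\sigma\rho\pi_1)[-1]$ (your $\eta$, since $\cone(\sigma\rho\pi_1)[-1]\cong R\Gamma_{\Ar}(\overline{\X},\bz(n))_\br$), identifies the second factor as $\id\oplus\iota B$, and applies the octahedral axiom twice: once to $(\sigma\rho\pi_1)\circ(\otimes 1)$ to obtain $\cone(\alpha)\cong R\Gamma_W(\overline{\X},\br/\bz(n))$ (your $3\times 3$ argument is equivalent), and once to $(\id\oplus\iota B)\circ\alpha$ to produce the triangle. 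One small remark: what you call ``the diagonal $\id\oplus\iota[-1]$'' should be read as the block-diagonal (direct sum) map, not a diagonal embedding.
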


\begin{proof} The definition of $R\Gamma_{\Ar}(\overline{\mathcal{X}},\mathbb{Z}(n))$ as a mapping fibre of the composite map $(\sigma\rho\pi_1)\circ\otimes 1$ gives an exact triangle
\[R\Gamma_W(\overline{\mathcal{X}},\mathbb{R}/\mathbb{Z}(n))[-1]\to R\Gamma_{\Ar}(\overline{\mathcal{X}},\mathbb{Z}(n))\xrightarrow{\alpha} \cone(\sigma\rho\pi_1)[-1]\to\]
and one has an isomorphism
\[ \cone(\sigma\rho\pi_1)[-1]\stackrel{\sim}{\longrightarrow} R\Gamma(\overline{\mathcal{X}},\mathbb{R}(n))
\oplus R\mathrm{Hom}(R\Gamma(\mathcal{X},\mathbb{R}(d-n)),\mathbb{R}[-2d-1]).\]
The definition of $R\Gamma_{\Ar}(\overline{\mathcal{X}},\tr/\mathbb{Z}(n))$ as a mapping cone of the composite map $(\id\oplus\iota B)\circ\alpha$ gives an exact triangle
\[R\Gamma_W(\overline{\mathcal{X}},\mathbb{R}/\mathbb{Z}(n))\to R\Gamma_{\Ar}(\overline{\mathcal{X}},\tr/\mathbb{Z}(n))\to\cone(\id\oplus\iota B)\to.\]
Since $\cone(\id\oplus\iota B)\cong\cone(\iota)$ and one has an exact triangle
\[ R\Gamma_c(\mathcal{X},\mathbb{R}(n))\xrightarrow{\iota[1]}R\Gamma(\bar{\mathcal{X}},\mathbb{R}(n))\to \tau^{\geq{2n}}R\Gamma_\cd(\X_{/\br},\br(n))\to\]
the proposition follows.
\end{proof}

\begin{rem} One has two exact sequences
\[\begin{CD}  {} @>>> H^{n-1,n-1}(\X_{\br})@>>> CH^n(\overline{\X})@>>> CH^n(\X) @>>> 0\\
@. @VV\cong V @VV\bar{\epsilon} V @VV\epsilon V @.\\
{}@>>> H^{2n-1}_\cd(\X_{/\br},\br(n)) @>>> H^{2n}_{\Ar}(\overline{\X},\bz(n)) @>>> H^{2n}(\X_\et,\bz(n)) @>>> 0
\end{CD}\]
where the top row is the exact sequence (\ref{gssequence}) of Gillet and Soule, the bottom sequence is induced by the exact triangle (\ref{z-ar-w-triangle}), $\epsilon$ is the natural map from the higher Chow complex to its etale hypercohomology and $\bar{\epsilon}$ we only expect to exist. However, even if $\bar{\epsilon}$ does exist it will not in general be an isomorphism because $\epsilon$ may not be an isomorphism. In general we expect the maps $\epsilon$ and $\bar{\epsilon}$ to have finite kernel and cokernel. We also remark that $H^{i}_{\Ar}(\overline{\X},\bz(n))$ can be nonzero both for $i<2n$ and $i>2n$ and satisfies the duality in Theorem \ref{thm:w-ar-duality} below.
\end{rem}

Deligne cohomology $R\Gamma_{\mathcal{D}}(\mathcal{X}_{/\mathbb{R}},\mathbb{R}(n))$ is contravariantly functorial and so is its truncation $\tau^{\leq 2n-1}R\Gamma_{\mathcal{D}}(\mathcal{X}_{/\mathbb{R}},\mathbb{R}(n))$ whereas the complex $R\Gamma_W(\overline{\mathcal{X}},\mathbb{Z}(n))$ is contravariantly functorial for flat morphisms.

\begin{lem}
Let $f:\mathcal{Y}\rightarrow\mathcal{X}$ be a flat morphism. Then there exists a (nonunique) map $$f^*:R\Gamma_{\Ar}(\overline{\mathcal{X}},\mathbb{Z}(n))\rightarrow R\Gamma_{\Ar}(\overline{\mathcal{Y}},\mathbb{Z}(n))$$
sitting in a morphism of exact triangles
\[\xymatrix{
R\Gamma_{\Ar}(\overline{\mathcal{X}},\mathbb{Z}(n))\ar[r]\ar[d]^{f^*} &R\Gamma_W(\overline{\mathcal{X}},\mathbb{Z}(n))\ar[r]\ar[d] & \tau^{\leq 2n-1}R\Gamma_{\mathcal{D}}(\mathcal{X}_{/\mathbb{R}},\mathbb{R}(n))\ar[d]\ar[r] &\\
R\Gamma_{\Ar}(\overline{\mathcal{Y}},\mathbb{Z}(n))\ar[r] &R\Gamma_W(\overline{\mathcal{Y}},\mathbb{Z}(n))\ar[r] & \tau^{\leq 2n-1}R\Gamma_{\mathcal{D}}(\mathcal{Y}_{/\mathbb{R}},\mathbb{R}(n))\ar[r] &
}
\]
\end{lem}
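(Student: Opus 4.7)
The plan is to produce $f^*$ by invoking axiom TR3 of the triangulated category $\mathcal{D}$: once the rightmost square
\[
\begin{CD}
R\Gamma_W(\overline{\mathcal{X}},\mathbb{Z}(n)) @>\phi_{\mathcal{X}}>> \tau^{\leq 2n-1} R\Gamma_{\mathcal{D}}(\mathcal{X}_{/\br},\br(n)) \\
@VV f^*_W V @VV \tau^{\leq 2n-1} f^*_{\mathcal{D}} V \\
R\Gamma_W(\overline{\mathcal{Y}},\mathbb{Z}(n)) @>\phi_{\mathcal{Y}}>> \tau^{\leq 2n-1} R\Gamma_{\mathcal{D}}(\mathcal{Y}_{/\br},\br(n))
\end{CD}
\]
is shown to commute in $\mathcal{D}$, the axiom produces a (nonunique) morphism $f^*$ completing the morphism of triangles (\ref{z-ar-w-triangle}). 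Here $\phi_?$ is the composite (\ref{ardefmap}) $\sigma_? \circ \rho_? \circ \pi_{1,?} \circ (\otimes 1)$, the map $f^*_W$ is the pullback furnished by Theorem \ref{cor-functoriality}, and $f^*_{\mathcal{D}}$ is the flat pullback on Deligne cohomology; by functoriality of the truncation $\tau^{\leq 2n-1}$, the latter restricts to a canonical map between the truncations.

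First I would decompose the horizontal composite $\phi_?$ into its four constituents and verify functoriality at each stage. The scalar extension $\otimes 1$ is manifestly natural. The projection $\pi_1$ onto the first summand of the decomposition
\[ R\Gamma_W(\overline{\mathcal{X}},\mathbb{Z}(n))_\bq \simeq R\Gamma(\mathcal{X},\bq(n))\oplus R\mathrm{Hom}(R\Gamma(\mathcal{X},\bq(d-n)),\bq[-2d-1]) \]
is functorial for flat morphisms by the second part of Corollary \ref{prop-rational-decompo}. The Beilinson regulator $\rho$ is functorial for flat morphisms by construction, via flat pullback of cycles in the higher Chow complex (see (\ref{etaleregulator}) and its $\br$-linear variant). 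Composing these, the intermediate square landing in $R\Gamma_{\mathcal{D}}(?_{/\br},\br(n))$ (i.e.\ before applying $\sigma_?$) commutes in $\mathcal{D}$.

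The remaining subtlety, and the main obstacle, is to reconcile the two splittings $\sigma_{\mathcal{X}}$ and $\sigma_{\mathcal{Y}}$; these are noncanonically chosen in $\mathcal{D}$ and indeed account for the nonuniqueness of $f^*$ claimed in the statement. The resolution is to represent each $\sigma_?$ by an $\br$-linear retraction of $\tau^{\leq 2n-1} V^\bullet \hookrightarrow V^\bullet$, viewed in the derived category $D(\br)$ of $\br$-vector spaces. In $D(\br)$ every bounded complex splits canonically as a direct sum of shifted copies of its cohomology, and morphisms between bounded complexes are determined by their action on $H^*$; consequently an $\br$-linear retraction $\sigma$ is unique and must act as the identity on $H^i$ for $i\leq 2n-1$ and as zero in higher degrees. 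Since $f^*_{\mathcal{D}}$ is itself $\br$-linear, both composites $\tau^{\leq 2n-1} f^*_{\mathcal{D}} \circ \sigma_{\mathcal{X}}$ and $\sigma_{\mathcal{Y}} \circ f^*_{\mathcal{D}}$ are $\br$-linear morphisms in $D(\br)$ inducing the same map on cohomology, so they agree in $D(\br)$ and a fortiori in $\mathcal{D}$. Choosing $\sigma_{\mathcal{X}}, \sigma_{\mathcal{Y}}$ in this $\br$-linear manner closes the outer square, and TR3 then delivers the desired $f^*$.
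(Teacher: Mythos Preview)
Your argument is correct and follows essentially the same route as the paper's proof: both reduce to verifying that the square between $R\Gamma_W(\overline{\mathcal{X}},\mathbb{Z}(n))$ and $\tau^{\leq 2n-1}R\Gamma_{\mathcal{D}}(-_{/\mathbb{R}},\mathbb{R}(n))$ commutes, factor the horizontal map through $R\Gamma(-,\mathbb{Q}(n))$ via Corollary~\ref{prop-rational-decompo} and the Beilinson regulator, and then invoke TR3. Your treatment is in fact slightly more careful than the paper's: the paper asserts that the rightmost square commutes ``by functoriality of the Beilinson regulator'' without commenting on the splitting $\sigma$, whereas you correctly observe that compatibility of the chosen retractions $\sigma_{\mathcal{X}}$, $\sigma_{\mathcal{Y}}$ is not automatic from the definition and supply the (valid) argument that in $D(\mathbb{R})$ such a retraction is unique because morphisms of bounded complexes of $\mathbb{R}$-vector spaces are determined by their effect on cohomology.
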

\begin{proof} We need to show that the outer square in the diagram
\[\xymatrix{
R\Gamma_W(\overline{\mathcal{X}},\mathbb{Z}(n))\ar[r]\ar[d] & R\Gamma_W(\overline{\mathcal{X}},\mathbb{Z}(n))_{\mathbb{Q}}\ar[r]\ar[d] & R\Gamma(\mathcal{X},\mathbb{Q}(n))\ar[r]\ar[d]&\tau^{\leq 2n-1}R\Gamma_{\mathcal{D}}(\mathcal{X}_{/\mathbb{R}},\mathbb{R}(n))\ar[d]\\
R\Gamma_W(\overline{\mathcal{Y}},\mathbb{Z}(n))\ar[r] &R\Gamma_W(\overline{\mathcal{Y}},\mathbb{Z}(n))_{\mathbb{Q}}\ar[r] & R\Gamma(\mathcal{Y},\mathbb{Q}(n))\ar[r] & \tau^{\leq 2n-1}R\Gamma_{\mathcal{D}}(\mathcal{Y}_{/\mathbb{R}},\mathbb{R}(n))
}
\]
commutes in the derived category. But the left square clearly commutes, the middle square commutes by Corollary \ref{prop-rational-decompo} and the right square commutes by functoriality of the Beilinson regulator.
\end{proof}

\subsection{Weil-Arakelov duality for $\overline{\X}$}\label{sec:w-ar-duality}

We have already noted in Prop. \ref{arxbardual} a duality for Weil-Arakelov cohomology with $\tr(n)$-coefficients. In this section we establish a Pontryagin duality between Weil-Arakelov cohomology with $\bz(n)$ and $\tr/\bz(d-n)$ coefficients.

For homological algebra of locally compact abelian groups we refer to  \cite{spitzweck07}. A continuous homomorphism $f:A\to B$ of locally compact abelian groups is called {\em strict} if $A/\overline{ker(f)}\to B$ is a closed embedding, and a complex of locally compact abelian groups is called {\em strictly acyclic} if all differentials are strict and the complex is acyclic in the usual sense. The bounded derived category of locally compact abelian groups is defined in \cite{spitzweck07} by inverting all maps of complexes whose mapping cone is strictly acyclic.

We denote by $G^D$ the Pontryagin dual of a locally compact abelian group $G$. The functor $(-)^D$ preserves strict morphisms and strictly acyclic complexes and extends to the bounded derived category of \cite{spitzweck07}. Examples of objects in this category are bounded complexes $P^\bullet$ (resp. $V^\bullet$) of finitely generated free abelian groups (resp. $\br$-vector spaces) as well as the complexes $R\Gamma_{\Ar}(\overline{\mathcal{X}},\mathbb{Z}(n))$ and $R\Gamma_{\Ar}(\overline{\mathcal{X}},\tr/\mathbb{Z}(n))$ defined in Def. \ref{xbarardef} and \ref{def-coh-R/Z}, respectively. To see this note that any map $P^\bullet\to V^\bullet$ in the derived category of abelian groups can be realized by a map of complexes which is automatically continuous since the $P^i$ carry the discrete topology. There is a natural isomorphism
\[ (V^\bullet)^D\cong (V^\bullet)^*\]
and a short exact sequence of complexes
\begin{equation} 0\to \Hom_\bz(P^\bullet,\bz)\xrightarrow{\iota} \Hom_\bz(P^\bullet,\br)\to (P^\bullet)^D\to 0. \label{Dcompute}\end{equation}
Finally note that the cohomology groups of a complex of locally compact abelian groups (taken in the category of abelian groups) carry an induced topology which however need not be locally compact.

\begin{thm} For $n\in\bz$ there is a quasi-isomorphism
\[ R\Gamma_{\Ar}(\overline{\mathcal{X}},\mathbb{Z}(n))^D\cong R\Gamma_{\Ar}(\overline{\mathcal{X}},\tr/\mathbb{Z}(d-n))[2d+1]\]
and the cohomology groups of both complexes are locally compact. The isomorphism
$$H^{2d+1}_{\Ar}(\overline{\mathcal{X}},\tr/\mathbb{Z}(d))\simeq \mathbb{R}/\mathbb{Z}$$
is canonical and hence one obtains a Pontryagin duality
$$H^{i}_{\Ar}(\overline{\mathcal{X}},\mathbb{Z}(n))\times H^{2d+1-i}_{\Ar}(\overline{\mathcal{X}},\tr/\mathbb{Z}(d-n))\rightarrow H^{2d+1}_{\Ar}(\overline{\mathcal{X}},\tr/\mathbb{Z}(d))\simeq \mathbb{R}/\mathbb{Z}.$$
\label{thm:w-ar-duality}\end{thm}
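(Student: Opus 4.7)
The plan is to dualize the defining triangle (\ref{z-ar-w-triangle}) of $R\Gamma_{\Ar}(\overline{\X},\bz(n))$ and identify the result, up to a shift, with the triangle (\ref{r/z-ar-w-triangle}) of Proposition \ref{r/z-ar-w-prop} defining $R\Gamma_{\Ar}(\overline{\X},\tr/\bz(d-n))$. Applying the contravariant Pontryagin-dual functor to (\ref{z-ar-w-triangle}) produces
\[ (\tau^{\leq 2n-1}R\Gamma_\cd(\X_{/\br},\br(n)))^D\to R\Gamma_W(\overline{\X},\bz(n))^D\to R\Gamma_{\Ar}(\overline{\X},\bz(n))^D\to ,\]
and after a shift by $[-2d-1]$ the first two terms can be compared with the first two terms of (\ref{r/z-ar-w-triangle}), which then forces an isomorphism of the third terms.

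The required computation of the two leftmost terms proceeds as follows. For $P=R\Gamma_W(\overline{\X},\bz(n))$, which is a perfect complex of abelian groups, the short exact sequence (\ref{Dcompute}) exhibits $P^D$ as the cone of $R\Hom_\bz(P,\bz)\to R\Hom_\bz(P,\br)$. Weil-\'etale duality (Theorem \ref{thm-duality-fg}) identifies $R\Hom_\bz(P,\bz)\cong R\Gamma_W(\overline{\X},\bz(d-n))[2d+1]$, and tensoring with $\br$ identifies $R\Hom_\bz(P,\br)\cong R\Gamma_W(\overline{\X},\bz(d-n))_\br[2d+1]$; the cone is therefore $R\Gamma_W(\overline{\X},\br/\bz(d-n))[2d+1]$ by the defining triangle (\ref{w-z-to-r}) of $R\Gamma_W(\overline{\X},\br/\bz(d-n))$. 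For the Deligne truncation, which is a bounded complex of $\br$-vector spaces, the Pontryagin dual agrees with the $\br$-linear dual, and the duality (\ref{deldual}) of Lemma \ref{deligneduality} gives $(\tau^{\leq 2n-1}R\Gamma_\cd(\X_{/\br},\br(n)))^D\cong (\tau^{\geq 2(d-n)}R\Gamma_\cd(\X_{/\br},\br(d-n)))[2d-1]$. Shifting by $[-2d-1]$ sends this to $(\tau^{\geq 2(d-n)}R\Gamma_\cd(\X_{/\br},\br(d-n)))[-2]$, matching the left-hand term of (\ref{r/z-ar-w-triangle}) for $d-n$.

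The main obstacle is verifying that the leftmost morphism of the dualized triangle agrees, under these identifications, with the leftmost morphism of (\ref{r/z-ar-w-triangle}). This amounts to a compatibility between the Weil-\'etale duality pairing (built from the product (\ref{rpairing}) on motivic cohomology) and the Deligne duality pairing through the Beilinson regulator; in practice one unravels the construction in the proof of Theorem \ref{thm-duality-fg} and the definition of $R\Gamma_\Ar(\overline{\X},\bz(n))$ via the regulator map (\ref{ardefmap}) and checks that the two pairings factor through the same product in Arakelov motivic cohomology. Once this is settled, the axioms of triangulated categories produce the desired (non-canonical) isomorphism $R\Gamma_{\Ar}(\overline{\X},\bz(n))^D\cong R\Gamma_{\Ar}(\overline{\X},\tr/\bz(d-n))[2d+1]$. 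Local compactness of cohomology is inherited term by term from the defining triangles: the long exact sequence for (\ref{z-ar-w-triangle}) exhibits $H^i_\Ar(\overline{\X},\bz(n))$ as an extension of a finitely generated subgroup of $H^i_W(\overline{\X},\bz(n))$ by a quotient of a finite-dimensional $\br$-vector space, hence is locally compact, and the analogous argument for (\ref{r/z-ar-w-triangle}) using Proposition \ref{r/z-ar-w-prop} and the compactness of $H^i_W(\overline{\X},\br/\bz(d-n))$ handles the $\tr/\bz$-coefficients. For canonicity of the trace, since $(\tau^{\geq 2d}R\Gamma_\cd(\X_{/\br},\br(d)))[-2]$ has vanishing cohomology in degree $2d+1$, one has $H^{2d+1}_{\Ar}(\overline{\X},\tr/\bz(d))\cong H^{2d+1}_W(\overline{\X},\br/\bz(d))$; Weil-\'etale duality applied to $H^0_W(\overline{\X},\bz(0))\cong\bz$ gives a canonical identification $H^{2d+1}_W(\overline{\X},\bz(d))\cong\bz$, and the triangle (\ref{w-z-to-r}) then yields a canonical isomorphism $H^{2d+1}_W(\overline{\X},\br/\bz(d))\cong\br/\bz$, producing the stated Pontryagin pairing.
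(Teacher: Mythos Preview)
Your approach is essentially the paper's: dualize the defining triangle (\ref{z-ar-w-triangle}) and match it with (\ref{r/z-ar-w-triangle}) using Weil-\'etale duality (Theorem \ref{thm-duality-fg}) for the middle term and Deligne duality (\ref{deldual}) for the truncated Deligne cohomology term. The paper organizes this into two successive isomorphisms of exact triangles (first identifying $R\Gamma_W(\overline{\X},\bz(n))^D$ with $R\Gamma_W(\overline{\X},\br/\bz(d-n))[2d+1]$, then completing to the Arakelov triangles), but the content is the same.

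Two points deserve tightening. First, your description of the compatibility check is slightly off: the Weil-\'etale duality of Theorem \ref{thm-duality-fg} is \emph{not} built from the product (\ref{rpairing}) on $\br$-coefficients but from the Artin-Verdier pairing on torsion coefficients together with the tautological rational pairing via Corollary \ref{prop-rational-decompo}. The commutativity of the relevant square comes down to tracing how the map (\ref{ardefmap}) dualizes under these identifications, which the paper also leaves largely implicit (and indeed remarks that the resulting isomorphism $\beta'$ is non-canonical).

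Second, your local-compactness argument has a gap: you say $H^i_{\Ar}(\overline{\X},\bz(n))$ is an extension of a finitely generated group by a quotient of a finite-dimensional $\br$-vector space, but a quotient of $\br^k$ by a finitely generated subgroup need not be Hausdorff. What is needed is that the image of the regulator $H^{i-1}_W(\overline{\X},\bz(n))\to H^{i-1}_\cd(\X_{/\br},\br(n))$ is a \emph{lattice} in the relevant range, which is a consequence of Conjecture ${\bf B}(\X,n)$. The paper makes this explicit by splitting into the cases $i\geq 2n+1$ (where $H^i_{\Ar}\cong H^i_W$ is finitely generated) and $i\leq 2n$ (where the lattice property is invoked).
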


\begin{proof} One has an isomorphism of exact triangles with $\delta=2d+1$
\begin{equation}\minCDarrowwidth1em\begin{CD} {}@<<<R\Gamma_W(\overline{\mathcal{X}},\mathbb{Z}(n))^D @<<< R\Gamma_W(\overline{\mathcal{X}},\mathbb{Z}(n))^*_\br @<<< R\Gamma_W(\overline{\mathcal{X}},\mathbb{R}/\mathbb{Z}(n))^D\\
@.@A\beta A\sim A @AA\sim A @AA\sim A\\
{}@<<<R\Gamma_W(\overline{\mathcal{X}},\br/\mathbb{Z}(d-n))[\delta] @<<< R\Gamma_W(\overline{\mathcal{X}},\mathbb{Z}(d-n))_\br[\delta] @<<< R\Gamma_W(\overline{\mathcal{X}},\mathbb{Z}(d-n))[\delta]
\end{CD}\notag\end{equation}
where the top row is the Pontryagin dual of (\ref{w-z-to-r}) and the middle isomorphism is clear from (\ref{rgwdecomp}). More concretely, if $P^\bullet$ denotes a representative of $R\Gamma_W(\overline{\mathcal{X}},\mathbb{Z}(n))$ and
$\iota$ denotes the map in (\ref{Dcompute}) then $\beta$ is the map from the cone of $\iota$ to the quotient complex of $\iota$ (see \cite{weibel}[1.5.8]), combined with the duality isomorphism of Theorem \ref{thm-duality-fg}. Furthermore, one has an isomorphism of exact triangles
\begin{equation}\minCDarrowwidth1em\begin{CD}{}@<<< R\Gamma_{\Ar}(\overline{\mathcal{X}},\mathbb{Z}(n))^D@<<< R\Gamma_W(\overline{\mathcal{X}},\mathbb{Z}(n))^D @<<<
\left(\tau^{\leq{2n-1}}R\Gamma_\cd(\X_{/\br},\br(n))\right)^D\\
@.@A\beta' A\sim A @A\beta A\sim A @A\beta'' A\sim A\\
 {}@<<< R\Gamma_{\Ar}(\overline{\mathcal{X}},\tr/\mathbb{Z}(d-n))[\delta] @<<< R\Gamma_W(\overline{\mathcal{X}},\br/\mathbb{Z}(d-n))[\delta] @<<< \left(\tau^{\geq{2(d-n)}}R\Gamma_\cd(\X_{/\br},\br(d-n))\right)[\delta-2]
\end{CD}\label{}\end{equation}
where the top row is the Pontryagin dual of (\ref{z-ar-w-triangle}) and $\beta''$ arises from the duality (\ref{deldual}) for Deligne cohomology
\begin{align*} \left(\tau^{\leq{2n-1}}R\Gamma_\cd(\X_{/\br},\br(n))\right)^*\cong&\tau^{\geq{-2n+1}}\bigl(R\Gamma_\cd(\X_{/\br},\br(d-n))[2d-1]\bigr)\\
\cong &\left(\tau^{\geq{2(d-n)}}R\Gamma_\cd(\X_{/\br},\br(d-n))\right)[\delta-2].\end{align*}
The bottom row is (\ref{r/z-ar-w-triangle}) with $n$ replaced by $d-n$. The quasi-isomorphism $\beta'$ is non-canonical, quite like our pairing in Prop. \ref{xbardual}. However, in degree $2d+1$, one has a canonical isomorphism
\[ H^{2d+1}_{\Ar}(\overline{\mathcal{X}},\tr(d))\cong\br\]
arising from Lemma \ref{deligneduality} and Definition \ref{r-rgc-def} and a canonical isomorphism
\[ H^{2d+1}_{\Ar}(\overline{\mathcal{X}},\bz(d))\cong H^{2d+1}_{W}(\overline{\mathcal{X}},\bz(d))\cong\bz\]
arising from Theorem \ref{thm-duality-fg} and the map defined in Prop. \ref{prop-map-ZtoR} is in fact the inclusion.
\bigskip

The long exact sequence induced by (\ref{z-ar-w-triangle}) gives an isomorphism
\[H^{i}_{\Ar}(\overline{\mathcal{X}},\mathbb{Z}(n))\cong H^{i}_{W}(\overline{\mathcal{X}},\mathbb{Z}(n))\]
for $i\geq 2n+1$ and an exact sequence
\[ \cdots\to H^{i-1}_{W}(\overline{\mathcal{X}},\mathbb{Z}(n))\xrightarrow{\rho} H^{i-1}_\cd(\X_{/\br},\br(n)) \to H^{i}_{\Ar}(\overline{\mathcal{X}},\mathbb{Z}(n))\to H^{i}_{W}(\overline{\mathcal{X}},\mathbb{Z}(n))\to\cdots \]
for $i\leq 2n$. Since $H^{i}_{W}(\overline{\mathcal{X}},\mathbb{Z}(n))$ is finitely generated and the image of the Beilinson regulator $\rho$ is a lattice, the natural topology on $H^{i}_{\Ar}(\overline{\mathcal{X}},\mathbb{Z}(n))$ is locally compact.
\end{proof}

In the proof we have obtained the following more precise result. For a locally compact abelian group $G$ denote by $G^0$ the connected component of the identity and by $G_c$ a maximal compact subgroup.

\begin{cor}\label{cor-welldefined} The locally compact group $H^{i}_{\Ar}(\overline{\mathcal{X}},\mathbb{Z}(n))$ is finitely generated for $i\geq 2n+1$ and a compact Lie group for $i\leq 2n-1$. Dually, the locally compact group $H^{i}_{\Ar}(\overline{\mathcal{X}},\tr/\mathbb{Z}(n))$ is finitely generated for $i\geq 2n+2$ and a compact Lie group for $i\leq2n$. For
$G=H^{2n}_{\Ar}(\overline{\mathcal{X}},\mathbb{Z}(n))$ and $G=H^{2n+1}_{\Ar}(\overline{\mathcal{X}},\tr/\mathbb{Z}(n))$
the group $G/G^0$ is finitely generated, $G_c$ is a compact Lie group and $G^0/G_c\cap G^0$ is a finite dimensional real vector space.
\end{cor}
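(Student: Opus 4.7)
My plan is to derive everything from the long exact cohomology sequence associated to the defining triangle (\ref{z-ar-w-triangle})
\[ R\Gamma_{\Ar}(\overline{\mathcal{X}},\bz(n))\to R\Gamma_W(\overline{\mathcal{X}},\bz(n))\xrightarrow{\rho}\tau^{\leq 2n-1}R\Gamma_{\mathcal D}(\mathcal{X}_{/\br},\br(n))\to, \]
together with the Pontryagin duality of Theorem \ref{thm:w-ar-duality} for the $\tr/\bz(n)$ statements. For $i\geq 2n+1$ the truncated Deligne complex contributes nothing, so $H^i_{\Ar}(\overline{\mathcal{X}},\bz(n))\cong H^i_W(\overline{\mathcal{X}},\bz(n))$ is finitely generated by Proposition \ref{finitelygenerated-cohomology}; this was already observed inside the proof of Theorem \ref{thm:w-ar-duality}.

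The heart of the argument is the description, for $j\leq 2n$, of the Beilinson regulator $\rho^j: H^j_W\to H^j_{\mathcal D}$. The rational decomposition of Corollary \ref{prop-rational-decompo} combined with the vanishing $H^{2d+1-j}(\mathcal{X},\br(d-n))\subseteq K_{j-2n-1}^{(d-n)}(\mathcal{X})_\br=0$ for $j\leq 2n$ identifies $H^j_W(\overline{\mathcal{X}},\bz(n))_\br$ with $H^j(\mathcal{X},\br(n))$. By ${\bf B}(\mathcal{X},n)$ and equation (\ref{beil}), $\rho^j\otimes\br$ is an isomorphism for $j\leq 2n-2$, so the image of $\rho^j$ is a full rank lattice in $H^j_{\mathcal D}$, its cokernel is a compact torus, and its kernel is the finite torsion subgroup of $H^j_W$. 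For $j=2n-1$ the decomposition (\ref{nearcentral}) identifies the image of $\rho^{2n-1}\otimes\br$ with the direct summand $\im(r^n)\subseteq H^{2n-1}_{\mathcal D}$, so $\coker(\rho^{2n-1})$ is an extension of $\im(z^{d-n})^*\cong\br^s$ by a compact torus $T^r$; such an extension splits in the category of LCA groups because any extension of a real vector space by a compact group does.

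Feeding these descriptions into the long exact sequence, I obtain $H^i_{\Ar}(\overline{\mathcal{X}},\bz(n))$ for $i\leq 2n-1$ as an extension of a finite group by a compact torus, hence a compact abelian Lie group. For $i=2n$ I obtain an extension
\[ 0\to T^r\oplus\br^s\to G\to H^{2n}_W(\overline{\mathcal{X}},\bz(n))\to 0 \]
with discrete finitely generated quotient, which forces the image of $T^r\oplus\br^s$ to be open and yields $G^0=T^r\oplus\br^s$. The structure theorem for compactly generated LCA groups then presents $G$ as $\br^s\oplus\bz^m\oplus K$ for a compact abelian Lie group $K$, from which one reads off $G/G^0\cong\bz^m\oplus(K/K^0)$ (finitely generated), $G_c\cong K$ (compact Lie), and $G^0/(G_c\cap G^0)\cong\br^s$.

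Finally, the $\tr/\bz(n)$-coefficient statements follow from the claims for $\bz(d-n)$ by applying Theorem \ref{thm:w-ar-duality}, using that Pontryagin duality exchanges finitely generated discrete groups with compact abelian Lie groups and, for the intermediate degree, interchanges $G^0$ with the annihilator of $G_c$; the three assertions for $\tr/\bz(n)$ in degrees $i\leq 2n$, $i=2n+1$, $i\geq 2n+2$ correspond under duality to the already proved assertions for $\bz(d-n)$ in degrees $2d+1-i\geq 2(d-n)+1$, $2d+1-i=2(d-n)$, $2d+1-i\leq 2(d-n)-1$. The main technical point to verify, which I expect to be routine but not entirely automatic, is that the topology induced on $H^i_{\Ar}$ from the mapping fibre construction agrees with the topology of the extensions above, i.e.\ that the constituent maps are strict in the sense of \cite{spitzweck07}; this reduces to the fact that the image of each $\rho^j$ is a closed discrete subgroup of the finite-dimensional real vector space $H^j_{\mathcal D}$.
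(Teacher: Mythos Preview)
Your proposal is correct and follows the same route as the paper. The paper gives no separate proof of the corollary; it merely says ``In the proof we have obtained the following more precise result,'' pointing back to the long exact sequence for (\ref{z-ar-w-triangle}) displayed at the end of the proof of Theorem~\ref{thm:w-ar-duality}, together with the one-line remark that the image of $\rho$ is a lattice. You supply exactly the details the paper leaves implicit: the use of ${\bf B}(\mathcal X,n)$ via (\ref{beil}) and (\ref{nearcentral}) to see that $\rho^j\otimes\br$ is an isomorphism for $j\leq 2n-2$ and has image the summand $\im(r^n)$ for $j=2n-1$, the resulting structure of $\coker(\rho^{2n-1})$ and of $H^{2n}_{\Ar}$, and the passage to $\tr/\bz(n)$ via the Pontryagin duality of Theorem~\ref{thm:w-ar-duality}. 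Your caveat about strictness of the maps is a legitimate technical point that the paper also does not address explicitly.
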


\begin{rem}
One may use the previous result to redefine, up to a canonical isomorphism, the groups $H^{i}_{\Ar}(\overline{\mathcal{X}},\mathbb{Z}(r))$ and $H^{i}_{\Ar}(\overline{\mathcal{X}},\tr/\mathbb{Z}(t))$ in terms of Weil-\'etale cohomology groups, for $i\neq 2r$ and $i\neq 2t+1$ respectively.
\end{rem}

\subsection{Weil-Arakelov cohomology with compact support}\label{sect-arcompactsupport} For any $n\in\bz$ recall that the complex
\begin{equation} R\Gamma_{\Ar,c}(\X,\tr(n)):=R\Gamma_c(X,\br(n))\oplus R\Gamma_c(\X,\br(n))[-1]\notag\end{equation}
was already defined in Definition \ref{r-rgc-def}, the complex $R\Gamma_{W,c}(\mathcal{X},\mathbb{Z}(n))$
in Definition \ref{z-rgc-def} and that there is an exact triangle (\ref{triangle-cpctsupp-fgcoh}) of perfect complexes of abelian groups
\begin{equation}\label{triangle-cpctsupp-fgcoh2}
R\Gamma_{W,c}(\mathcal{X},\mathbb{Z}(n))\to R\Gamma_{W}(\overline{\mathcal{X}},\mathbb{Z}(n))
\xrightarrow{i_{\infty}^*}R\Gamma_W(\mathcal{X}_{\infty},\mathbb{Z}(n))\to.
\end{equation}
We define versions of these complexes with $\br/\bz(n)$-coefficients as in (\ref{w-z-to-r}).

\begin{lem} The composite
\[ \left(\tau^{\geq{2n}}R\Gamma_\cd(\X_{/\br},\br(n))\right)[-2]\to R\Gamma_W(\overline{\mathcal{X}},\br/\mathbb{Z}(n))\xrightarrow{i_{\infty}^*\otimes\br/\bz} R\Gamma_W(\mathcal{X}_{\infty},\br/\mathbb{Z}(n)),\]
where the first map is the one in (\ref{r/z-ar-w-triangle}), is the zero map.
\label{easysplit}\end{lem}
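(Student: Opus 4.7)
My plan is to give an explicit factorization of the connecting map in (\ref{r/z-ar-w-triangle}) and then apply the second assertion of Proposition \ref{lem-com2}. Set $W_\br := R\Gamma_W(\overline{\X},\bz(n))_\br$, $D := \tau^{\geq 2n}R\Gamma_\cd(\X_{/\br},\br(n))$, and $M^* := R\mathrm{Hom}(R\Gamma(\X,\br(d-n)),\br[-2d-1])$, so that $W_\br \cong R\Gamma(\X,\br(n))\oplus M^*$ by Corollary \ref{prop-rational-decompo}.

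First, I would revisit the octahedron used in the proof of Proposition \ref{r/z-ar-w-prop}. There $R\Gamma_{\Ar}(\overline{\X},\tr/\bz(n))$ was constructed as the cone of $g\circ\alpha$, with $\alpha:R\Gamma_{\Ar}(\overline{\X},\bz(n))\to R\Gamma(\overline{\X},\br(n))\oplus M^*$ and $g=\id\oplus \iota B$, where $B$ is the duality of ${\bf B}(\X,n)$ and $\iota:R\Gamma_c(\X,\br(n))\to R\Gamma(\overline{\X},\br(n))$ has cone identified with $D$. The identification $\cone(g)\simeq D[-1]$ which produces (\ref{r/z-ar-w-triangle}) then identifies the connecting map $D[-2]\to R\Gamma_W(\overline{\X},\br/\bz(n))$ with the composite of the fibre inclusion $D[-2]\to R\Gamma(\overline{\X},\br(n))\oplus M^*$ (which is $0$ on the first summand and the natural map $D[-2]\hookrightarrow M^*$ coming from the triangle of $\iota B$ on the second) followed by the boundary $R\Gamma(\overline{\X},\br(n))\oplus M^*\to R\Gamma_W(\overline{\X},\br/\bz(n))$ supplied by the octahedron, which factors through $W_\br$ by construction. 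This yields the factorization
\[ D[-2]\longrightarrow M^* \hookrightarrow W_\br \longrightarrow R\Gamma_W(\overline{\X},\br/\bz(n)).\]

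Second, the exact triangle (\ref{w-z-to-r}) is functorial in $\overline{\X}\to\X_\infty$, so the square
\[
\begin{CD}
W_\br @>>> R\Gamma_W(\overline{\X},\br/\bz(n)) \\
@V i^*_\infty\otimes\br VV @VV i^*_\infty\otimes\br/\bz V \\
R\Gamma_W(\X_\infty,\bz(n))_\br @>>> R\Gamma_W(\X_\infty,\br/\bz(n))
\end{CD}
\]
commutes. Combined with the factorization above, the composite of the lemma equals
\[ D[-2]\to M^*\hookrightarrow W_\br\xrightarrow{\,i^*_\infty\otimes\br\,}R\Gamma_W(\X_\infty,\bz(n))_\br\to R\Gamma_W(\X_\infty,\br/\bz(n)).\]
By the second commutative square of Proposition \ref{lem-com2}, under the decomposition $W_\br\cong R\Gamma(\X,\br(n))\oplus M^*$ the map $i^*_\infty\otimes\br$ is $(u^*_\infty\otimes\br,\,0)$; in particular its restriction to $M^*$ vanishes. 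Hence the composite in the lemma is zero.

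The hard part will be the first step: the octahedron bookkeeping that pinpoints exactly through which summand of $W_\br$ the connecting map factors. Once this factorization is established, the conclusion is immediate from Proposition \ref{lem-com2}.
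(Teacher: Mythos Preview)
Your proposal is correct and follows essentially the same route as the paper's proof: both factor the connecting map $D[-2]\to R\Gamma_W(\overline{\X},\br/\bz(n))$ through the second summand $M^*\subset W_\br$ via the octahedra of Proposition~\ref{r/z-ar-w-prop}, and then invoke Proposition~\ref{lem-com2} to see that $i_\infty^*\otimes\br$ kills $M^*$. Your write-up is in fact slightly more explicit than the paper's about why the boundary $\cone(\sigma\rho\pi_1)[-1]\to R\Gamma_W(\overline{\X},\br/\bz(n))$ factors through $W_\br$ (namely, the octahedral connecting map is the composite through the middle object $W_\br$), which is the step the paper leaves to the reader between its two displayed squares.
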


\begin{proof} From the proof of Prop. \ref{r/z-ar-w-prop} we obtain a commutative square
\[\begin{CD} \tau^{\geq{2n}}R\Gamma_\cd(\X_{/\br},\br(n))[-2] @>>>\cone(\sigma\rho\pi_1)[-1]\cong R\Gamma(\overline{\mathcal{X}},\mathbb{R}(n))\oplus R\Gamma(\mathcal{X},\mathbb{R}(d-n))^*[-\delta]\\
\Vert@. @VVV\\
\tau^{\geq{2n}}R\Gamma_\cd(\X_{/\br},\br(n))[-2] @>>> R\Gamma_W(\overline{\mathcal{X}},\mathbb{R}/\mathbb{Z}(n))\end{CD}\]
where the upper horizontal map factors through the second summand. We have another commutative diagram
\[\begin{CD} R\Gamma_W(\overline{\mathcal{X}},\mathbb{Z}(n))_\br\cong R\Gamma(\mathcal{X},\mathbb{R}(n))\oplus R\Gamma(\mathcal{X},\mathbb{R}(d-n))^*[-\delta]@>i_{\infty}^*\otimes\br>>R\Gamma_W(\mathcal{X}_{\infty},\mathbb{Z}(n))_\br \\
@VVV @VVV\\
R\Gamma_W(\overline{\mathcal{X}},\mathbb{R}/\mathbb{Z}(n))@>>>R\Gamma_W(\mathcal{X}_{\infty},\br/\mathbb{Z}(n)) \end{CD}\]
where the upper horizonal map is zero on the second summand by Prop. \ref{lem-com2}. This implies the Lemma.
\end{proof}

\begin{defn} Define
\[ R\Gamma_{\Ar,c}(\mathcal{X},\tr/\mathbb{Z}(n)):=R\Gamma_{W,c}(\mathcal{X},\br/\mathbb{Z}(n))\]
and
\[ R\Gamma_\Ar(\mathcal{X}_{\infty},\tr/\mathbb{Z}(n)):=R\Gamma_W(\mathcal{X}_{\infty},\br/\mathbb{Z}(n))\oplus \left(\tau^{\geq{2n}}R\Gamma_\cd(\X_{/\br},\br(n))\right)[-1].\]
\end{defn}

In view of Lemma \ref{easysplit} the exact triangle (\ref{r/z-ar-w-triangle}) then extends to a commutative diagram with exact rows and columns
\begin{equation}\minCDarrowwidth1em\begin{CD}{}@. R\Gamma_{W,c}(\mathcal{X},\br/\mathbb{Z}(n)) @>\sim >>R\Gamma_{\Ar,c}(\mathcal{X},\tr/\mathbb{Z}(n))@>>>\\
@. @VVV @VVV \\
\left(\tau^{\geq{2n}}R\Gamma_\cd(\X_{/\br},\br(n))\right)[-2]@>>>  R\Gamma_W(\overline{\mathcal{X}},\mathbb{R}/\mathbb{Z}(n))@>>> R\Gamma_{\Ar}(\overline{\mathcal{X}},\tr/\mathbb{Z}(n))@>>>\\
\Vert@. @VV{i_{\infty}^*\otimes\br/\bz}V @VVV \\
\left(\tau^{\geq{2n}}R\Gamma_\cd(\X_{/\br},\br(n))\right)[-2]@>0>> R\Gamma_W(\mathcal{X}_{\infty},\br/\mathbb{Z}(n))@>>>R\Gamma_\Ar(\mathcal{X}_{\infty},\tr/\mathbb{Z}(n))@>>> \end{CD}\label{r/z-ar-w-dia}
\end{equation}
and it is also clear that the cohomology groups $H^i_{\Ar,c}(\mathcal{X},\tr/\mathbb{Z}(n))$ are compact for all $i,n\in\bz$.
The exact triangle (\ref{exptriangle}) in the introduction is just the defining triangle of
$R\Gamma_{W,c}(\mathcal{X},\br/\mathbb{Z}(n))$ and the exact triangle (\ref{tangenttri})
in the introduction is given by the following proposition. Recall the definition of algebraic deRham cohomology
$$R\Gamma_{dR}(\mathcal{X}_F/F):=R\Gamma(\mathcal{X}_{F,Zar},\Omega^*_{\mathcal{X}_F/F})$$
for any field $F$ of characteristic zero. For $F=\bc$ one has an isomorphism
$$R\Gamma_{dR}(\mathcal{X}_\bc/\bc)\cong R\Gamma(\X(\bc),\Omega_{\X(\bc)/\bc}^\bullet)$$
and for $F=\br$ an isomorphism
$$R\Gamma_{dR}(\mathcal{X}_\br/\br)\cong R\Gamma_{dR}(\mathcal{X}_\bc/\bc)^{G_\br}\cong R\Gamma(G_\br,\X(\bc), \Omega_{\X(\bc)/\bc}^\bullet).$$

\begin{prop} There is an exact triangle of perfect complexes of $\br$-vector spaces
\begin{equation}R\Gamma_{dR}(\mathcal{X}_\br/\br)/\mathrm{Fil}^n [-2]\to R\Gamma_{\Ar,c}(\X,\tr(n))\to R\Gamma_{W,c}(\mathcal{X},\mathbb{Z}(n))_\br\to \label{tangenttri2}\end{equation}
and hence a map
\begin{equation} R\Gamma_{\Ar,c}(\X,\tr(n))\to R\Gamma_{\Ar,c}(\mathcal{X},\tr/\mathbb{Z}(n)).\label{r-to-rmodz}\end{equation}
\label{tangentprop}\end{prop}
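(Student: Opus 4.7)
The plan is to explicitly compute $R\Gamma_{W,c}(\mathcal{X},\mathbb{Z}(n))_\br$ by tensoring the defining triangle (\ref{triangle-cpctsupp-fgcoh2}) with $\br$, and then to construct the desired map out of $R\Gamma_{\Ar,c}(\X,\tr(n))$ by exploiting the natural fibre structures. First, since Tate cohomology of the finite group $G_\br$ is $2$-torsion, $R\Gamma(\X(\br),\tau^{>n}R\widehat\pi_*(2\pi i)^n\bz)$ vanishes after $-\otimes\br$, so Definition \ref{iinftydef} produces a canonical quasi-isomorphism $R\Gamma_W(\X_\infty,\bz(n))_\br \simeq R\Gamma(G_\br,\X(\bc),\br(n))$. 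Combining Corollary \ref{prop-rational-decompo} with the second square of Proposition \ref{lem-com2}, the map $i_\infty^*\otimes\br$ becomes, under the rational decomposition (\ref{rgwdecomp}), the sum of the Betti regulator $r:R\Gamma(\X,\br(n)) \to R\Gamma(G_\br,\X(\bc),\br(n))$ on the first summand and the zero map on the second. Hence
\[
R\Gamma_{W,c}(\X,\bz(n))_\br \simeq F \oplus R\Gamma(\X,\br(d-n))^*[-2d-1],
\]
where $F := \mathrm{fib}(r)$, and by Lemma \ref{deligneduality} a) together with Conjecture ${\bf B}(\mathcal{X},d-n)$ the second summand is canonically identified with $R\Gamma_c(\X,\br(n))[-1]$.

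Next, I construct a canonical map $\alpha: R\Gamma_c(\X,\br(n)) \to F$. Since $r$ factors through $R\Gamma_\cd(\X_{/\br},\br(n))$ by (\ref{bettiregulator}), and since $R\Gamma_c(\X,\br(n))$ is by definition the fibre of $R\Gamma(\X,\br(n))\to R\Gamma_\cd(\X_{/\br},\br(n))$, the composition $R\Gamma_c(\X,\br(n)) \to R\Gamma(\X,\br(n)) \xrightarrow{r} R\Gamma(G_\br,\X(\bc),\br(n))$ vanishes, producing $\alpha$ via the universal property of the fibre. I then apply the octahedral axiom to the composable pair
\[
R\Gamma(\X,\br(n)) \to R\Gamma_\cd(\X_{/\br},\br(n)) \to R\Gamma(G_\br,\X(\bc),\br(n)),
\]
whose individual fibres are $R\Gamma_c(\X,\br(n))$ and $R\Gamma_{dR}(\X_\br/\br)/\mathrm{Fil}^n[-1]$ respectively (the latter arising from the defining triangle of the Deligne complex $\br(n)_\cd=\mathrm{fib}(\br(n)\to \Omega^\bullet_{\X(\bc)/\bc}/F^n)$ after taking derived $G_\br$-invariants, which are underived on $\br$-complexes). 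This identifies $\mathrm{fib}(\alpha)$ canonically with $R\Gamma_{dR}(\X_\br/\br)/\mathrm{Fil}^n[-2]$. Defining the map in (\ref{tangenttri2}) as $(\alpha,\mathrm{id})$ with respect to the chosen direct sum splittings on source and target then yields the exact triangle (\ref{tangenttri2}).

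Perfectness of the three complexes is straightforward: $R\Gamma_c(\X,\br(n))$ is perfect by Conjecture ${\bf B}(\mathcal{X},n)$, $R\Gamma_{W,c}(\X,\bz(n))_\br$ is perfect because $R\Gamma_{W,c}(\X,\bz(n))$ is a perfect complex of abelian groups by construction, and $R\Gamma_{dR}(\X_\br/\br)/\mathrm{Fil}^n$ is perfect by properness of $\X$. For the map (\ref{r-to-rmodz}) I simply compose the map of (\ref{tangenttri2}) with the ``exponential'' map $R\Gamma_{W,c}(\X,\bz(n))_\br\to R\Gamma_{W,c}(\X,\br/\bz(n))=R\Gamma_{\Ar,c}(\X,\tr/\bz(n))$ from the defining triangle $R\Gamma_{W,c}(\X,\bz(n))\to R\Gamma_{W,c}(\X,\bz(n))_\br\to R\Gamma_{W,c}(\X,\br/\bz(n))$. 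The main subtlety—entirely consistent with the non-canonicity already acknowledged at the start of Section \ref{sec:arithmetic}—is the choice of splitting in $R\Gamma_{W,c}(\X,\bz(n))_\br \simeq F\oplus R\Gamma_c(\X,\br(n))[-1]$; this affects only the particular realization of the triangle, not the canonical identification of its third term as $R\Gamma_{dR}(\X_\br/\br)/\mathrm{Fil}^n[-2]$.
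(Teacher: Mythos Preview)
Your proof is correct and is essentially the same argument as the paper's, just organized componentwise via the direct sum decomposition and the octahedral axiom rather than as a single $3\times 3$ commutative diagram with exact rows and columns. The key inputs are identical: the identification $R\Gamma_W(\X_\infty,\bz(n))_\br\simeq R\Gamma(G_\br,\X(\bc),\br(n))$, the decomposition (\ref{rgwdecomp}) together with Proposition~\ref{lem-com2}, the Deligne triangle $R\Gamma_{dR}(\X_\br/\br)/\mathrm{Fil}^n[-1]\to R\Gamma_\cd(\X_{/\br},\br(n))\to R\Gamma(G_\br,\X(\bc),\br(n))$, and Conjecture~${\bf B}(\X,n)$ for the second summand.
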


\begin{proof} Recall that by definition
$$R\Gamma_W(\mathcal{X}_{\infty},\mathbb{Z}(n))=R\Gamma(\mathcal{X}_{\infty},i^*_{\infty}\mathbb{Z}(n))$$
where $i^*_{\infty}\mathbb{Z}(n)$ is the complex of sheaves
$$i^*_{\infty}\mathbb{Z}(n):=\mathrm{Cone}(R\pi_*(2\pi i)^n\mathbb{Z}\longrightarrow \tau^{>n}R\widehat{\pi}_*(2\pi i)^n\mathbb{Z})[-1]$$
on $\mathcal{X}_{\infty}=\X(\bc)/G_\br$. So we have
$$R\Gamma_W(\mathcal{X}_{\infty},\mathbb{Z}(n))_\br=R\Gamma(\mathcal{X}_{\infty},R\pi_*(2\pi i)^n\mathbb{R})=
R\Gamma(G_\br,\X(\bc),(2\pi i)^n\mathbb{R} )$$
and the exact triangle
\[ \Omega_{\X(\bc)/\bc}^\bullet/F^n\to \br(n)_\cd\to (2\pi i)^n\mathbb{R}\to \]
in $Sh(G_{\mathbb{R}},\mathcal{X}(\mathbb{C}))$ induces an exact triangle
\[ R\Gamma_{dR}(\mathcal{X}_\br/\br)/\mathrm{Fil}^n[-1]\to R\Gamma_\cd(\X_{/\br},\br(n))\to R\Gamma_W(\mathcal{X}_{\infty},\mathbb{Z}(n))_\br\to.\]
One then has a commutative diagram with exact rows and columns
\begin{equation}\minCDarrowwidth1em\begin{CD}R\Gamma_{dR}(\mathcal{X}_\br/\br)/\mathrm{Fil}^n[-2]@>>>R\Gamma_c(\X,\br(n))\oplus R\Gamma_c(\X,\br(n))[-1]@>>> R\Gamma_{W,c}(\mathcal{X},\mathbb{Z}(n))_\br @>>>\\
@. @VV\beta_2V @VVV\\
{} @. R\Gamma(\X,\br(n))\oplus R\Gamma(\X,\br(d-n))^*[-\delta] @>\beta_1>\sim > R\Gamma_{W}(\overline{\mathcal{X}},\mathbb{Z}(n))_\br @>>>\\
@. @VVV @VVV\\
R\Gamma_{dR}(\mathcal{X}_\br/\br)/\mathrm{Fil}^n[-1]@>>> R\Gamma_\cd(\X_{/\br},\br(n))@>\beta_3>> R\Gamma_W(\mathcal{X}_{\infty},\mathbb{Z}(n))_\br@>>>\end{CD}
\notag\end{equation}
where $\beta_1$ is the isomorphism (\ref{rgwdecomp}), the middle column is the sum of the triangle (\ref{rgc}) with the duality isomorphism of conjecture ${\bf B}(\mathcal{X},n)$ and the bottom triangle is induced by the triangle
\[ \Omega_{\X(\bc)/\bc}^\bullet/F^n[-1]\to \br(n)_\cd\to \br(n)\to \]
in $Sh(G_\br,\X(\bc))$. The top row then gives (\ref{tangenttri2}).
\end{proof}

\begin{defn} Define $R\Gamma_{\Ar,c}(\X,\bz(n))$ to be a mapping fibre of the map (\ref{r-to-rmodz}) so that there is an exact triangle
\begin{equation} R\Gamma_{\Ar,c}(\X,\bz(n))\to R\Gamma_{\Ar,c}(\X,\tr(n))\to R\Gamma_{\Ar,c}(\mathcal{X},\tr/\mathbb{Z}(n))\to.\label{z-r-rmodz}\end{equation}  \end{defn}

One then has a diagram with exact rows and columns
\begin{equation}\minCDarrowwidth1em\begin{CD}R\Gamma_{\Ar,c}(\X,\bz(n))@>>>R\Gamma_c(\X,\br(n))\oplus R\Gamma_c(\X,\br(n))[-1]@>>> R\Gamma_{W,c}(\mathcal{X},\br/\mathbb{Z}(n)) @>>>\\
@VVV @VV\beta_2V @VVV\\
 R\Gamma_{W}(\overline{\X},\bz(n))@>>> R\Gamma(\X,\br(n))\oplus R\Gamma(\X,\br(d-n))^*[-\delta] @>>> R\Gamma_{W}(\overline{\mathcal{X}},\br/\mathbb{Z}(n))@>>>\\
@VVV @VVV @VV{i_{\infty}^*\otimes\br/\bz}V\\
\tilde{R\Gamma}_\cd(\X_{/\br},\bz(n))@>\beta_5>> R\Gamma_\cd(\X_{/\br},\br(n))@>\beta_4>> R\Gamma_W(\mathcal{X}_{\infty},\br/\mathbb{Z}(n))@>>>\end{CD}
\label{rgcdia}\end{equation}
where $\tilde{R\Gamma}_\cd(\X_{/\br},\bz(n))$ is the hypercohomology of the complex of sheaves
\[\mathrm{Cone}(R\pi_*\mathbb{Z}(n)_\cd\longrightarrow \tau^{>n}R\widehat{\pi}_*(2\pi i)^n\mathbb{Z})[-1]\]
on $\X_\infty$ and $\beta_4$ is the composite of $\beta_3$ with the natural map
\[ R\Gamma_W(\mathcal{X}_{\infty},\mathbb{Z}(n))_\br\to R\Gamma_W(\mathcal{X}_{\infty},\br/\mathbb{Z}(n)).\]

Alternatively, one can follow the construction of $R\Gamma_{W,c}(\mathcal{X},\mathbb{Z}(n))$ in section \ref{sect-compactsupport}, starting with the \'etale Beilinson regulator on the level of complexes (\ref{etaleregulator})
\begin{equation} R\Gamma(\X_{et},\bz(n)) \to R\Gamma_\cd(\X_{/\br},\bz(n))\notag\end{equation}
and using the left column in (\ref{rgcdia}) as the defining triangle of $R\Gamma_{\Ar,c}(\X,\bz(n))$.
As we already remarked in the introduction to this section, neither construction gives $R\Gamma_{\Ar,c}(\X,\bz(n))$ or $R\Gamma_{\Ar,c}(\mathcal{X},\tr/\mathbb{Z}(n))$ up to a unique isomorphism in the derived category.

\begin{rem}\label{tangentremark} All complexes in (\ref{z-r-rmodz}) can be represented by bounded complexes of locally compact abelian groups  (see the considerations at the beginning of subsection \ref{sec:w-ar-duality}) and (\ref{z-r-rmodz}) is in fact an exact triangle in the bounded derived category of locally compact abelian groups defined in \cite{spitzweck07}. For a locally compact abelian group $G$ we define the {\em tangent space} to be the $\br$-vector space
\[ T_\infty G:=\Hom_{cts}(G^D,\br)=\Hom_{cts}(\Hom_{cts}(G,\br/\bz),\br)\]
where $\Hom_{cts}(-,-)$ is endowed with the compact open topology. While the topology on $G^D$ is always locally compact, this is not in general true for $\Hom_{cts}(G,\br)$. However, by \cite{spitzweck07}[Prop. 3.12] it is true if $G$ has finite ranks in the sense of \cite{spitzweck07}[Def. 2.6]  and all the complexes in  (\ref{z-r-rmodz}) are easily seen to consist of groups of  finite ranks.  By \cite{spitzweck07}[Prop. 4.14 vii)] the functor $\Hom_{cts}(-,\br)$ is exact and of course so is $(-)^D$. We conclude that $T_\infty$ is an exact covariant functor (with values in finite dimensional real vector spaces if the argument has finite ranks) and extends to the bounded derived category of locally compact abelian groups. The image of the exact triangle (\ref{z-r-rmodz}) under the tangent space functor $T_\infty$ is the exact triangle (\ref{tangenttri2}).
\end{rem}

\begin{defn} Define $R\Gamma_{\Ar}(\X_\infty,\bz(n))$ to be the mapping fibre of the composite map
\[ \tilde{R\Gamma}_\cd(\X_{/\br},\bz(n))\xrightarrow{\beta_5} R\Gamma_\cd(\X_{/\br},\br(n))\xrightarrow{\sigma}\tau^{\leq 2n-1}R\Gamma_\cd(\X_{/\br},\br(n)).\]
\end{defn}

\begin{prop} There is an exact triangle
\[ R\Gamma_{\Ar}(\X_\infty,\bz(n))\to R\Gamma_{\Ar}(\X_\infty,\tr(n))\to R\Gamma_{\Ar}(\X_\infty,\tr/\bz(n))\to.\]
\end{prop}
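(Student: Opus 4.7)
The plan is to obtain the asserted triangle as the direct sum of an identity triangle on one summand with a triangle produced via a single application of the octahedral axiom.

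From the definitions, one has direct sum decompositions
\[ R\Gamma_\Ar(\X_\infty,\tr(n)) = \tau^{\geq 2n} R\Gamma_\cd(\X_{/\br},\br(n)) \oplus \tau^{\geq 2n} R\Gamma_\cd(\X_{/\br},\br(n))[-1] \]
(using (\ref{ardef}) and (\ref{rgxinftydef})) and, by definition,
\[ R\Gamma_\Ar(\X_\infty,\tr/\bz(n)) = R\Gamma_W(\X_\infty,\br/\bz(n)) \oplus \tau^{\geq 2n}R\Gamma_\cd(\X_{/\br},\br(n))[-1]. \]
The common second summand $\tau^{\geq 2n}R\Gamma_\cd(\X_{/\br},\br(n))[-1]$ will be accounted for by the trivial identity triangle $0 \to \tau^{\geq 2n}R\Gamma_\cd[-1] \xrightarrow{\mathrm{id}} \tau^{\geq 2n}R\Gamma_\cd[-1] \to$. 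It therefore suffices to produce an exact triangle
\[ R\Gamma_\Ar(\X_\infty,\bz(n)) \to \tau^{\geq 2n} R\Gamma_\cd(\X_{/\br},\br(n)) \to R\Gamma_W(\X_\infty,\br/\bz(n)) \to \]
and then take the direct sum with the identity triangle above.

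This first-summand triangle I would obtain by applying the octahedral axiom to the composable pair
\[ \tilde R\Gamma_\cd(\X_{/\br},\bz(n)) \xrightarrow{\beta_5} R\Gamma_\cd(\X_{/\br},\br(n)) \xrightarrow{\sigma} \tau^{\leq 2n-1}R\Gamma_\cd(\X_{/\br},\br(n)). \]
The three cones have already been identified for us: $\cone(\beta_5) = R\Gamma_W(\X_\infty,\br/\bz(n))$ by the bottom row of diagram (\ref{rgcdia}); $\cone(\sigma) = \tau^{\geq 2n} R\Gamma_\cd(\X_{/\br},\br(n))[1]$, since $\sigma$ is the split projection onto $\tau^{\leq 2n-1}$ arising from the chosen splitting of the truncation triangle; and $\cone(\sigma\beta_5) = R\Gamma_\Ar(\X_\infty,\bz(n))[1]$ by the very definition of $R\Gamma_\Ar(\X_\infty,\bz(n))$ as the mapping fibre of $\sigma\beta_5$. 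The octahedral axiom then yields an exact triangle
\[ R\Gamma_W(\X_\infty,\br/\bz(n)) \to R\Gamma_\Ar(\X_\infty,\bz(n))[1] \to \tau^{\geq 2n} R\Gamma_\cd(\X_{/\br},\br(n))[1] \to, \]
and a single shift followed by a rotation produces the desired first-summand triangle.

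The main obstacle, which is quite modest, is purely bookkeeping: one must verify that the three cones are indeed identified as claimed (with the correct $[1]$-shifts and no erroneous sign reversals), and confirm that the bottom row of (\ref{rgcdia}) exhibits $R\Gamma_W(\X_\infty,\br/\bz(n))$ precisely as $\cone(\beta_5)$ rather than as its fibre. Once this is checked, the direct-sum construction is immediate and no further assumptions on $\X$ or $n$ beyond those already in force are needed.
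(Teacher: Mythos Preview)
Your proposal is correct and is essentially the same argument as the paper's, just packaged differently. The paper obtains the middle triangle $R\Gamma_{\Ar}(\X_\infty,\bz(n))\to\tau^{\geq 2n}R\Gamma_\cd(\X_{/\br},\br(n))\to R\Gamma_W(\X_\infty,\br/\bz(n))$ from the same fibre-of-a-composite reasoning (i.e.\ the octahedral axiom applied to $\sigma\circ\beta_5$), then assembles a $3\times 3$ diagram whose middle column is the split triangle $\tau^{\geq 2n}R\Gamma_\cd[-2]\xrightarrow{0}\tau^{\geq 2n}R\Gamma_\cd\to\tau^{\geq 2n}R\Gamma_\cd\oplus\tau^{\geq 2n}R\Gamma_\cd[-1]$ and whose right column is the identity on $R\Gamma_W(\X_\infty,\br/\bz(n))$, concluding by the two-out-of-three property for columns; your direct-sum with the identity triangle on $\tau^{\geq 2n}R\Gamma_\cd[-1]$ is the same content rephrased.
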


\begin{proof} The mapping fibre of $\beta_5$ identifies with $R\Gamma_W(\mathcal{X}_{\infty},\br/\mathbb{Z}(n))[-1]$ and that  of $\sigma$ with $\tau^{\geq{2n}}R\Gamma_\cd(\X_{/\br},\br(n))$. The definition of $R\Gamma_{\Ar}(\X_\infty,\bz(n))$ as the mapping fibre of the composite $\sigma\circ\beta_5$ gives the central horizontal exact triangle in the diagram
\[\minCDarrowwidth1em\begin{CD} R\Gamma_\Ar(\mathcal{X}_{\infty},\tr/\mathbb{Z}(n))[-1]@>>>\left(\tau^{\geq{2n}}R\Gamma_\cd(\X_{/\br},\br(n))\right)[-2]@>0>> R\Gamma_W(\mathcal{X}_{\infty},\br/\mathbb{Z}(n))@>>>\\
@VVV @VV 0 V \Vert@.\\
R\Gamma_{\Ar}(\X_\infty,\bz(n))@>>>\tau^{\geq 2n}R\Gamma_\cd(\X_{/\br},\br(n))@>>> R\Gamma_W(\mathcal{X}_{\infty},\br/\mathbb{Z}(n))@>>>\\
@VVV @VVV @VVV\\
R\Gamma_{\Ar}(\X_\infty,\tr(n))@={\begin{array}{ll}{}&\tau^{\geq 2n}R\Gamma_\cd(\X_{/\br},\br(n))\\ \oplus & \left(\tau^{\geq 2n}R\Gamma_\cd(\X_{/\br},\br(n))\right)[-1]\end{array}} @>>> 0\\
@VVV @VVV @VVV
\end{CD}\]
while (\ref{r/z-ar-w-dia}) gives the upper horizontal exact triangle. The diagram commutes and the middle and right column are exact, hence so is the left.
\end{proof}

The relation between Weil-\'etale and Weil-Arakelov cohomology with $\bz(n)$-coefficients can then be summarized in the following diagram. The corresponding diagram (\ref{r/z-ar-w-dia}) for $\tr/\bz(n)$-coefficients is simpler which is why we discussed $\tr/\bz(n)$-coefficients first. This can be traced to the fact that Deligne cohomology with $\br/\bz(n)$-coefficients coincides with singular cohomology with $\br/\bz(n)$-coefficients for any $n\in\bz$.

\begin{defn} Define $T(\X_\infty,n)$ to be the mapping cone of
\[ R\Gamma_{dR}(\mathcal{X}_\br/\br)/\mathrm{Fil}^n[-1]\to R\Gamma_\cd(\X_{/\br},\br(n))\xrightarrow{\sigma}\tau^{\leq 2n-1}R\Gamma_\cd(\X_{/\br},\br(n)).\]
\end{defn}

We leave it again as an exercise to show exactness of the rows and columns in the following diagram. Note that the middle, resp. right hand, column consists of perfect complexes of abelian groups, resp. $\br$-vector spaces.
\begin{equation}\minCDarrowwidth1em\begin{CD}R\Gamma_{\Ar,c}(\X,\bz(n))@>>>R\Gamma_{W,c}(\X,\bz(n))@>>>
R\Gamma_{dR}(\mathcal{X}_\br/\br)/\mathrm{Fil}^n[-1]@>>> \\
@VVV @VVV @VVV \\
R\Gamma_{\Ar}(\overline{\mathcal{X}},\mathbb{Z}(n))@>>> R\Gamma_W(\overline{\mathcal{X}},\mathbb{Z}(n))@>>>
\tau^{\leq{2n-1}}R\Gamma_\cd(\X_{/\br},\br(n))@>>>\\
@VVV @VVV @VVV \\
R\Gamma_{\Ar}(\mathcal{X}_\infty,\mathbb{Z}(n))@>>> R\Gamma_W(\mathcal{X}_\infty,\mathbb{Z}(n))@>>>
T(\X_\infty,n)@>>>\end{CD}\label{z-ar-w-dia}
\end{equation}

\section{Special values of zeta functions}\label{sec:zeta}

Throughout this section, $\mathcal{X}$ denotes a proper regular connected arithmetic scheme of dimension $d$. Additional assumptions on $\mathcal{X}$ will be given at the beginning of each subsection.

In the introduction we have already given a conjectural description of the vanishing order and leading Taylor coefficient of $\zeta(\X,s)$ at any integer argument $s=n\in\bz$ in terms of Weil-Arakelov groups, and we have seen how to reformulate these conjectures in terms of a fundamental line. In the following we shall exclusively work with the fundamental line and leave the Weil-Arakelov description as a suggestive reformulation that invites further exploration. The main thing that remains to be done is a precise definition of the correction factor $C(\X,n)\in\bq$ and a proof of the equivalence of our formulation with the Tamagawa number conjecture of Fontaine and Perrin-Riou \cite{fpr91}.

\subsection{De Rham cohomology}\label{sec:derived-de-rham}

Let $n\in\mathbb{Z}$ be an integer. We consider the derived de Rham complex modulo the Hodge filtration $L\Omega^*_{\mathcal{X}/\mathbb{Z}}/\mathrm{Fil}^n$ (see \cite{Illusie71} VIII.2.1) as a complex of abelian sheaves on the Zariski site of $\mathcal{X}$. Note that  $L\Omega^*_{\mathcal{X}/\mathbb{Z}}/\mathrm{Fil}^n=0$ for $n\leq 0$. We denote
$$R\Gamma_{dR}(\mathcal{X}/\mathbb{Z})/F^n:=R\Gamma(\mathcal{X}_{Zar},L\Omega^*_{\mathcal{X}/\mathbb{Z}}/\mathrm{Fil}^n).$$
We remark that $H^i_{dR}(X/\mathbb{Z})/F^n:=H^i(R\Gamma_{dR}(\mathcal{X}/\mathbb{Z})/F^n)$  is finitely generated for all $i$ and vanishes for  $i<0$ and $i\geq d+n$. Indeed, since $\mathcal{X}$ is regular, the map $\mathcal{X}\rightarrow \mathrm{Spec}(\mathbb{Z})$ is a local complete intersection, hence the cotangent complex $L_{\mathcal{X}/\mathbb{Z}}$ has perfect amplitude $\subset[-1,0]$ (see \cite{Illusie71} III.3.2.6).
It follows that $L\Lambda^p L_{\mathcal{X}/\mathbb{Z}}$  has perfect amplitude $\subseteq [-p,0]$ (see \cite{Illusie71} III.3.2.6). By (\cite{SGA6II} 2.2.7.1) and (\cite{SGA6II} 2.2.8),  $L\Lambda^p L_{\mathcal{X}/\mathbb{Z}}$ is globally isomorphic in $\mathcal{D}(\mathcal{O}_{\mathcal{X}})$ to a complex of locally free finitely generated $\mathcal{O}_{\mathcal{X}}$-modules put in degrees $[-p,0]$, where $\mathcal{D}(\mathcal{O}_{\mathcal{X}})$ denotes the derived category of $\mathcal{O}_{\mathcal{X}}$-modules. Since $\mathcal{X}$ is proper over $\mathbb{Z}$, $H^q(\mathcal{X}_{Zar},L\Lambda^{p} L_{\mathcal{X}/\mathbb{Z}})$ is a finitely generated abelian group for all $q$ and $0$ for almost all $q$. Then the spectral sequence
$$H^q(\mathcal{X}_{Zar},L\Lambda^{p<n} L_{X/\mathbb{Z}})\Longrightarrow H^{p+q}_{dR}(\mathcal{X}/\mathbb{Z})/F^n$$
shows that $H^i_{dR}(X/\mathbb{Z})/F^n:=H^i(R\Gamma_{dR}(\mathcal{X}/\mathbb{Z})/F^n)$  is finitely generated for all $i$ and vanishes for  $i<0$ and $i\geq d+n$. Here $L\Lambda^{p<n} L_{X/\mathbb{Z}}:= L\Lambda^{p} L_{X/\mathbb{Z}}$ for $p<n$ and $L\Lambda^{p<n} L_{X/\mathbb{Z}}:=0$ for $p\geq n$.

For any flat $\mathbb{Z}$-algebra $A$, we have a canonical isomorphism
$$R\Gamma_{dR}(\mathcal{X}/\mathbb{Z})/F^n\otimes_{\mathbb{Z}}A\simeq R\Gamma_{dR}(\mathcal{X}_A/A)/F^n:=R\Gamma(\mathcal{X}_{A,Zar},L\Omega^*_{\mathcal{X}_A/A}/\mathrm{Fil}^n)$$
where $\mathcal{X}_A:=\mathcal{X}\otimes_{\mathbb{Z}}A$. Moreover, if $\mathcal{X}_A/A$ is smooth then we have a quasi-isomorphism
$$L\Omega^*_{\mathcal{X}_A/A}/\mathrm{Fil}^n\stackrel{\sim}{\rightarrow}\Omega^{*<n}_{\mathcal{X}_A/A}.$$

\subsection{The fundamental line}
We suppose that $\mathcal{X}$ satisfies Conjectures $\textbf{L}(\overline{\mathcal{X}}_{et},n)$, $\textbf{L}(\overline{\mathcal{X}}_{et},d-n)$ and $\textbf{AV}(\overline{\mathcal{X}}_{et},n)$.
\begin{defn}\label{deltadef} The fundamental line is
$$\Delta(\mathcal{X}/\mathbb{Z},n):=\mathrm{det}_{\mathbb{Z}}R\Gamma_{W,c}(\mathcal{X},\mathbb{Z}(n))
\otimes_{\mathbb{Z}}\mathrm{det}_{\mathbb{Z}}R\Gamma_{dR}(\mathcal{X}/\mathbb{Z})/F^n.$$
\end{defn}

\begin{prop}\label{prop-lambda-infty}
If $\mathcal{X}$ satisfies Conjecture ${\bf B}(\mathcal{X},n)$, then there is a canonical trivialization
$$\lambda_{\infty}(\mathcal{X},n):\mathbb{R} \stackrel{\sim}{\longrightarrow}
\mathrm{det}_{\mathbb{R}}  R\Gamma_{\mathrm{\Ar,c}}(\mathcal{X},\tr(n))
\stackrel{\sim}{\longrightarrow} \Delta(\mathcal{X}/\mathbb{Z},n)\otimes_{\mathbb{Z}}\mathbb{R}.$$
\end{prop}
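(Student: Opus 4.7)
The plan is to construct $\lambda_\infty(\X,n)$ as a composition of two canonical isomorphisms of $\br$-lines. Note first that Conjecture ${\bf B}(\mathcal{X},n)$ guarantees that $H^i_c(\X,\br(n))$ is finite-dimensional over $\br$ for every $i$, so $R\Gamma_c(\X,\br(n))$ is a perfect complex of $\br$-vector spaces and every determinant below is an honest one-dimensional $\br$-line.

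For the first isomorphism $\br \xrightarrow{\sim} \det_\br R\Gamma_{\Ar,c}(\X,\tr(n))$, I would use directly the definition
$$R\Gamma_{\Ar,c}(\X,\tr(n)) = R\Gamma_c(\X,\br(n)) \oplus R\Gamma_c(\X,\br(n))[-1]$$
from Definition \ref{r-rgc-def}, which yields the canonical identification
$$\det_\br R\Gamma_{\Ar,c}(\X,\tr(n)) \cong \det_\br R\Gamma_c(\X,\br(n)) \otimes_\br \bigl(\det_\br R\Gamma_c(\X,\br(n))\bigr)^{-1} \cong \br.$$
Equivalently, this is the trivialization coming from the acyclic long $\cup\theta$ exact sequence (\ref{cupseq}) of property (a) in the introduction.

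For the second isomorphism, I would apply $\det_\br$ to the exact triangle of Proposition \ref{tangentprop},
$$R\Gamma_{dR}(\X_\br/\br)/F^n[-2] \to R\Gamma_{\Ar,c}(\X,\tr(n)) \to R\Gamma_{W,c}(\X,\bz(n))_\br \to.$$
Since shift by $[-2]$ preserves the determinant line, this produces a canonical isomorphism
$$\det_\br R\Gamma_{\Ar,c}(\X,\tr(n)) \xrightarrow{\sim} \det_\br R\Gamma_{dR}(\X_\br/\br)/F^n \otimes_\br \det_\br R\Gamma_{W,c}(\X,\bz(n))_\br.$$
Combined with the flat base-change isomorphism $R\Gamma_{dR}(\X/\bz)/F^n \otimes_\bz \br \cong R\Gamma_{dR}(\X_\br/\br)/F^n$ from Section \ref{sec:derived-de-rham} and the identification $R\Gamma_{W,c}(\X,\bz(n))_\br = R\Gamma_{W,c}(\X,\bz(n))\otimes_\bz \br$, the right-hand side is canonically $\Delta(\X/\bz,n) \otimes_\bz \br$.

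The main point to verify carefully is the canonicity assertion: both $R\Gamma_{\Ar,c}(\X,\tr(n))$ and the triangle of Proposition \ref{tangentprop} are constructed using auxiliary choices (the splitting $\sigma$, and the representatives used when forming mapping fibres and cones). The trivialization $\lambda_\infty(\X,n)$ must be shown to be independent of those choices. This will follow from the fact that any two realizations of the triangle of Proposition \ref{tangentprop} are connected by a compatible isomorphism, and that the determinant functor sends this to the identity on the resulting $\br$-line; this is the main bookkeeping task in the proof, but involves no new input beyond the standard properties of $\det$ on distinguished triangles of perfect complexes.
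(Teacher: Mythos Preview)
Your proposal is correct and follows exactly the paper's approach: the paper's proof is two lines, citing precisely the long exact sequence (\ref{cupseq}) for the first isomorphism and the exact triangle (\ref{tangenttri2}) of Proposition~\ref{tangentprop} for the second. One minor remark: your final paragraph worries about the splitting $\sigma$, but in fact neither $R\Gamma_{\Ar,c}(\X,\tr(n))=R\Gamma_c(\X,\br(n))\oplus R\Gamma_c(\X,\br(n))[-1]$ nor the triangle (\ref{tangenttri2}) involves $\sigma$ in its construction, so that particular concern is unnecessary; the only genuine noncanonicity is in $R\Gamma_{W,c}(\X,\bz(n))$, and the paper already observes (Definition~\ref{z-rgc-def}) that its determinant is well defined up to canonical isomorphism.
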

\begin{proof}
The first isomorphism is induced by the long exact sequence (\ref{cupseq}) and the second by the exact triangle (\ref{tangenttri2}).
\end{proof}

\subsection{The complex $R\Gamma'_{eh}(\mathcal{X}_{\mathbb{F}_p},\mathbb{Z}_p(n))$ and Milne's correcting factor}
Let $p$ be a prime number. Recall that, if $Y$ is a smooth scheme over $\mathbb{F}_p$, one has $$\mathbb{Z}/p^{r}(n)\stackrel{\sim}{\rightarrow} \nu^n_r[-n]:=W_r\Omega^n_{Y,\mathrm{log}}[-n],$$ where $W_r\Omega^n_{Y,\mathrm{log}}$ is the \'etale subsheaf of the de Rham-Witt sheaf $W_r\Omega^n_{Y}$ locally generated by the sections of the form $d\mathrm{log}(\underline{f_1})\wedge\cdots \wedge d\mathrm{log}(\underline{f_n})$. Here $\underline{f_i}$ denotes the Teichmuller representative of the unit $f_i\in\mathcal{O}^{\times}_Y$. It follows that  $$R\Gamma_{et}(Y,\mathbb{Z}_p(n)):=\mathrm{holim}\  R\Gamma(Y_{et},\mathbb{Z}(n)/p^{\bullet})$$ is a perfect complex of $\mathbb{Z}_p$-modules if $Y$ is a smooth projective variety.

To treat arbitrary separated schemes of finite type over $\mathbb{F}_p$, we consider the $eh$-topos over $\mathbb{F}_p$  \cite{Geisser06} and we denote
$$R\Gamma_{eh}(Y,\mathbb{Z}_p(n)):=\mathrm{holim}\  R\Gamma(Y_{eh},\mathbb{Z}(n)/p^{\bullet}),$$ see \cite{Geisser06} Section 4. We also denote by $\mathbf{R}(\mathbb{F}_p,c)$ the strong form of resolution of singularities given in (\cite{Geisser06} Definition 2.4) for varieties over $\mathbb{F}_p$ of dimension $\leq c$. If $\mathbf{R}(\mathbb{F}_p,\mathrm{dim}(Y))$ holds, then $R\Gamma_{eh}(Y,\mathbb{Z}_p(n))$ is perfect  for $Y$ proper over $\mathbb{F}_p$ (see \cite{Geisser06} Corollary 4.4) and the canonical map $R\Gamma_{et}(Y,\mathbb{Z}_p(n))\rightarrow R\Gamma_{eh}(Y,\mathbb{Z}_p(n))$ is a quasi-isomorphism for $Y$ smooth (see \cite{Geisser06} Theorem 4.3).

\begin{notation}\label{ehprime}
Let $\mathcal{X}$ be a proper regular arithmetic scheme. We set
$$R\Gamma'_{eh}(\mathcal{X}_{\mathbb{F}_p},\mathbb{Z}_p(n)):= R\Gamma_{et}(\mathcal{X}^{\mathrm{red}}_{\mathbb{F}_p},\mathbb{Z}_p(n))$$ if $\mathcal{X}^{\mathrm{red}}_{\mathbb{F}_p}$ is smooth, and
$$R\Gamma'_{eh}(\mathcal{X}_{\mathbb{F}_p},\mathbb{Z}_p(n)):= R\Gamma_{eh}(\mathcal{X}_{\mathbb{F}_p},\mathbb{Z}_p(n))$$
otherwise. Here $\mathcal{X}^{\mathrm{red}}_{\mathbb{F}_p}$ denotes the maximal reduced closed subscheme of $\mathcal{X}_{\mathbb{F}_p}$.
\end{notation}
Notice that, under $\mathbf{R}(\mathbb{F}_p,\mathrm{dim}(\mathcal{X}_{\mathbb{F}_p}))$, one has $R\Gamma'_{eh}(\mathcal{X}_{\mathbb{F}_p},\mathbb{Z}_p(n))\simeq R\Gamma_{eh}(\mathcal{X}_{\mathbb{F}_p},\mathbb{Z}_p(n))$. Indeed, the map
 $\mathcal{X}^{\mathrm{red}}_{\mathbb{F}_p}\rightarrow \mathcal{X}_{\mathbb{F}_p}$ induces an isomorphism in the $eh$-topos (since this map is both a monomorphism and an $eh$-covering), so that $\mathbf{R}(\mathbb{F}_p,\mathrm{dim}(\mathcal{X}_{\mathbb{F}_p}))$ yields
 $$R\Gamma_{et}(\mathcal{X}^{\mathrm{red}}_{\mathbb{F}_p},\mathbb{Z}_p(n))\stackrel{\sim}{\rightarrow }R\Gamma_{eh}(\mathcal{X}^{\mathrm{red}}_{\mathbb{F}_p},\mathbb{Z}_p(n))\stackrel{\sim}{\rightarrow }R\Gamma_{eh}(\mathcal{X}_{\mathbb{F}_p},\mathbb{Z}_p(n))$$
 whenever $\mathcal{X}^{\mathrm{red}}_{\mathbb{F}_p}$ is smooth. We introduce $R\Gamma'_{eh}(\mathcal{X}_{\mathbb{F}_p},\mathbb{Z}_p(n))$ in order to avoid the systematic use of $\mathbf{R}(\mathbb{F}_p,\mathrm{dim}(\mathcal{X}_{\mathbb{F}_p}))$. We proceed similarly for Milne's correcting factor, and we refer to (\cite{Geisser06} Section 4.1) for the definition of $H^j_{eh}(\mathcal{X}_{\mathbb{F}_p},\Omega^i)$.
\begin{defn}
Let $\mathcal{X}$ be a proper regular arithmetic scheme. We set
$$\chi(\mathcal{X}_{\mathbb{F}_p},\mathcal{O},n):= \sum_{i\leq n, j}(-1)^{i+j} \cdot (n-i)\cdot \mathrm{dim}_{\mathbb{F}_p}H^j_{Zar}(\mathcal{X}^{\mathrm{red}}_{\mathbb{F}_p},\Omega^i)$$
if $\mathcal{X}^{\mathrm{red}}_{\mathbb{F}_p}$ is smooth, and
$$\chi(\mathcal{X}_{\mathbb{F}_p},\mathcal{O},n):= \sum_{i\leq n, j}(-1)^{i+j} \cdot (n-i)\cdot \mathrm{dim}_{\mathbb{F}_p}H^j_{eh}(\mathcal{X}_{\mathbb{F}_p},\Omega^i)$$
if $\mathcal{X}^{\mathrm{red}}_{\mathbb{F}_p}$ is singular and $\mathbf{R}(\mathbb{F}_p,\mathrm{dim}(\mathcal{X}_{\mathbb{F}_p}))$ holds.
\end{defn}

\subsection{The local factor $c_p(\mathcal{X},n)$}\label{sec:localfactor}

The following conjecture is a $p$-adic analogue of the fundamental exact triangle
$$R\Gamma_{dR}(\mathcal{X}_{\mathbb{R}}/\mathbb{R})/F^n[-1]\rightarrow R\Gamma_{\mathcal{D}}(\mathcal{X}_{/\mathbb{R}},\mathbb{R}(n))\rightarrow R\Gamma(G_{\mathbb{R}},\mathcal{X}(\mathbb{C}),(2\pi i)^n\mathbb{R})$$
for Deligne cohomology.

\begin{conj}\label{conjD_p} ${\bf D}_p(\mathcal{X},n)$
There is an exact triangle of complexes of $\mathbb{Q}_p$-vector spaces
$$R\Gamma_{dR}(\mathcal{X}_{\mathbb{Q}_p}/\mathbb{Q}_p)/F^n[-1]\rightarrow R\Gamma_{et}(\mathcal{X}_{\mathbb{Z}_p},\mathbb{Q}_p(n))\rightarrow R\Gamma'_{eh}(\mathcal{X}_{\mathbb{F}_p},\mathbb{Q}_p(n)).$$
\end{conj}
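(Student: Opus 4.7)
The plan is to factor the conjectured triangle through $p$-adic syntomic cohomology, reducing it to three independent identifications. First, I would establish the comparison
\[ R\Gamma_{et}(\mathcal{X}_{\mathbb{Z}_p},\mathbb{Q}_p(n)) \xrightarrow{\sim} R\Gamma_{syn}(\mathcal{X}_{\mathbb{Z}_p},\mathbb{Q}_p(n)) \]
between $p$-adic \'etale motivic and syntomic cohomology. For $\mathcal{X}_{\mathbb{Z}_p}$ smooth and $n < p - 1$, this is Geisser's theorem identifying Bloch's cycle complex with $\mathbb{Z}_p$-coefficients (in the range of interest) with Fontaine--Messing syntomic cohomology. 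In the general regular case one invokes the recent work of Nekov\'a\v{r}--Niziol and Colmez--Niziol, which extends this comparison to semistable models by analyzing the syntomic cycle class map in sufficient generality to lift the $n < p-1$ restriction.

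Second, I would use the fundamental exact triangle of syntomic cohomology, which in the smooth case reads
\[ R\Gamma_{syn}(\mathcal{X}_{\mathbb{Z}_p},\mathbb{Q}_p(n)) \to R\Gamma_{crys}(\mathcal{X}_{\mathbb{F}_p}/\mathbb{Z}_p)_{\mathbb{Q}_p}^{\phi = p^n} \to R\Gamma_{dR}(\mathcal{X}_{\mathbb{Q}_p}/\mathbb{Q}_p)/F^n, \]
obtained by combining Fontaine--Messing's definition of $R\Gamma_{syn}$ as the mapping fibre of $\phi - p^n\cdot \mathrm{id}$ on filtered crystalline cohomology with the Berthelot--Ogus comparison $R\Gamma_{crys}(\mathcal{X}_{\mathbb{Z}_p}/\mathbb{Z}_p)_{\mathbb{Q}_p} \simeq R\Gamma_{dR}(\mathcal{X}_{\mathbb{Q}_p}/\mathbb{Q}_p)$. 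Rotating this triangle produces exactly the shape of the conjectured one, with $R\Gamma_{dR}/F^n[-1]$ on the left. Third, I would identify the Frobenius-eigenspace of crystalline cohomology with $eh$-cohomology of the special fibre,
\[ R\Gamma_{crys}(\mathcal{X}_{\mathbb{F}_p}/\mathbb{Z}_p)_{\mathbb{Q}_p}^{\phi = p^n} \xrightarrow{\sim} R\Gamma'_{eh}(\mathcal{X}_{\mathbb{F}_p},\mathbb{Q}_p(n)). \]
For smooth $\mathcal{X}^{\mathrm{red}}_{\mathbb{F}_p}$ this is Geisser--Levine's logarithmic de Rham--Witt description $\mathbb{Z}/p^r(n)\simeq W_r\Omega^n_{\log}[-n]$ combined with the Illusie--Raynaud slope decomposition, which isolates the slope-$n$ part of crystalline cohomology as precisely log Hodge--Witt cohomology. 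For singular special fibre one descends via the $eh$-topology under $\mathbf{R}(\mathbb{F}_p,\dim \mathcal{X}_{\mathbb{F}_p})$, following Geisser.

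The main obstacle lies in the first stage: removing Geisser's restriction $n < p - 1$ and extending the motivic-to-syntomic comparison to proper regular $\mathcal{X}$ whose special fibre $\mathcal{X}_{\mathbb{F}_p}$ may fail to be smooth. The Niziol school's work handles the semistable case via a detailed study of the compatibility of the syntomic cycle class with the Hodge filtration, but translating those results into a statement about $R\Gamma_{et}(\mathcal{X}_{\mathbb{Z}_p},\mathbb{Q}_p(n))$ --- which here is defined via $j_{p,!}\mu_{p^r}^{\otimes n}$ rather than continuous \'etale cohomology of the generic fibre --- requires a careful compatibility check at the boundary maps of the associated localization triangles and at the comparison with $R\Gamma'_{eh}(\mathcal{X}_{\mathbb{F}_p},\mathbb{Q}_p(n))$. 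Once all three identifications are in place, assembling the final triangle is formal (octahedral axiom applied to the composition of the syntomic fundamental triangle with the étale--syntomic comparison). Appendix B, to which the authors defer, is essentially devoted to formalizing this program.
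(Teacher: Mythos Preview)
The statement you are attempting to prove is labeled \textbf{Conjecture} ${\bf D}_p(\mathcal{X},n)$ in the paper, and the paper does \emph{not} prove it in general. It is one of the standing assumptions used to formulate the main conjecture on zeta-values (alongside ${\bf L}(\overline{\mathcal{X}}_{et},n)$, ${\bf B}(\mathcal{X},n)$, etc.). The paper establishes it only in special cases: $n\leq 0$ (Prop.~5.4), $\mathcal{X}$ of characteristic $p$ (Prop.~5.5), and $\mathcal{X}/\mathcal{O}_K$ smooth (Prop.~7.19 in Appendix~B). So there is no ``paper's own proof'' to compare against for the general statement.

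That said, your outline is a reasonable summary of the strategy the paper pursues in Appendix~B, with one structural difference worth noting. You propose to compare $R\Gamma_{et}(\mathcal{X}_{\mathbb{Z}_p},\mathbb{Q}_p(n))$ directly with an \emph{integral} syntomic cohomology of $\mathcal{X}_{\mathbb{Z}_p}$ and then invoke a fundamental triangle for the latter. The paper instead routes everything through the \emph{localization triangle} on $\mathcal{X}_{\mathbb{Z}_p}$: it first identifies $R\Gamma(\mathcal{X}_{\mathbb{Z}_p},\tau^{\leq n}Rj_*\mathbb{Q}_p(n))$ with Nekov\'a\v{r}--Niziol syntomic cohomology of the \emph{generic fibre} $\mathcal{X}_{\mathbb{Q}_p}$ (Conjecture~7.5, proved in the semistable case), and then uses a further conjectural triangle (Conj.~7.9) comparing rigid cohomology of the special fibre with the Hyodo--Kato $[N]$-complex of the generic fibre. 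Combining these with Conj.~7.10 (the rigid description of $R\Gamma'_{eh}$) yields Corollary~7.11, which is exactly ${\bf D}_p(\mathcal{X},n)$. In the smooth case (Prop.~7.19) these conjectures collapse: $N=0$, rigid $=$ crystalline, and the slope decomposition of crystalline cohomology gives your step~3 directly. Your approach and the paper's agree there; for non-smooth regular $\mathcal{X}$ the paper's localization-based route is more explicit about exactly which ingredients remain conjectural (Conj.~7.4, 7.5, 7.9, 7.10), whereas your step~1 bundles several of these into a single unproven comparison.
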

The triangle of Conjecture \ref{conjD_p} must be compatible with the fundamental triangle of \cite{Bloch-Esnault-Kerz-14}[Thm. 5.4] in the following sense.  If $\mathcal{X}_{\mathbb{Z}_p}/\mathbb{Z}_p$ is smooth and $n<p-1$, then we have an isomorphism of triangles
\[ \xymatrix{
R\Gamma_{dR}(\mathcal{X}_{\mathbb{Q}_p}/\mathbb{Q}_p)/F^n[-1] \ar[d]\ar[r]& R\Gamma_{et}(\mathcal{X}_{\mathbb{Z}_p},\mathbb{Q}_p(n))\ar[d]\ar[r]& R\Gamma_{et}(\mathcal{X}_{\mathbb{F}_p},\mathbb{Q}_p(n))\ar[d] \\
R\Gamma(\mathcal{X}_{\mathbb{F}_p},p(n)\cdot\Omega^{<n}_{X_{\bullet}})_{\mathbb{Q}_p}[-1]\ar[r]& R\Gamma(\mathcal{X}_{\mathbb{F}_p},\mathfrak{S}_{X_{\bullet}}(n))_{\mathbb{Q}_p} \ar[r]&
R\Gamma(\mathcal{X}_{\mathbb{F}_p},W_{\bullet}\Omega^n_{\mathcal{X}_{\mathbb{F}_p},\mathrm{log}}[-n])_{\mathbb{Q}_p}
}
\]
where the left vertical isomorphism follows from $L\Omega^*_{\mathcal{X}_{\mathbb{Q}_p}/\mathbb{Q}_p}/F^n\simeq \Omega^{<n}_{\mathcal{X}_{\mathbb{Q}_p}/\mathbb{Q}_p}$ since $\mathcal{X}_{\mathbb{Q}_p}/\mathbb{Q}_p$ is smooth,
the middle vertical isomorphism is given by (\cite{Geisser04a} Theorem 1.3) and the right vertical isomorphism is given by (\cite{Geisser06} Theorem 4.3) together with the quasi-isomorphism $\mathbb{Z}/p^r(n)\simeq \nu_r^n[-n]$ over $\mathcal{X}_{\mathbb{F}_p,et}$. Finally, $p(n)\cdot\Omega^{<n}_{X_{\bullet}}$ is defined as in \cite{Bloch-Esnault-Kerz-14}.

Conjecture \ref{conjD_p} gives an isomorphism $\lambda_{p}(\mathcal{X},n):$
\begin{eqnarray*}
\left(\mathrm{det}_{\mathbb{Z}_p}  R\Gamma_{et}(\mathcal{X}_{\mathbb{Z}_p},\mathbb{Z}_p(n))\right)_{\mathbb{Q}_p}&\stackrel{\sim}{\longrightarrow}&\mathrm{det}_{\mathbb{Q}_p}  R\Gamma_{et}(\mathcal{X}_{\mathbb{Z}_p},\mathbb{Q}_p(n))\\\
&\stackrel{\sim}{\longrightarrow}&\mathrm{det}_{\mathbb{Q}_p}  R\Gamma'_{eh}(\mathcal{X}_{\mathbb{F}_p},\mathbb{Q}_p(n))
\otimes_{\mathbb{Q}_p}\mathrm{det}^{-1}_{\mathbb{Q}_p}R\Gamma_{dR}(\mathcal{X}_{\mathbb{Q}_p}/\mathbb{Q}_p)/F^n\\
&\stackrel{\sim}{\longrightarrow}& \left(\mathrm{det}_{\mathbb{Z}_p}  R\Gamma'_{eh}(\mathcal{X}_{\mathbb{F}_p},\mathbb{Z}_p(n))
\otimes_{\mathbb{Z}_p}\mathrm{det}^{-1}_{\mathbb{Z}_p}R\Gamma_{dR}(\mathcal{X}_{\mathbb{Z}_p}/\mathbb{Z}_p)/F^n\right)_{\mathbb{Q}_p}.
\end{eqnarray*}

\begin{defn}
We define $$d_{p}(\mathcal{X},n):=\mathrm{det}(\lambda_{p}(\mathcal{X},n))\in \mathbb{Q}_p^\times/\mathbb{Z}_p^\times \hspace{0.5cm} \mbox{and} \hspace{0.5cm} c_{p}(\mathcal{X},n):=p^{\chi(\mathcal{X}_{\mathbb{F}_p},\mathcal{O},n)}\cdot d_{p}(\mathcal{X},n).$$
\label{ddef}\end{defn}
Here the determinant of $\lambda_{p}(\mathcal{X},n)$ is computed with the given integral structures, i.e. one has
\begin{align}&\lambda_{p}\left(d_{p}(\mathcal{X},n)^{-1}\cdot \mathrm{det}_{\mathbb{Z}_p}  R\Gamma_{et}(\mathcal{X}_{\mathbb{Z}_p},\mathbb{Z}_p(n))\right)\notag\\
=\ &\mathrm{det}_{\mathbb{Z}_p}  R\Gamma_{eh}(\mathcal{X}_{\mathbb{F}_p},\mathbb{Z}_p(n))
\otimes_{\mathbb{Z}_p}\mathrm{det}^{-1}_{\mathbb{Z}_p}R\Gamma_{dR}(\mathcal{X}_{\mathbb{Z}_p}/\mathbb{Z}_p)/F^n.\label{lambda-int}\end{align}


\begin{prop}\label{prop-cp-n=0}
For $n\leq 0$, ${\bf D}_p(\mathcal{X},n)$ holds and $c_{p}(\mathcal{X},n)\equiv 1 \,\mathrm{mod}\, \mathbb{Z}_p^\times$ for all $p$.
\end{prop}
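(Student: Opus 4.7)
The plan is to reduce everything to three simultaneous vanishings for $n\leq 0$. First, $L\Omega^{<n}_{\mathcal{X}/\mathbb{Z}}=0$ (Section \ref{sec:derived-de-rham}) gives $R\Gamma_{dR}(\mathcal{X}_{\mathbb{Q}_p}/\mathbb{Q}_p)/F^n=0$. Second, $\chi(\mathcal{X}_{\mathbb{F}_p},\mathcal{O},n)=0$: the defining sum is empty for $n<0$, and for $n=0$ only $i=0$ contributes but with weight factor $(n-i)=0$. Third, one must establish that the natural map
\[
R\Gamma_{et}(\mathcal{X}_{\mathbb{Z}_p},\mathbb{Z}_p(n))\longrightarrow R\Gamma'_{eh}(\mathcal{X}_{\mathbb{F}_p},\mathbb{Z}_p(n))
\]
is a quasi-isomorphism lifting to an identification of integral $\mathbb{Z}_p$-structures. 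Granting this, the triangle of Conjecture~\ref{conjD_p} has a zero first term and an isomorphism as its second arrow, so $\mathbf{D}_p(\mathcal{X},n)$ holds; the determinant $d_p(\mathcal{X},n)$ becomes the determinant of an integral isomorphism of perfect $\mathbb{Z}_p$-complexes, hence a unit, and combined with $p^{\chi(\mathcal{X}_{\mathbb{F}_p},\mathcal{O},n)}=1$ this yields $c_p(\mathcal{X},n)\equiv 1\pmod{\mathbb{Z}_p^\times}$.

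For $n=0$ one has $\mathbb{Z}(0)=\mathbb{Z}$ as a constant \'etale sheaf, so $\mathbb{Z}/p^r(0)=\mathbb{Z}/p^r$. The key input is proper base change applied to $f:\mathcal{X}_{\mathbb{Z}_p}\to\mathrm{Spec}(\mathbb{Z}_p)$ together with the fact that $\mathrm{Spec}(\mathbb{Z}_p)$ is henselian: the former identifies $i^*Rf_*\mathbb{Z}/p^r$ with $Rf_{s,*}\mathbb{Z}/p^r$, and the latter implies $R\Gamma(\mathrm{Spec}(\mathbb{Z}_p),Rf_*\mathbb{Z}/p^r)=R\Gamma(\mathrm{Spec}(\mathbb{F}_p),i^*Rf_*\mathbb{Z}/p^r)$. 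Together these give an integral isomorphism $R\Gamma_{et}(\mathcal{X}_{\mathbb{Z}_p},\mathbb{Z}/p^r)\xrightarrow{\sim}R\Gamma_{et}(\mathcal{X}_{\mathbb{F}_p},\mathbb{Z}/p^r)$ for each $r$, hence on $\mathrm{holim}$ with $\mathbb{Z}_p$-coefficients. Since the \'etale site is insensitive to nilpotents, the target is $R\Gamma_{et}(\mathcal{X}^{\mathrm{red}}_{\mathbb{F}_p},\mathbb{Z}_p)$, matching $R\Gamma'_{eh}$ when $\mathcal{X}^{\mathrm{red}}_{\mathbb{F}_p}$ is smooth; otherwise one identifies it with $R\Gamma_{eh}(\mathcal{X}_{\mathbb{F}_p},\mathbb{Z}_p)$ via $cdh$-descent for the constant torsion sheaf $\mathbb{Z}/p^r$.

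For $n<0$ the claim is that both sides vanish. By the convention $\mathbb{Z}(n):=\bigoplus_\ell j_{\ell,!}\mu_{\ell^\infty}^{\otimes n}[-1]$, the Tor with $\mathbb{Z}/p^r$ kills the $\ell\neq p$ summands and collapses the $p$-summand to $\mathbb{Z}/p^r(n)\simeq j_{p,!}\mu_{p^r}^{\otimes n}$ on $\mathcal{X}_{\mathbb{Z}_p}$. Its pullback to $\mathcal{X}_{\mathbb{F}_p}$ is zero, so $R\Gamma'_{eh}(\mathcal{X}_{\mathbb{F}_p},\mathbb{Z}_p(n))=0$. For the other side, the open/closed triangle $j_{p,!}j_p^*\to\mathrm{id}\to i_*i^*$ applied to $Rj_{p,*}\mu_{p^r}^{\otimes n}$ gives
\[
R\Gamma(\mathcal{X}_{\mathbb{Z}_p},j_{p,!}\mu_{p^r}^{\otimes n})\to R\Gamma(\mathcal{X}_{\mathbb{Q}_p},\mu_{p^r}^{\otimes n})\to R\Gamma(\mathcal{X}_{\mathbb{F}_p},i^*Rj_{p,*}\mu_{p^r}^{\otimes n}),
\]
and the second arrow is an isomorphism by the same proper base change plus henselization argument applied now to the proper map $fj_p$ combined with proper base change of $Rj_{p,*}\mu_{p^r}^{\otimes n}$. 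Hence the first term vanishes, and so does $R\Gamma_{et}(\mathcal{X}_{\mathbb{Z}_p},\mathbb{Z}_p(n))$ after taking $\mathrm{holim}$. The map $\lambda_p(\mathcal{X},n)$ is then the identity of $\mathbb{Q}_p$ and $d_p=1$.

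The main (mild) obstacle is the comparison between $R\Gamma_{et}$ and $R\Gamma_{eh}$ for the constant coefficient $\mathbb{Z}/p^r$ when $\mathcal{X}^{\mathrm{red}}_{\mathbb{F}_p}$ is singular; everything else is a direct application of proper base change combined with henselization of the local base. Once that comparison is recorded, the proposition follows from the vanishings enumerated above.
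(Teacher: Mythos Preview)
Your approach is essentially the paper's, only spelled out where the paper is terse. The paper's entire proof consists of noting that $R\Gamma_{dR}(\mathcal{X}_{\mathbb{Q}_p}/\mathbb{Q}_p)/F^n=0$ for $n\le0$ and then citing \cite{Geisser06}~Theorem~3.6 for the single assertion that
\[
R\Gamma_{et}(\mathcal{X}_{\mathbb{Z}_p},\mathbb{Z}_p(n))\longrightarrow R\Gamma_{et}(\mathcal{X}_{\mathbb{F}_p},\mathbb{Z}_p(n))\longrightarrow R\Gamma_{eh}(\mathcal{X}_{\mathbb{F}_p},\mathbb{Z}_p(n))
\]
is an isomorphism for all $n\le 0$. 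Your three vanishings and the proper-base-change/henselian argument for $n=0$ are exactly the content of that citation unwound, and your $n<0$ argument (both sides vanish) is likewise the special case. The ``mild obstacle'' you flag, the $et$-versus-$eh$ comparison for the constant sheaf $\mathbb{Z}/p^r$ when $\mathcal{X}^{\mathrm{red}}_{\mathbb{F}_p}$ is singular, is precisely what that reference supplies, so your proof is complete once you cite it rather than appealing to an unspecified ``$cdh$-descent''.

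One small slip: in your $n<0$ paragraph you invoke ``the proper map $fj_p$'', but $fj_p:\mathcal{X}_{\mathbb{Q}_p}\to\mathrm{Spec}(\mathbb{Z}_p)$ is not proper. What you want is proper base change for $f$ applied to the sheaf $Rj_{p,*}\mu_{p^r}^{\otimes n}$ on $\mathcal{X}_{\mathbb{Z}_p}$, together with henselization of the base; this gives $R\Gamma(\mathcal{X}_{\mathbb{Q}_p},\mu_{p^r}^{\otimes n})\simeq R\Gamma(\mathcal{X}_{\mathbb{F}_p},i^*Rj_{p,*}\mu_{p^r}^{\otimes n})$ and hence the vanishing you claim. (Compare Proposition~\ref{nkleiner0}, which runs the same vanishing via $Rf_*j_{!}\simeq j_{!}Rf_{\eta,*}$ and $R\Gamma(S,j_!\mathcal{F})=0$.)
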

\begin{proof}
By definition, we have $R\Gamma_{dR}(\mathcal{X}_{\mathbb{Z}_p}/\mathbb{Q}_p)/F^n=0$ and
the map
$$R\Gamma_{et}(\mathcal{X}_{\mathbb{Z}_p},\mathbb{Z}_p(n))\rightarrow R\Gamma_{et}(\mathcal{X}_{\mathbb{F}_p},\mathbb{Z}_p(n))\rightarrow R\Gamma_{eh}(\mathcal{X}_{\mathbb{F}_p},\mathbb{Z}_p(n))$$
is an isomorphism by (\cite{Geisser06} Theorem 3.6).
\end{proof}

\begin{prop}\label{prop-cp-charp}
Assume that $\mathcal{X}$ has characteristic $p$. Then ${\bf D}_l(\mathcal{X},n)$ holds and $c_{l}(\mathcal{X},n)\equiv 1 \,\mathrm{mod}\, \mathbb{Z}_l^\times$ for all primes $l$.
\end{prop}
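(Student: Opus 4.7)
The plan is to treat the cases $l\neq p$ and $l=p$ separately. For $l\neq p$, since $p$ is a unit in $\mathbb{Z}_l$ while $p=0$ on $\mathcal{X}$, the base changes $\mathcal{X}_{\mathbb{Z}_l}$, $\mathcal{X}_{\mathbb{Q}_l}$ and $\mathcal{X}_{\mathbb{F}_l}$ are all empty. All three terms of the triangle in ${\bf D}_l(\mathcal{X},n)$ vanish, so the conjecture holds trivially, $\lambda_l(\mathcal{X},n)$ is the identity of the unit line, $d_l(\mathcal{X},n)\equiv 1\pmod{\mathbb{Z}_l^\times}$, and $\chi(\mathcal{X}_{\mathbb{F}_l},\mathcal{O},n)=0$, giving $c_l(\mathcal{X},n)\equiv 1\pmod{\mathbb{Z}_l^\times}$.

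For $l=p$, I would first verify ${\bf D}_p(\mathcal{X},n)$. Since $\mathcal{X}$ is regular with $p=0$ on it, one has $\mathcal{X}_{\mathbb{Z}_p}=\mathcal{X}_{\mathbb{F}_p}=\mathcal{X}$ as schemes and $\mathcal{X}$ is smooth over the perfect field $\mathbb{F}_p$; in particular $\mathcal{X}^{\mathrm{red}}_{\mathbb{F}_p}=\mathcal{X}$ is smooth, so Notation \ref{ehprime} yields $R\Gamma'_{eh}(\mathcal{X}_{\mathbb{F}_p},\mathbb{Z}_p(n))=R\Gamma_{et}(\mathcal{X},\mathbb{Z}_p(n))$. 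Moreover $\mathcal{X}_{\mathbb{Q}_p}=\emptyset$ forces $R\Gamma_{dR}(\mathcal{X}_{\mathbb{Q}_p}/\mathbb{Q}_p)/F^n=0$. The triangle of ${\bf D}_p(\mathcal{X},n)$ then reduces to the tautological identity $R\Gamma_{et}(\mathcal{X}_{\mathbb{Z}_p},\mathbb{Q}_p(n))=R\Gamma'_{eh}(\mathcal{X}_{\mathbb{F}_p},\mathbb{Q}_p(n))$, so the conjecture holds.

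It remains to compute $c_p(\mathcal{X},n)$. Writing $M:=R\Gamma_{dR}(\mathcal{X}_{\mathbb{Z}_p}/\mathbb{Z}_p)/F^n$, the étale and eh factors in $\lambda_p$ cancel as integral complexes, while $M$ is a perfect complex of $\mathbb{Z}_p$-modules with $M\otimes\mathbb{Q}_p=0$, so $\lambda_p$ reduces to the canonical trivialization $\det^{-1}_{\mathbb{Z}_p}M\otimes\mathbb{Q}_p\cong\mathbb{Q}_p$. A standard computation (e.g. on the two-term complex $[\mathbb{Z}_p\xrightarrow{p}\mathbb{Z}_p]$) gives $v_p(\det_{\mathbb{Z}_p} M)=-\chi(M)$ where $\chi(M):=\sum_i(-1)^i\,\mathrm{length}_{\mathbb{Z}_p}H^i(M)$, and hence $d_p(\mathcal{X},n)\equiv p^{-\chi(M)}\pmod{\mathbb{Z}_p^\times}$. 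The identification $\chi(M)=\chi(\mathcal{X}_{\mathbb{F}_p},\mathcal{O},n)$ will then immediately give $c_p(\mathcal{X},n)\equiv 1\pmod{\mathbb{Z}_p^\times}$.

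The key calculation, and the main technical obstacle, is this last identification. I would obtain it by analysing the Hodge filtration on $L\Omega^*_{\mathcal{X}/\mathbb{Z}_p}/F^n$, whose graded pieces are $L\Lambda^i L_{\mathcal{X}/\mathbb{Z}_p}[-i]$ for $0\leq i\leq n-1$. The transitivity triangle for $\mathcal{X}\to\mathrm{Spec}(\mathbb{F}_p)\to\mathrm{Spec}(\mathbb{Z}_p)$, together with $L_{\mathbb{F}_p/\mathbb{Z}_p}\simeq\mathbb{F}_p[1]$ and smoothness of $\mathcal{X}/\mathbb{F}_p$, yields an exact triangle
\[ \mathcal{O}_{\mathcal{X}}[1]\longrightarrow L_{\mathcal{X}/\mathbb{Z}_p}\longrightarrow \Omega^1_{\mathcal{X}/\mathbb{F}_p}\longrightarrow. \]
The associated Koszul filtration on $L\Lambda^i L_{\mathcal{X}/\mathbb{Z}_p}$ has graded pieces $L\Lambda^a(\mathcal{O}_{\mathcal{X}}[1])\otimes L\Lambda^b(\Omega^1_{\mathcal{X}/\mathbb{F}_p})$ for $a+b=i$; using the standard identities $L\Lambda^a(\mathcal{O}_{\mathcal{X}}[1])=\Gamma^a(\mathcal{O}_{\mathcal{X}})[a]=\mathcal{O}_{\mathcal{X}}[a]$ and $L\Lambda^b(\Omega^1_{\mathcal{X}/\mathbb{F}_p})=\Omega^b_{\mathcal{X}/\mathbb{F}_p}$, the graded pieces of $L\Lambda^i L_{\mathcal{X}/\mathbb{Z}_p}[-i]$ become $\Omega^b_{\mathcal{X}/\mathbb{F}_p}[-b]$ for each $(a,b)$ with $a+b=i$ and $a,b\geq 0$. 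Summing over $0\leq i\leq n-1$, each $\Omega^b_{\mathcal{X}/\mathbb{F}_p}[-b]$ appears with multiplicity $n-b$, whence
\[ \chi(M)=\sum_{b=0}^{n-1}(n-b)\sum_j(-1)^{b+j}\dim_{\mathbb{F}_p}H^j(\mathcal{X},\Omega^b_{\mathcal{X}/\mathbb{F}_p})=\chi(\mathcal{X}_{\mathbb{F}_p},\mathcal{O},n), \]
completing the proof. The delicate points to get right are the nontriviality of the extension $L_{\mathcal{X}/\mathbb{Z}_p}$ (which sits in degrees $[-1,0]$ rather than in degree $0$), the derived divided-power identity for shifts of flat modules, and the sign convention for the Knudsen--Mumford trivialization.
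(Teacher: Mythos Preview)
Your proof is correct and structurally matches the paper's argument: both split into the trivial case $l\neq p$ (empty base changes) and the case $l=p$, where the de Rham term has vanishing generic fibre and the \'etale and $eh'$ terms coincide, so $d_p(\mathcal{X},n)$ is governed entirely by the torsion complex $M=R\Gamma_{dR}(\mathcal{X}_{\mathbb{Z}_p}/\mathbb{Z}_p)/F^n$.

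The one real difference is the computation of $\chi(M)=\chi(\mathcal{X}_{\mathbb{F}_p},\mathcal{O},n)$. The paper simply invokes \cite{Morin15} for this identity, whereas you supply a direct derivation via the transitivity triangle $\mathcal{O}_{\mathcal{X}}[1]\to L_{\mathcal{X}/\mathbb{Z}_p}\to\Omega^1_{\mathcal{X}/\mathbb{F}_p}$ and the Koszul filtration on $L\Lambda^i$, using the d\'ecalage identity $L\Lambda^a(\mathcal{O}_{\mathcal{X}}[1])\simeq\Gamma^a(\mathcal{O}_{\mathcal{X}})[a]\simeq\mathcal{O}_{\mathcal{X}}[a]$. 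Your multiplicity count (each $\Omega^b[-b]$ appearing $n-b$ times across $0\leq i\leq n-1$) is correct and recovers exactly Milne's correcting factor. This makes your argument self-contained where the paper outsources the calculation; conversely, the paper's appeal to \cite{Morin15} is shorter and avoids rehearsing the filtration bookkeeping. Both reach the same conclusion by the same mechanism.
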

\begin{proof}
We have
$R\Gamma_{et}(\mathcal{X}_{\mathbb{Z}_p},\mathbb{Z}_p(n))= R\Gamma_{et}(\mathcal{X},\mathbb{Z}_p(n))$
and
$$R\Gamma_{dR}(\mathcal{X}_{\mathbb{Z}_p}/\mathbb{Z}_p)/F^n\simeq R\Gamma_{dR}(\mathcal{X}/\mathbb{Z})/F^n\otimes_{\mathbb{Z}}\mathbb{Z}_p.$$
The cohomology groups of  the complex $R\Gamma_{dR}(\mathcal{X}/\mathbb{Z})/F^n$ are finite, and the alternate product of their orders is
$p^{\chi(\mathcal{X}_{\mathbb{F}_p},\mathcal{O},n)}$ by \cite{Morin15}, so that $d_p(\mathcal{X},n)\equiv p^{-\chi(\mathcal{X}_{\mathbb{F}_p},\mathcal{O},n)} \,\mathrm{mod}\,\mathbb{Z}_p^\times$. For $l\neq p$, we have $\mathcal{X}_{\mathbb{Z}_l}=\mathcal{X}_{\mathbb{F}_l}=\emptyset$, hence
$c_{l}(\mathcal{X},n)\equiv d_{l}(\mathcal{X},n)\equiv 1\,\mathrm{mod}\,\mathbb{Z}_l^\times$.
\end{proof}

\begin{prop}\label{prop-cp=1}
Let  $\mathcal{X}$ be a regular proper arithmetic scheme. We have
 $c_{p}(\mathcal{X},n)\equiv 1 \,\mathrm{mod}\, \mathbb{Z}_p^\times$ for almost all $p$.
\end{prop}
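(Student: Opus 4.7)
The plan is as follows. First I would observe that $\mathcal{X}_{\mathbb{Z}_p}$ is smooth over $\mathbb{Z}_p$ for almost all primes $p$: since $\mathcal{X}$ is regular and of finite type over $\mathbb{Z}$ with smooth generic fibre $\mathcal{X}_{\mathbb{Q}}/\mathbb{Q}$ (as $\mathcal{X}$ is regular and $\mathbb{Q}$ is perfect), the non-smooth locus of $\mathcal{X}\to\Spec(\mathbb{Z})$ is a proper closed subset not meeting the generic fibre, hence supported on finitely many closed points of $\Spec(\mathbb{Z})$. For all but finitely many $p$ we also have $n<p-1$, so it suffices to show $c_p(\mathcal{X},n)\equiv 1\pmod{\mathbb{Z}_p^\times}$ for every $p$ satisfying both conditions.

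For such $p$ the conjecture $\textbf{D}_p(\mathcal{X},n)$ holds via the Bloch--Esnault--Kerz type comparison diagram recalled immediately after Conjecture \ref{conjD_p}. The plan is to upgrade the rational identification used in the definition of $\lambda_p(\mathcal{X},n)$ to an identification of $\mathbb{Z}_p$-lattices. Concretely, I would use: (a) Geisser's integral comparison $R\Gamma_{et}(\mathcal{X}_{\mathbb{Z}_p},\mathbb{Z}_p(n))\simeq R\Gamma(\mathcal{X}_{\mathbb{F}_p},\mathfrak{S}_{X_\bullet}(n))$ valid for $\mathcal{X}_{\mathbb{Z}_p}/\mathbb{Z}_p$ smooth and $n<p-1$; (b) the equality $R\Gamma'_{eh}(\mathcal{X}_{\mathbb{F}_p},\mathbb{Z}_p(n))=R\Gamma_{et}(\mathcal{X}_{\mathbb{F}_p},\mathbb{Z}_p(n))$ combined with the integral quasi-isomorphism $\mathbb{Z}/p^r(n)\simeq \nu^n_r[-n]$ which identifies the third term of the integral BEK triangle; (c) the fact that for smooth $\mathcal{X}_{\mathbb{Z}_p}$ the natural map $L\Omega^*_{\mathcal{X}_{\mathbb{Z}_p}/\mathbb{Z}_p}/F^n\to\Omega^{<n}_{\mathcal{X}_{\mathbb{Z}_p}/\mathbb{Z}_p}$ is a quasi-isomorphism of perfect complexes of $\mathbb{Z}_p$-modules. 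Taking determinants in the integral BEK triangle gives an equality of $\mathbb{Z}_p$-lattices
\[ {\det}_{\mathbb{Z}_p} R\Gamma_{et}(\mathcal{X}_{\mathbb{Z}_p},\mathbb{Z}_p(n)) = {\det}_{\mathbb{Z}_p}^{-1} R\Gamma(\mathcal{X}_{\mathbb{F}_p},p(n)\cdot\Omega^{<n}_{X_\bullet}) \otimes {\det}_{\mathbb{Z}_p} R\Gamma'_{eh}(\mathcal{X}_{\mathbb{F}_p},\mathbb{Z}_p(n)) \]
inside the corresponding rational line.

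The heart of the argument is then the explicit local computation: the twist $p(n)\cdot\Omega^{<n}_{X_\bullet}$ multiplies the degree-$i$ piece by $p^{n-i}$ for $0\leq i<n$, so the lattice ${\det}_{\mathbb{Z}_p}R\Gamma(\mathcal{X}_{\mathbb{F}_p},p(n)\cdot\Omega^{<n}_{X_\bullet})$ differs from the reduction mod $p$ of ${\det}_{\mathbb{Z}_p}R\Gamma_{dR}(\mathcal{X}_{\mathbb{Z}_p}/\mathbb{Z}_p)/F^n$ by the explicit factor
\[ p^{\sum_{i<n,j}(-1)^{i+j}(n-i)\dim_{\mathbb{F}_p}H^j(\mathcal{X}_{\mathbb{F}_p},\Omega^i)} = p^{\chi(\mathcal{X}_{\mathbb{F}_p},\mathcal{O},n)}. \]
Substituting into the defining identity (\ref{lambda-int}) of $d_p(\mathcal{X},n)$ yields $d_p(\mathcal{X},n)\equiv p^{-\chi(\mathcal{X}_{\mathbb{F}_p},\mathcal{O},n)}\pmod{\mathbb{Z}_p^\times}$, whence $c_p(\mathcal{X},n)=p^{\chi(\mathcal{X}_{\mathbb{F}_p},\mathcal{O},n)}\cdot d_p(\mathcal{X},n)\equiv 1\pmod{\mathbb{Z}_p^\times}$.

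The main obstacle is the passage from the rational BEK triangle to an identification at the level of perfect complexes of $\mathbb{Z}_p$-modules together with the precise tracking of the $p$-adic twist $p(n)$ in the derived determinant; this is exactly the type of integral $p$-adic Hodge-theoretic input that the paper isolates in its appendix on $p$-adically completed motivic cohomology, and it is what makes the argument work uniformly in all but finitely many primes.
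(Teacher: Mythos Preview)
Your proposal is correct and follows essentially the same route as the paper: reduce to $\mathcal{X}_{\mathbb{Z}_p}/\mathbb{Z}_p$ smooth with $n<p-1$, invoke the integral Bloch--Esnault--Kerz triangle, and then compare the two $\mathbb{Z}_p$-lattices in the de Rham determinant (twisted by $p(n)$ versus untwisted) to read off $d_p \equiv p^{-\chi(\mathcal{X}_{\mathbb{F}_p},\mathcal{O},n)}$.

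Two small points. First, the phrase ``reduction mod $p$ of $\det_{\mathbb{Z}_p}R\Gamma_{dR}(\mathcal{X}_{\mathbb{Z}_p}/\mathbb{Z}_p)/F^n$'' is not quite what you mean: you are comparing two $\mathbb{Z}_p$-lattices inside the same $\mathbb{Q}_p$-line, not reducing anything. Second, you jump directly from the twist $p^{n-i}$ on $\Omega^i$ to the exponent $\sum_{i<n,j}(-1)^{i+j}(n-i)\dim_{\mathbb{F}_p}H^j(\mathcal{X}_{\mathbb{F}_p},\Omega^i)$ without saying why the special-fibre dimensions appear rather than $\mathbb{Z}_p$-ranks. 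The paper handles this by noting that $H^j(\mathcal{X}_{\mathbb{Z}_p},\Omega^i)\cong H^j(\mathcal{X},\Omega^i_{\mathcal{X}/\mathbb{Z}})\otimes\mathbb{Z}_p$ is free for almost all $p$, whence $\mathrm{rank}_{\mathbb{Z}_p}=\dim_{\mathbb{F}_p}$ after base change; you could equally well argue via invariance of the Euler characteristic of the perfect complex $R\Gamma(\mathcal{X}_{\mathbb{Z}_p},\Omega^i)$ under $-\otimes^L\mathbb{F}_p$, which avoids the freeness step and is slightly cleaner. Your final paragraph overstates the difficulty: the integral BEK triangle for smooth $\mathcal{X}_{\mathbb{Z}_p}$ and $n<p-1$ is already available from Geisser and BEK, so no further input from Appendix~B is needed here.
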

\begin{proof}
By Proposition \ref{prop-cp-charp}, we may assume that $\mathcal{X}$ is flat over $\mathbb{Z}$. We may also assume $n<p-1$ and $\mathcal{X}_{\mathbb{Z}_p}/\mathbb{Z}_p$ smooth. By Remark \ref{rem-BEKcompatible} and by (\cite{Bloch-Esnault-Kerz-14} Theorem 5.4) the triangle
$$R\Gamma(\mathcal{X}_{\mathbb{Z}_p}, p(n)\cdot \Omega^{<n}_{\mathcal{X}_{\mathbb{Z}_p}/\mathbb{Z}_p})[-1]\rightarrow R\Gamma_{et}(\mathcal{X}_{\mathbb{Z}_p},\mathbb{Z}_p(n))\rightarrow R\Gamma_{et}(\mathcal{X}_{\mathbb{F}_p},\mathbb{Z}_p(n))$$
is exact,
where $$p(n)\cdot \Omega^{<n}_{\mathcal{X}_{\mathbb{Z}_p}/\mathbb{Z}_p}:=[p^{n}\cdot  \Omega^{0}_{\mathcal{X}_{\mathbb{Z}_p}/\mathbb{Z}_p}\rightarrow  p^{n-1}\cdot  \Omega^{1}_{\mathcal{X}_{\mathbb{Z}_p}/\mathbb{Z}_p} \rightarrow \cdots \rightarrow p \cdot  \Omega^{n-1}_{\mathcal{X}_{\mathbb{Z}_p}/\mathbb{Z}_p}]$$
sits in degrees $[0,n-1]$. The local factor $d_p(\mathcal{X},n)$ measures the difference between two different $\mathbb{Z}_p$-structures on $\mathrm{det}_{\mathbb{Q}_p}  R\Gamma_{et}(\mathcal{X}_{\mathbb{Z}_p},\mathbb{Q}_p(n))$. The first of those $\mathbb{Z}_p$-structures is given by
\begin{eqnarray*}
&&\mathrm{det}_{\mathbb{Q}_p}  R\Gamma_{et}(\mathcal{X}_{\mathbb{Z}_p},\mathbb{Q}_p(n))\\
&\stackrel{\sim}{\longrightarrow}&\mathrm{det}_{\mathbb{Q}_p}  R\Gamma_{et}(\mathcal{X}_{\mathbb{F}_p},\mathbb{Q}_p(n))
\otimes_{\mathbb{Q}_p}\mathrm{det}^{-1}_{\mathbb{Q}_p}R\Gamma_{dR}(\mathcal{X}_{\mathbb{Q}_p}/\mathbb{Q}_p)/F^n\\
&\stackrel{\sim}{\longrightarrow}&\left(\mathrm{det}_{\mathbb{Z}_p}  R\Gamma_{et}(\mathcal{X}_{\mathbb{F}_p},\mathbb{Z}_p(n))
\otimes_{\mathbb{Z}_p}\mathrm{det}^{-1}_{\mathbb{Z}_p}R\Gamma_{dR}(\mathcal{X}_{\mathbb{Z}_p}/\mathbb{Z}_p)/F^n\right)\otimes_{\mathbb{Z}_p} \mathbb{Q}_p\\
&\stackrel{\sim}{\longrightarrow}& \left(\mathrm{det}_{\mathbb{Z}_p}  R\Gamma_{et}(\mathcal{X}_{\mathbb{F}_p},\mathbb{Z}_p(n))
\otimes_{\mathbb{Z}_p}\mathrm{det}^{-1}_{\mathbb{Z}_p}R\Gamma(\mathcal{X}_{\mathbb{Z}_p},\Omega^{<n}_{\mathcal{X}_{\mathbb{Z}_p}/\mathbb{Z}_p})\right)_{\mathbb{Q}_p},
\end{eqnarray*}
where the last isomorphism follows from $L\Omega^*_{\mathcal{X}_{\mathbb{Z}_p}/\mathbb{Z}_p}/F^n\simeq \Omega^{<n}_{\mathcal{X}_{\mathbb{Z}_p}/\mathbb{Z}_p}$ since $\mathcal{X}_{\mathbb{Z}_p}$ is smooth. The second $\mathbb{Z}_p$-structure is
\begin{eqnarray*}
\mathrm{det}_{\mathbb{Q}_p}  R\Gamma_{et}(\mathcal{X}_{\mathbb{Z}_p},\mathbb{Q}_p(n))
&\stackrel{\sim}{\longrightarrow}&\left(\mathrm{det}_{\mathbb{Z}_p}  R\Gamma_{et}(\mathcal{X}_{\mathbb{Z}_p},\mathbb{Z}_p(n))\right)_{\mathbb{Q}_p}\\
&\stackrel{\sim}{\longrightarrow}&\left(\mathrm{det}_{\mathbb{Z}_p}  R\Gamma_{et}(\mathcal{X}_{\mathbb{F}_p},\mathbb{Z}_p(n))
\otimes_{\mathbb{Z}_p}\mathrm{det}^{-1}_{\mathbb{Z}_p}R\Gamma(\mathcal{X}_{\mathbb{Z}_p},p(n)\cdot\Omega^{<n}_{\mathcal{X}_{\mathbb{Z}_p}/\mathbb{Z}_p})
\right)_{\mathbb{Q}_p}.
\end{eqnarray*}
Hence $d_p(\mathcal{X},n)$ is defined as the determinant of the isomorphism
$$\left(\mathrm{det}^{-1}_{\mathbb{Z}_p}R\Gamma(\mathcal{X}_{\mathbb{Z}_p},p(n)\cdot\Omega^{<n}_{\mathcal{X}_{\mathbb{Z}_p}/\mathbb{Z}_p})
\right)_{\mathbb{Q}_p}\stackrel{\sim}{\rightarrow} \left(\mathrm{det}^{-1}_{\mathbb{Z}_p}R\Gamma(\mathcal{X}_{\mathbb{Z}_p},\Omega^{<n}_{\mathcal{X}_{\mathbb{Z}_p}/\mathbb{Z}_p})
\right)_{\mathbb{Q}_p}$$
with respect to the given integral structures. The Hodge to de Rham spectral sequence gives the commutative square of isomorphisms
\[ \xymatrix{
\left(\mathrm{det}^{-1}_{\mathbb{Z}_p}R\Gamma(\mathcal{X}_{\mathbb{Z}_p},p(n)\cdot\Omega^{<n}_{\mathcal{X}_{\mathbb{Z}_p}/\mathbb{Z}_p})
\right)_{\mathbb{Q}_p}\ar[r]\ar[d]&\left(\mathrm{det}^{-1}_{\mathbb{Z}_p}R\Gamma(\mathcal{X}_{\mathbb{Z}_p},\Omega^{<n}_{\mathcal{X}_{\mathbb{Z}_p}/\mathbb{Z}_p})
\right)_{\mathbb{Q}_p} \ar[d]
\\
\left(\bigotimes_{i<n;j} \mathrm{det}^{{(-1)}^{i+j+1}}_{\mathbb{Z}_p}H^j(\mathcal{X}_{\mathbb{Z}_p},p^{n-i}\cdot \Omega^{i}_{\mathcal{X}_{\mathbb{Z}_p}/\mathbb{Z}_p}) \right)_{\mathbb{Q}_p}\ar[r]& \left(\bigotimes_{i<n;j} \mathrm{det}^{{(-1)}^{i+j+1}}_{\mathbb{Z}_p}H^j(\mathcal{X}_{\mathbb{Z}_p},\Omega^{i}_{\mathcal{X}_{\mathbb{Z}_p}/\mathbb{Z}_p}) \right)_{\mathbb{Q}_p}
}
\]
where the vertical maps identify the given latices and the horizontal maps are the obvious identifications. Hence  $d_p(\mathcal{X},n)$ is the determinant of the lower horizontal map.

In view of $$H^j(\mathcal{X}_{\mathbb{Z}_p},\Omega^i_{\mathcal{X}_{\mathbb{Z}_p}/\mathbb{Z}_p})\simeq H^j(\mathcal{X},\Omega^i_{\mathcal{X}/\mathbb{Z}})\otimes_{\mathbb{Z}}\mathbb{Z}_p$$ and  since $H^j(\mathcal{X},\Omega^i_{\mathcal{X}/\mathbb{Z}})$ is a finitely generated $\mathbb{Z}$-module, one may suppose
that $H^j(\mathcal{X}_{\mathbb{Z}_p},\Omega^i_{\mathcal{X}_{\mathbb{Z}_p}/\mathbb{Z}_p})$ is a finitely generated free $\mathbb{Z}_p$-module for any $i,j$. It follows that $d_p(\mathcal{X},n)^{-1}$ is the determinant of the isomorphism
$$\left(\bigotimes_{i<n;j} \mathrm{det}^{{(-1)}^{i+j}}_{\mathbb{Z}_p} p^{n-i}\cdot H^j(\mathcal{X}_{\mathbb{Z}_p},\Omega^{i}_{\mathcal{X}_{\mathbb{Z}_p}/\mathbb{Z}_p}) \right)_{\mathbb{Q}_p}\stackrel{\sim}{\rightarrow}\left(\bigotimes_{i<n;j} \mathrm{det}^{{(-1)}^{i+j}}_{\mathbb{Z}_p} H^j(\mathcal{X}_{\mathbb{Z}_p},\Omega^{i}_{\mathcal{X}_{\mathbb{Z}_p}/\mathbb{Z}_p})\right)_{\mathbb{Q}_p}$$
and we obtain
$$d_p(\mathcal{X},n)=p^{-\sum_{i\leq n, j}(-1)^{i+j} \cdot (n-i)\cdot \mathrm{rank}_{\mathbb{Z}_p}H^j(\mathcal{X}_{\mathbb{Z}_p},\Omega^i_{\mathcal{X}_{\mathbb{Z}_p}/\mathbb{Z}_p})}.$$
Moreover, since $H^j(\mathcal{X}_{\mathbb{Z}_p},\Omega^i_{\mathcal{X}_{\mathbb{Z}_p}/\mathbb{Z}_p})$ is a free $\mathbb{Z}_p$-module for any $i,j$,  we have (see for example \cite{Illusie96} Proposition 6.6)
 $$H^j(\mathcal{X}_{\mathbb{Z}_p},\Omega^i_{\mathcal{X}_{\mathbb{Z}_p}/\mathbb{Z}_p})\otimes_{\mathbb{Z}_p}\mathbb{F}_p\simeq H^j(\mathcal{X}_{\mathbb{F}_p},\Omega^i_{\mathcal{X}_{\mathbb{F}_p}/\mathbb{F}_p}).$$
The result follows.
\end{proof}

\subsection{The main conjecture} Let $\mathcal{X}$ be a regular proper arithmetic scheme. We assume that $\mathcal{X}$ satisfies $\mathbf{L}(\overline{\mathcal{X}}_{et},n)$, $\mathbf{L}(\overline{\mathcal{X}}_{et},d-n)$, $\mathbf{AV}(\overline{\mathcal{X}}_{et},n)$, $\mathbf{B}(\mathcal{X},n)$ and $\mathbf{D}_p(\mathcal{X},n)$ for any prime number $p$. Moreover, we assume that $\mathbf{R}(\mathbb{F}_p,\mathrm{dim}(\mathcal{X}_{\mathbb{F}_p}))$ holds at the primes $p$ where $\mathcal{X}^{\mathrm{red}}_{\mathbb{F}_p}$ is singular. We suppose that
$$\zeta(\mathcal{X},s)=\Prod_{x\in\mathcal{X}_0}\dfrac{1}{1-N(x)^{-s}},$$
which converges for $\mathrm{Re}(s)>\mathrm{dim}(\mathcal{X})$,
has a meromorphic contination to the whole complex plane. We denote by $\mathrm{ord}_{s=n}\zeta(\mathcal{X},s)\in\bz$ its vanishing order and by $\zeta^*(\mathcal{X},n)\in\br$ its leading Taylor coefficient at $s=n$.
\begin{conj}\label{conj-vanishingorder} For any $n\in\bz$
$$ \mathrm{ord}_{s=n}\zeta(\mathcal{X},s)=\sum_{i\in\mathbb{Z}}(-1)^i\cdot i\cdot\mbox{\mbox{\emph{dim}}}_{\mathbb{R}}H^i_{\mathrm{\Ar,c}}(\mathcal{X},\tr(n)).$$
\end{conj}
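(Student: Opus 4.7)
The plan is to reduce Conjecture \ref{conj-vanishingorder} to the Beilinson-type vanishing-order formulas for the motives $h^i(\X_\bq)$ of the generic fibre, and then reassemble. First, unwinding the direct-sum definition $R\Gamma_{\Ar,c}(\X,\tr(n)):=R\Gamma_c(\X,\br(n))\oplus R\Gamma_c(\X,\br(n))[-1]$ from Definition \ref{r-rgc-def} and reindexing, the right-hand side collapses to
\[
\sum_{i}(-1)^i\cdot i\cdot\dim_\br H^i_{\Ar,c}(\X,\tr(n))\;=\;-\sum_i(-1)^i\dim_\br H^i_c(\X,\br(n))\;=\;-\chi\bigl(R\Gamma_c(\X,\br(n))\bigr),
\]
which is finite by Conjecture $\mathbf{B}(\X,n)$. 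Thus the statement becomes the identification of this secondary Euler characteristic with $\mathrm{ord}_{s=n}\zeta(\X,s)$, and as a sanity check one verifies it in the known cases: for $\X=\Spec(\co_F)$, using the localisation triangle \eqref{rgc} one computes $-\chi=r_1+r_2-1$ at $n=0$ and $-\chi=-1$ at $n=1$, matching the classical pole/zero orders of $\zeta_F$; and for $\X$ smooth proper over $\bF_p$ with Parshin's conjecture in force, $-\chi=-\dim_\br CH^n(\X)_\br$, matching Lichtenbaum--Geisser.

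Next I would write, up to Euler factors at bad primes and archimedean $\Gamma$-factors which are finite and nonzero at $s=n$,
\[
\zeta(\X,s)\;\sim\;\prod_{i=0}^{2(d-1)}L\bigl(h^i(\X_\bq),s\bigr)^{(-1)^i},
\]
via the Leray-type decomposition of the Hasse--Weil zeta function of the generic fibre. For each $i$ I would invoke the Beilinson/Bloch--Kato/Fontaine--Perrin-Riou vanishing-order formula
\[
\mathrm{ord}_{s=n}L\bigl(h^i(\X_\bq),s\bigr)=\dim_\br H^1_f\bigl(\bq,h^i(\X_\bq)(n)\bigr)^*-\dim_\br H^0_f\bigl(\bq,h^i(\X_\bq)(n)\bigr),
\]
and translate each term back into motivic cohomology with $\br$-coefficients of $\X_\bq$ via the usual comparison between the finite part of Galois cohomology and motivic/Deligne cohomology. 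Summing over $i$ with alternating signs, the resulting expression becomes an alternating sum of dimensions of $H^j(\X,\br(n))$ and of Deligne cohomology $H^j_\cd(\X_{/\br},\br(n))$ in the non-central range; the central and near-central contributions are handled by Conjecture $\mathbf{B}(\X,n)$ together with the perfect pairing \eqref{rpairing}, the decomposition \eqref{nearcentral}, and the duality of Lemma \ref{deligneduality}. The defining triangle \eqref{rgc} then bundles the various alternating sums into $-\chi(R\Gamma_c(\X,\br(n)))$, as required.

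The main obstacle is that this is really a reduction to other deep conjectures: both the Bloch--Kato vanishing-order formula for each $L(h^i(\X_\bq),s)$ and the matching of the bad-prime Euler factors of $\zeta(\X,s)$ with those predicted by the motives $h^i(\X_\bq)$ are open in general. In the setting where $\X$ is smooth over the ring of integers of a number field, the cleanest route is to bypass this direct argument and instead deduce Conjecture \ref{conj-vanishingorder} from the compatibility with the Tamagawa number conjecture established in section \ref{sec:compatibility}, which packages the vanishing-order and leading-coefficient statements together; Conjecture \ref{conj-vanishingorder} then holds unconditionally in exactly those cases where TNC is known. A direct proof in full generality, independent of Beilinson's conjectures on orders of $L$-functions, seems out of reach with current techniques.
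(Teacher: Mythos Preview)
The statement you are attempting to prove is Conjecture~\ref{conj-vanishingorder}, and it is stated in the paper precisely as a \emph{conjecture}: the paper offers no proof and does not claim one. So there is nothing in the paper to compare your argument against, and your proposal should not be read as a proof but at best as a heuristic reduction to other open conjectures.

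That said, several of your observations are correct and worth keeping. Your unwinding of Definition~\ref{r-rgc-def} to
\[
\sum_{i}(-1)^i\, i\,\dim_\br H^i_{\Ar,c}(\X,\tr(n))=-\chi\bigl(R\Gamma_c(\X,\br(n))\bigr)
\]
is right, and your sanity checks for $\Spec(\co_F)$ agree with the computations in Section~\ref{sec:examples}. However, the body of your argument---writing $\zeta(\X,s)$ as an alternating product of $L(h^i(\X_\bq),s)$ and invoking the Fontaine--Perrin-Riou vanishing-order formula for each factor---is not a proof but a restatement of the conjecture in a different language. The vanishing-order formula you invoke is itself the conjectural ingredient; you have reduced one open statement to another of comparable depth. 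You acknowledge this at the end, but the framing as a ``proof proposal'' is misleading.

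One further correction: you suggest deducing Conjecture~\ref{conj-vanishingorder} from the compatibility with the Tamagawa Number Conjecture in Section~\ref{sec:compatibility}. That section establishes equivalence of Conjecture~\ref{conjmain} (the leading-coefficient statement) with TNC for $\X$ smooth over a number ring, but it does not derive the vanishing-order statement from TNC; the vanishing-order conjecture is a separate input, parallel to the order-of-vanishing part of Beilinson's conjectures, and the paper treats it as such throughout. The only unconditional consequence the paper draws from Conjecture~\ref{conj-vanishingorder} is the proposition immediately following it, about $\ord_{s=n}\zeta(\overline{\X},s)$.
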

We consider the rational number
$$C(\mathcal{X},n):=\Prod_{p<\infty}\mid c_p(\mathcal{X},n)\mid_p
\,\,:=\Prod_{p<\infty}p^{-v_p(c_p(\mathcal{X},n))}$$
where $v_p$ denotes the $p$-adic valuation. Recall from Proposition \ref{prop-lambda-infty} that we have a trivialization
$$\lambda_{\infty}=\lambda_{\infty}(\mathcal{X},n):\mathbb{R} \stackrel{\sim}{\longrightarrow}
\mathrm{det}_{\mathbb{R}}  R\Gamma_{\mathrm{\Ar,c}}(\mathcal{X},\tr(n))
\stackrel{\sim}{\longrightarrow} \Delta(\mathcal{X}/\mathbb{Z},n)\otimes_{\mathbb{Z}}\mathbb{R}$$
induced by cup-product with the fundamental class.

\begin{conj}\label{conjmain}
$$\lambda_{\infty}(\zeta^*(\mathcal{X},n)^{-1}\cdot C(\mathcal{X},n)\cdot\mathbb{Z})= \Delta(\mathcal{X}/\mathbb{Z},n).
$$

\end{conj}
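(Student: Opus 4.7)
The statement is a conjecture, so I cannot hope to prove it unconditionally; however, the natural plan is to verify it in the cases where the Tamagawa number conjecture (TNC) of Bloch–Kato–Fontaine–Perrin-Riou is known. The strategy splits into three phases.

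\textbf{Phase 1: Identification of fundamental lines with $\bq$-coefficients.} First I would show that, at least when $\mathcal{X} \to \Spec(\mathcal{O}_F)$ is smooth proper, there is a canonical isomorphism
\[
\Delta(\mathcal{X}/\bz,n)_\bq \;\cong\; \Delta_f(h(\mathcal{X}_\bq)(n))
\]
with the fundamental line of Fontaine–Perrin-Riou for the motive $M = \bigoplus_{i=0}^{2d-2} h^i(\mathcal{X}_\bq)(n)[-i]$. The two ingredients here are the direct-sum decomposition of Corollary \ref{prop-rational-decompo} (which identifies $R\Gamma_{W,c}(\mathcal{X},\bz(n))_\bq$ with the sum of motivic cohomology of $\mathcal{X}$ and the dual of motivic cohomology in complementary weight) and the comparison of $R\Gamma_{dR}(\mathcal{X}/\bz)/F^n$ with the Hodge-filtered de Rham realization of $M$. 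Then I would verify that the archimedean trivialization $\lambda_\infty$ agrees with the Deligne period-regulator trivialization $\vartheta_\infty$ of \cite{fpr91}; this amounts to chasing the construction of $\lambda_\infty$ through the exact triangle (\ref{tangenttri2}) and the duality (\ref{thetaseq}), and comparing with the six-term Beilinson sequence recalled in Section \ref{beilsec}.

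\textbf{Phase 2: Prime-by-prime comparison of integral structures.} Fix a prime $p$. The TNC predicts that
\[
\vartheta_\infty\bigl(L^*(M,0)^{-1}\cdot\bz_p\bigr) \;=\; \Delta_f(M)\otimes\bz_p
\]
as invertible $\bz_p$-submodules of $\Delta_f(M)_{\bq_p}$, up to the local $L$-factors at $p$ being handled via crystalline cohomology. I would compare the $\bz_p$-lattice coming from $\Delta(\mathcal{X}/\bz,n)\otimes\bz_p$ with the $p$-part of $\Delta_f(M)$, and show that the discrepancy is exactly $|c_p(\mathcal{X},n)|_p = p^{-v_p(c_p(\mathcal{X},n))}$. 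The key input is Conjecture $\mathbf{D}_p(\mathcal{X},n)$: its exact triangle plays the role of the crystalline-syntomic fundamental triangle, relating
\[
R\Gamma_{et}(\mathcal{X}_{\bz_p},\bq_p(n)),\quad R\Gamma_{dR}(\mathcal{X}_{\bq_p}/\bq_p)/F^n,\quad R\Gamma'_{eh}(\mathcal{X}_{\bF_p},\bq_p(n)).
\]
The first is the $p$-adic étale realization whose image in the $\bz_p$-lattice of TNC is controlled by the Bloch–Kato exponential, the second is the Hodge-filtered piece of the fundamental line, and the third computes the local Euler factor $\zeta(\mathcal{X}_{\bF_p},s)$ at $s=n$ via the Geisser–Levine–Milne description of $\nu_r^n$. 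The correcting power $p^{\chi(\mathcal{X}_{\bF_p},\mathcal{O},n)}$ in Definition \ref{ddef} compensates for the shift between $\Omega^{<n}$ and the syntomic twist $p(n)\cdot\Omega^{<n}$, as in the computation of Proposition \ref{prop-cp=1}.

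\textbf{Phase 3: Assembly and verified cases.} Combining Phases 1 and 2 gives the equality of $\bz$-lattices after tensoring with $\bz_p$ for every $p$, and Phase 1 gives it after tensoring with $\br$; by standard descent (a $\bz$-lattice in a finite-dimensional $\bq$-vector space is determined by its $\br$- and $\bz_p$-structures), this yields Conjecture \ref{conjmain}. In this way the conjecture holds whenever TNC is known: for Dirichlet and Artin motives (Burns–Greither, Huber–Kings), for CM elliptic curves, for modular forms at non-critical integers where relevant Iwasawa main conjectures are known, etc.

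\textbf{Main obstacles.} The principal difficulty is Phase 2, and specifically the status of $\mathbf{D}_p(\mathcal{X},n)$ itself: outside the smooth case with $n<p-1$ (where Geisser's comparison with syntomic cohomology \cite{Geisser04a} and Bloch–Esnault–Kerz apply), this requires new results in $p$-adic Hodge theory for semistable and regular schemes — roughly the content of App. B of the paper and the recent work on syntomic cohomology by Nekovar–Niziol and Colmez–Niziol. Secondary but nontrivial obstacles are the finite-generation assumptions $\mathbf{L}(\overline{\mathcal{X}}_{et},n)$ (which subsume Tate and Beilinson–Soulé) and the Beilinson-type duality $\mathbf{B}(\mathcal{X},n)$: these are needed even to make sense of the statement, and at present are known only in very limited ranges.
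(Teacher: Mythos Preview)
Your plan is correct and mirrors the paper's own strategy: Theorem~\ref{equiv-main-tnc} establishes precisely the equivalence you outline, identifying $\Delta(\mathcal{X}/\bz,n)_\bq$ with the Fontaine--Perrin-Riou fundamental line, matching $\lambda_\infty$ with $\vartheta_\infty$, and then comparing $\bz_p$-lattices prime by prime via Conjecture~$\mathbf{D}_p(\mathcal{X},n)$ and Milne's leading-term formula for $\zeta(\mathcal{X}_{\bF_p},s)$. The only point you underemphasize is that the local comparison at $p$ requires the additional hypothesis that $1-\phi$ act semisimply at zero on $D_{cris,\mathfrak{p}}(V^i_p(n))$ (Lemma~\ref{conj-ss}), which is needed to reconcile the two natural trivializations of $\det_{\bq_p} C_{cris,\mathfrak{p}}(V^i_p(n))$.
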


We draw the following immediate consequence of Conjecture \ref{conj-vanishingorder}. The definition of $\zeta(\X_\infty,s)$ will be recalled in the proof.

\begin{prop} Conjecture \ref{conj-vanishingorder} implies that
\begin{equation} \ord_{s=n}\zeta(\overline{\X},s)=\sum_{i\in\bz}(-1)^i\cdot i\cdot\dim_\br H^{i}_{\Ar}(\overline{\mathcal{X}},\tr(n))\notag\end{equation}
where
$$\zeta(\overline{\X},s)=\zeta(\X,s)\zeta(\X_\infty,s)$$
is the {\em completed} Zeta-function of $\X$.
\end{prop}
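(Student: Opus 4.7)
The plan is to reduce the claim to Conjecture \ref{conj-vanishingorder} plus a standard identity between the order of vanishing of the Gamma factor $\zeta(\X_\infty,s)$ and a Hodge-theoretic Euler characteristic.

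First I would apply the direct sum construction of Definition \ref{r-rgc-def} termwise to the middle horizontal row of the diagram (\ref{rgc2}), i.e.\ to the exact triangle
\[ R\Gamma_c(\X,\br(n))\to R\Gamma(\overline{\X},\br(n))\to R\Gamma(\X_\infty,\br(n))\to, \]
to obtain an exact triangle of perfect complexes of $\br$-vector spaces
\[ R\Gamma_{\Ar,c}(\X,\tr(n))\to R\Gamma_{\Ar}(\overline{\X},\tr(n))\to R\Gamma_{\Ar}(\X_\infty,\tr(n))\to, \]
where $R\Gamma_{\Ar}(\X_\infty,\tr(n)):=R\Gamma(\X_\infty,\br(n))\oplus R\Gamma(\X_\infty,\br(n))[-1]$ by the Arakelov convention of Definition \ref{r-rgc-def}. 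Exactness is automatic since direct sum with a shifted copy is an exact operation on the bounded derived category.

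Next, for a perfect complex $L$ of $\br$-vector spaces set $\tilde e(L):=\sum_i(-1)^i i\dim_\br H^i(L)$ and $\chi(L):=\sum_i(-1)^i\dim_\br H^i(L)$. A direct calculation shows that for $L=K\oplus K[-1]$ one has
\[ \tilde e(K\oplus K[-1])=\sum_i(-1)^i i\bigl(\dim H^i(K)+\dim H^{i-1}(K)\bigr)=-\chi(K). \]
Applying this to each of the three complexes in the triangle above and using additivity of the ordinary Euler characteristic $\chi$ in distinguished triangles of $\br$-vector spaces, one gets
\[ \tilde e(R\Gamma_{\Ar}(\overline{\X},\tr(n)))=\tilde e(R\Gamma_{\Ar,c}(\X,\tr(n)))+\tilde e(R\Gamma_{\Ar}(\X_\infty,\tr(n))). \]
Combining this with Conjecture \ref{conj-vanishingorder}, which states $\ord_{s=n}\zeta(\X,s)=\tilde e(R\Gamma_{\Ar,c}(\X,\tr(n)))$, and the definition $\zeta(\overline{\X},s)=\zeta(\X,s)\zeta(\X_\infty,s)$, the theorem reduces to the single identity
\[ \ord_{s=n}\zeta(\X_\infty,s)=\tilde e(R\Gamma_{\Ar}(\X_\infty,\tr(n)))=-\chi\bigl(\tau^{\geq 2n}R\Gamma_\cd(\X_{/\br},\br(n))\bigr). \]

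Finally, I would verify this archimedean identity, which is where the only real work lies. By Serre's recipe, $\zeta(\X_\infty,s)$ is a product of $\Gamma_\br$- and $\Gamma_\bc$-factors indexed by the Hodge pairs $(p,q)$ appearing in $H^*(\X(\bc),\br)$, refined by the action of $F_\infty$. Its vanishing order at $s=n$ is a signed count of Hodge components with $p\geq n$ (with an extra parity condition for $\Gamma_\br$-factors). On the other hand, taking $G_\br$-invariants of the exact sequence (\ref{delseq>2n}) expresses $\dim H^i_\cd(\X_{/\br},\br(n))$ for $i\geq 2n$ in terms of the same Hodge data, and assembling the alternating sum for $i\geq 2n$ yields exactly the same expression. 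The main (and only) obstacle is this step: matching the pole-order bookkeeping for $\Gamma$-factors with the Euler characteristic of truncated Deligne cohomology. This is a classical computation going back to Serre and Deligne, which is why the introduction asserts that the result is ``fairly easy'' once Conjecture \ref{conj-vanishingorder} is granted.
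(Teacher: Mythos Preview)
Your proposal is correct and follows essentially the same approach as the paper. The paper uses the same exact triangle, the same reduction to the archimedean identity $\ord_{s=n}\zeta(\X_\infty,s)=-\chi(\tau^{\geq 2n}R\Gamma_\cd(\X_{/\br},\br(n)))$, and then carries out explicitly the Hodge-number computation you sketch in your final paragraph; your identity $\tilde e(K\oplus K[-1])=-\chi(K)$ is exactly what the paper uses (written there as $\sum_i(-1)^i i\dim H^i_{\Ar}(\X_\infty,\tr(n))=\sum_i(-1)^{i+1}\dim H^i(\X_\infty,\br(n))$).
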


\begin{proof} In view of definition \ref{r-rgc-def} the middle horizontal triangle in (\ref{rgc2}) gives an exact triangle
\[R\Gamma_{\Ar,c}(\X,\tr(n)) \to R\Gamma_\Ar(\overline{\X},\tr(n)) \to R\Gamma_\Ar(\X_\infty,\tr(n)) \to\]
and it suffices to show that
\begin{align*}\mathrm{ord}_{s=n}\zeta(\mathcal{X}_\infty,s)=&\sum_{i\in\mathbb{Z}}(-1)^i\cdot i\cdot \dim_\br H^i_{\mathrm{\Ar}}(\mathcal{X}_\infty,\tr(n))=\sum_{i\in\bz}(-1)^{i+1}\cdot \dim_\br H^i(\mathcal{X}_\infty,\br(n))\\
=&\sum_{i\geq 2n}(-1)^{i+1}\cdot \dim_\br H^i_\cd(\mathcal{X}_{/\br},\br(n))\\
=&\sum_{i\geq 2n}(-1)^{i+1}\cdot \left(\dim_\br H^i(\X(\bc),\br(n))^{G_\br} - \dim_\br (H^i(\X(\bc),\bc)/F^n)^{G_\br}\right) \end{align*}
where we have used the definition (\ref{rgxinftydef}) and the exact sequence (\ref{delseq>2n}). Denoting by $H^i(\X(\bc),\bc)\cong\bigoplus_{p+q=i}H^{p,q}$ the Hodge
decomposition and by
\[h^{p,q}=\dim_{\bc}H^{p,q};\quad h^{p,\pm}=\dim_{\bc}(H^{p,p})^{F_\infty=\pm
(-1)^p}\]
the Hodge numbers we have
\[ \dim_\br (H^i(\X(\bc),\bc)/F^n)^{G_\br}=\dim_\bc H^i(\X(\bc),\bc)/F^n= \sum_{p<n}h^{p,q}\]
and
\begin{align*}\dim_\br H^i(\X(\bc),\br(n))^{G_\br}=&\dim_\br H^i(\X(\bc),\br)^{F_\infty=(-1)^n}=\dim_\bc H^i(\X(\bc),\bc)^{F_\infty=(-1)^n}\\
=&\sum_{p<q}h^{p,q}+h^{\frac{i}{2},(-1)^{n-i/2}}  \end{align*}
since $F_\infty(H^{p,q})=H^{q,p}$. Here $h^{\frac{i}{2},\pm}=0$ if $\frac{i}{2}\notin\bz$. For $i\geq 2n$ we have that $p<n$ implies $p<q=i-p$ and hence we must show that
\begin{align*} \mathrm{ord}_{s=n}\zeta(\mathcal{X}_\infty,s)= &\sum_{i\geq 2n}(-1)^{i+1} \left(\sum_{n\leq p<q}h^{p,q}+h^{\frac{i}{2},(-1)^{n-i/2}}\right)\\
=&\sum_{i\in\bz}(-1)^{i+1} \left(\sum_{n\leq p<q}h^{p,q}+\sum_{n\leq \frac{i}{2}}h^{\frac{i}{2},(-1)^{n-i/2}}\right)\end{align*}
where the last identity holds since the sums are empty for $i<2n$. By definition
\begin{equation} \zeta(\mathcal{X}_\infty,s)=\prod_{i\in\bz} L_\infty(h^i(X),s)^{(-1)^i}\label{zetaxinftydef}\end{equation}
where
\[L_\infty(h^i(X),s)=\prod_{p<q}\Gamma_{\bc}(s-p)^{h^{p,q}}\cdot\prod_{p=\frac{i}{2}}
\Gamma_{\br}(s-p)^{h^{p,+}}\Gamma_{\br}(s-p+1)^{h^{p,-}},\]
\[\Gamma_\br(s)=\pi^{-s/2}\Gamma(\frac{s}{2});\quad \Gamma_{\bc}(s)=2(2\pi)^{-s}\Gamma(s)\]
and $X=\X_\bq$ is the generic fibre. Since
\[\ord_{s=n}\Gamma(s)=\begin{cases} -1 & n\leq 0\\ 0 & n\geq 1\end{cases}\]
we find
\[ \ord_{s=n}L_\infty(h^i(X),s)= -\left(\sum_{n\leq p<q}h^{p,q}+\sum_{n\leq \frac{i}{2}}h^{\frac{i}{2},(-1)^{n-i/2}}\right) \]
which proves the proposition.
\end{proof}

\subsection{Compatibility with the Tamagawa Number Conjecture}\label{sec:compatibility}

Let $F$ be a number field and  $$\pi:\mathcal{X}\rightarrow\textrm{Spec}(\mathcal{O}_F)$$ a smooth projective scheme, connected  of dimension $d$. We assume that $\mathcal{X}$ satisfies Conjectures $\textbf{L}(\overline{\mathcal{X}}_{et},n)$, $\textbf{L}(\overline{\mathcal{X}}_{et},d-n)$ and $\textbf{B}(\mathcal{X},n)$ and note that Conjecture $\textbf{AV}(\overline{\mathcal{X}}_{et},n)$ holds by Corollary \ref{cor-AVsmooth} and Conjecture  ${\bf D}_p(\mathcal{X},n)$ by Prop. \ref{psmooth}, so that Conjecture \ref{conjmain} makes sense.

We write $X:=\mathcal{X}\otimes_{\mathcal{O}_F}F$ for the generic fiber of $\mathcal{X}$, a smooth projective variety over $F$ of dimension $d-1$ and we fix a prime number $p$.
By Lemma \ref{invertible} in App. B we have a quasi-isomorphism of complexes of sheaves on $\X[1/p]_{et}$
\[ \bz(n)/p^\bullet\cong \mu_{p^\bullet}^{\otimes n}[0]\]
for any $n\in\bz$. By \cite{deligne94} there is a decomposition in the derived category of $p$-adic sheaves on $\X[1/p]_{et}$
\begin{equation} R\pi'_*\bq_p\cong\bigoplus\limits_{i\in\bz}R^i\pi'_*\bq_p[-i]\label{directsum}\end{equation}
where $\pi':=\pi[1/p]$ and $R^i\pi'_*\bq_p$ is a local system whose generic fibre we denote by
$$V^i_p\cong H^i(\mathcal{X}_{\overline{F}},\mathbb{Q}_p).$$
For the Artin-Verdier etale topos $\overline{\X}_{et}$ studied in App. A and the open subtopos
\[ \psi:\mathcal{X}[1/p]_{et}\xrightarrow{j}\X_{et}\xrightarrow{\phi} \overline{\X}_{et}\]
we define compact support cohomology $R\Gamma_c(\X[1/p],\F):=R\Gamma(\overline{\X}_{et},\psi_!\F)$ in the usual way. We denote by
$$u:\X_{\bF_p,et} \coprod \X_\infty\to\overline{\X}_{et}$$
the complementary closed embedding with components $u_p$ and $u_\infty$. We define the morphism of topoi $\alpha'$ by the factorization $$\alpha:Sh(G_{\mathbb{R}},\mathcal{X}(\mathbb{C}))\xrightarrow{\alpha'} \mathcal{X}[1/p]_{et}\xrightarrow{j}\mathcal{X}_{et}$$
where $\alpha$ was defined in section \ref{sect-AVTopos}.

\begin{lem} For a complex of sheaves $\mathcal{A}$ on $\overline{\X}_{et}$ with torsion cohomology there is a commutative diagram of exact triangles
\[\begin{CD} R\Gamma_c(\X[1/p],\mathcal{A}) @>>> R\Gamma(\X[1/p],\psi^*\mathcal{A}) @>>> R\Gamma(G_\br,\X(\bc),(\alpha')^*\psi^*\mathcal{A})\oplus R\Gamma(\X_{\bq_p},\mathcal{A})\\
\Vert@. @AAA @AAA\\
R\Gamma_c(\X[1/p],\mathcal{A}) @>>> R\Gamma(\overline{\X},\mathcal{A}) @>>> R\Gamma(\X_\infty,u_\infty^*\mathcal{A})\oplus R\Gamma(\X_{\bF_p},u_p^*\mathcal{A})\\
@. @AAA @AAA\\
{} @. R\Gamma_{\X_{\bF_p}\coprod\X_\infty}(\overline{\X},\mathcal{A}) @>\sim >> R\Gamma(\X_\infty,Ru_\infty^!\mathcal{A})\oplus R\Gamma(\X_{\bF_p},Ru_p^!\mathcal{A})
\end{CD}\]
\label{locxbar}\end{lem}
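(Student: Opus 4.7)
The plan is to assemble the diagram from the two basic recollement triangles on $\overline{\mathcal{X}}_{et}$ for the open-closed decomposition $\mathcal{X}[1/p]\coprod Z=\overline{\mathcal{X}}$ with $Z:=\mathcal{X}_\infty\coprod\mathcal{X}_{\mathbb{F}_p}$, and then to identify the stalks of $R\psi_*\psi^*\mathcal{A}$ along $Z$ using the torsion hypothesis. The middle row is obtained by applying $R\Gamma(\overline{\mathcal{X}},-)$ to $\psi_!\psi^*\mathcal{A}\to\mathcal{A}\to u_*u^*\mathcal{A}$; the definition $R\Gamma_c(\mathcal{X}[1/p],\mathcal{A}):=R\Gamma(\overline{\mathcal{X}},\psi_!\psi^*\mathcal{A})$ gives the left-hand term, and the disjoint-union decomposition $u=u_p\coprod u_\infty$ gives the direct-sum splitting of the right-hand term. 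The bottom row is the analogous output of the dual triangle $u_*Ru^!\mathcal{A}\to\mathcal{A}\to R\psi_*\psi^*\mathcal{A}$ combined with the parallel splitting $Ru^!\mathcal{A}=Ru_p^!\mathcal{A}\oplus Ru_\infty^!\mathcal{A}$.

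For the top row and the right-hand vertical maps, I will use that the canonical morphism $\mathcal{A}\to R\psi_*\psi^*\mathcal{A}$ fits into a commutative square with $\psi_!\psi^*\mathcal{A}$ as the common source, producing a map of recollement triangles with identity on the left column. The cofibre of this vertical is concentrated on $Z$, namely $u_*u^*R\psi_*\psi^*\mathcal{A}$, so applying $R\Gamma(\overline{\mathcal{X}},-)$ produces the top row with its third term $R\Gamma(Z,u^*R\psi_*\psi^*\mathcal{A})$, together with a map of triangles down to the middle row induced by the natural transformation $u^*\mathcal{A}\to u^*R\psi_*\psi^*\mathcal{A}$.

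The main obstacle will be the identification of this third term. On $\mathcal{X}_{\mathbb{F}_p}$, proper base change for torsion sheaves along the henselisation of $\mathrm{Spec}(\mathcal{O}_F)$ at the prime $p$ supplies a canonical isomorphism $R\Gamma(\mathcal{X}_{\mathbb{F}_p},u_p^*R\psi_*\psi^*\mathcal{A})\simeq R\Gamma(\mathcal{X}_{\mathbb{Q}_p},\mathcal{A})$, and the ordinary cospecialisation $R\Gamma(\mathcal{X}_{\mathbb{F}_p},u_p^*\mathcal{A})\to R\Gamma(\mathcal{X}_{\mathbb{Q}_p},\mathcal{A})$ then provides the $p$-adic component of the right vertical. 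On $\mathcal{X}_\infty$ the Artin-Verdier construction from Appendix A presents $u_\infty^*R\psi_*\psi^*\mathcal{A}$ as the pushforward along $\pi:Sh(G_{\mathbb{R}},\mathcal{X}(\mathbb{C}))\to Sh(\mathcal{X}_\infty)$ of $(\alpha')^*\psi^*\mathcal{A}$; the torsion hypothesis is precisely what allows one to replace the Tate pushforward $R\widehat{\pi}_*$ (which appears in the definition of $\mathbb{Z}(n)^{\overline{\mathcal{X}}}$) by the genuine pushforward $R\pi_*$, yielding $R\Gamma(\mathcal{X}_\infty,u_\infty^*R\psi_*\psi^*\mathcal{A})\simeq R\Gamma(G_{\mathbb{R}},\mathcal{X}(\mathbb{C}),(\alpha')^*\psi^*\mathcal{A})$. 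Once these identifications are established, commutativity of every square in the diagram follows by naturality of the localisation triangles together with the choice of identity left column.
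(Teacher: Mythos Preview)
Your approach is essentially the paper's: build the diagram at the sheaf level from the two recollement triangles for the open--closed decomposition $\psi:\mathcal{X}[1/p]\hookrightarrow\overline{\mathcal{X}}\hookleftarrow Z=\mathcal{X}_\infty\coprod\mathcal{X}_{\mathbb{F}_p}$, apply $R\Gamma(\overline{\mathcal{X}},-)$, and then identify $u^*R\psi_*\psi^*\mathcal{A}$ on each component of $Z$.

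Two small corrections. First, at $\mathcal{X}_\infty$ the torsion hypothesis plays no role and $R\widehat{\pi}_*$ never enters: Lemma~6.5 gives $u_\infty^*R\phi_*\cong R\pi_*\alpha^*$ directly (for bounded-below complexes), and combining with $\alpha=j\circ\alpha'$ and $j^*Rj_*\cong\mathrm{id}$ yields $u_\infty^*R\psi_*\cong R\pi_*(\alpha')^*$ outright. The Tate pushforward $R\widehat{\pi}_*$ is used elsewhere in the paper to define $\mathbb{Z}(n)^{\overline{\mathcal{X}}}$, but not here. Second, at $\mathcal{X}_{\mathbb{F}_p}$ the paper base-changes to the \emph{completion} $\mathbb{Z}_p$ rather than the henselisation; this is where torsion is genuinely needed, via the Gabber--Fujiwara formal base change theorem (packaged as Lemma~7.9), followed by ordinary proper base change along $\mathcal{X}_{\mathbb{F}_p}\hookrightarrow\mathcal{X}_{\mathbb{Z}_p}$. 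Henselisation alone would only produce cohomology over the henselian local field, and the further identification with $R\Gamma(\mathcal{X}_{\mathbb{Q}_p},\mathcal{A})$ again requires a Gabber--Fujiwara-type input.
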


\begin{proof} This follows from the diagram of complexes of sheaves on $\overline{\X}_{et}$
\[\begin{CD}
\psi_!\psi^*\mathcal{A}@>>> R\psi_*\psi^*\mathcal{A} @>>> u_{\infty,*}u^*_\infty R\psi_*\psi^*\mathcal{A}\oplus u_{p,*}u^*_p R\psi_*\psi^*\mathcal{A}\\
\Vert@. @AAA @AAA\\
\psi_!\psi^*\mathcal{A}@>>> \mathcal{A} @>>> u_{\infty,*}u^*_\infty\mathcal{A}\oplus u_{p,*}u^*_p\mathcal{A}\\
@. @AAA @AAA\\
{} @. u_* Ru^!\mathcal{A} @>\sim>> u_{\infty,*}Ru^!_\infty\mathcal{A}\oplus u_{p,*}Ru^!_p\mathcal{A}
\end{CD}\]
together with the isomorphism
\begin{equation}u^*_\infty R\psi_* \cong u^*_\infty R\phi_*Rj_*\cong R\pi_*\alpha^*Rj_*\cong R\pi_*(\alpha')^*j^*Rj_*\cong R\pi_*(\alpha')^*\label{psicompute}\end{equation}
of Lemma \ref{lem-here0}, the isomorphism
\begin{align*} u^*_p R\psi_*\psi^*\mathcal{A}\cong &\hat{i}^*g^*\phi^* R\psi_*\psi^*\mathcal{A}\cong  \hat{i}^*g^*\phi^* R\phi_*Rj_*\psi^*\mathcal{A}\\
\cong & \hat{i}^*g^*Rj_*\psi^*\mathcal{A}\\
\cong & \hat{i}^*R\hat{j}_*\tilde{g}^*\psi^*\mathcal{A}
\end{align*}
of Lemma \ref{gloclem2} (the notation of which we use) and the proper base change theorem
\[ R\Gamma(\X_{\bF_p},u^*_p R\psi_*\psi^*\mathcal{A})\cong R\Gamma(\X_{\bF_p},\hat{i}^*R\hat{j}_*\tilde{g}^*\psi^*\mathcal{A})
\cong R\Gamma(\X_{\bz_p},R\hat{j}_*\tilde{g}^*\psi^*\mathcal{A})\cong R\Gamma(\X_{\bq_p},\mathcal{A}).\]
\end{proof}

For $n\in\bz$ we set
\[ R\Gamma(\overline{\X},\mathbb{Z}_p(n)):=\mathrm{holim}_\bullet R\Gamma(\overline{\X}_{et},\bz(n)^{\overline{\mathcal{X}}}/p^\bullet),\]
\[ R\Gamma(\X_\infty,\mathbb{Z}_p(n)):=\mathrm{holim}_\bullet R\Gamma(\X_\infty,u_\infty^*\bz(n)^{\overline{\mathcal{X}}}/p^\bullet)\]
and
\[ R\Gamma_{\X_\infty}(\overline{\X}, \bz_p(n)):= \mathrm{holim}_\bullet R\Gamma(\X_\infty, Ru^!_\infty\bz(n)^{\overline{\mathcal{X}}}/p^\bullet)\]
where $\bz(n)^{\overline{\mathcal{X}}}$ is defined in Def. \ref{znxbardef} in App. A. If $Z$ is a scheme we set
\[ R\Gamma(Z,\mathbb{Z}_p(n)):=\mathrm{holim}_\bullet R\Gamma(Z_{et},\bz(n)/p^\bullet)\]
where $\bz(n)$ was defined in section \ref{sect-emc}, i.e. is given by the higher Chow complex for $n\geq 0$.
If $R\Gamma_?(Z,\mathbb{Z}_p(n))$ is any of the complexes just defined we set
\[ R\Gamma_?(Z,\mathbb{Q}_p(n)):=R\Gamma_?(Z,\mathbb{Z}_p(n))_\bq.\]

\begin{lem} There is a commutative diagram with exact rows and columns
\[\minCDarrowwidth1em\begin{CD} R\Gamma_c(\X[1/p],\bz_p(n)) @>>> R\Gamma(\X[1/p],\bz_p(n)) @>>> R\Gamma(\X_{\br},\bz_p(n))\oplus R\Gamma(\X_{\bq_p},\bz_p(n))\\
\Vert@. @AAA @AAA\\
R\Gamma_c(\X[1/p],\bz_p(n)) @>>> R\Gamma(\overline{\X},\bz_p(n)) @>>> R\Gamma(\X_\infty,\bz_p(n))\oplus R\Gamma(\X_{\bz_p},\bz_p(n))\\
@. @AAA @AAA\\
{} @. R\Gamma_{\X_{\bF_p}\coprod\X_\infty}(\overline{\X},\bz_p(n)) @>\sim >> R\Gamma_{\X_\infty}(\overline{\X}, \bz_p(n))\oplus R\Gamma_{\X_{\bF_p}}(\X_{\bz_p},\bz_p(n))
\end{CD}\]
\label{locdiagram}\end{lem}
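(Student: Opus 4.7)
The strategy is to deduce Lemma \ref{locdiagram} from Lemma \ref{locxbar} by applying the latter to the system of torsion complexes $\mathcal{A}_r := \bz(n)^{\overline{\mathcal{X}}}/p^r$ and then taking the homotopy limit over $r$. First I would apply Lemma \ref{locxbar} with $\mathcal{A} = \mathcal{A}_r$, which is legitimate because $\mathcal{A}_r$ has torsion cohomology. This yields, for each $r$, a commutative diagram of exact triangles whose rows and columns match the statement of Lemma \ref{locdiagram} but with $\bz_p(n)$ replaced by the mod $p^r$ version. Since the transition maps for varying $r$ are compatible (the whole construction of Lemma \ref{locxbar} is functorial in $\mathcal{A}$) and $\mathrm{holim}_r$ preserves exact triangles, applying $\mathrm{holim}_r$ produces a diagram of the same shape.

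Second, I would identify the terms of the resulting diagram with those appearing in the statement. The key identifications are as follows. On the open part $\psi^*\mathcal{A}_r \cong \mu_{p^r}^{\otimes n}[0]$ on $\mathcal{X}[1/p]_{et}$ by Proposition \ref{prop-comp} together with $\bz(n)/p^r \cong \mu_{p^r}^{\otimes n}$ (invertibility of $p$), and hence $\mathrm{holim}_r R\Gamma(\mathcal{X}[1/p],\psi^*\mathcal{A}_r) = R\Gamma(\mathcal{X}[1/p],\bz_p(n))$; compact supports are dealt with analogously. For the $\mathcal{X}_\infty$ entry, the isomorphism \eqref{psicompute} and the same comparison of $(\alpha')^*\psi^*\mathcal{A}_r$ with the constant equivariant sheaf $(2\pi i)^n\bz/p^r$ on $\mathcal{X}(\bc)$ produce $R\Gamma(\mathcal{X}_\infty,\bz_p(n))$ in the limit, matching our definition. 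For the $\mathcal{X}_{\bq_p}$ entry, the identification already carried out at the end of the proof of Lemma \ref{locxbar} via Lemma \ref{gloclem2} and proper base change gives $R\Gamma(\mathcal{X}_{\bq_p},\bz_p(n))$. For the local entries, one has $u_p^*\mathcal{A}_r \cong \hat{i}^*\bz(n)/p^r$ on $\mathcal{X}_{\bF_p}$ via $\phi^*\bz(n)^{\overline{\X}} \cong \bz(n)$ on $\mathcal{X}_{et}$ (Appendix A) followed by restriction along the closed immersion $\hat{i}:\mathcal{X}_{\bF_p} \hookrightarrow \mathcal{X}_{\bz_p}$, and similarly $Ru_p^!\mathcal{A}_r \cong R\hat{i}^!\bz(n)/p^r$, yielding $R\Gamma(\mathcal{X}_{\bz_p},\bz_p(n))$ and $R\Gamma_{\mathcal{X}_{\bF_p}}(\mathcal{X}_{\bz_p},\bz_p(n))$ respectively; the decomposition of the local cohomology term follows from the disjoint union of closed strata.

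Third, I would check that the bottom row is an isomorphism in the limit. This is immediate from the corresponding isomorphism at each finite level (the right-hand column of Lemma \ref{locxbar}) together with the fact that $\mathrm{holim}_r$ of an isomorphism is an isomorphism.

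The main obstacle is the verification that the sheaf identifications $u_p^*\bz(n)^{\overline{\mathcal{X}}} \simeq \hat{i}^*\bz(n)$ and $Ru_p^!\bz(n)^{\overline{\mathcal{X}}} \simeq R\hat{i}^!\bz(n)$ hold in the derived category of $p$-adic sheaves and are compatible with the maps in Lemma \ref{locxbar}; these rely on a careful use of the construction of $\bz(n)^{\overline{\mathcal{X}}}$ and its behavior under $\phi^*$ from Appendix A. Once these local comparisons are in place, the diagram of Lemma \ref{locdiagram} is nothing other than $\mathrm{holim}_r$ of the diagram of Lemma \ref{locxbar} with $\mathcal{A}=\mathcal{A}_r$.
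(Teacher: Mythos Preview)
Your overall strategy is exactly the paper's: apply Lemma \ref{locxbar} with $\mathcal{A}=\bz(n)^{\overline{\mathcal{X}}}/p^\bullet$ and pass to the homotopy limit. Most of your identifications are fine, but there is one genuine gap.

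The problematic step is your identification of the middle-right entry $R\Gamma(\X_{\bF_p},u_p^*\mathcal{A}_r)$ with $R\Gamma(\X_{\bz_p},\bz_p(n))$. You write that $u_p^*\mathcal{A}_r\cong \hat{i}^*\bz(n)/p^r$ follows from $\phi^*\bz(n)^{\overline{\X}}\cong\bz(n)$ on $\X_{et}$ ``followed by restriction along $\hat{i}$''. But $u_p$ factors as $\X_{\bF_p}\xrightarrow{\hat{i}}\X_{\bz_p}\xrightarrow{g}\X\xrightarrow{\phi}\overline{\X}$, so what you actually obtain is $u_p^*\mathcal{A}_r\cong \hat{i}^*g^*(\bz(n)_\X/p^r)$, where $\bz(n)_\X$ is the cycle complex on $\X$. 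To reach $\hat{i}^*(\bz(n)_{\X_{\bz_p}}/p^r)$, and hence (via proper base change) $R\Gamma(\X_{\bz_p},\bz(n)/p^r)$, you need to know that the flat pullback
\[
g^*\bz(n)_\X/p^\bullet\longrightarrow \bz(n)_{\X_{\bz_p}}/p^\bullet
\]
is a quasi-isomorphism. This is Lemma \ref{gloclem}, whose proof is not formal: it relies on the Gabber--Fujiwara formal base change theorem (via Lemma \ref{gloclem2}). Without it the diagram does not match the stated one. The paper's proof invokes precisely proper base change together with Lemma \ref{gloclem} at this point.

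Two minor corrections: the reference for $\phi^*\bz(n)^{\overline{\X}}\cong\bz(n)$ should be Proposition \ref{propstalk}, not Proposition \ref{prop-comp}; and for the top-right archimedean term you should invoke Artin's comparison theorem to pass from $R\Gamma(G_\br,\X(\bc),(\alpha')^*\mu_{p^\bullet}^{\otimes n})$ to $R\Gamma(\X_{\br,et},\mu_{p^\bullet}^{\otimes n})$, which is what is meant by $R\Gamma(\X_\br,\bz_p(n))$ in the statement.
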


\begin{proof} We apply Lemma \ref{locxbar} with $\mathcal{A}=\bz(n)^{\overline{\mathcal{X}}}/p^\bullet$. By Prop. \ref{propstalk} there is an isomorphism
\begin{equation} \psi^*\bz(n)^{\overline{\mathcal{X}}}/p^\bullet\cong j^*\phi^*\bz(n)^{\overline{\mathcal{X}}}/p^\bullet\cong \bz(n)/p^\bullet\cong \mu_{p^\bullet}^{\otimes n}.\label{pullbackcompute}\end{equation}
By Artin's comparison isomorphism we have
\begin{align*} R\Gamma(G_\br,\X(\bc),(\alpha')^*\mu_{p^\bullet}^{\otimes n})\cong &R\Gamma(G_\br,R\Gamma(\X(\bc),(\alpha')^*\mu_{p^\bullet}^{\otimes n}))\cong R\Gamma(G_\br,R\Gamma(\X_{\bc,et},\mu_{p^\bullet}^{\otimes n}))\\\cong &R\Gamma(\X_{\br,et},\mu_{p^\bullet}^{\otimes n}).\end{align*}
By proper base change and Lemma \ref{gloclem} we get an isomorphism
\begin{align*} R\Gamma(\X_{\bF_p},u^*_p\bz(n)^{\overline{\mathcal{X}}}/p^\bullet )\cong &R\Gamma(\X_{\bF_p},\hat{i}^*g^*\phi^*\bz(n)^{\overline{\mathcal{X}}}/p^\bullet)
\cong R\Gamma(\X_{\bz_p},g^*\bz(n)/p^\bullet)\\\cong &R\Gamma(\X_{\bz_p},\bz(n)/p^\bullet)\end{align*}
where we use the notation of Lemma \ref{gloclem2} with $D=\bz$.
\end{proof}

\begin{prop} \label{lem-Gammaf-1} There is an isomorphism of exact triangles
\[\minCDarrowwidth1em\begin{CD}
R\Gamma_c(\X[1/p],\bq_p(n)) @>>> R\Gamma(\overline{\X},\bq_p(n)) @>>> R\Gamma(\X_\infty,\bq_p(n))\oplus R\Gamma(\X_{\bz_p},\bq_p(n))\\
@VVV @VVV @VVV\\
\bigoplus\limits_{i\in\bz} R\Gamma_c(\mathcal{O}_F[\frac{1}{p}], V^i_p(n))[-i]@>>> \bigoplus\limits_{i\in\bz} R\Gamma_{f}(F, V^i_p(n))[-i]@>>>
\bigoplus\limits_{i\in\bz} (R\Gamma_f(F_\br,V^i_p(n))\oplus R\Gamma_f(F_{\bq_p}, V^i_p(n)))[-i]
\end{CD}\]
where the upper row is the middle row in Lemma \ref{locdiagram} tensored with $\bq$, the lower exact triangles are defined as in \cite{fon92} for the $p$-adic representation $V_p^i(n)$ and the outer vertical isomorphisms are induced by the decomposition (\ref{directsum}).
\end{prop}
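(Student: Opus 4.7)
The plan is to identify each term in the upper row with the corresponding direct summand in the lower row, using the Leray spectral sequence together with Deligne's decomposition (\ref{directsum}) for the smooth proper morphism $\pi$, and then to match the connecting morphisms against the defining triangle of the Fontaine-Perrin-Riou Selmer groups.

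First I would identify the outer terms. For $R\Gamma_c(\X[1/p],\bq_p(n))$, the Leray spectral sequence for $\pi':\X[1/p]\to\Spec(\mathcal{O}_F[1/p])$ together with (\ref{directsum}) degenerates and yields
\[ R\Gamma_c(\X[1/p],\bq_p(n))\cong\bigoplus_{i\in\bz} R\Gamma_c(\Spec(\mathcal{O}_F[1/p])_{et},R^i\pi'_*\bq_p(n))[-i], \]
and by smooth proper base change the lisse sheaf $R^i\pi'_*\bq_p$ has generic fibre $V^i_p$ (unramified outside $S=\{v\mid p\infty\}$), so this identifies with $\bigoplus_i R\Gamma_c(\mathcal{O}_F[1/p],V^i_p(n))[-i]$ in the sense of \cite{fon92}, since both are by definition mapping fibres of the natural map from global to sum of local Galois cohomologies at $S$. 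For $R\Gamma(\X_\infty,\bq_p(n))$, the isomorphism (\ref{psicompute}) combined with Artin's comparison gives $R\Gamma(\X_\infty,\bq_p(n))\cong R\Gamma(G_\br,R\Gamma(\X_{\bc,et},\bq_p(n)))$, and (\ref{directsum}) applied over $\bc$ produces $\bigoplus_i R\Gamma(G_\br,V^i_p(n))[-i]=\bigoplus_i R\Gamma_f(F_\br,V^i_p(n))[-i]$ by the archimedean convention of Bloch-Kato.

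The main obstacle is the $p$-adic local term $R\Gamma(\X_{\bz_p},\bq_p(n))$. The Leray spectral sequence for the smooth proper morphism $\X_{\bz_p}\to\Spec(\bz_p)$ together with (\ref{directsum}) again yields a decomposition
\[ R\Gamma(\X_{\bz_p},\bq_p(n))\cong\bigoplus_{i\in\bz} R\Gamma(\Spec(\bz_p)_{et},R^i\pi_{p,*}\bq_p(n))[-i], \]
and one must identify each summand with $R\Gamma_f(F_{\bq_p},V^i_p(n))$. This is precisely the content of Appendix B: it combines the Fontaine-Messing / Niziol comparison between $p$-adically completed motivic cohomology of $\X_{\bz_p}$ and syntomic cohomology, with the Bloch-Kato theorem identifying the crystalline Selmer condition $H^1_f(F_{\bq_p},V^i_p(n))$ with the image of syntomic cohomology in étale cohomology. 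The smoothness and properness of $\pi$ ensure that the representations $V^i_p$ are crystalline at places above $p$, as required.

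Finally, $R\Gamma(\overline{\X},\bq_p(n))$ sits as the middle term in the tensored triangle of Lemma \ref{locdiagram}, while $\bigoplus_i R\Gamma_f(F,V^i_p(n))[-i]$ is the middle term of the exact triangle
\[ R\Gamma_c(\mathcal{O}_F[1/p],V)\to R\Gamma_f(F,V)\to R\Gamma_f(F_\br,V)\oplus R\Gamma_f(F_{\bq_p},V)\to \]
obtained by the octahedral axiom from the defining triangles of $R\Gamma_c$ and $R\Gamma_f$ in \cite{fpr91}. Once the outer vertical maps have been constructed as above, and one verifies by naturality of cycle class maps, proper base change, and the syntomic-étale comparison of Appendix B that the connecting morphisms of the two triangles coincide, the triangulated five lemma supplies a (non-canonical) middle vertical isomorphism and the commutativity of the whole diagram. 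The technical heart of the argument is the $p$-adic local identification in the previous step.
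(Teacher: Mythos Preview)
Your outline is essentially the same as the paper's proof, and the overall architecture (identify the outer vertical maps via the decomposition (\ref{directsum}) and Appendix~B, then produce the middle map by checking compatibility of connecting morphisms) is correct. Two points deserve tightening.

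First, for the archimedean term you invoke (\ref{psicompute}), but that isomorphism computes $u_\infty^* R\psi_*$, i.e.\ the top-right entry of Lemma~\ref{locdiagram}, not $R\Gamma(\X_\infty,\bq_p(n)):=\mathrm{holim}\,R\Gamma(\X_\infty,u_\infty^*\bz(n)^{\overline{\X}}/p^\bullet)$ directly. The passage from the latter to $R\Gamma(\X_\br,\bq_p(n))$ uses instead that $R\Gamma_{\X_\infty}(\overline{\X},\bz_p(n))$ has $2$-torsion cohomology by (\ref{shriek}), hence vanishes after tensoring with $\bq$; this is how the paper argues.

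Second, your Leray argument for $R\Gamma(\X_{\bz_p},\bq_p(n))$ is not quite right as written: the complex $\bz(n)/p^\bullet$ on $\X_{\bz_p}$ is \emph{not} $\mu_{p^\bullet}^{\otimes n}$ (since $p$ is not invertible), so there is no lisse sheaf $R^i\pi_{p,*}\bq_p(n)$ on $\Spec(\bz_p)$ to which Deligne's decomposition applies. The decomposition itself, not merely the identification of the summands with $R\Gamma_f(F_{\bq_p},V^i_p(n))$, is the content of Cor.~\ref{pdecompcor} (via the syntomic description in Prop.~\ref{psmooth}); the paper simply cites that corollary. Finally, the paper verifies the compatibility of connecting maps by factoring through the unrestricted local Galois cohomology $\bigoplus_i R\Gamma(F_\br,V^i_p(n))\oplus R\Gamma(F_{\bq_p},V^i_p(n))$, which makes the commutativity transparent; your appeal to naturality is correct in spirit but this explicit factorization is what actually does the work.
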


\begin{proof} The left vertical map is clearly an isomorphism.  The complex $R\Gamma(\X_\infty, Ru^!_\infty\bz(n)^{\overline{\mathcal{X}}}/p^\bullet)$ has $2$-torsion cohomology by (\ref{shriek}) in App. A. Hence
$$R\Gamma_{\X_\infty}(\overline{\X}, \bq_p(n)):=(\mathrm{holim}_\bullet R\Gamma(\X_\infty, Ru^!_\infty\bz(n)^{\overline{\mathcal{X}}}/p^\bullet))_\bq=0$$
and
\begin{equation}R\Gamma(\X_\infty,\bq_p(n))\cong R\Gamma(\X_{\br},\bq_p(n))\cong \bigoplus\limits_{i\in\bz} R\Gamma(F_\br,V^i_p(n))[-i]\cong \bigoplus\limits_{i\in\bz} R\Gamma_f(F_\br,V^i_p(n))[-i]\notag\end{equation}
is an isomorphism, where $R\Gamma_f(F_\br,V):=R\Gamma(F_\br,V)$ holds by definition \cite{fpr91}.
The isomorphism
\[ R\Gamma(\X_{\bz_p},\bq_p(n))\cong \bigoplus\limits_{i\in\bz} R\Gamma_f(F_{\bq_p}, V^i_p(n))[-i]\]
is the statement of Cor. \ref{pdecompcor} where we take the local decomposition  (\ref{pmotdec}) to be induced by (\ref{directsum}).

The middle vertical map will be an isomorphism if it exists, and existence will follow from commutativity of
 \[\begin{CD}R\Gamma(\X_\infty,\bq_p(n))\oplus R\Gamma(\X_{\bz_p},\bq_p(n))@>>> R\Gamma_c(\X[1/p],\bq_p(n))[1]\\
@V\beta VV @VVV \\
 \bigoplus\limits_{i\in\bz} (R\Gamma_f(F_\br,V^i_p(n))\oplus R\Gamma_f(F_{\bq_p}, V^i_p(n)))[-i]@>>>  \bigoplus\limits_{i\in\bz} R\Gamma_c(\mathcal{O}_F[\frac{1}{p}], V^i_p(n))[-i+1]
\end{CD}.\]
The following diagram commutes
 \[\begin{CD}\bigoplus\limits_{i\in\bz} (R\Gamma(F_\br,V^i_p(n))\oplus R\Gamma(F_{\bq_p}, V^i_p(n)))[-i]@>>>  \bigoplus\limits_{i\in\bz} R\Gamma_c(\mathcal{O}_F[\frac{1}{p}], V^i_p(n))[-i+1]\\
@AAA @AAA\\
 R\Gamma(\X_{\br},\bq_p(n))\oplus R\Gamma(\X_{\bq_p},\bq_p(n))@>>>R\Gamma_c(\X[1/p],\bq_p(n))[1] \\
@AAA \Vert@.\\
R\Gamma(\X_\infty,\bq_p(n))\oplus R\Gamma(\X_{\bz_p},\bq_p(n))@>>> R\Gamma_c(\X[1/p],\bq_p(n))[1]
\end{CD}\]
 since the bottom square is a shift of the commutative diagram in Lemma \ref{locdiagram} and the top square is induced by the decomposition (\ref{directsum}). But the left vertical map factors through $\beta$ which concludes the proof.
\end{proof}

\begin{lem} There are natural isomorphisms
\begin{equation}R\Gamma_W(\overline{\mathcal{X}},\mathbb{Z}(n))_{\mathbb{Z}_p}\simeq \mathrm{holim}_\bullet R\Gamma(\overline{\mathcal{X}}_{et},\bz(n)/p^\bullet) = R\Gamma(\overline{\mathcal{X}},\mathbb{Z}_p(n))\label{xbarzp}\end{equation}
where $R\Gamma_W(\overline{\mathcal{X}},\mathbb{Z}(n))$ was defined in Def. \ref{defn-fg-cohomology},
and
\begin{equation}R\Gamma_W(\mathcal{X}_\infty,\mathbb{Z}(n))_{\mathbb{Z}_p}\simeq \mathrm{holim}_\bullet R\Gamma(\mathcal{X}_\infty,u_\infty^*\bz(n)^{\overline{\X}}/p^\bullet)=R\Gamma(\mathcal{X}_\infty,\mathbb{Z}_p(n))\label{xinftyzp}\end{equation}
where $R\Gamma_W(\X_\infty,\mathbb{Z}(n))$ was defined in Def. \ref{iinftydef}.
\label{zplemma}\end{lem}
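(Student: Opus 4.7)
The plan is to reduce both isomorphisms to a single structural fact, namely that for a perfect complex $C$ of abelian groups there is a canonical isomorphism
\[ C\otimes_\bz\bz_p\stackrel{\sim}{\longrightarrow}\mathrm{holim}_\bullet\,C\otimes^L_\bz\bz/p^\bullet\bz.\]
This is proved via the Milnor short exact sequence for $\mathrm{holim}$ together with the vanishing of $\varprojlim^1$ (by Mittag-Leffler since each $H^i(C\otimes^L\bz/p^\bullet)$ is finite) and the vanishing of the Tate module $T_pA$ of any finitely generated abelian group $A$. Both $R\Gamma_W(\overline{\X},\bz(n))$ and $R\Gamma_W(\X_\infty,\bz(n))$ are perfect complexes of abelian groups: the former by Proposition \ref{finitelygenerated-cohomology}, the latter since the triangle (\ref{test}) exhibits it as a mapping fibre of a map between two complexes with finitely generated cohomology ($G_\br$-equivariant Betti cohomology of a compact manifold, respectively Tate cohomology in degrees $>n$ of a finite $G_\br$-module).

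For the first isomorphism I would then chain the two identifications
\[ R\Gamma_W(\overline{\X},\bz(n))_{\bz_p}\stackrel{\sim}{\to}\mathrm{holim}_\bullet R\Gamma_W(\overline{\X},\bz(n))\otimes^L_\bz\bz/p^\bullet\stackrel{\sim}{\to}\mathrm{holim}_\bullet R\Gamma(\overline{\X}_{et},\bz/p^\bullet(n)),\]
where the second map is obtained by applying $\mathrm{holim}_\bullet$ to the natural isomorphism of Lemma \ref{lem-finitecoef} (with $m=p^\bullet$). By definition the last term equals $R\Gamma(\overline{\X},\bz_p(n))$.

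For the second isomorphism the same outer bookends apply; what needs to be supplied is a quasi-isomorphism
\[ R\Gamma_W(\X_\infty,\bz(n))\otimes^L_\bz\bz/p^\bullet\stackrel{\sim}{\longrightarrow}R\Gamma(\X_\infty,u_\infty^*\bz(n)^{\overline{\X}}/p^\bullet).\]
This is where I would appeal to Appendix A to identify the restriction $u_\infty^*\bz(n)^{\overline{\X}}$ with the complex of sheaves $i_\infty^*\bz(n)$ of Definition \ref{iinftydef}, at least after reducing mod $p^\bullet$. Concretely, under either comparison (\ref{psicompute}) or Proposition \ref{propstalk}, the pullback $u_\infty^*\bz(n)^{\overline{\X}}$ is a mapping fibre of $R\pi_*(2\pi i)^n\bz\to \tau^{>n}R\widehat\pi_*(2\pi i)^n\bz$, which is precisely $i_\infty^*\bz(n)$; tensoring this mapping fibre triangle with $\bz/p^\bullet$ and comparing with the mod-$p^\bullet$ version of the defining triangle of $R\Gamma_W(\X_\infty,\bz(n))$ produces the required identification. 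Taking $\mathrm{holim}_\bullet$ of both sides yields, by the Milnor argument, $R\Gamma_W(\X_\infty,\bz(n))_{\bz_p}\simeq R\Gamma(\X_\infty,\bz_p(n))$.

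The main technical obstacle is the last step: reducing the comparison of $u_\infty^*\bz(n)^{\overline{\X}}/p^\bullet$ to the explicit mapping fibre description. Unwinding the Artin-Verdier functors $u_\infty^*$, $\phi^*$, $\pi_*$ and $\widehat\pi_*$ at the level of complexes of sheaves (rather than only up to quasi-isomorphism) is delicate, in particular because of the $2$-primary behaviour of $\bz(n)^{\overline{\X}}$ near the archimedean places; the rest of the argument is formal.
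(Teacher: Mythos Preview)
Your proposal is correct and follows essentially the same route as the paper. For (\ref{xbarzp}) both you and the paper invoke perfectness plus Lemma~\ref{lem-finitecoef}; for (\ref{xinftyzp}) both reduce mod $p^\bullet$ and compare the defining triangle of $i_\infty^*\bz(n)$ with the $u_\infty^*$-pullback of the localization triangle for $\bz(n)^{\overline{\X}}$, then conclude via perfectness. The paper is only slightly more explicit about which isomorphisms identify the middle and right terms of the two triangles: it uses (\ref{psicompute}) together with (\ref{pullbackcompute}) to identify $u_\infty^*R\psi_*\psi^*\bz(n)^{\overline{\X}}/p^\bullet$ with $R\pi_*(2\pi i)^n\bz/p^\bullet$, and (\ref{shriek}) to identify $Ru_\infty^!\bz(n)^{\overline{\X}}/p^\bullet[1]$ with $\tau^{>n}R\widehat{\pi}_*(2\pi i)^n\bz/p^\bullet$. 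Your reference to Proposition~\ref{propstalk} is slightly off (it only treats $n\geq 0$ and gives $\tau^{\leq n}u_\infty^*R\phi_*\bz(n)$ rather than the mapping fibre directly), but your parenthetical ``at least after reducing mod $p^\bullet$'' is exactly the right caveat, and the $2$-primary issues you flag are precisely what (\ref{shriek}) and (\ref{psicompute}) handle.
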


\begin{proof} The first isomorphism is clear from Lemma \ref{lem-finitecoef} and perfectness of $R\Gamma_W(\overline{\mathcal{X}},\mathbb{Z}(n))$. One has an isomorphism of exact triangles in the derived category of sheaves on $\X_\infty$
\[\begin{CD} i^*_{\infty}\mathbb{Z}(n)/p^\bullet @>>> R\pi_*(2\pi i)^n\mathbb{Z}/p^\bullet @>>> \tau^{>n}R\widehat{\pi}_*(2\pi i)^n\mathbb{Z}/p^\bullet@>>>{}\\
@V\simeq VV @V\simeq VV@V\simeq VV@.\\
u^*_{\infty}\mathbb{Z}(n)^{\overline{\mathcal{X}}}/p^\bullet @>>> u^*_\infty R\psi_*\psi^*\mathbb{Z}(n)^{\overline{\mathcal{X}}} /p^\bullet @>>> Ru_{\infty}^!\mathbb{Z}(n)^{\overline{\mathcal{X}}}/p^\bullet[1]@>>>{}
\end{CD}\]
where the first row is the defining triangle of $i_\infty^*\bz(n)$ modulo $p^\bullet$ and the second row is the localization triangle for $\mathbb{Z}(n)^{\overline{\mathcal{X}}}/p^\bullet$. The right vertical map is an isomorphism by (\ref{shriek}) and the middle by (\ref{psicompute}) and (\ref{pullbackcompute}). The statement then follows again from perfectness of $R\Gamma_W(\X_\infty,\mathbb{Z}(n)):=R\Gamma(\X_\infty,i^*_{\infty}\mathbb{Z}(n))$.
\end{proof}

\begin{defn} For each prime $\mathfrak{p}\mid p$ of $F$ define the two-term complex
\[C_{cris,\mathfrak{p}}(V^i_p(n)):=[ D_{cris,\mathfrak{p}}(V^i_p(n))\stackrel{1-\phi}{\longrightarrow} D_{cris,\mathfrak{p}}(V^i_p(n)) ]\]
\end{defn}

\begin{lem} \label{lem-Gammaf-2}In the situation of Prop. \ref{lem-Gammaf-1}  there is an isomorphism of exact triangles.
\[ \xymatrix{
R\Gamma_{dR}(\mathcal{X}_{\mathbb{Q}_p}/\mathbb{Q}_p)/F^n[-1]\ar[r]\ar[d]
& \bigoplus_i \bigoplus_{\mathfrak{p}\mid p} D_{dR,\mathfrak{p}}(V^i_p(n))/F^0[-i-1] \ar[d]
\\
R\Gamma(\mathcal{X}_{\mathbb{Z}_p}, \mathbb{Q}_p(n))\ar[r]\ar[d]& \bigoplus_i \bigoplus_{\mathfrak{p}\mid p} R\Gamma_{f}(F_{\mathfrak{p}}, V^i_p(n))[-i]\ar[d]
\\
R\Gamma'_{eh}(\mathcal{X}_{\mathbb{F}_p}, \mathbb{Q}_p(n))\ar[r]&\bigoplus_i \bigoplus_{\mathfrak{p}\mid p} C_{cris,\mathfrak{p}}(V^i_p(n))[-i]
}
\]
\end{lem}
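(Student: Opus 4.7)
The plan is to decompose each of the three complexes in the left column into a direct sum over primes $\mathfrak{p}\mid p$ and over cohomological degrees $i$, and then identify each summand with the corresponding piece of Fontaine's fundamental triangle using the standard $p$-adic comparison isomorphisms.

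First I would handle the middle row. Since $\mathcal{X}_{\mathbb{Z}_p} = \coprod_{\mathfrak{p}\mid p}\mathcal{X}_{\mathcal{O}_{F_\mathfrak{p}}}$, one has $R\Gamma_{et}(\mathcal{X}_{\mathbb{Z}_p},\mathbb{Q}_p(n)) = \bigoplus_{\mathfrak{p}} R\Gamma_{et}(\mathcal{X}_{\mathcal{O}_{F_\mathfrak{p}}},\mathbb{Q}_p(n))$, and the further identification with $\bigoplus_i R\Gamma_f(F_\mathfrak{p}, V_p^i(n))[-i]$ is supplied by Corollary \ref{pdecompcor} (already invoked in the proof of Proposition \ref{lem-Gammaf-1}). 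For the top row, the same localization together with smoothness of $\mathcal{X}/\mathcal{O}_F$ gives $R\Gamma_{dR}(\mathcal{X}_{\mathbb{Q}_p}/\mathbb{Q}_p)/F^n = \bigoplus_{\mathfrak{p}} R\Gamma_{dR}(\mathcal{X}_{F_\mathfrak{p}}/F_\mathfrak{p})/F^n$. The Hodge-to-de~Rham spectral sequence degenerates in characteristic zero for smooth projective varieties, yielding $R\Gamma_{dR}(\mathcal{X}_{F_\mathfrak{p}}/F_\mathfrak{p})/F^n \simeq \bigoplus_i H^i_{dR}(\mathcal{X}_{F_\mathfrak{p}}/F_\mathfrak{p})/F^n[-i]$ compatibly with the Hodge filtration, and the Faltings--Tsuji de~Rham comparison identifies $H^i_{dR}(\mathcal{X}_{F_\mathfrak{p}}/F_\mathfrak{p})/F^n$ with $D_{dR,\mathfrak{p}}(V_p^i)/F^n = D_{dR,\mathfrak{p}}(V_p^i(n))/F^0$ via the standard identification of filtrations on a Tate twist; the overall $[-1]$ shift combines with the $[-i]$ shift to give the correct $[-i-1]$.

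For the bottom row, Notation \ref{ehprime} gives $R\Gamma'_{eh}(\mathcal{X}_{\mathbb{F}_p},\mathbb{Q}_p(n)) = \bigoplus_\mathfrak{p} R\Gamma_{et}(\mathcal{X}_{\mathbb{F}_\mathfrak{p}},\mathbb{Q}_p(n))$ since $\mathcal{X}_{\mathbb{F}_\mathfrak{p}}$ is smooth projective. Hochschild--Serre for $G_{\mathbb{F}_\mathfrak{p}} = \widehat{\mathbb{Z}}$ realizes this as a two-term complex $[R\Gamma_{et}(\mathcal{X}_{\overline{\mathbb{F}}_\mathfrak{p}},\mathbb{Q}_p(n)) \xrightarrow{1-F} R\Gamma_{et}(\mathcal{X}_{\overline{\mathbb{F}}_\mathfrak{p}},\mathbb{Q}_p(n))]$, and the decomposition (\ref{directsum}) plus the crystalline comparison (Fontaine--Messing, Faltings) identifies $H^i_{et}(\mathcal{X}_{\overline{\mathbb{F}}_\mathfrak{p}},\mathbb{Q}_p(n))$ with $D_{cris,\mathfrak{p}}(V_p^i(n))$ matching the Frobenius actions, giving the bottom isomorphism after the $[-i]$ shift.

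The main obstacle is the compatibility of these three horizontal isomorphisms with the vertical exact triangles. The right-hand triangle is essentially tautological from the Bloch--Kato--Fontaine definition of $R\Gamma_f(F_\mathfrak{p}, V)$ as the mapping fibre of $(1-\phi,\mathrm{can}): D_{cris,\mathfrak{p}}(V) \to D_{cris,\mathfrak{p}}(V)\oplus D_{dR,\mathfrak{p}}(V)/F^0$. The left-hand triangle is Conjecture $\mathbf{D}_p(\mathcal{X},n)$, which holds here by Proposition \ref{psmooth}. What must be verified is that this left-hand triangle matches Fontaine's one under the three comparison isomorphisms; this is the content of the fundamental exact triangle of syntomic cohomology and its compatibility with the de~Rham and crystalline comparison maps, which is precisely the input going into the construction of $\mathbf{D}_p$ in Appendix B and has been made explicit in the work of Fontaine--Messing, Kato, and (more recently) Nekov\'ar--Nizio{\l}. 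Once that compatibility is invoked at the level of the defining syntomic triangle, the isomorphism of the two exact triangles follows.
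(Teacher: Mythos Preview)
Your overall strategy matches the paper's: everything is reduced to Prop.~\ref{psmooth}, which packages the syntomic description (\ref{syn}) and the motivic decomposition of Cor.~\ref{pdecompcor} for each $\mathfrak{p}\mid p$; the paper's own proof is essentially the one-line citation of Prop.~\ref{psmooth} together with the decomposition $\mathcal{X}^{\mathrm{red}}_{\mathbb{F}_p}=\coprod_{\mathfrak{p}\mid p}\mathcal{X}_{\mathfrak{p}}$.

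However, your argument for the bottom row contains a genuine error. You claim that Hochschild--Serre plus crystalline comparison identifies $H^i_{et}(\mathcal{X}_{\overline{\mathbb{F}}_\mathfrak{p}},\mathbb{Q}_p(n))$ with $D_{cris,\mathfrak{p}}(V_p^i(n))$. This is false: the geometric $p$-adic motivic cohomology $H^i_{et}(\mathcal{X}_{\overline{\mathbb{F}}_\mathfrak{p}},\mathbb{Q}_p(n))$ is, via Geisser--Levine \cite{Geisser-Levine-00}, the cohomology of $W_\bullet\Omega^n_{\log}[-n]$, which only captures the slope-$n$ part of crystalline cohomology, not all of $D_{cris}$. (Already for $n=0$ and $\mathcal{X}_{\mathfrak{p}}$ geometrically connected the left side is $\mathbb{Q}_p$ while $D_{cris,\mathfrak{p}}(V^0_p)=K_{0,\mathfrak{p}}$.) The correct route is the one in the proof of Prop.~\ref{psmooth}: the short exact sequence $0\to W_\bullet\Omega^n_{\log}\to W_\bullet\Omega^n\xrightarrow{1-F}W_\bullet\Omega^n\to 0$ and the slope decomposition of $R\Gamma_{cris}(\mathcal{X}_{\mathfrak{p}}/K_{0,\mathfrak{p}})$ show that $1-\phi_n$ is invertible off slope~$n$, yielding the quasi-isomorphism
\[
R\Gamma_{et}(\mathcal{X}_{\mathfrak{p}},\mathbb{Q}_p(n))\;\cong\;\bigl[R\Gamma_{cris}(\mathcal{X}_{\mathfrak{p}}/K_{0,\mathfrak{p}})\xrightarrow{1-\phi_n}R\Gamma_{cris}(\mathcal{X}_{\mathfrak{p}}/K_{0,\mathfrak{p}})\bigr].
\]
Only \emph{then} does the crystalline comparison $H^i_{cris}(\mathcal{X}_{\mathfrak{p}}/K_{0,\mathfrak{p}})\cong D_{cris,\mathfrak{p}}(V^i_p)$ give the desired $\bigoplus_i C_{cris,\mathfrak{p}}(V^i_p(n))[-i]$. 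With this correction your argument goes through, and the compatibility of the three rows is exactly the content of (\ref{syn}) together with its motivic decomposition, as you indicate in your final paragraph.
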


\begin{proof} This is clear in view of Prop. \ref{psmooth}, the isomorphism
$$R\Gamma_f(F_{\bq_p}, V^i_p(n)))\cong \bigoplus_{\mathfrak{p}\mid p} R\Gamma_{f}(F_{\mathfrak{p}}, V^i_p(n))$$
and the fact that $\mathcal{X}^{\mathrm{red}}_{\mathbb{F}_p}=\Coprod_{\mathfrak{p}\mid p}\mathcal{X}_{\mathfrak{p}}$ is smooth projective over $\bF_p$ where $\mathcal{X}_{\mathfrak{p}}:=\mathcal{X}\otimes_{\mathcal{O}_F}\mathbb{F}_{\mathfrak{p}}$. By definition we have $$R\Gamma'_{eh}(\mathcal{X}_{\mathbb{F}_p},\mathbb{Z}_p(n))=R\Gamma_{et}(\mathcal{X}^{\mathrm{red}}_{\mathbb{F}_p},\mathbb{Z}_p(n)).$$ \end{proof}

Recall that an endomorphism $D\xrightarrow{\psi} D$ of a vector space over $\bq_p$, say, is called semisimple at zero if the map
\[ \bar{\psi}:\ker(\psi)\subseteq D\to\coker(\psi)\]
is an isomorphism. In this case one has a commutative diagram of isomorphisms
\begin{equation}\begin{CD} \det[D\xrightarrow{\psi} D] @>\id_{D,triv}>>\bq_p\\
@VVV @VV\mydet^*(\psi)V\\
\det(\ker(\psi))\otimes\det^{-1}(\coker(\psi)) @>\bar{\psi}_{triv}>>\bq_p
\end{CD}\label{ssdiagram}\end{equation}
where for any isomorphism $f:V\to W$ we denote by $f_{triv}$ the induced isomorphism $\det(V)\otimes\det^{-1}(W)\cong\bq_p$ and
$\mydet^*(\psi)\in\bq_p^\times$ is the determinant of $\psi$ on a complement of $\ker(\psi)$.

\begin{lem} \label{conj-ss} In the situation of Prop. \ref{lem-Gammaf-1}, assume in addition that the complex $C_{cris,\mathfrak{p}}(V^i_p(n))$
is semi-simple at $0$ for any $i$ and any $\mathfrak{p}\mid p$. Then cup-product with the fundamental class $e\in H^1(W_{\mathbb{F}_p},\mathbb{Z})$ gives an acyclic complex
$$\cdots\stackrel{\cup e}{\longrightarrow} H'^{\,i}_{eh}(\mathcal{X}_{\mathbb{F}_p},\mathbb{Q}_p(n))\stackrel{\cup e}{\longrightarrow} H'^{\,i+1}_{eh}(\mathcal{X}_{\mathbb{F}_p},\mathbb{Q}_p(n))\stackrel{\cup e}{\longrightarrow}\cdots$$
and hence a trivialization
$$\cup e:\mathrm{det}_{\mathbb{Q}_p}R\Gamma'_{eh}(\mathcal{X}_{\mathbb{F}_p}, \mathbb{Q}_p(n))\stackrel{\sim}{\longrightarrow} \bq_p.$$
Moreover the square of isomorphisms
\[\begin{CD} \mathrm{det}_{\mathbb{Q}_p} R\Gamma'_{eh}(\mathcal{X}_{\mathbb{F}_p}, \mathbb{Q}_p(n))@>\cup e>> \bq_p\\
@VV\beta_p(\X,n)V \Vert@.\\
\bigotimes\limits_{i, \mathfrak{p}\mid p}\mathrm{det}_{\mathbb{Q}_p}^{(-1)^i}C_{cris,\mathfrak{p}}(V^i_p(n))@>\sigma>> \bq_p
\end{CD}\]
commutes, where $\beta_p(\X,n)$ is induced by the bottom isomorphism in Lemma \ref{lem-Gammaf-2}, and
\[ \sigma=\bigotimes_{i, \mathfrak{p}\mid p}(\overline{(1-\phi)|D_{cris,\mathfrak{p}}(V^i_p(n))})_{triv}^{(-1)^i}.\]
\end{lem}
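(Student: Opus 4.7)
The proof reduces to a local-at-$\mathfrak{p}$, degree-by-degree computation on the two-term crystalline complex, once the correct chain-level model of étale cohomology is identified.

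The plan is to first use the structural decomposition $\mathcal{X}^{\mathrm{red}}_{\mathbb{F}_p} = \coprod_{\mathfrak{p}\mid p}\mathcal{X}_{\mathfrak{p}}$ together with the Deligne decomposition (\ref{directsum}) to split both sides of $\beta_p(\mathcal{X},n)$ as direct sums indexed by pairs $(\mathfrak{p}, i)$. The fundamental class $e \in H^1(W_{\mathbb{F}_p}, \mathbb{Z})$ pulls back to a generator of $H^1(W_{\mathbb{F}_\mathfrak{p}}, \mathbb{Z})$ for each $\mathfrak{p}\mid p$, and cup-product with $e$ respects both decompositions: the $\mathfrak{p}$-splitting because $e$ is pulled back from $\mathrm{Spec}(\mathbb{F}_p)$ and the $i$-splitting because the constructions on both sides are built from the local system $R^i\pi'_*\mathbb{Q}_p$ (respectively $V_p^i$) by operations (Galois cohomology, crystalline comparison) that commute with cup-products. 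This reduces the two assertions to the statement that, for each pair $(\mathfrak{p}, i)$, the cup-product with $e$ on $R\Gamma_{et}(\mathcal{X}_\mathfrak{p}, \mathbb{Q}_p(n))$ (viewed in degrees $i$ and $i+1$) corresponds under $\beta_p(\mathcal{X},n)$ to the tautological shift-by-one map on the two-term complex $C_{cris,\mathfrak{p}}(V_p^i(n))[-i]$.

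Next I would establish this key identification at the level of complexes. The Hochschild-Serre spectral sequence for $\mathcal{X}_{\mathfrak{p}, \bar{\mathbb{F}}_\mathfrak{p}} \to \mathcal{X}_\mathfrak{p}$, combined with the fact that $G_{\mathbb{F}_\mathfrak{p}}\simeq \widehat{\mathbb{Z}}$ has cohomological dimension one, represents $R\Gamma_{et}(\mathcal{X}_\mathfrak{p}, \mathbb{Q}_p(n))$ chain-level by the two-term complex $[R\Gamma(\mathcal{X}_{\mathfrak{p},\bar{\mathbb{F}}_\mathfrak{p}}, \mathbb{Q}_p(n)) \xrightarrow{1-\phi_{\mathrm{geom}}} R\Gamma(\mathcal{X}_{\mathfrak{p},\bar{\mathbb{F}}_\mathfrak{p}}, \mathbb{Q}_p(n))]$, on which cup-product with $e$ is by construction the identity shift. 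After applying (\ref{directsum}) this becomes $\bigoplus_i [V_p^i(n) \xrightarrow{1-\phi_{\mathrm{geom}}} V_p^i(n)][-i]$, and the Fontaine-Messing crystalline comparison (applied fibrewise and compatibly with cup-products) identifies this with $\bigoplus_i C_{cris,\mathfrak{p}}(V_p^i(n))[-i]$, where the shift map becomes the identity shift of the two-term crystalline complex. Passing to cohomology, the induced map $\cup e: H^i_{et}(\mathcal{X}_\mathfrak{p}, \mathbb{Q}_p(n)) \to H^{i+1}_{et}(\mathcal{X}_\mathfrak{p}, \mathbb{Q}_p(n))$ corresponds, on the direct summand coming from $V_p^i$, to the natural map $\bar{(1-\phi)}:\ker(1-\phi\mid D_{cris,\mathfrak{p}}(V_p^i(n))) \to \coker(1-\phi\mid D_{cris,\mathfrak{p}}(V_p^i(n)))$.

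The remaining two assertions are then immediate. Acyclicity of the cup-product sequence decomposes into the assertion that each $\bar{(1-\phi)}$ is an isomorphism, which is exactly the hypothesis of semi-simplicity at $0$ for every $C_{cris,\mathfrak{p}}(V_p^i(n))$. For the commutativity of the trivialization square, both the top trivialization $\cup e$ (obtained from acyclicity of the cup-product sequence) and the bottom trivialization $\sigma$ (obtained by construction) agree summand by summand with the canonical trivialization $(\overline{1-\phi})_{triv}^{(-1)^i}$ of $\det^{(-1)^i}(\ker)\otimes \det^{(-1)^{i+1}}(\coker)$ coming from the isomorphism $\bar{(1-\phi)}$.

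The main technical obstacle will be the chain-level identification in the second step, namely verifying that the continuous-étale model of $R\Gamma_{et}(\mathcal{X}_\mathfrak{p}, \mathbb{Q}_p(n))$ provided by the bottom row of Lemma \ref{lem-Gammaf-2} realizes cup-product with $e$ as the tautological shift on the two-term crystalline complex. This requires being careful with the Frobenius conventions (arithmetic versus geometric, and the Tate twist by $n$), with the passage from the Weil group to the profinite Galois group in $\mathbb{Q}_p$-coefficients, and with the compatibility of Fontaine-Messing's comparison with cup-products in the Weil-étale direction; however, these compatibilities are standard in the Geisser-style framework of \cite{Geisser04b} and follow from the functoriality of the comparison maps.
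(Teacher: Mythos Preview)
Your overall architecture---decompose along $(\mathfrak{p},i)$, model each piece by a two-term complex on which $\cup e$ is the identity shift, then invoke semi-simplicity---is exactly the paper's strategy, and the final two paragraphs are correct. The gap is in your middle step, where you identify the geometric motivic cohomology $R\Gamma(\mathcal{X}_{\mathfrak{p},\bar{\mathbb{F}}_\mathfrak{p}},\mathbb{Q}_p(n))$ with $\bigoplus_i V_p^i(n)[-i]$ via (\ref{directsum}). The decomposition (\ref{directsum}) concerns the $p$-adic sheaf $R\pi'_*\mathbb{Q}_p$ on $\mathcal{X}[1/p]$; it says nothing about the \emph{motivic} complex $\mathbb{Q}_p(n)$ on the special fibre, which for smooth $\mathcal{X}_\mathfrak{p}$ is built from the logarithmic de Rham--Witt sheaf $W_\bullet\Omega^n_{\log}[-n]$, not from $\mu_{p^\bullet}^{\otimes n}$. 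Concretely, for $\mathfrak{p}\mid p$ the representation $V_p^i(n)$ is crystalline but not unramified, so it does not even carry a $G_{\mathbb{F}_\mathfrak{p}}$-action to which ``$1-\phi_{\mathrm{geom}}$'' could refer; and when $f_\mathfrak{p}>1$ the $\mathbb{Q}_p$-dimensions of $V_p^i(n)$ and $D_{cris,\mathfrak{p}}(V_p^i(n))$ differ, so no ``fibrewise Fontaine--Messing'' map can identify the two two-term complexes.

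The paper bypasses this by going straight to crystalline cohomology of the special fibre. One has
\[
R\Gamma'_{eh}(\mathcal{X}_{\mathbb{F}_p},\mathbb{Q}_p(n))\cong R\Gamma\bigl(W_{\mathbb{F}_p},\,R\Gamma_{cris}(\mathcal{X}^{\mathrm{red}}_{\mathbb{F}_p}/\mathbb{Q}_p)\bigr),
\]
where $\phi\in W_{\mathbb{F}_p}$ acts by $\phi_n=\phi p^{-n}$; this comes from $\mathbb{Z}/p^r(n)\simeq\nu_r^n[-n]$ and the slope filtration of the de Rham--Witt complex. For an arbitrary complex $C$ of $W_{\mathbb{F}_p}$-modules one has $R\Gamma(W_{\mathbb{F}_p},C)\cong[C\xrightarrow{1-\phi}C]$, and the computation in \cite{Geisser04b} realizes $\cup e$ as the identity shift on this model---so the diagram with rows $\coker(1-\phi|H^{i-1})\hookrightarrow H^i(W_{\mathbb{F}_p},C)\twoheadrightarrow\ker(1-\phi|H^i)$ shows that $\cup e$ on cohomology is exactly the map $\overline{1-\phi}$ on each $H^i(C)$. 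Applying this with $C=R\Gamma_{cris}(\mathcal{X}^{\mathrm{red}}_{\mathbb{F}_p}/\mathbb{Q}_p)$ and using the $\phi$-module isomorphism $H^i_{cris}(\mathcal{X}^{\mathrm{red}}_{\mathbb{F}_p}/\mathbb{Q}_p)\cong\bigoplus_{\mathfrak{p}\mid p}D_{cris,\mathfrak{p}}(V_p^i(n))$ (the crystalline comparison) gives $\beta_p(\mathcal{X},n)$ and reduces both assertions to diagram (\ref{ssdiagram}), exactly as you conclude.
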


\begin{proof} For any complex of $W_{\bF_p}$-modules $C$ we have
$$R\Gamma(W_{\bF_p},C)\cong \mathrm{holim} (C\xrightarrow{1-\phi} C)=[C\xrightarrow{1-\phi} C]$$
and the discussion before \cite{Geisser04b}[Prop. 4.4] shows that there is a commutative diagram
\[\begin{CD}R\Gamma(W_{\bF_p},C)@>\sim>> [0 @>>> C@>{1-\phi}>> C]\\
@VV\cup eV @. @V\id_C VV @.\\
R\Gamma(W_{\bF_p},C)[1] @>\sim>> [C@>{1-\phi}>> C@>>> 0]\end{CD}\]
and hence a commutative diagram with exact rows for each $i$
\[\minCDarrowwidth1em\begin{CD}0@>>> \coker(1-\phi\vert H^{i-1}(C))@>>> H^i(W_{\bF_p},C)@>>> \ker(1-\phi\vert H^i(C))@>>> 0\\
@. @. @VV\cup eV @V\id_{H^i(C)} VV\\
0@<<<\ker(1-\phi\vert H^{i+1}(C)) @<<< H^{i+1}(W_{\bF_p},C)@<<< \coker(1-\phi\vert H^{i}(C)) @<<< 0.
\end{CD}\]
So if $1-\phi$ is semisimple at zero on each $H^i(C)$ we obtain a long exact sequence
\[ \cdots\stackrel{\cup e}{\longrightarrow} H^i(C)\stackrel{\cup e}{\longrightarrow} H^{i+1}(C) \stackrel{\cup e}{\longrightarrow}\cdots\]
It then suffices to remark that
$$R\Gamma'_{eh}(\mathcal{X}_{\mathbb{F}_p},\mathbb{Q}_p(n))
=R\Gamma_{et}(\mathcal{X}^{\mathrm{red}}_{\mathbb{F}_p},\mathbb{Q}_p(n))\cong R\Gamma(W_{\bF_p},R\Gamma_{cris}(\mathcal{X}^{\mathrm{red}}_{\mathbb{F}_p}/\bq_p)) $$
where $\phi\in W_{\bF_p}$ acts on $R\Gamma_{cris}(\mathcal{X}^{\mathrm{red}}_{\mathbb{F}_p}/\bq_p)$ by $\phi_n=\phi p^{-n}$
and that for this action we have an isomorphism of $\phi$-modules
\[ H^i_{cris}(\mathcal{X}^{\mathrm{red}}_{\mathbb{F}_p}/\bq_p)\cong \bigoplus_{\mathfrak{p}\mid p}D_{cris,\mathfrak{p}}(V^i_p(n)).\]
\end{proof}

\begin{prop} Under the assumptions of this section, Conjecture \ref{inj} holds for $\X$. In fact,
\[ H^i(\X,\bq(n))\to H^i(X,\bq(n)) \]
is injective for all $n$ and $i$.
\label{injprop}\end{prop}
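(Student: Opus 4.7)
The plan is to exploit the motivic localization sequence along the closed fibres of $\pi:\mathcal{X}\to\mathrm{Spec}(\mathcal{O}_F)$. Since $\pi$ is smooth, each embedding $i_\mathfrak{p}:\mathcal{X}_\mathfrak{p}\hookrightarrow\mathcal{X}$ is a regular closed embedding of codimension one, so purity yields Gysin maps and the exact sequence
\[
\bigoplus_\mathfrak{p} H^{i-2}(\mathcal{X}_\mathfrak{p},\mathbb{Q}(n-1))\xrightarrow{\bigoplus(i_\mathfrak{p})_*} H^i(\mathcal{X},\mathbb{Q}(n))\to H^i(X,\mathbb{Q}(n))
\]
identifies the kernel of the restriction map with the image of the Gysin maps. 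It therefore suffices to show that each $(i_\mathfrak{p})_*$ is rationally zero.

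The first key input is that $[\mathcal{X}_\mathfrak{p}]=\pi^*[\mathfrak{p}]\in H^2(\mathcal{X},\mathbb{Q}(1))$ vanishes because $\mathrm{Pic}(\mathcal{O}_F)_\mathbb{Q}=0$ by finiteness of the class group. The projection formula then gives $(i_\mathfrak{p})_*(i_\mathfrak{p})^*\beta=\beta\cup[\mathcal{X}_\mathfrak{p}]=0$ for every $\beta\in H^{i-2}(\mathcal{X},\mathbb{Q}(n-1))$, so the Gysin image of any class in the image of $(i_\mathfrak{p})^*$ is already zero.

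To handle classes not in the image of $(i_\mathfrak{p})^*$, I would pass to $p$-adic \'etale cohomology, where $p$ is the residue characteristic of $\mathfrak{p}$. By Prop.~\ref{lem-Gammaf-1} and the smoothness of $\pi$, the decomposition theorem gives
\[
R\Gamma(\mathcal{X}[1/p],\mathbb{Q}_p(n))\cong\bigoplus_j R\Gamma(\mathcal{O}_F[1/p],V^j_p(n))[-j],
\]
and since each $V^j_p$ is unramified outside $p$, Hochschild--Serre applied to the surjection $G_F\twoheadrightarrow G_{F,S_p}$ shows the map to $R\Gamma(F,V^j_p(n))$ is injective on cohomology in every degree. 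Hence the cycle class map fits into a square whose lower row $H^i(\mathcal{X}[1/p],\mathbb{Q}_p(n))\hookrightarrow H^i(X,\mathbb{Q}_p(n))$ is injective. The residual Gysin content that remains must therefore lie in the kernel of the cycle class map, and via Lemma~\ref{lem-Gammaf-2} this kernel is controlled by the crystalline/$f$-local conditions at $\mathfrak{p}$; the Frobenius semisimplicity of $H^*_{\mathrm{cris}}(\mathcal{X}_\mathfrak{p}/\mathbb{Q}_p)$ supplied by the Weil conjectures then rules out a nonzero contribution of pure weight different from zero.

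The main obstacle is the last step: reconciling the motivic cycle class map with the separation of $f$-part and non-$f$-part of the local Galois cohomology at primes $\mathfrak{p}\mid p$. One must show that a motivic class dying on the generic fibre $X$ cannot survive in the non-$f$-part either, and this requires exploiting the weight filtration on the crystalline cohomology of $\mathcal{X}_\mathfrak{p}$ together with the explicit structure of Lemma~\ref{lem-Gammaf-2}. Once this is done for every $p$, the kernel of $H^i(\mathcal{X},\mathbb{Q}(n))\to H^i(X,\mathbb{Q}(n))$ is zero after tensoring with $\mathbb{Q}_p$ for all $p$, and hence zero over $\mathbb{Q}$.
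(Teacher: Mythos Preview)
Your proposal has a genuine gap, which you in effect acknowledge. The projection formula step only shows that $(i_\mathfrak{p})_*\circ(i_\mathfrak{p})^*=0$, not that $(i_\mathfrak{p})_*=0$; to conclude the latter you would need $(i_\mathfrak{p})^*$ to be surjective on $H^{i-2}(\mathcal{X}_\mathfrak{p},\mathbb{Q}(n-1))$, which fails in general. Your attempt to patch this via $p$-adic realizations is not a proof: you argue that $H^i(\mathcal{X}[1/p],\mathbb{Q}_p(n))\to H^i(X,\mathbb{Q}_p(n))$ is injective, but this still leaves the Gysin contribution from primes above $p$ unaccounted for, and your final paragraph concedes that the step ``reconciling the motivic cycle class map with the separation of $f$-part and non-$f$-part'' is an obstacle you have not overcome. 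The appeal to Frobenius weights and crystalline semisimplicity is too vague to constitute an argument.

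The paper's proof is entirely different and bypasses the localization sequence and Gysin maps. For $i>2n$ both sides vanish. For $i\leq 2n$ one uses the isomorphism $H^i(\mathcal{X},\mathbb{Q}(n))\cong H^i_W(\overline{\mathcal{X}},\mathbb{Z}(n))_\mathbb{Q}$ from Corollary~\ref{prop-rational-decompo}, embeds into $\mathbb{Q}_p$-coefficients, and then invokes Proposition~\ref{lem-Gammaf-1} to identify $H^i(\overline{\mathcal{X}},\mathbb{Q}_p(n))$ with $\bigoplus_j H^{i-j}_f(F,V^j_p(n))$. The point is then purely cohomological: for $j=0,1$ the map $H^j_f(F,V)\to H^j(F,V)$ is injective by definition of $f$-cohomology, and for $j=2,3$ one shows $H^j_f(F,V^{i-j}_p(n))=0$ outright, using the duality $H^j_f(F,V)\cong H^{3-j}_f(F,V^*(1))^*$, Poincar\'e duality $V^{i-j}_p(n)^*(1)\cong V^{2d-2-i+j}_p(d-n)$, and the vanishing of $H^{2(d-n)+r}(\mathcal{X},\mathbb{Q}(d-n))$ for $r\geq 1$. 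So rather than killing Gysin images one by one, the paper shows that the entire $p$-adic cohomology of $\overline{\mathcal{X}}$ in this range injects into that of $X$, and this for every $p$.
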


\begin{proof} For $i>2n$ both groups are zero. For $i\leq 2n$ we have a commutative diagram
\[\xymatrix{H^i(\X,\bq(n))\ar[d]\ar[r]^{\sim} & H^i_W(\overline{\X},\bz(n))_\bq\ar[r] & H^i_W(\overline{\X},\bz(n))_{\bq_p}\ar[r]^{(\ref{xbarzp})\otimes\bq} &H^i(\overline{\X},\bq_p(n))\ar[d]\\
H^i(X,\bq(n))\ar[rrr]& & & H^i(X,\bq_p(n))}
\]
where the first map in the top row is the isomorphism of Cor. \ref{prop-rational-decompo}, the second map is clearly injective, and the right vertical map is injective since by Prop. \ref{lem-Gammaf-1} it is isomorphic to the injective map
\[ H^0_f(F, V^i_p(n))\oplus H^1_f(F, V^{i-1}_p(n)) \to  H^0(F, V^i_p(n))\oplus H^1(F, V^{i-1}_p(n)).\]
It follows that the left vertical map is injective. Note here that for $j=2,3$ we have
\[ H^j_f(F, V^{i-j}_p(n))\cong H^{3-j}_f(F, V^{i-j}_p(n)^*(1))^*\cong H^{3-j}_f(F, V^{2d-2-i+j}_p(d-n))^*=0\]
since $$H^{3-j+2d-2-i+j}(\X,\bq(d-n))=H^{2(d-n)+2n-i+1}(\X,\bq(d-n))=0$$ for $2n-i\geq 0$.
\end{proof}

\begin{rem} \label{injrem}
Suppose $\X$ is a regular scheme satisfying Conjectures $\textbf{L}(\overline{\mathcal{X}}_{et},n)$, $\textbf{L}(\overline{\mathcal{X}}_{et},d-n)$ and $\textbf{AV}(\overline{\mathcal{X}}_{et},n)$. Suppose $p$ is a prime number so that the conclusion of Cor. \ref{pdecompcor} holds for $\X_{\bz_p}$ and the conclusion of Cor. \ref{decompcor} (with the roles of $p$ and $l$ switched) for $\X_{\bz_l}$ if $p\neq l$. Then one can prove an analogue of Prop. \ref{lem-Gammaf-1} with $\X[1/p]/\co_F[1/p]$ replaced by $\X[1/Np]/\bz[1/Np]$ where $N$ is divisible by all $l$ where $\X_{\bz_l}/\bz_l$ is not smooth, and one can deduce Conjecture \ref{inj} for $\X$ and $i<2n$ following the proof of Prop. \ref{injprop}. So essentially, Conjecture \ref{inj} is a consequence of finite generation of motivic cohomology,  the monodromy weight conjecture for all $\X_{\bq_l}$ and the syntomic description of $p$-adically completed motivic cohomology of $\X_{\bz_p}$ (which holds if one simply chooses $p$ to be a good reduction prime).
\end{rem}

We recall the formulation of the Tamagawa number conjecture from \cite{fpr91}.
We consider the pure motive $h^i(X)(n)$ of weight $i-2n$ which we imagine as a pure object of $\mathcal{MM}_F$. Note that
$$\left(h^i(X)(n)\right)^*(1)\simeq h^{2(d-1)-i}(X)(d-1-n)(1)=h^{2d-2-i}(X)(d-n).$$
Consider the $\mathbb{Q}$-fundamental line
\begin{eqnarray*}
\Delta_f(h^i(X)(n))&:= &\mathrm{det}_{\mathbb{Q}}H^0_f(F,h^i(X)(n))\otimes_{\mathbb{Q}}\mathrm{det}^{-1}_{\mathbb{Q}}H^1_f(F,h^i(X)(n))\\
& & \otimes_{\mathbb{Q}}\mathrm{det}_{\mathbb{Q}}H^0_f(F,(h^i(X)(n))^*(1))\otimes_{\mathbb{Q}}\mathrm{det}^{-1}_{\mathbb{Q}}H^1_f(F,(h^i(X)(n))^*(1))\\
& & \otimes_{\mathbb{Q}}\mathrm{det}^{-1}_{\mathbb{Q}} \left(h^i(X)(n)_B^+\right) \otimes_{\mathbb{Q}}\mathrm{det}_{\mathbb{Q}} \left(t_{h^i(X)(n)}\right).
\end{eqnarray*}
Here $t_{h^i(X)(n)}:=(h^i(X)(n))_{dR}/\mathrm{Fil}^0$ is the tangent space. The period isomorphism induces the map
$$\alpha_{h^i(X)(n)}:\left(h^i(X)(n)_B^+\right)_{\mathbb{R}}\longrightarrow \left(t_{h^i(X)(n)}\right)_{\mathbb{R}}.$$

\begin{conj}\label{conj-6terms}(Fontaine-Perrin-Riou)
There is a canonical exact sequence of finite dimensional $\mathbb{R}$-vector spaces
$$0\rightarrow  H^0_f(F,h^i(X)(n))_{\mathbb{R}} \rightarrow\mathrm{Ker}(\alpha_{h^i(X)(n)}) \rightarrow H^1_f(F,(h^i(X)(n))^*(1))_{\mathbb{R}}^*$$
$$\rightarrow H^1_f(F,h^i(X)(n))_{\mathbb{R}}
\rightarrow \mathrm{Coker}(\alpha_{h^i(X)(n)}) \rightarrow H^0_f(F,(h^i(X)(n))^*(1))_{\mathbb{R}}^*\rightarrow 0.$$
\end{conj}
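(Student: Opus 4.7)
The plan is to reduce Conjecture \ref{conj-6terms} to the reformulation of Beilinson's conjectures developed earlier in the paper (principally Conjecture ${\bf B}(\mathcal{X},n)$), combined with a motive-by-motive decomposition of motivic cohomology. The starting observation is that the six-term sequence (\ref{central}) proven in Section \ref{beilsec} on the level of the scheme $\mathcal{X}$,
\[0\to H^{2n-1}(\X,\br(n))\xrightarrow{r^n} H^{2n-1}_\cd(\X_{/\br},\br(n))\to H^{2d-2n}(\X,\br(d-n))^*\xrightarrow{h}H^{2n}(\X,\br(n))\xrightarrow{z^n} H^{2n}_\cd(\X_{/\br},\br(n))\to H^{2d-2n-1}(\X,\br(d-n))^*\to 0,\]
already has the structure of the desired Fontaine--Perrin-Riou sequence. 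The task is therefore to (a) break off the $h^i(X)$-graded piece from each term, and (b) match the six terms with $H^0_f$, $\ker(\alpha)$, $H^1_f((-)^*(1))^*$, $H^1_f$, $\coker(\alpha)$, $H^0_f((-)^*(1))^*$ of the motive $h^i(X)(n)$.

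For step (b), I would proceed by combining three ingredients. First, the period map $\alpha_{h^i(X)(n)}$ is the $\br$-linear extension of the map $(H^i(X(\bc),(2\pi i)^n\bq)^+)_\br \to (H^i_{dR}(X/F)/F^n)_\br$, and its kernel and cokernel are precisely the $h^i$-graded pieces of $H^{2n-1}_\cd$ and $H^{2n}_\cd$ via the defining sequences (\ref{delseq<2n}) and (\ref{delseq>2n}). Second, $H^0_f(F,h^i(X)(n))$ is to be identified with the $h^i$-component of the integral motivic cohomology $H^{i+1}(X_{/\bz},\bq(n))^0$ (ker of the cycle class), and $H^1_f(F,h^i(X)(n))$ with a graded piece of $H^{i+1}(X_{/\bz},\bq(n))$; these identifications, though conjectural in general, hold under our standing assumptions by Conjecture \ref{inj} (established in Prop. \ref{injprop} for smooth proper $\X/\co_F$) together with the Beilinson regulator picture. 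Third, the duality $h^i(X)(n)^*(1)\simeq h^{2d-2-i}(X)(d-n)$ lets one identify the "dual" $H^j_f$ terms with Poincaré duals of $X$-level groups, matching the right-hand three terms of (\ref{central}) via Conjecture ${\bf B}(\mathcal{X},d-n)\Leftrightarrow{\bf B}(\mathcal{X},n)$.

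For step (a), the cleanest route in the smooth proper case (as in subsection \ref{sec:compatibility}) is to pass to a $p$-adic realization: the decomposition (\ref{directsum}) of $R\pi'_*\bq_p$ yields the motive-graded decomposition of $R\Gamma(\overline{\X},\bq_p(n))$ displayed in Prop.~\ref{lem-Gammaf-1}, and a parallel decomposition of derived de Rham cohomology via Prop. \ref{psmooth}/Lemma \ref{lem-Gammaf-2}. Combining these with the $\br$-vs-$\bq_p$ comparison isomorphisms (which is exactly the content of the Tamagawa number conjecture compatibility proof to follow) transports the 6-term sequence on $\X$ into a direct sum of 6-term sequences indexed by $i$, each one being the desired sequence for $h^i(X)(n)$.

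The main obstacle is the canonical character of the sequence. Existence of a long exact sequence of the required shape follows from the above formal manipulations, but its naturality and independence of the choices made during the decomposition (for instance, a splitting of the Künneth or weight filtration) is substantively equivalent to the existence of a motivic $t$-structure, or at least to the existence of a canonical Chow--Künneth decomposition for $X$. A secondary, technical obstacle is to verify that the connecting maps in our derived sequence coincide with the ones prescribed by Fontaine--Perrin-Riou (the height pairing $h$ with the Bloch--Beilinson height, the regulator $r^n$ with the Beilinson regulator on $K$-theory, and the cycle map $z^n$ with the $p$-adic cycle class composed with $\ker(\alpha)\to t_M$); these comparisons are known for the scheme-level maps and transport through the motivic decomposition, but require care at the central and near-central points where the height pairing and the homologically trivial part of the Chow group intervene.
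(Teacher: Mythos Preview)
The paper does not prove this statement. Conjecture~\ref{conj-6terms} is \emph{recalled} from \cite{fpr91} as one of the standing inputs to the Tamagawa number conjecture; it appears in the sentence ``We recall the formulation of the Tamagawa number conjecture from \cite{fpr91}'' and is never claimed or proved as a result of the paper. There is therefore no ``paper's own proof'' to compare against.

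Your proposal is attempting something the paper does not do: to \emph{derive} the Fontaine--Perrin-Riou six-term sequence from Conjecture ${\bf B}(\mathcal{X},n)$ via a motive-by-motive decomposition. The scheme-level analogy you draw is correct --- the paper itself remarks that its duality formulation of Beilinson's conjectures ``is implicit in the six term sequence of Fontaine and Perrin-Riou \cite{fpr91}[Prop.~3.2.5]'', and the sequence (\ref{central}) does have the same shape. But you have correctly identified the real obstruction: splitting (\ref{central}) into its $h^i(X)$-graded pieces on the \emph{rational motivic} side requires a Chow--K\"unneth decomposition (or motivic $t$-structure), which is not among the paper's assumptions. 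The $p$-adic decomposition (\ref{directsum}) you invoke lives in the derived category of $p$-adic sheaves, not in rational motivic cohomology, and the passage from one to the other is exactly Conjecture~\ref{conj-p-adic-realization} (Bloch--Kato), itself part of the TNC package being assumed rather than proved. So your outline is circular in places and conditional on deep conjectures in others --- which is consistent with the status of Conjecture~\ref{conj-6terms} as a conjecture.
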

Conjecture \ref{conj-6terms} gives a trivialization
$$\vartheta^{i,n}_{\infty}:\mathbb{R}\stackrel{\sim}{\longrightarrow} \Delta_f(h^i(X)(n))_{\mathbb{R}}.$$
Beilinson's conjecture on special values, in the formulation Fontaine-Perrin-Riou,  is the following
\begin{conj}\label{conj-Beilinson} (Beilinson)
$$\vartheta^{i,n}_{\infty}(L^*(h^i(X),n)^{-1})\in \Delta_f(h^i(X)(n)).$$
\end{conj}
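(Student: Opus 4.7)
Since Conjecture \ref{conj-Beilinson} is a statement about the individual motives $h^i(X)(n)$, while Conjecture \ref{conjmain} is an assertion about $\X$ as a whole, the plan is to establish the equivalence
$$\text{Conjecture \ref{conjmain} for $(\X,n)$} \iff \text{Conjecture \ref{conj-Beilinson} for all $h^i(X)(n)$, $i\in\bz$}$$
and then invoke Conjecture \ref{conjmain}. The rational Chow--K\"unneth type decomposition (\ref{directsum}) combined with Proposition \ref{lem-Gammaf-1} and Lemma \ref{lem-Gammaf-2} suggests decomposing the fundamental line. First, using Corollary \ref{prop-rational-decompo} and Lemma \ref{zplemma}, identify
$$\Delta(\X/\bz,n)\otimes_\bz\bq_p \cong \bigotimes_{i\in\bz}\Delta_f(h^i(X)(n))^{(-1)^i}\otimes_\bq\bq_p$$
by matching $R\Gamma_{W,c}(\X,\bz(n))_{\bq_p}$ with the global $f$-cohomology complexes of the $V^i_p(n)$ (via the localization diagram of Lemma \ref{locdiagram}) and matching $R\Gamma_{dR}(\X/\bz)/F^n\otimes_\bz\bq_p$ with $\bigoplus_i D_{dR,p}(V^i_p(n))/F^0[-i]$. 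This is a $p$-adic identification; a similar identification at every prime yields an equality of $\bq$-structures provided one controls the integrality.

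Second, match the trivializations. The archimedean trivialization $\lambda_\infty(\X,n)$ of Proposition \ref{prop-lambda-infty} is built from the Beilinson regulator, $\textbf{B}(\X,n)$, and cup product with $\theta$; the trivialization $\bigotimes_i(\vartheta^{i,n}_\infty)^{(-1)^i}$ comes from the Fontaine--Perrin-Riou six term sequence \ref{conj-6terms}. Under the motivic decomposition these should coincide tautologically, modulo the formal identification of the six term sequence with the duality in $\textbf{B}(\X,n)$ and the Hodge--Deligne comparison (which was the content of Lemma \ref{deligneduality} and the direct sum decomposition \ref{rgwdecomp}).

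Third, at each finite prime $p$ the factor $|c_p(\X,n)|_p = p^{-\chi(\X_{\bF_p},\mathcal O,n)}|d_p(\X,n)|_p$ must reproduce the product over $i$ and over $\mathfrak p\mid p$ of the local Bloch--Kato Tamagawa factors. This is where the bulk of the $p$-adic work lies. Using the semisimplicity input of Lemma \ref{conj-ss} together with diagram (\ref{ssdiagram}), show that the trivialization of $\det_{\bq_p}R\Gamma'_{eh}(\X_{\bF_p},\bq_p(n))$ by $\cup e$ is compatible with $\prod_{i,\mathfrak p\mid p}\det{}^*(1-\phi\mid D_{cris,\mathfrak p}(V^i_p(n)))^{(-1)^i}$. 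Then $d_p(\X,n)$ compares two $\bz_p$-structures on $\det_{\bq_p}R\Gamma(\X_{\bz_p},\bq_p(n))$: the one descending from the global $R\Gamma(\overline{\X},\bz_p(n))$ and the one produced through $\textbf{D}_p(\X,n)$ by gluing derived de Rham with $R\Gamma'_{eh}(\X_{\bF_p},\bz_p(n))$. Finally, the factor $p^{\chi(\X_{\bF_p},\mathcal O,n)}$ precisely absorbs the defect between derived de Rham over $\bz_p$ and the Hodge filtration of the $D_{dR}$'s of the $V^i_p(n)$, using the Hodge--de Rham spectral sequence as in the proof of Proposition \ref{prop-cp=1}.

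The main obstacle will be the third step: matching the two integral $\bz_p$-structures, i.e.\ showing that the comparison isomorphism furnished by Conjecture $\textbf{D}_p(\X,n)$ respects the local Euler factor $\det_{\bq_p}(1-\phi\mid D_{cris,\mathfrak p}(V^i_p(n)))$ attached to the local $L$-factor at $\mathfrak p$. This requires not only Conjecture $\textbf{D}_p(\X,n)$ but also Lemma \ref{conj-ss} (semisimplicity at zero of $1-\phi$), and a careful tracking through the diagram of Lemma \ref{lem-Gammaf-2} of which integral lattices are induced by which maps. Once this is done, combining over all primes $p$ and matching $\zeta^*(\X,n)=\prod_i L^*(h^i(X),n)^{(-1)^i}$ and $C(\X,n)=\prod_{i,p,\mathfrak p}$ (local Tamagawa factors) completes the reduction of Conjecture \ref{conjmain} to Conjecture \ref{conj-Beilinson} for each motive $h^i(X)(n)$, so that the two are equivalent.
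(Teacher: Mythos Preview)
The statement you are addressing is a \emph{conjecture}, not a theorem: the paper states Conjecture \ref{conj-Beilinson} as Beilinson's conjecture on special values and never claims to prove it. There is therefore no proof in the paper to compare with. Your proposal ``invokes Conjecture \ref{conjmain}'' in the final step, but Conjecture \ref{conjmain} is itself an unproven conjecture; so your argument, at best, reduces one open conjecture to another rather than proving anything.

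Moreover, the equivalence you sketch is essentially the content of Theorem \ref{equiv-main-tnc}, which the paper \emph{does} prove, but that theorem establishes the equivalence of Conjecture \ref{conjmain} with the \emph{Tamagawa number conjecture} (Conjecture \ref{conj-tnc}), not with Beilinson's rational conjecture \ref{conj-Beilinson}. The TNC is an integral refinement at each prime $p$, whereas Conjecture \ref{conj-Beilinson} is only the rationality statement $\vartheta^{i,n}_\infty(L^*(h^i(X),n)^{-1})\in\Delta_f(h^i(X)(n))$. Your third step (matching $\bz_p$-structures and absorbing Euler factors) is precisely the work done in the proof of Theorem \ref{equiv-main-tnc}, and it yields the equivalence with (\ref{totaltnc}) for the total motive $h(X)(n)$, not a proof of the rationality for each individual $h^i(X)(n)$. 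Even granting Conjecture \ref{conjmain}, passing from the total motive to the individual summands $h^i(X)(n)$ is not automatic from what you wrote.
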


For a prime number $p$ one defines
\begin{eqnarray*}
\Delta_f(V^i_p(n))&:= &\mathrm{det}_{\mathbb{Q}_p}H^0_f(F,V^i_p(n))\otimes_{\mathbb{Q}_p}\mathrm{det}^{-1}_{\mathbb{Q}}H^1_f(F,V^i_p(n))\\
& & \otimes_{\mathbb{Q}_p}\mathrm{det}_{\mathbb{Q}_p}H^0_f(F,(V^i_p(n))^*(1))\otimes_{\mathbb{Q}_p}\mathrm{det}^{-1}_{\mathbb{Q}_p}H^1_f(F,(V^i_p(n))^*(1))\\
& & \otimes_{\mathbb{Q}_p}\mathrm{det}^{-1}_{\mathbb{Q}_p} \left(V^i_p(n)\right)^+ \otimes_{\mathbb{Q}_p}\mathrm{det}_{\mathbb{Q}_p} \left(t_{V^i_p(n)}\right).
\end{eqnarray*}
\begin{conj}\label{conj-p-adic-realization}(Bloch-Kato)
For $M=h^i(X)(n)$ and $M=h^i(X)(n)^*(1)$ the $p$-adic realization induces isomorphisms
$$H^j_f(F,M)\otimes_{\mathbb{Q}}\mathbb{Q}_p\stackrel{\sim}{\longrightarrow}H^j_f(F,M_p)$$
for $j=0,1$.
\end{conj}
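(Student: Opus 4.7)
The plan is to address the two cases $j=0$ and $j=1$ separately, since they rely on genuinely different inputs.

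For $j=0$, I would first reduce to the case $i=2n$. When $i\neq 2n$, the pure motive $h^i(X)(n)$ has non-zero weight $i-2n$, and the Weil conjectures force every Frobenius eigenvalue on $V^i_p(n)$ at a prime of good reduction to have absolute value $\neq 1$; Chebotarev then gives $H^0(F,V^i_p(n))=0$. The motivic side vanishes by the standard conjecture ``numerical equals homological'' together with purity. When $i=2n$, both sides reduce to classes of codimension-$n$ algebraic cycles on $X_{\overline{F}}$ invariant under $G_F$: the motivic side by definition of $H^0_f$ via intersection theory, the $p$-adic side via the Tate conjecture for $X$ (which one would need as an additional input, or as part of the standard assumptions on $\mathcal{X}$).

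For $j=1$, the identification runs through the exact triangle of Prop.~\ref{lem-Gammaf-1}. The motivic $H^1_f(F,h^i(X)(n))$ is, up to rational isomorphism, a direct summand of $H^{i+1}(\mathcal{X},\bq(n))$ cut out by appropriate local conditions, and the $p$-adic realization map lands in $H^{i+1}(\overline{\mathcal{X}},\bq_p(n))$. First I would verify that the crystalline condition at primes above $p$ holds automatically: this follows from Lemma~\ref{lem-Gammaf-2} together with Conjecture $\mathbf{D}_p(\mathcal{X},n)$, which identifies $p$-adically completed \'etale motivic cohomology of $\mathcal{X}_{\bz_p}$ with the syntomic side and hence with $R\Gamma_f(F_{\mathfrak{p}},V^i_p(n))$. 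At primes of good reduction away from $p$ the unramified condition is automatic by smooth-proper base change. Injectivity follows from Prop.~\ref{injprop} combined with the Deligne decomposition~(\ref{directsum}).

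The main obstacle, and the only serious one, is surjectivity of the $p$-adic realization onto $H^1_f(F,V^i_p(n))$. This amounts to the hard half of the Bloch--Kato conjecture on Selmer groups of Galois representations: constructing a motivic class that realises every crystalline-at-$p$, unramified-elsewhere extension class in $H^1(F,V^i_p(n))$. In general this is deeply open, known only in isolated cases via Euler-system constructions (cyclotomic units for Tate twists, elliptic units for CM motives, Heegner points for modular abelian varieties, Beilinson--Kato elements for modular forms). Accordingly, within the framework of the present paper Conjecture~\ref{conj-p-adic-realization} should be viewed as a fundamental hypothesis rather than a theorem, and its role in Section~\ref{sec:compatibility} is to serve alongside Conjectures~\ref{conj-6terms} and~\ref{conj-Beilinson} as the $p$-adic input ensuring equivalence with our special-value conjecture.
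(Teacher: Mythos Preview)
The statement is explicitly labeled a \emph{Conjecture} in the paper, attributed to Bloch--Kato, and the paper offers no proof of it whatsoever. It appears in Section~\ref{sec:compatibility} precisely as one of the standing hypotheses (alongside Conjectures~\ref{conj-6terms}, \ref{conj-Beilinson}, and~\ref{conj-tnc}) needed to formulate the Tamagawa number conjecture and compare it with Conjecture~\ref{conjmain}. Your final paragraph already says exactly this, and that is the correct conclusion.

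The issue is the framing: you begin with ``The plan is to address the two cases $j=0$ and $j=1$ separately'' as though a proof were expected, and spend most of the proposal sketching partial arguments that rely on further open inputs (the Tate conjecture, the standard conjecture on numerical versus homological equivalence, and ultimately the full Bloch--Kato surjectivity). None of this is wrong as commentary on the conjecture, but none of it is a proof, and the paper neither claims nor attempts one. The right answer here is simply: the paper states this as a conjecture and does not prove it; it is used as an assumption in Theorem~\ref{equiv-main-tnc}.
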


One has an isomorphism
\begin{equation}
\vartheta_p^{i,n}:\Delta_f(h^i(X)(n))_{\mathbb{Q}_p}\simeq \Delta_f(V^i_p(n))\cong \mathrm{det}_{\mathbb{Q}_p}R\Gamma_{c}(\mathcal{O}_F[1/p],V^i_p(n))
\notag\end{equation}
where the first isomorphism is obtained by Conjecture \ref{conj-p-adic-realization} and Artin's comparison theorem and the second isomorphism by the lower exact triangle in Prop. \ref{lem-Gammaf-1}, the exact triangle
\[ D_{dR,\mathfrak{p}}(V^i_p(n))/F^0[-1]\to R\Gamma_f(F_\mathfrak{p},V^i_p(n))\to C_{cris,\mathfrak{p}}(V^i_p(n))\to \]
arising from the definition of $R\Gamma_f(F_\mathfrak{p},V^i_p(n))$ and the isomorphism
\begin{equation}\tau_\mathfrak{p}^{i,n}=\id_{D_{cris,\mathfrak{p}}(V^i_p(n)),triv}:
\mathrm{det}_{\mathbb{Q}_p}C_{cris,\mathfrak{p}}(V^i_p(n))\simeq\mathbb{Q}_p\label{taudef}\end{equation}
in the notation of diagram (\ref{ssdiagram}).
Any locally constant $\mathbb{Z}_p$-sheaf $T^i_p(n)$ on $\mathrm{Spec}(\mathcal{O}_F[1/p])$
together with an isomorphism $T_p^i(n) \otimes_{\mathbb{Z}_p}\mathbb{Q}_p\simeq V^i_p(n)$ gives an integral structure
$$\mathrm{det}_{\mathbb{Q}_p}R\Gamma_{c}(\mathcal{O}_F[1/p],V^i_p(n))\simeq \mathrm{det}_{\mathbb{Z}_p}R\Gamma_{c}(\mathcal{O}_F[1/p],T_p^i(n))\otimes_{\mathbb{Z}_p}\mathbb{Q}_p$$
which does not dependent on the choice of $T_p^i(n)$.

\begin{conj}\label{conj-tnc}(Bloch-Kato, Fontaine-Perrin-Riou) There is an identity of invertible $\bz_p$-submodules
$$\vartheta_{p}^{i,n}\vartheta^{i,n}_{\infty}(L^*(h^i(X),n)^{-1})\cdot\bz_p=\mathrm{det}_{\mathbb{Z}_p}R\Gamma_{c}(\mathcal{O}_F[1/p],T^i_p(n)).$$
of $\mathrm{det}_{\mathbb{Q}_p}R\Gamma_{c}(\mathcal{O}_F[1/p],V^i_p(n))$.
\end{conj}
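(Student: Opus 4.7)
The statement is a formulation of the Bloch--Kato/Fontaine--Perrin-Riou Tamagawa Number Conjecture for the motive $h^i(X)(n)$; as stated, this is a major open conjecture and there is no general proof. What one can realistically propose is either a reduction strategy to known inputs (Iwasawa Main Conjectures, Euler systems) or, in line with the paper's philosophy, a reduction to the cleaner integral statement in Conjecture \ref{conjmain}. I will outline both, since the compatibility theorem of subsection \ref{sec:compatibility} is designed precisely to make the second reduction available.

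First, the direct Iwasawa-theoretic approach. Fix a prime $p$ and an integral lattice $T^i_p(n) \subset V^i_p(n)$. The plan is to (i) choose a sufficiently large $\bz_p$-extension $F_\infty/F$ (typically the cyclotomic one, or the compositum of all $\bz_p$-extensions for ordinary motives), (ii) construct a $p$-adic $L$-function $\mathcal{L}_p(V^i_p) \in \Lambda := \bz_p[[\Gal(F_\infty/F)]]$ or its fraction field, with the interpolation property that its value at the relevant character recovers $L^*(h^i(X),n)$ up to an explicit Euler factor and period, (iii) formulate the Iwasawa Main Conjecture as the equality of the ideal generated by $\mathcal{L}_p$ with the characteristic ideal of a suitable Selmer complex $R\Gamma_{Iw,f}(F_\infty, T^i_p)$, and (iv) perform a descent argument: applying $\otimes^L_\Lambda \bz_p$ and the Perrin-Riou exponential/dual exponential to trivialize the relevant determinants, one recovers the integral identity of Conjecture \ref{conj-tnc} from the Main Conjecture. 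The descent step requires controlling characteristic ideals under specialization, which is standard when the appropriate Selmer modules have no pseudo-null submodules, together with the compatibility of $\vartheta_p^{i,n}\vartheta_\infty^{i,n}$ with Perrin-Riou's big exponential map; this last compatibility is where the explicit reciprocity laws enter.

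Second, and more in the spirit of the current paper, the natural strategy is to deduce Conjecture \ref{conj-tnc} from Conjecture \ref{conjmain}. Granting $\X$ smooth over $\co_F$ and the ``standard assumptions'' ${\bf AV}, {\bf L}, {\bf B}, {\bf D}_p$, the compatibility theorem being established in subsection \ref{sec:compatibility} will give, for each prime $p$, an identification
\[ \Delta(\X/\bz,n) \otimes_\bz \bz_p \;\cong\; \bigotimes_{i} \mathrm{det}_{\bz_p}^{(-1)^i} R\Gamma_c(\co_F[1/p], T^i_p(n)) \]
fitting into a commutative diagram that compares $\lambda_\infty(\X,n)$ with $\prod_i (\vartheta_\infty^{i,n} \vartheta_p^{i,n})^{(-1)^i}$, up to the $p$-adic correction factor $c_p(\X,n)$. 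Then $p$-adically localizing the identity of Conjecture \ref{conjmain}, i.e.
\[ \lambda_\infty(\zeta^*(\X,n)^{-1} \cdot C(\X,n) \cdot \bz) = \Delta(\X/\bz,n), \]
and using that $\zeta^*(\X,n) = \prod_i L^*(h^i(X),n)^{(-1)^i}$ (up to Euler factors at bad primes, handled by the local decomposition in Lemmas \ref{lem-Gammaf-1}--\ref{conj-ss}), one extracts an identity of $\bz_p$-lattices which must match Conjecture \ref{conj-tnc} factor by factor in $i$. Crucially, the product over primes of $|c_p(\X,n)|_p$ equals $C(\X,n)$ by construction, so the correction factors disappear in the conversion.

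The main obstacle in either route is of course the same one that has kept the Bloch--Kato conjecture open in general: one must produce an actual analytic or algebraic input tying $L^*(h^i(X),n)$ to arithmetic data. In the Iwasawa route, this is the Main Conjecture itself (known only for restricted classes: Tate motives, CM elliptic curves, Hilbert and elliptic modular forms under hypotheses, via Kato--Skinner--Urban--Wan type results). In the route through Conjecture \ref{conjmain}, the obstacle is shifted but not removed: Conjecture \ref{conjmain} is equivalent to BK in this smooth setting, so proving it is just as hard. What the paper does supply is the bookkeeping (the integral structures on $\Delta(\X/\bz,n)$ via Weil--\'etale cohomology, the correction factor $C(\X,n)$, and the trivialization $\lambda_\infty$) that makes the conjecture into a single, motivically natural identity of invertible $\bz$-modules; verifying this identity in new cases remains, in each case, the substantive problem.
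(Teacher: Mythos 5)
You are right that Conjecture \ref{conj-tnc} is an open conjecture and the paper offers no proof of it: it is recalled verbatim from the Bloch--Kato/Fontaine--Perrin-Riou literature and serves as one side of the equivalence in Theorem \ref{equiv-main-tnc}, not as something the paper establishes. Your second route (deducing \ref{conj-tnc} from \ref{conjmain}) is exactly what that theorem encodes, but as you correctly observe it is an equivalence and so merely relocates the difficulty; indeed, in subsection \ref{sec:examples} the paper runs the implication in the \emph{opposite} direction, feeding \emph{known} cases of Conjecture \ref{conj-tnc} through Theorem \ref{equiv-main-tnc} to obtain unconditional cases of Conjecture \ref{conjmain}.
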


In order to compare this statement with Conjecture \ref{conjmain} we shall consider the total "motive" $h(X)(n)\in\mathcal{D}^+(\mathcal{MM}_F)$ such that $\mathcal{H}^i(h(X)(n))=h^i(X)(n)$. One expects a (non-canonical) direct sum decomposition
\begin{equation}\label{motivic-directsum-decomposition}
h(X)(n)\simeq \bigoplus_{0\leq i\leq 2(d-1)} h^i(X)(n)[-i].
\end{equation}
In any case, the fundamental line of $h(X)(n)$ is
$$\Delta_f(h(X)(n)):=\bigotimes_{0\leq i\leq 2(d-1)} \Delta_f(h^i(X)(n))^{(-1)^i}$$
and similarly, we set
$$\Delta_f(h(X)(n)_p):=\bigotimes_{0\leq i\leq 2(d-1)} \Delta_f(V^i_p(n))^{(-1)^i}.$$
In view of Prop. \ref{injprop} we can take the higher Chow groups $H^j(\mathcal{X},\mathbb{Q}(n))$ as our definition for the $f$-motivic cohomology $H^j_f(F,h(X)(n))$. In view of the isomorphism
$$R\Gamma_W(\overline{\mathcal{X}},\mathbb{Z}(n))_{\mathbb{Q}}\simeq R\Gamma(\mathcal{X},\mathbb{Q}(n))\oplus
R\mathrm{Hom}(R\Gamma(\mathcal{X},\mathbb{Q}(d-n)),\mathbb{Q}[-2d-1])$$
of Cor. \ref{prop-rational-decompo}, the definition of $R\Gamma_{W,c}(\mathcal{X},\mathbb{Z}(n))$ in Def. \ref{z-rgc-def} and the definition of $\Delta(\mathcal{X}/\mathbb{Z},n)$ in Def. \ref{deltadef} we then have an isomorphism
\begin{align*}&\Delta_f(h(X)(n))\\\cong\  &\mathrm{det}_\bq R\Gamma_W(\overline{\mathcal{X}},\mathbb{Z}(n))_{\mathbb{Q}}\otimes_\bq\mathrm{det}^{-1}_\bq R\Gamma(\mathcal{X}(\bc),\mathbb{Q}(n))^{G_\br}\otimes_\bq\mathrm{det}_\bq R\Gamma_{dR}(\X_\bq/\bq)/F^n\\
\cong\  &\mathrm{det}_\bq R\Gamma_W(\overline{\mathcal{X}},\mathbb{Z}(n))_{\mathbb{Q}}\otimes_\bq\mathrm{det}^{-1}_\bq R\Gamma_W(\mathcal{X}_{\infty},\mathbb{Z}(n))_\bq\otimes_\bq\mathrm{det}_{\mathbb{Q}}(R\Gamma_{dR}(\mathcal{X}/\mathbb{Z})/F^n)_\bq
\\
\cong\ &\mathrm{det}_\bq R\Gamma_{W,c}(\mathcal{X},\mathbb{Z}(n))_\bq\otimes_\bq\mathrm{det}_{\mathbb{Q}}(R\Gamma_{dR}(\mathcal{X}/\mathbb{Z})/F^n)_\bq\\
=\ &\Delta(\X/\bz,n)_\bq.
\end{align*}
The isomorphism $\vartheta_{\infty}:=\bigotimes_i(\vartheta_{\infty}^{i,n})^{(-1)^i}$ becomes the isomorphism (\ref{ourtheta})
\begin{align*}\vartheta_{\infty}:\mathbb{R}\cong\ \Delta(\X_\bq,n)_\br=\Delta(\X/\bz,n)_\br\cong\Delta_f(h(X)(n))_{\mathbb{R}}
\end{align*}
explained in the introduction, based on Prop. \ref{tangentprop}. The isomorphism
of Conjecture \ref{conj-p-adic-realization} is the composite of the isomorphisms
$$R\Gamma_W(\overline{\mathcal{X}},\mathbb{Z}(n))_{\mathbb{Q}_p}\simeq R\Gamma(\overline{\mathcal{X}},\mathbb{Q}_p(n))\simeq
\bigoplus\limits_{i\in\bz} R\Gamma_{f}(F, V^i_p(n))[-i]$$
arising from Lemma \ref{zplemma} and  Prop. \ref{lem-Gammaf-1}, and the isomorphism
$\vartheta_{p}:=\bigotimes_i(\vartheta_{p}^{i,n})^{(-1)^i}$ becomes an isomorphism
\begin{equation}\vartheta_{p}:\Delta_f(h(X)(n))_{\bq_p}\cong \Delta_f(h(X)(n)_p)\cong \mathrm{det}_{\mathbb{Q}_p}R\Gamma_{c}(\X[1/p],\bq_p(n))\notag\end{equation}
using the left vertical isomorphism in Prop. \ref{lem-Gammaf-1}. Since $\X$ is smooth projective over $\Spec(\co_F)$ we have
\[ \zeta(\X,s)=\prod_{i\in\bz}L(h^i(X),s)^{(-1)^i}\]
and Conjecture \ref{conj-tnc} therefore predicts that
\begin{equation}\vartheta_{p}\vartheta_{\infty}(\zeta^*(\X,n)^{-1})\cdot\bz_p=\mathrm{det}_{\mathbb{Z}_p}R\Gamma_{c}(\X[1/p],\bz_p(n))
\label{totaltnc}\end{equation}
inside $\mathrm{det}_{\mathbb{Q}_p}R\Gamma_{c}(\X[1/p],\bq_p(n))$. Note here that we have an isomorphism
\begin{eqnarray*}
\mathrm{det}_{\mathbb{Z}_p}R\Gamma_{c}(\mathcal{X}[1/p],\mathbb{Z}_p(n))&\simeq &\mathrm{det}_{\mathbb{Z}_p}R\Gamma_{c}(\mathcal{O}_F[1/p],R \pi'_*\mathbb{Z}_p(n))\\
&\simeq & \bigotimes_{i}\mathrm{det}_{\mathbb{Z}_p}^{(-1)^i}R\Gamma_{c}(\mathcal{O}_F[1/p],R^i\pi'_*\mathbb{Z}_p(n))
\end{eqnarray*}
and we can choose $T_p^i(n):=R^i\pi'_*\mathbb{Z}_p(n)$.

\bigskip

\begin{thm} \label{equiv-main-tnc}
Let $\mathcal{X}/\mathcal{O}_F$ be smooth projective and $n\in\mathbb{Z}$ so that Conjectures $\textbf{L}(\overline{\mathcal{X}}_{et},n)$, $\textbf{L}(\overline{\mathcal{X}}_{et},d-n)$ and $\textbf{B}(\mathcal{X},n)$ hold for $\X$ and $n$. Assume that the complex $C_{cris,\mathfrak{p}}(V^i_p(n))$ is semisimple at zero for all $i$ and all primes $\mathfrak{p}$ of $F$. Then Conjecture \ref{conjmain} for $(\mathcal{X},n)$ is equivalent to the conjunction of the Tamagawa number conjecture (\ref{totaltnc}) for the motive $h(X)(n)$ over all primes $p$.
\end{thm}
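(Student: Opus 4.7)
\begin{f-proof}[Proof Plan.]
The plan is to reduce Conjecture \ref{conjmain} to the conjunction of its $p$-adic avatars, and then to match these $p$-adically against the Fontaine--Perrin-Riou formulation (\ref{totaltnc}). Since $\lambda_\infty$ identifies $\Delta(\mathcal{X}/\mathbb{Z},n)_\br$ with $\br$ and since $C(\mathcal{X},n) = \prod_q |c_q(\mathcal{X},n)|_q \in \bq^\times$, the conjectural identity $\lambda_\infty(\zeta^*(\mathcal{X},n)^{-1}C(\mathcal{X},n)\bz) = \Delta(\mathcal{X}/\mathbb{Z},n)$ is equivalent, after tensoring with $\bz_p$ and using that $|c_q|_q \in \bz_p^\times$ for $q\neq p$, to the family of statements
\begin{equation}
\lambda_\infty(\zeta^*(\mathcal{X},n)^{-1}\cdot\bz_p) = c_p(\mathcal{X},n)\cdot\Delta(\mathcal{X}/\mathbb{Z},n)\otimes_\bz\bz_p
\label{padicform}
\end{equation}
for every prime $p$. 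So it suffices to show that (\ref{padicform}) is equivalent to (\ref{totaltnc}).

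First I would identify the right-hand side of (\ref{padicform}) with the TNC lattice. By Lemma \ref{zplemma} and the defining triangle (\ref{triangle-cpctsupp-fgcoh2}) we have an isomorphism of $\bz_p$-determinants
\[
\mathrm{det}_{\bz_p} R\Gamma_{W,c}(\mathcal{X},\bz(n))_{\bz_p} \simeq \mathrm{det}_{\bz_p} R\Gamma(\overline{\mathcal{X}},\bz_p(n)) \otimes \mathrm{det}^{-1}_{\bz_p}R\Gamma(\mathcal{X}_\infty,\bz_p(n)),
\]
and the localization triangle of Lemma \ref{locdiagram} decomposes $\mathrm{det}_{\bz_p} R\Gamma(\overline{\mathcal{X}},\bz_p(n))$ into $\mathrm{det}_{\bz_p} R\Gamma_c(\mathcal{X}[1/p],\bz_p(n))$, $\mathrm{det}_{\bz_p} R\Gamma(\mathcal{X}_{\bz_p},\bz_p(n))$, $\mathrm{det}_{\bz_p} R\Gamma(\mathcal{X}_\infty,\bz_p(n))$ (the archimedean piece cancelling with the correction above), and a local piece at $p$. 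Conjecture $\mathbf{D}_p(\mathcal{X},n)$ (see (\ref{lambda-int})) relates $\mathrm{det}_{\bz_p} R\Gamma(\mathcal{X}_{\bz_p},\bz_p(n))$ to $\mathrm{det}_{\bz_p} R\Gamma'_{eh}(\mathcal{X}_{\bF_p},\bz_p(n)) \otimes \mathrm{det}^{-1}_{\bz_p} R\Gamma_{dR}(\mathcal{X}_{\bz_p}/\bz_p)/F^n$ up to the rational factor $d_p(\mathcal{X},n)$. After canceling the de Rham determinants and absorbing the factor $p^{\chi(\mathcal{X}_{\bF_p},\mathcal{O},n)}$, one obtains exactly
\[
c_p(\mathcal{X},n)\cdot\Delta(\mathcal{X}/\mathbb{Z},n)\otimes\bz_p \;=\; \mathrm{det}_{\bz_p}R\Gamma_c(\mathcal{X}[1/p],\bz_p(n))\otimes \mathrm{det}_{\bz_p}R\Gamma'_{eh}(\mathcal{X}_{\bF_p},\bz_p(n))
\]
as submodules of the corresponding $\bq_p$-line.

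Next I would match the trivializations. On the TNC side, Proposition \ref{lem-Gammaf-1} combined with the motivic direct sum decomposition (\ref{motivic-directsum-decomposition}) identifies $\mathrm{det}_{\bq_p} R\Gamma_c(\mathcal{X}[1/p],\bq_p(n))$ with $\bigotimes_i \mathrm{det}^{(-1)^i}_{\bq_p} R\Gamma_c(\mathcal{O}_F[1/p],V^i_p(n))$, and Lemma \ref{lem-Gammaf-2} identifies $\mathrm{det}_{\bq_p}R\Gamma'_{eh}(\mathcal{X}_{\bF_p},\bq_p(n))$ with $\bigotimes_{i,\mathfrak{p}\mid p}\mathrm{det}^{(-1)^i}_{\bq_p}C_{cris,\mathfrak{p}}(V^i_p(n))$. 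The $C_{cris}$--determinants are trivialized by $\id_{D_{cris}}$ (formula (\ref{taudef})), and under the semisimplicity at zero hypothesis this $\tau$-trivialization agrees with the cup-product trivialization by $e\in H^1(W_{\bF_p},\bz)$ of Lemma \ref{conj-ss}. On the archimedean side, the trivialization $\vartheta_\infty$ comes from the Fontaine--Perrin-Riou six-term sequence (Conjecture \ref{conj-6terms}), while $\lambda_\infty$ arises from the $\cup\theta$-sequence (\ref{cupseq}) together with the triangle (\ref{tangenttri2}). That these two archimedean trivializations coincide — this will be the main obstacle — comes down to checking that the six-term sequence of Fontaine--Perrin-Riou, when rewritten via Beilinson's duality conjecture $\mathbf{B}(\mathcal{X},n)$ (using the decomposition of Corollary \ref{prop-rational-decompo} of $R\Gamma_W(\overline{\mathcal{X}},\bz(n))_\bq$ into a motivic and a dual motivic part, and Proposition \ref{xbardual} for the archimedean duality), is exactly the exact sequence obtained by splicing (\ref{thetaseq}) with the tangent-space triangle (\ref{tangenttri2}). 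Equivalently, one verifies that the Beilinson regulator, the period map, and the duality isomorphism fit into the same diagram on both sides; for $\mathcal{X}$ smooth over $\mathcal{O}_F$ the relevant components are the regulator to Deligne cohomology and the Hodge decomposition, which by Lemma \ref{deligneduality} and the discussion of section \ref{sec:av} precisely produce the kernel/cokernel of $\alpha_{h^i(X)(n)}$.

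Once these compatibilities are established at each $p$, (\ref{padicform}) becomes term-by-term equivalent to the $p$-part of (\ref{totaltnc}) applied to $h(X)(n)$, whose $L$-function is $\zeta(\mathcal{X},s)$ (smoothness implies the Euler factors at the bad primes in characteristic coprime to $p$ pose no issue, and the factor $p^{\chi(\mathcal{X}_{\bF_p},\mathcal{O},n)}$ is absorbed into $c_p$ by design, cf.\ Proposition \ref{prop-cp-charp} and Milne's comparison for $\zeta(\mathcal{X}_{\bF_p},s)$). Assembling the $p$-adic equivalences over all $p$ and using the product formula on $\zeta^*(\mathcal{X},n)^{-1}C(\mathcal{X},n)$ gives the desired equivalence between Conjecture \ref{conjmain} and (\ref{totaltnc}) for all $p$.
\end{f-proof}
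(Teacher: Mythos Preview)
Your overall strategy matches the paper's: reduce to a $p$-local statement, unwind $\Delta(\mathcal{X}/\bz,n)\otimes\bz_p$ via Lemmas \ref{zplemma} and \ref{locdiagram}, apply $\mathbf{D}_p(\mathcal{X},n)$ and Lemma \ref{lem-Gammaf-2}, and finish with Milne's formula for the special fibre. Two points deserve correction.

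First, you call the archimedean compatibility ``the main obstacle''. It is not. The discussion immediately preceding the theorem already identifies $\Delta_f(h(X)(n))\cong\Delta(\mathcal{X}/\bz,n)_\bq$ and arranges that $\vartheta_\infty$ \emph{is} the isomorphism (\ref{ourtheta}); the paper's proof opens by noting that $\vartheta_\infty$ and $\lambda_\infty$ coincide \emph{by definition}. No separate verification involving the six-term sequence is needed.

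Second, and more substantively, your claim that under semisimplicity at zero ``this $\tau$-trivialization agrees with the cup-product trivialization by $e$'' is not correct, and this is precisely where the local zeta value enters. Diagram (\ref{ssdiagram}) gives $\id_{D,\mathrm{triv}}=\mathrm{det}^*(1-\phi)\cdot\overline{(1-\phi)}_{\mathrm{triv}}$, so the trivialization $\tau=\bigotimes\tau_{\mathfrak p}^{i,n}$ used in the definition of $\vartheta_p$ differs from the trivialization $\sigma$ of Lemma \ref{conj-ss} by the factor $Z^*(\mathcal{X}_{\bF_p},p^{-n})$. The paper's proof thus reduces (\ref{pconjmain}) $\Leftrightarrow$ (\ref{totaltnc}) to the identity
\[
p^{-\chi(\mathcal{X}_{\bF_p},\mathcal{O},n)}\cdot Z^*(\mathcal{X}_{\bF_p},p^{-n})\cdot(\cup e)\bigl(\mathrm{det}_{\bz_p}R\Gamma'_{eh}(\mathcal{X}_{\bF_p},\bz_p(n))\bigr)=\bz_p,
\]
which is exactly Milne's leading-term formula \cite{Milne86}[Thm.~0.1]. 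Your sketch invokes Milne at the end but suppresses the $Z^*$ factor coming from $\tau$ versus $\sigma$, and the reference to Proposition \ref{prop-cp-charp} (which concerns schemes in characteristic $p$) is misplaced. With these two adjustments your plan is the paper's proof.
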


\begin{proof} First note that the isomorphism $\vartheta_\infty$ and the isomorphism $\lambda_\infty$ defined in Prop. \ref{prop-lambda-infty} coincide by definition. So Conjecture \ref{conjmain} is equivalent to the identity
\begin{equation}\vartheta_{\infty}(\zeta^*(\X,n)^{-1})\cdot C(\X,n)\cdot\bz_p=\Delta(\mathcal{X}/\mathbb{Z},n)\otimes_{\mathbb{Z}}\mathbb{Z}_p\label{pconjmain}\end{equation}
for all primes $p$. Lemma \ref{zplemma} and Lemma \ref{locdiagram} induce an isomorphism
\begin{eqnarray*}
&&\Delta(\mathcal{X}/\mathbb{Z},n)\otimes_{\mathbb{Z}}\mathbb{Z}_p\\&=& \left(\mathrm{det}_{\mathbb{Z}}R\Gamma_{W,c}(\mathcal{X},\mathbb{Z}(n))\otimes_{\mathbb{Z}}\mathrm{det}_{\mathbb{Z}}
R\Gamma_{dR}(\mathcal{X}/\mathbb{Z})/F^n\right)_{\mathbb{Z}_p} \\
&\simeq &
\left(\mathrm{det}_\bz R\Gamma_W(\overline{\mathcal{X}},\mathbb{Z}(n))\otimes_\bz\mathrm{det}^{-1}_\bz R\Gamma_W(\mathcal{X}_{\infty},\mathbb{Z}(n))\otimes_\bz\mathrm{det}_{\mathbb{Z}}(R\Gamma_{dR}(\mathcal{X}/\mathbb{Z})/F^n)\right)_{\mathbb{Z}_p}\\
&\simeq &
\mathrm{det}_{\bz_p} R\Gamma(\overline{\mathcal{X}},\mathbb{Z}_p(n))\otimes_{\bz_p}\mathrm{det}_{\bz_p}^{-1} R\Gamma(\mathcal{X}_{\infty},\mathbb{Z}_p(n))\otimes_{\bz_p}\mathrm{det}_{\mathbb{Z}_p}R\Gamma_{dR}(\mathcal{X}_{\mathbb{Z}_p}/\mathbb{Z}_p)/F^n\\
&\simeq & \mathrm{det}_{\mathbb{Z}_p}R\Gamma_{c}(\mathcal{X}[1/p],\mathbb{Z}_p(n))\otimes_{\mathbb{Z}_p}\mathrm{det}_{\mathbb{Z}_p}
R\Gamma(\mathcal{X}_{\mathbb{Z}_p},\mathbb{Z}_p(n))\otimes_{\mathbb{Z}_p}\mathrm{det}_{\mathbb{Z}_p} R\Gamma_{dR}(\mathcal{X}_{\mathbb{Z}_p}/\mathbb{Z}_p)/F^n
\end{eqnarray*}
which we denote by $\vartheta_p^{\bz_p}$. Lemma \ref{lem-Gammaf-2} induces a commutative diagram of isomorphisms
\begin{equation}
\begin{CD} \mathrm{det}_{\mathbb{Q}_p}R\Gamma(\mathcal{X}_{\mathbb{Z}_p}, \mathbb{Q}_p(n)) \otimes_{\mathbb{Q}_p}\mathrm{det}_{\mathbb{Q}_p} R\Gamma_{dR}(\mathcal{X}_{\mathbb{Q}_p}/\mathbb{Q}_p)/F^n @>\lambda_p(\X,n)>>
\mathrm{det}_{\mathbb{Q}_p} R\Gamma'_{eh}(\mathcal{X}_{\mathbb{F}_p}, \mathbb{Q}_p(n))\\
@VVV @VV\beta_p(\X,n)V\\
\bigotimes\limits_{i, \mathfrak{p}\mid p} \mathrm{det}_{\mathbb{Q}_p}^{(-1)^i} R\Gamma_{f}(F_{\mathfrak{p}}, V^i_p(n)) \otimes \mathrm{det}_{\mathbb{Q}_p}^{(-1)^i}D_{dR,\mathfrak{p}}(V^i_p(n))/F^0 @>\bigotimes \lambda^{i,n}_{\mathfrak{p}}>> \bigotimes\limits_{i, \mathfrak{p}\mid p}\mathrm{det}_{\mathbb{Q}_p}^{(-1)^i}C_{cris,\mathfrak{p}}(V^i_p(n))
\end{CD}
\notag\end{equation}
where $\lambda_p(\mathcal{X},n)$ is the map defined using conjecture $\mathbf{D}_p(\mathcal{X},n)$.
By definition we have
\[ \vartheta_p= \left(\mathrm{id}_{R\Gamma_c}\otimes \gamma_p\right)  \circ(\vartheta_p^{\bz_p})_\bq\]
where $\mathrm{id}_{R\Gamma_c}$ is the identity map of
$\mathrm{det}_{\mathbb{Q}_p}R\Gamma_{c}(\mathcal{X}[1/p],\mathbb{Q}_p(n))$ and
\[\gamma_p:= \left(\bigotimes(\tau^{i,n}_{\mathfrak{p}})^{(-1)^i}\circ\beta_p(\X,n)\circ\lambda_p(\X,n) \right)\]
where the trivializations $\tau^{i,n}_{\mathfrak{p}}$ were defined in (\ref{taudef}). Comparing (\ref{pconjmain}) and (\ref{totaltnc}) we see that the two statements are equivalent if and only if
\[ C(\X,n)\cdot \gamma_p\left(\mathrm{det}_{\mathbb{Z}_p}
R\Gamma(\mathcal{X}_{\mathbb{Z}_p},\mathbb{Z}_p(n))\otimes_{\mathbb{Z}_p}\mathrm{det}_{\mathbb{Z}_p} R\Gamma_{dR}(\mathcal{X}_{\mathbb{Z}_p}/\mathbb{Z}_p)/F^n\right)=\bz_p\]
and from (\ref{lambda-int}) and the definition of $C(\X,n)$ this identity holds if and only  if
\[ p^{-\chi(\mathcal{X}_{\mathbb{F}_p},\mathcal{O},n)} \cdot \left(\bigotimes(\tau^{i,n}_{\mathfrak{p}})^{(-1)^i}\circ\beta_p(\X,n)\right)\left(\mathrm{det}_{\mathbb{Z}_p}R\Gamma'_{eh}(\mathcal{X}_{\mathbb{F}_p},\mathbb{Z}_p(n))\right)=\bz_p.\]
Consider the rational function $Z(\mathcal{X}_{\mathbb{F}_p},t)$ such that $\zeta(\mathcal{X}_{\mathbb{F}_p},s)=Z(\mathcal{X}_{\mathbb{F}_p},p^{-s})$ and its special value
$$Z^*(\mathcal{X}_{\mathbb{F}_p},p^{-n})=\mathrm{lim}_{t\rightarrow p^{-n}} (1-p^{n}t)^{\rho_n}Z(\mathcal{X}_{\mathbb{F}_p},t)$$
where $\rho_n:=-\mathrm{ord}_{t=p^{-n}}Z(\mathcal{X}_{\mathbb{F}_p},t)$ is the order of the pole of $Z(\mathcal{X}_{\mathbb{F}_p},t)$ at $t=p^{-n}$. Note that we have
$$Z(\mathcal{X}_{\mathbb{F}_p},t)=Z(\mathcal{X}^{\mathrm{red}}_{\mathbb{F}_p},t)$$
and
$$Z(\mathcal{X}_{\mathbb{F}_p},p^{-n} t)=\prod_{\mathfrak{p}\mid p} Z(\mathcal{X}_{\mathfrak{p}},p^{-n}t)=\prod_{\mathfrak{p}\mid p} \prod_{i}\mathrm{det}_{\mathbb{Q}_p}\left(1- t\phi\mid D_{cris,\mathfrak{p}}(V^i_p(n))\right)^{(-1)^{i+1}}.$$
 If $C_{cris,\mathfrak{p}}(V^i_p(n))$ is semisimple at zero then diagram (\ref{ssdiagram}) implies
\[ \bigotimes (\tau^{i,n}_{\mathfrak{p}})^{(-1)^i} = Z^*(\mathcal{X}_{\mathbb{F}_p},p^{-n})\cdot\sigma\]
where $\sigma$ is the map in Lemma \ref{conj-ss}. Hence by Lemma \ref{conj-ss} we are reduced to showing
\begin{align*} p^{-\chi(\mathcal{X}_{\mathbb{F}_p},\mathcal{O},n)} \cdot Z^*(\mathcal{X}_{\mathbb{F}_p},p^{-n})\cdot \left(\sigma\circ\beta_p(\X,n)\right)&\left(\mathrm{det}_{\mathbb{Z}_p}R\Gamma'_{eh}(\mathcal{X}_{\mathbb{F}_p},\mathbb{Z}_p(n))\right)\\
=p^{-\chi(\mathcal{X}_{\mathbb{F}_p},\mathcal{O},n)} \cdot Z^*(\mathcal{X}_{\mathbb{F}_p},p^{-n})\cdot \left(\cup e\right)&\left(\mathrm{det}_{\mathbb{Z}_p}R\Gamma'_{eh}(\mathcal{X}_{\mathbb{F}_p},\mathbb{Z}_p(n))\right)=\bz_p.
\end{align*}
But this is just a rewriting of the leading term formula for $Z(\mathcal{X}_{\mathbb{F}_p},t)$ due to Milne \cite{Milne86}[Thm. 0.1]. Note here that if $C$ denotes the perfect complex of $\bz_p$-modules with finite cohomology groups
$$\cdots\stackrel{\cup e}{\longrightarrow} H'^{\,i}_{eh}(\mathcal{X}_{\mathbb{F}_p},\mathbb{Z}_p(n))\stackrel{\cup e}{\longrightarrow} H'^{\,i+1}_{eh}(\mathcal{X}_{\mathbb{F}_p},\mathbb{Z}_p(n))\stackrel{\cup e}{\longrightarrow}\cdots$$
then the image of $\det_{\bz_p}(C)$ under the isomorphism $a:\det_{\bz_p}(C)_{\bq_p}\cong\bq_p$ arising from acyclicity of $C_{\bq_p}$ is $\chi(C)^{-1}\cdot\bz_p$ where $\chi(C)\in p^\bz$ is the multiplicative Euler characteristic of $C$. But $a$ coincides with the isomorphism denoted $\cup e$ above. Hence $$\left(\cup e\right)\left(\mathrm{det}_{\mathbb{Z}_p}R\Gamma'_{eh}(\mathcal{X}_{\mathbb{F}_p},\mathbb{Z}_p(n))\right)=\chi(\X_{\bF_p},\hat{\bz}(n))^{-1}\cdot\bz_p $$ where $\chi(\X_{\bF_p},\hat{\bz}(n))$ is the quantity appearing in \cite{Milne86}[Thm. 0.1].

\end{proof}

\subsection{Relationship with the functional equation}
We fix a regular scheme $\mathcal{X}$ of pure dimension $d$, which is flat and proper over $\mathbb{Z}$. We assume that $\mathcal{X}$ satisfies $\mathbf{L}(\overline{\mathcal{X}}_{et},n)$, $\mathbf{L}(\overline{\mathcal{X}}_{et},d-n)$, $\mathbf{AV}(\overline{\mathcal{X}}_{et},n)$, $\mathbf{B}(\mathcal{X},n)$ and $\mathbf{D}_p(\mathcal{X},n)$ for any prime number $p$. Moreover, we assume that $\mathbf{R}(\mathbb{F}_p,\mathrm{dim}(\mathcal{X}_{\mathbb{F}_p}))$ holds at the primes $p$ where $\mathcal{X}^{\mathrm{red}}_{\mathbb{F}_p}$ is singular. Recall that we denote the fundamental line by
$$\Delta(\mathcal{X}/\bz,n):=\mathrm{det}_{\mathbb{Z}}R\Gamma_{W,c}(\mathcal{X},\mathbb{Z}(n))
\otimes_{\mathbb{Z}}\mathrm{det}_{\mathbb{Z}} R\Gamma_{dR}(\mathcal{X}/\mathbb{Z})/F^n.$$

\begin{defn}
We set
\begin{eqnarray*}
\Xi_{\infty}(\mathcal{X}/\mathbb{Z},n)&:=& \mathrm{det}_{\mathbb{Z}}R\Gamma_{W}(\mathcal{X}_{\infty},\mathbb{Z}(n))\otimes\mathrm{det}^{-1}_{\mathbb{Z}} R\Gamma_{dR}(\mathcal{X}/\mathbb{Z})/F^n \\
& &\otimes  \mathrm{det}^{-1}_{\mathbb{Z}}R\Gamma_{W}(\mathcal{X}_{\infty},\mathbb{Z}(d-n))
\otimes\mathrm{det}_{\mathbb{Z}}R\Gamma_{dR}(\mathcal{X}/\mathbb{Z})/F^{d-n}.
\end{eqnarray*}
\end{defn}

\begin{prop}\label{prop-forfunctionalequation}
Duality for Deligne cohomology and duality for Weil-\'etale cohomology induce isomorphisms
$$\xi_{\infty}:\mathbb{R}\stackrel{\sim}{\longrightarrow} \Xi_{\infty}\otimes\mathbb{R}$$
and
$$\Delta(\mathcal{X}/\mathbb{Z},n)\otimes \Xi_{\infty}(\mathcal{X}/\mathbb{Z},n)\stackrel{\sim}{\longrightarrow}\Delta(\mathcal{X}/\mathbb{Z},d-n)$$
respectively, such that the square
\[ \xymatrix{
\Delta(\mathcal{X}/\mathbb{Z},n)\otimes \Xi_{\infty}(\mathcal{X}/\mathbb{Z},n)\otimes\mathbb{R}\ar[r]&\Delta(\mathcal{X}/\mathbb{Z},d-n)\otimes\mathbb{R}\\
\mathbb{R}\otimes\mathbb{R}\ar[u]^{\lambda_{\infty}(\mathcal{X},n)\otimes\xi_{\infty}(\mathcal{X},n)}\ar[r]^=&\mathbb{R}\ar[u]^{\lambda_{\infty}(\mathcal{X},d-n)}
}
\]
commutes.
\end{prop}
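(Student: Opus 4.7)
The plan is to identify both isomorphisms as formal consequences of dualities already established in the paper, and then to reduce the commutativity to the self-duality of the Beilinson regulator under cup-product with $\theta$.

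\textbf{Construction of $\xi_\infty$.} Tensoring the defining triangle for $i^*_\infty\bz(n)$ (Definition \ref{iinftydef}) with $\br$ and using the identification carried out in the proof of Proposition \ref{tangentprop}, I obtain the exact triangle
$$R\Gamma_{dR}(\X_\br/\br)/F^n[-1] \longrightarrow R\Gamma_\cd(\X_{/\br},\br(n)) \longrightarrow R\Gamma_W(\X_\infty,\bz(n))_\br \longrightarrow.$$
Passing to determinants yields the canonical isomorphism
$${\det}_\br R\Gamma_W(\X_\infty,\bz(n))_\br \otimes {\det}_\br^{-1} (R\Gamma_{dR}(\X/\bz)/F^n)_\br \cong {\det}_\br R\Gamma_\cd(\X_{/\br},\br(n)).$$
Applying this for both $n$ and $d-n$ and multiplying, I deduce that
$$\Xi_\infty(\X/\bz,n)_\br \cong {\det}_\br R\Gamma_\cd(\X_{/\br},\br(n)) \otimes {\det}_\br^{-1} R\Gamma_\cd(\X_{/\br},\br(d-n)).$$
The perfect pairings $H^i_\cd(\X_{/\br},\br(n)) \times H^{2d-1-i}_\cd(\X_{/\br},\br(d-n)) \to \br$ of Lemma \ref{deligneduality} b), after taking $\det^{(-1)^i}$ and reindexing $j=2d-1-i$, induce a canonical isomorphism ${\det}_\br R\Gamma_\cd(\X_{/\br},\br(n)) \cong {\det}_\br R\Gamma_\cd(\X_{/\br},\br(d-n))$ (the odd shift absorbs the sign from dualization). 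Hence I obtain a canonical trivialization $\br \xrightarrow{\sim} \Xi_\infty(\X/\bz,n)_\br$, which I take as $\xi_\infty$.

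\textbf{Construction of the second isomorphism.} The defining triangle (\ref{triangle-cpctsupp-fgcoh2}) gives
$${\det}_\bz R\Gamma_{W,c}(\X,\bz(n)) \otimes {\det}_\bz R\Gamma_W(\X_\infty,\bz(n)) \cong {\det}_\bz R\Gamma_W(\overline\X,\bz(n)),$$
while Theorem \ref{thm-duality-fg}, together with the fact that the shift by the odd integer $-2d-1$ cancels the sign inversion from $R\Hom(-,\bz)$ on determinants of perfect complexes, yields ${\det}_\bz R\Gamma_W(\overline\X,\bz(n)) \cong {\det}_\bz R\Gamma_W(\overline\X,\bz(d-n))$. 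Combining with the analogous identities for $d-n$ and telescoping inside $\Delta(\X/\bz,n)\otimes\Xi_\infty(\X/\bz,n)$, the two $R\Gamma_{dR}/F^n$ factors cancel, the $R\Gamma_W(\X_\infty,\bz(n))$ factors cancel via the $n$-localization triangle, the $R\Gamma_W(\X_\infty,\bz(d-n))$ factors cancel via the $(d-n)$-localization triangle combined with Weil-\'etale duality on $\overline\X$, and what remains is precisely $\Delta(\X/\bz,d-n)$.

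\textbf{The commutativity.} This is the main obstacle. By Proposition \ref{prop-lambda-infty}, $\lambda_\infty(\X,n)$ factors as the tautological trivialization ${\det}_\br R\Gamma_{\Ar,c}(\X,\tr(n))\cong\br$ coming from the direct-sum description in Definition \ref{r-rgc-def} (equivalently from the acyclicity of (\ref{thetaseq})), followed by the identification ${\det}_\br R\Gamma_{\Ar,c}(\X,\tr(n)) \cong \Delta(\X/\bz,n)_\br$ arising from the triangle (\ref{tangenttri2}); likewise for $d-n$. On the $\xi_\infty$ side, the trivialization is similarly controlled by Deligne self-duality. The verification then reduces to the assertion that the pairing of Conjecture ${\bf B}(\X,n)$, the Deligne pairing of Lemma \ref{deligneduality} b), and the Weil-\'etale pairing of Theorem \ref{thm-duality-fg} are mutually compatible along the Beilinson-regulator triangle (\ref{rgc}) and its $\X_\infty$-variant, and that cup-product with $\theta$ on the $n$-side is adjoint to cup-product with $\theta$ on the $(d-n)$-side. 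This in turn follows formally from the symmetry in $(n,d-n)$ of the product (\ref{rpairing}) produced in Proposition \ref{productprop} and of the map $i_\infty^*$; once this bookkeeping is done, both composites in the square yield the same trivialization of ${\det}_\br R\Gamma_{\Ar,c}(\X,\tr(d-n))\cong\Delta(\X/\bz,d-n)_\br$.
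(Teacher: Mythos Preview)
Your proposal is correct and follows essentially the same route as the paper: $\xi_\infty$ via the identification of $R\Gamma_W(\X_\infty,\bz(n))_\br$ with Deligne cohomology and Lemma \ref{deligneduality}b), the integral isomorphism via the localization triangle for $R\Gamma_{W,c}$ together with Theorem \ref{thm-duality-fg}, and commutativity via the compatibility of Conjecture ${\bf B}(\X,n)$ with Deligne duality. The only place where the paper is more explicit is the commutativity step: rather than asserting that the compatibilities ``follow formally from the symmetry in $(n,d-n)$ of the product (\ref{rpairing})'', the paper observes that under the decomposition of Corollary \ref{prop-rational-decompo} the Weil-\'etale pairing $\otimes\,\br$ is the evident one, displays the isomorphism of the long exact sequence $H^i_c\to H^i\to H^i_\cd$ with the dual of the corresponding sequence for $d-n$, and then invokes \cite{Knudsen-Mumford-76} to pass to determinants; you should replace your last sentence with this concrete mechanism.
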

\begin{proof}
Recall from Section \ref{sect-compactsupport} the definition of $R\Gamma_W(\mathcal{X}_{\infty},\mathbb{Z}(n))$. We have an isomorphism
$$R\Gamma_W(\mathcal{X}_{\infty},\mathbb{Z}(n))\otimes\mathbb{R}\simeq R\Gamma(G_{\mathbb{R}},\mathcal{X}(\mathbb{C}),(2\pi i)^n\mathbb{R}).$$
Duality for Deligne cohomology
$$R\Gamma_{\mathcal{D}}(\mathcal{X}/\mathbb{R},\mathbb{R}(n))\simeq \mbox{RHom}(R\Gamma_{\mathcal{D}}(\mathcal{X}/\mathbb{R},\mathbb{R}(d-n)),\mathbb{R}[-2d+1])$$
yields
\begin{eqnarray*}
&&\left(\mathrm{det}_{\mathbb{Z}}R\Gamma_{W}(\mathcal{X}_{\infty},\mathbb{Z}(n))\otimes_{\mathbb{Z}}\mathrm{det}^{-1}_{\mathbb{Z}} R\Gamma_{dR}(\mathcal{X}/\mathbb{Z})/F^n\right) \otimes\mathbb{R}\\
&\simeq&
\mathrm{det}_{\mathbb{R}}R\Gamma_{\mathcal{D}}(\mathcal{X}/\mathbb{R},\mathbb{R}(n))\\
&\simeq&
\mathrm{det}_{\mathbb{R}}\mbox{RHom}(R\Gamma_{\mathcal{D}}(\mathcal{X}/\mathbb{R},\mathbb{R}(d-n)),\mathbb{R}[-2d+1])\\
&\simeq&
\mathrm{det}_{\mathbb{R}}R\Gamma_{\mathcal{D}}(\mathcal{X}/\mathbb{R},\mathbb{R}(d-n))\\
&\simeq&\left(\mathrm{det}_{\mathbb{Z}}R\Gamma_{W}(\mathcal{X}_{\infty},\mathbb{Z}(d-n))\otimes_{\mathbb{Z}}\mathrm{det}_{\mathbb{Z}}^{-1} R\Gamma_{dR}(\mathcal{X}/\mathbb{Z})/F^{d-n}\right)\otimes\mathbb{R}
\end{eqnarray*}
We obtain
$$\xi_{\infty}:\mathbb{R}\stackrel{\sim}{\longrightarrow} \Xi_{\infty}\otimes\mathbb{R}.$$
The exact triangle
$$R\Gamma_{W,c}(\mathcal{X},\mathbb{Z}(n))\rightarrow R\Gamma_{W}(\overline{\mathcal{X}},\mathbb{Z}(n))\rightarrow R\Gamma_W(\mathcal{X}_{\infty},\mathbb{Z}(n))$$
gives
\begin{equation}\label{ke}
\mathrm{det}_{\mathbb{Z}}R\Gamma_{W,c}(\mathcal{X},\mathbb{Z}(n))\otimes \mathrm{det}_{\mathbb{Z}}R\Gamma_{W}(\mathcal{X}_{\infty},\mathbb{Z}(n))\simeq \mathrm{det}_{\mathbb{Z}}R\Gamma_{W}(\overline{\mathcal{X}},\mathbb{Z}(n)).
\end{equation}
Similarly, we have
\begin{equation}\label{kee}
\mathrm{det}_{\mathbb{Z}}R\Gamma_{W,c}(\mathcal{X},\mathbb{Z}(d-n))\otimes \mathrm{det}_{\mathbb{Z}}R\Gamma_{W}(\mathcal{X}_{\infty},\mathbb{Z}(d-n))\simeq \mathrm{det}_{\mathbb{Z}}R\Gamma_{W}(\overline{\mathcal{X}},\mathbb{Z}(d-n)).
\end{equation}
Moreover, duality for Weil-\'etale cohomology gives
\begin{equation}\label{keee}
\mathrm{det}_{\mathbb{Z}}R\Gamma_{W}(\overline{\mathcal{X}},\mathbb{Z}(n))\simeq
\mathrm{det}_{\mathbb{Z}}R\mathrm{Hom}(R\Gamma_{W}(\overline{\mathcal{X}},\mathbb{Z}(n)),\mathbb{Z}[-2d-1])\simeq
\mathrm{det}_{\mathbb{Z}}R\Gamma_{W}(\overline{\mathcal{X}},\mathbb{Z}(d-n)).
\end{equation}
Then (\ref{ke}), (\ref{kee}) and (\ref{keee}) induce
$$\Delta(\mathcal{X}/\mathbb{Z},n)\otimes \Xi_{\infty}(\mathcal{X}/\mathbb{Z},n)\stackrel{\sim}{\longrightarrow}\Delta(\mathcal{X}/\mathbb{Z},d-n).$$

We have canonical isomorphism (see Corollary \ref{prop-rational-decompo})
$$ R\Gamma_{W}(\overline{\mathcal{X}},\mathbb{Z}(n))_{\mathbb{R}}\simeq R\Gamma(\mathcal{X},\mathbb{R}(n))\oplus R\mathrm{Hom}(R\Gamma(\mathcal{X},\mathbb{R}(d-n)),\mathbb{R}[-2d-1])$$
and the pairing induced by Weil-\'etale duality, after $(-)\otimes \mathbb{R}$, is the evident one (see the proof of Theorem \ref{thm-duality-fg}). Moreover, the maps $\lambda_{\infty}(\mathcal{X},n)$ and $\lambda_{\infty}(\mathcal{X},d-n)$ are induced by the pairing between motivic cohomology with $\mathbb{R}$-coefficients and  motivic cohomology with $\mathbb{R}$-coefficients and compact support (see Conjecture  $\mathbf{B}(\mathcal{X},n)$), which is compatible with duality for Deligne cohomology in the sense that we have an isomorphism of long exact sequences
\[ \xymatrix{
\ar[r]& H^i_c(\mathcal{X},\mathbb{R}(n))\ar[d]\ar[r]
&H^i(\mathcal{X},\mathbb{R}(n))\ar[d]_{}^{}\ar[r]
&H^i_{\mathcal{D}}(\mathcal{X}/\mathbb{R},\mathbb{R}(n))\ar[d]\ar[r]&
\\
\ar[r]&  H^{2d-i}(\mathcal{X},\mathbb{R}(d-n))^*\ar[r]
&H^{2d-i}_c(\mathcal{X},\mathbb{R}(d-n))^*\ar[r]
&H^{2d-1-i}_{\mathcal{D}}(\mathcal{X}/\mathbb{R},\mathbb{R}(d-n))^*\ar[r]&
}
\]
If follows \cite{Knudsen-Mumford-76} that the induced square of isomorphisms
\[ \xymatrix{
\mathrm{det}_{\mathbb{R}} R\Gamma_c(\mathcal{X},\mathbb{R}(n))\otimes  \mathrm{det}_{\mathbb{R}} R\Gamma_{\mathcal{D}}(\mathcal{X}/\mathbb{R},\mathbb{R}(n))\ar[r]\ar[d]& \mathrm{det}_{\mathbb{R}} R\Gamma(\mathcal{X},\mathbb{R}(n))\ar[d]\\
\mathrm{det}_{\mathbb{R}} R\Gamma(\mathcal{X},\mathbb{R}(d-n))^*\otimes \mathrm{det}^{-1}_{\mathbb{R}} R\Gamma_{\mathcal{D}}(\mathcal{X}/\mathbb{R},\mathbb{R}(d-n))^* \ar[r]& \mathrm{det}_{\mathbb{R}} R\Gamma(\mathcal{X},\mathbb{R}(d-n))^*
}
\]
commutes, where $(-)^*:=R\mathrm{Hom}(-,\mathbb{R}[-2d])$. From there, we easily obtain the commutativity of the square of the proposition.

\end{proof}

We denote by $$x_{\infty}^2(\mathcal{X},n):=\mathrm{det}(\xi_{\infty}(\mathcal{X},n))\in\mathbb{R}_{>0}$$
the determinant of the isomorphism $\xi_{\infty}(\mathcal{X},n)$ with respect to the given integral structures, i.e. the strictly positive real number such that $$\xi_{\infty}(\mathcal{X},n)(x_{\infty}^2(\mathcal{X},n)^{-1}\cdot \mathbb{Z})=\Xi_{\infty}(\mathcal{X},n).$$
Recall the definition of
$$\zeta(\overline{\mathcal{X}},s):=\zeta(\mathcal{X},s)\cdot \zeta(\mathcal{X}_\infty,s)$$
from (\ref{zetaxinftydef}). 
\begin{conj}\label{conj-funct-equation} We have
$$A(\mathcal{X})^{(d-s)/2}\cdot \zeta(\overline{\mathcal{X}},d-s)=\pm A(\mathcal{X})^{s/2} \cdot \zeta(\overline{\mathcal{X}},s)$$
where the conductor $A(\mathcal{X})>0$ is a positive real number.
\end{conj}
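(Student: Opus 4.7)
The plan is to deduce Conjecture~\ref{conj-funct-equation} from the conjectural Deligne--Serre functional equations of the individual completed Hasse--Weil L-functions $\Lambda(h^i(X_{\mathbb{Q}}),s):=L_\infty(h^i(X_{\mathbb{Q}}),s)\cdot L(h^i(X_{\mathbb{Q}}),s)$ attached to the cohomology of the generic fibre $X_{\mathbb{Q}}=\mathcal{X}_{\mathbb{Q}}$, reorganised via Poincar\'e duality on $X_{\mathbb{Q}}$.

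First I would establish the multiplicative decomposition
\[
\zeta(\overline{\mathcal{X}},s) \;=\; \prod_{i=0}^{2(d-1)} \Lambda\bigl(h^i(X_{\mathbb{Q}}),s\bigr)^{(-1)^i}.
\]
The $\Gamma$-factor part is exactly (\ref{zetaxinftydef}); the $\zeta(\mathcal{X},s)$-part follows at good-reduction primes from smooth and proper base change, while at bad primes one defines the Euler factor of $h^i(X_{\mathbb{Q}})$ as $\det(1-\mathrm{Frob}_p p^{-s}\mid H^i(X_{\overline{\mathbb{Q}}_p},\mathbb{Q}_\ell)^{I_p})^{-1}$, its compatibility with the local factors of $\zeta(\mathcal{X}_{\mathbb{F}_p},s)$ being controlled by the monodromy--weight conjecture and $\ell$-independence.

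Next I would insert into this product the Tate-normalised functional equation
\[
\Lambda(h^i(X_{\mathbb{Q}}),s) \;=\; \varepsilon(h^i)\cdot N(h^i)^{-s}\cdot \Lambda\bigl(h^i(X_{\mathbb{Q}})^{*}(1),-s\bigr),
\]
and apply Poincar\'e duality on the smooth projective generic fibre, which gives $h^i(X_{\mathbb{Q}})^{*}(1)\cong h^{2d-2-i}(X_{\mathbb{Q}})(d)$ and hence $\Lambda(h^i(X_{\mathbb{Q}})^{*}(1),-s)=\Lambda(h^{2d-2-i}(X_{\mathbb{Q}}),d-s)$. Reindexing the resulting product via $j=2d-2-i$ (which preserves $(-1)^i$) converts the right-hand side into $\zeta(\overline{\mathcal{X}},d-s)$, and the prefactors assemble into
\[
\zeta(\overline{\mathcal{X}},s) \;=\; \varepsilon(\mathcal{X})\cdot A(\mathcal{X})^{-s}\cdot \zeta(\overline{\mathcal{X}},d-s),
\]
with $\varepsilon(\mathcal{X}):=\prod_i\varepsilon(h^i)^{(-1)^i}$ and $A(\mathcal{X}):=\prod_i N(h^i)^{(-1)^i}$. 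Applying this identity twice forces the consistency relation $\varepsilon(\mathcal{X})^2=A(\mathcal{X})^d$, so $\varepsilon(\mathcal{X})=\pm A(\mathcal{X})^{d/2}$; substituting back yields exactly the symmetric form asserted in the conjecture, with sign $\pm=\varepsilon(\mathcal{X})/A(\mathcal{X})^{d/2}\in\{\pm 1\}$. As a sanity check, for $\mathcal{X}$ a regular model of an elliptic curve $E/\mathbb{Q}$ (so $d=2$) one computes $A(\mathcal{X})=N_E^{-1}$ and $\varepsilon(\mathcal{X})=(w_E N_E)^{-1}$, and the consistency reduces to $w_E^2=1$, recovering the classical functional equation of $\zeta(\overline{\mathcal{X}},s)$ with sign $w_E$.

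The principal obstacle is that the Deligne--Serre functional equations for the individual $\Lambda(h^i(X_{\mathbb{Q}}),s)$ are themselves deep open conjectures, essentially equivalent to automorphy of the motives $h^i(X_{\mathbb{Q}})$ in the Langlands programme, and unconditional only in restricted cases (curves, CM abelian varieties, modular elliptic curves, certain Hilbert modular settings). The plan above therefore does not settle Conjecture~\ref{conj-funct-equation} but reduces it to the conjunction of these classical functional equations, supplemented at bad-reduction places by the monodromy--weight conjecture and $\ell$-independence of local Euler factors. Once both inputs are granted, the remaining argument is an entirely formal alternating-product manipulation organised by Poincar\'e duality on the generic fibre.
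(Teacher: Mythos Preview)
The statement you are addressing is labelled in the paper as a \emph{conjecture}; the paper offers no proof of it. It is introduced as an expected functional equation for the completed zeta function of a regular proper flat $\mathcal{X}$, with a remark identifying $A(\mathcal{X})$ for arithmetic surfaces via Bloch's discriminant, and is then used only as a hypothesis in the subsequent corollary. So there is no argument in the paper against which to compare yours.

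What you have written is a conditional \emph{reduction} of the conjecture to the Deligne--Serre functional equations for the motives $h^i(X_{\mathbb{Q}})$. This reduction is sound when $\mathcal{X}$ is smooth proper over a number ring, which is exactly the setting in which the paper itself asserts the factorisation $\zeta(\mathcal{X},s)=\prod_i L(h^i(X),s)^{(-1)^i}$ (just before Theorem~\ref{equiv-main-tnc}). For a general regular $\mathcal{X}$, however, the first step of your argument fails: the local Euler factor of $\zeta(\mathcal{X},s)$ at a prime $p$ of bad reduction is \emph{not} the alternating product of the inertia-invariant local $L$-factors of the $h^i$. Appendix~B makes this explicit: under the monodromy--weight conjecture one has $H^i(X_{\bar s},\mathbb{Q}_\ell)\cong Z^i\oplus (V^i)^I$ (Proposition~\ref{locmotivic}, Corollary~\ref{sdecompo}), so that
\[
\zeta(\mathcal{X}_{\mathbb{F}_p},s)=\prod_i\det\bigl(1-\phi p^{-s}\mid Z^i\bigr)^{(-1)^{i+1}}\cdot\prod_i L_p(h^i,s)^{(-1)^i},
\]
and the extra factors coming from the $Z^i$ (which are pure of weight $i$ but need not vanish) are precisely what the monodromy--weight conjecture supplies rather than eliminates. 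Your claimed decomposition $\zeta(\overline{\mathcal{X}},s)=\prod_i\Lambda(h^i,s)^{(-1)^i}$ therefore does not hold in the generality of the conjecture, and the reduction as written applies only to the everywhere-good-reduction case. To push it further you would need a separate duality or functional-equation statement for the alternating product of the $Z^i$-factors, which is not supplied by the inputs you invoke.
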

For $\X$ of dimension $d=2$, the conductor $A(\X)$ is the square root of the discriminant of the pairing
$$R\Gamma_{dR}(\X/\bz)/F^2\otimes_\bz^L R\Gamma_{dR}(\X/\bz)/F^2\rightarrow \bz[-2]$$
induced by Poincar\'e duality  (see \cite{Bloch87}).

\begin{cor}\label{cor:fe}
Assume that $\zeta(\overline{\mathcal{X}},d-s)$ satisfies the functional equation of Conjecture \ref{conj-funct-equation}.
If two of the following assertions are true, then so is the third:
\begin{itemize}
\item[i)] We have \begin{equation}\label{toward}
\begin{split}
 & A(\mathcal{X})^{n/2} \cdot \zeta^*(\mathcal{X}_\infty,n)\cdot x_{\infty}(\mathcal{X},n)^{-1}\cdot C(\mathcal{X},n)^{-1}
 \\
  =&\pm   A(\mathcal{X})^{(d-n)/2}\cdot \zeta^*(\mathcal{X}_\infty,d-n)\cdot x_{\infty}(\mathcal{X},d-n)^{-1}\cdot C(\mathcal{X},d-n)^{-1}.
 \end{split}
\end{equation}
\item[ii)] Conjecture \ref{conjmain} for $(\mathcal{X},n)$ holds.
\item[iii)] Conjecture \ref{conjmain} for $(\mathcal{X},d-n)$ holds.
\end{itemize}
\end{cor}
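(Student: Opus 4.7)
The plan is to use Proposition \ref{prop-forfunctionalequation} to pass between $(\X, n)$ and $(\X, d-n)$, and then to convert the resulting identity of $\bz$-structures on $\Delta(\X/\bz, d-n)_\br$ into the scalar identity (i) by feeding in the functional equation for leading Taylor coefficients.

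First, I would observe that the definition of $\Xi_{\infty}(\X/\bz, n)$ is antisymmetric under the swap $n \leftrightarrow d-n$, which yields a canonical identification $\Xi_{\infty}(\X/\bz, d-n) \simeq \Xi_{\infty}(\X/\bz, n)^{-1}$ and correspondingly $x_\infty(\X, d-n) = x_\infty(\X, n)^{-1}$. Using the isomorphism $\psi: \Delta(\X/\bz, n) \otimes \Xi_{\infty}(\X/\bz, n) \xrightarrow{\sim} \Delta(\X/\bz, d-n)$ of Proposition \ref{prop-forfunctionalequation}, I would rewrite $\Delta(\X/\bz, d-n)$ as the image under $\psi$ of $\Delta(\X/\bz, n) \otimes \Xi_{\infty}(\X/\bz, n)$. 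By the commutativity of the square there, together with the fact that $\xi_\infty(\X, n)$ carries $x_\infty(\X, n)^{-2}\cdot \bz$ to $\Xi_{\infty}(\X/\bz, n)$, this translates assertion (ii) into the statement
\[ \lambda_{\infty}(\X, d-n)\bigl(\zeta^*(\X, n)^{-1}\, C(\X, n) \cdot x_\infty(\X, n)^{-2} \cdot \bz\bigr) = \Delta(\X/\bz, d-n). \]
Since $\lambda_\infty(\X,d-n)$ is an isomorphism, (ii) therefore reduces assertion (iii) to the equality of invertible $\bz$-submodules of $\br$
\[ \zeta^*(\X, d-n)^{-1} C(\X, d-n) \cdot \bz \;=\; \pm\, \zeta^*(\X, n)^{-1} C(\X, n) \cdot x_\infty(\X, n)^{-2} \cdot \bz. \qquad (\ast) \]

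Second, I would apply the functional equation of Conjecture \ref{conj-funct-equation} to the leading Taylor coefficients of the symmetric function $s\mapsto A(\X)^{s/2}\zeta(\overline{\X}, s)=A(\X)^{s/2}\zeta(\X, s)\zeta(\X_\infty, s)$ at $s=n$ and $s=d-n$. This yields
\[ A(\X)^{n/2}\, \zeta^*(\X, n)\, \zeta^*(\X_\infty, n) \;=\; \pm\, A(\X)^{(d-n)/2}\, \zeta^*(\X, d-n)\, \zeta^*(\X_\infty, d-n). \]
Solving this for the ratio $\zeta^*(\X, n)/\zeta^*(\X, d-n)$ and substituting into assertion (i), a short direct manipulation using $x_\infty(\X, d-n) = x_\infty(\X, n)^{-1}$ shows that (i) is equivalent to $(\ast)$. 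Combining the two steps, under the functional equation one has the equivalences
\[ (\mathrm{i}) \;\Longleftrightarrow\; (\ast) \;\Longleftrightarrow\; \bigl((\mathrm{ii})\Leftrightarrow(\mathrm{iii})\bigr), \]
which is precisely the assertion that any two of (i), (ii), (iii) force the third.

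The proof is essentially formal once Proposition \ref{prop-forfunctionalequation} is in hand: no conceptual obstacle is anticipated, only bookkeeping of determinants and of the sign ambiguities in the functional equation. The mildly delicate point is the identification $\Xi_\infty(\X/\bz, d-n)\simeq \Xi_\infty(\X/\bz, n)^{-1}$ together with the induced compatibility $\xi_\infty(\X, d-n)=\xi_\infty(\X, n)^{-1}$, but this is a direct consequence of the manifestly involutive definition and of duality for $R\Gamma_W(\X_\infty,\bz(\cdot))$ and $R\Gamma_{dR}(\X/\bz)/F^{\cdot}$ already invoked in the construction of $\psi$.
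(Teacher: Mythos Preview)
Your proposal is correct and follows essentially the same approach as the paper's proof. The paper phrases the key identity coming from Proposition~\ref{prop-forfunctionalequation} as $\det(\lambda_\infty(\mathcal X,n))\cdot x_\infty(\mathcal X,n)=\pm\det(\lambda_\infty(\mathcal X,d-n))\cdot x_\infty(\mathcal X,d-n)$ and then rewrites Conjecture~\ref{conjmain} as $\det(\lambda_\infty(\mathcal X,n))=\pm\zeta^*(\mathcal X,n)\,C(\mathcal X,n)^{-1}$, whereas you carry out the same bookkeeping in terms of $\bz$-submodules; your intermediate relation $(\ast)$ is exactly what results from combining these two identities.

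Two minor points of phrasing. First, the chain ``$(\mathrm i)\Leftrightarrow(\ast)\Leftrightarrow\bigl((\mathrm{ii})\Leftrightarrow(\mathrm{iii})\bigr)$'' is not literally what you have shown: what holds is that $(\ast)$, (ii), (iii) satisfy the two-out-of-three property (unconditionally, via the commutative square), and that $(\mathrm i)\Leftrightarrow(\ast)$ under the functional equation; together these give two-out-of-three for (i), (ii), (iii). Second, the identification $\Xi_\infty(\mathcal X/\bz,d-n)\simeq\Xi_\infty(\mathcal X/\bz,n)^{-1}$ follows immediately from the manifest antisymmetry of the definition under $n\leftrightarrow d-n$; no further duality for $R\Gamma_W(\mathcal X_\infty,\bz(\cdot))$ or $R\Gamma_{dR}$ is needed there.
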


\begin{proof}
We have $x_{\infty}(\mathcal{X},d-n)=\pm x_{\infty}(\mathcal{X},n)^{-1}$, hence
Proposition \ref{prop-forfunctionalequation} gives an equality
$$\mathrm{det}(\lambda_{\infty}(\mathcal{X},n))\cdot x_{\infty}(\mathcal{X},n)=\pm \mathrm{det}(\lambda_{\infty}(\mathcal{X},d-n))\cdot x_{\infty}(\mathcal{X},d-n).$$
Moreover, Conjecture \ref{conjmain} for $(\mathcal{X},n)$ is the following:
$$\mathrm{det}(\lambda_{\infty}(\mathcal{X},n))=\pm \zeta^*(\mathcal{X},n)\cdot C(\mathcal{X},n)^{-1}$$
and similarly for  $(\mathcal{X},d-n)$. The Corollary now easily follows from Conjecture \ref{conj-funct-equation}.
\end{proof}

\subsection{Proven cases and examples}\label{sec:examples}

\subsubsection{Varieties over finite fields} Let $\mathcal{X}$ be regular proper arithmetic scheme of pure dimension $d$ defined over the finite field $\mathbb{F}_p$. We assume that $\mathcal{X}$ satisfies $\mathbf{L}(\mathcal{X}_{et},n)$, $\mathbf{L}(\mathcal{X}_{et},d-n)$ and $\mathbf{B}(\mathcal{X},d)$.
\begin{prop}
Conjecture \ref{conjmain} holds for $\mathcal{X}$ and any $n\in\mathbb{Z}$.
\end{prop}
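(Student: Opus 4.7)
The plan is to reduce the statement to Geisser's special-value formula for smooth proper varieties over $\mathbb{F}_p$ (\cite{Geisser04b} Thm.~9.1), via the comparison results of Section \ref{subsect-Licht} and the vanishing of all archimedean contributions.

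First, since $\mathcal{X}$ is proper over $\mathbb{F}_p$, one has $\mathcal{X}(\mathbb{C})=\emptyset$, so $R\Gamma_W(\mathcal{X}_\infty,\mathbb{Z}(n))=0$ and the defining triangle (\ref{triangle-cpctsupp-fgcoh}) gives $R\Gamma_{W,c}(\mathcal{X},\mathbb{Z}(n))\stackrel{\sim}{\to}R\Gamma_W(\overline{\mathcal{X}},\mathbb{Z}(n))$. All Deligne-cohomology complexes in (\ref{rgc2}) vanish, $R\Gamma_c(\mathcal{X},\mathbb{R}(n))=R\Gamma(\mathcal{X},\mathbb{R}(n))$, and
\[R\Gamma_{\mathrm{Ar},c}(\mathcal{X},\tr(n))=R\Gamma(\mathcal{X},\mathbb{R}(n))\oplus R\Gamma(\mathcal{X},\mathbb{R}(n))[-1].\]
In this setting the cup-product sequence (\ref{cupseq}) is tautologically exact, and $\lambda_\infty(\mathcal{X},n)$ becomes the canonical trivialization of $\det_\mathbb{R} R\Gamma_{\mathrm{Ar},c}(\mathcal{X},\tr(n))$ coming from the identification of the complex with $C\oplus C[-1]$ for $C=R\Gamma(\mathcal{X},\mathbb{R}(n))$.

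Next, by Remark after Conjecture ${\bf B}(\mathcal{X},n)$, the hypothesis ${\bf B}(\mathcal{X},n)$ in characteristic $p$ amounts to Parshin vanishing together with the perfect intersection pairing ${\bf P}(\mathcal{X},n)$. Combined with ${\bf L}(\mathcal{X}_{et},n)$ and ${\bf L}(\mathcal{X}_{et},d-n)$, Corollary \ref{cor-comparison-conject-Let-vs-LW} yields ${\bf L}(\mathcal{X}_W,n)$ and ${\bf L}(\mathcal{X}_W,d-n)$, and Theorem \ref{thm-comparison-char-p} produces a canonical isomorphism
\[R\Gamma_W(\overline{\mathcal{X}},\mathbb{Z}(n))=R\Gamma_W(\mathcal{X},\mathbb{Z}(n))\stackrel{\sim}{\longrightarrow}R\Gamma(\mathcal{X}_W,\mathbb{Z}(n)).\]
Under this identification the cup-product with $\theta$ matches Geisser's cup-product with the canonical class $e\in H^1(W_{\mathbb{F}_p},\mathbb{Z})$, so the trivialization $\lambda_\infty$ agrees with the $\cup e$-trivialization of $\det_\mathbb{R} R\Gamma(\mathcal{X}_W,\mathbb{R}(n))$ used in \cite{Geisser04b}.

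Finally, by Proposition \ref{prop-cp-charp} we have $c_l(\mathcal{X},n)\equiv 1 \bmod \mathbb{Z}_l^\times$ for every prime $l$, hence $C(\mathcal{X},n)=1$. The de Rham factor $\det_\mathbb{Z} R\Gamma_{dR}(\mathcal{X}/\mathbb{Z})/F^n$ is, by \cite{Morin15}, the determinant of a perfect torsion complex of $p$-primary alternating order $p^{\chi(\mathcal{X}_{\mathbb{F}_p},\mathcal{O},n)}$; its canonical rational trivialization therefore contributes the factor $p^{\chi(\mathcal{X}_{\mathbb{F}_p},\mathcal{O},n)}$. After these identifications, Conjecture \ref{conjmain} is equivalent to the identity
\[\zeta^*(\mathcal{X},n)^{-1}\cdot\mathbb{Z}=p^{-\chi(\mathcal{X}_{\mathbb{F}_p},\mathcal{O},n)}\cdot\det_\mathbb{Z} R\Gamma(\mathcal{X}_W,\mathbb{Z}(n))\]
inside $\mathbb{Q}$, where the right-hand side is trivialized by $\cup e$. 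This is precisely \cite{Geisser04b}[Thm.~9.1], which holds under ${\bf L}(\mathcal{X}_W,n)+{\bf L}(\mathcal{X}_W,d-n)$.

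The main technical point is the compatibility of the two trivializations ($\cup\theta$ versus $\cup e$) and the precise tracking of the sign and $p$-adic valuation in the de Rham factor; both are essentially bookkeeping once the identifications of the second paragraph are in place.
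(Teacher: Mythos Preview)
Your proof is correct and follows essentially the same approach as the paper's. The paper's proof is terser: it cites Proposition \ref{prop-cp-charp} for $C(\mathcal{X},n)=1$, uses Corollary \ref{cor-comparison-conject-Let-vs-LW} and Theorem \ref{thm-comparison-char-p} to identify $R\Gamma_{W,c}(\mathcal{X},\mathbb{Z}(n))$ with Lichtenbaum--Geisser Weil-\'etale cohomology, and then appeals directly to \cite{Morin15}, which packages Geisser's special-value formula \cite{Geisser04b}[Thm.~9.1] together with the derived de Rham interpretation of Milne's correcting factor. You have unpacked that last citation into its two constituents (the de Rham contribution and Geisser's theorem), and made explicit the vanishing of all archimedean data and the compatibility of the $\cup\theta$-trivialization with Geisser's $\cup e$-trivialization---points the paper leaves implicit in the reference to \cite{Morin15}.
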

\begin{proof}
By Proposition \ref{prop-cp-charp}, we have $C(\mathcal{X},n)=1$. By Corollary \ref{cor-comparison-conject-Let-vs-LW} and since
$\mathbf{B}(\mathcal{X},d)$ implies $\mathbf{P}(\mathcal{X},d)$, $\mathcal{X}$ satisfies $\mathbf{L}(\mathcal{X}_{W},n)$. The result follows from Theorem \ref{thm-comparison-char-p} and \cite{Morin15}.
\end{proof}

\subsubsection{The case $n=0$.} Let $\mathcal{X}$ be regular proper arithmetic scheme of pure dimension $d$. We assume that $\mathcal{X}$ satisfies $\mathbf{L}(\overline{\mathcal{X}}_{et},d)$ and $\mathbf{B}(\mathcal{X},d)$.

\begin{prop}
Conjecture \ref{conjmain} for $\mathcal{X}$ and $n=0$ is equivalent to (\cite{Morin14} Conjecture 4.2 (b)).
\end{prop}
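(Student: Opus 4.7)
The plan is to show that for $n=0$ both sides of Conjecture \ref{conjmain} reduce to the data appearing in \cite{Morin14}, then invoke the compatibility of definitions. First, observe the two simplifications specific to $n=0$. Since $L\Omega^*_{\mathcal{X}/\mathbb{Z}}/\mathrm{Fil}^0=0$ (see subsection \ref{sec:derived-de-rham}), the derived de Rham factor is trivial and
\[
\Delta(\mathcal{X}/\mathbb{Z},0)=\det\nolimits_{\mathbb{Z}} R\Gamma_{W,c}(\mathcal{X},\mathbb{Z}(0)).
\]
Second, Proposition \ref{prop-cp-n=0} gives $c_p(\mathcal{X},0)\equiv 1\,\mathrm{mod}\,\mathbb{Z}_p^\times$ for every $p$, so $C(\mathcal{X},0)=1$. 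Consequently Conjecture \ref{conjmain} at $n=0$ reads $\lambda_\infty(\mathcal{X},0)(\zeta^*(\mathcal{X},0)^{-1}\cdot \mathbb{Z})=\det_{\mathbb{Z}}R\Gamma_{W,c}(\mathcal{X},\mathbb{Z}(0))$, which is formally identical in shape to the statement of \cite{Morin14}[Conj. 4.2 (b)].

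The substantive content is then to identify the complex $R\Gamma_{W,c}(\mathcal{X},\mathbb{Z}(0))$ of the present paper with the Weil-\'etale compact support complex $R\Gamma_{W,c}(\mathcal{X},\mathbb{Z})$ of \cite{Morin14}, together with its $\mathbb{R}$-trivialization. I would proceed through the three defining steps of subsections \ref{sec:weiletale} and \ref{sect-compactsupport}. First, the Bloch cycle complex $\mathbb{Z}(0)=z^0(-,2\cdot{-})$ is quasi-isomorphic to the constant sheaf $\mathbb{Z}$ concentrated in degree zero on $\mathcal{X}_{et}$, whence $R\Gamma(\overline{\mathcal{X}}_{et},\mathbb{Z}(0))\cong R\Gamma(\overline{\mathcal{X}}_{et},\mathbb{Z})$ by construction of the Artin-Verdier complex (Appendix A). Second, the canonical map $\alpha_{\mathcal{X},0}$ of Theorem \ref{thm-alpha}, whose source is $R\mathrm{Hom}(R\Gamma(\mathcal{X},\mathbb{Q}(d)),\mathbb{Q}[-2d-2])$, has exactly the same target, source and cohomological description (via divisible subgroups of top-degree \'etale cohomology) as the ``dual cycle class map'' used in \cite{Morin14} to build $R\Gamma_W(\overline{\mathcal{X}},\mathbb{Z})$. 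By the uniqueness argument of Theorem \ref{cor-functoriality} the two mapping fibres are canonically isomorphic as objects of $\mathcal{D}$. Third, the map $i_\infty^*$ of Proposition \ref{lem-com2}, defining $R\Gamma_{W,c}(\mathcal{X},\mathbb{Z}(0))$ from $R\Gamma_W(\overline{\mathcal{X}},\mathbb{Z}(0))$ via the infinite fibre complex $R\Gamma_W(\mathcal{X}_\infty,\mathbb{Z}(0))$, is readily checked to agree with Morin's map to the archimedean term, as both are induced on cohomology by pullback along $\mathcal{X}_\infty\to\overline{\mathcal{X}}$ composed with the natural projection $\tau^{>0}R\widehat{\pi}_*\mathbb{Z}\to 0$ that appears for $n=0$ (for $n=0$, the definition of $i_\infty^*\mathbb{Z}(0)$ reduces to $R\pi_*\mathbb{Z}$ in degree $0$).

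Next I would verify the trivialization. For $n=0$ the tangent triangle (\ref{tangenttri2}) degenerates, giving a canonical isomorphism $R\Gamma_{\mathrm{Ar},c}(\mathcal{X},\widetilde{\mathbb{R}}(0))\xrightarrow{\sim} R\Gamma_{W,c}(\mathcal{X},\mathbb{Z}(0))_{\mathbb{R}}$. By Definition \ref{r-rgc-def} one has $R\Gamma_{\mathrm{Ar},c}(\mathcal{X},\widetilde{\mathbb{R}}(0))=R\Gamma_c(\mathcal{X},\mathbb{R}(0))\oplus R\Gamma_c(\mathcal{X},\mathbb{R}(0))[-1]$, so $\lambda_\infty(\mathcal{X},0)$ is built from the cup-product-with-$\theta$ isomorphism on these two summands, which under Conjecture $\mathbf{B}(\mathcal{X},0)$ identifies the second summand with the $\mathbb{R}$-dual of $R\Gamma(\mathcal{X},\mathbb{R}(d))$. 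This is precisely the rational decomposition (\ref{rgwdecomp}) combined with the regulator pairing of \cite{Morin14}, and the corresponding $\mathbb{R}$-trivialization matches term by term with the one Morin uses to state his Conjecture 4.2~(b).

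The main obstacle is purely bookkeeping: checking at the level of the derived category (not just cohomology) that the canonical map $\alpha_{\mathcal{X},0}$ used here coincides with the map used in \cite{Morin14} to cut out $R\Gamma_W(\overline{\mathcal{X}},\mathbb{Z})$, and that the resulting trivialization of the determinant line agrees on the nose (including signs). Both constructions are characterized by the same universal property (inducing the factorization ``$\mathrm{Hom}(H^{2d+2-i}(\mathcal{X},\mathbb{Q}(d)),\mathbb{Q})\twoheadrightarrow H^i(\overline{\mathcal{X}}_{et},\mathbb{Z})_{\mathrm{div}}\hookrightarrow H^i(\overline{\mathcal{X}}_{et},\mathbb{Z})$'' of Theorem \ref{thm-alpha}), so the Yoneda-style uniqueness statement in the proof of Theorem \ref{cor-functoriality} forces the two maps to agree; from this the equivalence of the two conjectures is immediate.
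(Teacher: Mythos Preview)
Your proposal is correct and follows the same approach as the paper, though the paper's proof is a single sentence: ``By Proposition \ref{prop-cp-n=0}, we have $C(\mathcal{X},0)=1$. The result follows.'' The paper can be this terse because the entire construction of $R\Gamma_{W,c}(\mathcal{X},\mathbb{Z}(n))$ and the trivialization $\lambda_\infty$ in this article is explicitly designed as the generalization to arbitrary $n$ of the $n=0$ construction in \cite{Morin14} (see the introduction and the remark that Definition \ref{defn-fg-cohomology} follows ``the model of \cite{Morin14}''); so the identifications you carefully spell out are, from the authors' point of view, built into the definitions rather than something to be checked. Your observations that $L\Omega^*_{\mathcal{X}/\mathbb{Z}}/\mathrm{Fil}^0=0$ and $C(\mathcal{X},0)=1$ are exactly the two reductions the paper has in mind; the rest of your argument is a correct unpacking of why the specialization to $n=0$ recovers Morin's objects.
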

\begin{proof}
By Proposition \ref{prop-cp-n=0}, we have $C(\mathcal{X},0)=1$. The result follows.
\end{proof}

\subsubsection{Number rings} Let $F$ be a number field, set $$\X=\Spec(\co_F)$$
and let
\[ \rho_n:=\ord_{s=n}\zeta_F(s)=\begin{cases} r_2 & \text{$n<0$ odd}\\r_1+r_2 & \text{$n<0$ even}\\r_1+r_2-1 & \text{$n=0$}\\-1 & \text{$n=1$} \\0 & \text{$n>1$}\end{cases}\]
be the well known vanishing order of the Dedekind Zeta function
$$ \zeta(\X,s)=\zeta_F(s)$$
where $r_1$, resp. $r_2$, is the number of real, resp. complex, places of $F$.
Conjectures ${\bf L}(\overline{\mathcal{X}}_{et},n)$, ${\bf B}(\mathcal{X},n)$ and ${\bf AV}(\overline{\mathcal{X}}_{et},n)$
are known for any $n\in\bz$.  For $n\geq 1$ define
\[ H^{i,n}:=H^i(\X_\et,\bz(n)).\]
We have $H^{i,n}=0$ for $i\leq 0$, the group $H^{1,n}$ is finitely generated and $H^{2,n}$ is finite. This follows from the isomorphism \cite{Geisser04a}[Thm.1.2]
\[ H^i(\X_{Zar},\bz(n))\xrightarrow{\sim}H^i(\X_\et,\bz(n)),\quad  i\leq n+1,\]
the analysis of the spectral sequence from motivic cohomology to algebraic K-theory (see \cite{Levine99}[14.4]) and the known properties (finite generation, ranks) of the algebraic K-groups of $\co_F$. In degrees $i\geq 3$ we have
\[ H^{i,n}\cong \begin{cases}{\mathrm Br}(\co_F)\cong (\bz/2\bz)^{r_1,\Sigma=0},\bq/\bz, (\bz/2\bz)^{r_1\cdot\delta_{i,1}} & n=1\\(\bz/2\bz)^{r_1\cdot\delta_{i,n}} & n\geq 2\end{cases}\]
where
\[ \delta_{i,n}=\begin{cases} 1  &  n\equiv i \mod 2\\ 0 & n\not\equiv i\mod 2.  \end{cases}\]
The Beilinson regulator map
$$H^1(\X_\et,\bz(n))\xrightarrow{r_n}H^1_{\cd}(\X_{/\br},\br(n))\cong \prod_{v\mid\infty}F_v/H^0(F_v,(2\pi i)^n\br)\cong \prod_{v\mid\infty}H^0(F_v,(2\pi i)^{n-1}\br)$$
induces isomorphisms
$$r_{n,\br}:H^1(\X_\et,\bz(n))_\br\xrightarrow{\sim}\prod_{v\mid\infty}H^0(F_v,(2\pi i)^{n-1}\br)$$
for $n>1$ and
$$r_{1,\br}:H^1(\X_\et,\bz(1))_\br\cong\bigl(\prod_{v\mid\infty}\br\bigr)^{\Sigma=0}$$
for $n=1$.  For $n\geq 1$ we set
\begin{align}  h_n:=& |H^2(\X_\et,\bz(n))|\notag\\ w_n:=&|H^1(\X_\et,\bz(n))_{tor}|\notag\\R_n:=&\vol(\coker(r_n)) \notag\end{align}
where the volume is taken with respect to the $\bz$-structure $\prod_{v\mid\infty}H^0(F_v,(2\pi i)^{n-1}\bz)$, resp. $(\prod_{v\mid\infty}\bz)^{\Sigma=0}$, of the target.
We have
$$H^i_W(\overline{\mathcal{X}},\mathbb{Z}(n))\cong\begin{cases}
0,0,H^{1,n},H^{2,n},(\bz/2\bz)^{r_1\cdot\epsilon_{3,n}},(\bz/2\bz)^{r_1\cdot\epsilon_{i,n}} & n>1\\
0,0,\mathcal{O}_F^{\times},\Cl(\co_F),\mathbb{Z},0 & n=1\\
0,\bz,0,(\co_F^\times)^*\oplus \Cl(\co_F)^D,(\co_F^\times)_{tor}^D,0 & n=0\\
(\bz/2\bz)^{r_1\cdot\epsilon_{i-1,n}},0,0, (H^{1,1-n})^*\oplus (H^{2,1-n})^D,(H^{1,1-n}_{tor})^D,0 & n<0\end{cases} $$
in degrees $i<0$, $i=0,1,2,3$ and $i>3$ respectively. Here $A^*=\Hom_\bz(A,\bz)$, $D$ is the Pontryagin dual and
\[ \epsilon_{i,n}=\begin{cases} \delta_{i,n}  &  1\leq i\leq n \text{  or  } n<i<0\\ 0 & \text{else.}  \end{cases}\]
The long exact sequence induced by (\ref{z-ar-w-triangle}) gives
$$H^i_\Ar(\overline{\mathcal{X}},\mathbb{Z}(n))\cong\begin{cases}
0,0,H^{1,n}_{tor},\coker(r_n)\oplus H^{2,n},(\bz/2\bz)^{r_1\cdot\epsilon_{3,n}},(\bz/2\bz)^{r_1\cdot\epsilon_{i,n}} & n>1\\
0,0,(\mathcal{O}_F^{\times})_{tor},Cl(\overline{\X}),\mathbb{Z},0 & n=1\\
0,\bz,0,(\co_F^\times)^*\oplus \Cl(\co_F)^D,(\co_F^\times)_{tor}^D,0 & n=0\\
(\bz/2\bz)^{r_1\cdot\epsilon_{i-1,n}},0,0, (H^{1,1-n})^*\oplus (H^{2,1-n})^D,(H^{1,1-n}_{tor})^D,0 & n<0\end{cases} $$
where $\Cl(\overline{\X})\cong\Pic(\overline{\X})$ is the usual Arakelov class group of $\co_F$.
For $n\leq 0$ there are isomorphisms
\[ H^i_{\Ar,c}(\X,\bz(n))\cong H^i_{W,c}(\X,\bz(n))\cong 0, \bz^{\rho_n}, (H^{1,1-n})^*\oplus (H^{2,1-n})^D,
(H^{1,1-n}_{tor})^D,0 \]
in degrees $i<1$, $i=1,2,3$ and $i>3$ respectively. Hence for any $i\in\bz$ the groups $H^i_{\Ar,c}(\X,\bz(n))$ are finitely generated,
\[ H^i_{\Ar,c}(\X,\tr(n))\cong H^i_{\Ar,c}(\X,\bz(n))\otimes_\bz\br\]
and $H^i_{\Ar,c}(\X,\tr/\bz(n))$ is compact. More precisely, for $n<0$ we have isomorphisms
\[ H^1_{\Ar,c}(\X,\tr(n))=H^1_c(\X,\br(n))\cong H^0_{\cd}(\X_{/\br},\br(n))\cong \prod_{v\mid\infty} H^0(F_v,(2\pi i)^n\br)    \]
and
\begin{align} H^2_{\Ar,c}(\X,\tr(n))=&H^1_c(\X,\br(n))\cong H^1(\X,\br(1-n))^*\cong H^1_{\cd}(\X_{/\br},\br(1-n))^*\label{h2ccompute}\\
\cong &\prod_{v\mid\infty}(F_v/H^0(F_v,(2\pi i)^{1-n}\br))^*\cong
\prod_{v\mid\infty}H^0(F_v,(2\pi i)^{-n}\br)^* \notag\end{align}
and similarly for $n=0$ (taking the quotient by the diagonally embedded $\br$ in the target). We note that
\[ \dim_\br H^1_{\Ar,c}(\X,\tr(n))=\dim_\br H^2_{\Ar,c}(\X,\tr(n))=\rho_n\]
and $H^i_{\Ar,c}(\X,\tr(n))=0$ for $i\neq 1,2$, verifying Conjecture \ref{conj-vanishingorder}. For $n<0$ recall that $R_{1-n}$ was formed with respect to the $\bz$-structures $H^{1,1-n}/tor$ and
$\prod_{v\mid\infty}H^0(F_v,(2\pi i)^{-n}\bz)$ in the dual of (\ref{h2ccompute}) and therefore also agrees with the regulator formed with respect to $(H^{1,1-n})^*$ and the natural $\bz$-structure $$(\prod_{v\mid\infty}H^0(F_v,(2\pi i)^{-n}\bz))^*\cong \prod_{v\mid\infty}H^0(F_v,(2\pi i)^{n}\bz)\cong H^1_{W,c}(\X,\bz(n))$$
of $H^1_{\Ar,c}(\X,\tr(n))$. Similar considerations apply to $n=0$. For $n\leq 0$ we have $C(\X,n)=1$ by Prop. \ref{prop-cp-n=0} and $R\Gamma_{dR}(\mathcal{X}/\mathbb{Z})/F^n=0$ and hence
Conjecture \ref{conjmain} reduces to the statement
\begin{equation} \zeta^*_F(n) = \pm\frac{h_{1-n}\cdot R_{1-n}}{w_{1-n}}. \label{nless0}\end{equation}
For $n\geq 1$ the groups $H^i_{\Ar,c}(\X,\bz(n))$ vanish except for
$i=2,3$. There is an exact sequence
\[ 0\to \left(\prod_{v\mid\infty}F_v^\times\right)/\co_F^\times\to
H^2_{\Ar,c}(\X,\bz(1)) \to \Cl(\co_F)\to 0\] showing that
the group $H^2_{\Ar,c}(\X,\bz(1))$ is an extension
$$0\to H^1_{\Ar,c}(\X,\tr/\bz(1))\to H^2_{\Ar,c}(\X,\bz(1))\to H^2_{\Ar,c}(\X,\tr(1))\cong\br\to 0$$
of $\br$ by a compact group. The continuation of this long exact sequence looks like
\[\begin{CD}0@>>>H^3_{\Ar,c}(\X,\bz(1))@>>> H^3_{\Ar,c}(\X,\tr(1))@>>> H^3_{\Ar,c}(\X,\tr/\bz(1))@>>> 0\\
@. \Vert@. \Vert@.\Vert@. @.\\
0 @>>> \bz @>>> \br @>>> \br/\bz @>>> 0.\end{CD}\]
Conjecture \ref{conj-vanishingorder} follows since $H^i_{\Ar,c}(\X,\tr(1))=0$ for $i\neq 2,3$.
For $n\geq 2$ there is an exact sequence
\begin{multline*}0\to \left(\prod_{v\mid\infty}H^0(F_v,\bc/(2\pi i)^n\bz)\right)/H^1(\X,\bz(n))\to H^2_{\Ar,c}(\X,\bz(n))\to H^2(\X,\bz(n))\to\\
\to (\bz/2\bz)^{r_1\cdot\epsilon_{2,n}}\to H^3_{\Ar,c}(\X,\bz(n))\to 0 \end{multline*}
showing that the groups $$H^{i-1}_{\Ar,c}(\X,\tr/\bz(n))\cong H^i_{\Ar,c}(\X,\bz(n))$$ are compact for $i=2,3$. We have
\[H^i_{\Ar,c}(\X,\tr(n))=0\]
for all $i$, verifying Conjecture \ref{conj-vanishingorder}.

\begin{prop} Assume $\X=\Spec(\co_F)$ and $n\geq 1$. If Conjecture $C_{EP}(\bq_p(n))$ of \cite{pr95}[App.C2] holds over all local fields $K=F_v$, in particular if all $F_v$ are abelian extensions of $\bq_p$ \cite{bb05}, then
\[ C(\X,n)= (n-1)!^{-[F:\bq]}.\]
\label{Ccompute}\end{prop}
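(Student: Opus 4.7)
The approach proceeds by localizing at each prime $p$, decomposing into local contributions at each prime $\mathfrak{p}$ of $F$ above $p$, and identifying the local factor with the quantity predicted by Conjecture $C_{EP}(\bq_p(n))$ over the local field $F_\mathfrak{p}$. Since $\X_{\bz_p}=\coprod_{\mathfrak{p}|p}\Spec(\co_{F_\mathfrak{p}})$ and $\X_{\bF_p}^{\mathrm{red}}=\coprod_{\mathfrak{p}|p}\Spec(\bF_\mathfrak{p})$, all three terms of the exact triangle of Conjecture ${\bf D}_p(\mathcal{X},n)$ decompose as direct sums over the primes $\mathfrak{p}|p$, and so does the comparison map $\lambda_p$; writing $d_p(\mathcal{X},n) = \prod_{\mathfrak{p}|p} d_\mathfrak{p}(n)$ accordingly and observing that $\chi(\mathcal{X}_{\bF_p},\co,n) = n\cdot \sum_{\mathfrak{p}|p} f_\mathfrak{p}$ (only $(i,j)=(0,0)$ contributes to the Milne Euler characteristic since each $\bF_\mathfrak{p}/\bF_p$ is separable), we reduce to proving the local identity
\[
\bigl|p^{nf_\mathfrak{p}}\cdot d_\mathfrak{p}(n)\bigr|_p \;=\; |(n-1)!|_p^{[F_\mathfrak{p}:\bq_p]}
\]
for each $\mathfrak{p}|p$. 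Once this is established, the fundamental identity $\sum_{\mathfrak{p}|p}[F_\mathfrak{p}:\bq_p]=[F:\bq]$ together with the product formula for the rational number $(n-1)! \in \bq^\times$ gives $C(\mathcal{X},n) = \prod_p |c_p(\mathcal{X},n)|_p = \prod_p |(n-1)!|_p^{[F:\bq]} = (n-1)!^{-[F:\bq]}$.

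To establish the local identity at $K:=F_\mathfrak{p}$, I would first observe that for $n\ge 1$ the complex $R\Gamma_{et}(\Spec(\bF_K),\bq_p(n))$ vanishes (since $W_r\Omega^n_{\bF_K,\log}=0$ on a field), so Conjecture ${\bf D}_p$ reduces over $\Spec(\co_K)$ to a quasi-isomorphism
\[
L\Omega^*_{\co_K/\bz_p}/F^n[-1] \xrightarrow{\ \sim\ } R\Gamma_{et}(\Spec(\co_K),\bq_p(n)).
\]
Using the localization sequence over $\Spec(\co_K)$ together with the purity isomorphism $R\Gamma_{\bF_K}(\Spec(\co_K),\bz_p(n)) \cong R\Gamma(\bF_K,\bz_p(n-1))[-2]$ (whose rational cohomology vanishes for $n\ge 2$; the $n=1$ case of the class number formula is already known independently), the right-hand side of the displayed quasi-isomorphism is identified with the Bloch-Kato finite cohomology $R\Gamma_f(K,\bq_p(n))$, and the map itself is induced by the Bloch-Kato exponential and its dual. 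Under this identification, $p^{nf_\mathfrak{p}}\cdot d_\mathfrak{p}(n)$ is precisely the determinant computed by $C_{EP}(\bq_p(n))$ over $K$, which equals, modulo $\bz_p^\times$, the product of the $\Gamma$-factor $\Gamma(\bq_p(n)) = (n-1)!^{[K:\bq_p]}$ with the local Euler factor $1-p^{nf_\mathfrak{p}}$ and the local $\epsilon$-factor. Since $\bq_p(n)$ is unramified as a $G_K$-representation, $\epsilon_p(\bq_p(n))\in\bz_p^\times$; and for $n\ge 1$ the Euler factor $1-p^{nf_\mathfrak{p}}$ is likewise a $\bz_p$-unit. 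Thus only the $\Gamma$-factor contributes to the $p$-adic valuation, yielding the desired local identity.

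The principal obstacle lies in the compatibility of normalizations: one must verify that the integral $\bz_p$-structures implicit in Conjecture ${\bf D}_p(\mathcal{X},n)$ (via the derived de Rham complex $L\Omega^*_{\co_K/\bz_p}/F^n$ and the motivic/syntomic $\bz_p$-structure on $R\Gamma_{et}(\Spec(\co_K),\bz_p(n))$) match the Bloch-Kato/Perrin-Riou conventions used to formulate $C_{EP}(\bq_p(n))$, so that the Milne term $p^{nf_\mathfrak{p}}$ is exactly the correction needed for the two sides to agree. Once this reconciliation is in place, the result follows formally from the hypothesis. When $F_v/\bq_p$ is abelian, one may invoke the proof of $C_{EP}(\bq_p(n))$ in \cite{bb05}, which reduces this normalization check to an explicit computation involving Soul\'e's cyclotomic elements and their images under the Bloch-Kato dual exponential map carried out in \emph{loc.\ cit.}
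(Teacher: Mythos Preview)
Your overall strategy---localize at each prime $\mathfrak{p}$ above $p$, identify the map in ${\bf D}_p(\X,n)$ with the Bloch--Kato exponential, and then invoke $C_{EP}(\bq_p(n))$---is exactly the paper's approach. The gap lies in your treatment of the $\epsilon$-factor and the derived de Rham integral structure at ramified primes. Your claim that ``$\bq_p(n)$ is unramified as a $G_K$-representation, hence $\epsilon_p(\bq_p(n))\in\bz_p^\times$'' fails: the $p$-adic cyclotomic character is ramified, and even after passing to the associated (unramified) Weil--Deligne representation $D_{pst}(\bq_p(n))$, the $\epsilon$-factor is taken with respect to the additive character $\psi_{o,K}$ of \cite{pr95}, whose conductor $n(\psi_{o,K})$ equals the valuation of the different of $K/\bq_p$. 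The paper computes, via \cite{deligne73}[5.9], that this $\epsilon$-factor contributes $D_K^{n-1}$ modulo $\bz_p^\times$, which is not a unit whenever $K/\bq_p$ is ramified and $n\neq 1$.

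What rescues the computation is a matching discriminant power on the de Rham side that you do not compute. The paper proves (in the proposition immediately following) that $\det_\bz R\Gamma_{dR}(\X/\bz)/F^n = |D_F|^{n-1}\cdot\det_\bz\co_F$ inside $\det_\bq F$, by an explicit analysis of the cotangent complex $L_{\co_F/\bz}$ and its divided powers. This $|D_F|^{n-1}$ exactly cancels the $\epsilon$-factor when one compares the two $\bz_p$-structures defining $d_v(\X,n)$, leaving $d_v(\X,n)\equiv (1-q_v^{-n})\cdot(n-1)!^{-[F_v:\bq_p]}$. The Milne term $q_v^n$ then converts the Euler factor into the unit $q_v^n-1$; note that the Euler factor appearing in $C_{EP}$ is $(1-q_v^{-n})$, not $(1-q_v^{n})$ as you wrote, and it is \emph{not} a $p$-adic unit on its own---its valuation is precisely $-nf_\mathfrak{p}$, which is what the Milne correction absorbs. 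Your ``principal obstacle'' paragraph gestures at integral-structure compatibilities but does not isolate this discriminant cancellation; without it the argument is only correct when every $F_v/\bq_p$ is unramified, i.e.\ for $F=\bq$.
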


\begin{proof} We first explicate Conjecture $C_{EP}(V)$ of \cite{pr95}[App.C2] for $K=F_v$ and $V=\bq_p(n)$. We have $D_{dR}(V)=K\cdot t^{-n}$ and the period isomorphism
\[ B_{dR}\otimes_{\bq_p}K\cdot t^{-n}\cong B_{dR}\otimes_{\bq_p}\Ind_{K}^{\bq_p}\bq_p(n)    \]
is already induced by the isomorphism
\[ \qbar_p\otimes_{\bq_p}K\cong \qbar_p\otimes_{\bq_p}\Ind_{K}^{\bq_p}\bq_p\cong \prod_{\Sigma}\qbar_p\]
sending $1\otimes x$ to $(\sigma(x))_{\sigma\in\Sigma}$ where $\Sigma=\Hom(K,\qbar_p)$. Denote by $D_K\in\bq_p$ the discriminant of $K/\bq_p$, well defined up to $\bz_p^\times$. For $$\omega=\omega_1^{-1}\otimes\omega_T$$ where $\omega_1$, resp. $\omega_T$, is a $\bz_p$-basis of $\Det_{\bz_p}\co_K$, resp. $\Det_{\bz_p}\Ind_{K}^{\bq_p}\bz_p$ we have in the notation of \cite{pr95}[Lemme C.2.8]
\[ \tilde{\xi}_V(\omega)= (\sqrt{D_K})^{-1}\in\qbar_p\]
and
\[ \eta_V(\omega):=\left\vert\frac{\sqrt{D_K}\cdot\tilde{\xi}_V(\omega)}{\epsilon(V,\psi_{o,K},\mu_{o,K})} \right\vert_p^{-1}=
\left\vert\frac{1}{\epsilon(V,\psi_{o,K},\mu_{o,K})} \right\vert_p^{-1}\]
where $\epsilon(V,\psi_{o,K},\mu_{o,K})$ is the $\epsilon$-factor associated by the theory of \cite{deligne73} to the representation of the Weil Group of $K$ on $D_{pst}(V)$ defined in \cite{pr95}[C.1.4]. Here the additive character $\psi_{o,K}$ and the Haar measure $\mu_{o,K}$ are defined in \cite{pr95}[C.2.7]. In particular $\mu_{o,K}(\co_K)=1$ and the $n(\psi_{o,K})$ of \cite{deligne73}[3.4] is the valuation of the different of $K/\bq_p$. By \cite{deligne73}[5.9] we have
\[ \eta_V(\omega)=|\epsilon(\bq_p(n),\psi_{o,K},\mu_{o,K})|_p=|D_K^{1-n}|_p\sim D_K^{n-1} \mod \bz_p^\times.\]
Furthermore, the $\Gamma$-factor of \cite{pr95}[C.2.9] is
\[ \prod_{j\in\bz}\Gamma^*(-j)^{-h_j(V)[K:\bq_p]}=(n-1)!^{-[K:\bq_p]}\]
and so Conjecture $C_{EP}(\bq_p(n))$ becomes
\begin{equation}\Det_{\bz_p}R\Gamma(K,\bz_p(n))=(1-q^{-n})\cdot(n-1)!^{-[K:\bq_p]}\cdot D_K^{n-1}\cdot\omega_1^{-1}\label{cepeq}\end{equation}
where $q$ is the cardinality of the residue field of $K$. This is an identity of invertible $\bz_p$-submodules of the invertible $\bq_p$-module $\Det_{\bq_p}R\Gamma(K,\bq_p(n))\cong\Det_{\bq_p}(K)^{-1}$ where this last isomorphism is given by the Bloch-Kato exponential map
\[ K\xrightarrow{\exp} H^1(K,\bq_p(n))\]
for $n\geq 2$ and the exact sequence
\[ 0\to K\xrightarrow{\exp} H^1(K,\bq_p(1))\cong \hat{K}^\times\otimes_{\bz_p}\bq_p\xrightarrow{\val}\bq_p\cong H^2(K,\bq_p(1))\to 0 \]
for $n=1$.
Coming back to the computation of $C(\X,n)$ we have by Prop. \ref{prop:derived-de-rham} below
\begin{equation}{\det}_\bz R\Gamma_{dR}(\mathcal{X}/\mathbb{Z})/F^n=|D_F|^{n-1}\cdot {\det}_\bz\mathcal{O}_F\subseteq {\det}_{\bq_p} F_{\bq_p}={\det}_{\bq_p} R\Gamma_{dR}(\mathcal{X}_{\bq_p}/\bq_p)/F^n\label{der1}\end{equation}
where $D_F$ is the discriminant of $F$. Moreover
$$R\Gamma'_{eh}(\mathcal{X}_{\mathbb{F}_p},\mathbb{Z}_p(n))\cong \prod_{v\mid p}R\Gamma(\kappa(v)_\et,\mathbb{Z}_p(n))=0$$
and
$$ R\Gamma(\mathcal{X}_{\mathbb{Z}_p,{et}},\mathbb{Z}_p(n))\cong \prod_{v\mid p}  R\Gamma(\co_{F_v,\et},\bz_p(n)).$$
By the localization triangle (\ref{etloc}) for $\X=\Spec(\co_{F_v})$, the fact that $\tau^{\leq n}Rj_*\mu_{p^\bullet}^{\otimes n}=Rj_*\mu_{p^\bullet}^{\otimes n}$ for $n\geq 1$ and the vanishing of $R\Gamma(\kappa(v)_\et,\mathbb{Z}_p(n-1))$ for $n\geq 2$ we have
\begin{equation} R\Gamma(\co_{F_v,\et},\bz_p(n))\cong R\Gamma(F_{v,\et},\bz_p(n))\label{rgfcompute}\end{equation}
for $n\geq 2$ and an exact triangle
\begin{equation} R\Gamma(\co_{F_v,\et},\bz_p(1))\to R\Gamma(F_{v,\et},\bz_p(1))\to R\Gamma(\kappa(v)_\et,\bz_p)[-1]\to\label{rgfcompute2}\end{equation}
for $n=1$. The exact triangle in Conjecture ${\bf D}_p(\mathcal{X},n)$ is the product over $v\mid p$ of the isomorphism
$$ F_v[-1]\xrightarrow{\sim}H^1_f(F_v,\bq_p(n))[-1]\cong R\Gamma_f(F_v,\bq_p(n))\cong R\Gamma(\co_{F_v,\et},\bq_p(n))$$
where the first map is the Bloch-Kato exponential.
For $v\mid p$ define $d_{v}(\mathcal{X},n)\in \mathbb{Q}_p^\times/\mathbb{Z}_p^\times$
such that
$$d_{v}(\mathcal{X},n)^{-1}\cdot \mathrm{det}_{\mathbb{Z}_p}  R\Gamma(\co_{F_v,\et},\bz_p(n))
=\mathrm{det}^{-1}_{\mathbb{Z}_p}R\Gamma_{dR}(\mathcal{X}_{\mathbb{Z}_p}/\mathbb{Z}_p)/F^n.$$
From (\ref{cepeq}), (\ref{der1}), (\ref{rgfcompute}) and (\ref{rgfcompute2}) we deduce
\begin{align*}d_{v}(\mathcal{X},n)=&(1-q_v^{-n})\cdot(n-1)!^{-[F_v:\bq_p]}\cdot D_{F_v}^{n-1}\cdot D_F^{1-n}\\=&(1-q_v^{-n})\cdot(n-1)!^{-[F_v:\bq_p]}\end{align*}
where $q_v:=|\kappa(v)|$.
With $d_{p}(\mathcal{X},n)$ and $c_{p}(\mathcal{X},n)$ defined in Definition \ref{ddef} we have
$$d_{p}(\mathcal{X},n)=\prod_{v\mid p}d_{v}(\mathcal{X},n),\quad c_{p}(\mathcal{X},n)=\prod_{v\mid p}q_v^n\cdot d_{v}(\mathcal{X},n)$$
and hence
\begin{align*}C(\mathcal{X},n):=&\Prod_{p<\infty}\mid c_p(\mathcal{X},n)\mid_p
=\Prod_{p<\infty}\prod_{v\mid p}|(n-1)!|_p^{[F_v:\bq_p]}
=(n-1)!^{-[F:\bq]}.\end{align*}
\end{proof}
For $n\geq 2$ we have $H_{W,c}^i({\mathcal{X}},\mathbb{Z}(n))=0$ for $i\neq 1,2,3$ and an exact sequence
\begin{multline*}0\to \prod_{v\mid\infty} H^0(F_v,(2\pi i)^n\bz)\rightarrow H_{W,c}^1({\mathcal{X}},\mathbb{Z}(n))\rightarrow H^{1,n}\xrightarrow{\alpha} (\mathbb{Z}/2\mathbb{Z})^{r_1\cdot\epsilon_{1,n}}\to\\ \to H_{W,c}^2({\mathcal{X}},\mathbb{Z}(n))\to H^{2,n} \to (\mathbb{Z}/2\mathbb{Z})^{r_1\cdot\epsilon_{2,n}}\to
H_{W,c}^3({\mathcal{X}},\mathbb{Z}(n))\to 0.\end{multline*}
The exact triangle (\ref{tangenttri2}) reduces to an isomorphism
\begin{equation} H_{W,c}^1({\mathcal{X}},\mathbb{Z}(n))_\br\cong F_\br\label{hwcr}.\end{equation}
The determinant of this isomorphism for the $\bz$-structures $H_{W,c}^1({\mathcal{X}},\mathbb{Z}(n))$ and
\begin{equation} \prod_{v\mid\infty} H^0(F_v,(2\pi i)^n\bz)\oplus H^0(F_v,(2\pi i)^{n-1}\bz)\label{zstruct}\end{equation}
on
\[F_\br\cong\prod_{v\mid\infty} F_v\cong \prod_{v\mid\infty} H^0(F_v,(2\pi i)^{n}\br)\oplus H^0(F_v,(2\pi i)^{n-1}\br)\]
equals $R_n/w_n\cdot|\im(\alpha)|$ and hence the determinant between $R\Gamma_{W,c}(\mathcal{X},\mathbb{Z}(n))$ and (\ref{zstruct}) equals
\[ R_n/w_n\cdot|\im(\alpha)|\cdot \frac{|H_{W,c}^2({\mathcal{X}},\mathbb{Z}(n))|}{|H_{W,c}^3({\mathcal{X}},\mathbb{Z}(n))|}
=\frac{R_n\cdot h_n\cdot 2^{r_1\cdot\delta_{1,n}}}{w_n\cdot 2^{r_1\cdot\delta_{2,n}}}\]
noting that $\delta_{i,n}=\epsilon_{i,n}$ for $i=1,2$ and $n\geq 1$. The determinant between (\ref{zstruct}) and $\co_F$ is
\[(2\pi i)^{r_2(2n-1)+r_1(n-\delta_{1,n})}/\sqrt{D_F}=\pm(2\pi )^{[F:\bq]\cdot n-r_2-r_1\cdot\delta_{1,n}}/\sqrt{|D_F|} \]
using that $\sqrt{D_F}=\pm i^{r_2}\sqrt{|D_F|}$.
Hence, also using Prop. \ref{prop:derived-de-rham} below, the isomorphism
\begin{equation}\lambda_\infty:\br\cong \Delta(\mathcal{X}/\mathbb{Z},n)_\br\cong {\det}_\br R\Gamma_{W,c}(\mathcal{X},\mathbb{Z}(n))_\br\otimes_\br {\det}_\br (R\Gamma_{dR}(\mathcal{X}/\bz)/\mathrm{Fil}^n)_\br \notag\end{equation}
induced by (\ref{tangenttri2}) satisfies
$$\lambda_{\infty}\left(|D_F|^{n-1}\cdot\frac{w_n\sqrt{|D_F|}}{2^{r_1\cdot(\delta_{1,n}-\delta_{2,n})}(2\pi)^{[F:\bq]\cdot n-r_2-r_1\cdot\delta_{1,n}}h_nR_n}\cdot\mathbb{Z}\right)= \Delta(\mathcal{X}/\mathbb{Z},n).$$
Finally, using Prop. \ref{Ccompute}, the identity of Conjecture \ref{conjmain}
$$\lambda_{\infty}(\zeta^*(\mathcal{X},n)^{-1}\cdot C(\mathcal{X},n)\cdot\mathbb{Z})= \Delta(\mathcal{X}/\mathbb{Z},n)$$
is equivalent to
\begin{equation}\zeta_F^*(n)=\zeta_F(n)=|D_F|^{1-n}\cdot(n-1)!^{-[F:\bq]}\cdot \frac{2^{r_1\cdot(\delta_{1,n}-\delta_{2,n})}(2\pi)^{[F:\bq]\cdot n-r_2-r_1\cdot\delta_{1,n}}h_nR_n}{w_n\sqrt{|D_F|}},\label{cnfn}\end{equation}
at least if we also assume conjecture $C_{EP}(\bq_p(n))$ for all local fields $F_v$. For $n=1$  the triangle (\ref{tangenttri2}) gives an exact sequence
\[ 0\to H_{W,c}^1({\mathcal{X}},\mathbb{Z}(1))_\br\to F_\br\xrightarrow{\mathrm{Tr}_{F_\br/\br}}\br\to 0\to 0\to \br\xrightarrow{\sim} H_{W,c}^3({\mathcal{X}},\mathbb{Z}(1))_\br\to 0\]
instead of the isomorphism (\ref{hwcr}) but otherwise the computation is the same, showing that for any number field $F$ Conjecture \ref{conjmain} is equivalent to (\ref{cnfn}) for $n=1$, i.e. to the analytic class number formula. One easily proves that (\ref{cnfn}) is equivalent to (\ref{nless0}) by verifying identity (\ref{toward}) in Corollary \ref{cor:fe} i).

\begin{prop} Assume $\X=\Spec(\co_F)$. Then Conjecture \ref{conjmain} holds for $n=0,1$ if $F$ is arbitrary and for any $n\in\bz$ if $F/\bq$ is abelian.
\end{prop}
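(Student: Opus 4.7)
The proof proceeds by reduction to the Tamagawa number conjecture via Theorem \ref{equiv-main-tnc}. Here $\X=\Spec(\co_F)$ is trivially smooth projective over itself of relative dimension $0$ (so $d=1$), and the only nonzero $p$-adic realization of $h(X)$ is $V_p^0=\bq_p$. All standing assumptions of the theorem are either classical or have been verified earlier in the paper: Conjectures $\textbf{L}(\overline{\mathcal{X}}_{et},n)$, $\textbf{L}(\overline{\mathcal{X}}_{et},1-n)$ reduce to finite generation of $H^i(\mathcal{O}_{F,et},\mathbb{Z}(n))$ in low degrees, which follows from the known structure of the $K$-theory of $\mathcal{O}_F$; $\textbf{B}(\mathcal{X},n)$ is Borel's theorem; $\textbf{AV}(\overline{\mathcal{X}}_{et},n)$ is covered by Corollary \ref{cor-AVsmooth}; and $\textbf{D}_p(\mathcal{X},n)$ is Proposition \ref{psmooth}. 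The semisimplicity hypothesis on $C_{cris,\mathfrak{p}}(\bq_p(n))$ at $0$ is immediate because crystalline Frobenius acts on $D_{cris,\mathfrak{p}}(\bq_p(n))=F_\mathfrak{p}$ (for $\mathfrak{p}\nmid p$ this module is zero; for $\mathfrak{p}\mid p$) by $q_\mathfrak{p}^{-n}$: for $n\ne 0$ the operator $1-\phi$ is invertible, while for $n=0$ it is zero but $\bar\psi$ identifies $\ker$ with $\coker$ via the identity.

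With this reduction in hand, Conjecture \ref{conjmain} for $(\X,n)$ is equivalent to the Bloch-Kato-Fontaine-Perrin-Riou conjecture (\ref{totaltnc}) for the motive $h(\Spec(F))(n)=\mathrm{Ind}_{F/\bq}\bq(n)$ at every prime $p$. For $n=0,1$ and arbitrary $F$, this TNC is a classical consequence of the analytic class number formula; the equivalence between (\ref{cnfn})--(\ref{nless0}) and the Dedekind class number formula is carried out in the explicit computation preceding the proposition (one also checks the functional equation compatibility of Corollary \ref{cor:fe} to pass between $n=0$ and $n=1$). For $n=1$ the factor $C(\X,1)=1$ is unconditional since $C_{EP}(\bq_p(1))$ is a theorem of Bloch-Kato, and for $n=0$ one has $C(\X,0)=1$ by Proposition \ref{prop-cp-n=0}, so no abelianness is required in these two cases.

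For $F/\bq$ abelian and general $n\in\bz$, all local completions $F_v$ are abelian over $\bq_p$, so the Bloch-Breuil theorem \cite{bb05} supplies $C_{EP}(\bq_p(n))$ and hence the formula of Proposition \ref{Ccompute} for the correction factor. The remaining TNC for $\mathrm{Ind}_{F/\bq}\bq(n)$ over abelian number fields is the main theorem of Burns-Greither for $n\geq 1$ (with the $2$-primary part completed by Flach), and of Huber-Kings for $n\leq 0$; both proceed via the equivariant Iwasawa main conjecture. The main obstacle, in practice, is ensuring that the integral trivialisations used in our formulation of Conjecture \ref{conjmain} match those used in \cite{fpr91} and in the cited TNC literature; however, the compatibility diagram constructed in the proof of Theorem \ref{equiv-main-tnc} together with the explicit identification of $C(\X,n)$ with the local Euler-Poincar\'e characteristic terms takes care of this bookkeeping, so the citations apply directly and yield Conjecture \ref{conjmain} in all stated cases.
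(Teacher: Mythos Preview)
Your approach is correct and is exactly the paper's strategy: apply Theorem \ref{equiv-main-tnc} to reduce Conjecture \ref{conjmain} to the Tamagawa number conjecture for $\mathrm{Ind}_{F/\bq}\bq(n)$, then invoke the known cases (class number formula for $n=0,1$ and arbitrary $F$; Burns--Greither, Huber--Kings, and Flach's $2$-primary completion for $F/\bq$ abelian and all $n$). The paper's own proof is a one-line citation of precisely these results.

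A couple of minor remarks on the exposition. First, the discussion of $C(\X,n)$ and $C_{EP}$ is unnecessary for this argument: Theorem \ref{equiv-main-tnc} establishes the equivalence with TNC directly, without requiring an explicit value for the correction factor, so you need neither Proposition \ref{Ccompute} nor the status of $C_{EP}(\bq_p(1))$ here. Second, the parenthetical about $\mathfrak{p}\nmid p$ is superfluous since in Lemma \ref{lem-Gammaf-2} the sum already ranges only over $\mathfrak{p}\mid p$. Your verification of semisimplicity at zero for $C_{cris,\mathfrak{p}}(\bq_p(n))$ is fine: for $n\neq 0$ the linearized Frobenius has eigenvalue $q_\mathfrak{p}^{-n}\neq 1$, and for $n=0$ the map $1-\phi$ on $F_{\mathfrak{p},0}$ has kernel and cokernel both canonically $\bq_p$, identified by the identity.
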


\begin{proof} By Theorem \ref{equiv-main-tnc} this reduces to known cases of the Tamagawa number conjecture
 \cite{bugr00,huki00,flach03,flach06-3}.
\end{proof}

We conclude this section with the computation of derived de Rham cohomology of $\X=\Spec(\co_F)$. Let $\mathcal{D}_{F}\subseteq \co_F$ be the different ideal so that $$\vert D_F\vert=\vert N(\mathcal{D}_{F})\vert=[\co_F:\mathcal{D}_F]$$ is the absolute discriminant of $F$. Recall from \cite{serre95}[III. Prop. 14] that the module of K\"ahler differentials $\Omega^1_{\mathcal{O}_F/\mathbb{Z}}$ is a cyclic $\mathcal{O}_F$-module with annihilator $\mathcal{D}_{F}$.  A generator $\omega$ of $\Omega^1_{\mathcal{O}_F/\mathbb{Z}}$ therefore gives an exact sequence of $\mathcal{O}_F$-modules
\begin{equation}0\longrightarrow \mathcal{D}_{F}\longrightarrow\mathcal{O}_{F}\stackrel{\omega}{\longrightarrow}\Omega^1_{\mathcal{O}_F/\mathbb{Z}}\longrightarrow 0\label{kaehler}\end{equation}
and we have $|\Omega^1_{\mathcal{O}_F/\mathbb{Z}}|=|D_F|$. Recall that $L\Omega_{\mathcal{O}_F/\mathbb{Z}}/F^n:=\mathrm{Tot}(\Omega^{<n}_{P_{\bullet}/\mathbb{Z}})$ where $P_{\bullet}\rightarrow \mathcal{O}_F$ is an augmented simplicial $\mathbb{Z}$-algebra which is a free resolution of $\mathcal{O}_F$ and that
$$R\Gamma_{dR}(\X/\bz)/F^n:=R\Gamma(\X_{Zar}, L\Omega_{\mathcal{O}_F/\mathbb{Z}}/F^n)=\Gamma(\X_{Zar}, L\Omega_{\mathcal{O}_F/\mathbb{Z}}/F^n)$$ is the derived de Rham cohomology modulo the $n$-th step of the Hodge filtration introduced in section \ref{sec:derived-de-rham}. Here the last isomorphism holds since $\X$ is affine.
We denote by $L\Omega^{\hat{}}_{\mathcal{O}_F/\mathbb{Z}}:="\mathrm{lim}"L\Omega_{\mathcal{O}_F/\mathbb{Z}}/F^n$ the Hodge-completed derived de Rham complex. Unless stated otherwise, a complex of the form $[M\rightarrow N]$ denotes a cochain complex placed in degrees $[0,1]$.

\begin{prop}\label{prop:derived-de-rham}
We have $\mathrm{gr}^0(L\Omega^{\hat{}}_{\mathcal{O}_F/\mathbb{Z}})\simeq \mathcal{O}_F[0]$. For any $i\geq 1$, a generator of the cyclic $\mathcal{O}_F$-module $\Omega^1_{\mathcal{O}_F/\mathbb{Z}}$ gives a quasi-isomorphism of complexes of $\mathcal{O}_F$-modules
\begin{equation}\mathrm{gr}^i(L\Omega^{\hat{}}_{\mathcal{O}_F/\mathbb{Z}})\simeq \Gamma^{i-1}_{\mathcal{O}_F}\mathcal{D}_{F}\otimes_{\mathcal{O}_F} \Omega^1_{\mathcal{O}_F/\mathbb{Z}}[-1]\label{griso}\end{equation}
where $\Gamma^{i-1}_{\mathcal{O}_F}\mathcal{D}_{F}$ is an invertible $\mathcal{O}_F$-module.
In particular, for any $n\geq 1$, we have an exact sequence of complexes
$$0\rightarrow F^1/F^n\rightarrow R\Gamma_{dR}(\X/\bz)/F^n\rightarrow \mathcal{O}_F[0]\rightarrow 0$$
where $F^1/F^n$ is concentrated in degree $1$, and $H^1(F^1/F^n)$ is finite with order
$$\vert H^1(F^1/F^n)\vert =|D_{F}|^{n-1}.$$
\end{prop}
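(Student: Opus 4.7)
The plan is to compute everything from the cotangent complex $L_{\co_F/\bz}$ and the Hodge filtration on $L\Omega^\wedge_{\co_F/\bz}$. First, since $\co_F/\bz$ is locally a complete intersection of relative dimension $0$, the complex $L_{\co_F/\bz}$ has perfect amplitude in $[-1,0]$, and a dimension count in the transitivity triangle shows $H^{-1}(L_{\co_F/\bz})=0$, so $L_{\co_F/\bz}\simeq\Omega^1_{\co_F/\bz}$ in $\mathcal{D}(\co_F)$. The sequence (\ref{kaehler}) gives an explicit flat resolution of length one, $L_{\co_F/\bz}\simeq[\mathcal{D}_F\xrightarrow{\iota}\co_F]$ placed in degrees $[-1,0]$, where $\mathcal{D}_F$ is an invertible $\co_F$-module since $\co_F$ is Dedekind.

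Next I would invoke Illusie's identification of the graded pieces of the Hodge filtration, $\mathrm{gr}^i(L\Omega^\wedge_{\co_F/\bz})\simeq L\Lambda^i L_{\co_F/\bz}[-i]$, and compute the derived exterior power of the two-term complex. For $i=0$ this gives $\co_F[0]$ on the nose. For $i\geq 1$, using the Koszul-type formula (divided powers on the degree $-1$ part, exterior on the degree $0$ part) and noting that $\Lambda^j\co_F=0$ for $j\geq 2$ and $\Gamma^j\mathcal{D}_F\cong\mathcal{D}_F^{\otimes j}$ since $\mathcal{D}_F$ is invertible, only two terms survive:
\[
L\Lambda^i L_{\co_F/\bz}\simeq\bigl[\mathcal{D}_F^{\otimes i}\xrightarrow{\delta_i}\mathcal{D}_F^{\otimes(i-1)}\bigr]
\]
placed in degrees $[-i,1-i]$, where $\delta_i$ is induced by $\iota$ (contracting the last factor). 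Because $\iota$ is injective and $\mathcal{D}_F$ is a nonzero ideal, $\delta_i$ is injective, so the complex is quasi-isomorphic to its cokernel, which equals $\mathcal{D}_F^{\otimes(i-1)}\otimes_{\co_F}(\co_F/\mathcal{D}_F)=\Gamma^{i-1}_{\co_F}\mathcal{D}_F\otimes_{\co_F}\Omega^1_{\co_F/\bz}$ sitting in degree $1-i$. Shifting by $[-i]$ gives the isomorphism (\ref{griso}).

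For the final part, the Hodge filtration gives the exact triangle $F^1/F^n\to L\Omega_{\co_F/\bz}/F^n\to\mathrm{gr}^0=\co_F[0]$, and since $\X$ is affine and all sheaves involved are coherent, $R\Gamma$ degenerates to global sections, yielding the asserted short exact sequence of complexes. Each graded piece $\mathrm{gr}^i$ for $1\leq i\leq n-1$ is concentrated in cohomological degree $1$ with value $\Gamma^{i-1}\mathcal{D}_F\otimes_{\co_F}\Omega^1_{\co_F/\bz}$. Since $\mathcal{D}_F$ is invertible and $|\Omega^1_{\co_F/\bz}|=|D_F|$, each such group has order $|D_F|$; the filtration spectral sequence for $F^1/F^n$ then collapses for degree reasons, so $H^1(F^1/F^n)$ is a successive extension of $n-1$ groups each of order $|D_F|$, giving $|H^1(F^1/F^n)|=|D_F|^{n-1}$.

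The main technical point will be verifying the Koszul-type formula for $L\Lambda^i$ of a two-term flat complex together with the sign/divided-power conventions from \cite{Illusie71} that produce $\Gamma^{i-1}$ rather than $\Lambda^{i-1}$ in the answer; once that is in place and $\mathcal{D}_F$'s invertibility is used to collapse $\Gamma^{i-1}\mathcal{D}_F=\mathcal{D}_F^{\otimes(i-1)}$, the rest is bookkeeping. The observation that $\Lambda^{\geq 2}\co_F=0$ is what makes the two-term complex collapse to a single cokernel and ultimately makes each graded piece concentrated in cohomological degree one.
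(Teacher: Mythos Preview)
Your proposal is correct and follows essentially the same route as the paper: identify $L_{\co_F/\bz}\simeq\Omega^1_{\co_F/\bz}$ via the lci property, resolve by the two-term complex $[\mathcal{D}_F\to\co_F]$, apply Illusie's formula $\mathrm{gr}^i\simeq L\Lambda^i(L)[-i]$ together with the Koszul-type decomposition of $L\Lambda^i$ of a two-term flat complex, and use $\Lambda^{\geq 2}\co_F=0$ to reduce to a single injective map whose cokernel is $\Gamma^{i-1}\mathcal{D}_F\otimes\Omega^1_{\co_F/\bz}$. The only cosmetic differences are that the paper argues $H^{-1}(L_{\co_F/\bz})=0$ by tensoring with $F$ (using $L_{F/\bq}\simeq 0$) rather than a transitivity-triangle count, and it keeps the notation $\Gamma^j\mathcal{D}_F$ throughout and checks exactness of $0\to\Gamma^i\mathcal{D}_F\to\Gamma^{i-1}\mathcal{D}_F\to\Gamma^{i-1}\mathcal{D}_F\otimes\Omega^1\to 0$ by Zariski-localizing (using that $\Gamma^i$ commutes with base change) rather than by invoking invertibility to rewrite $\Gamma^j\mathcal{D}_F$ as $\mathcal{D}_F^{\otimes j}$ and tensoring the inclusion $\mathcal{D}_F\hookrightarrow\co_F$ with a flat module.
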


\begin{proof}
The first assertion is obvious, since we have
$$\mathrm{gr}^0(L\Omega^{\hat{}}_{\mathcal{O}_F/\mathbb{Z}})\simeq[\cdots \rightarrow P_1\rightarrow P_0]\stackrel{\sim}{\rightarrow}\mathcal{O}_F[0].$$
Next we claim that the canonical morphism $$L_{\mathcal{O}_F/\mathbb{Z}}\rightarrow H^0(L_{\mathcal{O}_F/\mathbb{Z}})[0]\simeq \Omega^1_{\mathcal{O}_F/\mathbb{Z}}[0]$$ is a quasi-isomorphism. The map $\mathrm{Spec}(\mathcal{O}_F)\rightarrow \mathrm{Spec}(\mathbb{Z})$ is a local complete intersection, hence we have an isomorphism
$L_{\mathcal{O}_F/\mathbb{Z}}\simeq [M\rightarrow N][1]$
in the derived category of $\mathcal{O}_F$-modules,
where $M$ and $N$ are finitely generated and locally free $\mathcal{O}_F$-modules \cite{Illusie71}[Prop. 3.2.6]. In particular $H^{-1}(L_{\mathcal{O}_F/\mathbb{Z}})$ is a torsion-free $\mathcal{O}_F$-module. It is therefore enough to show that $H^{-1}(L_{\mathcal{O}_F/\mathbb{Z}})\otimes_{\mathcal{O}_F}F=H^{-1}(L_{\mathcal{O}_F/\mathbb{Z}}\otimes_{\mathcal{O}_F}F)=0$,
which in turn follows from
$$L_{\mathcal{O}_F/\mathbb{Z}}\otimes_{\mathcal{O}_F} F\simeq L_{F/\mathbb{Q}}\simeq 0.$$
Recall from \cite{Illusie72}[Ch. VIII (2.1.1.5)] that $\mathrm{gr}^i(L\Omega^{\hat{}}_{\mathcal{O}_F/\mathbb{Z}})\simeq L\Lambda^i(L_{\mathcal{O}_F/\mathbb{Z}})[-i]$
and assume $i\geq 1$. We obtain using (\ref{kaehler}), \cite{Illusie72}[Ch. VIII, Lemme 2.1.2.1] and \cite{Illusie71}[Ch. I, Prop. 4.3.2.1 (ii)]
\begin{eqnarray*}
L\Lambda^i(L_{\mathcal{O}_F/\mathbb{Z}})&\simeq & L\Lambda^i(\Omega^1_{\mathcal{O}_F/\mathbb{Z}})\\
&\simeq & L\Lambda^i\left([\mathcal{D}_{F}\rightarrow\mathcal{O}_{F}][1]\right)\\
&\simeq &[\Gamma^i \mathcal{D}_{F}\rightarrow \Gamma^{i-1} \mathcal{D}_{F}\otimes \Lambda^1\mathcal{O}_F\rightarrow\cdots \rightarrow \Lambda^i\mathcal{O}_F][i]\\
&\simeq &[\Gamma^i \mathcal{D}_{F}\rightarrow \Gamma^{i-1} \mathcal{D}_{F}\otimes \mathcal{O}_F][i]
\end{eqnarray*}
where all tensor products and exterior and divided power algebras are taken over $\co_F$. The map $\kappa:\Gamma^i \mathcal{D}_{F}\rightarrow \Gamma^{i-1} \mathcal{D}_{F}\otimes \mathcal{O}_F$ corresponds to the canonical "application $i$-ique" $\mathcal{D}_{F}\rightarrow \Gamma^{i-1} \mathcal{D}_{F}\otimes \mathcal{O}_F$ sending $x$ to $\gamma_{i-1}(x)\otimes x$. Thus $\kappa$ is given by
$$\fonc{\kappa}{\Gamma^i\mathcal{D}_{F}}{\Gamma^{i-1} \mathcal{D}_{F}\otimes \mathcal{O}_F}{\gamma_{i}(x)}{\gamma_{i-1}(x)\otimes x}.
$$
Consider the sequence of $\mathcal{O}_F$-modules
\begin{equation}\label{exact-sequence}
0\longrightarrow \Gamma^i\mathcal{D}_{F}\stackrel{\kappa}{\longrightarrow} \Gamma^{i-1}\mathcal{D}_{F}\otimes \mathcal{O}_{F}\stackrel{1\otimes \omega}{\longrightarrow} \Gamma^{i-1}\mathcal{D}_{F}\otimes\Omega^1_{\mathcal{O}_F/\mathbb{Z}}\longrightarrow 0.
\end{equation}
To see that (\ref{exact-sequence}) is exact, one may localize for the Zariski topology since the functor $\Gamma^i$ is compatible with extension of scalars, i.e. $\Gamma^i_A(M)\otimes_AB\simeq \Gamma^i_B(M\otimes_A B)$ (see \cite{Roby63} Th\'eor\`eme III.3). Hence one may assume that $\mathcal{D}_{F}$ is a free module of rank one with generator $x$, in which case $\Gamma^i\mathcal{D}_{F}$ is also free of rank one and generated by $\gamma_i(x)$, and similarly for $\Gamma^{i-1}\mathcal{D}_{F}$. The exactness of the sequence (\ref{exact-sequence}) follows. We obtain
$$L\Lambda^i(L_{\mathcal{O}_F/\mathbb{Z}})\simeq [\Gamma^i \mathcal{D}_{F}\rightarrow \Gamma^{i-1} \mathcal{D}_{F}\otimes \mathcal{O}_F][i]\simeq \Gamma^{i-1}\mathcal{D}_{F}\otimes\Omega^1_{\mathcal{O}_F/\mathbb{Z}}[i-1]$$
hence
$$
\mathrm{gr}^i(L\Omega^{\hat{}}_{\mathcal{O}_F/\mathbb{Z}})\simeq L\Lambda^i(L_{\mathcal{O}_F/\mathbb{Z}})[-i]\simeq \Gamma^{i-1}_{\mathcal{O}_F}\mathcal{D}_{F}\otimes_{\mathcal{O}_F} \Omega^1_{\mathcal{O}_F/\mathbb{Z}}[-1].
$$
As observed above, $\Gamma^{i-1}_{\mathcal{O}_F}\mathcal{D}_{F}$ is an invertible $\mathcal{O}_F$-module, hence $\Gamma^{i-1}_{\mathcal{O}_F}\mathcal{D}_{F}\otimes_{\mathcal{O}_F} \Omega^1_{\mathcal{O}_F/\mathbb{Z}}$ is a torsion $\mathbb{Z}$-module whose order is given by
$$\vert \Gamma^{i-1}_{\mathcal{O}_F}\mathcal{D}_{F}\otimes_{\mathcal{O}_F} \Omega^1_{\mathcal{O}_F/\mathbb{Z}} \vert=\vert \Omega^1_{\mathcal{O}_F/\mathbb{Z}}\vert=\vert D_{F} \vert.$$
In order to show the last assertion, it remains to prove that $F^1/F^n$ is concentrated in degree $1$ and that $H^1(F^1/F^n)$ is finite with order
$\vert H^1(F^1/F^n)\vert = |D_{F}|^{n-1}$, where $F^*$ denotes the Hodge filtration. This is obvious for $n=1$. We conclude by induction on $n$ using the exact triangle
\[ F^n/F^{n+1}\to F^1/F^{n+1}\to F^1/F^n\to\]
and the isomorphism (\ref{griso})
\[ F^n/F^{n+1}\cong \Gamma(\X,\mathrm{gr}^n(L\Omega^{\hat{}}_{\mathcal{O}_F/\mathbb{Z}}))\cong  \Gamma^{n-1}_{\mathcal{O}_F}\mathcal{D}_{F}\otimes_{\mathcal{O}_F} \Omega^1_{\mathcal{O}_F/\mathbb{Z}}[-1].\]
\end{proof}

\section{Appendix A: Artin-Verdier duality}\label{sectAVD}
In this section $\mathcal{X}$ denotes a regular connected scheme, which is proper over $\mathbb{Z}$. We assume that $\mathcal{X}$ is of pure (absolute) dimension $d$. Unless specified otherwise, a scheme is always endowed with the \'etale topology.

\subsection{Introduction}
Artin-Verdier duality for the cycle complex over $\mathcal{X}$ is known in certain  cases by \cite{Geisser04a}, \cite{Geisser10} and \cite{Sato07}. In order to deal with $2$-torsion, these duality theorems relate \'etale cohomology of $\mathcal{X}$ with \'etale cohomology with compact support of $\mathcal{X}$ in the sense of Milne \cite{Milne-duality}. The aim of this appendix is to define complexes $\mathbb{Z}(n)^{\overline{\mathcal{X}}}$ over the Artin-Verdier \'etale topos $\overline{\mathcal{X}}_{et}$ and to show duality over $\overline{\mathcal{X}}_{et}$. In particular, we obtain Conjecture ${\bf AV}(\overline{\mathcal{X}}_{et},n)$ for smooth proper schemes over number rings and any $n\in\mathbb{Z}$  (see Corollary \ref{cor-AVsmooth}), as well as for arbitrary regular proper arithmetic schemes if $n\geq d$ or $n\leq 0$ (see Corollary \ref{corAVn=0}). These results are essentially due to Geisser and Sato (see \cite{Geisser04a}, \cite{Geisser10} and \cite{Sato07}); we only
  treat the $2$-torsion in order to restore duality over $\overline{\mathcal{X}}_{et}$. This appendix may be seen as a generalization of \cite{Bienenfeld87} to higher dimensional arithmetic schemes.

We now explain our definition for $\mathbb{Z}(n)^{\overline{\mathcal{X}}}$ and describe the contents of this appendix. The topos $\overline{\mathcal{X}}_{et}$ is defined so that there is an open-closed decomposition (see Section \ref{sect-AVTopos})
$$\phi:\mathcal{X}_{et}\longrightarrow \overline{\mathcal{X}}_{et}\longleftarrow \mathcal{X}_{\infty}:u_{\infty}$$
where $\phi:\mathcal{X}_{et}\rightarrow \overline{\mathcal{X}}_{et}$ is the open embedding and $\mathcal{X}_{\infty}$ its closed complement. The right definition for $\mathbb{Z}(n)^{\overline{\mathcal{X}}}$ in the range $0\leq n\leq d$ is
\begin{equation}\label{rightdef}
\mathbb{Z}(n)^{\overline{\mathcal{X}}}:=\tau^{\leq n}(R\phi_*\mathbb{Z}(n)^{\mathcal{X}}).
\end{equation}
Since we need $\phi^*\mathbb{Z}(n)^{\overline{\mathcal{X}}}\simeq\mathbb{Z}(n)^{\mathcal{X}}$, this definition requires $\mathcal{H}^i(\mathbb{Z}(n)^{\mathcal{X}})=0$ for any $i>n$, which is not known for general regular $\mathcal{X}$. However (\ref{rightdef}) gives
$$Ru_{\infty}^!\mathbb{Z}(n)^{\overline{\mathcal{X}}}= \left( \tau^{>n} u_{\infty,*}R\pi_*(2i\pi)^n\mathbb{Z}\right)[-1]\simeq  \left(\tau^{>n} u_{\infty,*}R\widehat{\pi}_*(2i\pi)^n\mathbb{Z}\right)[-1].$$
where $R\widehat{\pi}_*(2i\pi)^n\mathbb{Z}$ is the complex of $2$-torsion sheaves on $\mathcal{X}_{\infty}$ defined in Section \ref{sectTate}. Therefore, we define $\mathbb{Z}(n)^{\overline{\mathcal{X}}}$ such that there is an exact triangle
$$\mathbb{Z}(n)^{\overline{\mathcal{X}}}\longrightarrow R\phi_*\mathbb{Z}(n)\longrightarrow \tau^{>n} \left(u_{\infty,*}R\widehat{\pi}_*(2i\pi)^n\mathbb{Z}\right)$$
for any $n\in\mathbb{Z}$ (see Corollary \ref{cor-u^!}). We then show that the resulting complexes $\mathbb{Z}(n)^{\overline{\mathcal{X}}}$ for any $n\in\mathbb{Z}$ satisfy the expected Artin-Verdier duality. This fact relies on Poincar\'e duality for the cohomology of the possibly non-orientable manifold $\mathcal{X}(\mathbb{R})$ with $\mathbb{Z}/2\mathbb{Z}$-coefficients. Moreover, our definition of $\mathbb{Z}(n)^{\overline{\mathcal{X}}}$  coincides with (\ref{rightdef}) whenever $n\geq0$ and $\mathcal{H}^i(\mathbb{Z}(n)^{\mathcal{X}})=0$ for any $i>n$ (Proposition \ref{prop-trunc}). In particular, we have $\mathbb{Z}(0)^{\overline{\mathcal{X}}}\simeq \mathbb{Z}$ and
$\mathbb{Z}(1)^{\overline{\mathcal{X}}}\simeq \phi_*\mathbb{G}_m[-1]$, where $\mathbb{Z}$ denotes the constant sheaf  on $\overline{\mathcal{X}}_{et}$, and $\mathbb{G}_m$ denotes the multiplicative group on $\mathcal{X}_{et}$ (Proposition \ref{prop-description-atn=0,1}). We notice that, even though the complex $R\Gamma(\overline{\mathcal{X}}_{et},\mathbb{Z}(n))$ has bounded cohomology, it may have non-trivial cohomology in negative degrees for $n<0$. We observe in Proposition \ref{prop-pbf} that this surprising fact is forced by the projective bundle formula.

\subsection{The motivic complex $\mathbb{Z}(n)^{\mathcal{X}}$}
For any $n\geq0$, we consider Bloch's cycle complex $\mathbb{Z}(n)=z^n(-,2n-*)$ as a complex of sheaves
on the small \'etale site $\mathcal{X}_{et}$ of the scheme $\mathcal{X}$. We have $\mathbb{Z}(0)\simeq\mathbb{Z}$ and $\mathbb{Z}(1)\simeq\mathbb{G}_m[-1]$ (see \cite{Levine01}, \cite{Levine99}).
We write
$\mathbb{Z}/m\mathbb{Z}(n):=\mathbb{Z}(n)\otimes^L\mathbb{Z}/m\mathbb{Z}$
and
$\mathbb{Q}/\mathbb{Z}(n):=\underrightarrow{ \mathrm{lim}}\,\mathbb{Z}/m\mathbb{Z}(n)$.
We have an exact triangle
$$\mathbb{Z}(n)\rightarrow \mathbb{Q}(n)\rightarrow \mathbb{Q}/\mathbb{Z}(n).$$
The Beilinson-Soul\'e vanishing conjecture states that $\mathbb{Z}(n)$ is acyclic in negative degrees.  In order to unconditionally define hypercohomology (or higher direct images) with coefficients in $\mathbb{Z}(n)$, we use $K$-injective resolutions (see \cite{Spaltenstein88} and \cite{Serpe03}). By \cite{Geisser04a}, if $\mathcal{X}$ is smooth over a number ring then $\mathbb{Z}/p^r\mathbb{Z}(n)$ is isomorphic, in the derived category, to Sato's complex \cite{Sato07}. Finally, recall from Section \ref{sect-emc} that for $n<0$ we define
$$\mathbb{Z}(n):=\bigoplus_{p}j_{p,!}(\mu_{p^{\infty}}^{\otimes n})[-1].$$

\subsection{The Artin-Verdier \'etale topos $\overline{\mathcal{X}}_{et}$}\label{sect-AVTopos} For a scheme $\mathcal{X}$ as above, we consider $\mathcal{X}(\mathbb{C})$ as a topological space with respect to the complex topology. The space $\mathcal{X}(\mathbb{C})$ carries an action of $G_{\mathbb{R}}:=\mathrm{Gal}(\mathbb{C}/\mathbb{R})$, and we consider the quotient topological space $\mathcal{X}_{\infty}:=\mathcal{X}(\mathbb{C})/G_{\mathbb{R}}$. Consider the canonical morphisms of topoi
$$\alpha:Sh(G_{\mathbb{R}},\mathcal{X}(\mathbb{C}))\longrightarrow \mathcal{X}_{et}$$
and
$$\pi:Sh(G_{\mathbb{R}},\mathcal{X}(\mathbb{C}))\longrightarrow Sh(\mathcal{X}_{\infty}).$$
Here $Sh(G_{\mathbb{R}},\mathcal{X}(\mathbb{C}))$ and $Sh(\mathcal{X}_{\infty})$ denote the topos of $G_{\mathbb{R}}$-equivariant sheaves (of sets) on $\mathcal{X}(\mathbb{C})$ and the topos of sheaves on $\mathcal{X}_{\infty}$ respectively. Recall that there is a canonical equivalence between $Sh(G_{\mathbb{R}},\mathcal{X}(\mathbb{C}))$ and the category of $G_{\mathbb{R}}$-equivariant \'etal\'e spaces over $\mathcal{X}(\mathbb{C})$, i.e. the category of topological spaces $E$ given with a $G_{\mathbb{R}}$-action and a $G_{\mathbb{R}}$-equivariant local homeomorphism $E\rightarrow \mathcal{X}(\mathbb{C})$. The morphisms $\pi$ and $\alpha$ are defined as follows. In order to construct $\alpha$, we need to define a functor
$\alpha^*$ from the \'etale site of $\mathcal{X}$ to the category $Sh(G_{\mathbb{R}},\mathcal{X}(\mathbb{C}))$ such that $\alpha^*$ is both continuous and left exact (i.e. such that $\alpha^*$ preserves the final object, fiber products and covering families). This functor simply takes an \'etale $\mathcal{X}$-scheme $\mathcal{U}\rightarrow\mathcal{X}$ to the $G_{\mathbb{R}}$-equivariant \'etal\'e space $\mathcal{U}(\mathbb{C})\rightarrow \mathcal{X}(\mathbb{C})$. We shall also consider the topos $Sh(G_{\mathbb{R}},\mathcal{X}_{\infty})$ of $G_{\mathbb{R}}$-equivariant sheaves on $\mathcal{X}_{\infty}$, where $G_{\mathbb{R}}$ acts trivially on $\mathcal{X}_{\infty}$. Notice that an abelian sheaf on $Sh(G_{\mathbb{R}},\mathcal{X}_{\infty})$ is simply a sheaf of $\mathbb{Z}[G_{\mathbb{R}}]$-modules on $\mathcal{X}_{\infty}$. In order to define $\pi$, we consider the map $p:Sh(G_{\mathbb{R}},\mathcal{X}(\mathbb{C}))\rightarrow Sh(G_{\mathbb{R}},\mathcal{X}_{\infty})$ induced by t
 he $G_{\mathbb{R}}$-equivariant continuous map $\mathcal{X}(\mathbb{C})\rightarrow\mathcal{X}_{\infty}$. Given an \'etal\'e space $E\rightarrow \mathcal{X}_{\infty}$, we define $$\pi^*(E\rightarrow \mathcal{X}_{\infty}):=E\times_{\mathcal{X}_{\infty}}\mathcal{X}(\mathbb{C})\rightarrow \mathcal{X}(\mathbb{C})$$ where $G_{\mathbb{R}}$ acts on the second factor. Given a $G_{\mathbb{R}}$-sheaf $F$ on $\mathcal{X}(\mathbb{C})$ and an open $U\subset \mathcal{X}_{\infty}$, we have a canonical $G_{\mathbb{R}}$-action on the set $p_*F(U)$, and we set $\pi_*F(U):=(p_*F(U))^{G_{\mathbb{R}}}$. The following result is well-known.
\begin{lem}\label{lem00}
Let $\mathcal{A}$ be an abelian object of $Sh(G_\mathbb{R},\mathcal{X}(\mathbb{C}))$. For any point $x\in\mathcal{X}_{\infty}$ and $y\in\mathcal{X}(\mathbb{C})$ lying over $x$, we have
\begin{equation}\label{Rpi}
(R\pi_*\mathcal{A})_x\simeq R\Gamma(G_y,\mathcal{A}_y)
\end{equation}
where $G_y\subseteq G_\mathbb{R}$ is the stabilizer of $y$.
\end{lem}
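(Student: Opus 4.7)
The plan is to compute the stalk of $R\pi_*\mathcal{A}$ at $x$ by choosing a cofinal system of small neighborhoods of $x$ in $\mathcal{X}_\infty$, pulling back to $G_\mathbb{R}$-stable neighborhoods of the orbit $G_\mathbb{R}\cdot y$ in $\mathcal{X}(\mathbb{C})$, and identifying the resulting equivariant cohomology with group cohomology of the stalk. Concretely, if $q:\mathcal{X}(\mathbb{C})\to\mathcal{X}_\infty$ is the quotient map, then for any open $V\subseteq\mathcal{X}_\infty$ one has a natural isomorphism
\[
R\Gamma(V,R\pi_*\mathcal{A})\;\cong\; R\Gamma_{G_\mathbb{R}}\bigl(q^{-1}(V),\mathcal{A}\bigr),
\]
where the right hand side is $G_\mathbb{R}$-equivariant sheaf cohomology (this follows formally from the factorization $\pi_*=(-)^{G_\mathbb{R}}\circ p_*$ together with a composition of derived functors argument, noting that $p_*$ is exact on $G_\mathbb{R}$-equivariant sheaves since $p$ is a local homeomorphism onto its $G_\mathbb{R}$-stable image). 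Passing to the colimit over $V\ni x$ yields
\[
(R\pi_*\mathcal{A})_x\;\cong\;\varinjlim_{V\ni x}R\Gamma_{G_\mathbb{R}}\bigl(q^{-1}(V),\mathcal{A}\bigr).
\]

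Now distinguish the two cases. First, if $G_y=1$, then $y$ and its conjugate $\bar y$ are distinct and admit disjoint open neighborhoods $W$, $\bar W=\sigma W$ in $\mathcal{X}(\mathbb{C})$ with $W\cap\bar W=\emptyset$; the sets $q(W)$ form a neighborhood basis of $x$, and $q^{-1}(q(W))=W\sqcup\bar W$ is a $G_\mathbb{R}$-stable open on which $G_\mathbb{R}$ acts freely by permuting the two components. Then $R\Gamma_{G_\mathbb{R}}(W\sqcup\bar W,\mathcal{A})\cong R\Gamma(W,\mathcal{A})$, which in the colimit gives $\mathcal{A}_y=R\Gamma(G_y,\mathcal{A}_y)$. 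Second, if $G_y=G_\mathbb{R}$ (so $y$ is a real point), choose a cofinal system of $G_\mathbb{R}$-stable open neighborhoods $W$ of $y$ in $\mathcal{X}(\mathbb{C})$ on which the restriction $\mathcal{A}|_W$ has ordinary sheaf cohomology reducing to the stalk $\mathcal{A}_y$ in the colimit (such a basis exists since $\mathcal{X}(\mathbb{C})$ is a real analytic space and $y$ has a fundamental system of $G_\mathbb{R}$-stable neighborhoods, e.g., small $G_\mathbb{R}$-invariant open balls for a chosen invariant Riemannian metric). The Cartan--Leray spectral sequence (or the composition-of-functors spectral sequence for $\pi_*=(-)^{G_\mathbb{R}}\circ p_*$ restricted to such $W$) then collapses to give
\[
\varinjlim_W R\Gamma_{G_\mathbb{R}}(W,\mathcal{A})\;\cong\; R\Gamma(G_\mathbb{R},\mathcal{A}_y)\;=\;R\Gamma(G_y,\mathcal{A}_y).
\]

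The main technical point will be ensuring the existence of a cofinal system of $G_\mathbb{R}$-stable neighborhoods of a real point $y$ on which higher sheaf cohomology vanishes in the limit, so that the composition-of-functors spectral sequence degenerates as stated; this is where one uses that $\mathcal{X}(\mathbb{C})$ near $y$ looks like a $G_\mathbb{R}$-equivariant manifold, and that $y$ admits arbitrarily small $G_\mathbb{R}$-stable contractible neighborhoods.
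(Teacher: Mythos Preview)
The paper does not prove this lemma at all: it is introduced with ``The following result is well-known'' and left unproven. Your sketch is a perfectly standard way to establish it, so there is no conflict of approach to discuss.

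One small correction: your justification that $p_*$ is exact ``since $p$ is a local homeomorphism onto its $G_\mathbb{R}$-stable image'' is not right as stated. At a real point $y\in\mathcal{X}(\mathbb{R})$ the quotient map $q:\mathcal{X}(\mathbb{C})\to\mathcal{X}_\infty$ is \emph{not} a local homeomorphism (the quotient near such a point is a manifold with boundary, not a manifold). The correct reason $p_*$ is exact is that $p$ is proper with finite (discrete) fibers, so the stalk of $p_*\mathcal{F}$ at $x$ is $\bigoplus_{y\in p^{-1}(x)}\mathcal{F}_y$; the paper itself notes later (Section~6.4) that ``the functor $p_*$ is exact by proper base change.'' With this fix your composition-of-functors argument and the two-case analysis go through.
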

In particular the sheaf $R^i\pi_*\mathcal{A}$ is concentrated on the closed subset $\mathcal{X}(\mathbb{R})\subset \mathcal{X}_{\infty}$ as long as $i>0$. The topos $\overline{\mathcal{X}}_{et}$ is the category of sheaves of sets on the Artin-Verdier \'etale site of $\overline{\mathcal{X}}$ (see \cite{Flach-Morin-12}). There is an open-closed decomposition
$$\phi:\mathcal{X}_{et}\longrightarrow \overline{\mathcal{X}}_{et}\longleftarrow \mathcal{X}_{\infty}:u_{\infty}$$
whose gluing functor can be described as follows:
\begin{equation}\label{glueing}
u_{\infty}^*\phi_*\simeq \pi_*\alpha^*: \mathcal{X}_{et} \longrightarrow Sh(G_{\mathbb{R}},\mathcal{X}(\mathbb{C}))
\longrightarrow Sh(\mathcal{X}_{\infty})
\end{equation}
Those  two properties characterize  $\overline{\mathcal{X}}_{et}$ up to equivalence:
$\overline{\mathcal{X}}_{et}$ is canonically equivalent to the category of triples $(F_{\mathcal{X}},F_{\infty},f)$ where $F_{\mathcal{X}}$ is an object of $\mathcal{X}_{et}$ (i.e. an \'etale sheaf on $\mathcal{X}$), $F_{\infty}$ an object of $Sh(\mathcal{X}_{\infty})$ and
$f:F_{\infty}\rightarrow \pi_*\alpha^*F_{\mathcal{X}}$ is a map in $Sh(\mathcal{X}_{\infty})$. This description of the topos $\overline{\mathcal{X}}_{et}$ gives as usual two triples of adjoint functors $(\phi_!,\phi^*,\phi_*)$ and $(u_{\infty}^*,u_{\infty,*},u_{\infty}^!)$ between the corresponding categories of abelian sheaves, which satisfy the classical formalism. In particular, we have
$$u^*_{\infty}u_{\infty,*}\stackrel{\sim}{\rightarrow}\mathrm{Id},\hspace{0.5cm} \phi^* \phi_*\stackrel{\sim}{\rightarrow}\mathrm{Id},\hspace{0.5cm} \phi^* u_{\infty,*}= 0.$$
Moreover, for any complex of abelian sheaves $\mathcal{A}$ on $\overline{\mathcal{X}}_{et}$, we have an exact sequence
$$0\rightarrow \phi_! \phi^*\mathcal{A}\rightarrow \mathcal{A}\rightarrow u_{\infty,*}u^*_{\infty}\mathcal{A} \rightarrow 0$$
and an exact triangle
$$u_{\infty,*}Ru_{\infty}^!\mathcal{A}\rightarrow \mathcal{A} \rightarrow R\phi_* \phi^*\mathcal{A}$$
where the maps are given by adjunction.
The following derived version of (\ref{glueing}) will be useful. We denote by $\mathcal{D}(\mathcal{X}_{et})$, $\mathcal{D}(\overline{\mathcal{X}}_{et})$ and $\mathcal{D}(\mathcal{X}_{\infty})$ the derived categories of the abelian categories of abelian sheaves on $\mathcal{X}_{et}$, $\overline{\mathcal{X}}_{et} $ and $\mathcal{X}_{\infty}$ respectively. Moreover we denote by  $\mathcal{D}^+(\mathcal{X}_{et})$, $\mathcal{D}^+(\overline{\mathcal{X}}_{et})$ and $\mathcal{D}^+(\mathcal{X}_{\infty})$ the corresponding derived categories of bounded below complexes.
\begin{lem}\label{lem-here0} The functor $\alpha^*$ sends injective objects to $\pi_*$-acyclic objects, hence
the natural transformation of functors from $\mathcal{D}^+(\mathcal{X}_{et})$ to $\mathcal{D}(\mathcal{X}_{\infty})$
$$u_{\infty}^*\circ R\phi_*\stackrel{\sim}{\longrightarrow} R\pi_*\circ\alpha^*$$
is an isomorphism.
\end{lem}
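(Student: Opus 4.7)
The second assertion is formal given the first. Pick $K^\bullet\in\mathcal D^+(\mathcal X_{et})$ and a bounded-below injective resolution $K^\bullet\to I^\bullet$. Then $R\phi_*K^\bullet=\phi_*I^\bullet$, and applying the underived identity (\ref{glueing}) termwise yields
\[
u_\infty^*R\phi_*K^\bullet = u_\infty^*\phi_*I^\bullet = \pi_*\alpha^*I^\bullet.
\]
Since $\alpha^*$ is exact (it is the inverse-image functor of a morphism of topoi), the complex $\alpha^*I^\bullet$ still represents $\alpha^*K^\bullet$, and once the $\alpha^*I^n$ are $\pi_*$-acyclic one has $\pi_*\alpha^*I^\bullet=R\pi_*\alpha^*K^\bullet$. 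So everything reduces to proving that $\alpha^*I$ is $\pi_*$-acyclic whenever $I$ is an injective abelian sheaf on $\mathcal X_{et}$.

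\textbf{Main step.} To check $\pi_*$-acyclicity one looks at stalks, and by Lemma \ref{lem00} the stalk of $R^i\pi_*\alpha^*I$ at a point $x\in\mathcal X_\infty$ is $H^i(G_y,(\alpha^*I)_y)$ for any $y\in\mathcal X(\mathbb C)$ above $x$. When $y\notin\mathcal X(\mathbb R)$ the stabilizer $G_y$ is trivial and the claim is automatic. When $y\in\mathcal X(\mathbb R)$ we have $G_y=G_{\mathbb R}$, and the stalk $(\alpha^*I)_y$ coincides with the \'etale stalk $I_{\bar y}$ at the geometric point
\[
\bar y\colon \mathrm{Spec}(\mathbb C)\to\mathrm{Spec}(\mathbb R)\xrightarrow{\,y\,}\mathcal X,
\]
carrying its natural Galois action (coming from the fact that $\mathrm{Spec}(\mathcal O^{sh}_{\mathcal X,\bar y})$ is a pro-\'etale Galois cover of $\mathrm{Spec}(\mathcal O^{h}_{\mathcal X,y})$ with group $G_{\mathbb R}$). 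The claim becomes $H^i(G_{\mathbb R},I_{\bar y})=0$ for all $i>0$.

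\textbf{Key computation.} I would prove this by identifying $G_{\mathbb R}$-cohomology of the stalk with \'etale cohomology of the Henselization. Because the strict Henselization $\mathcal O^{sh}_{\mathcal X,\bar y}$ is strictly Henselian, any abelian sheaf has trivial higher \'etale cohomology on $\mathrm{Spec}(\mathcal O^{sh})$, and the Hochschild--Serre spectral sequence for $\mathrm{Spec}(\mathcal O^{sh})\to\mathrm{Spec}(\mathcal O^{h})$ collapses to an isomorphism
\[
H^i\bigl(\mathrm{Spec}(\mathcal O^{h})_{et},\,I|_{\mathcal O^h}\bigr)\;\simeq\;H^i(G_{\mathbb R},I_{\bar y}).
\]
The functor $F\mapsto F(\mathcal O^h)=\mathrm{colim}_{(\mathcal U,\tilde y)}F(\mathcal U)$ is the Henselian stalk. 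One then verifies that this functor has vanishing higher derived functors on any injective $I$ on $\mathcal X_{et}$. The standard way is a Godement reduction: every injective $I$ is a retract of a product $\prod_{\bar z}\bar z_*M_{\bar z}$ of geometric skyscrapers with $M_{\bar z}$ an injective abelian group, and both sides of the desired vanishing commute with products; the stalk $(\bar z_*M_{\bar z})_{\bar y}$ is, as a $G_{\mathbb R}$-module, a product of copies of $M_{\bar z}$ indexed by the $G_{\mathbb R}$-set of $\mathcal X$-maps $\bar z\to\mathrm{Spec}(\mathcal O^{sh}_{\mathcal X,\bar y})$, hence coinduced from a subgroup of $G_{\mathbb R}$, so Shapiro's lemma gives the cohomological triviality.

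\textbf{Main obstacle.} The only substantive content is this last step---the Galois-acyclicity of stalks of injective \'etale sheaves at geometric points over real points. The formal reduction and the stalk formula of Lemma \ref{lem00} are automatic; it is the identification (or equivalently the structural description of injectives on $\mathcal X_{et}$) that requires care, and the Henselian comparison together with Shapiro's lemma is the cleanest route.
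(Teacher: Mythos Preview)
Your plan and main step match the paper exactly: reduce to $\pi_*$-acyclicity of $\alpha^*I$, compute stalks via Lemma \ref{lem00}, and identify $H^i(G_{\mathbb R},I_{\bar y})$ with \'etale cohomology of the Henselization $\mathcal O^h_{\mathcal X,y}$. The divergence is only in the final vanishing argument, and there your Godement route has a genuine gap.

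The claim ``both sides of the desired vanishing commute with products'' fails: the stalk functor $I\mapsto I_{\bar y}$ is a filtered colimit and does \emph{not} commute with infinite products of sheaves. So from the fact that $I$ is a retract of $\prod_{\bar z}\bar z_*M_{\bar z}$ you cannot conclude that $I_{\bar y}$ is a retract of $\prod_{\bar z}(\bar z_*M_{\bar z})_{\bar y}$, and the Shapiro reduction never gets off the ground.

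The paper's argument at this point is both simpler and correct. Since $\varprojlim\mathcal U=\mathrm{Spec}(\mathcal O^h_{\mathcal X,y})$ is a filtered inverse limit of \'etale $\mathcal X$-schemes with affine transition maps, \'etale cohomology commutes with this limit:
\[
H^i(G_{\mathbb R},I_{\bar y})\;\simeq\;H^i\bigl(\mathrm{Spec}(\mathcal O^h)_{et},I|_{\mathcal O^h}\bigr)\;\simeq\;\varinjlim_{(\mathcal U,\tilde y)} H^i(\mathcal U_{et},I|_{\mathcal U}).
\]
Each $I|_{\mathcal U}$ is injective on $\mathcal U_{et}$ (restriction along an \'etale morphism has an exact left adjoint $j_!$), so every term in the colimit vanishes for $i>0$. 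This replaces your Godement/Shapiro step entirely and closes the argument.
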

\begin{proof}
In view of the canonical isomorphisms $$u_{\infty}^*R\phi_*\simeq R(u_{\infty}^*\phi_*)\simeq R(\pi_*\alpha^*),$$
one is reduced to showing that $\alpha^*$ sends injective objects to $\pi_*$-acyclic objects. Indeed, if this is the case then the spectral sequence for the composite functor $\pi_*\circ\alpha^*$ together with the exactness of $\alpha^*$ yield
$$R(\pi_*\alpha^*)\simeq R\pi_*R\alpha^*\simeq R(\pi_*)\alpha^*.$$
Let $I$ be an injective abelian sheaf on
$\mathcal{X}_{et}$. By Lemma \ref{lem00}, we have $(R^i\pi_*(\alpha^*I))_{y}=0$ for any $i\geq1$ and any $y\in\mathcal{X}_\infty-\mathcal{X}(\mathbb{R})$. Let $y\in \mathcal{X}(\mathbb{R})\subset \mathcal{X}_\infty$. The point $y$ is a morphism $y: \mathrm{Spec}(\mathbb{R})\rightarrow\mathcal{X}$ and we denote by
$$x: \mathrm{Spec}(\mathbb{C})\rightarrow \mathrm{Spec}(\mathbb{R})\rightarrow\mathcal{X}$$
the (unique) point $x\in\mathcal{X}(\mathbb{C})$ such that $\pi(x)=y$. Then we have
$$(R^i\pi_*(\alpha^*I))_{y}=H^i(G_\mathbb{R},(\alpha^*I)_x)=H^i(G_\mathbb{R},x^*I)=H^i( \mathrm{Spec}(\mathbb{R})_{et},y^*I)$$
for any $i\geq 1$. Moreover, we have
\begin{equation}\label{lalala}
H^i( \mathrm{Spec}(\mathbb{R})_{et},y^*I)=H^i(\underleftarrow{ \mathrm{lim}}\,\mathcal{U},I_{\mid \underleftarrow{ \mathrm{lim}}\mathcal{U}})=\underrightarrow{ \mathrm{lim}}H^i(\mathcal{U},I_{\mid \mathcal{U}})
\end{equation}
where $\mathcal{U}$ runs over the filtered set of pointed \'etale neighborhoods of $(\mathcal{X},y)$, i.e. the family of pairs $(\mathcal{U}\rightarrow\mathcal{X}, \mathrm{Spec}(\mathbb{R})\rightarrow\mathcal{U})$ such that $y$ coincides with the composition $ \mathrm{Spec}(\mathbb{R})\rightarrow\mathcal{U}\rightarrow\mathcal{X}$. Notice that (\ref{lalala}) holds since \'etale cohomology commutes with filtered projective limits of schemes and because $\underleftarrow{ \mathrm{lim}}\,\mathcal{U}$ is an henselian local ring with residue field of Galois group $G_{\mathbb{R}}$ (however the residue field of $\underleftarrow{ \mathrm{lim}}\,\mathcal{U}$ is not $\mathbb{R}$ but rather an henselian real field algebraic over $\mathbb{Q}$). We obtain
$$H^i( \mathrm{Spec}(\mathbb{R})_{et},y^{*}I)=\underrightarrow{ \mathrm{lim}}H^i(\mathcal{U},I_{\mid \mathcal{U}})=0$$
since $I_{\mid \mathcal{U}}$ is injective on $\mathcal{U}$, hence
$$R^i\pi_*(\alpha^*I)=0\mbox{ for any }i\geq1.$$
The result follows.

\end{proof}

\subsection{Tate cohomology and the functor $R{\widehat{\pi}}_*$  }\label{sectTate}

We choose a resolution $P_{\geq0}\rightarrow \mathbb{Z}$ of the $\mathbb{Z}[G_{\mathbb{R}}]$-module $\mathbb{Z}$ by finitely generated free $\mathbb{Z}[G_{\mathbb{R}}]$-modules, and we extend it into a complete resolution $P_*$. We have morphisms of complexes of $\mathbb{Z}[G_{\mathbb{R}}]$-modules:
$$P_*\rightarrow P_{\geq 0}\rightarrow \mathbb{Z}.$$
If $A$ is a  bounded below complex of $G_{\mathbb{R}}$-modules, Tate hypercohomology is defined as
$$R\widehat{\Gamma}(G_{\mathbb{R}},A):=\int \mathrm{Hom}_{G_{\mathbb{R}}-\mathrm{Mod}}(P_*,A)$$
where $\int$ denotes the totalization of the double complex $\mathrm{Hom}$ with respect to the \emph{direct sum} on diagonals.
The spectral sequence
\begin{equation*}
\widehat{H}^i(G_{\mathbb{R}},H^j(A))\Longrightarrow \widehat{H}^{i+j}(G_{\mathbb{R}},A)
\end{equation*}
converges. It follows that $R\widehat{\Gamma}(G_{\mathbb{R}},-)$ preserves quasi-isomorphisms. We keep the notations of the previous section and we consider the $G_{\mathbb{R}}$-action on the topological space $\mathcal{X}(\mathbb{C})$. We define Tate equivariant cohomology as follows:
$$R\widehat{\Gamma}(G_{\mathbb{R}},\mathcal{X}(\mathbb{C}),\mathcal{A}):=R\widehat{\Gamma}(G_{\mathbb{R}},R\Gamma(\mathcal{X}(\mathbb{C}),\mathcal{A})).$$
where $\mathcal{A}$ is a bounded below  complex of $G_{\mathbb{R}}$-equivariant abelian sheaves on $\mathcal{X}(\mathbb{C})$. We have
\begin{eqnarray*}
R\pi_*\mathcal{A}&\simeq & \mathrm{Ou}\int \underline{\mathrm{Hom}}_{Sh(G_{\mathbb{R}},\mathcal{X}_{\infty})}(\mathbb{Z},p_*I^*)\\
&\stackrel{\sim}{\rightarrow} & \mathrm{Ou}\int \underline{\mathrm{Hom}}_{Sh(G_{\mathbb{R}},\mathcal{X}_{\infty})}(\Gamma^*P_{\geq0},p_*I^*)\\
&\stackrel{\sim}{\leftarrow} & \mathrm{Ou}\int \underline{\mathrm{Hom}}_{Sh(G_{\mathbb{R}},\mathcal{X}_{\infty})}(\Gamma^*P_{\geq0},p_*\mathcal{A})
\end{eqnarray*}
where $\mathcal{A}\rightarrow I^*$ is an injective resolution (by injective equivariant sheaves), $\Gamma^*P_{\geq0}$ is the complex of equivariant sheaves associated with $P_{\geq0}$, $\underline{\mathrm{Hom}}_{Sh(G_{\mathbb{R}},\mathcal{X}_{\infty})}$ denotes the internal Hom inside the category of abelian sheaves on $Sh(G_{\mathbb{R}},\mathcal{X}_{\infty})$, $\int$ refers to the total complex associated with a double complex  and $\mathrm{Ou}:Sh(G_{\mathbb{R}},\mathcal{X}_{\infty})\rightarrow Sh(\mathcal{X}_{\infty})$ denotes the forgetful functor. Finally  $p_*$ is the direct image of the morphism $Sh(G_{\mathbb{R}},\mathcal{X}(\mathbb{C}))\rightarrow Sh(G_{\mathbb{R}},\mathcal{X}_{\infty})$. The functor $p_*$ is exact by proper base change.  Similarly, we define
$$\widehat{\pi}_*\mathcal{A}:=\mathrm{Ou}\int \underline{\mathrm{Hom}}_{Sh(G_{\mathbb{R}},\mathcal{X}_{\infty})}(\Gamma^*P_*,p_*\mathcal{A})$$
where $\int$  now denotes totalization with respect to the \emph{direct sum on diagonals}. We have
a convergent spectral sequence
\begin{equation}\label{hss0}
\underline{\widehat{H}}^i(G_{\mathbb{R}},p_*\mathcal{H}^j(\mathcal{A}))\Longrightarrow \mathcal{H}^{i+j}(\widehat{\pi}_*\mathcal{A}).
\end{equation}
Here, given a sheaf of $\mathbb{Z}[G_{\mathbb{R}}]$-modules $\mathcal{F}$ on $\mathcal{X}_{\infty}$, we denote by $\underline{\widehat{H}}^i(G_{\mathbb{R}},\mathcal{F})$ the sheaf associated with the presheaf $U\rightarrow \widehat{H}^i(G_{\mathbb{R}},\mathcal{F}(U))$. The spectral sequence (\ref{hss0}) shows that $\widehat{\pi}_*$ preserves quasi-isomorphisms. Hence $\widehat{\pi}_*$ induces  a functor
$$R\widehat{\pi}_*:\mathcal{D}^+(G_{\mathbb{R}},\mathcal{X}(\mathbb{C}))\longrightarrow \mathcal{D}(\mathcal{X}_{\infty}).$$
The spectral sequence above reads as follows:
\begin{equation}\label{hss}
\underline{\widehat{H}}^i(G_{\mathbb{R}},p_*\mathcal{H}^j(\mathcal{A}))\Longrightarrow \mathcal{H}^{i+j}(R\widehat{\pi}_*\mathcal{A}).
\end{equation}
Note that the map $P_*\rightarrow P_{\geq 0}$ gives a natural transformation $R\pi_*\rightarrow R\widehat{\pi}_*$. We shall need the following
\begin{lem}\label{lemTate}
Let $\mathcal{A}$ be a bounded below complex of abelian sheaves on $Sh(G_{\mathbb{R}},\mathcal{X}(\mathbb{C}))$. There are canonical isomorphisms
$$R\widehat{\Gamma}(G_{\mathbb{R}},\mathcal{X}(\mathbb{C}),\mathcal{A})\simeq R\Gamma(\mathcal{X}_{\infty},R\widehat{\pi}_*\mathcal{A})\simeq R\Gamma(\mathcal{X}(\mathbb{R}),R\widehat{\pi}_*\mathcal{A}_{\mid \mathcal{X}(\mathbb{R})}).$$

\end{lem}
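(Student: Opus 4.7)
Plan: My approach is to first establish the second isomorphism by a support argument, and then use it to prove the first by identifying both sides with the totalization of a common double complex built from an injective resolution of $\mathcal{A}$.

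For the second isomorphism, the convergent spectral sequence (\ref{hss})
$$\underline{\widehat{H}}^i(G_{\mathbb{R}},p_*\mathcal{H}^j(\mathcal{A}))\Longrightarrow \mathcal{H}^{i+j}(R\widehat{\pi}_*\mathcal{A})$$
reduces me to showing that $R\widehat{\pi}_*\mathcal{A}$ is supported on the closed subset $\mathcal{X}(\mathbb{R})\subset\mathcal{X}_{\infty}$. I would check this at stalks: for any $y\in\mathcal{X}_{\infty}\setminus\mathcal{X}(\mathbb{R})$, the two preimages $y',y''=\sigma(y')$ of $y$ in $\mathcal{X}(\mathbb{C})$ are interchanged by complex conjugation with trivial common stabilizer, so for any $G_{\mathbb{R}}$-equivariant sheaf $\mathcal{F}$ the stalk $(p_*\mathcal{F})_y\cong\mathcal{F}_{y'}\oplus\mathcal{F}_{y''}$ is an induced $G_{\mathbb{R}}$-module and hence has vanishing Tate cohomology in all degrees. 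Therefore $R\widehat{\pi}_*\mathcal{A}\simeq j_*(R\widehat{\pi}_*\mathcal{A})|_{\mathcal{X}(\mathbb{R})}$ for the closed immersion $j:\mathcal{X}(\mathbb{R})\hookrightarrow\mathcal{X}_{\infty}$, giving the second identification.

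For the first isomorphism, I would choose a bounded below injective resolution $\mathcal{A}\to I^{\bullet}$ in $Sh(G_{\mathbb{R}},\mathcal{X}(\mathbb{C}))$. The forgetful functor $\mathrm{Ou}$ admits the exact left adjoint $\mathbb{Z}[G_{\mathbb{R}}]\otimes_{\mathbb{Z}}-$, so each $\mathrm{Ou}I^j$ is an injective (in particular flabby) sheaf on $\mathcal{X}(\mathbb{C})$; combined with the fact that $p$ is a finite branched covering, this yields $R\Gamma(\mathcal{X}_{\infty},\mathrm{Ou}p_*I^j)\simeq \Gamma(\mathcal{X}(\mathbb{C}),I^j)$ in degree zero. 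Writing each term of the complete resolution as $P_i\cong\mathbb{Z}[G_{\mathbb{R}}]^{n_i}$, one has $\underline{\mathrm{Hom}}_{Sh(G_{\mathbb{R}},\mathcal{X}_{\infty})}(\Gamma^*P_i,p_*I^j)\cong (p_*I^j)^{n_i}$, and $\Gamma(\mathcal{X}_{\infty},\mathrm{Ou}(-))$ sends this to $\mathrm{Hom}_{G_{\mathbb{R}}}(P_i,\Gamma(\mathcal{X}(\mathbb{C}),I^j))$. Thus the double complex computing $R\Gamma(\mathcal{X}_{\infty},\widehat{\pi}_*I^{\bullet})$ agrees termwise with $\mathrm{Hom}_{G_{\mathbb{R}}}(P_*,\Gamma(\mathcal{X}(\mathbb{C}),I^{\bullet}))$, whose direct-sum totalization gives $R\widehat{\Gamma}(G_{\mathbb{R}},\mathcal{X}(\mathbb{C}),\mathcal{A})$ by definition.

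The main obstacle will be justifying the commutation of $R\Gamma(\mathcal{X}_{\infty},-)$ with the direct-sum totalization $\int$ of the double complex, which is unbounded in the $P$-variable since $P_*$ is a complete (and in our case $2$-periodic) resolution. My remedy is to first apply the second isomorphism to restrict everything to the compact subspace $\mathcal{X}(\mathbb{R})$: its finite covering dimension (bounded by $d-1$) forces only finitely many rows of the hypercohomology spectral sequence to contribute in each total degree, allowing $R\Gamma(\mathcal{X}(\mathbb{R}),-)$ to pass through the totalization at the level of cohomology. Canonicity and functoriality in $\mathcal{A}$ then follow from the uniqueness up to homotopy of the injective resolution and of the complete resolution $P_*$.
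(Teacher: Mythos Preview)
Your argument for the second isomorphism via the support of $R\widehat{\pi}_*\mathcal{A}$ is correct and matches the paper's. Your termwise identification $\Gamma(\mathcal{X}_\infty,\mathrm{Ou}\,\underline{\mathrm{Hom}}(\Gamma^*P_i,p_*I^j))\cong \mathrm{Hom}_{G_\mathbb{R}}(P_i,\Gamma(\mathcal{X}(\mathbb{C}),I^j))$ is also the same as the paper's, and you correctly isolate the obstacle: passing $R\Gamma(\mathcal{X}_\infty,-)$ through the infinite direct-sum totalization.

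The problem is your remedy. Detouring through $\mathcal{X}(\mathbb{R})$ destroys the identification you just set up: after restriction, the individual terms become $\underline{\mathrm{Hom}}(\Gamma^*P_i,(p_*I^j)|_{\mathcal{X}(\mathbb{R})})$, and their global sections are $\mathrm{Hom}_{G_\mathbb{R}}(P_i,\Gamma(\mathcal{X}(\mathbb{R}),I^j|_{\mathcal{X}(\mathbb{R})}))$, not $\mathrm{Hom}_{G_\mathbb{R}}(P_i,\Gamma(\mathcal{X}(\mathbb{C}),I^j))$. The individual sheaves $\underline{\mathrm{Hom}}(\Gamma^*P_i,p_*I^j)$ are \emph{not} supported on $\mathcal{X}(\mathbb{R})$; only the cohomology of the totalization is. So whatever you compute on $\mathcal{X}(\mathbb{R})$ no longer has an evident identification with $R\widehat{\Gamma}(G_\mathbb{R},\mathcal{X}(\mathbb{C}),\mathcal{A})$.

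The paper avoids this detour entirely: $\mathcal{X}_\infty=\mathcal{X}(\mathbb{C})/G_\mathbb{R}$ is itself compact (since $\mathcal{X}$ is proper) and finite-dimensional, so the very properties you sought on $\mathcal{X}(\mathbb{R})$ already hold on $\mathcal{X}_\infty$. Concretely, the paper argues that each $\underline{\mathrm{Hom}}(\Gamma^*P_k,p_*I^l)$ is injective (as $P_k$ is finite free over $\mathbb{Z}[G_\mathbb{R}]$), that arbitrary direct sums of injectives remain $\Gamma(\mathcal{X}_\infty,-)$-acyclic because $H^i(\mathcal{X}_\infty,-)$ commutes with direct sums by compactness, and that $\Gamma(\mathcal{X}_\infty,-)$ has finite cohomological dimension. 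These three facts together give $R\Gamma(\mathcal{X}_\infty,\int\ldots)\simeq \Gamma(\mathcal{X}_\infty,\int\ldots)\simeq \int\Gamma(\mathcal{X}_\infty,\ldots)$, and then your termwise identification finishes. The fix to your proof is simply to drop the restriction to $\mathcal{X}(\mathbb{R})$ and run the finite-cohomological-dimension argument on $\mathcal{X}_\infty$ directly.
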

\begin{proof}
Let $\mathcal{A}\rightarrow I^*$ be an injective resolution. The first isomorphism follows from the following canonical identifications:
\begin{eqnarray}
\label{first}R\Gamma(\mathcal{X}_{\infty},R\widehat{\pi}_*\mathcal{A})&\simeq & R\Gamma(\mathcal{X}_{\infty},\int \underline{\mathrm{Hom}}_{Sh(G_{\mathbb{R}},\mathcal{X}_{\infty})}(\Gamma^*P_*,p_*I^*))\\
\label{sec} &\simeq & \Gamma(\mathcal{X}_{\infty},\int \underline{\mathrm{Hom}}_{Sh(G_{\mathbb{R}},\mathcal{X}_{\infty})}(\Gamma^*P_*,p_*I^*))\\
\label{third}&\simeq &\int \Gamma(\mathcal{X}_{\infty},\underline{\mathrm{Hom}}_{Sh(G_{\mathbb{R}},\mathcal{X}_{\infty})}(\Gamma^*P_*,p_*I^*))\\
\label{fou}&\simeq  &\int \mathrm{Hom}_{G_{\mathbb{R}}-\mathrm{Mod}}(P_*,\Gamma(\mathcal{X}_{\infty},p_*I^*))\\
&\simeq &\int \mathrm{Hom}_{G_{\mathbb{R}}-\mathrm{Mod}}(P_*,\Gamma(\mathcal{X}(\mathbb{C}),I^*))\\
&\simeq & R\widehat{\Gamma}(G_{\mathbb{R}},\mathcal{X}(\mathbb{C}),\mathcal{A}).
\end{eqnarray}
Here (\ref{first}) follows from the spectral sequence (\ref{hss}); (\ref{sec}) and (\ref{third}) follow from the fact that $\mathcal{X}_{\infty}$ is compact and finite dimensional. Indeed, the sheaves $\underline{\mathrm{Hom}}_{Sh(G_{\mathbb{R}},\mathcal{X}_{\infty})}(\Gamma^*P_k,p_*I^l)$ are injective abelian equivariant sheaves (hence in particular injective abelian sheaves) since $P_k$ is a finitely generated free $\mathbb{Z}[G_{\mathbb{R}}]$-module. Moreover, a direct sum of injective sheaves is acyclic for the global sections functor $\Gamma(\mathcal{X}_{\infty},-)$, since $H^i(\mathcal{X}_{\infty},-)$ commutes with direct sums. It follows that $\int \underline{\mathrm{Hom}}_{Sh(G_{\mathbb{R}},\mathcal{X}_{\infty})}(\Gamma^*P_*,p_*I^*))$ is a complex of acyclic sheaves, hence (\ref{sec}) follows from the fact that $\Gamma(\mathcal{X}_{\infty},-)$ has finite cohomological dimension. The identification (\ref{third}) is valid since $\Gamma(\mathcal{X}_{\infty},-)$ commutes
 with direct sums, and (\ref{fou}) is given by adjunction.

The second isomorphism of the Lemma follows from the fact that  $\mathcal{H}^i(R\widehat{\pi}_*\mathcal{A})$ is concentrated on the closed subset $\mathcal{X}(\mathbb{R})\subseteq \mathcal{X}_{\infty}$ for any $i\in\mathbb{Z}$.
\end{proof}

\subsection{The motivic complex $\mathbb{Z}(n)^{\overline{\mathcal{X}}}$}
The construction of $\mathbb{Z}(n)^{\overline{\mathcal{X}}}$ requires the following lemma.
\begin{lem}\label{lem-here10}
Let $\mathbb{Z}(n)\rightarrow I(n)$ be  a $K$-injective resolution. There is a canonical morphism of complexes of abelian sheaves on $\overline{\mathcal{X}}_{et}$
$$\sigma_{\overline{\mathcal{X}},\mathbb{Z}(n)}:
\phi_*I(n)\longrightarrow u_{\infty,*} \tau^{>n} \widehat{\pi}_*\alpha^*(\tau^{\geq 0}I(n))$$
such that $\sigma_{\overline{\mathcal{X}},\mathbb{Z}(n)}$ induces the following morphism in $\mathcal{D}(\overline{\mathcal{X}}_{et})$
\begin{eqnarray*}
R\phi_*\mathbb{Z}(n)&\longrightarrow  &u_{\infty,*}u_{\infty}^*R\phi_*\mathbb{Z}(n)\\
&\stackrel{\sim}{\longrightarrow}& u_{\infty,*}R(\pi_*\alpha^*)\mathbb{Z}(n)\\
&\longrightarrow & u_{\infty,*}R\pi_*\alpha^* (\tau^{\geq0}\mathbb{Z}(n))\\
&\longrightarrow &u_{\infty,*}R\widehat{\pi}_* \alpha^*(\tau^{\geq0}\mathbb{Z}(n))\\
&\longrightarrow &u_{\infty,*}\tau^{>n}R\widehat{\pi}_*\alpha^*(\tau^{\geq0}\mathbb{Z}(n)).
\end{eqnarray*}
\end{lem}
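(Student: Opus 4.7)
The plan is to build $\sigma_{\overline{\mathcal{X}},\mathbb{Z}(n)}$ as an honest composition of morphisms of complexes, each lifting one arrow in the prescribed derived-category composite, and then check compatibility step by step. The adjunction unit provides a canonical morphism of complexes $\phi_*I(n)\to u_{\infty,*}u_{\infty}^*\phi_*I(n)$. Using the identification $u_\infty^*\phi_* = \pi_*\alpha^*$ of (\ref{glueing}), which holds at the level of underived functors, the right-hand side equals $u_{\infty,*}\pi_*\alpha^*I(n)$. The canonical truncation $I(n)\to\tau^{\geq 0}I(n)$, together with exactness of $\alpha^*$, then produces a morphism $u_{\infty,*}\pi_*\alpha^*I(n) \to u_{\infty,*}\pi_*\alpha^*(\tau^{\geq 0}I(n))$. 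Next, the augmentation $P_*\to\mathbb{Z}$ (the map $P_0\twoheadrightarrow\mathbb{Z}$ extended by zero to the negative degrees of the complete resolution) induces, for every bounded below complex $\mathcal{A}$ of $G_{\mathbb{R}}$-equivariant sheaves, a natural morphism of complexes
\[
\pi_*\mathcal{A}=\mathrm{Ou}\,\underline{\mathrm{Hom}}(\mathbb{Z},p_*\mathcal{A}) \longrightarrow \mathrm{Ou}\int\underline{\mathrm{Hom}}(\Gamma^*P_*,p_*\mathcal{A})=\widehat{\pi}_*\mathcal{A};
\]
applying this to $\mathcal{A}=\alpha^*(\tau^{\geq 0}I(n))$ and projecting onto the good truncation $\tau^{>n}$ yields the remaining two arrows.

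Having constructed $\sigma$, the verification that it realizes the prescribed five-term derived composite is largely formal. The $K$-injectivity of $I(n)$ guarantees that $\phi_*I(n)$ computes $R\phi_*\mathbb{Z}(n)$, and Lemma \ref{lem-here0} then gives that $u_{\infty,*}\pi_*\alpha^*I(n) \simeq u_{\infty,*}u_\infty^* R\phi_*\mathbb{Z}(n) \simeq u_{\infty,*}R(\pi_*\alpha^*)\mathbb{Z}(n)$, so the first two arrows are realized correctly. The third arrow is tautological once one knows that $\pi_*\alpha^*(\tau^{\geq 0}I(n))$ represents $R\pi_*\alpha^*(\tau^{\geq 0}\mathbb{Z}(n))$, which follows from Lemma \ref{lem-here0} applied term-wise to the injective components of $\tau^{\geq 0}I(n)$ in degrees $\geq 1$, combined with a short spectral-sequence argument dealing with the degree-$0$ kernel. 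The final truncation arrow is obvious.

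The main obstacle is to verify that the complex-level natural transformation $\pi_*\to\widehat{\pi}_*$ constructed above descends to the intended derived natural transformation $R\pi_*\to R\widehat{\pi}_*$ arising from $P_*\to P_{\geq 0}\to\mathbb{Z}$. This reduces to a compatibility between the two equivalent formulas for $R\pi_*$ recalled in Section \ref{sectTate}---one using injective resolutions on the coefficient side, the other using the resolution $\Gamma^*P_{\geq 0}\to\mathbb{Z}$ on the group side---with the double-complex formula for $\widehat{\pi}_*$. This amounts to a straightforward diagram chase in the bicomplex $\mathrm{Ou}\,\underline{\mathrm{Hom}}(\Gamma^*P_{\geq 0},p_*I^*)\to\mathrm{Ou}\,\underline{\mathrm{Hom}}(\Gamma^*P_*,p_*I^*)$ for an injective resolution $\alpha^*(\tau^{\geq 0}I(n))\to I^*$, but it is the one place in the proof where one must actually unwind the Tate-cohomology formalism rather than invoke abstract adjointness properties.
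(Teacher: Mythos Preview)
Your construction is identical to the paper's: both build $\sigma_{\overline{\mathcal{X}},\mathbb{Z}(n)}$ as the five-step composite (adjunction unit, the identification $u_\infty^*\phi_*=\pi_*\alpha^*$, the truncation map $I(n)\to\tau^{\geq 0}I(n)$, the natural transformation $\pi_*\to\widehat{\pi}_*$ coming from $P_*\to\mathbb{Z}$, and the projection to $\tau^{>n}$). The paper's proof is in fact nothing more than this list; it does not pause to verify that the complex-level composite induces the stated derived composite, so your additional discussion of that point already exceeds what the paper provides.

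One small correction: in your treatment of the third arrow you try to show that $\pi_*\alpha^*(\tau^{\geq 0}I(n))$ computes $R\pi_*\alpha^*(\tau^{\geq 0}\mathbb{Z}(n))$, but the degree-$0$ term of $\tau^{\geq 0}I(n)$ is a cokernel rather than an injective, so Lemma~\ref{lem-here0} does not apply to it and your ``short spectral-sequence argument'' is not convincing as stated. Fortunately this is unnecessary: since $\widehat{\pi}_*$ preserves quasi-isomorphisms (by the spectral sequence~(\ref{hss0})), the \emph{target} $\widehat{\pi}_*\alpha^*(\tau^{\geq 0}I(n))$ already computes $R\widehat{\pi}_*\alpha^*(\tau^{\geq 0}\mathbb{Z}(n))$, and naturality of $\pi_*\to\widehat{\pi}_*$ together with a comparison through a genuine $\pi_*$-acyclic resolution of $\alpha^*(\tau^{\geq 0}I(n))$ shows the composite $c'\circ b'$ agrees with the derived $c\circ b$ without ever needing the intermediate object to model $R\pi_*$. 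The ``main obstacle'' you identify is thus essentially formal.
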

\begin{proof}
We consider the morphism
\begin{eqnarray*}
\phi_*I(n)&\longrightarrow  &u_{\infty,*}u_{\infty}^*\phi_*I(n)\\
&\stackrel{\sim}{\longrightarrow}&  u_{\infty,*}\pi_*\alpha^* I(n) \\
&\longrightarrow &u_{\infty,*}\pi_*\alpha^* (\tau^{\geq 0} I(n))\\
&\longrightarrow & u_{\infty,*}\widehat{\pi}_*\alpha^* (\tau^{\geq 0} I(n))\\
&\longrightarrow & u_{\infty,*}\tau^{>n} \widehat{\pi}_*\alpha^* (\tau^{\geq 0} I(n)).
\end{eqnarray*}
where the first map is given by adjunction and the second map is given by (\ref{glueing}). The fourth map is given by the natural transformation $\pi_*\rightarrow \widehat{\pi}_*$ induced by $P_*\rightarrow \mathbb{Z}$.
\end{proof}

\begin{defn}\label{znxbardef}
We consider the following morphisms of complexes:
\begin{eqnarray*}
\sigma_{\overline{\mathcal{X}},\mathbb{Z}(n)}:
\phi_*I(n)& \longrightarrow & u_{\infty,*} \tau^{>n} \widehat{\pi}_*\alpha^*(\tau^{\geq 0} I(n))\\
\sigma^!_{\overline{\mathcal{X}},\mathbb{Z}(n)}:
\phi_*I(n)& \longrightarrow &  u_{\infty,*} \widehat{\pi}_*\alpha^*(\tau^{\geq 0} I(n))
\end{eqnarray*}
and we define
\begin{eqnarray*}
\mathbb{Z}(n)^{\overline{\mathcal{X}}}&:=&  \mathrm{\emph{Cone}}(\sigma_{\overline{\mathcal{X}},\mathbb{Z}(n)})[-1]\\
R\widehat{\phi}_!\mathbb{Z}(n)&:=& \mathrm{\emph{Cone}}(\sigma^!_{\overline{\mathcal{X}},\mathbb{Z}(n)})[-1].
\end{eqnarray*}

\end{defn}

Notice that $\mathbb{Z}(n)^{\overline{\mathcal{X}}}$ and $R\widehat{\phi}_!\mathbb{Z}(n)$ are well defined in the derived category $\mathcal{D}(\overline{\mathcal{X}}_{et})$, i.e. they do not depend on the choice of $I(n)$ up to a canonical isomorphism in $\mathcal{D}(\overline{\mathcal{X}}_{et})$.

\begin{prop}\label{prop-comp}
We have canonical morphisms of complexes
$$R\widehat{\phi}_!\mathbb{Z}(n)\longrightarrow \mathbb{Z}(n)^{\overline{\mathcal{X}}} \longrightarrow  R\phi_*\mathbb{Z}(n)$$
If $\mathcal{X}(\mathbb{R})=\emptyset$, these two maps are quasi-isomorphisms.
\end{prop}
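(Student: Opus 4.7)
The plan is to construct the two maps as explicit morphisms of complexes induced by natural transformations between the targets of $\sigma^!_{\overline{\mathcal{X}},\mathbb{Z}(n)}$ and $\sigma_{\overline{\mathcal{X}},\mathbb{Z}(n)}$, and then to identify the hypothesis $\mathcal{X}(\mathbb{R})=\emptyset$ with vanishing of the functor $R\widehat{\pi}_*$.

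First I would observe that since $I(n)$ is $K$-injective, $\phi_*I(n)$ represents $R\phi_*\mathbb{Z}(n)$, so the defining triangle of $\mathbb{Z}(n)^{\overline{\mathcal{X}}}$ already yields a canonical map $\mathbb{Z}(n)^{\overline{\mathcal{X}}}\to R\phi_*\mathbb{Z}(n)$ (the shifted fibre map). For the map $R\widehat{\phi}_!\mathbb{Z}(n)\to \mathbb{Z}(n)^{\overline{\mathcal{X}}}$, note that the natural quotient map of complexes
\[
q: u_{\infty,*}\widehat{\pi}_*\alpha^*(\tau^{\geq 0} I(n)) \longrightarrow u_{\infty,*}\tau^{>n}\widehat{\pi}_*\alpha^*(\tau^{\geq 0} I(n))
\]
satisfies $q\circ \sigma^!_{\overline{\mathcal{X}},\mathbb{Z}(n)}=\sigma_{\overline{\mathcal{X}},\mathbb{Z}(n)}$ by the very definition of the latter. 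Hence the pair $(\mathrm{id}_{\phi_*I(n)},q)$ is a morphism between the two defining cofibre sequences, which by functoriality of the mapping cone induces a morphism on shifted fibres $R\widehat{\phi}_!\mathbb{Z}(n)\to \mathbb{Z}(n)^{\overline{\mathcal{X}}}$, canonical in the derived category.

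For the quasi-isomorphism statement, I would use the spectral sequence (\ref{hss}) together with Lemma \ref{lem00}. If $\mathcal{X}(\mathbb{R})=\emptyset$, then for every $y\in\mathcal{X}_\infty$ the stabilizer $G_y\subseteq G_\mathbb{R}$ of any lift is trivial, so $\widehat{H}^i(G_y,M)=0$ for all $i\in\mathbb{Z}$ and all $G_y$-modules $M$. By the (stalkwise) version of (\ref{hss}) combined with the stalk formula of Lemma \ref{lem00}, it follows that $R\widehat{\pi}_*\mathcal{A}=0$ in $\mathcal{D}(\mathcal{X}_\infty)$ for any bounded below complex $\mathcal{A}$ of $G_\mathbb{R}$-equivariant sheaves on $\mathcal{X}(\mathbb{C})$, and hence $\widehat{\pi}_*\alpha^*(\tau^{\geq 0}I(n))$ is acyclic.

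Consequently $\sigma^!_{\overline{\mathcal{X}},\mathbb{Z}(n)}$ and $\sigma_{\overline{\mathcal{X}},\mathbb{Z}(n)}$ both have targets quasi-isomorphic to $0$, so both their shifted mapping cones are quasi-isomorphic to $\phi_*I(n)=R\phi_*\mathbb{Z}(n)$, and the two constructed arrows become isomorphisms in $\mathcal{D}(\overline{\mathcal{X}}_{et})$. The only mildly delicate point is checking that the composite $q\circ\sigma^!=\sigma$ holds already on the nose as complexes (not just up to homotopy), which follows by inspecting the chain of maps in the proof of Lemma \ref{lem-here10}; everything else is formal manipulation of cones and a stalk computation, so I expect no serious obstacle.
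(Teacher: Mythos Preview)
Your proof is correct and follows essentially the same approach as the paper. The paper's proof is terse (``the maps are the obvious ones'' and then the vanishing of $R\widehat{\pi}_*\alpha^*(\tau^{\geq 0}\mathbb{Z}(n))$ when $\mathcal{X}(\mathbb{R})=\emptyset$, citing Lemma~\ref{lemcomp}), whereas you spell out the cone-functoriality for the first map and the stalk argument for the vanishing; one small imprecision is that Lemma~\ref{lem00} concerns $R\pi_*$ rather than $R\widehat{\pi}_*$, but the underlying point---that for $y\notin\mathcal{X}(\mathbb{R})$ the stalk $(p_*\mathcal{H}^j\mathcal{A})_x$ is an induced $G_{\mathbb{R}}$-module and hence has trivial Tate cohomology---is exactly what makes the spectral sequence~(\ref{hss}) collapse to zero.
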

\begin{proof}
The maps are the obvious ones. The cohomology sheaves of the complex $R\widehat{\pi}_*\alpha^* (\tau^{\geq 0} \mathbb{Z}(n))$ are concentrated on $\mathcal{X}(\mathbb{R})$ (see Lemma \ref{lemcomp} below). In particular, $\mathcal{X}(\mathbb{R})=\emptyset$ implies that $R\widehat{\pi}_*\alpha^* (\tau^{\geq 0} \mathbb{Z}(n))\simeq 0$.
\end{proof}

\begin{lem}\label{lemcomp}
For any $n\in\mathbb{Z}$, we have a canonical isomorphism
$$R\widehat{\pi}_*(\tau^{\geq 0}\alpha^*\mathbb{Z}(n))\simeq R\widehat{\pi}_*((2i\pi)^n\mathbb{Z})$$
in the derived category $\mathcal{D}(\mathcal{X}_{\infty})$. For $n\geq 0$ the natural map of complexes
$$\tau^{>n}R(\pi_*\alpha^*)\mathbb{Z}(n)\longrightarrow \tau^{>n} R\widehat{\pi}_*(\tau^{\geq0} \alpha^* \mathbb{Z}(n))$$
is a quasi-isomorphism.
\end{lem}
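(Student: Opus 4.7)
The plan is to prove both statements by exploiting the vanishing of $R\widehat\pi_*$ on complexes of sheaves of $\mathbb Q$-vector spaces, together with the Beilinson--Lichtenbaum theorem applied on the smooth generic fibre $\mathcal X_\mathbb Q$. The key elementary fact, visible from the spectral sequence (\ref{hss}), is that Tate cohomology of the finite group $G_\mathbb{R}$ vanishes on $\mathbb Q$-modules; hence $R\widehat\pi_*$ kills any bounded below complex whose cohomology sheaves are sheaves of $\mathbb Q$-vector spaces. Note also that $\alpha$ factors through the inclusion of the smooth $\mathbb Q$-scheme $\mathcal X_\mathbb Q\hookrightarrow\mathcal X$, so every motivic computation reduces to one on the smooth variety $\mathcal X_\mathbb Q/\mathbb Q$.

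For the first isomorphism I separate into cases. When $n<0$, the definition $\mathbb Z(n)=\bigoplus_p j_{p,!}\mu_{p^\infty}^{\otimes n}[-1]$ together with the observation that $\mathcal X(\mathbb C)\subseteq\mathcal X[1/p]$ for every prime $p$ yields $\alpha^*\mathbb Z(n)\simeq (2\pi i)^n\mathbb Q/\mathbb Z[-1]$ on $\mathcal X(\mathbb C)$, via the logarithm identification $\mu_m\cong (2\pi i)\mathbb Z/m\mathbb Z$. This complex lives in degree $1$, so $\tau^{\geq 0}$ leaves it unchanged; rotating the short exact sequence $0\to (2\pi i)^n\mathbb Z\to (2\pi i)^n\mathbb Q\to (2\pi i)^n\mathbb Q/\mathbb Z\to 0$ and applying $R\widehat\pi_*$ then gives the claim since the $\mathbb Q$-term is killed. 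When $n\geq 0$, the triangle $\alpha^*\mathbb Z(n)\to\alpha^*\mathbb Q(n)\to\alpha^*\mathbb Q/\mathbb Z(n)$ similarly reduces the statement to identifying $\alpha^*\mathbb Q/\mathbb Z(n)\simeq (2\pi i)^n\mathbb Q/\mathbb Z$, which follows from the Beilinson--Lichtenbaum isomorphism $\mathbb Z/m(n)\simeq \mu_m^{\otimes n}$ on $\mathcal X_{\mathbb Q,et}$ by passing to the colimit in $m$. To pass from $\alpha^*\mathbb Z(n)$ to $\tau^{\geq 0}\alpha^*\mathbb Z(n)$, I chase the long exact sequence for $\mathbb Z(n)\xrightarrow{m}\mathbb Z(n)\to \mathbb Z/m(n)\simeq\mu_m^{\otimes n}[0]$ to conclude that $\mathcal H^i(\mathbb Z(n))$ is uniquely divisible for $i<0$; consequently $R\widehat\pi_*(\tau^{<0}\alpha^*\mathbb Z(n))=0$.

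For the second quasi-isomorphism, assuming $n\geq 0$, I factor the given map as
\[ R\pi_*\alpha^*\mathbb Z(n)\longrightarrow R\pi_*(\tau^{\geq 0}\alpha^*\mathbb Z(n))\longrightarrow R\widehat\pi_*(\tau^{\geq 0}\alpha^*\mathbb Z(n)). \]
The fibre of the first arrow is $R\pi_*(\tau^{<0}\alpha^*\mathbb Z(n))$; since its cohomology sheaves are $\mathbb Q$-vector spaces, $R^i\pi_*$ vanishes on them for $i>0$ and the fibre lives in the same negative degrees, hence is killed by $\tau^{>n}$. For the second arrow, the essential input is that $\mathcal H^i_{et}(\mathbb Z(n))=0$ for $i>n$ on the smooth scheme $\mathcal X_\mathbb Q$ (a consequence of Bloch's Gersten resolution together with the trivial vanishing of the cycle complex in degrees exceeding $n$ at the spectrum of a field). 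Therefore $\tau^{\geq 0}\alpha^*\mathbb Z(n)$ is concentrated in degrees $[0,n]$, and since the fibre of $R\pi_*\mathcal F\to R\widehat\pi_*\mathcal F$ for any single sheaf $\mathcal F$ is concentrated in degrees $\leq 0$, a filtration argument shows that the fibre for the complex $\tau^{\geq 0}\alpha^*\mathbb Z(n)$ lies in degrees $\leq n$ and is again killed by $\tau^{>n}$.

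The main obstacle is genuinely an input from algebraic K-theory: one needs the full Beilinson--Lichtenbaum / Bloch--Kato theorem (at every torsion prime, including $2$) both to identify $\alpha^*\mathbb Q/\mathbb Z(n)$ with $(2\pi i)^n\mathbb Q/\mathbb Z$ and to secure the unique-divisibility and vanishing bounds on $\mathcal H^i(\mathbb Z(n))$. Granting this deep input, the rest of the proof is a formal manipulation of the two spectral sequences computing $R\pi_*$ and $R\widehat\pi_*$ in concert with truncation.
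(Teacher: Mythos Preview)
Your proof is correct. For the first assertion you follow essentially the same route as the paper: both of you use that $R\widehat\pi_*$ annihilates complexes of $\mathbb Q$-sheaves (via the spectral sequence (\ref{hss})) together with the identification $\alpha^*\mathbb Q/\mathbb Z(n)\simeq (2\pi i)^n\mathbb Q/\mathbb Z[0]$. The only cosmetic differences are that the paper treats all $n$ uniformly (no case split is needed, since for $n<0$ the triangle $\mathbb Z(n)\to\mathbb Q(n)\to\mathbb Q/\mathbb Z(n)$ degenerates), and that the paper handles $\tau^{\geq 0}$ by directly truncating the triangle $\alpha^*\mathbb Z(n)\to\alpha^*\mathbb Q(n)\to\alpha^*\mathbb Q/\mathbb Z(n)$ (legitimate because the third term sits in degree $0$), rather than your separate argument that $\tau^{<0}\alpha^*\mathbb Z(n)$ has uniquely divisible cohomology.

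For the second assertion your approach genuinely differs from the paper's. The paper works through the $\mathbb Q$--$\mathbb Q/\mathbb Z$ triangle once more: it uses that $\pi_*\alpha^*$ is exact on $\mathbb Q$-sheaves to get $\mathcal H^p(R(\pi_*\alpha^*)\mathbb Q(n))=0$ for $p>n$, deduces $\tau^{>n}R(\pi_*\alpha^*)\mathbb Z(n)\simeq\tau^{>n}R\pi_*(2\pi i)^n\mathbb Z$ from the long exact sequence, and then identifies the latter with $\tau^{>n}R\widehat\pi_*(2\pi i)^n\mathbb Z$. You instead factor through $R\pi_*(\tau^{\geq 0}\alpha^*\mathbb Z(n))$ and bound the degrees of both fibres directly, using that $\tau^{\geq 0}\alpha^*\mathbb Z(n)$ is concentrated in $[0,n]$. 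This is cleaner in spirit but requires the \emph{integral} vanishing $\mathcal H^i_{et}(\mathbb Z(n))=0$ for $i>n$ on $\mathcal X_{\mathbb Q}$, whereas the paper only needs the rational statement $\mathcal H^i(\alpha^*\mathbb Q(n))=0$ for $i>n$. Both facts are known for smooth varieties over a field (cf.\ Conjecture~\ref{zhongconj} and the remark following it), so the difference is immaterial here; the paper's route would however survive in situations where only the rational vanishing is available.
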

\begin{proof}
The exact triangle
$$\alpha^*\mathbb{Z}(n)\rightarrow \alpha^*\mathbb{Q}(n)\rightarrow \alpha^*\mathbb{Q}/\mathbb{Z}(n)$$
induces
$$\tau^{\geq0}\alpha^*\mathbb{Z}(n)\rightarrow \tau^{\geq0}\alpha^*\mathbb{Q}(n)\rightarrow \tau^{\geq0}\alpha^*\mathbb{Q}/\mathbb{Z}(n)$$
since $\alpha^*\mathbb{Q}/\mathbb{Z}(n)$ is concentrated in degree $0$.
This gives another exact triangle
$$R\widehat{\pi}_*(\tau^{\geq0}\alpha^*\mathbb{Z}(n))\rightarrow R\widehat{\pi}_*(\tau^{\geq0}\alpha^*\mathbb{Q}(n))\rightarrow R\widehat{\pi}_*\alpha^*\mathbb{Q}/\mathbb{Z}(n).$$
The spectral sequence (\ref{hss}) and the fact that $\tau^{\geq0}\alpha^*\mathbb{Q}(n)$ is bounded show that the cohomology sheaves of $R\widehat{\pi}_*(\tau^{\geq0}\alpha^*\mathbb{Q}(n))$ are $2$-primary torsion. Since they are also divisible, they  vanish. We obtain
$$R\widehat{\pi}_*(\tau^{\geq0}\alpha^*\mathbb{Z}(n))\simeq R\widehat{\pi}_*\alpha^*\mathbb{Q}/\mathbb{Z}(n)[-1].$$
We have isomorphisms
$$\alpha^*\mathbb{Q}/\mathbb{Z}(n)\simeq \underrightarrow{ \mathrm{lim}}\,\mu_m^{\otimes n}(\mathbb{C})[0]\simeq (2i\pi)^n\mathbb{Q}/\mathbb{Z}[0]$$
in the derived category of $G_\mathbb{R}$-equivariant abelian sheaves on $\mathcal{X}(\mathbb{C})$. We obtain
$$R\widehat{\pi}_*(\tau^{\geq0}\alpha^*\mathbb{Z}(n))\simeq R\widehat{\pi}_*\alpha^*\mathbb{Q}/\mathbb{Z}(n)[-1]\simeq
R\widehat{\pi}_*((2i\pi)^n\mathbb{Q}/\mathbb{Z})[-1]\simeq R\widehat{\pi}_*((2i\pi)^n\mathbb{Z})$$
where the last isomorphisms follows from the exact sequence
$$0\rightarrow (2i\pi)^n\mathbb{Z}\rightarrow (2i\pi)^n\mathbb{Q}\rightarrow (2i\pi)^n\mathbb{Q}/\mathbb{Z}\rightarrow 0$$
of $G_\mathbb{R}$-equivariant abelian sheaves and from the fact that $R\widehat{\pi}_*(2i\pi)^n\mathbb{Q}\simeq 0$ as above.

We prove the second assertion. Let $Sh(\mathcal{X}_{et},\mathbb{Q})$ and $Sh(\mathcal{X}_{\infty},\mathbb{Q})$ (respectively $Sh(\mathcal{X}_{et},\mathbb{Z})$ and $Sh(\mathcal{X}_{\infty},\mathbb{Z})$) be the  categories of sheaves of $\mathbb{Q}$-vector spaces (respectively of abelian groups) on $\mathcal{X}_{et}$ and $\mathcal{X}_{\infty}$ respectively. The inclusion functor $i:Sh(\mathcal{X}_{et},\mathbb{Q})\rightarrow Sh(\mathcal{X}_{et},\mathbb{Z})$ is exact and preserves $K$-injective complexes. Moreover the functor
$\pi_*\alpha^* \circ i:Sh(\mathcal{X}_{et},\mathbb{Q}) \rightarrow Sh(\mathcal{X}_{\infty},\mathbb{Q})$ is exact. We obtain $$\pi_*\alpha^* \circ i\simeq R(\pi_*\alpha^* \circ i)\simeq R(\pi_*\alpha^*) Ri\simeq R(\pi_*\alpha^*) i$$
hence
$$R(\pi_{*}\alpha^*)\mathbb{Q}(n)= \pi_{*}\alpha^*\mathbb{Q}(n).$$
In particular we have
$$\mathcal{H}^{p}(R(\pi_*\alpha^*)\mathbb{Q}(n))\simeq \mathcal{H}^p(\pi_*\alpha^*\mathbb{Q}(n))=0$$
for $p>n$, since $\alpha^*\mathbb{Q}(n)$ is acyclic in degrees $p>n$. Then we consider the long exact sequence
$$\mathcal{H}^{n}(R(\pi_*\alpha^*)\mathbb{Q}(n))\rightarrow \mathcal{H}^{n}(R(\pi_*\alpha^*)\mathbb{Q}/\mathbb{Z}(n))\rightarrow \mathcal{H}^{n+1}(R(\pi_*\alpha^*)\mathbb{Z}(n))$$
$$\rightarrow 0\rightarrow \mathcal{H}^{n+1}(R(\pi_*\alpha^*)\mathbb{Q}/\mathbb{Z}(n))\rightarrow \mathcal{H}^{n+2}(R(\pi_*\alpha^*)\mathbb{Z}(n))\rightarrow 0\rightarrow\cdots $$
The complex
$\alpha^*\mathbb{Q}/\mathbb{Z}(n)\simeq (2i\pi)^n\mathbb{Q}/\mathbb{Z}[0]$ is concentrated in degree zero, hence the abelian sheaf
$$\mathcal{H}^{n}(R(\pi_*\alpha^*)\mathbb{Q}/\mathbb{Z}(n))=\mathcal{H}^{n}(R\pi_*(\alpha^*\mathbb{Q}/\mathbb{Z}(n)))=R^n\pi_*(\alpha^*\mathbb{Q}/\mathbb{Z}(n))\simeq R^n\pi_*((2i\pi)^n\mathbb{Q}/\mathbb{Z})$$
is killed by two for any $n>0$ (see Lemma \ref{lem00}). It follows that the map
$$\mathcal{H}^{n}(R(\pi_*\alpha^*)\mathbb{Q}(n))\rightarrow \mathcal{H}^{n}(R(\pi_*\alpha^*)\mathbb{Q}/\mathbb{Z}(n))$$
is the zero map for $n>0$, so that the long exact sequence  above  gives isomorphisms
$$\mathcal{H}^{i-1}(R\pi_*(\alpha^*\mathbb{Q}/\mathbb{Z}(n)))\stackrel{\sim}{\rightarrow} \mathcal{H}^{i}(R(\pi_*\alpha^*)\mathbb{Z}(n))\mbox{ for any }i>n.$$
We obtain
\begin{eqnarray*}
\tau^{>n}(R(\pi_*\alpha^*)\mathbb{Z}(n))&\simeq&\tau^{>n}(R(\pi_*\alpha^*)\mathbb{Q}/\mathbb{Z}(n)[-1])\\
&\simeq&\tau^{>n}(R\pi_*(\alpha^*\mathbb{Q}/\mathbb{Z}(n))[-1])\\
&\simeq&\tau^{>n}(R\pi_*(2i\pi)^n\mathbb{Q}/\mathbb{Z}[-1])\\
&\simeq&\tau^{>n}(R\pi_*(2i\pi)^n\mathbb{Z})
\end{eqnarray*}
for any $n>0$. Note that
\begin{equation}\label{la}
\tau^{>n}(R(\pi_*\alpha^*)\mathbb{Z}(n))\simeq\tau^{>n}(R\pi_*(2i\pi)^n\mathbb{Z})
\end{equation}
also holds for $n=0$ since $\mathbb{Z}(0)\simeq\mathbb{Z}[0]$. It follows that the composite map
$$\tau^{>n}R(\pi_*\alpha^*)\mathbb{Z}(n)\longrightarrow \tau^{>n}R\widehat{\pi}_*(\tau^{\geq0}\alpha^*\mathbb{Z}(n))\stackrel{\sim}{\longrightarrow} \tau^{>n}R\widehat{\pi}_*((2i\pi)^n\mathbb{Z})$$
is an isomorphism in the derived category. The result follows.

\end{proof}

\begin{cor}\label{cor-u^!}
For any $n\in\mathbb{Z}$, we have exact triangles
$$\mathbb{Z}(n)^{\overline{\mathcal{X}}}\rightarrow R\phi_*\mathbb{Z}(n)\rightarrow  u_{\infty,*}\tau^{>n}R\widehat{\pi}_*(2i\pi)^n\mathbb{Z}$$
and
$$R\widehat{\phi}_!\mathbb{Z}(n)\rightarrow R\phi_*\mathbb{Z}(n)\rightarrow u_{\infty,*}R\widehat{\pi}_*(2i\pi)^n\mathbb{Z}.$$
\end{cor}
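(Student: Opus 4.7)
The plan is to derive both exact triangles directly from the definitions in Definition \ref{znxbardef} by identifying the three terms in each defining cone triangle with the objects appearing in the statement. Specifically, $\mathbb{Z}(n)^{\overline{\mathcal{X}}} := \mathrm{Cone}(\sigma_{\overline{\mathcal{X}},\mathbb{Z}(n)})[-1]$ gives automatically an exact triangle
$$\mathbb{Z}(n)^{\overline{\mathcal{X}}}\longrightarrow \phi_*I(n)\longrightarrow u_{\infty,*}\tau^{>n}\widehat{\pi}_*\alpha^*(\tau^{\geq 0}I(n))\longrightarrow$$
in the homotopy category of complexes of abelian sheaves on $\overline{\mathcal{X}}_{et}$, and analogously for $R\widehat{\phi}_!\mathbb{Z}(n)$ without the truncation $\tau^{>n}$. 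So everything reduces to identifying the outer two terms in the derived category.

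First I would identify the middle term: since $I(n)$ is a $K$-injective resolution of $\mathbb{Z}(n)$ on $\mathcal{X}_{et}$, one has $\phi_*I(n) \simeq R\phi_*\mathbb{Z}(n)$ canonically in $\mathcal{D}(\overline{\mathcal{X}}_{et})$. Next I would identify the right-hand term with $u_{\infty,*}\tau^{>n}R\widehat{\pi}_*(2i\pi)^n\mathbb{Z}$, respectively $u_{\infty,*}R\widehat{\pi}_*(2i\pi)^n\mathbb{Z}$. Because $u_{\infty,*}$ is exact (it is a closed embedding) and commutes with the truncation $\tau^{>n}$, it is enough to show that
$$\widehat{\pi}_*\alpha^*(\tau^{\geq 0}I(n)) \simeq R\widehat{\pi}_*(\tau^{\geq 0}\alpha^*\mathbb{Z}(n)) \simeq R\widehat{\pi}_*(2i\pi)^n\mathbb{Z}$$
in $\mathcal{D}(\mathcal{X}_\infty)$. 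The first isomorphism follows because $\alpha^*$ is exact and sends $I(n)$ to a bounded-below complex of $\widehat{\pi}_*$-acyclic objects (the same argument used for $\pi_*$ in Lemma \ref{lem-here0}, after noting that $\widehat{\pi}_*$ is obtained from $\pi_*$ by $\mathrm{Hom}_{\mathbb{Z}[G_{\mathbb{R}}]}(P_*,-)$ with $P_*$ a complex of finitely generated free modules, so injectivity as equivariant sheaves is preserved under the relevant $\underline{\mathrm{Hom}}$). The second isomorphism is exactly the content of Lemma \ref{lemcomp}.

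Assembling these identifications rotates the cone triangle from Definition \ref{znxbardef} into the desired exact triangles. I would also remark that the compatibility of the map $\sigma_{\overline{\mathcal{X}},\mathbb{Z}(n)}$ with the map used in Lemma \ref{lem-here10} is built into the construction, so the triangle produced genuinely has the map $R\phi_*\mathbb{Z}(n)\to u_{\infty,*}\tau^{>n}R\widehat{\pi}_*(2i\pi)^n\mathbb{Z}$ described there (rather than some other map differing by a sign or automorphism).

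The main routine to check is the $\widehat{\pi}_*$-acyclicity of $\alpha^*I^k$ for $I^k$ injective, which is the only nontrivial input beyond Lemma \ref{lemcomp}; the rest is a formal manipulation of cones. Once this acyclicity is verified (via the same reduction to stalks at points of $\mathcal{X}(\mathbb{R})$ and henselization arguments as in Lemma \ref{lem-here0}, now with Tate cohomology in place of ordinary $G_{\mathbb{R}}$-cohomology), the corollary falls out immediately.
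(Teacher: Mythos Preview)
Your approach is essentially the same as the paper's: unwind the cone definitions and invoke Lemma~\ref{lemcomp}. The identifications $\phi_*I(n)\simeq R\phi_*\mathbb{Z}(n)$ and $R\widehat{\pi}_*(\tau^{\geq 0}\alpha^*\mathbb{Z}(n))\simeq R\widehat{\pi}_*((2i\pi)^n\mathbb{Z})$ are exactly what is needed, and you have them.

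One point is overcomplicated, though. You propose to check that $\alpha^*$ sends injectives to ``$\widehat{\pi}_*$-acyclic'' objects, mimicking Lemma~\ref{lem-here0}. This is unnecessary: the functor $\widehat{\pi}_*$ is defined directly on bounded-below complexes and already preserves quasi-isomorphisms, by the convergent spectral sequence~(\ref{hss0}). Hence $R\widehat{\pi}_*$ is simply the induced functor on $\mathcal{D}^+$, not a right derived functor in the usual sense requiring acyclic resolutions. Since $\alpha^*$ is exact, $\alpha^*(\tau^{\geq 0}I(n))$ is quasi-isomorphic to $\tau^{\geq 0}\alpha^*\mathbb{Z}(n)$, and applying $\widehat{\pi}_*$ immediately gives the identification you want. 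The Tate-cohomology-of-stalks argument you sketch in the last paragraph is therefore not needed (and would be awkward to make precise, since there is no clean notion of an individual sheaf being ``$\widehat{\pi}_*$-acyclic'').
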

\begin{proof}
This follows from Lemma \ref{lemcomp} together with the definition of $\mathbb{Z}(n)^{\overline{\mathcal{X}}}$ and  $R\widehat{\phi}_!\mathbb{Z}(n)$.
\end{proof}
In particular we have isomorphisms
\begin{equation}Ru_{\infty}^!\mathbb{Z}(n)^{\overline{\mathcal{X}}}\simeq \left(\tau^{>n}R\widehat{\pi}_*((2i\pi)^n\mathbb{Z})\right)[-1]\label{shriek}\end{equation}
and
$$Ru_{\infty}^!(R\widehat{\phi}_!\mathbb{Z}(n))\simeq R\widehat{\pi}_*((2i\pi)^n\mathbb{Z})[-1].$$

\begin{prop}\label{propstalk}
For any $n\in\mathbb{Z}$, we have an isomorphism in $\mathcal{D}(\mathcal{X}_{et})$:
$$\phi^*\mathbb{Z}(n)^{\overline{\mathcal{X}}}\simeq\mathbb{Z}(n)^{\mathcal{X}}.$$
For $n\geq 0$, we have an isomorphism in $\mathcal{D}(\mathcal{X}_{\infty})$:
$$u_{\infty}^*\mathbb{Z}(n)^{\overline{\mathcal{X}}}\simeq \tau^{\leq n}u_{\infty}^*R\phi_*\mathbb{Z}(n).$$
\end{prop}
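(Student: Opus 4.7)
The plan is to apply the two functors $\phi^*$ and $u_\infty^*$ to the defining exact triangle of $\mathbb{Z}(n)^{\overline{\mathcal{X}}}$, namely
\[
\mathbb{Z}(n)^{\overline{\mathcal{X}}}\to R\phi_*\mathbb{Z}(n)\to u_{\infty,*}\,\tau^{>n}R\widehat{\pi}_*(2\pi i)^n\mathbb{Z}\to
\]
furnished by Corollary \ref{cor-u^!}, and in each case identify the resulting triangle with a triangle already given by truncation.

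First I would treat $\phi^*$. Since $\phi$ is an open immersion into the open–closed decomposition $(\phi,u_\infty)$, the standard identities $\phi^*R\phi_*\simeq \mathrm{id}$ and $\phi^* u_{\infty,*}=0$ hold on the derived category. Applying $\phi^*$ to the defining triangle annihilates the third term and identifies the second with $\mathbb{Z}(n)$, so the first arrow becomes an isomorphism $\phi^*\mathbb{Z}(n)^{\overline{\mathcal{X}}}\xrightarrow{\sim}\mathbb{Z}(n)^{\mathcal{X}}$, giving the first assertion.

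Next I would treat $u_\infty^*$, assuming $n\geq 0$. Applying $u_\infty^*$ to the defining triangle and using $u_\infty^*u_{\infty,*}\simeq \mathrm{id}$ together with Lemma \ref{lem-here0} (so that $u_\infty^*R\phi_*\mathbb{Z}(n)\simeq R\pi_*\alpha^*\mathbb{Z}(n)$) yields an exact triangle
\[
u_\infty^*\mathbb{Z}(n)^{\overline{\mathcal{X}}}\to u_\infty^*R\phi_*\mathbb{Z}(n)\xrightarrow{\beta}\tau^{>n}R\widehat{\pi}_*\alpha^*(\tau^{\geq 0}\mathbb{Z}(n))\to.
\]
Unwinding the construction of $\sigma_{\overline{\mathcal{X}},\mathbb{Z}(n)}$ in Lemma \ref{lem-here10}, the arrow $\beta$ is the composite of the canonical truncation $u_\infty^*R\phi_*\mathbb{Z}(n)\to \tau^{>n}(u_\infty^*R\phi_*\mathbb{Z}(n))$ with the map $\tau^{>n}R\pi_*\alpha^*\mathbb{Z}(n)\to \tau^{>n}R\widehat{\pi}_*\alpha^*(\tau^{\geq 0}\mathbb{Z}(n))$ induced by $\pi_*\to\widehat{\pi}_*$ and $\mathbb{Z}(n)\to\tau^{\geq 0}\mathbb{Z}(n)$. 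The second assertion of Lemma \ref{lemcomp} identifies this latter map with an isomorphism for $n\geq 0$. Hence $\beta$ is canonically identified with the truncation map, whose mapping fibre is by definition $\tau^{\leq n}(u_\infty^*R\phi_*\mathbb{Z}(n))$, giving the desired isomorphism.

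There is no serious obstacle: both statements reduce to a direct manipulation of the defining triangle together with the already-established Lemmas \ref{lem-here0} and \ref{lemcomp}. The only point that requires a moment's care is the identification of the connecting map $\beta$ with the truncation morphism, which is where the hypothesis $n\geq 0$ is used (via Lemma \ref{lemcomp}) and is what prevents the second isomorphism from extending formally to arbitrary $n\in\mathbb{Z}$.
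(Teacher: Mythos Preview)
Your proposal is correct and follows essentially the same approach as the paper: apply $\phi^*$ and $u_\infty^*$ to the defining triangle from Corollary~\ref{cor-u^!}, use the standard identities for the open--closed decomposition, and invoke Lemma~\ref{lemcomp} for the $n\geq 0$ case. The only cosmetic difference is ordering: the paper first uses Lemma~\ref{lemcomp} to rewrite the third term of the triangle as $u_{\infty,*}\bigl(\tau^{>n}R(\pi_*\alpha^*)\mathbb{Z}(n)\bigr)$ and then applies $u_\infty^*$ together with $R(\pi_*\alpha^*)\simeq u_\infty^*R\phi_*$, whereas you apply $u_\infty^*$ first and then identify $\beta$ with the truncation map via Lemma~\ref{lemcomp}; both routes are equivalent and your extra care in checking that $\beta$ really is the truncation is a welcome clarification.
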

\begin{proof}
Applying $\phi^*$ to the first exact triangle of Corollary \ref{cor-u^!}, we obtain $\phi^*\mathbb{Z}(n)^{\overline{\mathcal{X}}}\simeq\mathbb{Z}(n)^{\mathcal{X}}$
since $\phi^*R\phi_*=R(\phi^*\phi_*)=\mathrm{Id}$ and $\phi^*u_{\infty,*}=0$. For $n\geq 0$, we have an exact triangle
$$\mathbb{Z}(n)^{\overline{\mathcal{X}}}\rightarrow R\phi_*\mathbb{Z}(n)^{\mathcal{X}}\rightarrow u_{\infty,*}(\tau^{>n}R(\pi_*\alpha^*)\mathbb{Z}(n))$$
by Lemma \ref{lemcomp}. Applying $u_{\infty}^*$ to this exact triangle, we obtain
$$u_{\infty}^*\mathbb{Z}(n)^{\overline{\mathcal{X}}}\rightarrow u_{\infty}^*R\phi_*\mathbb{Z}(n)^{\mathcal{X}}\rightarrow \tau^{>n} u_{\infty}^*R\phi_*\mathbb{Z}(n)$$
since $u_{\infty}^*u_{\infty,*}=Id$ and  $R(\pi_*\alpha^*)\simeq R(u^*_{\infty}\phi_*)\simeq u^*_{\infty}R\phi_*$.
\end{proof}
\begin{prop}\label{prop-trunc} Let $n\geq 0$. If $\mathcal{H}^i(\mathbb{Z}(n)^{\mathcal{X}})=0$ for any $i>n$ then the map $\mathbb{Z}(n)^{\overline{\mathcal{X}}}\rightarrow R\phi_*\mathbb{Z}(n)^{\mathcal{X}}$ induces a quasi-isomorphism
$$\mathbb{Z}(n)^{\overline{\mathcal{X}}}\stackrel{\sim}{\longrightarrow}\tau^{\leq n}(R\phi_*\mathbb{Z}(n)^{\mathcal{X}}).$$
This is for example the case for $\mathcal{X}$ smooth over a number ring.
\end{prop}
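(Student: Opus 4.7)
The plan is to exploit the exact triangle
$$\mathbb{Z}(n)^{\overline{\mathcal{X}}}\to R\phi_*\mathbb{Z}(n)\to u_{\infty,*}\tau^{>n}R\widehat{\pi}_*(2i\pi)^n\mathbb{Z}\to$$
from Corollary \ref{cor-u^!} together with the stalk description in Proposition \ref{propstalk}, and to verify the quasi-isomorphism by a direct cohomology computation. The argument splits into three steps.

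First I would show $\mathbb{Z}(n)^{\overline{\mathcal{X}}}\in\mathcal{D}^{\leq n}(\overline{\mathcal{X}}_{et})$. The functors $\phi^*$ and $u_\infty^*$ are exact and jointly conservative, hence it suffices to check the vanishing $\mathcal{H}^i=0$ for $i>n$ after restriction. By Proposition \ref{propstalk}, $\phi^*\mathbb{Z}(n)^{\overline{\mathcal{X}}}\simeq\mathbb{Z}(n)^{\mathcal{X}}$ which is acyclic in degrees $>n$ by hypothesis, while $u_\infty^*\mathbb{Z}(n)^{\overline{\mathcal{X}}}\simeq \tau^{\leq n}u_\infty^* R\phi_*\mathbb{Z}(n)$ is manifestly acyclic in those degrees by construction.

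Once $\mathbb{Z}(n)^{\overline{\mathcal{X}}}$ is in $\mathcal{D}^{\leq n}$, the factorization of the natural map $\mathbb{Z}(n)^{\overline{\mathcal{X}}}\to R\phi_*\mathbb{Z}(n)$ through $\tau^{\leq n}R\phi_*\mathbb{Z}(n)$ is automatic: applying the functor $\tau^{\leq n}$ gives a commutative square in which the left vertical arrow $\tau^{\leq n}\mathbb{Z}(n)^{\overline{\mathcal{X}}}\to\mathbb{Z}(n)^{\overline{\mathcal{X}}}$ is a quasi-isomorphism, producing the desired lift $\mathbb{Z}(n)^{\overline{\mathcal{X}}}\to\tau^{\leq n}R\phi_*\mathbb{Z}(n)$.

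To check that this lift is a quasi-isomorphism, I would pass to the long exact sequence of cohomology sheaves associated to the triangle of Corollary \ref{cor-u^!}. Since $u_{\infty,*}\tau^{>n}R\widehat{\pi}_*(2i\pi)^n\mathbb{Z}$ is concentrated in degrees $>n$, the connecting terms $\mathcal{H}^{i-1}$ and $\mathcal{H}^i$ of this third term both vanish whenever $i\leq n$; hence for such $i$ the natural map $\mathcal{H}^i(\mathbb{Z}(n)^{\overline{\mathcal{X}}})\to\mathcal{H}^i(R\phi_*\mathbb{Z}(n))=\mathcal{H}^i(\tau^{\leq n}R\phi_*\mathbb{Z}(n))$ is an isomorphism, and for $i>n$ both sides vanish by step one. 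The only delicate point is the naturality of the identifications, but since $\tau^{\leq n}$ is functorial in the derived category, the induced isomorphisms on $\mathcal{H}^i$ coincide with those coming from the lifted map. For the last sentence of the proposition, the case of $\mathcal{X}$ smooth over a number ring reduces to Geisser's theorem \cite{Geisser04a} identifying $\mathbb{Z}(n)^{\mathcal{X}}$ with (the étale hypercohomology sheafification of) Bloch's cycle complex together with the Beilinson--Lichtenbaum vanishing on the generic and closed fibers. The main potential obstacle — describing $u_\infty^*\mathbb{Z}(n)^{\overline{\mathcal{X}}}$ precisely on $\mathcal{X}_\infty$ — has already been settled by Lemma \ref{lemcomp} and Proposition \ref{propstalk}, so no genuine difficulty remains.
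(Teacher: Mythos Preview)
Your proposal is correct and follows essentially the same approach as the paper: both first show $\mathbb{Z}(n)^{\overline{\mathcal{X}}}$ is concentrated in degrees $\leq n$ via the conservative pair $(\phi^*,u_\infty^*)$ and Proposition \ref{propstalk}, then obtain the factorization through $\tau^{\leq n}R\phi_*\mathbb{Z}(n)$. The only minor variation is in the final step: the paper checks the factored map is a quasi-isomorphism by applying $\phi^*$ and $u_\infty^*$ once more and invoking Proposition \ref{propstalk}, whereas you read it off directly from the long exact cohomology sequence of the triangle in Corollary \ref{cor-u^!}; these are equivalent verifications.
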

\begin{proof}
Recall that the family of exact functors $(\phi^*,u_{\infty}^*)$ is conservative. Moreover, the complex $\mathbb{Z}(n)^{\mathcal{X}}$ is acyclic in degree $>n$, hence so is the complex $\mathbb{Z}(n)^{\overline{\mathcal{X}}}$ by Proposition \ref{propstalk}. Hence the map
$\mathbb{Z}(n)^{\overline{\mathcal{X}}}\rightarrow R\phi_*\mathbb{Z}(n)^{\mathcal{X}}$ factors though
$\mathbb{Z}(n)^{\overline{\mathcal{X}}}\rightarrow\tau^{\leq n}(R\phi_*\mathbb{Z}(n)^{\mathcal{X}})$.
This map is a quasi-isomorphism if and only if it is a quasi-isomorphism after applying the functors $\phi^*$ and $u_{\infty}^*$. Hence the result follows from Proposition \ref{propstalk}.
\end{proof}

\begin{prop}\label{prop-description-atn=0,1}
We have $\mathbb{Z}(0)^{\overline{\mathcal{X}}}\simeq \mathbb{Z}$ and
$\mathbb{Z}(1)^{\overline{\mathcal{X}}}\simeq \phi_*\mathbb{G}_m[-1]$, where $\mathbb{Z}$ denotes the constant sheaf  on $\overline{\mathcal{X}}_{et}$, and $\mathbb{G}_m$ denotes the multiplicative group on $\mathcal{X}_{et}$.
\end{prop}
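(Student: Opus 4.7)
The proof proposal is that both identifications are essentially immediate applications of Proposition \ref{prop-trunc}, combined with an explicit computation of $\phi_*$ on the sheaf $\mathbb{Z}$.

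First I would verify the hypothesis of Proposition \ref{prop-trunc} in each case. Since $\mathcal{X}$ is regular, the identifications $\mathbb{Z}(0)\simeq \mathbb{Z}[0]$ and $\mathbb{Z}(1)\simeq \mathbb{G}_m[-1]$ hold as complexes of \'etale sheaves on $\mathcal{X}$, so the cohomology of $\mathbb{Z}(n)^{\mathcal{X}}$ is concentrated in degree $n$ for $n=0,1$; in particular $\mathcal{H}^i(\mathbb{Z}(n)^{\mathcal{X}})=0$ for $i>n$. Thus Proposition \ref{prop-trunc} applies and yields canonical isomorphisms
\[ \mathbb{Z}(0)^{\overline{\mathcal{X}}}\simeq \tau^{\leq 0}R\phi_*\mathbb{Z} \quad\text{and}\quad \mathbb{Z}(1)^{\overline{\mathcal{X}}}\simeq \tau^{\leq 1}R\phi_*(\mathbb{G}_m[-1]). \]

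For the $n=1$ case, I would then just observe that $R\phi_*(\mathbb{G}_m[-1])=(R\phi_*\mathbb{G}_m)[-1]$, hence $\tau^{\leq 1}R\phi_*(\mathbb{G}_m[-1])=(\tau^{\leq 0}R\phi_*\mathbb{G}_m)[-1]=\phi_*\mathbb{G}_m[-1]$, giving the desired formula with no further work. For the $n=0$ case, similarly $\tau^{\leq 0}R\phi_*\mathbb{Z}\simeq \phi_*\mathbb{Z}$ as a sheaf placed in degree~$0$, so it remains only to identify the \'etale pushforward $\phi_*\mathbb{Z}$ with the constant sheaf $\mathbb{Z}$ on $\overline{\mathcal{X}}_{et}$.

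For that last identification I would use the description of $\overline{\mathcal{X}}_{et}$ as the category of glueing triples $(F_\mathcal{X},F_\infty,f)$ with $f:F_\infty\to \pi_*\alpha^*F_\mathcal{X}$, together with the formula $u_\infty^*\phi_*\simeq \pi_*\alpha^*$ from (\ref{glueing}). The adjunction unit $\mathbb{Z}_{\overline{\mathcal{X}}}\to \phi_*\phi^*\mathbb{Z}_{\overline{\mathcal{X}}}=\phi_*\mathbb{Z}$ is an isomorphism iff its image under each of the conservative pair $(\phi^*,u_\infty^*)$ is. The pullback along $\phi$ is trivially an isomorphism since $\phi^*\phi_*=\mathrm{id}$. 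For $u_\infty^*$ one has to check that $\mathbb{Z}_{\mathcal{X}_\infty}\to \pi_*\alpha^*\mathbb{Z}=\pi_*\mathbb{Z}$ is an isomorphism of sheaves on $\mathcal{X}_\infty$, which follows by comparing stalks via Lemma \ref{lem00}: at any $x\in \mathcal{X}_\infty$ and any lift $y\in \mathcal{X}(\mathbb{C})$, $(\pi_*\mathbb{Z})_x\simeq \mathbb{Z}^{G_y}=\mathbb{Z}$, since the $G_y$-action on the stalk of the constant sheaf is trivial.

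There is no real obstacle here: the only thing resembling work is the last stalk computation. Both statements therefore follow formally once Proposition \ref{prop-trunc} has been invoked, and a brief remark that the cone-definition (Definition \ref{znxbardef}) recovers the underived pushforward $\phi_*$ of $\mathbb{Z}$ in degree $0$ is essentially what needs to be written.
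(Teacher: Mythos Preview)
Your proposal is correct and follows exactly the same approach as the paper: invoke Proposition~\ref{prop-trunc} using $\mathbb{Z}(0)\simeq\mathbb{Z}[0]$ and $\mathbb{Z}(1)\simeq\mathbb{G}_m[-1]$. The paper's proof is a single sentence and leaves the identifications $\tau^{\leq 0}R\phi_*\mathbb{Z}\simeq\mathbb{Z}$ and $\tau^{\leq 1}R\phi_*(\mathbb{G}_m[-1])\simeq\phi_*\mathbb{G}_m[-1]$ implicit; you have simply spelled these out, including the stalk computation for $\phi_*\mathbb{Z}\simeq\mathbb{Z}$.
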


\begin{proof}
In view of $\mathbb{Z}(0)\simeq \mathbb{Z}[0]$ and $\mathbb{Z}(1)\simeq \mathbb{G}_m[-1]$, this follows from Proposition \ref{prop-trunc}.

\end{proof}

\subsection{Functoriality}

Let $f:\mathcal{X}\rightarrow \mathcal{Y}$ be an equidimensional flat map between (regular proper) arithmetic schemes.
Recall that flat pull-back of cycles induces a morphism $\mathbb{Z}(n)^{\mathcal{Y}}\rightarrow Rf_*\mathbb{Z}(n)^{\mathcal{X}}$. We consider the following commutative diagram of topoi:
\[ \xymatrix{
\mathcal{X}_{et}\ar[d]^{f_{\infty}}\ar[r]^{\phi^{\mathcal{X}}}& \overline{\mathcal{X}}_{et}\ar[d]^{\overline{f}}& \mathcal{X}_{\infty}\ar[d]^{f_{\infty}}\ar[l]_{u_{\infty}^{\mathcal{X}}}\\
\mathcal{Y}_{et}\ar[r]^{\phi^{\mathcal{X}}}& \overline{\mathcal{Y}}_{et}& \mathcal{Y}_{\infty}\ar[l]_{u_{\infty}^{\mathcal{X}}}
}
\]

\begin{prop}\label{prop-funct-barX}
For any $n\in\mathbb{Z}$, the pull-back map $\mathbb{Z}(n)^{\mathcal{Y}}\rightarrow Rf_*\mathbb{Z}(n)^{\mathcal{X}}$ induces   compatible maps $\mathbb{Z}(n)^{\overline{\mathcal{Y}}}\rightarrow R\overline{f}_*\mathbb{Z}(n)^{\overline{\mathcal{X}}}$ and $R\widehat{\phi}_!^{\mathcal{Y}}\mathbb{Z}(n)^{\mathcal{Y}}\rightarrow R\overline{f}_*R\widehat{\phi}_!^{\mathcal{Y}}\mathbb{Z}(n)^{\mathcal{X}}$ in $\mathcal{D}(\overline{\mathcal{Y}}_{et})$.
\end{prop}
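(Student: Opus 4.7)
The strategy is to construct a morphism between the defining exact triangles of $\mathbb{Z}(n)^{\overline{\mathcal{Y}}}$ and $R\overline{f}_*\mathbb{Z}(n)^{\overline{\mathcal{X}}}$, coming from Corollary \ref{cor-u^!}. Note that $\phi^{\mathcal{Y}}\circ f = \overline{f}\circ \phi^{\mathcal{X}}$ and $\overline{f}\circ u^{\mathcal{X}}_\infty = u^{\mathcal{Y}}_\infty\circ f_\infty$, where $f_\infty:\mathcal{X}_\infty\to\mathcal{Y}_\infty$ and the corresponding equivariant map on $\mathcal{X}(\mathbb{C})$ (still denoted $f_\infty$) commute with the morphisms $\pi,\alpha$ in the obvious way.

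First I would produce the map on the middle terms: the flat pull-back $\mathbb{Z}(n)^{\mathcal{Y}}\to Rf_*\mathbb{Z}(n)^{\mathcal{X}}$ induces, by applying $R\phi^{\mathcal{Y}}_*$, a morphism
\[ R\phi^{\mathcal{Y}}_*\mathbb{Z}(n)^{\mathcal{Y}}\longrightarrow R\phi^{\mathcal{Y}}_*Rf_*\mathbb{Z}(n)^{\mathcal{X}}\simeq R\overline{f}_*R\phi^{\mathcal{X}}_*\mathbb{Z}(n)^{\mathcal{X}}. \]
For the right-hand terms of the triangles in Corollary \ref{cor-u^!} one has
\[ R\overline{f}_*u^{\mathcal{X}}_{\infty,*}\tau^{>n}R\widehat{\pi}^{\mathcal{X}}_*(2\pi i)^n\mathbb{Z}\simeq u^{\mathcal{Y}}_{\infty,*}Rf_{\infty,*}\tau^{>n}R\widehat{\pi}^{\mathcal{X}}_*(2\pi i)^n\mathbb{Z}, \]
and the adjunction unit $(2\pi i)^n\mathbb{Z}\to Rf_{\infty,*}(2\pi i)^n\mathbb{Z}$ in $Sh(G_{\mathbb{R}},\mathcal{Y}(\mathbb{C}))$ yields, after applying $R\widehat{\pi}^{\mathcal{Y}}_*$, the truncation $\tau^{>n}$, and using $R\widehat{\pi}^{\mathcal{Y}}_*Rf_{\infty,*}\simeq Rf_{\infty,*}R\widehat{\pi}^{\mathcal{X}}_*$ (which follows from the exactness of $p_*$ in Section \ref{sectTate} together with the construction of $\widehat{\pi}_*$ via $\mathrm{Hom}_{G_{\mathbb{R}}}(P_*,-)$), the desired map
\[ u^{\mathcal{Y}}_{\infty,*}\tau^{>n}R\widehat{\pi}^{\mathcal{Y}}_*(2\pi i)^n\mathbb{Z}\longrightarrow u^{\mathcal{Y}}_{\infty,*}\tau^{>n}Rf_{\infty,*}R\widehat{\pi}^{\mathcal{X}}_*(2\pi i)^n\mathbb{Z}. \]

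The next step is to verify that the square of middle and right terms in the two triangles commutes in $\mathcal{D}(\overline{\mathcal{Y}}_{et})$. This reduces, by the description of $\sigma_{\overline{?},\mathbb{Z}(n)}$ given in Lemma \ref{lem-here10}, to the naturality of the adjunction maps $\mathrm{id}\to u_{\infty,*}u_\infty^*$, of the isomorphism $u_\infty^*R\phi_*\simeq R\pi_*\alpha^*$ of Lemma \ref{lem-here0}, and of the natural transformation $R\pi_*\to R\widehat{\pi}_*$ with respect to pushforward along $f_\infty$; all of these are formal consequences of the base-change identities recorded above. Given the commutative square, axiom (TR3) for triangulated categories produces a morphism $\mathbb{Z}(n)^{\overline{\mathcal{Y}}}\to R\overline{f}_*\mathbb{Z}(n)^{\overline{\mathcal{X}}}$ fitting into a morphism of triangles, and the same argument with $\sigma^!$ in place of $\sigma$ produces the analogous map $R\widehat{\phi}_!^{\mathcal{Y}}\mathbb{Z}(n)^{\mathcal{Y}}\to R\overline{f}_*R\widehat{\phi}_!^{\mathcal{X}}\mathbb{Z}(n)^{\mathcal{X}}$. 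The compatibility between the two maps is then immediate from the natural transformation $R\widehat{\phi}_!\to\mathbb{Z}(n)^{\overline{?}}$ of Proposition \ref{prop-comp}, which is functorial in the scheme by construction.

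The potential obstacle is that TR3 produces the map on cones non-canonically. To get a canonical map (and hence functoriality in $f$), I would verify that the groups $\mathrm{Hom}_{\mathcal{D}(\overline{\mathcal{Y}}_{et})}(\mathbb{Z}(n)^{\overline{\mathcal{Y}}}[1],R\overline{f}_*u^{\mathcal{X}}_{\infty,*}\tau^{>n}R\widehat{\pi}^{\mathcal{X}}_*(2\pi i)^n\mathbb{Z})$ vanish, so that the lifting is unique; this is a cohomological degree estimate comparing the bounded-above complex $\mathbb{Z}(n)^{\overline{\mathcal{Y}}}[1]$ (acyclic in degrees $>n+1$ by Proposition \ref{propstalk} and the vanishing of $\tau^{>n}R\widehat{\pi}_*(2\pi i)^n\mathbb{Z}$ in low degrees) with the target, which is concentrated in degrees $>n$. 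Alternatively, and perhaps more cleanly, one can lift $\sigma$ functorially at the level of complexes by choosing compatible $K$-injective resolutions of $\mathbb{Z}(n)^{\mathcal{X}}$ and $\mathbb{Z}(n)^{\mathcal{Y}}$ and checking that Lemma \ref{lem-here10} yields a genuine commutative square of complexes, from which the induced map on mapping fibres is automatically canonical. The compatibility $R\widehat{\phi}_!\mathbb{Z}(n)\to \mathbb{Z}(n)^{\overline{\mathcal{X}}}$ and its pushforward then commutes with the two constructed maps by the same strict commutativity at the level of complexes.
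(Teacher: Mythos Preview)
Your proposal is essentially correct and follows the same approach as the paper: build a commutative square between the middle and right-hand terms of the defining triangles (Corollary~\ref{cor-u^!}), then fill in the map on fibres. Two small points to tighten. First, your degree argument for uniqueness relies on $u_\infty^*\mathbb{Z}(n)^{\overline{\mathcal{Y}}}$ being concentrated in degrees $\leq n$, which Proposition~\ref{propstalk} only gives for $n\geq 0$; the paper makes exactly this split, invoking the degree estimate for $n\geq 0$ and your alternative route---lifting the square to genuine complexes via Lemma~\ref{lem-here10} and using functoriality of the cone---for arbitrary $n$. Second, the map on the right-hand terms should land in $u^{\mathcal{Y}}_{\infty,*}Rf_{\infty,*}\tau^{>n}R\widehat{\pi}^{\mathcal{X}}_*(2\pi i)^n\mathbb{Z}$ rather than $u^{\mathcal{Y}}_{\infty,*}\tau^{>n}Rf_{\infty,*}R\widehat{\pi}^{\mathcal{X}}_*(2\pi i)^n\mathbb{Z}$; the needed factorisation exists because the target is already in degrees $>n$, but you should compose with the natural map $\tau^{>n}Rf_{\infty,*}\to Rf_{\infty,*}\tau^{>n}$.
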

\begin{proof} The following diagram
\[ \xymatrix{
R\phi_*^{\mathcal{Y}}(\mathbb{Z}(n))^{\mathcal{Y}}\ar[d]\ar[r]& R\phi_*^{\mathcal{Y}}Rf_*\mathbb{Z}(n)^{\mathcal{X}} \ar[d]\ar[r]^{\sim}& R\overline{f}_* R\phi_*^{\mathcal{X}}\mathbb{Z}(n)^{\mathcal{X}}\ar[d]\\
u^{\mathcal{Y}}_{\infty,*}\tau^{>n}R\widehat{\pi}_*^{\mathcal{Y}}((2i\pi)^n\mathbb{Z})\ar[r]& u^{\mathcal{Y}}_{\infty,*}Rf_{\infty,*}\tau^{>n}R\widehat{\pi}_*^{\mathcal{X}}((2i\pi)^n\mathbb{Z})\ar[r]^{\sim}&  R\overline{f}_{*}u^{\mathcal{X}}_{\infty,*}\tau^{>n}R\widehat{\pi}_*^{\mathcal{X}}((2i\pi)^n\mathbb{Z})
}
\]
commutes in $\mathcal{D}(\overline{\mathcal{Y}}_{et})$, hence there exists a map
$$\mathbb{Z}(n)^{\overline{\mathcal{Y}}}\longrightarrow R\overline{f}_* \mathbb{Z}(n)^{\overline{\mathcal{X}}}$$
sitting in a morphism of exact triangles. If $n\geq 0$, such a map is unique since $u_{\infty}^{*}\mathbb{Z}(n))^{\overline{\mathcal{Y}}}$ is concentrated in degrees $\leq n$ by Proposition \ref{propstalk}, whereas $Rf_{\infty,*}\tau^{>n}R\widehat{\pi}_*^{\mathcal{X}}((2i\pi)^n\mathbb{Z})$ is concentrated in degrees $\geq n+2$.

For arbitrary $n\in\mathbb{Z}$, we represent the previous diagram by a commutative diagram of actual complexes of sheaves (using Lemma \ref{lem-here10} and Lemma \ref{lemcomp}), and we use the fact that the cone is functorial  (note that the right horizontal maps are isomorphisms of complexes). We define similarly a canonical map
$$R\widehat{\phi}_!^{\mathcal{Y}}\mathbb{Z}(n)^{\mathcal{Y}}\longrightarrow R\overline{f}_*R\widehat{\phi}_!^{\mathcal{X}}\mathbb{Z}(n)^{\mathcal{X}}$$
such that the following diagram commutes
\[ \xymatrix{
R\widehat{\phi}_!^{\mathcal{Y}}\mathbb{Z}(n)^{\mathcal{Y}}\ar[d]\ar[r]& \mathbb{Z}(n)^{\overline{\mathcal{Y}}} \ar[d]\ar[r]^{\sim}& R\phi^{\mathcal{Y}}_* \mathbb{Z}(n)^{\mathcal{Y}}\ar[d]\\
R\overline{f}_*R\widehat{\phi}_!^{\mathcal{X}}\mathbb{Z}(n)^{\mathcal{X}}\ar[r]& R\overline{f}_*\mathbb{Z}(n)^{\overline{\mathcal{X}}} \ar[r]^{\sim}& R\overline{f}_* R\phi^{\mathcal{X}}_*\mathbb{Z}(n)^{\mathcal{X}}
}
\]

\end{proof}

\subsection{Relationship with Milne's cohomology with compact support}

We start with the definition of cohomology with compact support with respect to the Artin-Verdier compactification: we define
$$R\Gamma_c(\mathcal{X}_{et},\mathcal{A}):=R\Gamma(\overline{\mathcal{X}}_{et},\varphi_!\mathcal{A})$$
for any bounded below complex $\mathcal{A}$ of abelian sheaves on ${\mathcal{X}}_{et}$. We obtain immediately an exact triangle
$$R\Gamma_c(\mathcal{X}_{et},\mathcal{A})\rightarrow R\Gamma(\overline{\mathcal{X}}_{et},\mathcal{A})\rightarrow R\Gamma(\mathcal{X}_{\infty},u_{\infty}^*\mathcal{A}).$$
\begin{prop}
We have an exact triangle
$$R\Gamma_c(\mathcal{X}_{et},\mathcal{A})\rightarrow R\Gamma(\mathcal{X}_{et},\mathcal{A})\rightarrow R\Gamma(G_{\mathbb{R}},\mathcal{X}(\mathbb{C}),\alpha^*\mathcal{A}).$$
\label{rgc-prop}\end{prop}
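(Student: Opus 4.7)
The plan is to deduce this triangle from the first one stated in the proposition together with the identifications of $u_\infty^* R\phi_*$ already recorded in the appendix.

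First, I would reinterpret the middle term of the already-given triangle. By definition, $R\Gamma(\mathcal{X}_{et},\mathcal{A}) = R\Gamma(\overline{\mathcal{X}}_{et}, R\phi_*\mathcal{A})$, so the triangle
\[ R\Gamma_c(\mathcal{X}_{et},\mathcal{A})\rightarrow R\Gamma(\overline{\mathcal{X}}_{et},\mathcal{A})\rightarrow R\Gamma(\mathcal{X}_{\infty},u_{\infty}^*\mathcal{A}) \]
should really be read as coming from the canonical triangle $\phi_!\mathcal{A} \to R\phi_*\mathcal{A} \to u_{\infty,*}u_\infty^* R\phi_*\mathcal{A}$ in $\mathcal{D}(\overline{\mathcal{X}}_{et})$, which one obtains by applying the gluing triangle $\phi_!\phi^*(-) \to (-) \to u_{\infty,*}u_\infty^*(-)$ to $R\phi_*\mathcal{A}$ and using $\phi^*R\phi_* = \mathrm{Id}$ (noting that $\phi_!$ is exact since $\phi$ is an open embedding, so no derivation is needed). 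Taking $R\Gamma(\overline{\mathcal{X}}_{et},-)$ then yields the exact triangle
\[ R\Gamma_c(\mathcal{X}_{et},\mathcal{A}) \to R\Gamma(\mathcal{X}_{et},\mathcal{A}) \to R\Gamma(\mathcal{X}_\infty, u_\infty^* R\phi_*\mathcal{A}) \to. \]

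Second, I would identify the third term with $R\Gamma(G_\mathbb{R},\mathcal{X}(\mathbb{C}),\alpha^*\mathcal{A})$. By Lemma \ref{lem-here0}, there is a natural isomorphism
\[ u_\infty^* R\phi_*\mathcal{A} \simeq R\pi_* \alpha^*\mathcal{A} \]
in $\mathcal{D}(\mathcal{X}_\infty)$. Applying $R\Gamma(\mathcal{X}_\infty,-)$ and using the composition-of-functors identity $R\Gamma(\mathcal{X}_\infty,-) \circ R\pi_* \simeq R\Gamma(\mathrm{Sh}(G_\mathbb{R},\mathcal{X}(\mathbb{C})),-)$ (which is just the Leray spectral sequence for the morphism of topoi $\pi$, and is an isomorphism at the level of derived functors since $\pi_*$ has enough $\pi_*$-acyclics), we obtain
\[ R\Gamma(\mathcal{X}_\infty, u_\infty^* R\phi_*\mathcal{A}) \simeq R\Gamma(\mathcal{X}_\infty, R\pi_*\alpha^*\mathcal{A}) \simeq R\Gamma(G_\mathbb{R},\mathcal{X}(\mathbb{C}),\alpha^*\mathcal{A}), \]
which is precisely the third term of the desired triangle.

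Substituting this identification into the triangle from the first step produces the exact triangle claimed. There is no real obstacle here; the content is entirely the combination of the gluing triangle for $\overline{\mathcal{X}}_{et}$ with the identification $u_\infty^* R\phi_* \simeq R\pi_*\alpha^*$ of Lemma \ref{lem-here0}. The only point worth verifying carefully is that the map produced by this identification agrees (up to sign) with the natural pullback map $R\Gamma(\mathcal{X}_{et},\mathcal{A}) \to R\Gamma(G_\mathbb{R},\mathcal{X}(\mathbb{C}),\alpha^*\mathcal{A})$ induced by the morphism of topoi $\alpha$; this follows by tracing through the adjunction units and using that $u_\infty^* R\phi_* \simeq R\pi_*\alpha^*$ is itself constructed from these adjunctions.
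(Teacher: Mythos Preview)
Your proof is correct and follows essentially the same approach as the paper: both use the gluing triangle for the open-closed decomposition applied to $R\phi_*\mathcal{A}$, invoke Lemma~\ref{lem-here0} to identify $u_\infty^* R\phi_*\mathcal{A}\simeq R\pi_*\alpha^*\mathcal{A}$, and then recognise $R\Gamma(\mathcal{X}_\infty,R\pi_*\alpha^*\mathcal{A})\simeq R\Gamma(G_{\mathbb R},\mathcal{X}(\mathbb C),\alpha^*\mathcal{A})$. The only difference is the order in which Lemma~\ref{lem-here0} and $R\Gamma(\overline{\mathcal{X}}_{et},-)$ are applied, which is immaterial.
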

\begin{proof}
Using Lemma \ref{lem-here0}, we obtain an exact triangle
$$\phi_!\mathcal{A}\rightarrow R\phi_*\mathcal{A}\rightarrow u_{\infty,*}R\pi_*(\alpha^*\mathcal{A}).$$
The result then follows from the canonical identifications
$$R\Gamma(\overline{\mathcal{X}}_{et},u_{\infty,*}R\pi_*(\alpha^*\mathcal{A}))
\simeq R\Gamma(\mathcal{X}_{\infty},R\pi_*(\alpha^*\mathcal{A}))\simeq
R\Gamma(G_{\mathbb{R}},\mathcal{X}(\mathbb{C}),\alpha^*\mathcal{A}).$$

\end{proof}

We now recall Milne's definition for \'etale cohomology with compact support \cite{Milne-duality}. Let $A$ be an abelian sheaf on $\mathrm{Spec}(\mathbb{Z})_{et}$. One defines a complex $R\widehat{\Gamma}_c(\mathrm{Spec}(\mathbb{Z})_{et},A)$ so that there is an exact triangle
$$R\widehat{\Gamma}_c(\mathrm{Spec}(\mathbb{Z})_{et},A)\rightarrow R\Gamma(\mathrm{Spec}(\mathbb{Z})_{et},A)\rightarrow R\widehat{\Gamma}(G_{\mathbb{R}},v^*A)$$
where $R\widehat{\Gamma}(G_{\mathbb{R}},-)$ denotes Tate cohomology of the finite group $G_{\mathbb{R}}$ and
$v: \mathrm{Spec}(\mathbb{C})\rightarrow  \mathrm{Spec}(\mathbb{Z})$
is the unique map. This definition generalizes as follows. Let $f:\mathcal{X}\rightarrow  \mathrm{Spec}(\mathbb{Z})$ be a proper scheme over $\mathrm{Spec}(\mathbb{Z})$, and let $\mathcal{A}$ be a complex of abelian sheaves on $\mathcal{X}_{et}$ whose restriction $\mathcal{A}_{\mathcal{X}[1/S]}$ to $\mathcal{X}[1/S]:=\mathcal{X}\otimes_{\mathbb{Z}}\mathbb{Z}[1/S]$ is bounded below, for some finite set $S$ of prime numbers.
Then $v^*Rf_*\mathcal{A}$ is bounded below, and one defines
$$R\widehat{\Gamma}_c(\mathcal{X}_{et},\mathcal{A}):=R\widehat{\Gamma}_c(\mathrm{Spec}(\mathbb{Z})_{et},Rf_*\mathcal{A})$$
so that we have an exact triangle
$$R\widehat{\Gamma}_c(\mathcal{X}_{et},\mathcal{A})\rightarrow R\Gamma(\mathcal{X}_{et},\mathcal{A})\rightarrow R\widehat{\Gamma}(G_{\mathbb{R}},v^*Rf_*\mathcal{A}).$$

\begin{lem}\label{lem-Milne-Tate} Let $\mathcal{A}$ be a complex of abelian sheaves such that $\mathcal{A}_{ \mathcal{X}[1/S]}$ is a bounded below complex of locally constant torsion abelian sheaves, for some finite set of primes $S$. Then we have an exact triangle
$$R\widehat{\Gamma}_c(\mathcal{X}_{et},\mathcal{A})\rightarrow R\Gamma(\mathcal{X}_{et},\mathcal{A})\rightarrow R\widehat{\Gamma}(G_{\mathbb{R}},\mathcal{X}(\mathbb{C}),\alpha^*\mathcal{A}).$$
\end{lem}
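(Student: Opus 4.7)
The plan is to identify Milne's defining triangle
$$R\widehat{\Gamma}_c(\mathcal{X}_{et},\mathcal{A})\rightarrow R\Gamma(\mathcal{X}_{et},\mathcal{A})\rightarrow R\widehat{\Gamma}(G_{\mathbb{R}},v^*Rf_*\mathcal{A})$$
with the desired one by establishing an isomorphism of $G_{\mathbb{R}}$-module complexes
$$v^*Rf_*\mathcal{A}\ \simeq\ R\Gamma(\mathcal{X}(\mathbb{C}),\alpha^*\mathcal{A})$$
and then applying $R\widehat{\Gamma}(G_{\mathbb{R}},-)$. Since $\mathrm{Spec}(\mathbb{C})$ has characteristic zero, the point $v$ factors through $\mathrm{Spec}(\mathbb{Z}[1/S])$, so in computing $v^*Rf_*\mathcal{A}$ we may replace $\mathcal{X}$ by $\mathcal{X}[1/S]$ and assume from the outset that $\mathcal{A}$ is a bounded below complex of locally constant torsion sheaves.

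First I would apply the proper base change theorem to the cartesian square
\[
\xymatrix{
\mathcal{X}_{\mathbb{C}}\ar[r]\ar[d]_{f_{\mathbb{C}}} & \mathcal{X}\ar[d]^{f}\\
\mathrm{Spec}(\mathbb{C})\ar[r]^{v} & \mathrm{Spec}(\mathbb{Z})
}
\]
to obtain a natural $G_{\mathbb{R}}$-equivariant isomorphism $v^*Rf_*\mathcal{A}\simeq R\Gamma(\mathcal{X}_{\mathbb{C},et},\mathcal{A}|_{\mathcal{X}_{\mathbb{C}}})$, where the $G_{\mathbb{R}}$-action on the right arises from the natural action on $\mathcal{X}_{\mathbb{C}}=\mathcal{X}\times_{\mathrm{Spec}(\mathbb{Z})}\mathrm{Spec}(\mathbb{C})$ through the second factor. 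Then I would invoke Artin's comparison theorem: since $\mathcal{A}$ is a bounded below complex of locally constant torsion sheaves on $\mathcal{X}[1/S]$, and $\mathcal{X}_{\mathbb{C}}\to \mathrm{Spec}(\mathbb{C})$ is of finite type, the canonical morphism of topoi $\alpha'_{\mathbb{C}}:\mathrm{Sh}(\mathcal{X}(\mathbb{C}))\to \mathcal{X}_{\mathbb{C},et}$ induces a quasi-isomorphism
$$R\Gamma(\mathcal{X}_{\mathbb{C},et},\mathcal{A}|_{\mathcal{X}_{\mathbb{C}}})\stackrel{\sim}{\longrightarrow} R\Gamma(\mathcal{X}(\mathbb{C}),(\alpha'_{\mathbb{C}})^*\mathcal{A}|_{\mathcal{X}_{\mathbb{C}}}).$$
By construction the composite $\alpha:\mathrm{Sh}(G_{\mathbb{R}},\mathcal{X}(\mathbb{C}))\to \mathcal{X}_{et}$ factors through $\alpha'_{\mathbb{C}}$ after forgetting the $G_{\mathbb{R}}$-structure, so $(\alpha'_{\mathbb{C}})^*\mathcal{A}|_{\mathcal{X}_{\mathbb{C}}}$ is exactly the underlying sheaf of $\alpha^*\mathcal{A}$. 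This identifies the $G_{\mathbb{R}}$-module complex $v^*Rf_*\mathcal{A}$ with $R\Gamma(\mathcal{X}(\mathbb{C}),\alpha^*\mathcal{A})$.

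Taking $R\widehat{\Gamma}(G_{\mathbb{R}},-)$, which preserves quasi-isomorphisms of bounded below complexes of $G_{\mathbb{R}}$-modules by the convergent spectral sequence discussed in Section \ref{sectTate}, yields $R\widehat{\Gamma}(G_{\mathbb{R}},v^*Rf_*\mathcal{A})\simeq R\widehat{\Gamma}(G_{\mathbb{R}},\mathcal{X}(\mathbb{C}),\alpha^*\mathcal{A})$, and substituting into Milne's defining triangle produces the desired exact triangle.

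The main obstacle is the $G_{\mathbb{R}}$-equivariance of the two identifications (proper base change and Artin's comparison). Both are classical, but one has to be slightly careful in articulating them at the level of derived functors: proper base change supplies a natural transformation that is a quasi-isomorphism for bounded below complexes of torsion sheaves, while the Artin comparison must be formulated on the geometric fiber $\mathcal{X}_{\mathbb{C}}$ so that the Galois action on $\mathrm{Spec}(\mathbb{C})/\mathrm{Spec}(\mathbb{R})$ is visibly compatible with the $G_{\mathbb{R}}$-action on $\mathcal{X}(\mathbb{C})$ encoded in the topos $\mathrm{Sh}(G_{\mathbb{R}},\mathcal{X}(\mathbb{C}))$. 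This is precisely where the factorization of $\alpha$ through $\alpha'_{\mathbb{C}}$ is used, and it is also the reason for the hypothesis that $\mathcal{A}|_{\mathcal{X}[1/S]}$ be locally constant and torsion: both base change and Artin comparison fail without it.
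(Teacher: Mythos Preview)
Your proof is correct and follows essentially the same approach as the paper: proper base change to identify $v^*Rf_*\mathcal{A}$ with $R\Gamma_{et}(\mathcal{X}_{\mathbb{C}},\mathcal{A}|_{\mathcal{X}_{\mathbb{C}}})$, Artin's comparison to pass to $R\Gamma(\mathcal{X}(\mathbb{C}),\alpha^*\mathcal{A})$, and then apply $R\widehat{\Gamma}(G_{\mathbb{R}},-)$. Your treatment is in fact more detailed than the paper's---you make explicit the reduction to $\mathcal{X}[1/S]$ via the factorization of $v$, and you spell out the $G_{\mathbb{R}}$-equivariance through the factorization of $\alpha$---but the underlying argument is identical.
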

\begin{proof}
By proper base change we have an isomorphism
$$v^*Rf_*\mathcal{A}\simeq R\Gamma_{et}(\mathcal{X}\otimes_{\mathbb{Z}}{\mathbb{C}},\mathcal{A}_{\mathcal{X}\otimes_{\mathbb{Z}}{\mathbb{C}}})$$
where $\mathcal{A}_{\mathcal{X}\otimes_{\mathbb{Z}}{\mathbb{C}}}$ is the pull-back of $\mathcal{A}$ to $\mathcal{X}\otimes_{\mathbb{Z}}{\mathbb{C}}$. Artin's comparison theorem then gives
$$R\Gamma(\mathcal{X}(\mathbb{C}),\alpha^*\mathcal{A})\simeq  R\Gamma_{et}(\mathcal{X}\otimes_{\mathbb{Z}}{\mathbb{C}},\mathcal{A}_{\mathcal{X}\otimes_{\mathbb{Z}}{\mathbb{C}}})\simeq v^*Rf_*\mathcal{A}$$
hence
$$R\widehat{\Gamma}(G_{\mathbb{R}},\mathcal{X}(\mathbb{C}),\alpha^*\mathcal{A})
\simeq R\widehat{\Gamma}(G_{\mathbb{R}},v^*Rf_*\mathcal{A}).
$$
\end{proof}

\begin{prop}\label{prop-milnecomplex}
For any $n\in\mathbb{Z}$ and any $m\geq 1$, we have an isomorphism
$$R\Gamma(\overline{\mathcal{X}}_{et},R\widehat{\phi}_!\mathbb{Z}(n))\otimes^L\mathbb{Z}/m\mathbb{Z}\simeq R\widehat{\Gamma}_c(\mathcal{X}_{et},\mathbb{Z}/m(n)).$$
\end{prop}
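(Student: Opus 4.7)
My plan is to compare two exact triangles whose middle terms agree and whose right-hand terms agree canonically after tensoring with $\bz/m$; the isomorphism on the left-hand terms will then follow from the five lemma on triangulated categories.

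First, I would apply $R\Gamma(\overline{\mathcal{X}}_{et},-)$ to the defining triangle of Corollary \ref{cor-u^!},
$$R\widehat{\phi}_!\mathbb{Z}(n)\to R\phi_*\mathbb{Z}(n)\to u_{\infty,*}R\widehat{\pi}_*(2\pi i)^n\mathbb{Z},$$
and use Lemma \ref{lemTate} together with $R\Gamma(\overline{\mathcal{X}}_{et},u_{\infty,*}(-))\simeq R\Gamma(\mathcal{X}_{\infty},-)$ to obtain the triangle
$$R\Gamma(\overline{\mathcal{X}}_{et},R\widehat{\phi}_!\mathbb{Z}(n))\to R\Gamma(\mathcal{X}_{et},\mathbb{Z}(n))\to R\widehat{\Gamma}(G_{\mathbb{R}},\mathcal{X}(\mathbb{C}),(2\pi i)^n\mathbb{Z}).$$
Next I would tensor this triangle with $\bz/m\bz$ in the derived category. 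The middle term becomes $R\Gamma(\mathcal{X}_{et},\mathbb{Z}/m(n))$ by the definition $\mathbb{Z}/m(n):=\mathbb{Z}(n)\otimes^L\mathbb{Z}/m\mathbb{Z}$. For the right term, applying $R\widehat{\Gamma}(G_{\mathbb{R}},\mathcal{X}(\mathbb{C}),-)$ (a triangulated functor) to the triangle $(2\pi i)^n\mathbb{Z}\xrightarrow{m}(2\pi i)^n\mathbb{Z}\to (2\pi i)^n\mathbb{Z}/m$ and comparing with the multiplication-by-$m$ triangle on $R\widehat{\Gamma}(G_{\mathbb{R}},\mathcal{X}(\mathbb{C}),(2\pi i)^n\mathbb{Z})$ shows that the result is $R\widehat{\Gamma}(G_{\mathbb{R}},\mathcal{X}(\mathbb{C}),(2\pi i)^n\mathbb{Z}/m)$.

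Now I would invoke Lemma \ref{lem-Milne-Tate} applied to $\mathcal{A}=\mathbb{Z}/m(n)$: taking $S$ to contain all primes dividing $m$ so that $\mathbb{Z}/m(n)_{\mathcal{X}[1/S]}\simeq\mu_m^{\otimes n}[0]$ is locally constant torsion, one obtains Milne's triangle
$$R\widehat{\Gamma}_c(\mathcal{X}_{et},\mathbb{Z}/m(n))\to R\Gamma(\mathcal{X}_{et},\mathbb{Z}/m(n))\to R\widehat{\Gamma}(G_{\mathbb{R}},\mathcal{X}(\mathbb{C}),\alpha^*\mathbb{Z}/m(n)).$$
The canonical identification $\alpha^*\mathbb{Z}/m(n)\simeq (2\pi i)^n\mathbb{Z}/m$, which follows from $\mathbb{Z}/m(n)\simeq\mu_m^{\otimes n}$ away from $m$ and $\mu_m\simeq(2\pi i/m)\mathbb{Z}/\mathbb{Z}$ over $\mathbb{C}$, identifies the right-hand term with that of our triangle.

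The main obstacle is to produce a morphism of triangles between the tensored triangle from Step 2 and Milne's triangle from Step 3, with identity on the middle term and the canonical identification on the right-hand term. For this one must trace through the constructions at the level of $K$-injective complexes. Concretely, the map $R\Gamma(\mathcal{X}_{et},\mathbb{Z}/m(n))\to R\widehat{\Gamma}(G_{\mathbb{R}},\mathcal{X}(\mathbb{C}),(2\pi i)^n\mathbb{Z}/m)$ arising from our triangle factors via $R\Gamma(\mathcal{X}_{et},\mathbb{Z}(n))\to R\Gamma(\overline{\mathcal{X}}_{et},u_{\infty,*}R\widehat{\pi}_*\alpha^*(\tau^{\geq 0}\mathbb{Z}(n)))$ (the map $\sigma^!_{\overline{\mathcal{X}},\mathbb{Z}(n)}$ of Definition \ref{znxbardef} combined with the natural transformation $R\pi_*\to R\widehat{\pi}_*$), and this unwinds to the composite of the Artin comparison map $R\Gamma(\mathcal{X}_{et},\mathbb{Z}/m(n))\to R\Gamma(\mathcal{X}(\mathbb{C}),\alpha^*\mathbb{Z}/m(n))^{G_{\mathbb{R}}}$ followed by the canonical map to Tate cohomology — which is exactly Milne's definition of the connecting map. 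Once this compatibility is established, the five lemma applied to the morphism of triangles yields the claimed isomorphism $R\Gamma(\overline{\mathcal{X}}_{et},R\widehat{\phi}_!\mathbb{Z}(n))\otimes^L\mathbb{Z}/m\mathbb{Z}\simeq R\widehat{\Gamma}_c(\mathcal{X}_{et},\mathbb{Z}/m(n))$.
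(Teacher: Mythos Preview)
Your proposal is correct and follows essentially the same approach as the paper: both obtain the exact triangle from the definition of $R\widehat{\phi}_!\mathbb{Z}(n)$ (via Corollary \ref{cor-u^!}, which packages Lemma \ref{lemcomp}) together with Lemma \ref{lemTate}, then identify it with the triangle of Lemma \ref{lem-Milne-Tate}. The paper's proof is much terser, simply asserting that the two triangles ``can be identified'', whereas you have taken care to spell out why the right-hand maps agree---this is a reasonable expansion of what the paper leaves implicit.
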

\begin{proof}
By definition of $R\widehat{\phi}_!\mathbb{Z}(n)$, Lemma \ref{lemTate} and Lemma \ref{lemcomp}, we have an exact triangle
$$R\Gamma(\overline{\mathcal{X}}_{et},R\widehat{\phi}_!\mathbb{Z}(n))\otimes^L\mathbb{Z}/m\mathbb{Z}\rightarrow R\Gamma(\mathcal{X}_{et},\mathbb{Z}/m(n))\rightarrow R\widehat{\Gamma}(G_{\mathbb{R}},\mathcal{X}(\mathbb{C}),(2i\pi)^n \mathbb{Z}/m)
$$
which can be identified with the triangle of Lemma \ref{lem-Milne-Tate}.
\end{proof}
This suggests the following
\begin{notation}
We set $R\widehat{\Gamma}_c(\mathcal{X}_{et},\mathbb{Z}(n)):=R\Gamma(\overline{\mathcal{X}}_{et},R\widehat{\phi}_!\mathbb{Z}(n))$ and $\widehat{H}^i_c(\mathcal{X}_{et},\mathbb{Z}(n)):=H^i(\overline{\mathcal{X}}_{et},R\widehat{\phi}_!\mathbb{Z}(n))$.
\end{notation}

\begin{prop}\label{propcomp}
There are canonical exact triangles
$$R\widehat{\phi}_!\mathbb{Z}(n)\rightarrow \mathbb{Z}(n)^{\overline{\mathcal{X}}}\rightarrow u_{\infty,*}\tau^{\leq n}R\widehat{\pi}_*(2i\pi)^n\mathbb{Z}$$
and
$$R\widehat{\Gamma}_c(\mathcal{X}_{et},\mathbb{Z}(n))\rightarrow R\Gamma(\overline{\mathcal{X}}_{et},\mathbb{Z}(n))\rightarrow R\Gamma(\mathcal{X}(\mathbb{R}),\tau^{\leq n}R\widehat{\pi}_*(2i\pi)^n\mathbb{Z}).$$

\end{prop}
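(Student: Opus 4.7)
The plan is to deduce both triangles from the octahedral axiom. Recall that by construction $\mathbb{Z}(n)^{\overline{\mathcal{X}}}$ and $R\widehat{\phi}_!\mathbb{Z}(n)$ are the shifted cones of the two maps of complexes
$$\sigma_{\overline{\mathcal{X}},\mathbb{Z}(n)}:\phi_*I(n)\to u_{\infty,*}\tau^{>n}\widehat{\pi}_*\alpha^*(\tau^{\geq 0}I(n)),\qquad \sigma^!_{\overline{\mathcal{X}},\mathbb{Z}(n)}:\phi_*I(n)\to u_{\infty,*}\widehat{\pi}_*\alpha^*(\tau^{\geq 0}I(n)),$$
and the first is visibly the composite of the second with the canonical truncation morphism
$$t:u_{\infty,*}\widehat{\pi}_*\alpha^*(\tau^{\geq 0}I(n))\to u_{\infty,*}\tau^{>n}\widehat{\pi}_*\alpha^*(\tau^{\geq 0}I(n)).$$

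First I would apply the octahedral axiom to the factorization $\sigma_{\overline{\mathcal{X}},\mathbb{Z}(n)}=t\circ\sigma^!_{\overline{\mathcal{X}},\mathbb{Z}(n)}$. This produces an exact triangle in $\mathcal{D}(\overline{\mathcal{X}}_{et})$
$$\mathrm{Cone}(\sigma^!_{\overline{\mathcal{X}},\mathbb{Z}(n)})\to \mathrm{Cone}(\sigma_{\overline{\mathcal{X}},\mathbb{Z}(n)})\to\mathrm{Cone}(t)\to,$$
that is, after a shift by $[-1]$, an exact triangle
$$R\widehat{\phi}_!\mathbb{Z}(n)\to \mathbb{Z}(n)^{\overline{\mathcal{X}}}\to \mathrm{Cone}(t)[-1]\to.$$
The standard triangle $\tau^{\leq n}\to \mathrm{id}\to\tau^{>n}$ identifies $\mathrm{Cone}(t)[-1]$ canonically with $u_{\infty,*}\tau^{\leq n}\widehat{\pi}_*\alpha^*(\tau^{\geq 0}I(n))$, and Lemma~\ref{lemcomp} further identifies this, via the canonical isomorphism $R\widehat{\pi}_*(\tau^{\geq 0}\alpha^*\mathbb{Z}(n))\simeq R\widehat{\pi}_*((2i\pi)^n\mathbb{Z})$ in $\mathcal{D}(\mathcal{X}_\infty)$, with $u_{\infty,*}\tau^{\leq n}R\widehat{\pi}_*(2i\pi)^n\mathbb{Z}$. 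This yields the first triangle, canonical because each of $\sigma$, $\sigma^!$, $t$ and the truncation identification is canonical.

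For the second triangle I would simply apply $R\Gamma(\overline{\mathcal{X}}_{et},-)$ to the first. The left and middle terms become $R\widehat{\Gamma}_c(\mathcal{X}_{et},\mathbb{Z}(n))$ and $R\Gamma(\overline{\mathcal{X}}_{et},\mathbb{Z}(n))$ by definition. For the right-hand term, adjunction gives
$$R\Gamma(\overline{\mathcal{X}}_{et},u_{\infty,*}\tau^{\leq n}R\widehat{\pi}_*(2i\pi)^n\mathbb{Z})\simeq R\Gamma(\mathcal{X}_\infty,\tau^{\leq n}R\widehat{\pi}_*(2i\pi)^n\mathbb{Z}),$$
and the spectral sequence~(\ref{hss}) shows that $\mathcal{H}^i(R\widehat{\pi}_*(2i\pi)^n\mathbb{Z})$ is supported on the closed subset $\mathcal{X}(\mathbb{R})\subseteq \mathcal{X}_\infty$ (the Tate cohomology stalks at points of $\mathcal{X}_\infty\setminus\mathcal{X}(\mathbb{R})$ vanish since their stabilizers in $G_{\mathbb{R}}$ are trivial), so this last complex coincides with $R\Gamma(\mathcal{X}(\mathbb{R}),\tau^{\leq n}R\widehat{\pi}_*(2i\pi)^n\mathbb{Z})$. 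No step presents a genuine obstacle; the only care required is to realize $\sigma_{\overline{\mathcal{X}},\mathbb{Z}(n)}$ and $\sigma^!_{\overline{\mathcal{X}},\mathbb{Z}(n)}$ as actual morphisms of complexes (as in Lemma~\ref{lem-here10}) so that the factorization through $t$ is literal and the application of the octahedral axiom yields a canonical triangle.
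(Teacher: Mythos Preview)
Your proof is correct and is essentially the same as the paper's: the paper presents the argument as a $3\times 3$ diagram with exact rows and columns (the first two rows being the triangles of Corollary~\ref{cor-u^!} and the right column the truncation triangle for $R\widehat{\pi}_*(2i\pi)^n\mathbb{Z}$), which is just the octahedral axiom applied to the same factorization $\sigma_{\overline{\mathcal{X}},\mathbb{Z}(n)}=t\circ\sigma^!_{\overline{\mathcal{X}},\mathbb{Z}(n)}$. Your derivation of the second triangle by applying $R\Gamma(\overline{\mathcal{X}}_{et},-)$ and using the support of $R\widehat{\pi}_*(2i\pi)^n\mathbb{Z}$ on $\mathcal{X}(\mathbb{R})$ makes explicit what the paper leaves implicit in ``the result follows.''
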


\begin{proof}
We have a commutative diagram with exact rows and colons:
\[ \xymatrix{
R\widehat{\phi}_!\mathbb{Z}(n)\ar[d]\ar[r]
&\ar[r]\ar[d]R\phi_*\mathbb{Z}(n)&u_{\infty,*}R\widehat{\pi}_*(2i\pi)^n\mathbb{Z}\ar[d]\\
\mathbb{Z}(n)^{\overline{\mathcal{X}}}\ar[d]\ar[r]
&\ar[r]\ar[d]R\phi_*\mathbb{Z}(n)
&u_{\infty,*}\tau^{>n}R\widehat{\pi}_*(2i\pi)^n\mathbb{Z}\ar[d]\\
u_{\infty,*}\tau^{\leq n}R\widehat{\pi}_*(2i\pi)^n\mathbb{Z}\ar[r]& 0 \ar[r]& u_{\infty,*}\tau^{\leq n}R\widehat{\pi}_*(2i\pi)^n\mathbb{Z}[1]
}
\]
The result follows.
\end{proof}

\begin{cor}\label{cor-comp-milnebarX}
For any $i> \mathrm{dim}(\mathcal{X})+n$,  there is a canonical isomorphism
$$H^i(\overline{\mathcal{X}}_{et},\mathbb{Z}(n))\simeq \widehat{H}^i_c(\mathcal{X}_{et},\mathbb{Z}(n)).$$
\end{cor}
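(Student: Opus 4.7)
The plan is to deduce the corollary directly from the second exact triangle of Proposition \ref{propcomp}, namely
$$R\widehat{\Gamma}_c(\mathcal{X}_{et},\mathbb{Z}(n))\to R\Gamma(\overline{\mathcal{X}}_{et},\mathbb{Z}(n))\to R\Gamma(\mathcal{X}(\mathbb{R}),\tau^{\leq n}R\widehat{\pi}_*(2\pi i)^n\mathbb{Z}).$$
The induced long exact sequence shows that the statement reduces to proving that
$$H^j(\mathcal{X}(\mathbb{R}),\tau^{\leq n}R\widehat{\pi}_*(2\pi i)^n\mathbb{Z})=0 \quad\text{for }j>d-1+n,$$
because then for any $i>d+n$ both $j=i-1$ and $j=i$ fall in the vanishing range.

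For this, I would use the cohomological dimension of the real locus. Since $\mathcal{X}$ is regular of absolute dimension $d$ and proper over $\mathrm{Spec}(\mathbb{Z})$, the real scheme $\mathcal{X}_{\mathbb{R}}$ is smooth of relative dimension $d-1$ over $\mathbb{R}$, hence $\mathcal{X}(\mathbb{R})$ is a compact real manifold of dimension $d-1$ (possibly empty, in which case the corollary is trivial). For any abelian sheaf $\mathcal{F}$ on such a manifold we have $H^p(\mathcal{X}(\mathbb{R}),\mathcal{F})=0$ for $p>d-1$.

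Combining this cohomological bound with the fact that $\tau^{\leq n}R\widehat{\pi}_*(2\pi i)^n\mathbb{Z}$ is concentrated in degrees $\leq n$, the hypercohomology spectral sequence
$$E_2^{p,q}=H^p(\mathcal{X}(\mathbb{R}),\mathcal{H}^q(\tau^{\leq n}R\widehat{\pi}_*(2\pi i)^n\mathbb{Z}))\Rightarrow H^{p+q}(\mathcal{X}(\mathbb{R}),\tau^{\leq n}R\widehat{\pi}_*(2\pi i)^n\mathbb{Z})$$
has $E_2^{p,q}=0$ whenever $p>d-1$ or $q>n$, so the abutment vanishes for $p+q>d-1+n$. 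Since the construction of the triangle in Proposition \ref{propcomp} is canonical, so is the resulting isomorphism. There is no real obstacle here; the only thing to verify carefully is the dimension count for $\mathcal{X}(\mathbb{R})$ and the standard bound on cohomological dimension of a paracompact topological manifold.
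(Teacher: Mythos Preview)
Your proof is correct and follows essentially the same approach as the paper. The paper simply invokes the (rotated) triangle from Proposition~\ref{propcomp} and notes that $\mathcal{X}(\mathbb{R})$ has topological dimension $\leq \dim(\mathcal{X})-1$; you spell out the same argument via the hypercohomology spectral sequence. One small remark: your justification that $\mathcal{X}_{\mathbb{R}}$ is smooth over $\mathbb{R}$ relies on $\mathcal{X}$ being flat over $\mathbb{Z}$ (regularity of $\mathcal{X}$ implies smoothness of the generic fibre in characteristic~$0$), while in the non-flat case $\mathcal{X}(\mathbb{R})=\emptyset$, which you do note.
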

\begin{proof}
This follows immediately from the exact triangle
$$ R\Gamma(\mathcal{X}(\mathbb{R}),\tau^{\leq n}R\widehat{\Gamma}(G_{\mathbb{R}},(2i\pi)^n\mathbb{Z}))[-1]\rightarrow R\widehat{\Gamma}_c(\mathcal{X}_{et},\mathbb{Z}(n))\rightarrow R\Gamma(\overline{\mathcal{X}}_{et},\mathbb{Z}(n))
$$
since $\mathcal{X}(\mathbb{R})$ has topological dimension $\leq \mathrm{dim}(\mathcal{X})-1$.
\end{proof}
\begin{cor}\label{cor-trace}
We have $H^i(\overline{\mathcal{X}}_{et},\mathbb{Z}(d))=0$ for any $i>2d+2$ and there is a canonical trace map
$H^{2d+2}(\overline{\mathcal{X}}_{et},\mathbb{Z}(d))\rightarrow \mathbb{Q}/\mathbb{Z}$.
\end{cor}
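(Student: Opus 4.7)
The plan is to reduce everything to Conjecture ${\bf AV}(\overline{\mathcal{X}}_{et},0)$, which holds unconditionally for any regular proper arithmetic scheme $\mathcal{X}$ by Corollary \ref{corAVn=0}. The duality pairing
$$H^{i}(\overline{\mathcal{X}}_{et},\mathbb{Z}/m\mathbb{Z}(d))\times H^{2d+1-i}(\overline{\mathcal{X}}_{et},\mathbb{Z}/m\mathbb{Z})\rightarrow \mathbb{Q}/\mathbb{Z}$$
is perfect. Since $\mathbb{Z}(0)\simeq \mathbb{Z}[0]$, the group $H^{j}(\overline{\mathcal{X}}_{et},\mathbb{Z}/m\mathbb{Z})$ vanishes for $j<0$, and $H^{0}(\overline{\mathcal{X}}_{et},\mathbb{Z}/m\mathbb{Z})\simeq \mathbb{Z}/m\mathbb{Z}$ because $\mathcal{X}$, and hence $\overline{\mathcal{X}}_{et}$, is connected. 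Therefore for each $m$ we obtain $H^{i}(\overline{\mathcal{X}}_{et},\mathbb{Z}/m\mathbb{Z}(d))=0$ for $i>2d+1$, while $H^{2d+1}(\overline{\mathcal{X}}_{et},\mathbb{Z}/m\mathbb{Z}(d))$ is canonically identified with $\frac{1}{m}\mathbb{Z}/\mathbb{Z}\subset \mathbb{Q}/\mathbb{Z}$.

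Next I would pass to the colimit over $m$. Since cohomology of a topos commutes with filtered colimits of abelian sheaves, this yields $H^{i}(\overline{\mathcal{X}}_{et},\mathbb{Q}/\mathbb{Z}(d))=0$ for $i>2d+1$ and a canonical isomorphism $H^{2d+1}(\overline{\mathcal{X}}_{et},\mathbb{Q}/\mathbb{Z}(d))\simeq \mathbb{Q}/\mathbb{Z}$ that assembles the mod-$m$ trace maps into one.

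I now combine this with the Bockstein long exact sequence associated to $\mathbb{Z}(d)\to\mathbb{Q}(d)\to\mathbb{Q}/\mathbb{Z}(d)$ on $\overline{\mathcal{X}}_{et}$. A preliminary observation is that $R\Gamma(\overline{\mathcal{X}}_{et},\mathbb{Q}(d))\simeq R\Gamma(\mathcal{X}_{et},\mathbb{Q}(d))$: indeed, the triangle of Corollary \ref{cor-u^!}
$$\mathbb{Z}(d)^{\overline{\mathcal{X}}}\to R\phi_{*}\mathbb{Z}(d)\to u_{\infty,*}\tau^{>d}R\widehat{\pi}_{*}(2\pi i)^{d}\mathbb{Z}$$
has 2-torsion cone because the cohomology sheaves of $R\widehat{\pi}_{*}(2\pi i)^{d}\mathbb{Z}$ are Tate cohomology groups of $G_{\mathbb{R}}$, hence annihilated by $2$. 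Consequently this cone vanishes after $\otimes\mathbb{Q}$. Since $H^{i}(\mathcal{X}_{et},\mathbb{Q}(d))\subset K_{2d-i}(\mathcal{X})_{\mathbb{Q}}^{(d)}=0$ for $i>2d$, we obtain $H^{i}(\overline{\mathcal{X}}_{et},\mathbb{Q}(d))=0$ for $i>2d$.

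Plugging in the Bockstein for $i>2d+1$ gives
$$0=H^{i}(\overline{\mathcal{X}}_{et},\mathbb{Q}(d))\to H^{i}(\overline{\mathcal{X}}_{et},\mathbb{Q}/\mathbb{Z}(d))\to H^{i+1}(\overline{\mathcal{X}}_{et},\mathbb{Z}(d))\to H^{i+1}(\overline{\mathcal{X}}_{et},\mathbb{Q}(d))=0,$$
so $H^{i+1}(\overline{\mathcal{X}}_{et},\mathbb{Z}(d))\simeq H^{i}(\overline{\mathcal{X}}_{et},\mathbb{Q}/\mathbb{Z}(d))$ vanishes for $i+1>2d+2$, and for $i=2d+1$ one gets the canonical isomorphism
$$H^{2d+2}(\overline{\mathcal{X}}_{et},\mathbb{Z}(d))\simeq H^{2d+1}(\overline{\mathcal{X}}_{et},\mathbb{Q}/\mathbb{Z}(d))\simeq \mathbb{Q}/\mathbb{Z},$$
the composite providing the desired trace map. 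There is no serious obstacle here: every ingredient is already in place, and the only thing to be careful about is the bookkeeping that makes the trivial two-step Bockstein shift turn the degree-$(2d+1)$ trace with torsion coefficients into the degree-$(2d+2)$ trace with integral coefficients.
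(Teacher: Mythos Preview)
Your argument has a circularity problem. You invoke Conjecture ${\bf AV}(\overline{\mathcal{X}}_{et},0)$ via Corollary~\ref{corAVn=0}, but that corollary is an application of Theorem~\ref{thm-barX}, and the proof of Theorem~\ref{thm-barX} \emph{uses} Corollary~\ref{cor-trace}: the very first step there is to compose with the map
\[
R\Gamma(\overline{\mathcal{X}}_{et},\mathbb{Z}(d))\longrightarrow \tau^{\geq 2d+2} R\Gamma(\overline{\mathcal{X}}_{et},\mathbb{Z}(d))\simeq \mathbb{Q}/\mathbb{Z}[-2d-2]
\]
``given by Corollary~\ref{cor-trace}''. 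So you cannot appeal to the $\overline{\mathcal{X}}$-duality to construct the trace on $\overline{\mathcal{X}}$; that duality is only available \emph{after} the trace exists.

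The paper avoids this by working on the Milne side. Via Corollary~\ref{cor-comp-milnebarX} one has $H^i(\overline{\mathcal{X}}_{et},\mathbb{Z}(d))\simeq \widehat{H}^i_c(\mathcal{X}_{et},\mathbb{Z}(d))$ for $i>2d$, and then everything is phrased in terms of $\widehat{H}^*_c$. The rational vanishing and the Bockstein step are exactly as you wrote, but the torsion vanishing $\widehat{H}^{i}_c(\mathcal{X}_{et},\mathbb{Q}/\mathbb{Z}(d))=0$ for $i>2d+1$ is taken directly from Geisser's duality results in \cite{Geisser10} (this is the Milne-style statement ${\bf AV}(\mathcal{X},0)$, which does not depend on anything in this appendix). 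Finally the trace is built by push-forward along $f:\mathcal{X}\to\Spec(\mathbb{Z})$, namely $Rf_*\mathbb{Z}(d)[-2d]\to\mathbb{Z}(1)[-2]$, together with the classical identification $\widehat{H}^3_c(\Spec(\mathbb{Z}),\mathbb{Z}/m(1))\simeq\mathbb{Z}/m$. Your outline becomes correct if you replace the appeal to ${\bf AV}(\overline{\mathcal{X}}_{et},0)$ by this direct route through $\widehat{H}^*_c$ and \cite{Geisser10}.
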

\begin{proof} We may assume $\mathcal{X}$ flat over $\mathbb{Z}$. By Corollary \ref{cor-comp-milnebarX}, we have $H^i(\overline{\mathcal{X}}_{et},\mathbb{Z}(d))\simeq \widehat{H}^i_c(\mathcal{X}_{et},\mathbb{Z}(d))$ for $i>2d$. We have $\widehat{H}^i_c(\mathcal{X}_{et},\mathbb{Q}(d))\simeq H^i(\mathcal{X}_{Zar},\mathbb{Q}(d))=0$ for $i\geq 2d$ (the case $i=2d$ follows from the fact that $CH^d(\mathcal{X})$ is finite). We get $\widehat{H}^i_c(\mathcal{X}_{et},\mathbb{Z}(d))\simeq \widehat{H}^{i-1}_c(\mathcal{X}_{et},\mathbb{Q}/\mathbb{Z}(d))$ for $i>2d$. Using  \cite{Geisser10}, we find  $\widehat{H}^{i}_c(\mathcal{X}_{et},\mathbb{Q}/\mathbb{Z}(d))=0$ for $i>2d+1$. The push-forward map
$Rf_*\mathbb{Z}(d)[-2d]\rightarrow \mathbb{Z}(1)[-2]$
induces
$$\widehat{H}^{2d+1}_c(\mathcal{X},\mathbb{Z}/m(d))=\widehat{H}^{2d+1}_c(\mathrm{Spec}(\mathbb{Z}),Rf_*\mathbb{Z}/m(d))\rightarrow \widehat{H}^3_c(\mathrm{Spec}(\mathbb{Z}),\mathbb{Z}/m(1))\simeq \mathbb{Z}/m$$
where the last identification follows from $\mathbb{Z}(1)\simeq\mathbb{G}_m[1]$ and classical Artin-Verdier duality.

\end{proof}

\subsection{Products}
We consider below product maps $\mathbb{Z}(n)\otimes^L \mathbb{Z}(m)\rightarrow \mathbb{Z}(n+m)$ in $\mathcal{D}(\mathcal{X}_{et})$. These product maps are always assumed to induce, after $(-)\otimes^L\mathbb{Z}/r\mathbb{Z}$, the natural maps $\mu^{\otimes n}_r\otimes \mu^{\otimes n}_r\rightarrow \mu^{\otimes n+m}_r$
over the \'etale site of $\mathcal{X}'[1/r]$, where $\mathcal{X}'\subset \mathcal{X}$ is an open subscheme which is smooth over $\mathbb{Z}$.

\begin{prop}\label{prop-canonicalproduct}
Let $n, m\geq 0$ be non-negative integers. A product map $\mathbb{Z}(n)\otimes^L \mathbb{Z}(m)\rightarrow \mathbb{Z}(n+m)$ as above over $\mathcal{X}$ induces a unique product map over $\overline{\mathcal{X}}$:
$$\mathbb{Z}(n)^{\overline{\mathcal{X}}}\otimes^L \mathbb{Z}(m)^{\overline{\mathcal{X}}}\rightarrow \mathbb{Z}(n+m)^{\overline{\mathcal{X}}}.$$
\end{prop}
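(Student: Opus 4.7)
The plan is to use the defining exact triangle of Corollary \ref{cor-u^!}
\[ \mathbb{Z}(n+m)^{\overline{\mathcal{X}}}\to R\phi_*\mathbb{Z}(n+m)\to u_{\infty,*}\tau^{>n+m}R\widehat{\pi}_*(2i\pi)^{n+m}\mathbb{Z} \]
and lift an obviously defined product on $R\phi_*\mathbb{Z}(n+m)$ through this triangle. First I would construct the obvious candidate map: the product on $\mathbb{Z}(n)$ together with the lax monoidal structure of $R\phi_*$ (adjoint to the counit $\phi^*R\phi_*\mathbb{Z}(n)\otimes^L\phi^*R\phi_*\mathbb{Z}(m)\to\mathbb{Z}(n)\otimes^L\mathbb{Z}(m)\to\mathbb{Z}(n+m)$) gives a product $R\phi_*\mathbb{Z}(n)\otimes^L R\phi_*\mathbb{Z}(m)\to R\phi_*\mathbb{Z}(n+m)$, and precomposing with the natural maps $\mathbb{Z}(n)^{\overline{\mathcal{X}}}\to R\phi_*\mathbb{Z}(n)$ produces
\[ \Phi:\mathbb{Z}(n)^{\overline{\mathcal{X}}}\otimes^L\mathbb{Z}(m)^{\overline{\mathcal{X}}}\longrightarrow R\phi_*\mathbb{Z}(n+m). \]

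Set $A:=\mathbb{Z}(n)^{\overline{\mathcal{X}}}\otimes^L\mathbb{Z}(m)^{\overline{\mathcal{X}}}$ and $T:=u_{\infty,*}\tau^{>n+m}R\widehat{\pi}_*(2i\pi)^{n+m}\mathbb{Z}$. From the defining triangle, both existence of a lift $A\to\mathbb{Z}(n+m)^{\overline{\mathcal{X}}}$ of $\Phi$ and its uniqueness reduce to proving $\mathrm{Hom}_{\mathcal{D}(\overline{\mathcal{X}}_{et})}(A,T[i])=0$ for $i=0,-1$. By adjunction,
\[ \mathrm{Hom}_{\mathcal{D}(\overline{\mathcal{X}}_{et})}(A,T[i])\cong \mathrm{Hom}_{\mathcal{D}(\mathcal{X}_\infty)}\bigl(u_\infty^*A,\tau^{>n+m}R\widehat{\pi}_*(2i\pi)^{n+m}\mathbb{Z}[i]\bigr). \]
The key step is to observe that $u_\infty^*$ is symmetric monoidal for the derived tensor product, so by Proposition \ref{propstalk}
\[ u_\infty^*A\cong \tau^{\leq n}u_\infty^*R\phi_*\mathbb{Z}(n)\otimes^L \tau^{\leq m}u_\infty^*R\phi_*\mathbb{Z}(m), \]
which is cohomologically concentrated in degrees $\leq n+m$, while the target is concentrated in degrees $\geq n+m+1$ for $i=0$ and in degrees $\geq n+m+2$ for $i=-1$. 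The Hom groups therefore vanish, yielding both existence and uniqueness of the product map $\mathbb{Z}(n)^{\overline{\mathcal{X}}}\otimes^L\mathbb{Z}(m)^{\overline{\mathcal{X}}}\to\mathbb{Z}(n+m)^{\overline{\mathcal{X}}}$.

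I expect no genuine obstacle: the main conceptual input is the degree bound provided by Proposition \ref{propstalk} together with the fact that the "boundary correction" complex $T$ has its cohomology entirely above degree $n+m$. The only point requiring minor care is that the derived tensor product of two complexes concentrated in degrees $\leq n$ and $\leq m$ respectively lies in degrees $\leq n+m$, and that $u_\infty^*$ commutes with $\otimes^L$, both of which are standard.
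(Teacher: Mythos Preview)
Your proposal is correct and is essentially the same argument as the paper's proof: construct $\Phi$ via the lax monoidal structure of $R\phi_*$, then use the degree bound from Proposition \ref{propstalk} (that $u_\infty^*A$ is concentrated in degrees $\leq n+m$) together with the adjunction $u_\infty^*\dashv u_{\infty,*}$ to kill the relevant $\mathrm{Hom}$'s into $T$ and $T[-1]$. The only cosmetic difference is that the paper writes the target as $u_{\infty,*}\tau^{>n+m}R\pi_*(2i\pi)^{n+m}\mathbb{Z}$ rather than with $R\widehat{\pi}_*$, which is harmless since for $n+m\geq 0$ these truncations agree by Lemma \ref{lemcomp}.
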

\begin{proof}
The product map $\mathbb{Z}(n)\otimes \mathbb{Z}(m)\rightarrow \mathbb{Z}(n+m)$ in $\mathcal{D}(\mathcal{X}_{et})$ induces
\begin{equation}\label{zero}
R\phi_*\mathbb{Z}(n)\otimes^L R\phi_*\mathbb{Z}(m)\rightarrow R\phi_*\mathbb{Z}(n+m)
\end{equation}
in $\mathcal{D}(\overline{\mathcal{X}}_{et})$, and we consider
\begin{equation}\label{phiprod}
\mathbb{Z}(n)^{\overline{\mathcal{X}}}\otimes^L \mathbb{Z}(m)^{\overline{\mathcal{X}}}\rightarrow
R\phi_*\mathbb{Z}(n)\otimes^L R\phi_*\mathbb{Z}(m)\rightarrow R\phi_*\mathbb{Z}(n+m).
\end{equation}
We now remark that the composite map
$$\mathbb{Z}(n)^{\overline{\mathcal{X}}}\otimes^L \mathbb{Z}(m)^{\overline{\mathcal{X}}}\rightarrow R\phi_*\mathbb{Z}(n+m) \rightarrow u_{\infty,*}\tau^{>n+m}R\pi_*(2i\pi)^{n+m}\mathbb{Z}$$
is zero and that
$$\mathrm{Hom}_{\mathcal{D}(\overline{\mathcal{X}}_{et})}(\mathbb{Z}(n)^{\overline{\mathcal{X}}}\otimes^L \mathbb{Z}(m)^{\overline{\mathcal{X}}},u_{\infty,*}\tau^{>n+m}R\pi_*(2i\pi)^{n+m}\mathbb{Z}[-1])=0$$
simply because
$$u^*_{\infty}\mathbb{Z}(n)^{\overline{\mathcal{X}}}\otimes^L u^*_{\infty}\mathbb{Z}(m)^{\overline{\mathcal{X}}}
\simeq \tau^{\leq n}u^*_{\infty}R\phi_*\mathbb{Z}(n)\otimes^L \tau^{\leq m}u^*_{\infty}R\phi_*\mathbb{Z}(m)$$
is concentrated in degrees $\leq n+m$ by Proposition \ref{propstalk}. It follows that (\ref{phiprod}) induces a unique map
$$\mathbb{Z}(n)^{\overline{\mathcal{X}}}\otimes^L \mathbb{Z}(m)^{\overline{\mathcal{X}}}\rightarrow \mathbb{Z}(m)^{\overline{\mathcal{X}}}.$$

\end{proof}

\begin{rem}
If $\mathbb{Z}(n)$, $\mathbb{Z}(m)$ and $\mathbb{Z}(n+m)$
are acyclic in degrees $>n$, $>m$ and $>n+m$ respectively, then (\ref{zero}) induces
$$\tau^{\leq n}
R\phi_*\mathbb{Z}(n)\otimes^L \tau^{\leq m} R\phi_*\mathbb{Z}(n+m)\rightarrow \tau^{\leq n+m}R\phi_*\mathbb{Z}(n+m)$$
by adjunction. Proposition \ref{prop-canonicalproduct} then follows somewhat more directly from Proposition \ref{prop-trunc}. This applies for $\mathcal{X}$ smooth over a number ring.
\end{rem}

\begin{prop}\label{prop-noncanonicalprod}
Let $n,m\in\mathbb{Z}$ be arbitrary integers. A product map $\mathbb{Z}(n)\otimes \mathbb{Z}(m)\rightarrow \mathbb{Z}(n+m)$ over $\mathcal{X}$ induces in a non-canonical way product maps
\begin{equation}\label{prodgeneral}
\mathbb{Z}(n)^{\overline{\mathcal{X}}}\otimes^L \mathbb{Z}(m)^{\overline{\mathcal{X}}}\rightarrow \mathbb{Z}(n+m)^{\overline{\mathcal{X}}}
\end{equation}
and
\begin{equation}\label{prodcompact}
R\widehat{\phi}_!\mathbb{Z}(n)\otimes^L R\phi_*\mathbb{Z}(m)\rightarrow \mathbb{Z}(n+m)^{\overline{\mathcal{X}}}.
\end{equation}
If $n\geq 0$ these product maps can be chosen so that the induced square
\[ \xymatrix{
R\widehat{\phi}_!\mathbb{Z}(n)\ar[r]\ar[d]&R\underline{\mathrm{Hom}}(R\phi_*\mathbb{Z}(m),\mathbb{Z}(n+m)^{\overline{\mathcal{X}}})\ar[d]\\
\mathbb{Z}(n)^{\overline{\mathcal{X}}}\ar[r]& R\underline{\mathrm{Hom}}(\mathbb{Z}(m)^{\overline{\mathcal{X}}},\mathbb{Z}(n+m)^{\overline{\mathcal{X}}})
}
\]
commutes.

\end{prop}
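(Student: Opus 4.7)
The plan is to deduce both product maps from a \emph{single} product on the "ambient" complex $R\phi_*\mathbb{Z}(n+m)$, built from the given product on $\mathcal{X}_{et}$ via the lax monoidality of $R\phi_*$, and then to lift this product through the defining triangles of $\mathbb{Z}(n+m)^{\overline{\mathcal{X}}}$ (Corollary~\ref{cor-u^!}). In contrast to Proposition~\ref{prop-canonicalproduct}, for general integers $n,m$ the $\mathrm{Hom}$ group measuring the non-uniqueness of such a lift need not vanish, which is exactly why the resulting product maps are canonical only up to a choice. The same strategy applies to $R\widehat{\phi}_!\mathbb{Z}(n)$ in place of $\mathbb{Z}(n)^{\overline{\mathcal{X}}}$ by using the second triangle of Corollary~\ref{cor-u^!} instead.

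Concretely, for~(\ref{prodgeneral}) I would start from the composite
$$\mathbb{Z}(n)^{\overline{\mathcal{X}}}\otimes^L\mathbb{Z}(m)^{\overline{\mathcal{X}}}\longrightarrow R\phi_*\mathbb{Z}(n)\otimes^L R\phi_*\mathbb{Z}(m)\longrightarrow R\phi_*\mathbb{Z}(n+m),$$
where the first arrow is the tensor product of the natural maps $\mathbb{Z}(n)^{\overline{\mathcal{X}}}\to R\phi_*\mathbb{Z}(n)$ from Proposition~\ref{prop-comp} and the second is induced by $\mathbb{Z}(n)\otimes^L\mathbb{Z}(m)\to\mathbb{Z}(n+m)$. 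Applying $\mathrm{Hom}(\mathbb{Z}(n)^{\overline{\mathcal{X}}}\otimes^L\mathbb{Z}(m)^{\overline{\mathcal{X}}},-)$ to the defining triangle of $\mathbb{Z}(n+m)^{\overline{\mathcal{X}}}$ reduces the existence of the desired lift to checking that the composite
$$\mathbb{Z}(n)^{\overline{\mathcal{X}}}\otimes^L\mathbb{Z}(m)^{\overline{\mathcal{X}}}\longrightarrow R\phi_*\mathbb{Z}(n+m)\longrightarrow u_{\infty,*}\tau^{>n+m}R\widehat{\pi}_*(2\pi i)^{n+m}\mathbb{Z}$$
vanishes. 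Using the $(u_\infty^*,u_{\infty,*})$-adjunction and the strong monoidality of $u_\infty^*$, this is adjoint to a map
$$u_\infty^*\mathbb{Z}(n)^{\overline{\mathcal{X}}}\otimes^L u_\infty^*\mathbb{Z}(m)^{\overline{\mathcal{X}}}\longrightarrow \tau^{>n+m}R\widehat{\pi}_*(2\pi i)^{n+m}\mathbb{Z}.$$
I would show this vanishes by factoring the natural product $R\pi_*\alpha^*\mathbb{Z}(n)\otimes^L R\pi_*\alpha^*\mathbb{Z}(m)\to R\pi_*\alpha^*\mathbb{Z}(n+m)$ through the Tate-cohomology product $R\widehat{\pi}_*(2\pi i)^n\mathbb{Z}\otimes^L R\widehat{\pi}_*(2\pi i)^m\mathbb{Z}\to R\widehat{\pi}_*(2\pi i)^{n+m}\mathbb{Z}$ (using Lemma~\ref{lemcomp}), and then using that $u_\infty^*\mathbb{Z}(n)^{\overline{\mathcal{X}}}\to R\widehat{\pi}_*(2\pi i)^n\mathbb{Z}$ is, by definition of the mapping fiber in Definition~\ref{znxbardef}, supported in degrees $\leq n$; hence its tensor product with the analogous map for $m$ maps to $\tau^{\leq n+m}R\widehat{\pi}_*(2\pi i)^{n+m}\mathbb{Z}$, killing the composite with the truncation $\tau^{>n+m}$.

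The construction of~(\ref{prodcompact}) is identical, replacing the left tensor factor $\mathbb{Z}(n)^{\overline{\mathcal{X}}}$ by $R\widehat{\phi}_!\mathbb{Z}(n)$ and using the second triangle of Corollary~\ref{cor-u^!}, together with the fact that $u_\infty^* R\widehat{\phi}_!\mathbb{Z}(n)\simeq R\widehat{\pi}_*((2\pi i)^n\mathbb{Z})[-1]$ concentrates the "correction" computation in a range where the same truncation argument applies. For the final commutative square in the case $n\geq 0$, I would construct the two lifts coherently as follows: fix first the map~(\ref{prodgeneral}) and then obtain~(\ref{prodcompact}) as the unique composite that fits into the pair of morphisms of defining triangles induced by the vertical map $R\widehat{\phi}_!\mathbb{Z}(n)\to \mathbb{Z}(n)^{\overline{\mathcal{X}}}$ of Proposition~\ref{prop-comp}; the required square then commutes by adjunction between $\otimes$ and $R\underline{\mathrm{Hom}}$ and by naturality. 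The main obstacle is the vanishing of the obstruction map above for negative $n$ or $m$: here the truncation argument used in Proposition~\ref{prop-canonicalproduct} is not directly available, and care must be taken with the description of $u_\infty^*\mathbb{Z}(n)^{\overline{\mathcal{X}}}$ (which, for $n<0$, need not be the $\tau^{\leq n}$ of $R\pi_*\alpha^*\mathbb{Z}(n)$) to ensure that the factorization through Tate cohomology remains valid.
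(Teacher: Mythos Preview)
Your construction of the maps~(\ref{prodgeneral}) and~(\ref{prodcompact}) is essentially the paper's: pass through $R\phi_*\mathbb{Z}(n+m)$, show the composite into $u_{\infty,*}\tau^{>n+m}R\widehat{\pi}_*(2\pi i)^{n+m}\mathbb{Z}$ vanishes because the map from each factor to Tate cohomology $R\widehat{\pi}_*(2\pi i)^r\mathbb{Z}$ factors through $\tau^{\leq r}$, then lift. Note that your closing worry about negative $n$ or $m$ is unfounded: the factorization through $\tau^{\leq n}R\widehat{\pi}_*(2\pi i)^n\mathbb{Z}$ comes directly from the defining triangle of $\mathbb{Z}(n)^{\overline{\mathcal{X}}}$ (the composite into $\tau^{>n}$ is zero, hence the map factors through the fibre $\tau^{\leq n}$), and this holds for all $n\in\mathbb{Z}$. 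No description of $u_\infty^*\mathbb{Z}(n)^{\overline{\mathcal{X}}}$ as a truncation is needed.

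The gap is in the compatibility argument for $n\geq 0$. You say you would ``fix first the map~(\ref{prodgeneral}) and then obtain~(\ref{prodcompact}) as the unique composite that fits into the pair of morphisms of defining triangles'', but no uniqueness is available here, and an arbitrary choice of~(\ref{prodgeneral}) need not make the bottom square of the final diagram commute. The paper proceeds differently: it first observes that $\tau^{>n+m}R\widehat{\pi}_*(2\pi i)^{n+m}\mathbb{Z}\simeq\tau^{\geq n+m+2}R\widehat{\pi}_*(2\pi i)^{n+m}\mathbb{Z}$ (the middle cohomology vanishes by parity), so the relevant Hom and $\mathrm{Hom}[-1]$ groups from $\tau^{\leq n}\otimes\tau^{\leq m}$ into it vanish. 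This forces a \emph{unique} map
\[
u_{\infty,*}\tau^{\leq n}R\widehat{\pi}_*(2\pi i)^{n}\mathbb{Z}\otimes^L u_{\infty,*}\tau^{>m}R\widehat{\pi}_*(2\pi i)^{m}\mathbb{Z}\longrightarrow u_{\infty,*}\tau^{>n+m}R\widehat{\pi}_*(2\pi i)^{n+m}\mathbb{Z},
\]
and the paper then chooses~(\ref{prodgeneral}) as a fill-in in a morphism of triangles (with $\mathbb{Z}(n)^{\overline{\mathcal{X}}}\otimes^L(-)$ applied to the defining triangle for $\mathbb{Z}(m)^{\overline{\mathcal{X}}}$) whose third row is this unique map. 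That particular choice of~(\ref{prodgeneral}) is what makes the lower square commute, after which~(\ref{prodcompact}) is obtained as a further fill-in. Your sketch misses this periodicity step, which is precisely what pins down a \emph{compatible} pair rather than two unrelated lifts.
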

\begin{proof}
We need to show that the composite map
$$\mathbb{Z}(n)^{\overline{\mathcal{X}}}\otimes^L \mathbb{Z}(m)^{\overline{\mathcal{X}}}\rightarrow R\phi_*\mathbb{Z}(n)\otimes^L R\phi_*\mathbb{Z}(m)\rightarrow R\phi_*\mathbb{Z}(n+m) \rightarrow u_{\infty,*}\tau^{>n+m}R\widehat{\pi}_*(2i\pi)^{n+m}\mathbb{Z}$$
is the zero map. This follows from the fact that the map $$R\phi_*\mathbb{Z}(n)\otimes^L R\phi_*\mathbb{Z}(m) \rightarrow u_{\infty,*}\tau^{>n+m}R\widehat{\pi}_*(2i\pi)^{n+m}\mathbb{Z}$$ factors through
$$\tau^{>n+m}\left(u_{\infty,*}R\widehat{\pi}_*(2i\pi)^{n}\mathbb{Z}\otimes ^L u_{\infty,*}R\widehat{\pi}_*(2i\pi)^{m}\mathbb{Z}\right)$$
on the one hand and that
$$\mathbb{Z}(n)^{\overline{\mathcal{X}}}\otimes^L \mathbb{Z}(m)^{\overline{\mathcal{X}}}\rightarrow
u_{\infty,*}R\widehat{\pi}_*(2i\pi)^{n}\mathbb{Z}\otimes ^L u_{\infty,*}R\widehat{\pi}_*(2i\pi)^{m}\mathbb{Z}$$
factors through
$$u_{\infty,*}\tau^{\leq n}R\widehat{\pi}_*(2i\pi)^{n}\mathbb{Z}\otimes ^L u_{\infty,*} \tau^{\leq m}R\widehat{\pi}_*(2i\pi)^{m}\mathbb{Z}$$
on the other. This gives the existence of (\ref{prodgeneral}), which however is non-unique in general because $$u^*_{\infty}\mathbb{Z}(n)^{\overline{\mathcal{X}}}\otimes^L u^*_{\infty}\mathbb{Z}(m)^{\overline{\mathcal{X}}}$$
fails to be concentrated in degrees $\leq n+m$ (e.g. take $n<<0$). We obtain by a similar argument a non-canonical map (\ref{prodcompact}).

We now show that these product maps (\ref{prodgeneral}) and (\ref{prodcompact}) may be chosen to be compatible, at least for $n\geq 0$.  In view of
$$u_{\infty,*} \tau^{> n+m}R\widehat{\pi}_*(2i\pi)^{n+m}\mathbb{Z}\simeq u_{\infty,*} \tau^{\geq n+m+2}R\widehat{\pi}_*(2i\pi)^{n+m}\mathbb{Z}$$
we see that
$$\mathrm{Hom}_{\mathcal{D}(\overline{\mathcal{X}})}(
u_{\infty,*}\tau^{\leq n}R\widehat{\pi}_*(2i\pi)^{n}\mathbb{Z}\otimes ^L u_{\infty,*} \tau^{\leq m}R\widehat{\pi}_*(2i\pi)^{m}\mathbb{Z} [\epsilon],
u_{\infty,*} \tau^{> n+m}R\widehat{\pi}_*(2i\pi)^{n+m}\mathbb{Z})=0$$
for $\epsilon=0,1$. It follows that the map
$$u_{\infty,*}\tau^{\leq n}R\widehat{\pi}_*(2i\pi)^{n}\mathbb{Z}\otimes ^L u_{\infty,*} R\widehat{\pi}_*(2i\pi)^{m}\mathbb{Z}\rightarrow
u_{\infty,*} \tau^{> n+m}R\widehat{\pi}_*(2i\pi)^{n+m}\mathbb{Z}$$
induces a unique map
\begin{equation}\label{jj}
u_{\infty,*}\tau^{\leq n}R\widehat{\pi}_*(2i\pi)^{n}\mathbb{Z}\otimes ^L u_{\infty,*} \tau^{> m}R\widehat{\pi}_*(2i\pi)^{m}\mathbb{Z}\rightarrow
u_{\infty,*} \tau^{> n+m}R\widehat{\pi}_*(2i\pi)^{n+m}\mathbb{Z}.
\end{equation}
We obtain a commutative diagram
\[ \xymatrix{
\mathbb{Z}(n)^{\overline{\mathcal{X}}}\otimes^L \mathbb{Z}(m)^{\overline{\mathcal{X}}}\ar[d]&\mathbb{Z}(n+m)^{\overline{\mathcal{X}}}\ar[d]\\
\mathbb{Z}(n)^{\overline{\mathcal{X}}}\otimes^L R\phi_*\mathbb{Z}(m)\ar[r]\ar[d]&R\phi_*\mathbb{Z}(n+m)\ar[d]\\
\mathbb{Z}(n)^{\overline{\mathcal{X}}}\otimes ^L u_{\infty,*}\tau^{> m}R\widehat{\pi}_*(2i\pi)^{m}\mathbb{Z}
\ar[r]&
u_{\infty,*}\tau^{> n+m} R\widehat{\pi}_*(2i\pi)^{n+m}\mathbb{Z}
}
\]
where the lower horizontal map is the composition of (\ref{jj}) with
$$\mathbb{Z}(n)^{\overline{\mathcal{X}}}\otimes ^L u_{\infty,*}\tau^{> m}R\widehat{\pi}_*(2i\pi)^{m}\mathbb{Z}\rightarrow
u_{\infty,*}\tau^{\leq n}R\widehat{\pi}_*(2i\pi)^{n}\mathbb{Z}\otimes ^L u_{\infty,*} \tau^{> m}R\widehat{\pi}_*(2i\pi)^{m}\mathbb{Z}.$$
The colons are exact triangles hence one may choose a map (\ref{prodgeneral}) which turns this diagram  into a morphism of exact triangles. It follows that the square
\[ \xymatrix{
\mathbb{Z}(n)^{\overline{\mathcal{X}}}\ar[d]\ar[r]^{(\ref{prodgeneral})\hspace{1cm}}& R\underline{\mathrm{Hom}}(\mathbb{Z}(m)^{\overline{\mathcal{X}}},\mathbb{Z}(n+m)^{\overline{\mathcal{X}}})\ar[d]\\
u_{\infty,*}\tau^{\leq n}R\widehat{\pi}_*(2i\pi)^{n}\mathbb{Z}\ar[r]& R\underline{\mathrm{Hom}}(u_{\infty,*}\tau^{>m}R\widehat{\pi}_*(2i\pi)^{m}\mathbb{Z}[-1],\mathbb{Z}(n+m)^{\overline{\mathcal{X}}})
}
\]
commutes, where the left vertical map is induced by adjunction
$$\mathbb{Z}(n)^{\overline{\mathcal{X}}}\rightarrow u_{\infty,*}u^*_{\infty}\mathbb{Z}(n)^{\overline{\mathcal{X}}}\simeq u_{\infty,*}\tau^{\leq n}R\widehat{\pi}_*(2i\pi)^{n}\mathbb{Z}$$
and the  right vertical map is induced by the map $u_{\infty,*}\tau^{>m}R\widehat{\pi}_*(2i\pi)^{m}\mathbb{Z}[-1]\rightarrow \mathbb{Z}(m)^{\overline{\mathcal{X}}}$ which is in turn given by the definition of $\mathbb{Z}(m)^{\overline{\mathcal{X}}}$ (see Corollary \ref{cor-u^!}). Hence there exists a product map (\ref{prodcompact}) inducing a morphism of exact triangles:
\[ \xymatrix{
R\widehat{\phi}_!\mathbb{Z}(n)\ar[r]^{(\ref{prodcompact})\hspace{1cm}}\ar[d]&R\underline{\mathrm{Hom}}(R\phi_*\mathbb{Z}(m),\mathbb{Z}(n+m)^{\overline{\mathcal{X}}})\ar[d]\\
\mathbb{Z}(n)^{\overline{\mathcal{X}}}\ar[d]\ar[r]^{(\ref{prodgeneral})\hspace{1cm}}& R\underline{\mathrm{Hom}}(\mathbb{Z}(m)^{\overline{\mathcal{X}}},\mathbb{Z}(n+m)^{\overline{\mathcal{X}}})\ar[d]\\
u_{\infty,*}\tau^{\leq n}R\widehat{\pi}_*(2i\pi)^{n}\mathbb{Z}\ar[r]& R\underline{\mathrm{Hom}}(u_{\infty,*}\tau^{>m}R\widehat{\pi}_*(2i\pi)^{m}\mathbb{Z}[-1],\mathbb{Z}(n+m)^{\overline{\mathcal{X}}})
}
\]
\end{proof}

\subsection{Artin-Verdier Duality}
The following conjecture is known for $\mathcal{X}$ smooth proper over a number ring, and for regular proper $\mathcal{X}$ as long as $n\leq 0$. It is expected to hold for arbitrary regular proper $\mathcal{X}$.
\begin{conj}$\mathbf{AV}(\mathcal{X},n)$ There is a symmetric product map
$$\mathbb{Z}(n)\otimes^L \mathbb{Z}(d-n)\rightarrow \mathbb{Z}(d)$$ in $\mathcal{D}(\mathcal{X}_{et})$
such that the induced pairing
$$\widehat{H}^{i}_c(\mathcal{X}_{et},\mathbb{Z}/m(n))\times H^{2d+1-i}(\mathcal{X}_{et},\mathbb{Z}/m(d-n))\rightarrow \widehat{H}^{2d+1}_c(\mathcal{X}_{et},\mathbb{Z}/m(d))\rightarrow\mathbb{Q}/\mathbb{Z}$$
is a perfect pairing of finite abelian groups for any $i\in\mathbb{Z}$ and any positive integer $m$.
\end{conj}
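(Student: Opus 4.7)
The plan is to prove Conjecture $\mathbf{AV}(\mathcal{X},n)$ by reducing prime-by-prime and separating the tame part (primes $p$ invertible on $\mathcal{X}$) from the wild part (primes $p$ lying under a closed fibre of $\mathcal{X}$). Since both sides of the pairing are finite, it suffices to treat $m = p^r$ for each prime $p$. The product map $\mathbb{Z}(n) \otimes^L \mathbb{Z}(d-n) \to \mathbb{Z}(d)$ should come from the intersection-of-cycles product on higher Chow complexes (which is the one source of a symmetric product for regular $\X$); reduction modulo $p^r$ gives the required pairing, and composition with the trace map of Corollary \ref{cor-trace} produces the map to $\mathbb{Q}/\mathbb{Z}$.

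First I would handle primes $p$ invertible on $\mathcal{X}$. By rigidity and the known identification $\mathbb{Z}/p^r(n) \simeq \mu_{p^r}^{\otimes n}$ on $\mathcal{X}[1/p]_{et}$ (extended by zero to $\mathcal{X}$), both groups reduce to the $\mu^{\otimes\bullet}$-cohomology of the smooth proper scheme $\mathcal{X}[1/p]$ over the Dedekind domain $\mathbb{Z}[1/p]$. One then obtains the pairing by combining smooth-and-proper Poincar\'e duality on the relative dimension $d-1$ scheme $\mathcal{X}[1/p]$ with Artin--Verdier duality for $\mathrm{Spec}(\mathbb{Z}[1/p])$, using the Leray spectral sequence for the structure morphism. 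The trace isomorphism $\widehat{H}^{2d+1}_c(\mathcal{X}_{et},\mu_{p^r}^{\otimes d}) \simeq \mathbb{Z}/p^r$ is the cohomological incarnation of Corollary \ref{cor-trace}.

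Next I would address the primes $p$ that are non-invertible on $\mathcal{X}$, which is where the essential content lies. When $\mathcal{X}$ is smooth over a number ring, one identifies $\mathbb{Z}/p^r(n)$ with Sato's $p$-adic \'etale Tate twist $\mathfrak{T}_r(n)$ (via Geisser's theorem that Bloch's cycle complex realizes $\mathfrak{T}_r(n)$ under the smoothness hypothesis) and then directly applies Sato's duality theorem \cite{Sato07}. When $n \leq 0$ the complex $\mathbb{Z}(n) = \bigoplus_{p} j_{p,!}\mu_{p^\infty}^{\otimes n}[-1]$ is supported where $p$ is invertible, so the non-invertible case is vacuous and the result falls back to the tame step above; the dual range $n \geq d$ follows symmetrically by pairing with $\mathbb{Z}(d-n)$ in the lower range. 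Compatibility of the two halves on the trace level must be checked by tracing the pairing through the localization triangle relating $\mathbb{Z}(n)|_\mathcal{X}$ to its restriction to $\mathcal{X}[1/p]$ and to the closed fibres $\mathcal{X}_{\mathbb{F}_p}$.

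The main obstacle is the range $0 < n < d$ for regular but not smooth $\mathcal{X}$: on a non-smooth closed fibre $\mathcal{X}_{\mathbb{F}_p}$ the identification $\tau^{\leq n}\mathbb{Z}/p^r(n) \simeq \nu_r^n[-n]$ of Geisser--Levine fails, so one cannot directly invoke Milne's duality for logarithmic de Rham--Witt sheaves. A general proof would require either a resolution-of-singularities reduction (in the style of Geisser \cite{Geisser06}) to the smooth case handled by Sato, or a direct extension of Sato's $p$-adic Tate twist formalism to regular semistable (and more generally regular) schemes, likely via the recent syntomic machinery of Niziol and collaborators (\cite{nekniz13,colniz15,ertlniziol16}); one must further verify that the syntomic duality pairing matches the geometric cycle-intersection pairing used to formulate the conjecture, a compatibility that is subtle in the presence of wild ramification.
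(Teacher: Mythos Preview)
The statement you are attempting to prove is a \emph{conjecture} in the paper, not a theorem: the paper explicitly labels it as such and remarks that it ``is known for $\mathcal{X}$ smooth proper over a number ring, and for regular proper $\mathcal{X}$ as long as $n\leq 0$'' but ``is expected to hold for arbitrary regular proper $\mathcal{X}$.'' There is therefore no proof in the paper to compare against. What you have written is not a proof but a survey of the known cases together with an identification of the remaining obstacle, and on that score your analysis is essentially accurate and matches the paper: the reduction to $m=p^r$, the tame case via $\mu_{p^r}^{\otimes n}$ and classical Artin--Verdier duality, the smooth case via Geisser's identification with Sato's complexes $\mathfrak{T}_r(n)$ and Sato's duality, and the vacuity at non-invertible $p$ for $n\leq 0$ are exactly the ingredients the paper uses in the proofs of Corollaries~\ref{corAVn=0} and~\ref{cor-AVsmooth} (which establish the $\overline{\mathcal{X}}_{et}$-version $\mathbf{AV}(\overline{\mathcal{X}}_{et},n)$ in those cases).

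Your diagnosis of the obstruction for $0<n<d$ on a regular but non-smooth $\mathcal{X}$ is correct and is precisely why the statement remains conjectural. One additional subtlety you should be aware of: even the starting point of your plan---a symmetric product $\mathbb{Z}(n)\otimes^L\mathbb{Z}(d-n)\to\mathbb{Z}(d)$ on higher Chow complexes for a merely regular $\mathcal{X}$ over a Dedekind base---is not unconditionally available in the literature. The paper invokes Spitzweck's motivic complexes in the smooth case (proof of Corollary~\ref{cor-AVsmooth}) precisely to obtain such a product, and for $n\leq 0$ it constructs the product by hand via $R\underline{\mathrm{Hom}}$ and Geisser's higher-dimensional duality (proof of Corollary~\ref{corAVn=0}). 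So the very first sentence of your plan already contains a gap in the general regular case.
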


The aim of this section is to prove the following result.
\begin{thm}\label{thm-barX}
Let $n\in\mathbb{Z}$. Assume that $\mathcal{X}$ satisfies $\mathbf{AV}(\mathcal{X},n)$. Then  there is a  product map
$$\mathbb{Z}(n)^{\overline{\mathcal{X}}}\otimes^L \mathbb{Z}(d-n)^{\overline{\mathcal{X}}}\rightarrow \mathbb{Z}(d)^{\overline{\mathcal{X}}}$$
in $\mathcal{D}(\overline{\mathcal{X}}_{et})$ such that the induced pairing
$$H^{i}(\overline{\mathcal{X}}_{et},\mathbb{Z}/m(n))\times H^{2d+1-i}(\overline{\mathcal{X}}_{et},\mathbb{Z}/m(d-n))\rightarrow H^{2d+1}(\overline{\mathcal{X}}_{et},\mathbb{Z}/m(d))\rightarrow\mathbb{Q}/\mathbb{Z}$$
is a perfect pairing of finite abelian groups for any $i\in\mathbb{Z}$  and any positive integer $m$.
\end{thm}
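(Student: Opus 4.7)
The plan is to deduce Theorem \ref{thm-barX} by a five-lemma argument, combining Artin--Verdier duality on $\mathcal{X}$ away from infinity with Poincar\'e duality for the topological manifold $\mathcal{X}(\mathbb{R})$ at infinity.

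First I would invoke Proposition \ref{prop-noncanonicalprod} (after, if necessary, swapping the roles of $n$ and $d-n$ to put oneself in the case $n\geq 0$) to choose a symmetric pair of compatible product maps
$$\mu:\mathbb{Z}(n)^{\overline{\mathcal{X}}} \otimes^L \mathbb{Z}(d-n)^{\overline{\mathcal{X}}} \longrightarrow \mathbb{Z}(d)^{\overline{\mathcal{X}}},\quad \mu^!: R\widehat{\phi}_!\mathbb{Z}(n) \otimes^L R\phi_*\mathbb{Z}(d-n) \longrightarrow \mathbb{Z}(d)^{\overline{\mathcal{X}}}$$
fitting in the commutative square provided there. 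By the octahedral axiom applied to the triangles of Proposition \ref{propcomp}, $\mu$ and $\mu^!$ then induce a third product $\mu_\infty$ on the Tate cofibers supported on $\mathcal{X}_\infty$, and the adjoints of these three products assemble into a morphism from the triangle of Proposition \ref{propcomp} to its $R\mathrm{Hom}(-, \mathbb{Z}(d)^{\overline{\mathcal{X}}}[-2d-1])$-twisted analogue for $d-n$.

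Reducing modulo $m$ and applying $R\Gamma(\overline{\mathcal{X}}_{et},-)$ together with the trace of Corollary \ref{cor-trace}, this produces a morphism of long exact sequences in which the middle column is the pairing of Theorem \ref{thm-barX}, so the five lemma reduces its perfectness to that of the outer columns. Proposition \ref{prop-milnecomplex} identifies the left outer column with the pairing
$$\widehat{H}^i_c(\mathcal{X}_{et},\mathbb{Z}/m(n)) \times H^{2d+1-i}(\mathcal{X}_{et},\mathbb{Z}/m(d-n)) \longrightarrow \mathbb{Q}/\mathbb{Z},$$
which is perfect by the assumption $\mathbf{AV}(\mathcal{X},n)$; finiteness of all the groups involved is built into that hypothesis.

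The main obstacle is the right outer column. Since the Tate sheaves $R\widehat{\pi}_*(2\pi i)^\ast\mathbb{Z}/m$ are $2$-torsion (cf.\ Lemma \ref{lem00}), only the case $m=2^r$ is nontrivial, and the cohomology sheaves reduce to locally constant $\mathbb{Z}/2$-sheaves on the compact topological manifold $\mathcal{X}(\mathbb{R})$ of dimension $d-1$. After unwinding the truncations $\tau^{\leq n}$ and $\tau^{>d-n}$ and verifying that the trace of Corollary \ref{cor-trace} restricts on the $\mathcal{X}_\infty$-summand to the Poincar\'e trace for $\mathcal{X}(\mathbb{R})$, the pairing becomes Poincar\'e duality for $\mathcal{X}(\mathbb{R})$ with constant $\mathbb{Z}/2$-coefficients, which is valid regardless of orientability. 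The principal technical work is precisely this last identification: tracking signs, twists and shifts, and checking that the auxiliary product $\mu_\infty$ obtained from the octahedron really coincides with cup-product against the fundamental class of $\mathcal{X}(\mathbb{R})$.
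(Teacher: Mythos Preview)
Your proposal is correct and follows essentially the same approach as the paper: a five-lemma argument using the triangle of Proposition \ref{propcomp}, with $\mathbf{AV}(\mathcal{X},n)$ giving perfectness on the compact-support side and $\mathbb{Z}/2$-Poincar\'e duality for the manifold $\mathcal{X}(\mathbb{R})$ handling the archimedean side. Two minor points of comparison: first, you do not need to invoke the octahedral axiom to produce $\mu_\infty$, since the full morphism of exact triangles (including the map on Tate cofibers) is already constructed in the final diagram of the proof of Proposition \ref{prop-noncanonicalprod}; second, the paper packages the archimedean step as a separate Lemma \ref{lemforduality}, and there the duality is verified directly at the integral level (the Tate sheaves being $2$-torsion) by decomposing the truncations as shifted sums of constant $\mathbb{Z}/2$-sheaves, so the issue of matching $\mu_\infty$ with cup-product against the fundamental class is absorbed into a direct computation rather than treated as a compatibility to be checked.
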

\begin{proof}
By Proposition \ref{prop-noncanonicalprod} there exist product maps  (\ref{prodgeneral}) and (\ref{prodcompact}) inducing a morphism of exact triangles (see the last diagram in the proof of Proposition \ref{prop-noncanonicalprod}). Applying $R\Gamma(\overline{\mathcal{X}}_{et},-)$ and composing with the map
$$R\Gamma(\overline{\mathcal{X}}_{et},\mathbb{Z}(d))\longrightarrow \tau^{\geq 2d+2} R\Gamma(\overline{\mathcal{X}}_{et},\mathbb{Z}(d))\simeq \mathbb{Q}/\mathbb{Z}[-2d-2]$$
given by Corollary \ref{cor-trace},
we obtain a morphism of exact triangles:
\[ \xymatrix{
R\widehat{\Gamma}(\mathcal{X}_{et},\mathbb{Z}(n))\ar[r]\ar[d]&R\mathrm{Hom}(R\Gamma(\mathcal{X}_{et},\mathbb{Z}(d-n)),\mathbb{Q}/\mathbb{Z}[-2d-2])\ar[d]\\
R\Gamma(\overline{\mathcal{X}}_{et},\mathbb{Z}(n))\ar[r]\ar[d]&R\mathrm{Hom}(R\Gamma(\overline{\mathcal{X}}_{et},\mathbb{Z}(d-n)),\mathbb{Q}/\mathbb{Z}[-2d-2])\ar[d]\\
R\Gamma(\mathcal{X}(\mathbb{R}),\tau^{\leq n}R\widehat{\pi}_*(2i\pi)^{n}\mathbb{Z})\ar[r]^{\sim\hspace{2.7cm}}& R\mathrm{Hom}(R\Gamma(\mathcal{X}(\mathbb{R}),\tau^{>m}R\widehat{\pi}_*(2i\pi)^{d-n}\mathbb{Z}[-1]),\mathbb{Q}/\mathbb{Z}[-2d-2])
}
\]
where the bottom horizontal map is an isomorphism by Lemma \ref{lemforduality} below. Applying the functor $(-)\otimes^L\mathbb{Z}/m$ we obtain
a morphism of exact triangles:
\[ \xymatrix{
R\widehat{\Gamma}(\mathcal{X}_{et},\mathbb{Z}/m(n))\ar[r]^{\sim\hspace{2.7cm}}\ar[d]&R\mathrm{Hom}(R\Gamma(\mathcal{X}_{et},\mathbb{Z}/m(d-n)),\mathbb{Q}/\mathbb{Z}[-2d-1])\ar[d]\\
R\Gamma(\overline{\mathcal{X}}_{et},\mathbb{Z}/m(n))\ar[r]\ar[d]&R\mathrm{Hom}(R\Gamma(\overline{\mathcal{X}}_{et},\mathbb{Z}/m(d-n)),\mathbb{Q}/\mathbb{Z}[-2d-1])\ar[d]\\
R\Gamma(\mathcal{X}(\mathbb{R}),\tau^{\leq n}R\widehat{\pi}_*(2i\pi)^{n}\mathbb{Z}/m)\ar[r]^{\sim\hspace{2.7cm}}& R\mathrm{Hom}(R\Gamma(\mathcal{X}(\mathbb{R}),\tau^{>m}R\widehat{\pi}_*(2i\pi)^{d-n}\mathbb{Z}/m[-1]),\mathbb{Q}/\mathbb{Z}[-2d-1])
}
\]
where top horizontal map is an isomorphism by assumption. The theorem therefore follows from the following Lemma \ref{lemforduality}.
\end{proof}

\begin{lem}\label{lemforduality}
The product $(2i\pi)^{n}\mathbb{Z}\otimes (2i\pi)^{d-n}\mathbb{Z}\rightarrow (2i\pi)^{d}\mathbb{Z}$ induces a perfect pairing
$$H^{i-1}(\mathcal{X}(\mathbb{R}),\tau^{\leq n}R\widehat{\pi}_*(2i\pi)^{n}\mathbb{Z})\times
H^{2d+2-i}(\mathcal{X}(\mathbb{R}),\tau^{>d-n}R\widehat{\pi}_*(2i\pi)^{n}\mathbb{Z})$$
$$\rightarrow H^{d-1}(\mathcal{X}(\mathbb{R}),\widehat{H}^{d+2}(G_{\mathbb{R}},(2i\pi)^{d}\mathbb{Z}))\rightarrow\mathbb{Q}/\mathbb{Z}$$
of finite $2$-torsion abelian groups.
\end{lem}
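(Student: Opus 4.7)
The plan is to reduce the claimed pairing to a combination of mod-$2$ Poincaré duality on the closed topological manifold $\mathcal{X}(\mathbb{R})$ and the cup product on Tate cohomology of $G_{\mathbb{R}}$ acting on $(2\pi i)^\bullet\mathbb{Z}$. Since $\mathcal{X}$ is regular and proper over $\mathbb{Z}$, it is smooth over $\mathbb{Q}$ and hence over $\mathbb{R}$, so $\mathcal{X}(\mathbb{R})$ is a closed (not necessarily orientable) topological manifold of dimension $d-1$.

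First I would identify the cohomology sheaves of $R\widehat{\pi}_*(2\pi i)^n\mathbb{Z}$ on $\mathcal{X}(\mathbb{R})$. By the Tate analogue of the stalk formula in Lemma \ref{lem00}, combined with the standard computation $\widehat{H}^k(G_{\mathbb{R}},(2\pi i)^n\mathbb{Z}) = \mathbb{Z}/2\mathbb{Z}$ for $k\equiv n \pmod 2$ and zero otherwise, the sheaf $R^k\widehat{\pi}_*(2\pi i)^n\mathbb{Z}$ equals the constant sheaf $\underline{\mathbb{Z}/2\mathbb{Z}}$ on $\mathcal{X}(\mathbb{R})$ when $k\equiv n\pmod 2$ and vanishes otherwise. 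Consequently $\tau^{\leq n}R\widehat{\pi}_*(2\pi i)^n\mathbb{Z}$ and $\tau^{>d-n}R\widehat{\pi}_*(2\pi i)^{d-n}\mathbb{Z}$ are complexes of $\mathbb{F}_2$-sheaves on $\mathcal{X}(\mathbb{R})$ whose hypercohomology groups are finite $2$-torsion, thanks to the bound $\dim\mathcal{X}(\mathbb{R}) = d-1$ on sheaf-cohomological degrees.

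Next I would analyze the pairing via the hypercohomology spectral sequences. The cup product $(2\pi i)^n\mathbb{Z}\otimes(2\pi i)^{d-n}\mathbb{Z}\to(2\pi i)^d\mathbb{Z}$ and the multiplicativity of $R\widehat{\pi}_*$ give a pairing at the sheaf-complex level, which composed with projection to the lowest nonzero cohomology sheaf of $R\widehat{\pi}_*(2\pi i)^d\mathbb{Z}$ in the relevant range, namely $\widehat{H}^{d+2}(G_\mathbb{R},(2\pi i)^d\mathbb{Z})$ in degree $d+2$, produces the pairing in the statement; the projection is available because $R\widehat{\pi}_*(2\pi i)^d\mathbb{Z}$ on $\mathcal{X}(\mathbb{R})$ is formal, owing to $2$-periodicity via the Bott class $\eta\in\widehat{H}^2(G_\mathbb{R},\mathbb{Z})$. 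On the $E_2$-page of the hypercohomology spectral sequences, the pairing decomposes as the tensor product of (i) the cup product $H^p(\mathcal{X}(\mathbb{R}),\mathbb{Z}/2)\otimes H^{p'}(\mathcal{X}(\mathbb{R}),\mathbb{Z}/2)\to H^{p+p'}(\mathcal{X}(\mathbb{R}),\mathbb{Z}/2)$ in sheaf cohomology and (ii) the Tate cup product $\widehat{H}^q(G_\mathbb{R},(2\pi i)^n\mathbb{Z})\otimes\widehat{H}^{q'}(G_\mathbb{R},(2\pi i)^{d-n}\mathbb{Z})\to\widehat{H}^{q+q'}(G_\mathbb{R},(2\pi i)^d\mathbb{Z})$. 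Pairing (i) with $p+p'=d-1$ is perfect by mod-$2$ Poincaré duality on the closed $(d-1)$-manifold $\mathcal{X}(\mathbb{R})$, while pairing (ii) with $q+q'=d+2$ is a nontrivial pairing of one-dimensional $\mathbb{F}_2$-spaces by the explicit structure of the Tate cohomology ring of $G_\mathbb{R}$.

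The final step is to transfer perfectness from the $E_2$-pairing to the abutments: a filtration-compatible pairing which is perfect on associated gradeds is perfect on the filtered objects, by a standard five-lemma induction on the length of the filtration. The main technical obstacle, in my view, is the careful verification that the induced pairing of spectral sequences really factorizes as (i)$\otimes$(ii) at $E_2$ and that the differentials on the two sides are mutually dual with respect to this $E_2$-pairing. A potentially cleaner alternative, which I would pursue if this bookkeeping becomes cumbersome, is to reformulate the lemma via Verdier duality with $\mathbb{F}_2$-coefficients on the compact $(d-1)$-manifold $\mathcal{X}(\mathbb{R})$: one identifies $\tau^{>d-n}R\widehat{\pi}_*(2\pi i)^{d-n}\mathbb{Z}$ with an appropriate shift of the Verdier dual of $\tau^{\leq n}R\widehat{\pi}_*(2\pi i)^n\mathbb{Z}$, reducing the pairing to Verdier biduality on $\mathcal{X}(\mathbb{R})$.
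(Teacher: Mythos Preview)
Your approach is correct and uses the same two ingredients as the paper: the identification of the Tate cohomology sheaves on $\mathcal{X}(\mathbb{R})$ as constant $\mathbb{Z}/2$-sheaves, and mod-$2$ Poincar\'e duality on the closed $(d-1)$-manifold $\mathcal{X}(\mathbb{R})$. The difference is organizational. You set things up via hypercohomology spectral sequences and then worry about compatibility of differentials and transfer of perfectness through the filtration. The paper avoids this entirely by observing that the complexes are \emph{formal}: on $\mathcal{X}(\mathbb{R})$ the restriction of $R\widehat{\pi}_*(2i\pi)^m\mathbb{Z}$ is the constant complex associated to the Tate cochain complex of $G_\mathbb{R}$, and any complex of abelian groups with cohomology $\mathbb{Z}/2$ in alternate degrees splits in $\mathcal{D}(\mathrm{Ab})$ (since $\mathrm{Ext}^k_\mathbb{Z}(\mathbb{Z}/2,\mathbb{Z}/2)=0$ for $k\geq 2$). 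Hence
\[
\tau^{\leq n}R\widehat{\pi}_*(2i\pi)^n\mathbb{Z}\simeq\bigoplus_{k\geq 0}\mathbb{Z}/2[-n+2k],\qquad
\tau^{>d-n}R\widehat{\pi}_*(2i\pi)^{d-n}\mathbb{Z}\simeq\bigoplus_{k\geq 0}\mathbb{Z}/2[-(d+2-n)-2k],
\]
the hypercohomology groups become finite direct sums of $H^*(\mathcal{X}(\mathbb{R}),\mathbb{Z}/2)$, and Poincar\'e duality applies term by term. You already invoke formality for the target $R\widehat{\pi}_*(2i\pi)^d\mathbb{Z}$; noting that it holds for the two source complexes as well makes your spectral sequences degenerate at $E_2$ and collapses your filtration argument to the paper's direct computation.
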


\begin{proof} Notice first that the pairing lands in $H^{d-1}(\mathcal{X}(\mathbb{R}),\widehat{H}^{d+2}(G_{\mathbb{R}},(2i\pi)^{d}\mathbb{Z}))$ because the real manifold $\mathcal{X}(\mathbb{R})$ is $(d-1)$-dimensional (we may assume $\mathcal{X}/\mathbb{Z}$ flat) and
$$\tau^{> d-n}R\widehat{\pi}_*((2i\pi)^{d-n}\mathbb{Z})\simeq \tau^{\geq d+2-n}R\widehat{\pi}_*((2i\pi)^{d-n}\mathbb{Z})$$
since $\widehat{H}^{d+1-n}(G_{\mathbb{R}},(2i\pi)^{d-n}\mathbb{Z})=0$ regardless the parity of $d-n$.
We need to show that the pairing mentioned in the lemma induces isomorphisms
\begin{equation}\label{duality-iso}
H^{2d+2-i}(\mathcal{X}(\mathbb{R}),\tau^{> d-n}R\widehat{\pi}_*((2i\pi)^{d-n}\mathbb{Z}))\simeq  H^{i-1}(\mathcal{X}(\mathbb{R}),\tau^{\leq n}R\widehat{\pi}_*((2i\pi)^n\mathbb{Z}))^D
\end{equation}
where $(-)^D$ denotes the Pontryagin dual.
We have
$$\tau^{> d-n}R\widehat{\pi}_*((2i\pi)^{d-n}\mathbb{Z})\simeq \tau^{\geq d+2-n}R\widehat{\pi}_*((2i\pi)^{d-n}\mathbb{Z})\simeq \bigoplus_{k\geq0}\mathbb{Z}/2\mathbb{Z}[-(d+2-n)-2k].$$
Similarly, we have
$$\tau^{\leq n}R\widehat{\pi}_*((2i\pi)^n\mathbb{Z})\simeq \bigoplus_{k\geq0}\mathbb{Z}/2\mathbb{Z}[-n+2k]$$
hence
\begin{eqnarray}
H^{i-1}(\mathcal{X}(\mathbb{R}),\tau^{\leq n}R\widehat{\pi}_*((2i\pi)^n\mathbb{Z}))&\simeq&
\bigoplus_{k\geq0} H^{i-1}(\mathcal{X}(\mathbb{R}),\mathbb{Z}/2\mathbb{Z}[-n+2k])\\
\label{ter}&\simeq&\bigoplus_{k\geq0} H^{i-1-n+2k}(\mathcal{X}(\mathbb{R}),\mathbb{Z}/2\mathbb{Z}).
\end{eqnarray}
Poincar\'e duality for the $(d-1)$--dimensional real manifold $\mathcal{X}(\mathbb{R})$ with $\mathbb{Z}/2\mathbb{Z}$-coefficients yields
\begin{eqnarray}
H^{2d+2-i}(\mathcal{X}(\mathbb{R}),\tau^{> d-n}R\widehat{\pi}_*((2i\pi)^{d-n}\mathbb{Z}))&\simeq& \bigoplus_{k\geq0} H^{2d+2-i-(d+2-n)-2k}(\mathcal{X}(\mathbb{R}),\mathbb{Z}/2\mathbb{Z})\\
&\simeq&\bigoplus_{k\geq0} H^{d-i+n-2k}(\mathcal{X}(\mathbb{R}),\mathbb{Z}/2\mathbb{Z})\\
&\simeq& \bigoplus_{k\geq0} H^{(d-1)-(d-i+n-2k)}(\mathcal{X}(\mathbb{R}),\mathbb{Z}/2\mathbb{Z})^D\\
 \label{terter}&\simeq& \left(\bigoplus_{k\geq0} H^{i-1-n+2k}(\mathcal{X}(\mathbb{R}),\mathbb{Z}/2\mathbb{Z})\right)^D\\
 &\simeq& H^{i-1}(\mathcal{X}(\mathbb{R}),\tau^{\leq n}R\widehat{\pi}_*((2i\pi)^n\mathbb{Z}))^D
\end{eqnarray}
Note that the sums (\ref{ter}) and (\ref{terter}) are both finite. Note also that the manifold $\mathcal{X}(\mathbb{R})$ may very well be non-orientable (e.g. take $\mathcal{X}=\mathbb{P}^2_{\mathbb{Z}}$) but Poincar\'e duality still holds with $\mathbb{Z}/2\mathbb{Z}$-coefficients. The result follows.
\end{proof}

\begin{cor}\label{corAVn=0} Let $\mathcal{X}$ be a regular proper scheme of pure dimension $d$ and let $n\leq 0$. There is a product map
$\mathbb{Z}(n)^{\overline{\mathcal{X}}}\otimes^L \mathbb{Z}(d-n)^{\overline{\mathcal{X}}}\rightarrow \mathbb{Z}(d)^{\overline{\mathcal{X}}}$
such that
$$H^{2d+1-i}(\overline{\mathcal{X}}_{et},\mathbb{Z}/m(n))\times H^{i}(\overline{\mathcal{X}}_{et},\mathbb{Z}/m(d-n))\rightarrow H^{2d+1}(\overline{\mathcal{X}}_{et},\mathbb{Z}/m(d))\rightarrow \mathbb{Q}/\mathbb{Z}$$
is a perfect pairing of finite groups for any $i\in\mathbb{Z}$.
\end{cor}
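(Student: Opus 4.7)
The plan is to reduce the assertion to Conjecture $\mathbf{AV}(\mathcal{X},n)$ of the previous subsection via Theorem \ref{thm-barX}, and then to verify that conjecture for every regular proper $\mathcal{X}$ in the range $n\leq 0$. Since $\mathbf{AV}(\mathcal{X},n) \Rightarrow \mathbf{AV}(\mathcal{X},d-n)$ by symmetry of the statement, it suffices to treat $n\leq 0$.

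First I would unwind the definition of $\mathbb{Z}/m(n)$ for $n\leq 0$. For $n<0$, the definition $\mathbb{Z}(n) = \bigoplus_p j_{p,!}\mu_{p^\infty}^{\otimes n}[-1]$ together with the $m$-divisibility of $\mu_{p^\infty}^{\otimes n}$ yields $\mathbb{Z}/m(n) \simeq j_!\mu_m^{\otimes n}$, where $j:\mathcal{X}[1/m]\hookrightarrow \mathcal{X}$ is the open immersion; for $n=0$ one has simply $\mathbb{Z}/m(0)=\mathbb{Z}/m$. Consequently $\widehat{H}^i_c(\mathcal{X}_{et},\mathbb{Z}/m(n))$ is the ordinary (Milne) compact-support cohomology of the étale sheaf $j_!\mu_m^{\otimes n}$ (resp.\ $\mathbb{Z}/m$).

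Second, I would construct the pairing. Because $d-n\geq d$, Geisser's comparison between Bloch's complex and the $p$-adic étale Tate twists of Sato gives, for each $p\mid m$, a concrete description of $\mathbb{Z}/p^r(d-n)$ extending $\mu_{p^r}^{\otimes (d-n)}$ from $\mathcal{X}[1/p]$ across the closed fibres, compatible with products. The natural pairings $\mu_m^{\otimes n}\otimes \mu_m^{\otimes(d-n)}\to \mu_m^{\otimes d}$ on $\mathcal{X}[1/m]$ promote via these descriptions to a product $\mathbb{Z}(n)\otimes^L \mathbb{Z}(d-n) \to \mathbb{Z}(d)$ on $\mathcal{X}_{et}$.

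Third, I would invoke the generalized arithmetic Artin–Verdier duality for torsion sheaves on regular proper arithmetic schemes (Artin–Verdier, Deninger, Milne, Moser, Geisser, Sato), which asserts precisely that the induced pairing
\[
\widehat{H}^i_c(\mathcal{X}_{et},\mathbb{Z}/m(n)) \times H^{2d+1-i}(\mathcal{X}_{et},\mathbb{Z}/m(d-n)) \to \widehat{H}^{2d+1}_c(\mathcal{X}_{et},\mathbb{Z}/m(d)) \to \mathbb{Q}/\mathbb{Z}
\]
is a perfect pairing of finite abelian groups. Together with Theorem \ref{thm-barX} this both produces the desired product on $\overline{\mathcal{X}}_{et}$ and upgrades the duality from compact-support cohomology of $\mathcal{X}_{et}$ to ordinary cohomology of $\overline{\mathcal{X}}_{et}$, absorbing the archimedean contribution as in Lemma \ref{lemforduality}.

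The main obstacle is the $p$-primary duality at primes $p\mid m$ where $\mathcal{X}$ fails to be smooth, especially at $p=2$ which must be handled compatibly with the Tate-cohomological treatment of the archimedean fibre in the definition of $\mathbb{Z}(n)^{\overline{\mathcal{X}}}$. Happily, in the range $n\leq 0$ the complex $\mathbb{Z}(n)$ is by construction extended by zero from the locus where $p$ is invertible, so the duality at bad primes reduces to an analysis of $Rj_{p,*}j_p^*\mathbb{Z}/p^r(d-n)$ on the closed fibre — precisely the computation controlled by Sato's $p$-adic étale Tate twist formalism — and the formal machinery of Theorem \ref{thm-barX} then takes care of the archimedean correction needed to pass to $\overline{\mathcal{X}}_{et}$.
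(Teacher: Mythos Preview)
Your overall strategy --- reduce to $\mathbf{AV}(\mathcal{X},n)$ and then apply Theorem \ref{thm-barX} --- is exactly what the paper does, and your identification of $\mathbb{Z}/m(n)$ with $j_!\mu_m^{\otimes n}$ for $n<0$ is correct. The perfectness statement is indeed supplied by Geisser's work \cite{Geisser10}, specifically his Theorem 7.8.

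However, there is a gap in your second step, the construction of the product map. You invoke ``Geisser's comparison between Bloch's complex and the $p$-adic \'etale Tate twists of Sato'' to describe $\mathbb{Z}/p^r(d-n)$. This comparison (from \cite{Geisser04a}) requires $\mathcal{X}$ to be \emph{smooth} over a number ring, whereas the corollary is stated for arbitrary regular proper $\mathcal{X}$. Sato's formalism simply does not apply here, and your later remark that the $p$-primary analysis at bad primes is ``controlled by Sato's $p$-adic \'etale Tate twist formalism'' has the same problem.

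The paper avoids this by constructing the product map differently, via the dualizing-complex formalism. Using \cite{Geisser10}, Theorem 7.10 (which holds for regular $\mathcal{X}$) together with $\mathbb{Z}(1)\simeq\mathbb{G}_m[-1]$, one computes directly
\[
R\underline{\mathrm{Hom}}_{\mathcal{X}[1/m]}(\mu_m^{\otimes n}[-1],\mathbb{Z}(d))
\;\simeq\; Rf^! R\underline{\mathrm{Hom}}_{\mathbb{Z}[1/m]}(\mu_m^{\otimes n}[-1],\mathbb{Z}(1))[-2d+2]
\;\simeq\; Rf^!(\mathbb{Z}/m(1-n))[-2d+2]
\;\simeq\; \mathbb{Z}/m(d-n),
\]
the last isomorphism again by \cite{Geisser10}. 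Pushing forward along $Rj_{p,*}$ and passing to the limit then gives the product map $\mathbb{Z}(d-n)\to R\underline{\mathrm{Hom}}_{\mathcal{X}}(\mathbb{Z}(n),\mathbb{Z}(d))$ over all of $\mathcal{X}$. The point is that this $Rf^!$ computation requires only regularity, not smoothness, and does not go through Sato's complexes at all.
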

\begin{proof}
The pairing $\mathbb{Z}(0)\otimes^L\mathbb{Z}(d)\simeq \mathbb{Z}\otimes^L\mathbb{Z}(d)\rightarrow \mathbb{Z}(d)$ is the obvious one. By (\cite{Geisser10} Theorem 7.8) the assumption of Theorem \ref{thm-barX} for $n=0$ is fulfilled. The case $n<0$ will follow from (\cite{Geisser10} Theorem 7.8) and from an isomorphism
$$\mathbb{Z}/m(d-n)\simeq R\underline{\mathrm{Hom}}_{\mathcal{X}[1/m]}(\mu_m^{\otimes n}[-1],\mathbb{Z}(d)).$$
Let $f:\mathcal{X}[1/m]\rightarrow\mathrm{Spec}(\mathbb{Z}[1/m])$ be the unique map. By (\cite{Geisser10} Theorem 7.10) and since $\mathbb{Z}(1)\simeq \mathbb{G}_m[-1]$, we have
\begin{eqnarray}
R\underline{\mathrm{Hom}}_{\mathcal{X}[1/m]}(\mu_m^{\otimes n}[-1],\mathbb{Z}(d))&\simeq &
R\underline{\mathrm{Hom}}_{\mathcal{X}[1/m]}(f^*\mu_m^{\otimes n}[-1],\mathbb{Z}(d))\\
&\simeq & Rf^!R\underline{\mathrm{Hom}}_{\mathbb{Z}[1/m]}(\mu_m^{\otimes n}[-1],\mathbb{Z}(1)[-2d+2])\\
&\simeq & Rf^!(\mathbb{Z}/m(1-n))[-2d+2]\\
&\simeq & \mathbb{Z}/m(d-n).
\end{eqnarray}
We obtain
$$\mathbb{Z}/p^{\nu}(d-n)\stackrel{\sim}{\rightarrow}
Rj_{p,*}j^*_{p}\mathbb{Z}/p^{\nu}(d-n)\simeq R\underline{\mathrm{Hom}}_{\mathcal{X}}(j_{p,!}\mu_{p^{\nu}}^{\otimes n}[-1],\mathbb{Z}(d))$$
where the first isomorphism follows from (\cite{Geisser10} Theorem 7.2(a)) and (\cite{Geisser10} Proposition 2.3). Taking the limit over $\nu$ and $p$, we obtain the product map
$$\mathbb{Z}(d-n)\rightarrow \mathrm{holim}\, \mathbb{Z}/m(d-n)\rightarrow R\underline{\mathrm{Hom}}_{\mathcal{X}}(\mathbb{Z}(n),\mathbb{Z}(d))$$
over $\mathcal{X}$. Finally, the induced map
$$\widehat{H}^{i}_c(\mathcal{X}_{et},\mathbb{Z}/m(n))\times H^{2d+1-i}(\mathcal{X}_{et},\mathbb{Z}/m(d-n))\rightarrow \widehat{H}^{2d+1}_c(\mathcal{X}_{et},\mathbb{Z}/m(d))\rightarrow\mathbb{Q}/\mathbb{Z}$$
is a perfect pairing of finite groups by (\cite{Geisser10} Theorem 7.8).

\end{proof}

\begin{cor}\label{cor-AVsmooth} Let $\mathcal{X}$ be a smooth proper scheme over a number ring and let $n\in\mathbb{Z}$ be an arbitrary integer. There is a product map
$\mathbb{Z}(n)^{\overline{\mathcal{X}}}\otimes^L \mathbb{Z}(d-n)^{\overline{\mathcal{X}}}\rightarrow \mathbb{Z}(d)^{\overline{\mathcal{X}}}$
such that
$$H^{2d+1-i}(\overline{\mathcal{X}}_{et},\mathbb{Z}/m\mathbb{Z}(n))\times H^{i}(\overline{\mathcal{X}}_{et},\mathbb{Z}/m\mathbb{Z}(d-n))\rightarrow H^{2d+1}(\overline{\mathcal{X}}_{et},\mathbb{Z}/m(d))\rightarrow \mathbb{Q}/\mathbb{Z}$$
is a perfect pairing of finite groups for any $i\in\mathbb{Z}$.
\end{cor}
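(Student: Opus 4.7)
The plan is to verify Conjecture $\mathbf{AV}(\mathcal{X},n)$ on $\mathcal{X}$ itself for all $n\in\mathbb{Z}$ and then invoke Theorem \ref{thm-barX} to pass from duality on $\mathcal{X}$ to duality on $\overline{\mathcal{X}}$. Since the statement concerns torsion coefficients, both constructing the product pairing and checking perfectness decompose prime by prime: writing $m=\prod_{l} l^{v_l(m)}$, it suffices to produce compatible product maps $\mathbb{Z}/l^{r}(n)\otimes^{L}\mathbb{Z}/l^{r}(d-n)\to \mathbb{Z}/l^{r}(d)$ on $\mathcal{X}_{et}$ inducing a perfect pairing on $\widehat{H}^*_c$ against $H^*$ for every prime $l$ and every $r\geq 1$.

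For the $l$-primary part with $l$ invertible on $\mathcal{X}$, one uses the Suslin--Voevodsky isomorphism $\mathbb{Z}/l^{r}(n)\simeq \mu_{l^{r}}^{\otimes n}$ on $\mathcal{X}[1/l]_{et}$, valid because $\mathcal{X}$ is smooth over the number ring (so that the generic fibre is smooth and the reduction results apply). The product is then the classical cup product on Tate twists, and the perfect pairing is the classical Artin--Verdier duality for the proper arithmetic scheme $\mathcal{X}$ in the formulation of Milne (using $\widehat{\Gamma}_c$), since $\mu_{l^r}^{\otimes n}$ is a locally constant constructible $l$-torsion sheaf on $\mathcal{X}[1/l]$ extended by zero.

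For the $l=p$ primary part where $p$ is a residue characteristic, one uses the comparison, established by Geisser in \cite{Geisser04a}, between Bloch's complex $\mathbb{Z}/p^{r}(n)$ and Sato's $p$-adic \'etale Tate twist $\mathfrak{T}_{r}(n)$ on a smooth scheme over a Dedekind ring of mixed characteristic $(0,p)$. Sato \cite{Sato07} constructed symmetric product maps $\mathfrak{T}_{r}(n)\otimes^{L}\mathfrak{T}_{r}(d-n)\to \mathfrak{T}_{r}(d)$ and proved, for $\mathcal{X}$ smooth proper over a number ring, that the induced pairing is perfect after passing to the Artin--Verdier compactification; transported through the Geisser identification, this gives the $p$-primary part of $\mathbf{AV}(\mathcal{X},n)$. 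For the case $n<0$, recall that by definition $\mathbb{Z}(n)=\bigoplus_{p}j_{p,!}(\mu_{p^{\infty}}^{\otimes n})[-1]$, and the required pairing then reduces to the duality between $\mathbb{Z}/m(n)$ and $\mathbb{Z}/m(d-n)$ already handled above, combined with the $n=d$ compatibility to land in the trace map of Corollary \ref{cor-trace}.

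Once $\mathbf{AV}(\mathcal{X},n)$ is in hand for every $n\in\mathbb{Z}$, Proposition \ref{prop-noncanonicalprod} lifts the product map $\mathbb{Z}(n)\otimes^{L}\mathbb{Z}(d-n)\to\mathbb{Z}(d)$ to a (noncanonical) product map on $\overline{\mathcal{X}}_{et}$, and Theorem \ref{thm-barX} upgrades the $\mathcal{X}$-duality to the $\overline{\mathcal{X}}$-duality asserted in the corollary via the key reduction to Poincar\'e duality on the real manifold $\mathcal{X}(\mathbb{R})$ with $\mathbb{Z}/2\mathbb{Z}$-coefficients encoded in Lemma \ref{lemforduality}. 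The principal technical obstacle is the $p$-primary part at the residue characteristics: away from it one has classical duality, but at it one must really invoke the full force of Sato's construction of $\mathfrak{T}_{r}(n)$ and its compatibility with Bloch's higher Chow complex. The $2$-primary subtleties involving the archimedean fibre are absorbed into Theorem \ref{thm-barX} itself.
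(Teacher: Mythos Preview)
Your overall strategy is the same as the paper's: verify Conjecture $\mathbf{AV}(\mathcal{X},n)$ on $\mathcal{X}$ via Geisser's identification $\mathbb{Z}/p^{r}(n)\simeq \mathfrak{T}_r(n)$ and Sato's duality theorem, then invoke Theorem~\ref{thm-barX}. However, two points need correction.

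First, your case distinction is vacuous. For $\mathcal{X}$ proper and flat over a number ring, \emph{every} rational prime $l$ is a residue characteristic, so the case ``$l$ invertible on $\mathcal{X}$'' is empty. Your second paragraph (Geisser's comparison plus Sato's duality) is in fact the entire argument for every prime; this is exactly what the paper does. The paper also notes that the cases $n\leq 0$ and $n\geq d$ are already covered by Corollary~\ref{corAVn=0}, so only $0\leq n\leq d$ remains---your direct treatment of $n<0$ is not wrong but is redundant.

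Second, there is a genuine gap: $\mathbf{AV}(\mathcal{X},n)$ demands an \emph{integral} product map $\mathbb{Z}(n)\otimes^{L}\mathbb{Z}(d-n)\to\mathbb{Z}(d)$ in $\mathcal{D}(\mathcal{X}_{et})$, and Theorem~\ref{thm-barX} (via Proposition~\ref{prop-noncanonicalprod}) needs this to lift the product to $\overline{\mathcal{X}}$. Sato only supplies the torsion product maps $\mathfrak{T}_r(n)\otimes^{L}\mathfrak{T}_r(d-n)\to\mathfrak{T}_r(d)$, and you never construct the integral one. The paper fills this by citing Spitzweck \cite{Spitzweck14}, whose motivic complexes $\mathbb{Z}(n)_S$ are canonically isomorphic to Bloch's $\mathbb{Z}(n)$ in the smooth case and carry integral products inducing Sato's on torsion quotients. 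You should insert this reference (or Levine's product on higher Chow complexes for smooth schemes over a Dedekind base) at the point where you claim to have $\mathbf{AV}(\mathcal{X},n)$ in hand.
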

\begin{proof} It remains to treat the case $0\leq n\leq d$. By \cite{Geisser04a}, the complex $\mathbb{Z}/p^{\nu}\mathbb{Z}(n)$ is isomorphic (in the derived category) to Sato's complex  (see \cite{Schneider94} and \cite{Sato07}): we have $\mathbb{Z}/p^{\nu}\mathbb{Z}(n)\simeq \mathfrak{T}_\nu(n)$. For general $m=p_1^{\nu_1}\cdots p_s^{\nu_s}$, we simply write $$\mathbb{Z}/m\mathbb{Z}(n)\simeq \mathbb{Z}/p_1^{\nu_1}\mathbb{Z}(n)\times \cdots \times \mathbb{Z}/p_i^{\nu_i}\mathbb{Z}(n).$$ Using this identification with Sato's complex, there is a canonical product map $$\mathbb{Z}/m\mathbb{Z}(n)\otimes \mathbb{Z}/m\mathbb{Z}(d-n)\rightarrow \mathbb{Z}/m\mathbb{Z}(d)$$
which is uniquely induced by $$\mu_{p_i^{\nu_i}}^{\otimes n}\otimes \mu_{p_i^{\nu_i}}^{\otimes (d-n)}\rightarrow \mu_{p_i^{\nu_i}}^{\otimes d}$$
over $\mathcal{X}[1/p_i]$ for $i=1,...,s$. By \cite{Spitzweck14}, this product map is defined integrally: Spitzweck defines motivic complexes $\mathbb{Z}(n)_S$ on $\mathcal{X}_{et}$ which are canonically isomorphic (in the derived category) to Bloch's cycle complexes $\mathbb{Z}(n)$ on $\mathcal{X}$ (since $\mathcal{X}$ is assumed to be smooth over a number ring), and product maps $\mathbb{Z}(n)_S\otimes^L\mathbb{Z}(d-n)_S\rightarrow \mathbb{Z}(d)_S$ inducing the product on Sato's complexes.
By \cite{Sato07} 10.1.3 the induced map
$$\widehat{H}^{i}_c(\mathcal{X}_{et},\mathbb{Z}/m(n))\times H^{2d+1-i}(\mathcal{X}_{et},\mathbb{Z}/m(d-n))\rightarrow \widehat{H}^{2d+1}_c(\mathcal{X}_{et},\mathbb{Z}/m(d))\rightarrow\mathbb{Q}/\mathbb{Z}$$
is a perfect pairing of finite abelian groups for any $i\in\mathbb{Z}$ and any positive integer $m$, so that Theorem \ref{thm-barX} applies.
\end{proof}

\subsection{The conjecture $\mathbf{AV}(f,n)$}\label{sectionAVf}

Let $f:\mathcal{X}\rightarrow \mathcal{Y}$ be a flat map of relative dimension $c$ between connected regular proper arithmetic schemes of dimension $d_{\mathcal{X}}$ and $d_{\mathcal{Y}}$ respectively. We have canonical maps
\begin{equation}\label{forhat-push-forward}
R\widehat{\Gamma}_c(\mathcal{Y}_{et},\mathbb{Q}/\mathbb{Z}(n))\longrightarrow R\widehat{\Gamma}_c(\mathcal{X}_{et},\mathbb{Q}/\mathbb{Z}(n)).
\end{equation}
and
\begin{equation}\label{forbar-push-forward-}
R\Gamma(\overline{\mathcal{Y}}_{et},\mathbb{Q}/\mathbb{Z}(n))\longrightarrow R\Gamma(\overline{\mathcal{X}}_{et},\mathbb{Q}/\mathbb{Z}(n)).
\end{equation}
Assume that  $\mathbf{AV}(\overline{\mathcal{X}}_{et},n)$ and $\mathbf{AV}(\overline{\mathcal{Y}}_{et},n)$ hold. This yields isomorphisms
$$R\Gamma(\mathcal{Y}_{et},\widehat{\mathbb{Z}}(d_{\mathcal{Y}}-n))\stackrel{\sim}{\rightarrow}R\mathrm{Hom}(R\widehat{\Gamma}_c(\mathcal{Y}_{et},\mathbb{Q}/\mathbb{Z}(n)),\mathbb{Q}/\mathbb{Z}[-2d_{\mathcal{Y}}-1])$$
and
$$R\Gamma(\mathcal{X}_{et},\widehat{\mathbb{Z}}(d_{\mathcal{X}}-n))\stackrel{\sim}{\rightarrow}R\mathrm{Hom}(R\widehat{\Gamma}_c(\mathcal{X}_{et},\mathbb{Q}/\mathbb{Z}(n)),\mathbb{Q}/\mathbb{Z}[-2d_{\mathcal{X}}-1])$$
in $\mathcal{D}$, where
$$R\Gamma(\mathcal{Y}_{et},\widehat{\mathbb{Z}}(d-n)):=\mathrm{holim}\,R\Gamma(\mathcal{Y}_{et},\mathbb{Z}/m(d-n)).$$
Hence (\ref{forhat-push-forward}) induces a morphism
\begin{equation}\label{hat-push-forward}
R\Gamma(\mathcal{X}_{et},\widehat{\mathbb{Z}}(d_{\mathcal{X}}-n))\longrightarrow R\Gamma(\mathcal{Y}_{et},\widehat{\mathbb{Z}}(d_{\mathcal{Y}}-n))[-2c].
\end{equation}
We obtain similarly a morphism
\begin{equation}\label{bar-push-forward}
R\Gamma(\overline{\mathcal{X}}_{et},\widehat{\mathbb{Z}}(d_{\mathcal{X}}-n))\longrightarrow R\Gamma(\overline{\mathcal{Y}}_{et},\widehat{\mathbb{Z}}(d_{\mathcal{Y}}-n))[-2c].
\end{equation}

\begin{conj} $\mathbf{AV}(f,n)$ The diagram
\[ \xymatrix{
R\Gamma(\mathcal{X}_{Zar},\mathbb{Z}(d_{\mathcal{X}}-n))\ar[d]\ar[r]& R\Gamma(\mathcal{X}_{et},\widehat{\mathbb{Z}}(d_{\mathcal{X}}-n))\ar[d]^{(\ref{hat-push-forward})}&R\Gamma(\overline{\mathcal{X}}_{et},\widehat{\mathbb{Z}}(d_{\mathcal{X}}-n))\ar[l]\ar[d]^{(\ref{bar-push-forward})}\\
R\Gamma(\mathcal{Y}_{Zar},\mathbb{Z}(d_{\mathcal{Y}}-n))[-2c]\ar[r]& R\Gamma(\mathcal{Y}_{et},\widehat{\mathbb{Z}}(d_{\mathcal{Y}}-n))[-2c] &R\Gamma(\overline{\mathcal{Y}}_{et},\widehat{\mathbb{Z}}(d_{\mathcal{Y}}-n))[-2c]\ar[l]
}
\]
 commutes in $\mathcal{D}$, where the horizontal maps are the evident ones and the left vertical map is induced by proper push-forward of cycles.
\end{conj}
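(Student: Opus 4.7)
To prove Conjecture $\mathbf{AV}(f,n)$ I would split the diagram into its two squares and prove each separately, after reducing everything to the mod-$m$ situation.

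First, I would observe that all three columns are obtained by the derived limit $\mathrm{holim}_m$ of analogues with $\mathbb{Z}/m$-coefficients. Since both vertical maps in the right square, as well as (\ref{forhat-push-forward}) and (\ref{forbar-push-forward-}), are defined via $\mathbf{AV}(\overline{\mathcal{X}}_{et},n)$ and $\mathbf{AV}(\overline{\mathcal{Y}}_{et},n)$ by dualizing the \emph{pullback} on $\mathbb{Q}/\mathbb{Z}(n)$-cohomology, the right square's commutativity is essentially formal: it reduces, via the duality isomorphism, to the evident compatibility of the pullback maps $R\Gamma(\overline{\mathcal{Y}}_{et},\mathbb{Q}/\mathbb{Z}(n)) \to R\Gamma(\overline{\mathcal{X}}_{et},\mathbb{Q}/\mathbb{Z}(n))$ and $R\widehat{\Gamma}_c(\mathcal{Y}_{et},\mathbb{Q}/\mathbb{Z}(n)) \to R\widehat{\Gamma}_c(\mathcal{X}_{et},\mathbb{Q}/\mathbb{Z}(n))$ which both come from the same map $Rf_*\mathbb{Z}(n)^{\overline{\mathcal{X}}} \leftarrow \mathbb{Z}(n)^{\overline{\mathcal{Y}}}$ of Proposition \ref{prop-funct-barX}. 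More precisely, applying $R\Gamma(\overline{\mathcal{Y}},-)$ to the exact triangle $\varphi_!\mathbb{Q}/\mathbb{Z}(n)^{\mathcal{Y}}\to\mathbb{Q}/\mathbb{Z}(n)^{\overline{\mathcal{Y}}}\to u_{\infty,*}u_\infty^*\mathbb{Q}/\mathbb{Z}(n)^{\overline{\mathcal{Y}}}$ and its $\mathcal{X}$-analogue produces a morphism of triangles by Proposition \ref{prop-funct-barX}, and dualizing via $\mathbf{AV}$ gives the desired commutativity.

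For the left square the content is genuine: it asserts that the cycle-theoretic proper push-forward $f_{*}^{\mathrm{cyc}}:R\Gamma(\mathcal{X}_{\mathrm{Zar}},\mathbb{Z}(d_{\mathcal{X}}-n))\to R\Gamma(\mathcal{Y}_{\mathrm{Zar}},\mathbb{Z}(d_{\mathcal{Y}}-n))[-2c]$ agrees, after \'etale completion, with the duality-induced push-forward (\ref{hat-push-forward}). Reducing to $\mathbb{Z}/m$-coefficients, the assertion becomes a projection-formula identity: for any classes $a\in H^*(\mathcal{X}_{et},\mathbb{Z}/m(d_{\mathcal{X}}-n))$ and $b\in\widehat{H}^*_c(\mathcal{Y}_{et},\mathbb{Z}/m(n))$ one must check
\[ \mathrm{tr}_{\mathcal{Y}}\bigl(f_{*}^{\mathrm{cyc}}(a)\cup b\bigr)=\mathrm{tr}_{\mathcal{X}}\bigl(a\cup f^*b\bigr)\in\mathbb{Z}/m, \]
where $\mathrm{tr}_{\mathcal{X}},\mathrm{tr}_{\mathcal{Y}}$ are the \'etale trace maps coming from $\mathbf{AV}(\mathcal{X},d_{\mathcal{X}})$ and $\mathbf{AV}(\mathcal{Y},d_{\mathcal{Y}})$. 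For the part prime to the residue characteristics this follows from classical Poincar\'e--Verdier duality via the identification of the cycle-theoretic cycle class with the \'etale cycle class and the compatibility of $f_!$ with proper push-forward; one uses the results of Geisser--Levine and Sato identifying $\mathbb{Z}/m(n)$ \'etale-locally with $\mu_m^{\otimes n}$. For the $p$-primary part at primes $p$ dividing residue characteristics the identification proceeds through the isomorphism $\mathbb{Z}/p^r(n)\simeq \nu_r^n[-n]$ on smooth fibres in characteristic $p$ together with Sato's $\mathfrak{T}_r(n)$-complexes in mixed characteristic, and uses the Gros-type trace $Rf_*\nu^{n+c}_{r,\mathcal{X}_{\mathbb{F}_p}}\to\nu^n_{r,\mathcal{Y}_{\mathbb{F}_p}}[-2c]$.

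The main obstacle will be the $p$-primary compatibility at primes of bad reduction. Whereas away from the characteristic one has a smooth proper base change and Poincar\'e duality that directly yield the projection formula at the cycle level, at $p$-primary parts one has to construct the trace on Sato's complexes and verify it coincides with the cycle-theoretic one. In the smooth case this is in \cite{Sato07} and the classical work of Gros; for general regular $\mathcal{X},\mathcal{Y}$ this requires extending the trace formalism to $p$-adic nearby cycles and checking a cocycle/compatibility condition that is not purely formal. The remaining piece is the compatibility at the archimedean fibre, but this is automatic: the target of the right-hand horizontal map involves only $2$-torsion sheaves supported on $\mathcal{X}(\mathbb{R})$ and $\mathcal{Y}(\mathbb{R})$, and the push-forward here is induced by the topological map $f_\infty:\mathcal{X}(\mathbb{R})\to\mathcal{Y}(\mathbb{R})$ via the Poincar\'e duality of Lemma \ref{lemforduality}, which is manifestly functorial.
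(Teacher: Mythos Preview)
The statement you are attempting to prove is a \emph{conjecture} in the paper, not a theorem. The paper does not provide any proof of $\mathbf{AV}(f,n)$; it is stated in Section~\ref{sectionAVf} as a hypothesis and invoked only as an assumption in Theorem~\ref{cor-functoriality}. So there is no proof in the paper to compare your proposal against.

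That said, your outline is a reasonable strategy toward the conjecture. The right square is indeed largely formal: both vertical maps are obtained by Pontryagin-dualizing the pullback maps (\ref{forhat-push-forward}) and (\ref{forbar-push-forward-}), and Proposition~\ref{prop-funct-barX} furnishes the compatibility of those pullbacks. Your reduction of the left square to a projection-formula identity is the correct reformulation, and you have correctly located the genuine difficulty at the $p$-primary part in mixed characteristic, where one needs a trace formalism on Sato-type complexes compatible with cycle-theoretic push-forward. This is precisely why the authors leave $\mathbf{AV}(f,n)$ as a conjecture rather than a theorem: the required compatibility of traces for general regular (non-smooth) $f$ is not available in the literature, and your sketch does not supply it either. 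Your claim that the archimedean compatibility is ``automatic'' via Lemma~\ref{lemforduality} is plausible but would also require checking that the duality-induced push-forward on the $2$-torsion pieces agrees with the topological one; this is not addressed in the paper.
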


\subsection{The projective bundle formula}

For $n<0$, the complex $R\Gamma(\overline{\mathcal{X}}_{et},\mathbb{Z}(n))$ may have non-trivial cohomology in negative degrees. The following proposition shows that this surprising fact is a consequence of the projective bundle formula. We only treat the simplest (but decisive) case $\mathcal{X}=\mathrm{Spec}(\mathbb{Z})$.
\begin{prop}\label{prop-pbf}
There is an isomorphism
$$R\Gamma(\overline{\mathbb{P}^m_{\mathbb{Z}}}_{,et},\mathbb{Z})\simeq \bigoplus_{0\leq n\leq m}R\Gamma(\overline{\mathrm{Spec}(\mathbb{Z})}_{et},\mathbb{Z}(-n))[-2n].$$
\end{prop}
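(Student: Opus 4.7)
The plan is to construct an explicit map induced by the Chern class of $\mathcal{O}(1)$ and then to verify it is a quasi-isomorphism by splitting the verification into the étale and archimedean parts of $\overline{\mathcal{X}}_{et}$.

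First I would construct the relevant Chern class. By Proposition \ref{prop-description-atn=0,1}, $\mathbb{Z}(1)^{\overline{\mathcal{X}}} \simeq \phi_*\mathbb{G}_m[-1]$, so
$$H^2(\overline{\mathbb{P}^m_\mathbb{Z}}_{,et},\mathbb{Z}(1)) \cong H^1(\mathbb{P}^m_{\mathbb{Z},et},\mathbb{G}_m) = \mathrm{Pic}(\mathbb{P}^m_\mathbb{Z}) = \mathbb{Z}$$
and I let $c_1$ be the class of $\mathcal{O}(1)$. Writing $p:\overline{\mathbb{P}^m_\mathbb{Z}} \to \overline{B} := \overline{\mathrm{Spec}(\mathbb{Z})}$, and choosing a product map $\mathbb{Z}(-n)^{\overline{\mathcal{X}}} \otimes^L \mathbb{Z}(n)^{\overline{\mathcal{X}}} \to \mathbb{Z}^{\overline{\mathcal{X}}}$ from Proposition \ref{prop-noncanonicalprod}, cup product with $c_1^n$ combined with the flat pullback $p^*$ of Proposition \ref{prop-funct-barX} gives maps $\Phi_n: R\Gamma(\overline{B}_{et},\mathbb{Z}(-n))[-2n] \to R\Gamma(\overline{\mathbb{P}^m_\mathbb{Z}}_{,et},\mathbb{Z})$. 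I set $\Phi = \bigoplus_n \Phi_n$.

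Next I would fit $\Phi$ into a morphism of exact triangles coming from Corollary \ref{cor-u^!}. Applied with $\mathcal{X}=\mathbb{P}^m_\mathbb{Z}$ and $n=0$, that corollary gives
$$R\Gamma(\overline{\mathbb{P}^m_\mathbb{Z}}_{,et},\mathbb{Z}) \to R\Gamma(\mathbb{P}^m_{\mathbb{Z},et},\mathbb{Z}) \to R\Gamma(\mathbb{P}^m(\mathbb{R}),\tau^{>0}R\widehat{\pi}_*\mathbb{Z}),$$
while applied with $\mathcal{X}=\mathrm{Spec}(\mathbb{Z})$ and $n=-i$ it gives the analogous triangles for each summand on the left-hand side of the statement. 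Provided the products in Proposition \ref{prop-noncanonicalprod} are chosen compatibly on $\overline{B}$ and $\overline{\mathbb{P}^m_\mathbb{Z}}$, cup product with $c_1^i$ induces a map of triangles with $\Phi$ on the left, a candidate étale PBF map $\Phi_{et}$ in the middle, and a candidate topological PBF map $\Phi_\infty$ on the right. By the five lemma it suffices to show that $\Phi_{et}$ and $\Phi_\infty$ are quasi-isomorphisms.

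The quasi-isomorphism $\Phi_{et}$ is the étale projective bundle formula for motivic cohomology over $\mathbb{Z}$, which follows from Levine's PBF for Bloch's higher Chow complex \cite{Levine01} together with the comparison between Zariski and étale motivic cohomology for regular schemes; cup product with the algebraic $c_1$ implements the decomposition. The quasi-isomorphism $\Phi_\infty$ follows from the classical topological PBF $H^*(\mathbb{P}^m(\mathbb{C}),\mathbb{Z}) = \mathbb{Z}[c]/c^{m+1}$, using the fact that the hyperplane class $c$ lifts to a $G_\mathbb{R}$-invariant class in $H^2(\mathbb{P}^m(\mathbb{C}),(2\pi i)\mathbb{Z})$, whence $\mathbb{Z}\cdot c^i \cong (2\pi i)^{-i}\mathbb{Z}$ as $G_\mathbb{R}$-modules; taking the Tate functor $R\widehat{\pi}_*$ and applying the appropriate truncations is straightforward because these are complexes of locally constant sheaves on $\mathbb{P}^m(\mathbb{R})$. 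The main obstacle is the bookkeeping of compatibilities: the products in Proposition \ref{prop-noncanonicalprod} are non-canonical, and one must choose them so that the image of $c_1 \in H^2(\overline{\mathbb{P}^m_\mathbb{Z}},\mathbb{Z}(1))$ under the boundary map $\mathbb{Z}(1)^{\overline{\mathcal{X}}} \to u_{\infty,*}\tau^{>1}R\widehat{\pi}_*(2\pi i)\mathbb{Z}$ is identified with the topological hyperplane class (up to the standard twist by $2\pi i$), so that the decomposition on the archimedean side genuinely matches the one coming from the algebraic Chern class. Once this compatibility is in place, the verification of each of $\Phi_{et}$ and $\Phi_\infty$ reduces to the classical statements cited above.
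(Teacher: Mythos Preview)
Your overall strategy—reduce to the defining triangle of Corollary \ref{cor-u^!} and verify the two legs separately—is the same as the paper's, but the execution has a real gap on each leg.

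On the \'etale side you invoke Levine's projective bundle formula for Bloch's higher Chow complex together with the Zariski/\'etale comparison. But Bloch's $\mathbb{Z}(-n)$ vanishes for $n>0$, so that PBF applied with twist $0$ only says $R\Gamma(\mathbb{P}^m_{\mathbb{Z},Zar},\mathbb{Z})\simeq R\Gamma(\mathrm{Spec}(\mathbb{Z})_{Zar},\mathbb{Z})$; and Geisser's comparison \cite{Geisser04a} for $\mathbb{Z}(0)$ only holds in degrees $\leq 1$, precisely missing the degrees where the summands with $n>0$ live. The identity the paper actually needs uses the ad hoc definition $\mathbb{Z}(-n)=\bigoplus_p j_{p,!}(\mu_{p^\infty}^{\otimes -n})[-1]$ from Section \ref{sect-emc} and comes from proper base change for $Rf_*$ on torsion \'etale sheaves (via $\mathbb{Z}\to\mathbb{Q}\to\mathbb{Q}/\mathbb{Z}$), not from any cycle-theoretic PBF.

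On the archimedean side the object in question is $R\Gamma(\mathbb{P}^m(\mathbb{R}),\tau^{>0}R\widehat{\pi}_*\mathbb{Z})$, a complex of $2$-torsion sheaves on the \emph{real} points. Your argument via $H^*(\mathbb{P}^m(\mathbb{C}),\mathbb{Z})$ does give a PBF for the untruncated $R\widehat{\Gamma}(G_\mathbb{R},\mathbb{P}^m(\mathbb{C}),\mathbb{Z})$, but the sheaf-level truncation $\tau^{>0}$ does not simply become $\tau^{>-n}$ on each summand under cup product with $c_1^n$; the claim that ``truncations are straightforward'' hides the whole computation. The paper instead computes $R\Gamma(\mathbb{P}^m(\mathbb{R}),\mathbb{Z}/2)\simeq\bigoplus_{0\leq n\leq m}\mathbb{Z}/2[-n]$ directly via the antipodal cover $\mathbf{S}^m\to\mathbb{P}^m(\mathbb{R})$, and then performs an explicit reindexing $\bigoplus_{k>0}\bigoplus_n\mathbb{Z}/2[-n-2k]\simeq\bigoplus_n\tau^{>-n}R\widehat{\Gamma}(G_\mathbb{R},(2\pi i)^{-n}\mathbb{Z})[-2n]$. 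This combinatorial step is the heart of the archimedean verification. Note also that the paper never constructs a Chern-class map: it identifies both legs of the triangle with direct sums, checks the connecting map splits as the sum of the defining maps for each $R\Gamma(\overline{\mathrm{Spec}(\mathbb{Z})}_{et},\mathbb{Z}(-n))$, and reads off the fibre—entirely sidestepping the non-canonical product compatibilities you flag as an obstacle.
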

\begin{proof}
By proper base change, one has
$$R\Gamma(\mathbb{P}^m_{\mathbb{Z},et},\mathbb{Z})\simeq \bigoplus_{0\leq n\leq m}R\Gamma( \mathrm{Spec}(\mathbb{Z})_{et},\mathbb{Z}(-n))[-2n].$$
Moreover, one has
$\tau^{> 0}R\widehat{\Gamma}(G_{\mathbb{R}},\mathbb{Z})\simeq\bigoplus_{k>0}
\mathbb{Z}/2\mathbb{Z}[-2k]$ hence
$$R\Gamma(\mathbb{P}^m(\mathbb{R}),\tau^{> 0}R\widehat{\pi}_*\mathbb{Z})=\bigoplus_{k>0}
R\Gamma(\mathbb{P}^m(\mathbb{R}),\mathbb{Z}/2\mathbb{Z})[-2k].$$
Let $\mathbf{S}^m\subset\mathbb{R}^{m+1}$ be the $m$-sphere endowed with its natural  (antipodal) action of $\{\pm 1\}$. A look at the spectral sequence for the Galois cover $$\mathbf{S}^m\longrightarrow \mathbf{S}^m/\{\pm 1\}\simeq \mathbb{P}^m(\mathbb{R})$$
shows that the canonical map
$$R\Gamma(\{\pm 1\},\mathbb{Z}/2\mathbb{Z})_{\leq m}\rightarrow
R\Gamma(\{\pm 1\},\mathbb{Z}/2\mathbb{Z})\rightarrow R\Gamma(\mathbb{P}^m(\mathbb{R}),\mathbb{Z}/2\mathbb{Z})$$
is an isomorphism. This
yields
$$R\Gamma(\mathbb{P}^m(\mathbb{R}),\mathbb{Z}/2\mathbb{Z})\simeq\bigoplus_{0\leq n\leq m}\mathbb{Z}/2\mathbb{Z}[-n].$$
We obtain
\begin{eqnarray*}
R\Gamma(\mathbb{P}^m(\mathbb{R}),\tau^{> 0}R\widehat{\pi}_*\mathbb{Z})&\simeq&\bigoplus_{k>0}
R\Gamma(\mathbb{P}^m(\mathbb{R}),\mathbb{Z}/2\mathbb{Z})[-2k]\\
&\simeq&\bigoplus_{k>0}\bigoplus_{0\leq n\leq m}\mathbb{Z}/2\mathbb{Z}[-n][-2k]\\
&\simeq&\bigoplus_{0\leq n\leq m}\bigoplus_{k>0}\mathbb{Z}/2\mathbb{Z}[-2k+n][-2n]\\
&\simeq&\bigoplus_{0\leq n\leq m}\tau^{> -n}R\widehat{\Gamma}(G_{\mathbb{R}},(2i\pi)^{-n}\mathbb{Z})[-2n]
\end{eqnarray*}
We obtain an exact triangle
$$R\Gamma(\overline{\mathbb{P}^m_{\mathcal{X}}}_{,et},\mathbb{Z})\rightarrow \bigoplus_{0\leq n\leq m}R\Gamma( \mathrm{Spec}(\mathbb{Z})_{et},\mathbb{Z}(-n))[-2n]\rightarrow \bigoplus_{0\leq n\leq m}\tau^{> -n}R\widehat{\Gamma}(G_{\mathbb{R}},(2i\pi)^{-n}\mathbb{Z})[-2n]$$
where the second map is the sum of the maps
$R\Gamma( \mathrm{Spec}(\mathbb{Z})_{et},\mathbb{Z}(-n))\rightarrow \tau^{> -n}R\widehat{\Gamma}(G_{\mathbb{R}},(2i\pi)^{-n}\mathbb{Z})$.
The result follows.
\end{proof}

\section{Appendix B: Motivic and syntomic cohomology}

The first purpose of this appendix is to formulate a conjectural relation between ($p$-adically completed cohomology of) higher Chow complexes and syntomic cohomology for arbitrary regular arithmetic schemes over local integer rings, extending results of Geisser \cite{Geisser04a} in the smooth case. Whereas \cite{Geisser04a} applies with integral coefficients under the assumption $0\leq n<p-1$ we shall only consider rational coefficients but any $n\in\bz$. The second purpose is to discuss the motivic decomposition of $p$-adically completed motivic cohomology which is necessary to compare our main conjecture to the Tamagawa Number Conjecture. This appendix is only needed in the main body of the text in section \ref{sec:compatibility}, and then only in the restricted setting of smooth schemes for which more complete results are known (see Prop. \ref{psmooth} and the remarks following it).

For any equidimensional scheme $Y$ and $n\in\bz$ we define the complex of etale sheaves $$\bz(n)=z^n(-,2n-*)$$ from Bloch's higher Chow complex (and we retain the cohomological indexing even if $Y$ is singular). For any prime number $l$ we set
 \begin{equation} R\Gamma(Y,\bz_l(n))=\mathrm{holim}_\bullet R\Gamma(Y,\bz(n)/l^\bullet);\quad  R\Gamma(Y,\bq_l(n))=R\Gamma(Y,\bz_l(n))_\bq\label{compdef}\end{equation}
where cohomology groups are always understood in the etale topology. For $n<0$ we have $\bz(n)=0$ which differs from the definition in section \ref{sect-emc}. However, the two definitions will lead to the same cohomology with $\bq_l(n)$-coefficients in Cor. \ref{decompcor} and Cor. \ref{pcor} below.

In the following a regular scheme will always assumed to be (essentially) of finite type over a field or a Dedekind ring.

\begin{conj} For a regular scheme $\X$ the complex $\bz(n)$ on $\X_{et}$ is (cohomologically) concentrated in degrees $\leq n$.
\label{zhongconj}\end{conj}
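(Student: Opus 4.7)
The plan is to establish $\mathcal{H}^i(\bz(n)) = 0$ for $i > n$ on $\X_{et}$ via the standard decomposition into rational and finite coefficients, followed by a reduction to Beilinson--Lichtenbaum. The cases $n < 0$ (where $\bz(n) = 0$), $n = 0$ (where $\bz(0) \simeq \bz[0]$) and $n = 1$ (where $\bz(1) \simeq \mathbb{G}_m[-1]$) are immediate, so I fix $n \geq 2$.

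First I would treat rational coefficients. Via Levine's spectral sequence from motivic cohomology to $K$-theory and its rational degeneration, together with Geisser's identification of Zariski and \'etale motivic cohomology with rational coefficients \cite{Geisser04a,Levine99}, one has $H^i(\X_{et},\bq(n)) \cong K_{2n-i}(\X)_\bq^{(n)}$ for regular $\X$, which vanishes for $i > 2n$ on dimension grounds; closing the gap to $i > n$ amounts to the rational Beilinson--Lichtenbaum vanishing $K_j^{(n)}(\X)_\bq = 0$ for $j < n$. Next I would treat $\bz/\ell^r(n)$ one prime $\ell$ at a time, using the localisation triangle associated with the closed-open pair $i_\ell : \X_{\bF_\ell} \hookrightarrow \X \hookleftarrow \X[1/\ell] : j_\ell$. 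On $\X[1/\ell]$ one has $\bz/\ell^r(n) \simeq \mu_{\ell^r}^{\otimes n}$ by Geisser, placed in degree $0$; on $\X_{\bF_\ell}$ the Geisser--Levine identification $\bz/\ell^r(n) \simeq \nu_r^n[-n]$ places the complex in degree $n$. Both are well within the required range, and the localisation triangle $i_{\ell,*} Ri_\ell^! \bz/\ell^r(n) \to \bz/\ell^r(n) \to Rj_{\ell,*}\mu_{\ell^r}^{\otimes n}$ then propagates the bound to $\X$ provided one can identify the boundary term $Ri_\ell^! \bz/\ell^r(n)$ with a suitable shift of $\bz/\ell^r(n-1)$ on the special fibre, which allows for a descending induction on $n$.

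The hard part is precisely this identification of $Ri_p^! \bz_p(n)$ in mixed characteristic at the residual prime $p$, for regular schemes that are not smooth over $\bz_p$. Geisser carried this out for smooth $\X/\bz_p$ and $n < p-1$ using the Fontaine--Messing syntomic complex \cite{Geisser04a}; extending to arbitrary $n$ and to genuinely non-smooth regular $\X$ would require the recent syntomic cohomology theory of Nekov\'a\v{r}--Nizio\l, Colmez--Nizio\l\ and Ertl--Nizio\l\ \cite{nekniz13,colniz15,ertlniziol16}, and most likely a reduction to the semistable case via de Jong's alterations together with cohomological descent. This $p$-adic input is precisely what underpins Conjecture ${\bf D}_p(\mathcal{X},n)$ used elsewhere in the paper, and its unavailability in full generality is what keeps the present conjecture open.
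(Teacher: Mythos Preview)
The statement is labelled as a \emph{Conjecture} in the paper, and the paper does not attempt a proof. Immediately after stating it, the authors simply remark that it is known when $\X$ is smooth over a field or Dedekind ring by \cite{Geisser04a}[Cor.~4.4], and observe that it amounts to the vanishing of the \'etale sheafification of $\mathcal{H}^i(\bz(n))$ for $i=n+1,\dots,2n$. So there is nothing to compare your proposal against: you are sketching an approach to an open problem, not reproducing an argument from the paper.

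That said, your sketch has genuine problems even as a strategy. First, you conflate global cohomology with cohomology sheaves: the conjecture asserts that the \emph{sheaf} $\mathcal{H}^i(\bz(n))$ vanishes for $i>n$, i.e.\ a stalkwise statement over strictly henselian local rings, whereas you argue about $H^i(\X_{et},\bq(n))\cong K_{2n-i}(\X)_\bq^{(n)}$. Your proposed ``rational Beilinson--Lichtenbaum vanishing'' $K_j^{(n)}(\X)_\bq=0$ for $j<n$ is simply false globally (take $j=0$: $K_0(\X)_\bq^{(n)}=CH^n(\X)_\bq$ is typically nonzero). Second, your mod-$\ell$ step is circular: the identification $\bz/\ell^r(n)\simeq\mu_{\ell^r}^{\otimes n}$ on $\X[1/\ell]$ that you invoke is exactly Lemma~\ref{invertible} of the paper, whose hypothesis is Conjecture~\ref{zhongconj} itself; without that hypothesis one only knows $\tau^{\leq n}\bz(n)/\ell\simeq\mu_\ell^{\otimes n}$ (Zhong). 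You correctly identify the $p$-adic localisation step in mixed characteristic as the crux, and your honesty in the final paragraph that ``its unavailability in full generality is what keeps the present conjecture open'' is the accurate summary---but that means what you have written is not a proof proposal, it is an explanation of why the conjecture remains a conjecture.
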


This conjecture is known if $\X$ is smooth over a field or Dedekind ring by \cite{Geisser04a}[Cor. 4.4]. Note that the Bloch complex of presheaves $\bz(n)$ is concentrated in degrees $\leq 2n$ and so the conjecture says that the sheafification (in the etale topology) of $\mathcal H^i(\bz(n))$ vanishes for $i=n+1,...,2n$. This should be true for the Zariski topology as well.

\begin{lem} If Conjecture \ref{zhongconj} holds, $l$ is invertible on $\X$ and $n\geq 0$ then $\bz(n)/l^\bullet\cong \mu_{l^\bullet}^{\otimes n}$ on $\X_{et}$.
\label{invertible}\end{lem}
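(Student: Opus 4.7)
The plan is to combine Conjecture~\ref{zhongconj} with the (now proven) Beilinson--Lichtenbaum isomorphism in mixed characteristic. More precisely, since $l$ is invertible on $\X$, we have $\X = \X[1/l]$, and a theorem of Geisser \cite{Geisser04a}[Thm.~1.2] (applied also in the paper via \cite{Zhong14}[Thm.~2.6]) provides a canonical isomorphism
\[
\tau^{\leq n}\bigl(\mathbb{Z}(n)/l^{r}\bigr)\ \cong\ \mu_{l^r}^{\otimes n}
\]
on $\X_{et}$ for every $r\geq 1$. The entire task is therefore to show that, under Conjecture~\ref{zhongconj}, the complex $\mathbb{Z}(n)/l^r$ already coincides with its truncation $\tau^{\leq n}(\mathbb{Z}(n)/l^r)$, so that no information is lost.

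First I would invoke Conjecture~\ref{zhongconj}, which asserts that $\mathcal{H}^i(\mathbb{Z}(n))=0$ for $i>n$ on $\X_{et}$. Then from the distinguished triangle
\[
\mathbb{Z}(n)\xrightarrow{l^r}\mathbb{Z}(n)\longrightarrow \mathbb{Z}(n)/l^r\longrightarrow
\]
and the associated short exact sequences
\[
0\to \mathcal{H}^i(\mathbb{Z}(n))/l^r\to \mathcal{H}^i(\mathbb{Z}(n)/l^r)\to {}_{l^r}\mathcal{H}^{i+1}(\mathbb{Z}(n))\to 0,
\]
I deduce that $\mathcal{H}^i(\mathbb{Z}(n)/l^r)=0$ for all $i>n$. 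Consequently the natural map
\[
\mathbb{Z}(n)/l^r\ \longrightarrow\ \tau^{\leq n}\bigl(\mathbb{Z}(n)/l^r\bigr)
\]
is a quasi-isomorphism.

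Combining the two steps yields the asserted isomorphism $\mathbb{Z}(n)/l^r\cong \mu_{l^r}^{\otimes n}$ on $\X_{et}$, and this is functorial in $r$, so we get the statement for $\mathbb{Z}(n)/l^\bullet$. There is no serious obstacle: the whole content is packaged in Conjecture~\ref{zhongconj}, and the only point worth double-checking is the bound on the cohomological amplitude of $\mathbb{Z}(n)/l^r$ (the trivial observation that deriving $\otimes^L\mathbb{Z}/l^r$ on a complex concentrated in degrees $\leq n$ keeps it concentrated in degrees $\leq n$, because the only potential contribution in degree $n+1$ comes from ${}_{l^r}\mathcal{H}^{n+1}(\mathbb{Z}(n))$, which vanishes by hypothesis).
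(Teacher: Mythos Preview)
Your proposal is correct and is essentially the paper's proof spelled out in detail: the paper simply cites \cite{Zhong14}[Thm.~2.6] for the isomorphism $\tau^{\leq n}(\mathbb{Z}(n)/l^r)\cong \mu_{l^r}^{\otimes n}$ and remarks that the truncation is unnecessary under Conjecture~\ref{zhongconj}, which is exactly what you verify via the long exact cohomology sequence. One cosmetic point: the natural map for the good truncation goes $\tau^{\leq n}(\mathbb{Z}(n)/l^r)\to \mathbb{Z}(n)/l^r$, not the other direction, but this does not affect the argument.
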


\begin{proof} This is \cite{Zhong14}[Thm. 2.6], noting that truncation is unnecessary under Conjecture \ref{zhongconj}.
\end{proof}

We shall usually consider the following local setting. For a prime $p$ let $K/\bq_p$ be a finite extension with maximal unramified subextension $K_0/\bq_p$. We set $$s=\Spec(k),\quad S=\Spec(\co_K),\quad \eta=\Spec(K)$$ and $$ \etabar=\Spec(\etabar),\quad \sbar=\Spec(\bar{k}),\quad \shen=\Spec(\mathcal O_K^{ur}),\quad \etahen=\Spec(K^{ur})$$ where $k$ is the residue field of $K$ and $K^{ur}/K$ the maximal unramified extension. We denote by $\iota:s\to S$ and $j:\eta\to S$ the complementary immersions. Finally, we let
$$f:X\to S$$
be a flat, projective morphism of relative dimension $d-1$ and we assume throughout that $X$ is a {\em regular} scheme. We denote the base change of a map by indexing it with the source of the base change map to its target. For example, $\iota_X:X_s\to X$ is the closed immersion of the special fibre.

\subsection{$l$-adic cohomology}  In this section $l$ is a prime number different from $p$. We summarize here some facts from $l$-adic cohomology in order to motivate the conjectures of the next section.

\subsubsection{Localization triangles} There is a localisation triangle
\[ R\Gamma_{X_{\sbar}}(X_{\shen},\bq_l)\to R\Gamma(X_{\shen},\bq_l)\to R\Gamma(X_{\etahen},\bq_l)\]
where we can view the first group as (Borel-Moore or ordinary) homology and the second (via proper base change) as cohomology of the special fibre $X_{\sbar}$. The topological analogue of this situation is a tubular neighborhood, i.e. a closed embedding $X_{\sbar}\to X$ of a compact space $X_{\sbar}$ into a manifold $X$ which is moreover a homotopy equivalence.  This gives rise to a map from homology to cohomology of $X_{\sbar}$ by the same localization sequence. Using regularity of $X$ and $S$ we have $f^!\bq_l=\bq_l(d-1)[2d-2]$ and using regularity of $s$ and $S$ we have $R\iota^!\bq_l=\bq_l(-1)[-2]$. Since $\iota\circ f_s=f\circ\iota_X$ we obtain $$R\iota_X^!\bq_l=R\iota_X^!f^!\bq_l(-d+1)[-2d+2]=f_s^!\bq_l(-d)[-2d]$$ and we can rewrite the shifted localization triangle as
\begin{equation}  R\Gamma(X_{\sbar},\bq_l)\to R\Gamma(X_{\etahen},\bq_l)\to R\Gamma(X_{\sbar}, f_{\sbar}^!\bq_l(-d)[-2d+1])  \to\label{tri1}\end{equation}
which we view as a computation of the cohomology of $X_{\etahen}$. The cohomology of $X_{\etahen}$ can also be computed by Galois descent from the cohomology of $X_{\etabar}$. Setting $I:=\Gal(\bar{K}/K^{ur})$ one has
\[ R\Gamma(X_{\etahen},\bq_l)\cong R\Gamma(I,R\Gamma(X_{\etabar},\bq_l))\]
but in order to bring out the analogy with $p$-adic cohomology in the next section we rewrite this slightly using Weil-Deligne representations. If $(V,\rho)$ is a continuous $l$-adic representation of $G_K:=\Gal(\bar{K}/K)$, a theorem of Grothendieck guarantees that an open subgroup $I_1\subseteq I$ acts unipotently, i.e. for $\sigma\in I_1$
\[ \rho(\sigma)=\exp(t_l(\sigma)N)\]
where $t_l:I\to\bz_l(1)$ is the natural surjection and $N:V\to V(-1)$ is a nilpotent endomorphism. Following  \cite{deligne73}[8.4.2] one defines a representation $(V^\delta,\rho^\delta)$ of the Weil group $W_K\subseteq G_K$ on $V=V^\delta$ by $\rho^\delta(\phi^n\sigma)=\rho(\phi^n\sigma)\exp(-t_l(\sigma)N)$ for $\sigma\in I$ and $\phi$ any lift of Frobenius. By Grothendieck's theorem $\rho^\delta$ is trivial on the open subgroup $I_1$ of $W_K$, i.e. $\rho^\delta$ is discrete. One then has with $\sigma_t$ denoting a topological generator of $\bz_l(1)$ and $P=\ker(t_l)$
\[R\Gamma(I,V)\cong R\Gamma(\bz_l(1),V^P)\cong\left[V^P\xrightarrow{\sigma_t-1} V^P(-1)\right]\cong \left[V^{\delta,I}\xrightarrow{N}V^{\delta,I}(-1)\right]\]
where for a map of complexes $A\to B$ we write
$$[A\to B]:=\mathrm{holim}[A\to B]\cong \mathrm{Cone}(A\to B)[-1].$$
Applying these considerations to a decomposition \cite{deligne94}
\begin{equation} R\Gamma(X_{\etabar},\bq_l)\cong\bigoplus_{i\in\bz}H^i(X_{\etabar},\bq_l)[-i] \label{motdec}\end{equation}
we find
\begin{align*} R\Gamma(X_{\etahen},\bq_l)\cong &R\Gamma(I,R\Gamma(X_{\etabar},\bq_l))\cong \mathrm{holim} \left(R\Gamma(X_{\etabar},\bq_l)^{\delta,I}\xrightarrow{N}R\Gamma(X_{\etabar},\bq_l)^{\delta,I}(-1)\right)\end{align*}
for a certain nilpotent endomorphism $N$ of $R\Gamma(X_{\etabar},\bq_l)$ and
obtain the exact triangle
\begin{equation}  R\Gamma(X_{\sbar},\bq_l)\to  \left[R\Gamma(X_{\etabar},\bq_l)^{\delta,I}\xrightarrow{N}R\Gamma(X_{\etabar},\bq_l)^{\delta,I}(-1)\right]\to R\Gamma(X_{\sbar}, \bq_l)^*(-d)[-2d+1]  \to\label{tri3}\end{equation}
where we have used the duality
$R\Gamma(X_{\sbar}, f_{\sbar}^!\bq_l)\cong R\Gamma(X_{\sbar}, \bq_l)^*$
between homology and cohomology.

\subsubsection{Motivic decomposition} If $f$ is smooth then $I$ acts trivially, $N=0$, and the composite map
\[ \mathrm{sp}:R\Gamma(X_{\sbar},\bq_l)\to R\Gamma(X_{\etahen},\bq_l)\to R\Gamma(X_{\etabar},\bq_l)^{\delta,I}=R\Gamma(X_{\etabar},\bq_l)\]
is an isomorphism, i.e. gives a {\em splitting} of (\ref{tri1}) and (\ref{tri3}). For general regular $X$
it was shown in \cite{Flach-Morin-12}[Thm 10.1] that the monodromy weight conjecture \cite{illusie}[Conj. 3.9] implies that in each degree $i$ there is a short exact sequence
\[ 0\to Z^i\to H^i(X_{\sbar},\bq_l)\xrightarrow{sp} H^i(X_{\etabar},\bq_l)^I=H^i(X_{\etabar},\bq_l)^{\delta,I,N=0}\to 0\]
where $Z^i$ is pure of weight $i$. For each $i$ this gives a splitting of the short exact sequence
\[ 0\to H^{i-1}(X_{\etabar},\bq_l)(-1)^{\delta,I}/N\to H^i(X_{\etahen},\bq_l)\to H^i(X_{\etabar},\bq_l)^{\delta,I,N=0}\to 0\]
as well as a short exact sequence
\begin{equation} 0\to H^{i-1}(X_{\etabar},\bq_l)(-1)^{\delta,I}/N\to H^{2d-1-i}(X_{\sbar}, \bq_l(d))^*\to Z^{i+1}\to 0\label{dualpart}\end{equation}
using the long exact sequence induced by (\ref{tri3}).

\begin{prop} Assume the monodromy weight conjecture for the generic fibre of the regular scheme $X$ and set
\[V^i:=H^i(X_{\etabar},\bq_l).\]
Assume moreover that Frobenius acts semisimply on $H^i(X_{\sbar},\bq_l)$ for each $i$. Then the triangle (\ref{tri3}) is isomorphic to the direct sum over $i\in\bz$ of the $[-i]$-shift of the exact triangles
\begin{equation} Z^i[0]\oplus V^{i,\delta,I,N=0}[0]\to\left[V^{i,\delta,I}\xrightarrow{N} V^{i,\delta,I}(-1)\right]\to V^{i,\delta,I}(-1)/N[-1]\oplus Z^i[1]\to\label{tri5}\end{equation}
in the derived category of $W_k$-modules. Here we denote by $W_k\subset\Gal(\bar{k}/k)$ the Weil group of the finite field $k$.
\label{locmotivic}\end{prop}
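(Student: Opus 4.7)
The plan is to decompose each of the three terms of (\ref{tri3}) separately into direct summands indexed by $i\in\bz$, and then verify that the two connecting maps and the boundary respect these decompositions. Once this is done, the triangle becomes a direct sum of the triangles (\ref{tri5}) shifted by $[-i]$.

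First I would handle the middle term of (\ref{tri3}). Since $X_\eta\to\eta$ is smooth projective, a $G_K$-equivariant ample class exists, and so Deligne's decomposition theorem (via hard Lefschetz) yields a $G_K$-equivariant splitting
\[R\Gamma(X_{\etabar},\bq_l)\cong\bigoplus_i V^i[-i].\]
Applying $R\Gamma(I,-)$ and rewriting using the Weil-Deligne formalism recalled before (\ref{tri3}) gives the decomposition of the middle term as $\bigoplus_i[V^{i,\delta,I}\xrightarrow{N} V^{i,\delta,I}(-1)][-i]$. For the outer terms, semisimplicity of Frobenius on each $H^i(X_{\sbar},\bq_l)$ makes the complex of $W_k$-modules $R\Gamma(X_{\sbar},\bq_l)$ formal (no $\Ext^1$ between semisimple $W_k$-modules), so it splits canonically as $\bigoplus_i H^i(X_{\sbar},\bq_l)[-i]$; the short exact sequence from \cite{Flach-Morin-12}[Thm.~10.1] then splits via the MWC-derived weight inequality, since $Z^i$ is pure of weight $i$ whereas $V^{i,\delta,I,N=0}$ has Frobenius weights $\geq i$. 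Dualizing and using (\ref{dualpart}), which splits for the same weight-theoretic reason, yields the required decomposition of the third term into $V^{i,\delta,I}(-1)/N[-1]\oplus Z^i[1]$-pieces.

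Second, I would verify block-diagonality of the connecting maps. On $i$-th cohomology, the long exact sequence combined with \cite{Flach-Morin-12}[Thm.~10.1] identifies the specialization map $H^i(\alpha)$ with the composite $Z^i\oplus V^{i,\delta,I,N=0}\twoheadrightarrow V^{i,\delta,I,N=0}\hookrightarrow H^i(X_{\etahen},\bq_l)$, whose image lies entirely in the $B_i$-summand of the middle term; an analogous analysis applies to the map to the third term and to the boundary. To lift this cohomology-level block-diagonality to the derived category, one uses that MWC forces the monodromy filtration on each $V^{i,\delta,I}$ to have pure Frobenius-graded pieces, so that (once Frobenius semisimplicity is propagated to $V^{i,\delta,I}$) each two-term complex $[V^{i,\delta,I}\xrightarrow{N}V^{i,\delta,I}(-1)]$ is itself formal; then $\Hom_{\mathcal{D}(W_k)}(A_i,B_j)=\bigoplus_n\Hom_{W_k}(H^n(A_i),H^n(B_j))$, reducing derived-category block-diagonality to the already-established cohomology-level statement.

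The hard part will be propagating semisimplicity and formality from the stated assumption on $H^i(X_{\sbar},\bq_l)$ to the Weil-Deligne representation $V^{i,\delta,I}$ governing the middle term. Without Frobenius semisimplicity on $V^{i,\delta,I}$ one cannot conclude that the two-term complex $[V^{i,\delta,I}\to V^{i,\delta,I}(-1)]$ is formal, and $\Hom_D(A_i,B_{i-1})$ could a priori be nonzero because $H^i(B_{i-1})=V^{i-1,\delta,I}(-1)/N$ has Frobenius weights $\geq i+1$ that could match weights appearing in $V^{i,\delta,I,N=0}$. Extracting the required semisimplicity on $V^{i,\delta,I}$ from MWC together with the hypothesis via the (MWC-controlled) specialization map, and then ruling out accidental Frobenius eigenvalue coincidences across different cohomological degrees, is the core technical issue—beyond this, the construction of the decomposition is formal.
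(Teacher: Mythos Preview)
Your overall plan is reasonable and you correctly flag the central difficulty, but there are two concrete errors and the ``hard part'' you defer is exactly where the paper's argument lies.

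First, the assertion ``no $\Ext^1$ between semisimple $W_k$-modules'' is false: $W_k\cong\bz$, and already $\Ext^1_{W_k}(\bq_l,\bq_l)\cong\bq_l$. The splitting $R\Gamma(X_{\sbar},\bq_l)\cong\bigoplus_i H^i(X_{\sbar},\bq_l)[-i]$ holds for a different reason, namely that the category of $l$-adic sheaves on $s$ has cohomological dimension one, so every bounded complex is (noncanonically) formal. Semisimplicity is not needed here, and the splitting is not canonical.

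Second, your weight inequality is backwards: $V^{i,\delta,I,N=0}$ is a \emph{quotient} of $H^i(X_{\sbar},\bq_l)$ via the specialisation sequence, hence has Frobenius weights $\leq i$, not $\geq i$. Since $Z^i$ is pure of weight exactly $i$, the two can share weight-$i$ pieces and a weight argument alone does not split the sequence. The paper instead uses the assumed semisimplicity of $H^i(X_{\sbar},\bq_l)$ directly to produce the splitting $\sigma_3$.

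The point you isolate as ``the core technical issue''---propagating semisimplicity to $V^{i,\delta,I}$---is precisely what the paper does first, and the mechanism is the primitive decomposition: under the monodromy weight conjecture the graded pieces $\mathrm{Gr}^M_n V^{i,\delta,I}$ are pure of distinct weights (so $V^{i,\delta,I}\cong\bigoplus_n\mathrm{Gr}^M_n$ as $W_k$-modules), and by \cite{weilii}[1.6.14.2--3] each $\mathrm{Gr}^M_n$ is a direct sum of Tate twists of graded pieces of $V^{N=0}$. Since $V^{N=0}$ is a quotient of the semisimple $H^i(X_{\sbar},\bq_l)$, it is semisimple, and hence so is $V^{i,\delta,I}$. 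This gives the splitting $\sigma_2$.

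Finally, your worry about ``accidental Frobenius eigenvalue coincidences across different cohomological degrees'' does not arise in the paper's approach. Rather than proving block-diagonality of a map between formal complexes, the paper works with truncations $\tau^{\geq i}$: it takes the splittings $\sigma_0,\sigma_1$ coming from cohomological dimension one and from (\ref{motdec}), combines them with $\sigma_2,\sigma_3$, adjusts $\sigma_0$ so that the square of splittings commutes, and then inducts on $i$. This is more direct than the formality route and avoids any cross-degree comparison.
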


\begin{proof} If $M$ denotes the monodromy filtration on $V:=V^{i,\delta,I}$ \cite{weilii}[1.6.1] we have an isomorphism of $W_k$-modules
\[ V\cong\bigoplus_{n\in\bz}\mathrm{Gr}^M_n V \]
since $\mathrm{Gr}^M_n V$ is pure of weight $n$ by the monodromy weight conjecture.  By \cite{weilii}[1.6.14.2, 1.6.14.3] there is an isomorphism of $W_k$-modules
\[ \mathrm{Gr}^M_n V \cong \bigoplus_{j\geq |n|\atop j\equiv n\,(2)} \mathrm{Gr}^M_{-j}(V^{N=0})(\frac{n+j}{2})\]
and $V^{N=0}$ is a quotient of $H^i(X_{\sbar},\bq_l)$, hence semisimple. We conclude that $W_k$ acts semisimply on $V=V^{i,\delta,I}$.  Since the cohomological dimension of the abelian category of $l$-adic sheaves on $s=\Spec(k)$ is equal to one, there exists a decomposition
\begin{equation} R\Gamma(X_{\sbar},\bq_l)\cong\bigoplus_{i\in\bz}H^i(X_{\sbar},\bq_l)[-i]\label{sdecomp}\end{equation}
in the derived category of $W_k$-modules. In the following diagram the unlabeled arrows form a commutative diagram induced by the truncation functors, $\sigma_0$ is the splitting given by (\ref{sdecomp}),
$\sigma_1$ is the splitting given by (\ref{motdec}), the (surjective) splitting $\sigma_2$ exists by semisimplicity of $V^{i,\delta,I}$ and the (injective) splitting $\sigma_3$ by semisimplicity of $H^i(X_{\sbar},\bq_l)$.
\[ \xymatrix{H^i(X_{\sbar},\bq_l)[-i]\ar[r]\ar[d] & V^{i,\delta,I,N=0}[-i]\ar[r]\ar@/_1pc/[l]_{\sigma_3} & V^{i,\delta,I}[-i]\ar[d]\ar@/_1pc/[l]_{\sigma_2}  \\
\tau^{\geq i}R\Gamma(X_{\sbar},\bq_l) \ar[rr]\ar@/_/[u]_{\sigma_0} && \tau^{\geq i}R\Gamma(X_{\etabar},\bq_l)^{\delta,I}\ar@/_/[u]_{\sigma_1}}.      \]
Write $\sigma_0=\sigma_0^Z\oplus\sigma_0^V$ corresponding to the decomposition of $W_k$-modules
\[ H^i(X_{\sbar},\bq_l)\cong Z^i\oplus V^{i,\delta,I,N=0}\]
 induced by $\sigma_3$. After replacing $\sigma_0$ by $\sigma_0^Z\oplus \sigma_3\circ\sigma_2\circ\sigma_1\circ\gamma$ where $\gamma=\tau^{\geq i}sp$ is the bottom horizontal arrow, the diagram of splittings commutes. A similar argument applies to (\ref{dualpart})$[-1]$. By an easy induction one finds that (\ref{tri3}) is isomorphic to the claimed direct sum of triangles.
\end{proof}

We record the following Corollary which is not needed in the rest of the paper.

\begin{cor} Under the assumptions of Prop. \ref{locmotivic} there exists a (noncanonical) decomposition
\begin{equation} Rf_*\bq_l\cong \bigoplus_{i\in\bz}R^if_*\bq_l[-i] \cong \bigoplus_{i\in\bz} j_*V^i[-i]\oplus \iota_*Z^i[-i]\notag\end{equation}
in the derived category of $l$-adic sheaves on $S$.
\label{sdecompo}\end{cor}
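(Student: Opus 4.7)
The plan is to obtain both decompositions simultaneously via the recollement of the derived category of $l$-adic sheaves on $S$ along the open-closed pair $j:\eta\hookrightarrow S\hookleftarrow s:\iota$. Recall that in this setup an $l$-adic complex $C$ on $S$ is equivalent to a triple $(C_\eta,C_s,\phi)$, with $C_\eta$ a complex on $\eta$, $C_s$ a complex on $s$ and $\phi:C_s\to\iota^*Rj_*C_\eta$ a morphism on $s$; the object is reconstructed from the localization triangle $\iota_*\cone(\phi)[-1]\to C\to Rj_*C_\eta$. In this language, proper base change identifies $Rf_*\bq_l$ with the triple $(Rf_{\eta,*}\bq_l,\,Rf_{s,*}\bq_l,\,sp)$, where $sp$ is the specialization map.

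First I would decompose each ingredient by cohomological degree. The decomposition (\ref{motdec}) gives $Rf_{\eta,*}\bq_l\cong\bigoplus_i V^i[-i]$ in the derived category of continuous $G_K$-representations. Semisimplicity of the $W_k$-action on each $H^i(X_{\sbar},\bq_l)$ combined with the $l$-cohomological dimension $\leq 1$ of $s$ forces $Rf_{s,*}\bq_l\cong\bigoplus_i H^i(X_{\sbar},\bq_l)[-i]$, since in an abelian category of cohomological dimension $\leq 1$ every bounded complex is formal: the obstruction $\Hom(H^n(C)[-n],\tau^{\leq n-1}C[1])$ to splitting each truncation involves only $\Ext^{\geq 2}$-groups and vanishes. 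Applying $\iota^*Rj_*$ to the $\eta$-decomposition then yields $\iota^*Rj_*Rf_{\eta,*}\bq_l\cong\bigoplus_i\bigl[V^{i,\delta,I}\xrightarrow{N} V^{i,\delta,I}(-1)\bigr][-i]$ by the Weil-Deligne identification already used in the derivation of (\ref{tri3}).

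The decisive input is Prop. \ref{locmotivic}: the global-sections triangle (\ref{tri3}) is isomorphic to a direct sum over $i$ of $[-i]$-shifts of explicit triangles in the derived category of $W_k$-modules. Translating this from $W_k$-module complexes to $l$-adic sheaves on $s$, it identifies the specialization $sp$, with respect to the decompositions chosen above, as a direct sum $\bigoplus_i sp_i$ of maps $sp_i:H^i(X_{\sbar},\bq_l)[-i]\to\bigl[V^{i,\delta,I}\xrightarrow{N} V^{i,\delta,I}(-1)\bigr][-i]$; in particular $sp$ has no off-diagonal components mixing different cohomological degrees, which a priori could exist in $\Ext^1$-positions. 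The gluing description then delivers $Rf_*\bq_l\cong\bigoplus_i R^if_*\bq_l[-i]$, with $R^if_*\bq_l$ corresponding to the triple $(V^i,H^i(X_{\sbar},\bq_l),sp_i)$.

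It remains to split each $R^if_*\bq_l$ as $j_*V^i\oplus\iota_*Z^i$. The image of $sp_i:H^i(X_{\sbar},\bq_l)\to V^{i,I}$ is precisely $V^{i,\delta,I,N=0}=V^{i,I}$, and by the short exact sequence coming from the monodromy weight conjecture combined with $W_k$-semisimplicity (already invoked in Prop. \ref{locmotivic}), $0\to Z^i\to H^i(X_{\sbar},\bq_l)\to V^{i,I}\to 0$ splits as $G_k$-modules. Fixing such a splitting, the triple $(V^i,\,Z^i\oplus V^{i,I},\,(0,\mathrm{id}))$ visibly decomposes as the direct sum of $(V^i,V^{i,I},\mathrm{id})\cong j_*V^i$ and $(0,Z^i,0)\cong\iota_*Z^i$. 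The only substantive input is Prop. \ref{locmotivic}; the potentially tricky point, which is ruling out off-diagonal specialization components in Step 2, is exactly what that proposition provides, and the rest is formal recollement bookkeeping.
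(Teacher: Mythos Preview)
Your argument is correct and follows essentially the same recollement/gluing strategy as the paper. The paper's proof is considerably terser: it only records the description of the \emph{abelian} category of $l$-adic sheaves on $S$ as diagrams $V_0\xrightarrow{sp}V_1$ and identifies $R^if_*\bq_l$, $j_*V^i$ and $\iota_*Z^i$ in those terms, leaving the formality statement $Rf_*\bq_l\cong\bigoplus_i R^if_*\bq_l[-i]$ implicit. You make this step explicit by working at the derived level and using Prop.~\ref{locmotivic} to show that the derived gluing datum (the specialization map into $\iota^*Rj_*$) is diagonal with respect to the chosen cohomological splittings; this is exactly the content that the paper's proof tacitly borrows from the proposition. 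Your final splitting of each $R^if_*\bq_l$ via the semisimple decomposition $H^i(X_{\sbar},\bq_l)\cong Z^i\oplus V^{i,I}$ is the same as the paper's.
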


\begin{proof} The abelian category of $l$-adic sheaves on $S$ can be described as the category of diagrams $V_0\xrightarrow{sp}V_1$ where $V_0$ is an $l$-adic $G_k$-representation, $V_1$ a $G_K$-representation and $sp$ is $G_K$-equivariant. In this description, by proper base change, $R^if_*\bq_l$ is given by $H^i(X_{\sbar},\bq_l)\xrightarrow{sp}V^i$, $j_*V^i$ by $(V^i)^I\to V^i$ and $\iota_*Z^i$ by $Z^i\to 0$.
\end{proof}

\begin{rem} An alternative proof of Cor. \ref{sdecompo} might be obtained along the following lines. There is a perverse $t$-structure on the derived category of $l$-adic sheaves on separated, finite type $S$-schemes
\cite{illusie}[App.]. Assuming the monodromy weight conjecture it seems plausible that one can adapt the proof of the relative hard Lefschetz isomorphism
\[ \ell^i: {^p}H^{-i}Rf_*\bq_l[d]\xrightarrow{\sim}{^p}H^{i}Rf_*\bq_l[d](i) \]
from \cite{bbd}[Thm. 5.4.10].  As explained in \cite{deligne94} this implies a (noncanonical) direct sum decomposition
\[ Rf_*\bq_l[d]\xrightarrow{\sim}\bigoplus_{i\in\bz}({^p}H^{i}Rf_*\bq_l[d])[-i].\]
Since $\F:={^p}H^{i}Rf_*\bq_l[d]$ is a pure perverse sheaf one might be able to adapt the proof of the direct sum decomposition \cite{bbd}[5.3.11]
\[ \F\cong j_{*!}j^*\F\oplus \iota_*{^p}\iota^*\F\]
assuming semisimplicity of Frobenius. By proper base change $j^*\F\cong V^{i+d}$ and since $S$ is of dimension one we have $j_{*!}V^i\cong j_*V^i$. Moreover ${^p}\iota^*\F\cong Z^{i-1}[1]$.
\end{rem}

For any $l$-adic representation $V$ of $G_K:=\Gal(\bar{K}/K)$ recall the definition of $f$-cohomology of Bloch-Kato \cite{fon92}
\[ R\Gamma_f(K,V):=\left( V^I\xrightarrow{1-\phi}V^I\right)\cong R\Gamma(S, j_*V)\cong R\Gamma(s,\iota^*j_*V)\]
and the definition of $R\Gamma_{/f}(K,V)$ via the exact triangle
\[ R\Gamma_f(K,V)\to R\Gamma(K,V)\to R\Gamma_{/f}(K,V)\to\]
where $R\Gamma(K,V)=R\Gamma(\eta,V)$ is continuous Galois cohomology of $V$  and $\phi$ is the geometric Frobenius.

\begin{cor} Assume Conjecture \ref{zhongconj} and the assumptions of Prop. \ref{locmotivic} for the regular scheme $X$. Then for any $n\in\bz$ there is a (noncanonical) isomorphism of localization triangles
\[\minCDarrowwidth1em \begin{CD}
 R\Gamma(X,\bq_l(n)) @>\sim>> R\Gamma(s,Z^{2n}(n))[-2n]\oplus \bigoplus\limits_{i\in\bz} R\Gamma_f(K,V^i(n))[-i]\\
@VVV @VVV \\
 R\Gamma(X_{\eta},\bq_l(n))@>\sim >> \bigoplus\limits_{i\in\bz} R\Gamma(K,V^i(n))[-i]\\
@VVV @VVV\\
 R\Gamma(X_s, f_s^!\bq_l(n-d)[-2d+1])@>\sim >> R\Gamma(s,Z^{2n}(n))[-2n+1]\oplus\bigoplus\limits_{i\in\bz} R\Gamma_{/f}(K,V^i(n))[-i]\\
@VVV @VVV\end{CD}\]
where the Tate twist in the right hand column is defined in the usual way.
For $n<0$ all complexes in this diagram are acyclic.
\label{decompcor}\end{cor}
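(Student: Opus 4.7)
The plan is to handle the case $n<0$ separately by vanishing arguments, and then for $n\geq 0$ to reduce the left-hand column to a Galois-cohomology computation that can be matched term by term with the right-hand column via Proposition \ref{locmotivic}.

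For $n<0$, the left-hand column is trivially acyclic because $\bz(n)=0$ by the convention of this appendix. For the right column I would observe that $V^i=H^i(X_{\etabar},\bq_l)$ vanishes outside $0\le i\le 2d-2$, and for such $i$ the Weil conjectures make $V^i(n)$ pure of weight $i-2n\geq 2$, so $1-\phi$ acts invertibly on $V^{i,I}(n)$ and hence $R\Gamma_f(K,V^i(n))$, $R\Gamma(K,V^i(n))$ and $R\Gamma_{/f}(K,V^i(n))$ all vanish (using local Tate duality for the last two). Likewise $Z^{2n}=0$ since $Z^i$ is supported in $0\le i\le 2d-2$, so both sides of the diagram are acyclic.

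For $n\geq 0$, the first step is to use Lemma \ref{invertible} together with Conjecture \ref{zhongconj} to identify $\bz(n)/l^{\bullet}\simeq \mu_{l^{\bullet}}^{\otimes n}$ on $X_{\et}$ (and hence on $X_\eta$ and $X_s$), which turns the left column into continuous $l$-adic cohomology with $\bq_l(n)$-coefficients. Since $S$ is henselian local, $R\Gamma(X,-)\simeq R\Gamma(X_s,\iota_X^{*}-)$, so combining proper base change for $f$ with the absolute purity isomorphism $R\iota_X^{!}\bq_l\simeq f_s^{!}\bq_l(-d)[-2d]$ (valid since $X$ is regular and $l$ is invertible on $X$) reproduces the left column as the Tate twist by $(n)$ of the image of the triangle (\ref{tri3}) under $R\Gamma(W_k,-)$. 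Then I would invoke Proposition \ref{locmotivic}, which gives a noncanonical isomorphism in the derived category of $W_k$-modules between (\ref{tri3}) and the direct sum over $i\in\bz$ (shifted by $[-i]$) of the triangles (\ref{tri5}).

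The key remaining step is to split each (\ref{tri5}) (after twisting by $(n)$) as the direct sum of the trivial triangle $Z^i(n)\to 0\to Z^i(n)[1]$ and the short-exact-sequence triangle
\[
V^{i,I}(n)\longrightarrow \bigl[V^{i,\delta,I}(n)\xrightarrow{N}V^{i,\delta,I}(n-1)\bigr]\longrightarrow V^{i,\delta,I}(n-1)/N\,[-1]\longrightarrow,
\]
using the elementary identification $V^{i,\delta,I,N=0}=V^{i,I}$, which follows from the formula $\rho^{\delta}|_I=\rho|_I\cdot\exp(-t_l(\cdot)N)$ together with nilpotency of $N$ on $V^P$. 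Applying $R\Gamma(W_k,-)$ to the second triangle yields the Bloch--Kato triangle $R\Gamma_f(K,V^i(n))\to R\Gamma(K,V^i(n))\to R\Gamma_{/f}(K,V^i(n))$ by the description of Galois cohomology recalled before (\ref{tri3}), while applying it to the trivial triangle gives $R\Gamma(s,Z^i(n))\to 0\to R\Gamma(s,Z^i(n))[1]$, which vanishes unless $i=2n$ because $Z^i$ is pure of weight $i$ and thus Frobenius eigenvalues on $Z^i(n)$ have absolute value $q^{(i-2n)/2}\neq 1$ for $i\neq 2n$. Reassembling with the $[-i]$ shifts reproduces the right column exactly as stated. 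The only serious obstacle is Proposition \ref{locmotivic} itself, whose hypotheses (monodromy--weight conjecture and Frobenius semisimplicity on $H^i(X_{\sbar},\bq_l)$) do all the real work; once (\ref{tri3}) decomposes motivically, the rest is formal manipulation of triangles.
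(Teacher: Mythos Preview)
Your proposal is correct and follows essentially the same approach as the paper: identify the left column (for $n\geq 0$, via Conjecture \ref{zhongconj} and Lemma \ref{invertible}) with $R\Gamma(s,-)$ applied to the $(n)$-twist of (\ref{tri1})$\cong$(\ref{tri3}), invoke Proposition \ref{locmotivic} to decompose into the triangles (\ref{tri5}), and then observe that $R\Gamma(s,Z^i(n))$ is acyclic for $i\neq 2n$ by purity of $Z^i$. Your explicit splitting of (\ref{tri5}) into the trivial $Z^i$-triangle and the $N$-triangle, together with the identification $V^{i,\delta,I,N=0}=V^{i,I}$, spells out a step the paper leaves implicit.

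One small imprecision: for $n<0$ you write that $V^i(n)$ is ``pure of weight $i-2n$'', but $V^i$ need not be pure as a $W_k$-module (only the graded pieces of the monodromy filtration are). What you actually need---and what the weight argument gives---is that the Frobenius weights on $V^{i,I}(n)$ are all $>0$ (being a quotient of $H^i(X_{\sbar},\bq_l)(n)$, whose weights lie in $[-2n,i-2n]$), and dually on $(V^{i}(n))^*(1)^{I}$ are all $<0$; combined with $R\Gamma_{/f}(K,V)\cong R\Gamma_f(K,V^*(1))^*[-2]$ this yields the vanishing you claim. The paper handles this the same way, deferring to the ``easy weight argument'' of Proposition \ref{nkleiner0}.
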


\begin{proof} The left vertical triangle is isomorphic to $R\Gamma(s,-)$ applied to the $(n)$-Tate-twist of (\ref{tri1}) which agrees with the higher Chow definition of the left hand column for $n\geq 0$ if we assume Conjecture \ref{zhongconj}. The statement now follows from our rewriting of (\ref{tri1}) as (\ref{tri3}) and $\bigoplus_{i\in\bz}(\ref{tri5})[-i]$ together with the fact $R\Gamma(s,Z^i(n))$ is acyclic for $i\neq 2n$ since $Z^i$ is pure of weight $i$. The acyclicity of the complexes in the right hand column for $n<0$ follows from an easy weight argument (see the Proof of Prop. \ref{nkleiner0} below). This then shows that the higher Chow definition of the left hand column also holds for $n<0$.

\end{proof}

The middle isomorphism in the diagram of Corollary \ref{decompcor} can be interpreted as a computation of $l$-adic motivic cohomology of the generic fibre from the geometric cohomology (fibre functor) $X\mapsto H^i(X_{\etabar},\bq_l)$ with its natural extra structure, i.e. the continuous $G_K$-action. Succinctly, one has the isomorphism
\[ R\Gamma(X_\eta,\bq_l(n))\cong R\Gamma(K,R\Gamma(X_{\etabar},\bq_l(n))).\]
Alternatively, one can compute $R\Gamma(X_\eta,\bq_l(n))$ from the slightly different fibre functor $X\mapsto H^i(X_{\etabar},\bq_l)^\delta$ with values in Weil-Deligne representations introduced above. One has
\begin{align} R\Gamma(X_{\eta},\bq_l(n))\cong &R\Gamma(s,R\Gamma(X_{\etahen},\bq_l(n)))\notag\\ \cong &\mathrm{holim} \left(\begin{CD}
R\Gamma(X_{\etabar},\bq_l)^{\delta,I} @>1-\phi_n>> R\Gamma(X_{\etabar},\bq_l)^{\delta,I}\\@V N VV @V N VV\\
R\Gamma(X_{\etabar},\bq_l)^{\delta,I}@>1-\phi_{n-1}>> R\Gamma(X_{\etabar},\bq_l)^{\delta,I}\end{CD}\right)\label{t1}\end{align}
where $\phi_r=\phi |k|^{-r}$.

\subsection{$p$-adic cohomology} The two computations of $l$-adic motivic cohomology of the generic fibre $X_\eta$ given at the end of the last section lead to different absolute cohomology theories for $l=p$. Since $p$ is invertible on $X_\eta$ and $X_\eta$ is smooth over a field, one still has the isomorphism $\bz(n)/p^\bullet\cong \mu_{p^\bullet}^{\otimes n}[0]$ on $X_{\eta,et}$. Hence the first
computation
\[ R\Gamma(X_\eta,\bq_p(n))\cong R\Gamma(G_K, R\Gamma(X_{\etabar},\bq_p(n)))\]
applies in the same way. The analogue of the fibre functor $X\mapsto H^i(X_{\etabar},\bq_l)^\delta$ with values in $l$-adic Weil-Deligne representation is the fibre functor
\[ X\mapsto  D_{pst}(H^i(X_{\etabar},\bq_p))\]
with values in (weakly admissible) filtered $(\phi,N,G_K)$-modules in the sense of \cite{fon2} (finite dimensional $K_0^{ur}$ vector spaces with operators $\phi$ and $N$ and a discrete $G_K$-action as well as a filtration on the scalar extension to $\bar{K}$. See also \cite{fon3} for a detailed discussion of these fibre functors both for $l=p$ and $l\neq p$). If $f$ is smooth then this fibre functor is isomorphic to crystalline cohomology of the special fibre $X_{\sbar}$, if $f$ is log-smooth it is isomorphic to log-crystalline, or Hyodo-Kato cohomology of $X_{\sbar}$ and for general $X_\eta$ (not necessarily smooth or proper) it was constructed by Beilinson \cite{beil13} from the log-smooth case by $h$-descent. Following \cite{nekniz13} we denote this functor by
\[ X\mapsto H_{HK}^{B,i}(X_{{\etabar},h})\cong D_{pst}(H^i(X_{\etabar},\bq_p)).\]
The corresponding absolute cohomology theory is log-syntomic cohomology of $X_\eta$ as defined by Niziol and Nekovar in
\cite{nekniz13}. It can be computed by a homotopy limit similar to (\ref{t1})
\begin{multline}R\Gamma_{syn}(X_\eta,n)\\\cong \mathrm{holim} \left(\begin{CD} R\Gamma_{HK}^B(X_{\etabar,h})^{G_K}
 @>(1-\phi_n,\iota_{dR})>>R\Gamma_{HK}^B(X_{\etabar,h})^{G_K}\oplus (R\Gamma_{dR}(X_{\etabar,h})/F^n)^{G_K} \\@V N VV @V (N,0) VV\\
R\Gamma_{HK}^B(X_{\etabar,h})^{G_K}@>1-\phi_{n-1}>>R\Gamma_{HK}^B(X_{\etabar,h})^{G_K}\end{CD}\right). \label{syndef}\end{multline}
There is a comparison map \cite{nekniz13}[Thm. A (4)]
\[R\Gamma_{syn}(X_\eta,n)\xrightarrow{\beta}R\Gamma(X_\eta,\bq_p(n))\]
but it only induces an isomorphism in degrees $i\leq n$.
\bigskip

\subsubsection{Localization triangles} We now discuss the localization exact triangles for both version of $p$-adic motivic cohomology.
We first establish a localization triangle for a fairly general regular scheme $\X$ which uses the definition of $\bz(n)$ as a cycle complex.

\begin{lem} Assume $\X$ is of finite type over a discrete valuation ring $D$ with perfect residue field $\kappa$ of characteristic $p$ and fraction field $F$ of characteristic $0$. Denote by
\[j:\X_{F}\to \X,\quad\quad i:\X_{\kappa}\to \X\]
the complementary open and closed immersions. If $\X$ is regular and satisfies Conjecture \ref{zhongconj}
then there is an exact triangle of complexes of sheaves on $\X_{et}$
\begin{equation} \tau^{\leq n-1}(i_*\bz(n-1)/p^\bullet)[-2]\to \bz(n)/p^\bullet\to \tau^{\leq n}Rj_*\mu_{p^\bullet}^{\otimes n}\to. \label{etloc}\end{equation}
If $\X_s$ is a normal crossing divisor, the truncation in front of the first term can be removed.
\label{loclem}\end{lem}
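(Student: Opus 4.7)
The plan is to deduce the triangle from Bloch's étale localization sequence for the higher Chow complex on the regular scheme $\X$. Since $\X$ is regular, Levine's purity theorem \cite{Levine01} gives an exact triangle
\[i_*\bz(n-1)[-2]\to\bz(n)\to Rj_*\bz(n)_{|\X_F}\to\]
on $\X_{et}$. Tensoring derived with $\bz/p^\bullet$, and using the exactness of $i_*$ together with the projection formula $Rj_*(-)\otimes^L\bz/p^\bullet\simeq Rj_*((-)\otimes^L\bz/p^\bullet)$ for the perfect complex $\bz/p^\bullet$, yields
\[i_*\bz(n-1)/p^\bullet[-2]\to\bz(n)/p^\bullet\to Rj_*(\bz(n)_{|\X_F}/p^\bullet)\to.\]
Since $\X_F$ is smooth over the characteristic-zero field $F$, Conjecture \ref{zhongconj} holds there (cf.\ \cite{Geisser04a}[Cor.~4.4]) and $p$ is invertible on $\X_F$, so Lemma \ref{invertible} gives $\bz(n)_{|\X_F}/p^\bullet\cong\mu_{p^\bullet}^{\otimes n}$ and the ``untruncated'' localization triangle
\begin{equation*}
i_*\bz(n-1)/p^\bullet[-2]\to\bz(n)/p^\bullet\to Rj_*\mu_{p^\bullet}^{\otimes n}\to\tag{$\ast$}
\end{equation*}
on $\X_{et}$.

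Next I would invoke Conjecture \ref{zhongconj} for $\X$ itself, which (via the short exact sequence $0\to\bz\xrightarrow{p^\bullet}\bz\to\bz/p^\bullet\to 0$) forces $\bz(n)/p^\bullet$ to be concentrated in cohomological degrees $\leq n$. The second map in $(\ast)$ therefore factors uniquely through the truncation $\tau^{\leq n}Rj_*\mu_{p^\bullet}^{\otimes n}$. Letting $F$ be the fibre of $\bz(n)/p^\bullet\to\tau^{\leq n}Rj_*\mu_{p^\bullet}^{\otimes n}$, the octahedral axiom applied to the composition $\bz(n)/p^\bullet\to\tau^{\leq n}Rj_*\mu_{p^\bullet}^{\otimes n}\to Rj_*\mu_{p^\bullet}^{\otimes n}$ produces a second exact triangle
\[F\to i_*\bz(n-1)/p^\bullet[-2]\xrightarrow{u}\tau^{>n}Rj_*\mu_{p^\bullet}^{\otimes n}[-1]\to.\]
The long exact cohomology sequence of $(\ast)$ combined with $\mathcal H^i(\bz(n)/p^\bullet)=0$ for $i>n$ yields canonical isomorphisms $\mathcal H^i(Rj_*\mu_{p^\bullet}^{\otimes n})\cong i_*\mathcal H^{i-1}(\bz(n-1)/p^\bullet)$ for all $i\geq n+1$, and chasing the octahedral construction identifies $\mathcal H^k(u)$ for $k\geq n+2$ with the corresponding shift of this isomorphism. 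Consequently $F$ vanishes in degrees $\geq n+2$ and its cohomology in degrees $\leq n+1$ coincides with that of $i_*\bz(n-1)/p^\bullet[-2]$; the natural map $F\to i_*\bz(n-1)/p^\bullet[-2]$ therefore factors through $\tau^{\leq n+1}(i_*\bz(n-1)/p^\bullet[-2])=\tau^{\leq n-1}(i_*\bz(n-1)/p^\bullet)[-2]$, and the induced map is an isomorphism on all cohomology sheaves, hence a quasi-isomorphism. This gives the desired triangle.

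For the final assertion, when $\X_s$ is a normal crossings divisor the goal is to show that $i_*\bz(n-1)/p^\bullet$ itself is cohomologically concentrated in degrees $\leq n-1$, so that the outer truncation becomes superfluous. Replacing $\X_s$ by its reduction (harmless étale-locally) and writing $\X_s^{\mathrm{red}}=\bigcup Y_\alpha$ with each $Y_\alpha$ smooth over the perfect residue field $\kappa$, the Mayer--Vietoris spectral sequence for the simplicial scheme of intersections $Y_{\alpha_0}\cap\cdots\cap Y_{\alpha_p}$ (all smooth over $\kappa$ by the NCD hypothesis) reduces the vanishing statement to Conjecture \ref{zhongconj} for each smooth stratum, which is known by \cite{Geisser04a}[Cor.~4.4]. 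The main obstacle in the argument is the bookkeeping around the octahedral axiom --- pinning down that the map $u$ realizes precisely the identification of cohomology sheaves coming from $(\ast)$ --- together with, for the NCD addendum, verifying that the mod-$p$ Bloch cycle complex descends cleanly along Mayer--Vietoris; the remainder is a routine diagram chase.
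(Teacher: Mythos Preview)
Your first step is the problem. You assert an \emph{\'etale} localization triangle
\[i_*\bz(n-1)[-2]\to\bz(n)\to Rj_*\bz(n)_{|\X_F}\to\]
on $\X_{et}$, citing Levine. But Levine's localization sequence (and Geisser's version \cite{Geisser04a}[Cor.~3.3]) is a statement about the cycle complex as a complex of presheaves, or equivalently of Zariski sheaves: the third term is $j_*\bz(n)_{Zar}\simeq Rj_*\bz(n)_{Zar}$, not the \'etale $Rj_*$. After \'etale sheafification $\epsilon^*$ you get a triangle with third term $\epsilon^*Rj_*\bz(n)_{Zar}$, and there is no reason for this to agree with $Rj_*\bz(n)$ in the \'etale sense. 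Likewise the first term becomes $i_*\bz(n-1)$ but the \emph{\'etale} localization triangle has $i_*Ri^!\bz(n)$ there, and identifying $Ri^!\bz(n)$ with $\bz(n-1)[-2]$ \'etale-locally is exactly the purity statement that is not available for general regular $\X$. The paper's proof confronts this head-on: it compares the pulled-back Zariski triangle to the genuine \'etale localization triangle, and uses the Rost--Voevodsky theorem (Beilinson--Lichtenbaum) on $\X_F$ to show that $\tau^{\leq n}\epsilon^*Rj_*\bz(n)_{Zar}/p^\bullet\cong\tau^{\leq n}Rj_*\bz(n)/p^\bullet$, then feeds this into a Five-Lemma argument. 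Your octahedral bookkeeping is fine, but it is applied to a triangle you have not established.

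Your NCD argument also does not work as stated. Bloch's cycle complex does not satisfy Mayer--Vietoris for closed covers, so you cannot compute $\bz(n-1)/p^\bullet$ on $\X_s$ from its values on the smooth strata $Y_{\alpha_0}\cap\cdots\cap Y_{\alpha_p}$ by a simplicial spectral sequence. The paper instead uses Zhong's identification of $\bz(r)/p^\bullet$ on any finite-type $\kappa$-scheme with the Gersten complex of logarithmic de~Rham--Witt sheaves \cite{Zhong14}[Thm.~1.1], together with Sato's result \cite{sato07-2}[Cor.~2.2.5] that for a normal crossing divisor this Gersten complex is cohomologically concentrated in the single degree $r$.
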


\begin{proof} We follow the argument in \cite{Geisser04a}[Proof of Thm. 1.2.1].  Since $\X$ is of finite type over a discrete valuation ring one has an exact localization triangle on $\X_{Zar}$ \cite{Geisser04a}[Cor. 3.3]
\begin{equation} i_*\bz(n-1)_{Zar}[-2]\to \bz(n)_{Zar}\to j_*\bz(n)_{Zar}\to \notag\end{equation}
and an isomorphism $\Gamma(V,\bz(n))\cong R\Gamma(V_{Zar},\bz(n))$ for any open subscheme $V\subseteq \X$ \cite{Geisser04a}[Thm.3.2 b)]. But this implies that
\[j_*\bz(n)_{Zar}\xrightarrow{\sim}Rj_*\bz(n)_{Zar}\]
since the map on stalks at $x\in \X$
\[ \varinjlim_{x\in V}\Gamma(V\cap \X_F,\bz(n))\to\varinjlim_{x\in V}R\Gamma((V\cap \X_F)_{Zar},\bz(n))\]
is an isomorphism. So we obtain a localization triangle in the Zariski topology
\begin{equation} i_*\bz(n-1)_{Zar}[-2]\to \bz(n)_{Zar}\to Rj_*\bz(n)_{Zar}\to \label{zarloc}\end{equation}
not only on $\X$ but, by the same argument, on any etale scheme $\X'\to \X$.
Let $\epsilon:\X_{et}\to \X_{Zar}$ be the morphism from the etale topos to the topos of Zariski sheaves on the category of etale schemes over $\X$ and use the same notation for $\X_F$ and $X_\kappa$. Note that $\epsilon_*$ is the inclusion of etale into Zariski sheaves and hence the identity map on objects whereas $\epsilon^*$ is etale sheafification. The identity $\epsilon^*\epsilon_*\F\cong \F$ for any etale sheaf $\F$ induces isomorphisms
\begin{equation}\epsilon^*\epsilon_*\F^\bullet\xrightarrow{\sim}\epsilon^*R\epsilon_*\F^\bullet\xrightarrow{\sim}\F^\bullet\label{adiso}\end{equation}
for any complex of etale sheaves $\F^\bullet$. In particular $\epsilon_*\bz(n)=\bz(n)_{Zar}$ and $\epsilon^*\bz(n)_{Zar}=\bz(n)$.
There is a commutative diagram of exact triangles on $\X_{et}$
\begin{equation}\begin{CD} \epsilon^*i_*\bz(n-1)_{Zar}[-2] @>>> \epsilon^*\bz(n)_{Zar} @>>>  \epsilon^*Rj_*\bz(n)_{Zar} @>>>{}\\
@VVV \Vert@. @VVV @.\\
 i_*Ri^!\bz(n) @>>> \bz(n) @>>>  Rj_*\bz(n) @>>> {}
\end{CD}\notag\end{equation}
where the top row is the pullback of (\ref{zarloc}) and the bottom row is the localization triangle in the etale topology. The vertical maps are induced by choosing a K-injective resolution $\bz(n)\to I(n)$. To see the right commutative diagram, start with the adjunction $\id\to j_*j^*$ in the category of complexes of sheaves and compose with $\epsilon^*\epsilon_*$. Applied to $\bz(n)$ we get
\[ \epsilon^*\bz(n)_{Zar}=\epsilon^*\epsilon_*\bz(n)\to \epsilon^*\epsilon_*j_*j^*\bz(n)=\epsilon^*j_*\epsilon_*j^*\bz(n)=\epsilon^*j_*\bz(n)_{Zar}\] and applied to $I(n)$ we get
\[\bz(n)\cong \epsilon^*\epsilon_*I(n)\to \epsilon^*\epsilon_*j_*j^*I(n)=\epsilon^*\epsilon_*Rj_*\bz(n)\cong Rj_*\bz(n).\]
The left commutative diagram is similarly obtained by applying $\epsilon^*\epsilon_*i_*i^!\to \epsilon^*\epsilon_*$ to $\bz(n)\to I(n)$. Taking mapping cones of multiplication by $p^\bullet$ we obtain the diagram
\begin{equation}\begin{CD} \epsilon^*i_*\bz(n-1)_{Zar}/p^\bullet[-2] @>>> \epsilon^*\bz(n)_{Zar}/p^\bullet @>>>  \epsilon^*j_*\bz(n)_{Zar}/p^\bullet @>>>{}\\
@VVV \Vert@. @VVV @.\\
 i_*Ri^!\bz(n)/p^\bullet @>>> \bz(n)/p^\bullet @>>>  Rj_*\bz(n)/p^\bullet @>>> {}.
\end{CD}\label{geisserdia}\end{equation}
By the Rost-Voevodsky theorem (previously Beilinson-Lichtenbaum conjecture, see e.g. \cite{Zhong14}[Thm. 2.5]) on $\X_F$, the adjunction
\[ \bz(n)_{Zar}/p^\bullet=\epsilon_*\bz(n)/p^\bullet\to \epsilon_*I(n)/p^\bullet=R\epsilon_*\bz(n)/p^\bullet\]
induces a quasi-isomorphism
\[ \bz(n)_{Zar}/p^\bullet\cong\tau^{\leq n}R\epsilon_*\bz(n)/p^\bullet.\]
By \cite{Zhong14}[Lemma 2.4] we obtain a quasi-isomorphism
\[ \tau^{\leq n}Rj_*\bz(n)_{Zar}/p^\bullet\cong\tau^{\leq n}Rj_*\tau^{\leq n}R\epsilon_*\bz(n)/p^\bullet\cong \tau^{\leq n}Rj_*R\epsilon_*\bz(n)/p^\bullet=\tau^{\leq n}R\epsilon_*Rj_*\bz(n)/p^\bullet\]
and hence an isomorphism
\[ \tau^{\leq n}\epsilon^*Rj_*\bz(n)_{Zar}/p^\bullet\cong\tau^{\leq n}\epsilon^*R\epsilon_*Rj_*\bz(n)/p^\bullet\cong\tau^{\leq n}Rj_*\bz(n)/p^\bullet,\]
i.e. the right vertical map in (\ref{geisserdia}) is an isomorphism in degrees $\leq n$.
From the Five Lemma and $\tau^{\leq n}\bz(n)/p^\bullet\cong\bz(n)/p^\bullet$ it follows that the truncation of the left vertical map in (\ref{geisserdia})
\[\tau^{\leq n+1}\epsilon^*i_*\bz(n-1)_{Zar}/p^\bullet[-2]\to \tau^{\leq n+1} i_*Ri^!\bz(n)/p^\bullet\]
is a quasi-isomorphism and that there is an exact triangle
\[ \tau^{\leq n+1}(i_*\bz(n-1)/p^\bullet[-2])\to\bz(n)/p^\bullet\to \tau^{\leq n}Rj_*\bz(n)/p^\bullet\to \]
using $\epsilon^*i_*\bz(n-1)_{Zar}/p^\bullet\cong i_*\epsilon^*\bz(n-1)_{Zar}/p^\bullet\cong i_*\bz(n-1)/p^\bullet$.
Using Lemma \ref{invertible} we have an isomorphism
\begin{equation}\bz(n)/p^\bullet\cong\mu_{p^\bullet}^{\otimes n}\label{generickummer}\end{equation}
on $\X_F$ and we get the exact triangle (\ref{etloc}).

Recall that if $Z$ is a separated, finite type scheme over a perfect field $\kappa$ of characteristic $p$ all of whose irreducible components are of dimension $d-1$ and $r\geq 0$ there is a quasi-isomorphism on $Z_{et}$ \cite{Zhong14}[Thm. 1.1]
\[ \bz(r)/p^\bullet=\bz^c(d-1-r)/p^\bullet[-2d+2]\cong\tilde{\nu}_{Z,\bullet}(d-1-r)[-2d+2]\]
where $\tilde{\nu}_{Z,\bullet}(d-1-r)[-2d+2]$ is the Gersten complex of logarithmic deRham-Witt sheaves (see \cite{sato07-2}[1.7])
\[ \bigoplus_{x\in Z^0}i_{x,*}W_\bullet\Omega^r_{x,log} \xrightarrow{(-1)^r\partial} \bigoplus_{x\in Z^1}i_{x,*}W_\bullet\Omega^{r-1}_{x,log} \xrightarrow{(-1)^r\partial}\cdots\]
concentrated in degrees $[r,2r]$. Note that these complexes are identical for $Z$ and $Z_{red}$. If $\X_s$ is a normal crossing divisor, i.e. $\X$ is semistable, then this complex is cohomologically concentrated in degree $r$ by \cite{sato07-2}[Cor. 2.2.5]. Hence the truncation in front of the first term in (\ref{etloc}) can be removed.
\end{proof}

The following consequence of Lemma \ref{loclem} is only needed in section \ref{sec:compatibility} in the main text.

\begin{lem} With notation and assumptions as in Lemma \ref{loclem} denote by $\hat{D}$ the $p$-adic completion of $D$ and by $g:\X_{\hat{D}}\to\X$ the natural (flat) morphism. Then the flat pullback on higher Chow complexes \cite{bloch86}
\[ g^*\bz(n)/p^\bullet \to \bz(n)/p^\bullet\]
is a quasi-isomorphism.
\label{gloclem}\end{lem}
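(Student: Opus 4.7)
The plan is to deduce the statement from the localization triangle of Lemma \ref{loclem}, applied on both $\mathcal X$ and $\mathcal X_{\hat D}$, exploiting the fact that $g$ is an isomorphism on special fibres. Write $\hat i\colon\mathcal X_\kappa\to\mathcal X_{\hat D}$, $\hat j\colon\mathcal X_{\hat F}\to\mathcal X_{\hat D}$ and $\tilde g\colon\mathcal X_{\hat F}\to\mathcal X_F$, so that $g\circ\hat i=i$ and $g\circ\hat j=j\circ\tilde g$ and $g^{-1}(\mathcal X_\kappa)=\mathcal X_\kappa$. First I would check that the hypotheses of Lemma \ref{loclem} are inherited by $\mathcal X_{\hat D}$: it is flat of finite type over the DVR $\hat D$, and regularity follows because at closed points over $\mathcal X_\kappa$ the completed local rings $\hat{\mathcal O}_{\mathcal X,x}$ and $\hat{\mathcal O}_{\mathcal X_{\hat D},x'}$ agree, while $\mathcal X_{\hat F}$ is smooth (being the base change of the smooth characteristic zero scheme $\mathcal X_F/F$). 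Conjecture \ref{zhongconj} for $\mathcal X_{\hat D}$ reduces to that for $\mathcal X$ by the same étale-local argument, and alternatively can be bypassed since all we really need is an exact triangle of the shape (\ref{etloc}) on $\mathcal X_{\hat D}$ identifying $g^*\bz(n)/p^\bullet$.

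Next I would apply the exact functor $g^*$ to (\ref{etloc}) on $\mathcal X$, obtaining
\[
\tau^{\leq n-1}(g^* i_*\bz(n-1)/p^\bullet)[-2]\to g^*\bz(n)/p^\bullet\to \tau^{\leq n}\,g^*Rj_*\mu_{p^\bullet}^{\otimes n}\to,
\]
and identify the outer terms with those of (\ref{etloc}) on $\mathcal X_{\hat D}$. The left-hand identification $g^* i_*\bz(n-1)/p^\bullet\cong \hat i_*\bz(n-1)/p^\bullet$ is trivial from proper base change along the closed immersion $i$, since $g^{-1}(\mathcal X_\kappa)=\mathcal X_\kappa$ and the sheaf $\bz(n-1)/p^\bullet$ on $\mathcal X_\kappa$ is intrinsic to $\mathcal X_\kappa$. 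The right-hand identification, $g^*Rj_*\mu_{p^\bullet}^{\otimes n}\xrightarrow{\sim}R\hat j_*\mu_{p^\bullet}^{\otimes n}$, is where the real content lies.

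This base-change statement is the main obstacle and I would deduce it from the Fujiwara--Gabber rigidity/affine analogue of proper base change: for the regular pair $(\mathcal X,\mathcal X_\kappa)$ and the henselian/completion morphism $g$, the restriction of $Rj_*\F$ to the closed fibre of a locally constant torsion sheaf $\F$ on $\mathcal X_F$ with order invertible on $\mathcal X_F$ depends only on the completion of $\mathcal X$ along $\mathcal X_\kappa$. Applied to $\F=\mu_{p^\bullet}^{\otimes n}$, which makes sense since $F$ has characteristic zero, this gives the base change on the closed stratum; on the open stratum $\mathcal X_{\hat F}\subset\mathcal X_{\hat D}$ the base change is tautological because $\tilde g^*\mu_{p^\bullet}^{\otimes n}=\mu_{p^\bullet}^{\otimes n}$. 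Gluing these two identifications yields the desired base change isomorphism on all of $\mathcal X_{\hat D}$.

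Having matched the outer vertices, the five lemma provides an isomorphism between $g^*\bz(n)/p^\bullet$ and the middle vertex of (\ref{etloc}) on $\mathcal X_{\hat D}$, namely $\bz(n)/p^\bullet$. The last step is to check that this abstract isomorphism coincides with the flat pullback map of the statement. This is formal: the triangle (\ref{etloc}) is constructed in Lemma \ref{loclem} via the Zariski localization triangle for the higher Chow complex (which is functorial for flat pullback) and the Beilinson--Lichtenbaum identification of $\bz(n)/p^\bullet$ with $\mu_{p^\bullet}^{\otimes n}$ away from $p$. Both ingredients commute with $g^*$ by construction, so a direct diagram chase shows that the isomorphism obtained above is the flat pullback, completing the proof.
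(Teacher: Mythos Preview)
Your approach is essentially the same as the paper's: compare the localization triangle (\ref{etloc}) on $\mathcal X$ (pulled back by $g^*$) with (\ref{etloc}) on $\mathcal X_{\hat D}$, identify the closed-fibre term via $g|_{\mathcal X_\kappa}=\mathrm{id}$, identify the open-fibre term via the Gabber--Fujiwara formal base change theorem, and conclude by the five lemma. The paper isolates the base-change step as a separate lemma (for arbitrary torsion complexes $\mathcal F$, the maps $g^*Rj_*j^*\mathcal F\to R\hat j_*\tilde g^*j^*\mathcal F$ and $Ri^!\mathcal F\to R\hat i^!g^*\mathcal F$ are quasi-isomorphisms), but the content is the same as what you invoke.

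One point of presentation: your phrasing ``the five lemma provides an isomorphism \ldots\ The last step is to check that this abstract isomorphism coincides with the flat pullback'' has the logic inverted. The five lemma does not produce a map; it needs one as input. What you must do is start with the flat pullback $\gamma:g^*\bz(n)/p^\bullet\to\bz(n)/p^\bullet$, show it extends to a map between the two triangles, and then verify that the induced maps on the outer terms are the base-change isomorphisms you have in hand. The paper makes this transparent by inserting a third (middle) row, the \'etale localization triangle for the sheaf $g^*\bz(n)/p^\bullet$ on $\mathcal X_{\hat D}$: the comparison from the pulled-back triangle to this middle row is pure base change (the separate lemma), while the comparison from the middle row to (\ref{etloc}) on $\mathcal X_{\hat D}$ is functoriality of localization applied to $\gamma$. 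Your ``direct diagram chase'' in the final paragraph amounts to collapsing these two steps; it works, but the intermediate row makes the role of $\gamma$ explicit and avoids having to trace compatibilities through the Zariski-to-\'etale passage and the Beilinson--Lichtenbaum identification.
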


\begin{proof}  We first prove the following general base change result for torsion sheaves.

\begin{lem} Let $\X$ be of finite type over a Dedekind $D$ ring with fraction field of characteristic zero. Let $p$ be a prime number and denote by $\hat{D}$ the $p$-adic completion of $D$. Consider the Cartesian diagram
\[\begin{CD} \X_{\hat{D}}[1/p] @>\hat{j}>> \X_{\hat{D}} @<\hat{i}<< \X_{\bF_p}\\ @V\tilde{g}VV @VgVV \Vert@.\\
\X[1/p] @>j>> \X @<i<< \X_{\bF_p}.     \end{CD}\]
Then for any complex of sheaves $\F$ on $\X_{et}$ with torsion cohomology, the base change morphism
\begin{equation} \beta: g^*Rj_*j^*\F\to R\hat{j}_*\tilde{g}^*j^*\F\notag\end{equation}
as wells the natural morphism
\begin{equation} \hat{i}^*\alpha:Ri^!\F\to R\hat{i}^!g^*\F\notag\end{equation}
are quasi-isomorphisms.
\label{gloclem2}\end{lem}

\begin{proof} There is a commutative diagram with exact rows
\[\begin{CD}
\hat{i}_*R\hat{i}^!g^*\F@>>> g^*\F@>>> R\hat{j}_*\hat{j}^*g^*\F @>>>{}\\
@AA\alpha A\Vert@.@AA\beta A @.\\
g^*i_*Ri^!\F @>>> g^*\F@>>> g^*Rj_*j^*\F@>>>{}
\end{CD}\]
where the top row is the localization triangle for $g^*\F$ on $\X_{\hat{D}}$ and the bottom row is the pullback of the localization triangle for $\F$ on $\X$. The right square commutes since both maps are adjoint to the same map and similarly for the left square. The stalk of $\beta$ at a geometric point $p:\Spec(\bar{x})\to\X_{\hat{D}}$ is an isomorphism if $x\in \X_{\hat{D}}[1/p]$, i.e. $p=\hat{j}p'$, in view of the isomorphism
\begin{align*} p^*g^*Rj_*j^*\F\cong &(p')^*\hat{j}^*g^*Rj_*j^*\F\cong (p')^*\tilde{g}^*j^*Rj_*j^*\F\\
\cong &(p')^*\tilde{g}^*j^*\F\cong (p')^*\hat{j}^*R\hat{j}_*\tilde{g}^*j^*\F\\\cong &p^*R\hat{j}_*\tilde{g}^*j^*\F.
\end{align*}
For $x\in \X_{\bF_p}$ the stalk of $\beta$ is the map
\begin{equation} R\Gamma(A[\frac{1}{p}]_{et},\F)\to R\Gamma(A'[\frac{1}{p}]_{et},\F)\label{stalk}\end{equation}
where $A$ (resp $A'$) is the strict Henselization of $\X$ (resp. $\X_{\hat{D}}$) at $\bar{x}$. By the definition and elementary properties of the notion of Henselian pair \cite{egaiv}[(18.5.5), (18.5.6)] it follows that $(A,(p))$ and $(A',(p))$ are Henselian pairs. By the Gabber-Fujiwara formal base change theorem \cite{fujiwara95}[Cor. 6.6.4] the restriction map
\[R\Gamma(A[\frac{1}{p}]_{et},\F)\to R\Gamma(\hat{A}[\frac{1}{p}]_{et},\F)\]
is a quasi-isomorphism, where $\hat{A}$ is the $p$-adic completion of $A$. The same holds for $A'$ and we have $\hat{A}\cong \widehat{A'}$. Hence (\ref{stalk}) and therefore $\beta$ are quasi-isomorphisms. This implies that $\alpha$ is a quasi-isomorphism which proves Lemma \ref{gloclem2}.
\end{proof}

We continue with the proof of Lemma \ref{gloclem}. There is a commutative diagram with exact rows where the top row is (\ref{etloc}) on $\X_{\hat{D}}$, the bottom row is the pullback of (\ref{etloc}) on $\X$ and the middle row is the truncated localization triangle for $g^*\bz(n)/p^\bullet$ on $\X_{\hat{D}}$.
\[\begin{CD}\tau^{\leq n-1}(\hat{i}_*\bz(n-1)/p^\bullet)[-2]@>>> \bz(n)/p^\bullet@>>> \tau^{\leq n}R\hat{j}_*\mu_{p^\bullet}^{\otimes n}@>>>{}\\
@AA\alpha' A@AA\gamma A@AA\beta' A @.\\
\tau^{\leq n+1}\hat{i}_*R\hat{i}^!g^*\bz(n)/p^\bullet@>>> g^*\bz(n)/p^\bullet@>>> \tau^{\leq n}R\hat{j}_*\hat{j}^*g^*\bz(n)/p^\bullet @>>>{}\\
@AA\tau^{\leq n+1}\alpha A\Vert@.@AA\tau^{\leq n}\beta A @.\\
g^*\tau^{\leq n-1}(i_*\bz(n-1)/p^\bullet)[-2]@>>> g^*\bz(n)/p^\bullet@>>> g^*\tau^{\leq n}Rj_*\mu_{p^\bullet}^{\otimes n}@>>>{}
\end{CD}\]
The maps from the middle to the top row form a commutative diagram by functoriality of the (truncated) localization triangle. By Lemma \ref{gloclem2} for $\F:=\bz(n)/p^\bullet$ the middle and bottom row are quasi-isomorphic. Since the base change of $g$ to $\kappa$ is an isomorphism, the map $\alpha'\circ\tau^{\leq n+1}\alpha$ is an isomorphism. It follows that $\alpha'$ is a quasi-isomorphism. Since
\begin{equation}\hat{j}^*g^*\bz(n)/p^\bullet\cong \tilde{g}^*j^*\bz(n)/p^\bullet\cong \tilde{g}^*\mu_{p^\bullet}^{\otimes n}\cong \mu_{p^\bullet}^{\otimes n}\notag\end{equation}
the map $\beta'$ is a quasi-isomorphism and we deduce the same for $\gamma$.
\end{proof}

For the discussion below we find it most convenient to isolate the following statement. Lemma \ref{loclem} shows that it holds in the semistable case under Conjecture \ref{zhongconj}. Unfortunately we cannot prove it in the general regular case even assuming Gersten's conjecture.

\begin{conj} Assume $\X$ is of finite type over a discrete valuation ring $D$ with perfect residue field $\kappa$ of characteristic $p$ and fraction field $F$ of characteristic $0$. Denote by
\[j:\X_{F}\to \X,\quad\quad i:\X_{\kappa}\to \X\]
the complementary open and closed immersions. If $\X$ is regular then there is an exact triangle of complexes of sheaves on $\X_{et}$
\begin{equation} i_*\bz(n-1)/p^\bullet[-2]\to \bz(n)/p^\bullet\to \tau^{\leq n}Rj_*\mu_{p^\bullet}^{\otimes n}\to. \label{etloc2}\end{equation}
\label{locconj}\end{conj}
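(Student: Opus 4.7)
The plan is to reduce Conjecture \ref{locconj} to a cohomological concentration statement on the special fibre, namely that $\bz(n-1)/p^\bullet$ on $\X_{\kappa,et}$ lives in degrees $\leq n-1$, so that $i_*\bz(n-1)/p^\bullet$ coincides with its $\tau^{\leq n-1}$-truncation. Granting Conjecture \ref{zhongconj} for $\X$, Lemma \ref{loclem} already produces the triangle (\ref{etloc}), whose left term differs from the desired one in (\ref{etloc2}) only by the outer truncation; since both triangles share the same middle and right terms, the cone of $\bz(n)/p^\bullet \to \tau^{\leq n}Rj_*\mu_{p^\bullet}^{\otimes n}$ is determined up to non-canonical isomorphism, and so the conjecture is equivalent to this concentration. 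In effect this is a form of Gersten's conjecture on the possibly singular and non-reduced special fibre of the regular scheme $\X$.

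The most natural first attempt uses de Jong's semistable alteration theorem: working étale-locally on $\X$ and, if necessary, enlarging $D$, one finds a proper, surjective, generically finite map $\pi\colon \X'\to \X$ with $\X'$ regular and $\X'_\kappa$ a strict normal crossings divisor. On such $\X'$ the sharp triangle (\ref{etloc2}) already follows from the final sentence of Lemma \ref{loclem} together with Sato's theorem \cite{sato07-2}[Cor.\ 2.2.5], which identifies $\bz(n-1)/p^\bullet$ with the single logarithmic de Rham--Witt sheaf $\nu^{n-1}_\bullet$ in degree $n-1$ on SNC $\kappa$-varieties. One then wants to propagate the concentration of $i_*\bz(n-1)/p^\bullet$ from $\X'_\kappa$ back to $\X_\kappa$, which calls for an $eh$- or $h$-descent statement for $\bz(m)/p^\bullet$ analogous to Geisser's \cite{Geisser06}, but transposed from equal to mixed characteristic.

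An alternative and probably cleaner route is to identify $\bz(n)/p^\bullet$ on $\X_{et}$ with Sato's complex $\mathfrak{T}_\bullet(n)$ of \cite{Sato07}, which is constructed for arbitrary regular arithmetic $\X$ and for which the exact triangle (\ref{etloc2}) is built into Sato's absolute purity theorem. For $\X$ smooth over a Dedekind ring this comparison is the main result of \cite{Geisser04a}, but its extension to regular $\X$ with arbitrary singular special fibre is precisely the missing ingredient. I expect this step to be the principal obstacle: it amounts to an absolute cohomological purity statement $Ri^!\bz(n)/p^\bullet \cong \bz(n-1)/p^\bullet[-2]$ for Bloch's cycle complex modulo $p^\bullet$, which is currently known only in the smooth or SNC case. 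A realistic route around this obstruction is via the syntomic description of $p$-adically completed motivic cohomology discussed in section \ref{sec:zeta} (building on \cite{nekniz13}, \cite{colniz15}, \cite{ertlniziol16}), where the corresponding purity should be within reach in the full regular setting.
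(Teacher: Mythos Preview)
The statement you are addressing is labelled as a \emph{Conjecture} in the paper, and the paper does not prove it. Immediately before stating it, the authors write: ``Lemma \ref{loclem} shows that it holds in the semistable case under Conjecture \ref{zhongconj}. Unfortunately we cannot prove it in the general regular case even assuming Gersten's conjecture.'' So there is no proof in the paper to compare against; the authors explicitly flag the general regular case as open.

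Your reduction is correct and matches the paper's reasoning: under Conjecture \ref{zhongconj}, Lemma \ref{loclem} gives the triangle (\ref{etloc}) with the truncated term $\tau^{\leq n-1}(i_*\bz(n-1)/p^\bullet)[-2]$, and since the other two vertices agree with (\ref{etloc2}), the conjecture is equivalent to $\bz(n-1)/p^\bullet$ on $\X_{\kappa,et}$ being concentrated in degrees $\leq n-1$. This is exactly what the paper establishes in the semistable case via \cite{sato07-2}[Cor.\ 2.2.5], and your observation that this is what fails in general is on target.

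However, none of your proposed routes constitutes a proof. The alteration-plus-descent approach requires $eh$/$h$-descent for $\bz(m)/p^\bullet$ in mixed characteristic, which is not available. The comparison with Sato's $\mathfrak{T}_\bullet(n)$ for general regular $\X$ is, as you yourself note, the missing ingredient and is not known beyond the smooth case of \cite{Geisser04a}. Your remark that ``this is a form of Gersten's conjecture'' on the special fibre should be tempered: the paper says explicitly that even Gersten's conjecture does not suffice, so the obstruction is genuinely deeper than a Gersten-type acyclicity statement. What you have written is a reasonable outline of possible research directions, consistent with the paper's own indications (syntomic methods via \cite{nekniz13}, \cite{colniz15}, \cite{ertlniziol16}), but it is not a proof and the paper does not claim one either.
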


We now return to the local setting. Assuming Conjecture \ref{locconj} we obtain a commutative diagram of exact localization triangles where the top row is induced by (\ref{etloc2}) and the bottom row is the usual localization triangle in the etale topology.
\[ \begin{CD} R\Gamma(X_s,\bq_p(n-1))[-2] @>>>  R\Gamma(X,\bq_p(n))@>>>  R\Gamma(X,\tau^{\leq n}Rj_*\bq_p(n)) \to\\@VVV \Vert  @. @VVV\\ R\Gamma(X_s, Ri^! \bq_p(n))  @>>>  R\Gamma(X,\bq_p(n))@>>>  R\Gamma(X_{\eta},\bq_p(n))\to\end{CD}\]

\begin{conj} For regular $X$ and $n\geq 0$ the period map
\[R\Gamma_{syn}(X_{\eta},n)\xrightarrow{\beta} R\Gamma(X_{\eta},\bq_p(n))\]
of \cite{nekniz13}[Thm. A (4)] factors through an isomorphism
\[R\Gamma_{syn}(X_{\eta},n) \cong R\Gamma(X,\tau^{\leq n}Rj_*\bq_p(n)).\]
\label{synvan}\end{conj}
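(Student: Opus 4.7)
The plan is to realize both sides as hypercohomology of complexes of sheaves on $X_{et}$, construct a natural sheaf-level morphism, and verify it is a quasi-isomorphism on stalks using the Nekov\'a\v r--Nizio\l{} comparison theorem.

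First, I would construct a complex $\mathcal{S}(n)$ of sheaves on $X_{et}$ with $R\Gamma(X,\mathcal{S}(n)) \simeq R\Gamma_{syn}(X_\eta,n)$ together with a canonical morphism $\mathcal{S}(n) \to \tau^{\leq n}Rj_*\bq_p(n)$ whose effect on hypercohomology produces, after composition with $R\Gamma(X,\tau^{\leq n}Rj_*\bq_p(n)) \to R\Gamma(X,Rj_*\bq_p(n)) = R\Gamma(X_\eta,\bq_p(n))$, the period map $\beta$. For $X$ semistable over $S$ this is essentially the Fontaine--Messing / Kato construction as refined via $B_{st}^+$ by \cite{nekniz13}: the syntomic complex naturally maps into truncated \'etale cohomology, the truncation occurring because only the filtration step $F^n$ appears in the definition (\ref{syndef}). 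For general regular $X$ one would extend the construction by $h$-descent along a de Jong alteration, using that $\bq_p$-syntomic cohomology satisfies $h$-descent (as in \cite{ertlniziol16}) and likewise for $\tau^{\leq n}Rj_*\bq_p(n)$ on the target side.

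Second, taking hypercohomology gives a canonical map $\alpha: R\Gamma_{syn}(X_\eta,n) \to R\Gamma(X,\tau^{\leq n}Rj_*\bq_p(n))$, and it suffices to check $\mathcal{S}(n) \to \tau^{\leq n}Rj_*\bq_p(n)$ is a quasi-isomorphism of complexes of sheaves, which reduces to stalks. At a geometric point of $X_\eta$ both sides restrict to $\bq_p(n)$ in degree zero, and the map is the identity by construction. At a geometric point $\bar x$ of $X_s$, the stalk of $\tau^{\leq n}Rj_*\bq_p(n)$ is $\tau^{\leq n}R\Gamma(X_{\bar x,\eta}^h,\bq_p(n))$, which by Gabber--Fujiwara formal base change (Lemma \ref{gloclem2}) equals $\tau^{\leq n}R\Gamma(\hat X_{\bar x,\eta},\bq_p(n))$; the stalk of $\mathcal{S}(n)$ computes the local syntomic cohomology of the same formal completion, and the isomorphism in degrees $\leq n$ from \cite{nekniz13}[Thm. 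A(4)] together with the fact that $\mathcal{S}(n)$ lives in degrees $\leq n$ by construction gives the desired quasi-isomorphism.

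The principal obstacle is the extension to non-semistable regular $X$: although the syntomic-\'etale comparison is well-developed in the semistable case, constructing $\mathcal{S}(n)$ as a genuine complex of sheaves on $X_{et}$ (rather than as an $h$-local object) and controlling the interaction of $\tau^{\leq n}$ with the Hyodo--Kato and de Rham pieces of (\ref{syndef}) under alterations is delicate; this will likely require the refined syntomic comparison of \cite{colniz15} together with a careful $h$-sheafification argument. A secondary technical subtlety is the canonicity of $\alpha$ in the derived category, which can be addressed by the vanishing $\mathrm{Hom}_{\mathcal{D}(X_{et})}(\mathcal{S}(n),\tau^{>n}Rj_*\bq_p(n)[-1]) = 0$, reflecting the degree concentration of $\mathcal{S}(n)$ at the sheaf level.
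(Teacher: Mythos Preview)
The statement you are attempting to prove is a \emph{conjecture} in the paper, not a theorem; the paper does not claim a proof for general regular $X$. What the paper does prove is the strictly semistable case (Proposition \ref{niziol}), and the proof there is short and direct: it invokes \cite{colniz15}[Thm.~1.1], which gives a morphism $\alpha_{n,\bullet}^{FM}:\mathscr S_\bullet(n)_X\to i^*\tau^{\leq n}Rj_*\mu_{p^\bullet}^{\otimes n}$ with kernel and cokernel annihilated by a fixed power of $p$, so that after taking $\mathrm{holim}$ and tensoring with $\bq$ one gets the isomorphism on the target side; then \cite{nekniz13}[Thm.~3.8] identifies $(\mathrm{holim}_\bullet R\Gamma(X,\mathscr S_\bullet(n)_X))_\bq$ with $R\Gamma_{syn}(X_\eta,n)$.

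Your sketch for the semistable case is in the same spirit---you invoke the Fontaine--Messing/Kato complex and the Nekov\'a\v r--Nizio\l{} comparison---though the paper does not argue via stalks but rather applies the Colmez--Nizio\l{} theorem globally on $X_s$ and then passes to the inverse limit rationally. Your proposed extension to general regular $X$ via $h$-descent along alterations is genuinely beyond what the paper establishes; you correctly identify this as the principal obstacle, and indeed the paper leaves it open. The difficulties you flag (sheafifying $\mathcal S(n)$ on $X_{et}$ rather than $h$-locally, commuting truncation with the Hyodo--Kato and de Rham pieces under alterations) are real, and there is an additional issue you do not mention: a de Jong alteration $X'\to X$ has $X'$ semistable over a possibly ramified extension $S'\to S$, so one would need to control both the change of base ring and the failure of the alteration to be finite \'etale when descending the isomorphism back to $X$. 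In short, your semistable argument is essentially equivalent to the paper's, while your general-case strategy is a reasonable outline for attacking an open conjecture rather than a reproduction of a proof in the paper.
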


\begin{prop} If $X$ is strictly semistable (in the sense of \cite{colniz15}) then Conjecture \ref{synvan} holds. \label{niziol}\end{prop}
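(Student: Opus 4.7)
The plan is to combine proper base change with the main comparison theorem of Colmez--Niziol \cite{colniz15} and the construction of syntomic cohomology in \cite{nekniz13}. First I would reduce the right-hand side of Conjecture \ref{synvan} to a statement on $X_s$. Since $S=\Spec(\co_K)$ is henselian, its \'etale cohomology agrees with that of $s$, and combined with proper base change along the proper map $f:X\to S$ this yields a natural quasi-isomorphism
\[ R\Gamma(X,\tau^{\leq n}Rj_*\bq_p(n)) \;\xrightarrow{\sim}\; R\Gamma(X_s,\tau^{\leq n}i^*Rj_*\bq_p(n)),\]
using that $i^*$ commutes with truncation.

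Next, the key geometric input is the main theorem of Colmez--Niziol \cite{colniz15}, which for a strictly semistable $X/\co_K$ and $r\geq 0$ produces a natural period morphism from the Fontaine--Messing--Kato log-syntomic complex $\mathcal{S}'_n(r)$ on $X_{s,et}$ to truncated $p$-adic nearby cycles,
\[ \mathcal{S}'_n(r)\longrightarrow \tau^{\leq r}\,i^*Rj_*\mu_{p^n}^{\otimes r},\]
and shows that its cone is annihilated by a constant power of $p$ depending only on $r$ and the absolute ramification index of $K$. Taking $\mathrm{holim}_n$ and inverting $p$ produces a quasi-isomorphism $\mathcal{S}'(r)_{\bq_p}\xrightarrow{\sim} \tau^{\leq r}i^*Rj_*\bq_p(r)$ in the derived category of $\bq_p$-sheaves on $X_{s,et}$, and hence after applying $R\Gamma(X_s,-)$ a quasi-isomorphism on global sections compatible with all structures.

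Third, I would identify $R\Gamma(X_s,\mathcal{S}'(n)_{\bq_p})$ with the Nekov\'a\v{r}--Nizio\l\ syntomic cohomology $R\Gamma_{syn}(X_\eta,n)$ of \eqref{syndef}. Since $X$ is strictly semistable, the canonical log-smooth pair $(X,M_X)$ over $(\co_K,M_{\co_K})$ computes Beilinson's Hyodo--Kato and log de Rham cohomologies $R\Gamma_{HK}^B(X_{\etabar,h})$ and $R\Gamma_{dR}(X_{\etabar,h})$ via its own log-crystalline and log de Rham cohomology; this is precisely the log-smooth case of Beilinson's $h$-descent comparison underlying \cite{nekniz13}. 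The homotopy limit \eqref{syndef} then unwinds to $R\Gamma(X_s,\mathcal{S}'(n)_{\bq_p})$, compatibly with the period map $\beta$ by the very construction of $\beta$. Chaining the three identifications factors $\beta$ through the asserted isomorphism $R\Gamma_{syn}(X_\eta,n)\cong R\Gamma(X,\tau^{\leq n}Rj_*\bq_p(n))$.

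The principal obstacle is the third step: matching the geometric log-syntomic complex on the fixed strictly semistable model with the $h$-descent definition of $R\Gamma_{syn}(X_\eta,n)$ from \cite{nekniz13}, and tracking the Frobenius, monodromy and Hodge filtration data through both constructions so that the Colmez--Niziol quasi-isomorphism really does implement Nekov\'a\v{r}--Nizio\l's $\beta$. This compatibility is essentially what \cite{nekniz13} and \cite{colniz15} are designed to establish, but assembling the precise statement needed here is a nontrivial bookkeeping exercise in which the recent refinement \cite{ertlniziol16} is convenient.
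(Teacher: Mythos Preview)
Your proposal is correct and follows essentially the same approach as the paper: reduce to $X_s$ via proper base change, apply the Colmez--Nizio\l\ theorem to identify the truncated nearby cycles with the Fontaine--Messing--Kato log-syntomic complex after inverting $p$, and then identify the latter with $R\Gamma_{syn}(X_\eta,n)$. The only point worth noting is that what you flag as the ``principal obstacle'' is in fact handled by a single citation: \cite{nekniz13}[Thm.~3.8] gives precisely the required isomorphism $\alpha_{syn}:(\mathrm{holim}_\bullet R\Gamma(X,\mathscr S_\bullet(n)_X))_\bq \cong R\Gamma_{syn}(X_{\eta},n)$, so no further bookkeeping is needed.
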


\begin{proof} By \cite{colniz15}[Thm.1.1] there is a morphism
\[ \alpha_{n,\bullet}^{FM}:\mathscr S_\bullet(n)_X\to i^*\tau^{\leq n}Rj_*\mu_{p^\bullet}^{\otimes n}\]
whose kernel and cokernel are annihilated by a fixed power of $p$, and where $\mathscr S_\bullet(n)_X$ is the log-syntomic complex
of Fontaine-Messing-Kato \cite{kato94}. Hence
\[ (\mathrm{holim}_\bullet R\Gamma(X_s,\mathscr S_\bullet(n)_X))_\bq\cong (\mathrm{holim}_\bullet R\Gamma(X_s,i^*\tau^{\leq n}Rj_*\mu_{p^\bullet}^{\otimes n})_\bq\cong R\Gamma(X,\tau^{\leq n}Rj_*\bq_p(n)).\]
By \cite{nekniz13}[Thm. 3.8] there is an isomorphism
\[\alpha_{syn}:(\mathrm{holim}_\bullet R\Gamma(X,\mathscr S_\bullet(n)_X))_\bq \cong R\Gamma_{syn}(X_{\eta},n) \]
where $R\Gamma_{syn}(X_{\eta},n)$ is given by the homotopy limit (\ref{syndef}).
\end{proof}

\begin{cor} For $X$ semistable satisfying Conjecture \ref{zhongconj} and $n\geq 0$ there is an exact localization triangle
\[ R\Gamma(X_s,\bq_p(n-1))[-2] \to R\Gamma(X,\bq_p(n)) \to  R\Gamma_{syn}(X_{\eta},n)\to.\]
\label{absloc}\end{cor}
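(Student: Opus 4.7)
The plan is to take the exact triangle of \'etale sheaves on $X$ provided by Lemma \ref{loclem} (in the semistable case, where the truncation in front of $i_*\bz(n-1)/p^\bullet[-2]$ can be removed), apply the functor $R\Gamma(X,-)$, take the homotopy limit over $p^\bullet$, and tensor with $\bq$. This produces precisely the top row of the commutative diagram of localization triangles displayed immediately before Conjecture \ref{synvan}, namely
\[
R\Gamma(X_s,\bq_p(n-1))[-2] \to R\Gamma(X,\bq_p(n)) \to R\Gamma(X,\tau^{\leq n}Rj_*\bq_p(n)) \to.
\]

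For the first term, one uses that $i$ is a closed immersion so that $i_*$ is exact and preserves injectives, giving $R\Gamma(X, i_*\bz(n-1)/p^\bullet) \cong R\Gamma(X_s, \bz(n-1)/p^\bullet)$; taking $\mathrm{holim}_\bullet$ and tensoring with $\bq$ yields $R\Gamma(X_s,\bq_p(n-1))$ by definition (\ref{compdef}). For $n=0$ both sides vanish under the convention $\bz(-1)=0$, consistent with the triangle reducing to an isomorphism. The only minor technical point is that homotopy limits along $p^\bullet$ commute with $R\Gamma(X,-)$ on the three terms, which holds via standard Mittag--Leffler considerations since each $p^\bullet$-torsion cohomology pro-system is levelwise bounded.

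Finally, the third term is identified with syntomic cohomology via Proposition \ref{niziol}: for strictly semistable $X$, Conjecture \ref{synvan} is known and the Fontaine--Messing--Kato period map furnishes a canonical isomorphism
\[
R\Gamma_{syn}(X_\eta,n) \xrightarrow{\sim} R\Gamma(X,\tau^{\leq n}Rj_*\bq_p(n)).
\]
Substituting this in delivers the claimed exact triangle. No serious obstacle arises: the heavy lifting has already been done in Lemma \ref{loclem} (the integral localization triangle on $X_{et}$, resting on the Rost--Voevodsky theorem and the regularity of $X$) and in Proposition \ref{niziol} (the comparison with syntomic cohomology via \cite{colniz15} and \cite{nekniz13}); the corollary simply assembles these two inputs together with the standard functorial operations $R\Gamma(X,-)$, $\mathrm{holim}_\bullet$, and $(-)\otimes\bq$.
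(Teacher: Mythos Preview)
Your proof is correct and follows exactly the paper's approach: the paper's proof reads in its entirety ``Combine Lemma \ref{loclem} and Prop. \ref{niziol},'' and you have simply spelled out what this combination entails. The additional remarks you make (identification of the first term via exactness of $i_*$, the Mittag--Leffler point for homotopy limits, the $n=0$ degenerate case) are reasonable elaborations but not strictly needed.
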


\begin{proof} Combine Lemma \ref{loclem} and Prop. \ref{niziol}. \end{proof}

\begin{rem} The natural map $\bz(n)\to \bz(n)/p^\bullet$ of (pro)-complexes of sheaves on $X_{et}$ and $X_{\eta,et}$ induces a commutative diagram
\[\begin{CD} H^i(X,\bz(n))_\bq @>\iota >> H^i(X_\eta,\bz(n))_\bq\\
@VVV @VVc V \\ H^i(X,\bq_p(n)) @>>> H^i(X_\eta,\bq_p(n)).\end{CD}\]
The Chern class maps from $K$-theory to motivic cohomology \cite{bloch86} induce an isomorphism
\[K_{2n-i}(X_\eta)^{(n)}_\bq\cong H^i(X_{\eta,Zar},\bz(n))_\bq\cong H^i(X_\eta,\bz(n))_\bq  \]
whose composite with $c$ is the etale Chern class map $c^{et}$. By \cite{nekniz13}[Thm. A (7)]
$c^{et}$ factors through $H^i_{syn}(X_\eta, n)$, hence so does the composite map $c\iota$. Corollary \ref{absloc} then gives another proof of the factorization
\[K_{2n-i}(X)^{(n)}_\bq\cong H^i(X,\bz(n))_\bq\to H^i_{syn}(X_\eta, n)\to H^i(X_\eta,\bq_p(n))  \]
in the semistable case.
\end{rem}

 Concerning a syntomic description of $R\Gamma(X,\bq_p(n))$ we expect the following. The geometric cohomology theory for arbitrary varieties $Y/k$ is rigid cohomology \cite{ber86,ber97}
 $$Y\mapsto H^i_{rig}(Y/K_0)$$
 taking values in the (Tannakian) category of $\phi$-modules (finite dimensional $K_0$-vector spaces with a Frobenius-semilinear endomorphism $\phi$ \cite{fon2}[4.2]). We expect the following $p$-adic analogue of (\ref{tri3}) relating the geometric cohomology of the special and the generic fibre.

\begin{conj} For regular $X$ there is an exact triangle in the derived category of $\phi$-modules
\begin{multline}R\Gamma_{rig}(X_s/K_0)\xrightarrow{sp}\left[R\Gamma_{HK}^B(X_{\etabar,h})^{G_K}\xrightarrow{N}R\Gamma_{HK}^B(X_{\etabar,h})(-1)^{G_K}\right]\xrightarrow{sp'} \\R\Gamma_{rig}(X_s/K_0)^*(-d)[-2d+1]\to\label{ploctri}\end{multline}
where $sp$ induces the specialization map constructed in \cite{wu2014} and $sp'$ is the composite of the Poincare duality isomorphism
\[R\Gamma_{HK}^B(X_{\etabar,h})(-1)\cong R\Gamma_{HK}^B(X_{\etabar,h})^*(-d)[-2d+2] \]
on $X_{\etabar}$ and $sp^*$.
\label{ploc}\end{conj}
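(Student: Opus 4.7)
The plan is to mirror the $\ell$-adic construction of triangle (\ref{tri3}), replacing $\ell$-adic nearby cycles (with their Weil-Deligne structure) by Hyodo-Kato cohomology (with its $(\phi,N,G_K)$-module structure). First I would reduce to the strictly semistable case. Since $R\Gamma_{HK}^B(X_{\etabar,h})$ is defined by Beilinson via $h$-descent from the log-smooth situation, and since de Jong's alterations combined with hyperdescent techniques of \cite{beil13} allow one to compute both $R\Gamma_{rig}(X_s/K_0)$ and $R\Gamma_{HK}^B$ from a semistable hypercovering, I would build the triangle Zariski-locally on an $h$-hypercover of $X$ that is semistable and then descend. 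This is the analogue of how one constructs the $\ell$-adic triangle (\ref{tri1}) after regularity is used to identify $R\iota_X^!\bq_l$ with $f_s^!\bq_l(-d)[-2d]$.

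In the strictly semistable case, the middle complex has a concrete description: $R\Gamma_{HK}^B(X_{\etabar,h})$ is identified with the log-crystalline cohomology of the log-special fibre tensored with $K_0^{ur}$, and the bracket $[\,\cdot\xrightarrow{N}\cdot(-1)\,]$ then computes the log-crystalline cohomology over $(\Spec\, \co_K)^{\log}$ by the classical Hyodo-Kato monodromy sequence. The specialization map $sp$ would be the composite of Grosse-Kl\"onne's (or the recent Wu \cite{wu2014}) specialization from rigid cohomology of $X_s$ to Hyodo-Kato cohomology followed by the canonical inclusion into the bracket; in the semistable case this factors through the $N=0$ part on $H^0$ of the bracket. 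The dual map $sp'$ would be constructed by dualizing $sp$ via Poincar\'e duality for rigid cohomology $R\Gamma_{rig}(X_s/K_0) \cong R\Gamma_{rig}(X_s/K_0)^*(-(d-1))[-2(d-1)]$ together with the Poincar\'e duality on Hyodo-Kato cohomology $R\Gamma_{HK}^B(X_{\etabar,h})\cong R\Gamma_{HK}^B(X_{\etabar,h})^*(-(d-1))[-2(d-1)]$ (which interchanges $N$ and $-N$), so that the shift $[-2d+1]$ and twist $(-d)$ come out as in (\ref{tri3}).

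To establish exactness I would work degree-by-degree after decomposing both sides via the monodromy filtration on $H_{HK}^{B,i}$, exactly as in the proof of Prop. \ref{locmotivic}. Under the $p$-adic monodromy-weight conjecture (Mokrane's conjecture), the $N=0$ quotient of the weight filtration on $H_{HK}^{B,i}(X_{\etabar,h})^{G_K}$ receives the image of $H^i_{rig}(X_s/K_0)$ with pure cokernel $Z^i$, and the $N$-coinvariants dually inject into $H^{2d-1-i}_{rig}(X_s/K_0)^*(-d)$ with the same pure cokernel. Assembling these short exact sequences gives the required direct sum of triangles analogous to (\ref{tri5}), and the resulting exactness then descends back to $R\Gamma_{rig}$ via the semisimplicity of Frobenius on rigid cohomology groups of smooth proper varieties over $\bar k$ (a further piece of input one has to assume, matching the $\ell$-adic hypotheses of Prop. \ref{locmotivic}).

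The main obstacle will be the passage from semistable to general regular $X$. Unlike Hyodo-Kato cohomology, the rigid cohomology $R\Gamma_{rig}(X_s/K_0)$ of a general (non-reduced, non-log-smooth) special fibre is not itself built by $h$-descent from a smooth situation in an obvious way, and one needs a descent spectral sequence compatible with Beilinson's $h$-descent for $R\Gamma_{HK}^B$. In effect, one must show that both specialization maps $sp$ and $sp'$ are functorial for alterations of $X$ and that the resulting simplicial systems compute the expected objects; this hinges on comparison theorems between Grosse-Kl\"onne's dagger spaces, Beilinson's $h$-construction and the rigid cohomology of the $h$-covers, in the style of recent work of Ertl--Niziol \cite{ertlniziol16}. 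Once this compatibility is available, the semistable case combined with cohomological descent yields (\ref{ploctri}) in general.
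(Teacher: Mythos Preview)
The statement you are attempting to prove is labelled a \emph{Conjecture} in the paper, and the paper does not offer a proof of it in general. The only case treated is the smooth case (Prop.~\ref{psmooth}), where the conjecture is trivial: $sp$ is the isomorphism $R\Gamma_{rig}(X_s/K_0)\cong R\Gamma_{cris}(X_s/K_0)\cong R\Gamma_{HK}^B(X_{\etabar,h})^{G_K}$, $N=0$, and the triangle degenerates to a split one. So there is no paper proof to compare against beyond this.

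Your proposal is a reasonable heuristic for why one should expect the conjecture to hold, but as a proof it has a structural gap. You propose to establish exactness ``degree-by-degree after decomposing both sides via the monodromy filtration \dots\ exactly as in the proof of Prop.~\ref{locmotivic}''. But Prop.~\ref{locmotivic} (and its $p$-adic analogue Prop.~\ref{plocmotivic}) take the exact triangle as \emph{input} and produce the motivic decomposition; they do not deduce exactness from the decomposition. In the $\ell$-adic case the triangle (\ref{tri3}) is already known from the localization sequence, and Prop.~\ref{locmotivic} merely refines it. Your argument runs this backwards: you would need the exactness of (\ref{ploctri}) before you can invoke a monodromy-filtration decomposition, so this step is circular. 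Moreover your exactness argument invokes the $p$-adic monodromy-weight conjecture and Frobenius semisimplicity, which are themselves open; even granting them you would obtain (\ref{ploctri}) only under those hypotheses, whereas the conjecture as stated asks for it unconditionally for regular $X$.

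You correctly identify the genuine obstruction: in the $\ell$-adic setting the triangle comes for free from \'etale localization on $X_{\shen}$, and there is no analogous ``geometric'' localization producing (\ref{ploctri}) directly in the $p$-adic world. Your idea of $h$-descent from the semistable case is the natural approach, but the difficulty you flag---that $R\Gamma_{rig}(X_s/K_0)$ for singular $X_s$ is not built by $h$-descent in a way visibly compatible with Beilinson's construction of $R\Gamma_{HK}^B$---is exactly why this remains a conjecture rather than a theorem.
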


We expect the following relation between rigid cohomology with compact support \cite{ber86} and $p$-adic motivic cohomology with compact support as defined in \cite{Geisser06} and between the dual of rigid cohomology with compact support and $p$-adic motivic Borel-Moore homology as defined in (\ref{compdef}) above.

\begin{conj} a) For a separated, finite type $k$-scheme $Y$ there exists an isomorphism
\[R\Gamma_c(Y_{eh},\bq_p(n))\xrightarrow{\sim}\left[R\Gamma_{rig,c}(Y/K_0) \xrightarrow{1-\phi_n}R\Gamma_{rig,c}(Y/K_0)\right].\]
b) For a separated, finite type $k$-scheme $Y$ there exists an isomorphism
\[R\Gamma(Y,\bq_p(n))\xrightarrow{\sim}\left[R\Gamma_{rig,c}(Y/K_0)^*
 \xrightarrow{1-\phi_{n-d+1}}R\Gamma_{rig,c}(Y/K_0)^*\right][-2d+2]. \]
\label{pdescent}\end{conj}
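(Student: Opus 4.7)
The plan is to reduce both parts to the case $Y$ smooth and projective via $eh$-descent, and then to apply the Geisser--Levine identification of $\bq_p(n)$-coefficient motivic cohomology with logarithmic de Rham--Witt cohomology. The left hand side of (a) satisfies $eh$-descent by construction \cite{Geisser06}, and the left hand side of (b) inherits descent, rationally, from the cdh/$eh$-properties of Bloch's cycle complex established in \cite{Geisser06}, \cite{Geisser10}. On the right hand sides, rigid cohomology with compact support is known to satisfy $h$-descent by work of Chiarellotto--Tsuzuki and Le Stum, and the functor $A\mapsto [A \xrightarrow{1-\phi_r} A]$ is exact in $A$, so descent passes to the mapping fibre. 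Granted $\mathbf{R}(k,\dim Y)$, or using rational coefficients together with de Jong alterations and a trace argument, one reduces to $Y$ smooth projective.

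For $Y$ smooth projective of pure dimension $e$ we have $R\Gamma_{rig,c}(Y/K_0) = R\Gamma_{rig}(Y/K_0) \simeq R\Gamma_{\mathrm{cris}}(Y/W(k))_{\bq}$ and $R\Gamma_c(Y_{eh},\bq_p(n)) = R\Gamma(Y_{et},\bq_p(n))$. The Geisser--Levine quasi-isomorphism $\bz/p^r(n) \simeq \nu^n_r[-n]$ of \cite{Geisser-Levine-00}, the classical exact sequence $0 \to \nu^n_\infty \to W\Omega^n_Y \xrightarrow{1-F} W\Omega^n_Y \to 0$, and the identification of $H^q(Y_{Zar},W\Omega^n)_\bq$ with the slope $[n,n+1)$ part of $H^{n+q}_{\mathrm{cris}}(Y/W)_\bq$ combine to yield $R\Gamma(Y_{et},\bq_p(n)) \simeq [R\Gamma_{\mathrm{cris}}(Y/W)_\bq \xrightarrow{1-\phi_n} R\Gamma_{\mathrm{cris}}(Y/W)_\bq]$. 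This establishes (a) in the smooth projective case, and $eh$-descent then extends it to arbitrary $Y$.

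For (b), the smooth projective case follows from (a) together with Poincar\'e duality: for $Y$ smooth proper of dimension $e$, $R\Gamma(Y,\bq_p(n)) = R\Gamma_c(Y,\bq_p(n))$ and the duality $R\Gamma_{rig,c}(Y)^* \simeq R\Gamma_{rig,c}(Y)(e)[2e]$ converts $\phi_{n-d+1}$ (with $d = e+1$, matching the convention $\dim Y = d-1$ used in the arithmetic setting of Corollary \ref{decompcor}) into $\phi_n$, while the shift $-2d+2 = -2e$ cancels the duality shift; hence (b) matches (a) exactly. For general $Y$ one again invokes descent, using that Borel--Moore motivic cohomology is covariant for proper maps and contravariant along open immersions and blow-ups in the way required for $eh$-descent.

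The principal obstacle is twofold. First, $h$- or $eh$-descent for $R\Gamma_{rig,c}$ must be formulated compatibly with the Frobenius action in sufficient functorial generality to permit passage to the mapping fibre of $1-\phi_n$ and gluing back from the smooth projective case; while descent with $\bq_p$-coefficients is available, the requisite Frobenius-equivariant form in families requires care. Second, and more substantively, the functoriality of the Geisser--Levine comparison -- pullback along open immersions, pushforward along proper maps, and compatibility with blow-up squares -- must be verified in the precision needed for $eh$-descent to produce a natural, rather than merely abstract, isomorphism. Recent work on log-syntomic cohomology (\cite{nekniz13}, \cite{colniz15}, \cite{ertlniziol16}) provides a framework in which these compatibilities should be tractable, and indeed the semistable analogue of part (a) essentially follows from Proposition \ref{niziol}.
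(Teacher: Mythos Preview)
The statement you are attempting to prove is a \emph{conjecture} in the paper, not a theorem; the paper offers no proof of it in general. What the paper does establish is the smooth projective case, inside Proposition~\ref{psmooth}, and there the argument is essentially the one you give: the Geisser--Levine isomorphism $\bz(n)/p^\bullet\cong W_\bullet\Omega^n_{X_s,\log}[-n]$, the slope decomposition of crystalline cohomology, and the exact sequence $0\to W_\bullet\Omega^n_{\log}\to W_\bullet\Omega^n\xrightarrow{1-F}W_\bullet\Omega^n\to 0$ combine to give part (a) for smooth proper $Y$, and part (b) then follows from (a) via Poincar\'e duality for crystalline cohomology and Milne's duality for the logarithmic sheaves. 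So your treatment of the smooth projective case is correct and matches the paper.

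Your descent strategy for the general case is the natural one, but the ``obstacles'' you flag in your final paragraph are not technicalities to be filled in later: they are precisely the reason the statement is recorded as a conjecture. Two points deserve emphasis. First, for part (b) the left-hand side $R\Gamma(Y,\bq_p(n))$ is the hypercohomology of Bloch's higher Chow complex on the (possibly singular) scheme $Y$, and it is not known that this agrees with an $eh$-sheafified version or that it satisfies $eh$-descent; Geisser's results in \cite{Geisser06}, \cite{Geisser10} concern the $eh$-sheafified theory, which is a different object in the singular case. Second, to run any descent or alteration-trace argument one must first \emph{construct} a natural comparison map between the two sides for arbitrary $Y$, compatible with pullbacks, proper pushforwards, and blow-up squares; only then can one reduce to checking it is an isomorphism on smooth projective $Y$. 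Your proposal presupposes such a map, but producing it---with the required Frobenius-equivariant functoriality on the rigid side and the required cycle-theoretic functoriality on the motivic side---is the heart of the problem and is not available in the literature. Your sketch is thus a plausible outline of how a proof might eventually go, but it is not a proof.
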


\begin{cor} Assume $X$ is regular and satisfies Conjectures \ref{locconj}, \ref{synvan}, \ref{ploc} and \ref{pdescent}b) for $Y=X_s$ and that the bottom square in the diagram below commutes. Then for $n\geq 0$ there is an isomorphism of localization triangles
\[\minCDarrowwidth1em\begin{CD}
R\Gamma(X,\bq_p(n))@>\sim>> \left[R\Gamma_{rig}(X_s/K_0)
 \xrightarrow{(1-\phi_n,sp')}R\Gamma_{rig}(X_s/K_0)\oplus R\Gamma_{dR}(X_\eta)/F^n\right]\\
 @VVV @VVV\\
R\Gamma_{syn}(X_\eta,n)@>\sim>> \left[\minCDarrowwidth1em\begin{CD} R\Gamma_{HK}^B(X_{\etabar,h})^{G_K}
 @>(1-\phi_n,\iota_{dR})>>R\Gamma_{HK}^B(X_{\etabar,h})^{G_K}\oplus R\Gamma_{dR}(X_\eta)/F^n\\@V N VV @V (N,0) VV\\
R\Gamma_{HK}^B(X_{\etabar,h})^{G_K}@>1-\phi_{n-1}>>R\Gamma_{HK}^B(X_{\etabar,h})^{G_K}\end{CD}\right] \\
 @VVV @VVV\\
 R\Gamma(X_s,\bq_p(n-1))[-1]@>\sim>>\left[R\Gamma_{rig}(X_s/K_0)^*
 \xrightarrow{1-\phi_{n-d}}R\Gamma_{rig}(X_s/K_0)^*\right][-2d+1] \end{CD}\]
where $sp'=\iota_{dR}\circ sp$.
\label{pcor}\end{cor}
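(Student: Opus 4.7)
The plan is to produce the diagram row by row, identify each row using one of the assumed conjectures, and then derive the top row and its commutative square from the other two by a Five Lemma argument. The left column is precisely the localization triangle of Corollary~\ref{absloc}: under Conjecture~\ref{locconj} one obtains
\[
R\Gamma(X_s,\bq_p(n-1))[-2]\to R\Gamma(X,\bq_p(n))\to R\Gamma(X,\tau^{\leq n}Rj_*\bq_p(n))\to,
\]
and Conjecture~\ref{synvan} identifies the third term with $R\Gamma_{syn}(X_\eta,n)$. The middle row of the diagram is nothing but the definition~\eqref{syndef} of syntomic cohomology as a $2\times 2$ homotopy limit involving $R\Gamma_{HK}^B(X_{\etabar,h})^{G_K}$, its monodromy $N$, the Frobenius operators $\phi_n$, $\phi_{n-1}$ and the de~Rham correction $\iota_{dR}$. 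The bottom row is Conjecture~\ref{pdescent}b) applied to $Y=X_s$, combined with Poincaré duality for rigid cohomology (and with the fact that, since $f$ is proper, $R\Gamma_{rig,c}(X_s/K_0)=R\Gamma_{rig}(X_s/K_0)$).

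For the right column I will apply a "syntomic realization" functor to the exact triangle~\eqref{ploctri} of Conjecture~\ref{ploc}: namely, the functor that sends a (complex of) $\phi$-module(s) $M$ to the homotopy fibre of $1-\phi_n$ on $M$, enhanced on the first two terms by the de~Rham correction coming from the Hyodo--Kato isomorphism. Since homotopy limits preserve distinguished triangles, this yields an exact triangle whose three terms are exactly the three entries in the right column of the corollary: the first term becomes $[R\Gamma_{rig}(X_s/K_0)\xrightarrow{(1-\phi_n,\,sp')} R\Gamma_{rig}(X_s/K_0)\oplus R\Gamma_{dR}(X_\eta)/F^n]$ with $sp'=\iota_{dR}\circ sp$ by construction; the middle term is the $2\times 2$ homotopy limit~\eqref{syndef} by inspection; and the third term is the fibre of $1-\phi_n$ on $R\Gamma_{rig}(X_s/K_0)^*(-d)[-2d+1]$, which after absorbing the Tate twist $(-d)$ into the Frobenius index becomes $[R\Gamma_{rig}(X_s/K_0)^*\xrightarrow{1-\phi_{n-d}}R\Gamma_{rig}(X_s/K_0)^*][-2d+1]$. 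The top row isomorphism then follows from the $3\times 3$ lemma applied to the comparison of two exact triangles whose middle and outer rows have already been matched. The commutativity of the bottom square is part of the hypotheses, and the commutativity of the top square reduces to the compatibility of the Nekov{\'a}{\v r}--Nizio{\l} period map $\beta$ with the specialization map $sp$ of Conjecture~\ref{ploc}, which is built into the construction of $\beta$.

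The main obstacle is the index bookkeeping in the bottom row. A direct application of Conjecture~\ref{pdescent}b) to $(Y,n')=(X_s,n-1)$, where $\dim X_s=d-1$, gives an expression with Frobenius index $n-d+1$ and cohomological shift $[-2d+4]$ in terms of $R\Gamma_{rig,c}(X_s/K_0)^*$, whereas the statement of the corollary has Frobenius index $n-d$ and shift $[-2d+1]$ in terms of $R\Gamma_{rig}(X_s/K_0)^*$. Since $X_s$ is proper, the two flavours of rigid cohomology coincide, and one uses Poincaré duality $R\Gamma_{rig}(X_s/K_0)^*\cong R\Gamma_{rig}(X_s/K_0)(d-1)[2d-2]$ (together with the effect of Tate twisting on $\phi_n$) to convert between the two indexings; this also confirms that the map of Conjecture~\ref{pdescent}b) for $(X_s,n-1)$ agrees with the syntomic realization of the third term of~\eqref{ploctri}. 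A secondary technical point is the verification that the de~Rham components used to form the right column (via the Hyodo--Kato comparison $\iota_{dR}$) genuinely match those used to define $R\Gamma_{syn}(X_\eta,n)$, which one checks by unwinding the construction of the HK-to-dR comparison for $X_\eta$ from \cite{beil13}.
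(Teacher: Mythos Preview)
Your overall architecture is exactly the one the paper intends (no proof is given there, as the corollary is meant to be an immediate assembly of the preceding conjectures): the left column is the localization triangle supplied by Conjectures~\ref{locconj} and~\ref{synvan}, the middle row is the definition~\eqref{syndef}, the right column is obtained by taking the homotopy fibre of $1-\phi_n$ (with the de~Rham correction on the first two terms) on the triangle~\eqref{ploctri} of Conjecture~\ref{ploc}, and the bottom row is Conjecture~\ref{pdescent}b). The top isomorphism then comes from TR3 once the bottom square commutes, which is an explicit hypothesis. Your separate remark about checking commutativity of the top square is therefore unnecessary: the top map is \emph{produced} by TR3, not given in advance.

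The one substantive slip is in your ``main obstacle''. In Conjecture~\ref{pdescent}b) the symbol $d$ is the ambient $d$ of the section (equivalently $\dim Y+1$ when $Y=X_s$), not $\dim Y$. With this reading, the direct substitution $Y=X_s$, $n\mapsto n-1$ gives
\[
R\Gamma(X_s,\bq_p(n-1))\cong\bigl[R\Gamma_{rig,c}(X_s/K_0)^*\xrightarrow{1-\phi_{n-d}}R\Gamma_{rig,c}(X_s/K_0)^*\bigr][-2d+2],
\]
and after the shift $[-1]$ and the identification $R\Gamma_{rig,c}=R\Gamma_{rig}$ (properness of $f$) you land exactly on the bottom row of the corollary. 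There is no index discrepancy to repair. Your detour through Poincar\'e duality for rigid cohomology of $X_s$ is not only unnecessary but in fact not available in the stated generality: $X$ is regular but $X_s$ need not be smooth, so Poincar\'e duality for $R\Gamma_{rig}(X_s/K_0)$ in the form you invoke need not hold. Drop that paragraph entirely and the argument is clean.
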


 Corollary \ref{pcor} implies Conjecture  ${\bf D}_p(\mathcal{X},n)$  (Conj. \ref{conjD_p} in section \ref{sec:localfactor}) in the presence of Conjecture \ref{pdescent}a). This rather indirect way of obtaining a syntomic description of $R\Gamma(X,\bq_p(n))$ goes back to \cite{Geisser04a} for smooth $f$ and $0\leq n<p-1$.   A more natural way to obtain a syntomic description of $R\Gamma(X,\bq_p(n))$ would be to construct a cycle class map with values in syntomic cohomology, following the construction of the etale cycle class map in  \cite{geisserlevine01}.

\begin{prop} Corollary \ref{pcor} holds unconditionally for $n<0$. More precisely, all complexes in the right hand column are acyclic, whereas the complexes in the left hand column are acyclic by definition (for $R\Gamma(X,\bq_p(n))$ we can use the higher Chow definition or that of section \ref{sect-emc}).
\label{nkleiner0}\end{prop}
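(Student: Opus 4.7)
The plan is to verify acyclicity of each column separately, using only standard weight bounds and the higher Chow definition. For the left column, under the higher Chow definition $\bz(n) = z^n(-, 2n-*)$ recalled at the start of this appendix, the complex is zero for $n < 0$, so both $R\Gamma(X, \bq_p(n))$ and $R\Gamma(X_s, \bq_p(n-1))$ vanish (noting $n-1 < 0$ too); under the alternative section~\ref{sect-emc} definition the relevant complex is torsion in a derived sense and still vanishes after $\otimes\bq$. The middle entry $R\Gamma_{syn}(X_\eta,n)$ is, by the formula (\ref{syndef}), literally the same object as the middle entry of the right column, so its acyclicity will follow from the right-column analysis below.

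For the right column I first observe that $R\Gamma_{dR}(X_\eta)/F^n = R\Gamma(X_{\eta,Zar}, L\Omega^{<n}_{X_\eta/K}) = 0$ for $n \leq 0$ by the convention on the derived Hodge filtration, so every de Rham contribution drops out. It then remains to show that the maps $1 - \phi_m := 1 - |k|^{-m}\phi$ appearing in the three entries are quasi-isomorphisms. The key weight inputs are: since $X$ is regular and proper flat over $S$, the generic fibre $X_\eta$ is smooth projective over $K$, so Deligne's Weil~I together with the Tsuji--Beilinson $p$-adic comparison at the level of Weil--Deligne representations gives that Frobenius acts on $H^i_{HK}^B(X_{\etabar,h})$ with eigenvalues of absolute value $|k|^{i/2}$; and since $X_s$ is proper of dimension $d-1$, standard weight bounds for rigid cohomology put Frobenius eigenvalues on $H^j_{rig}(X_s/K_0)$ among Weil numbers of weights in $[0,j]$. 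Because the category of isocrystals over $K_0$ is semisimple (Dieudonn\'e--Manin), each such complex splits in the derived category as the direct sum of its cohomology, so a Frobenius map is a quasi-isomorphism as soon as it is invertible on each cohomology group.

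For $n < 0$ the twist shifts every relevant weight upward by $-2n > 0$, making each $1 - \phi_m$ invertible. On $H^j_{rig}(X_s/K_0)$ the weights of $\phi_n$ lie in $[-2n, j-2n]$, all strictly positive, so the top entry (the mapping fibre of $1-\phi_n$, once the de Rham summand vanishes) is acyclic. On $(H^j_{rig}(X_s/K_0))^*$ with $j \leq 2d-2$ the weights of $\phi_{n-d}$ lie in $[2(d-n)-j, 2(d-n)]$, whose minimum is $\geq 2-2n > 0$, so the bottom entry is acyclic. For the middle entry, $1-\phi_n$ and $1-\phi_{n-1}$ are invertible on $H^i_{HK}^B(X_{\etabar,h})^{G_K}$ (weights $i-2n$ and $i-2n+2$, both positive); with $R\Gamma_{dR}/F^n = 0$ the homotopy limit reduces to the totalization $V \to V \oplus V \to V$ of the commuting square, where commutativity $N(1-\phi_n) = (1-\phi_{n-1})N$ follows from the standard relation $N\phi = p\phi N$. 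Invertibility of both horizontal arrows then forces trivial kernel of the first differential, surjectivity of the second, and image equal to kernel in the middle, giving acyclicity.

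The only point not entirely routine will be citing the Frobenius weights on $H^i_{HK}^B(X_{\etabar,h})$, which genuinely uses the $p$-adic comparison theorem at the level of Weil--Deligne representations; all remaining steps are elementary linear algebra in the semisimple category of isocrystals once those bounds are fixed.
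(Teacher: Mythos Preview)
There is a genuine gap in your treatment of the section~\ref{sect-emc} definition of $R\Gamma(X,\bq_p(n))$. That object is $\bigl(\mathrm{holim}_\bullet R\Gamma(X_{et},\bz(n)/p^\bullet)\bigr)_\bq$, and the fact that $\bz(n)$ has torsion cohomology does \emph{not} make this vanish: the archetype is $(\mathrm{holim}\,\bz/p^\bullet)_\bq=\bq_p\ne 0$. For $n<0$ one has $\bz(n)/p^\bullet\cong j_{X,!}\mu_{p^\bullet}^{\otimes n}$ on $X_{et}$, and the paper actually computes
\[
R\Gamma(X,j_{X,!}\mu_{p^\bullet}^{\otimes n})\cong R\Gamma(S,Rf_*j_{X,!}\mu_{p^\bullet}^{\otimes n})\cong R\Gamma(S,j_!Rf_{\eta*}\mu_{p^\bullet}^{\otimes n})=0
\]
using properness of $f$ (so $Rf_*j_{X,!}\cong j_!Rf_{\eta*}$) together with the vanishing $R\Gamma(S,j_!\F)=0$ over the henselian local base $S$. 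Your one-line dismissal skips this argument entirely.

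There is also an overclaim in your direct treatment of the syntomic term. You assert that Frobenius on $H_{HK}^{B,i}(X_{\etabar,h})\cong D_{pst}(H^i(X_{\etabar},\bq_p))$ has eigenvalues of absolute value $|k|^{i/2}$, i.e.\ is pure of weight $i$. This is false whenever the monodromy operator $N$ is nonzero: already in the $p$-adic weight--monodromy picture the eigenvalues on $\mathrm{Gr}^M_j$ have weight $i+j$, so for bad reduction they spread over a range. What your argument actually needs is only that all weights are $\ge 0$ (so that $2n,\,2n-2<0$ are excluded), and that weaker bound does follow unconditionally from Mokrane's weight spectral sequence together with Katz--Messing---but that is not ``Weil~I plus comparison'' and must be stated correctly. (Incidentally, the category of $\phi$-modules over $K_0$ is not semisimple---Dieudonn\'e--Manin requires an algebraically closed residue field---though this is harmless since the long exact cohomology sequence already gives what you want.) The paper takes a different route here: it establishes acyclicity of the top and bottom right entries via Nakkajima's weight bounds for rigid cohomology of the proper scheme $X_s$, and then deduces the middle from the exact triangle.
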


\begin{proof} Since $X_s$ is proper of dimension $d-1$ the eigenvalues of $\phi_n^{[k:\bF_p]}$ on any $H^i_{rig}(X_s/K_0)$ are Weil numbers of weight $w$ in the range $-2n\leq w\leq 2(d-1)-2n$ \cite{nakk}. Hence for $n<0$ the eigenvalue $1$ of weight $w=0$ cannot occur. Similarly, the eigenvalues of $\phi_{n-d}^{[k:\bF_p]}$ on any $H^i_{rig}(X_s/K_0)^*$ have weight $w$ in the range $-2(n-d)-2(d-1)\leq w\leq -2(n-d)$ and $w=0$ cannot occur if $n<0$. Together with the fact that  $R\Gamma_{dR}(X_\eta)=F^0 R\Gamma_{dR}(X_\eta)=F^nR\Gamma_{dR}(X_\eta)$ for $n<0$ this implies that two of the three complexes in the right hand column are acyclic, hence so is the third. It remains to show that $R\Gamma(X,\bq_p(n))$ as defined in section \ref{sect-emc} is acyclic. We have
\[ R\Gamma(X,\bz(n)/p^\bullet)=R\Gamma(X,j_{X,!}\mu_{p^\bullet}^{\otimes n})\cong R\Gamma(S,Rf_*j_{X,!}\mu_{p^\bullet}^{\otimes n})
\cong R\Gamma(S,j_!Rf_{\eta*}\mu_{p^\bullet}^{\otimes n})=0\]
using the fact that $f$ is proper, i.e. $Rf_*=Rf_!$, and the vanishing of $R\Gamma(S,j_!\F)$ for any sheaf $\F$ \cite{milduality}[Prop. II.1.1].
\end{proof}

\subsubsection{Motivic decomposition} Define the $\phi$-module
\[ Z^i:=\ker\left(H^i_{rig}(X_s/K_0)\xrightarrow{sp} H_{HK}^{B,i}(X_{\etabar,h})\right).\]

 \begin{prop} Assume the $p$-adic monodromy weight conjecture \cite{mok93} for the generic fibre of the regular scheme $X$ and
assume moreover that the $\phi$-module $H^i_{rig}(X_s/K_0)$ is semisimple for each $i$. Assume Conjecture \ref{ploc} holds for $X$. Then the triangle (\ref{ploctri}) is the direct sum over $i\in\bz$ of the $[-i]$-shift of the exact triangles
\begin{multline} Z^i[0]\oplus H_{HK}^{B,i}(X_{\etabar,h})^{G_K,N=0}[0]\to\left[H_{HK}^{B,i}(X_{\etabar,h})^{G_K}\xrightarrow{N} H_{HK}^{B,i}(X_{\etabar,h})(-1)^{G_K}\right]\to \\H_{HK}^{B,i}(X_{\etabar,h})(-1)^{G_K}/N[-1]\oplus Z^i[1]\to\label{ptri5}\end{multline}
in the derived category of $\phi$-modules. Moreover $Z^i$ is pure of weight $i$.
\label{plocmotivic}\end{prop}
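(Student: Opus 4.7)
The plan is to adapt the argument of Proposition \ref{locmotivic} from $l$-adic sheaves on $s$ to $\phi$-modules over $K_0$, the two key inputs being the cohomological dimension of the target abelian category and the reduction of semisimplicity questions to questions about the fibre functor.

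First I would establish the analogue of the decomposition (\ref{sdecomp}): the abelian category of $\phi$-modules over $K_0$ (finite-dimensional $K_0$-vector spaces with a Frobenius-semilinear automorphism) is equivalent to finitely generated modules over a twisted polynomial ring, and one checks directly that $\operatorname{Ext}^{\geq 2}$ vanishes, so it has cohomological dimension one. Since the triangle (\ref{ploctri}) lives in the derived category of $\phi$-modules, this yields a (noncanonical) splitting
\[
R\Gamma_{rig}(X_s/K_0)\cong\bigoplus_{i\in\bz}H^{i}_{rig}(X_s/K_0)[-i].
\]

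Next I would analyze $V:=H_{HK}^{B,i}(X_{\etabar,h})^{G_K}$ as a $\phi$-module. The monodromy operator $N$ endows $V$ with a monodromy filtration $M$, and the $p$-adic weight-monodromy conjecture of \cite{mok93} identifies (up to shift) $M$ with the weight filtration, so each $\mathrm{Gr}^M_n V$ is pure of weight $n$. Because $\phi$ acts on $V/V^{G_K}$ without the eigenvalue $1$ on any fixed weight piece (weights are $|k|^{n/2}$-Weil numbers), the formation of $G_K$-invariants is compatible with the monodromy grading and $V=\bigoplus_n \mathrm{Gr}^M_n V$ as $\phi$-modules. The $p$-adic version of Deligne's formula \cite{weilii}[1.6.14] identifies $\mathrm{Gr}^M_n V$ with a sum of Tate twists of graded pieces of the $N$-primitive part of $V^{N=0}$, which is a quotient of the semisimple $\phi$-module $H^{i}_{rig}(X_s/K_0)$; hence $V$ is semisimple as a $\phi$-module.

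The third step is to produce the short exact sequence of $\phi$-modules
\[
0\to Z^{i}\to H^{i}_{rig}(X_s/K_0)\xrightarrow{\,sp\,} H_{HK}^{B,i}(X_{\etabar,h})^{G_K,N=0}\to 0
\]
with $Z^{i}$ pure of weight $i$. The map $sp$ factors through $V^{N=0}$ since $H^{i}_{rig}(X_s/K_0)$ carries no monodromy; surjectivity and the purity of the kernel are obtained by translating the $l$-adic argument of \cite{Flach-Morin-12}[Thm.~10.1] to the category of $\phi$-modules, where the eigenvalues of Frobenius on $\ker(sp)$ and $\mathrm{coker}(sp)$ are controlled by the weight filtration on $V$ via the $p$-adic weight-monodromy conjecture. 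This is the main obstacle: one needs a genuine $p$-adic weight-spectral-sequence input (for instance from Mokrane's construction and the degeneration of the associated weight spectral sequence) to force the image of $sp$ to coincide with $V^{N=0,G_K}$ in the absence of $l$-adic smooth-proper base change.

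Finally, once the three ingredients above are assembled, the diagram-chase of Proposition \ref{locmotivic} carries over verbatim: semisimplicity of $H^{i}_{rig}(X_s/K_0)$ (assumed) and of $V^{i,G_K}$ (proved in step two) produces splittings $\sigma_{0},\sigma_{1},\sigma_{2},\sigma_{3}$ of the truncation morphisms in the derived category of $\phi$-modules, and after replacing $\sigma_{0}$ by $\sigma_{0}^{Z}\oplus\sigma_{3}\circ\sigma_{2}\circ\sigma_{1}\circ\gamma$ along the decomposition $H^{i}_{rig}(X_s/K_0)\cong Z^{i}\oplus V^{i,G_K,N=0}$ from step three, the resulting system of splittings commutes with the triangle morphisms. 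An easy induction then exhibits (\ref{ploctri}) as the shifted direct sum of the triangles (\ref{ptri5}), and the purity of $Z^{i}$ is the content of step three.
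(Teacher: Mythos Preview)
Your approach is essentially the paper's: reduce to the argument of Proposition~\ref{locmotivic}, using that the category of $\phi$-modules has global dimension one, transport the weight argument of \cite{Flach-Morin-12}[Thm.~10.1] to rigid cohomology to get surjectivity of $sp$ and purity of $Z^i$, and run the same splitting diagram. The paper supplies \cite{nakk} as the reference for the weight bounds on $H^i_{rig}(X_s/K_0)$ that you flag as the ``$p$-adic weight-spectral-sequence input'' in your step three.

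One point to tighten. In your final step you need the splitting $\sigma_1$ of $\tau^{\geq i}R\Gamma_{HK}^B(X_{\etabar,h})^{G_K}$, and it must be compatible with $N$ so that the middle term of (\ref{ploctri}) decomposes degreewise. A second appeal to global dimension one in $\phi$-modules gives a splitting but not an $N$-equivariant one. The paper obtains $\sigma_1$ instead by applying the functor $D_{pst}$ to the $p$-adic motivic decomposition $R\Gamma(X_{\etabar},\bq_p)\cong\bigoplus_i H^i(X_{\etabar},\bq_p)[-i]$, which automatically carries $N$. Also, your sentence in step two about ``$\phi$ acting on $V/V^{G_K}$'' is garbled, since $V$ is already the $G_K$-invariants; the splitting $V\cong\bigoplus_n\mathrm{Gr}^M_nV$ follows directly from the graded pieces being pure of pairwise distinct weights.
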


\begin{proof} The proof of \cite{Flach-Morin-12}[Thm 10.1] consists in applying the exact weight filtration functor to the long exact sequence induced by (\ref{tri1}), using the fact that $H^i(X_{\sbar},\bq_l)$ has weights $\leq i$ since $X_s$ is proper, together with the monodromy weight conjecture. In view of \cite{nakk} these arguments are available to show surjectivity of
\[H^i_{rig}(X_s/K_0)\xrightarrow{sp} H_{HK}^{B,i}(X_{\etabar,h})^{G_K,N=0}\]
as well as the fact that $Z^i$ is pure of weight $i$, assuming the $p$-adic monodromy weight conjecture. We can then follow the proof of Prop. \ref{locmotivic}, using the fact that the category of $\phi$-modules has global dimension one, and that the motivic decomposition
\begin{equation} R\Gamma(X_{\etabar},\bq_p)\cong\bigoplus_{i\in\bz}H^i(X_{\etabar},\bq_p)[-i] \label{pmotdec}\end{equation}
induces a motivic decomposition of
\[R\Gamma_{HK}^B(X_{\etabar,h})\cong D_{pst}(R\Gamma(X_{\etabar},\bq_p)).\]
\end{proof}

For any $\phi$-module $D$ set
\[ R(\phi,D):=[D\xrightarrow{1-\phi} D]=\mathrm{holim}(D\xrightarrow{1-\phi} D).\]
For any $p$-adic representation $V$ of $G_K$ recall the definition of $f$-cohomology of Bloch-Kato and Fontaine Perrin-Riou \cite{fon92}
\[R\Gamma_f(K,V):=\left( D_{cris}(V)\xrightarrow{(1-\phi,\subseteq)}D_{cris}(V)\oplus D_{dR}(V)/F^0\right)\]
and the definition of $R\Gamma_{/f}(K,V)$ via the exact triangle
\[ R\Gamma_f(K,V)\to R\Gamma(K,V)\to R\Gamma_{/f}(K,V)\to.\]
Define
\[ R\Gamma_{st}(K,V):=\left[\minCDarrowwidth1em\begin{CD}D_{st}(V) @>(1-\phi,\subseteq) >>D_{st}(V)\oplus D_{dR}(V)/F^0\\
@V N VV @V N VV\\
D_{st}(V(-1)) @>1-\phi >>D_{st}(V(-1))\end{CD}
\right]\]
so that there is an exact triangle
\[ R\Gamma_f(K,V)\to R\Gamma_{st}(K,V)\to R(\phi,D_{st}(V(-1))/N)[-1]\to.\]
Setting
\[V^i:=H^i(X_{\etabar},\bq_p)\]
we have an isomorphism of $(\phi,N)$-modules
\[ D_{st}(V^i)\cong H_{HK}^{B,i}(X_{\etabar,h})^{G_K}\]
and of $\phi$-modules
\[ D_{cris}(V^i)\cong H_{HK}^{B,i}(X_{\etabar,h})^{G_K,N=0}.\]

\begin{cor} Let $X$ be a regular scheme satisfying the assumptions of Cor. \ref{pcor} and of Prop. \ref{plocmotivic}. Then for any $n\in\bz$  there is an isomorphism of localization triangles
\[\minCDarrowwidth1em \begin{CD}
 R\Gamma(X,\bq_p(n)) @>\sim>> R(\phi,Z^{2n}(n))[-2n]\oplus \bigoplus\limits_{i\in\bz} R\Gamma_f(K,V^i(n))[-i]\\
@VVV @VVV \\
 R\Gamma_{syn}(X_\eta,n)@>\sim >> \bigoplus\limits_{i\in\bz} R\Gamma_{st}(K,V^i(n))[-i]\\
@VVV @VVV\\
 R\Gamma(X_s,\bq_p(n-1))[-1]@>\sim >> R(\phi,Z^{2n}(n))[-2n+1]\oplus\bigoplus\limits_{i\in\bz}R(\phi, D_{st}(V^i(n-1))/N)[-i-1] \\
@VVV @VVV\end{CD}\]
and an isomorphism of localization triangles
\[\minCDarrowwidth1em \begin{CD}
 R\Gamma(X,\bq_p(n)) @>\sim>> R(\phi,Z^{2n}(n))[-2n]\oplus \bigoplus\limits_{i\in\bz} R\Gamma_f(K,V^i(n))[-i]\\
@VVV @VVV \\
 R\Gamma(X_{\eta},\bq_p(n))@>\sim >> \bigoplus\limits_{i\in\bz} R\Gamma(K,V^i(n))[-i]\\
@VVV @VVV\\
 R\Gamma(X_s, Ri^!\bq_p(n))[1]@>\sim >> R(\phi,Z^{2n}(n))[-2n+1]\oplus\bigoplus\limits_{i\in\bz} R\Gamma_{/f}(K,V^i(n))[-i]\\
@VVV @VVV\end{CD}\]
\label{pdecompcor}\end{cor}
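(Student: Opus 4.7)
The strategy is to paste the decomposition of Proposition \ref{plocmotivic} into the holim formulas of Corollary \ref{pcor} and compare with the pieces appearing on the right hand side via weight considerations. First, Proposition \ref{plocmotivic} together with the motivic decomposition (\ref{pmotdec}) of $R\Gamma(X_{\etabar},\bq_p)$ in the Tannakian category of continuous $G_K$-representations yields isomorphisms
\[ R\Gamma_{rig}(X_s/K_0)\cong \bigoplus_{i\in\bz}(Z^i\oplus D_{cris}(V^i))[-i],\quad R\Gamma_{HK}^B(X_{\etabar,h})^{G_K}\cong \bigoplus_{i\in\bz}D_{st}(V^i)[-i] \]
of $\phi$-modules, respectively $(\phi,N)$-modules, and $R\Gamma_{dR}(X_\eta)\cong \bigoplus_{i\in\bz}D_{dR}(V^i)[-i]$ compatibly with the Hodge filtrations and with the comparison map $\iota_{dR}$. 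Crucially, the specialization map $sp$ vanishes on each summand $Z^i$ by the very definition of $Z^i$, so $sp'=\iota_{dR}\circ sp$ also vanishes on $Z^i$ and the summands $Z^i$ of $R\Gamma_{rig}(X_s/K_0)$ decouple completely from the $V^i$-summands in the syntomic holim.

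For the first diagram, I would substitute these decompositions into the three rows of the right hand column of Corollary \ref{pcor}. The top row then decomposes as $\bigoplus_i R(\phi,Z^i(n))[-i]\oplus \bigoplus_i R\Gamma_f(K,V^i(n))[-i]$, and purity of $Z^i$ of weight $i$ (also from Proposition \ref{plocmotivic}) forces the eigenvalues of $\phi_n=\phi\cdot p^{-n[k:\bF_p]}$ on $Z^i(n)$ to be Weil numbers of weight $i-2n$, so $R(\phi,Z^i(n))$ is acyclic unless $i=2n$. This collapses the $Z$-contribution to the single term $R(\phi,Z^{2n}(n))[-2n]$ and matches the top right entry of the corollary. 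The middle row substitution reproduces the double holim defining $R\Gamma_{st}(K,V^i(n))$ summand by summand (here no $Z$-contribution appears because $R\Gamma_{HK}^B$ knows nothing about $Z^i$). The bottom row can then either be computed as the cone of the natural triangle
\[ R\Gamma_f(K,V^i(n))\to R\Gamma_{st}(K,V^i(n))\to R(\phi,D_{st}(V^i(n-1))/N)[-1]\to, \]
summed over $i$ and combined with the dangling $R(\phi,Z^{2n}(n))[-2n+1]$, or verified directly using Conjecture \ref{pdescent}(b) and weight purity of the dual $Z^i$-pieces.

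For the second diagram the top row is identical to that of the first diagram, and the middle row follows from the canonical isomorphism $R\Gamma(X_\eta,\bq_p(n))\cong R\Gamma(K,R\Gamma(X_{\etabar},\bq_p(n)))$ combined with the motivic decomposition (\ref{pmotdec}) to give $\bigoplus_i R\Gamma(K,V^i(n))[-i]$. The bottom row of the second diagram is then forced on us as the cone of the middle minus the top, and it matches the right hand entry because the $R\Gamma_{/f}$-pieces are by definition the cones of $R\Gamma_f(K,V^i(n))\to R\Gamma(K,V^i(n))$, while the $R(\phi,Z^{2n}(n))[-2n+1]$ piece appears as the cone of $R(\phi,Z^{2n}(n))[-2n]\to 0$.

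The main obstacle is the verification of compatibility between the formal decomposition produced by the abstract semisimplicity argument of Proposition \ref{plocmotivic} and the concrete geometric maps appearing in Corollary \ref{pcor} and the syntomic holim (\ref{syndef}); in particular one must know that the splittings of (\ref{ploctri}) can be chosen to be compatible with the Hodge filtration after tensoring to $\bar{K}$, which is what allows the decomposition to pass through the double holim defining $R\Gamma_{syn}$. Everything else is a formal juggling of exact triangles together with the single weight calculation showing that $R(\phi,Z^i(n))$ vanishes except when $i=2n$.
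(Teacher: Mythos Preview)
Your approach is correct and is exactly what the paper intends; the paper gives no proof of this corollary, but the argument is the direct $p$-adic analogue of the brief proof of Corollary~\ref{decompcor} in the $l$-adic case (substitute the motivic decomposition into the localization triangle and kill the off-weight $Z^i$-terms). Your worry about compatibility with the Hodge filtration is not a genuine obstacle: the decompositions of $R\Gamma_{HK}^B(X_{\etabar,h})^{G_K}$ and of $R\Gamma_{dR}(X_\eta)/F^n$ both arise by applying the \emph{filtered} tensor functors $D_{st}$ and $D_{dR}$ to the single decomposition (\ref{pmotdec}), so $\iota_{dR}$ and the Hodge filtration are automatically decomposed, while the splitting of $R\Gamma_{rig}(X_s/K_0)$ in Proposition~\ref{plocmotivic} was constructed precisely to be compatible with $sp$, hence also with $sp'=\iota_{dR}\circ sp$. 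One small omission: the case $n<0$ is not covered by Corollary~\ref{pcor} (which is stated for $n\geq 0$) but by Proposition~\ref{nkleiner0}, where both columns are acyclic; you should mention this to justify the claim ``for any $n\in\bz$''.
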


\begin{prop} If $f:X\to S$ is smooth then Conjectures \ref{zhongconj}, \ref{synvan}, \ref{ploc}, \ref{pdescent}
hold true, if in \ref{pdescent}a) we replace $R\Gamma(X_{s,eh},\bq_p(n))$ by (see notation \ref{ehprime}) $$R\Gamma_{eh}'(X_{s},\bq_p(n)):=R\Gamma(X_{s,et},\bq_p(n)).$$
Moreover, the conclusions of Corollary \ref{pcor}, Prop. \ref{plocmotivic}, and Cor. \ref{pdecompcor} hold true, in particular there is an isomorphism
\begin{equation} R\Gamma(X,\bq_p(n))\cong\left[ R\Gamma_{cris}(X_s/K_0)\xrightarrow{(1-\phi_n,sp')}R\Gamma_{cris}(X_s/K_0)\oplus R\Gamma_{dR}(X_\eta)/F^n\right].\label{syn}\end{equation}
and Conjecture ${\bf D}_p(\mathcal{X},n)$ in section \ref{sec:localfactor} holds true. 
\label{psmooth}\end{prop}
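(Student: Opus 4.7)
The plan is to reduce each of the four conjectures to known results in $p$-adic Hodge theory, exploiting two decisive simplifications specific to the smooth case: the monodromy operator $N$ on Hyodo-Kato cohomology vanishes, and the specialization map
\[ sp:R\Gamma_{rig}(X_s/K_0)\to R\Gamma_{HK}^B(X_{\etabar,h})^{G_K} \]
is a quasi-isomorphism, so $Z^i=0$ for every $i$. The first is immediate from the definition of Hyodo-Kato cohomology on a smooth variety (trivial log structure), and the second follows from Berthelot's identification of rigid with crystalline cohomology for smooth proper varieties, combined with the crystalline comparison theorem of Fontaine-Messing, Faltings, and Tsuji, which gives $D_{cris}(H^i(X_{\etabar},\bq_p))\cong H^i_{cris}(X_s/K_0)$ as $\phi$-modules.

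I would verify the conjectures in turn. Conjecture \ref{zhongconj} is \cite{Geisser04a}[Cor.~4.4]. Conjecture \ref{synvan} follows from Proposition \ref{niziol}, since smooth schemes are strictly semistable (with empty special-fibre crossings). For Conjecture \ref{ploc}, with $N=0$ the middle term of (\ref{ploctri}) becomes the split complex
\[ R\Gamma_{rig}(X_s/K_0)\oplus R\Gamma_{rig}(X_s/K_0)(-1)[-1], \]
and Poincar\'e duality $R\Gamma_{rig}(X_s/K_0)^*(-d+1)[-2d+2]\cong R\Gamma_{rig}(X_s/K_0)$ identifies the third term of the triangle as the second summand, yielding a split exact triangle in its claimed form. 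For Conjecture \ref{pdescent}, the classical formula
\[ R\Gamma(X_{s,et},\bq_p(n))\cong \bigl[R\Gamma_{cris}(X_s/K_0)\xrightarrow{1-\phi_n}R\Gamma_{cris}(X_s/K_0)\bigr] \]
for smooth proper varieties over the finite residue field $k$ (due to Fontaine-Messing and Milne, cf.\ \cite{Milne86}), together with the identification $R\Gamma_{rig}(X_s/K_0)\cong R\Gamma_{cris}(X_s/K_0)$, gives part (a) in the modified form with $R\Gamma'_{eh}$; part (b) follows from (a) by Poincar\'e duality for smooth proper $X_s$.

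With these inputs, Proposition \ref{plocmotivic}, Corollary \ref{pcor} and Corollary \ref{pdecompcor} all apply: the semisimplicity hypothesis on $H^i_{rig}(X_s/K_0)$ in Prop.~\ref{plocmotivic} becomes vacuous because $Z^i=0$ eliminates the summand where it was used, the monodromy weight conjecture is trivial since $N=0$, and the formality of bounded complexes in the hereditary abelian category of $\phi$-modules supplies the required motivic decompositions automatically. The isomorphism (\ref{syn}) is then the first row of Corollary \ref{pcor} read in the smooth case. Finally, Conjecture ${\bf D}_p(\mathcal{X},n)$ is the defining triangle of the homotopy pullback (\ref{syn}): the fibre is $R\Gamma_{dR}(X_\eta)/F^n[-1]$, while the cofibre is $[R\Gamma_{cris}(X_s/K_0)\xrightarrow{1-\phi_n}R\Gamma_{cris}(X_s/K_0)]$, which by Conjecture \ref{pdescent}a) above equals $R\Gamma'_{eh}(X_{\bF_p},\bq_p(n))$.

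The main obstacle is not conceptual---given smoothness, everything reduces to classical comparison theorems---but rather bookkeeping the compatibility between the various avatars of $p$-adic cohomology (rigid, crystalline, Beilinson's $h$-sheafified Hyodo-Kato, Fontaine-Messing-Kato syntomic) and their period, specialization, and comparison maps. In particular, ensuring that the $sp$ of Conjecture~\ref{ploc} is compatible with the $\iota_{dR}$ appearing in the syntomic definition (\ref{syndef}) requires a careful choice of representatives, and this compatibility is what ultimately pins down the description (\ref{syn}) and the resulting triangle ${\bf D}_p(\mathcal{X},n)$ in a canonical way.
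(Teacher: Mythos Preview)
Your overall structure matches the paper's proof: same references for Conjectures \ref{zhongconj} and \ref{synvan}, same observation that $N=0$ and $sp$ is an isomorphism makes Conjecture \ref{ploc} trivial, and the same logical chain through Prop.~\ref{plocmotivic} and Cor.~\ref{pdecompcor}. Two points deserve comment.

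First, a minor one: the isomorphism $R\Gamma(X_{s,et},\bq_p(n))\cong [R\Gamma_{cris}(X_s/K_0)\xrightarrow{1-\phi_n}R\Gamma_{cris}(X_s/K_0)]$ is not quite ``Fontaine--Messing and Milne''. The paper derives it from the slope decomposition of the de Rham--Witt complex \cite{illusie79}, the short exact sequence $0\to W_\bullet\Omega^n_{X_s,\log}\to W_\bullet\Omega^n_{X_s}\xrightarrow{1-F}W_\bullet\Omega^n_{X_s}\to 0$, and the Geisser--Levine isomorphism $\bz(n)/p^\bullet\cong W_\bullet\Omega^n_{X_s,\log}[-n]$ \cite{Geisser-Levine-00}. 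This is needed to match motivic with crystalline input.

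Second, and more substantively: you correctly identify the compatibility of the various comparison maps as ``the main obstacle'', but you do not resolve it. Corollary~\ref{pcor} carries the explicit hypothesis that ``the bottom square in the diagram below commutes'', and this is what pins down the isomorphism (\ref{syn}) as compatible with the localization triangle coming from higher Chow complexes. Saying it ``requires a careful choice of representatives'' understates the issue: one must identify the motivic complex $i^*\bz(n)/p^\bullet$ with the (non-logarithmic) syntomic complex $\mathscr S'_\bullet(n)_X$ of Kato and then check that this identification fits into the diagram relating the localization triangle (\ref{etloc}) to the syntomic description. The paper does this via an explicit three-row diagram of pro-complexes on $X_s$, invoking \cite{Geisser04a}[Sec.~6] for the top rows, \cite{ertlniziol16}[Thm.~3.2, (3.10)] for the bottom, and either Kurihara (for $n<p-1$) or \cite{colniz15}[Thm.~1.1] (in general) to conclude that $\mathscr S'_\bullet(n)_X\to \mathscr S_\bullet(n)_X$ becomes an isomorphism rationally. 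Without this step, you have established all the abstract isomorphisms but not that the left column of Cor.~\ref{pcor} (defined via higher Chow groups) actually matches the right column.
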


\begin{proof} Conjecture \ref{zhongconj} holds by \cite{Geisser04a}[Cor. 4.4], Conjecture \ref{synvan} by Prop. \ref{niziol} and Conjecture \ref{ploc} is trivial since $sp$ is in this case the isomorphism
\[ R\Gamma_{rig}(X_s/K_0)\cong R\Gamma_{cris}(X_s/K_0)\cong R\Gamma_{HK}(X_s/K_0)\cong R\Gamma_{HK}^B(X_{\etabar,h})^{G_K} \]
and $N=0$. The theory of the deRham-Witt complex \cite{illusie79}[I.5.7.2] gives a short exact sequence
\[ 0\to W_\bullet\Omega^n_{X_s,log}\to W_\bullet\Omega^n_{X_s}\xrightarrow{1-F} W_\bullet\Omega^n_{X_s}\to 0\]
where $F$ is the Frobenius on the deRham-Witt complex, and an isomorphism
\[ R\Gamma_{cris}(X_s/K_0)\cong \bigoplus_{j=0}^{d-1} R\Gamma(X_s,W_\bullet\Omega_{X_s}^j)_\bq[-j] \]
where $\phi_j=\phi p^{-j}$ on the left hand side induces $F$ on $R\Gamma(X_s,W_\bullet\Omega_{X_s}^j)_\bq$.
In each degree $i$ the decomposition of the $\phi$-module
\[ H^i_{cris}(X_s/K_0)\cong \bigoplus_{j=0}^{d-1} H^{i-j}(X_s,W_\bullet\Omega_{X_s}^j)_\bq\]
is such that the slopes of $H^{i-j}(X_s,W_\bullet\Omega_{X_s}^j)_\bq$ lie in the interval $[j,j+1)$. In particular, $\phi$ is divisible by $p^j$, and $F=\phi p^{-j}$ has no eigenvalue $1$ for $n\neq j$. Hence
\begin{align} &\left[ R\Gamma_{cris}(X_s/K_0)\xrightarrow{1-\phi_n}R\Gamma_{cris}(X_s/K_0)\right]\label{pdescent1}\\
\cong &\left[ \left(\mathrm{holim}\  R\Gamma(X_s,W_\bullet\Omega_{X_s}^n)\right)_\bq[-n] \xrightarrow{1-F}\left(\mathrm{holim}\ R\Gamma(X_s,W_\bullet\Omega_{X_s}^n)\right)_\bq[-n]\right]\notag\\
\cong & \left(\mathrm{holim}\ R\Gamma(X_s,W_\bullet\Omega_{X_s,log}^n)\right)_\bq[-n]\notag\\
\cong & R\Gamma(X_s,\bq_p(n))\notag
\end{align}
where this last isomorphism follows from the isomorphism of \'etale sheaves on $X_s$ \cite{Geisser-Levine-00}[Thm. 8.5]
\begin{equation} \bz(n)/p^\bullet\cong W_\bullet\Omega^n_{X_s,log}[-n].\label{motcomp}\end{equation}
for $n\geq 0$. This gives Conjecture \ref{pdescent}a) with $R\Gamma(X_{s,eh},\bq_p(n))$ replaced by $R\Gamma(X_{s,et},\bq_p(n))$. Conjecture \ref{pdescent}b) follows from Conj. \ref{pdescent}a), Poincare duality for $R\Gamma_{cris}(X_s/K_0)$, Milne's duality \cite{Milne86}[Thm. 1.11] for the sheaves $W_\bullet\Omega^n_{X_s,log}$ and the isomorphism (\ref{motcomp}).

One has a commutative diagram of exact triangles of pro-complexes of \'etale sheaves on $X_s$
\begin{equation}\begin{CD}
  i^*\bz(n)/p^\bullet@>>> i^*\tau^{\leq n}Rj_*\mu_{p^\bullet}^{\otimes n}@>>> (\bz(n-1)/p^\bullet[-1])@>>>{}\\
 @V\cong VV \Vert@. @V\cong VV @.\\
\mathscr S''_\bullet(n)_X @>>> i^*\tau^{\leq n}Rj_*\mu_{p^\bullet}^{\otimes n} @>\kappa>> W_\bullet\Omega^{n-1}_{X_s,log}[-n]@>>>{}\\
 @AA\alpha_\bullet' A @AA\alpha_{n,\bullet}^{FM}A \Vert@. @.\\
 \mathscr S'_\bullet(n)_X @>>> \mathscr S_\bullet(n)_X @>>> W_\bullet\Omega^{n-1}_{X_s,log}[-n]@>>>{} 
\end{CD}\notag\end{equation}
where $\mathscr S''_\bullet(n)_X$ is defined as the mapping fibre of the map $\kappa$ and $\mathscr S'_\bullet(n)_X$ is the (non-logarithmic) syntomic complex of the smooth scheme $X$ as defined in \cite{kato87}. The isomorphism of the top two rows was shown in \cite{Geisser04a}[Sec. 6], the exactness of the lower triangle in \cite{ertlniziol16}[Thm. 3.2] and the commutativity of the lower two rows in \cite{ertlniziol16}[(3.10)].  If $n<p-1$ a result of Kurihara \cite{kurihara} shows that $\alpha_\bullet'$ is an isomorphism, and for any $n$ a result of Niziol and Colmez \cite{colniz15}[Thm.1.1] shows that $\alpha_{n,\bullet}^{FM}$ and hence $\alpha_\bullet'$ have bounded kernel and cokernel. In either case, by following the proof of \cite{nekniz13}[Thm. 3.8], one verifies that the composite map
\begin{multline*}(\mathrm{holim}_\bullet R\Gamma(X,\mathscr S'_\bullet(n)_X))_\bq\to (\mathrm{holim}_\bullet R\Gamma(X,\mathscr S_\bullet(n)_X))_\bq \xrightarrow{\alpha_{syn}} R\Gamma_{syn}(X_{\eta},n)\to \\
\left[ R\Gamma_{cris}(X_s/K_0)\xrightarrow{(1-\phi_n,sp')}R\Gamma_{cris}(X_s/K_0)\oplus R\Gamma_{dR}(X_\eta)/F^n\right]\ \end{multline*}
is an isomorphism. This implies the commutativity of the top square in Cor. \ref{pcor} and hence the conclusion of Cor. \ref{pcor}. Concerning Prop. \ref{plocmotivic}, the $p$-adic monodromy weight conjecture is trivially true in the smooth case and semisimplicity of
$H^i_{cris}(X_s/K_0)$ is not needed in the proof. The conclusion of Cor. \ref{pcor} and Prop. \ref{plocmotivic} then imply Corollary \ref{pdecompcor}.
\end{proof}

\begin{rem} Using the theory of cohomological descent one can show \cite{cisdeg13}[Prop. 5.3.3] that the natural maps
\[ R\Gamma(X_{s,et},\bq_p(n))\to R\Gamma(X_{s,eh},\alpha^*\bq_p(n))\to R\Gamma(X_{s,h},\alpha^*\bq_p(n))\]
are quasi-isomorphisms where $\alpha$ is the pullback from the {\em small} \'etale site of $X_s$. However, the $eh$-motivic cohomology defined by Geisser in \cite{Geisser06} and occurring in Conjecture \ref{pdescent}a) is defined by pulling back Voevodsky's complex $\bz(n)$ from the site $(\mathrm{Sm}/k)_{et}$ to the site $(\mathrm{Sch}/k)_{eh}$. As already remarked before notation \ref{ehprime}, one needs to assume resolution of singularities in order to prove the isomorphism $R\Gamma(X_{s,et},\bq_p(n))\cong R\Gamma(X_{s,eh},\bq_p(n))$ in this case (see \cite{Geisser06}[Thm. 4.3]).
\end{rem}

\begin{rem}\label{rem-BEKcompatible} If $f$ is smooth and and $n<p-1$ then one has an isomorphism \cite{Bloch-Esnault-Kerz-14}[Prop. 7.2 (3)]
\[ i^*\bz(n)/p^\nu\cong \mathscr S'_\nu(n)_X\cong \bz(n)_{X_\bullet}/p^\nu\]
where $\bz(n)_{X_\bullet}$ is the motivic pro-complex defined in \cite{Bloch-Esnault-Kerz-14}. Hence the compatibility requested after Conjecture ${\bf D}_p(\mathcal{X},n)$ (Conjecture \ref{conjD_p} in section  \ref{sec:localfactor}) is satisfied.
The exact triangle
\[ R\Gamma_{dR}(X_\eta)/F^n[-1]\to R\Gamma(X,\bq_p(n))\to \left[ R\Gamma_{cris}(X_s/K_0)\xrightarrow{1-\phi_n}R\Gamma_{cris}(X_s/K_0)\right]\]
arising from (\ref{syn}) can be written as the fundamental triangle
\[ R\Gamma_{dR}(X_\eta)/F^n[-1]\to R\Gamma(X,\bq_p(n))\to R\Gamma(X_s,\bq_p(n))\]
by (\ref{pdescent1}), and it agrees with the (rational cohomology of the) fundamental triangle of \cite{Bloch-Esnault-Kerz-14}[Thm. 5.4].
\end{rem}

\begin{bibdiv}
\begin{biblist}
\bibselect{all-my-references}
\end{biblist}
\end{bibdiv}

\end{document}